\documentclass[11pt]{amsart}
\usepackage{tikz}
\usetikzlibrary{arrows.meta, decorations.pathmorphing}
\usepackage{amssymb}
\usepackage{amsmath, amsfonts, vmargin}
\usepackage{color}
\usepackage{amsthm}
\usepackage[colorlinks=true,citecolor=blue,linkcolor=blue]{hyperref}
\usepackage{bbm}
\usepackage{verbatim}
\usepackage{mathrsfs}
\usepackage{txfonts}
\usepackage{enumerate}
\usepackage{cleveref}
\usepackage{cite}
\usepackage{array}
\usepackage{multirow}
\usepackage{booktabs}
\usepackage{mathtools}

\usepackage[T1]{fontenc}
\usepackage[english]{babel}
\usepackage{dsfont}
\usepackage{ytableau}
\usepackage{float}
\usepackage{esint}

\newtheorem{theorem}{Theorem}
\newtheorem{proposition}[theorem]{Proposition}

\newtheorem{corollary}[theorem]{Corollary}
\newtheorem{lemma}[theorem]{Lemma}

\newtheorem{remark}[theorem]{Remark}
\newtheorem{definition}[theorem]{Definition}
\theoremstyle{definition}

\newtheorem{question}[theorem]{Question}
\newtheorem{example}[theorem]{Example}

\numberwithin{theorem}{section}
\numberwithin{theorem}{section}
\numberwithin{equation}{section}

\newenvironment{rk}{\begin{remark}\rm}{\end{remark}}

\def\exp{\operatorname{\rm exp}}

\def\Tr{{\rm Tr}}

\newcommand{\dbar}{{\mkern3mu\mathchar'26\mkern-10mu d}}

\newcommand{\dint}{\mathchoice{{\vbox{\hbox{$\textstyle-$}}\kern-7.5pt}}
    {{\vbox{\hbox{$\scriptstyle-$}}\kern-5.25pt}}
    {{\vbox{\hbox{$\scriptscriptstyle-$}}\kern-3.75pt}}
    {{\vbox{\hbox{$\scriptscriptstyle-$}}\kern-3.0pt}}\!\int}
\newcolumntype{C}[1]{>{\centering\arraybackslash}p{#1}}

\newcommand{\ncint}{\fint}

\def\Op{\mathrm{Op}}

\begin{document}

\title{Weyl's laws and Connes' Trace Theorem for operator-valued pseudo-differential operators}

\author{Edward McDonald}
\address{Mathematical Institute of the University of Bonn, 53115 Bonn, Germany}
\email{eamcd92@gmail.com}

\author{Xiao Xiong}
\address{Institute for Advanced Study in Mathematics, Harbin Institute of Technology, 150001 Harbin, China}
\email{xxiong@hit.edu.cn}

\author{Xinyu Zhang}
\address{Institute for Advanced Study in Mathematics, Harbin Institute of Technology, 150001 Harbin, China}
\email{xy.zhang@stu.hit.edu.cn}

\subjclass[2020]{Primary 46L51, 58B34, 47G30, 35P20; Secondary 46L52, 47L20}

\keywords{Weyl's law, Connes' Trace Theorem, pseudo-differential operator, singular value function, Riemann $\zeta$-function.}

\begin{abstract}
 We investigate the spectral asymptotic behavior of operator-valued classical pseudo-differential operators ($\Psi$DOs) for negative order with symbols taking values in a semifinite von Neumann algebran $\mathcal{M}$ equipped with a normal semifinite  faithful trace. Within the framework of Connes' noncommutative geometry, we extend Connes' trace theorem to this operator-valued (type II) setting. Our main results are as follows:
(i) a symbolic characterization of complex powers for operator-valued elliptic $\Psi$DOs, extending Seeley's classical construction;
(ii) a trace formula for localized Riemann $\zeta$-functions that links the spectral residues of operator-valued elliptic operators to their principal symbols, thereby providing an operator-valued extension of the Connes--Wodzicki residue;
(iii) Weyl's law for right-compactly supported operator-valued classical $\Psi$DOs of arbitrary negative order, which yields a direct spectral proof of the noncommutative integral that bypasses the use of Dixmier traces;
(iv) Weyl's law for operator-valued commutators of certain Fourier multipliers with multiplication operators.
\end{abstract}

\maketitle

\markboth{E. McDonald, X. Xiong and X. Zhang}%
{Weyl's laws and Connes' Trace Theorem for $\Psi$DO}

\tableofcontents

\setcounter{section}{-1}
\section{Introduction}

\subsection{Pseudo-differential operators}

Pseudo-differential operators (henceforth: $\Psi$DOs) were first explicitly introduced by Kohn-Nirenberg \cite{KN65} and H\"{o}rmander \cite{L.H65} to bridge singular integrals and differential operators.  Further foundational treatments appear in \cite{RT10, St93}. These operators can be regarded as natural generalizations of Fourier multipliers and are central to modern analysis and the theory of partial differential equations.

The definition of a $\Psi$DO proceeds as follows. Let $m, \rho, \delta$ be real numbers satisfying $0 \le \delta \le \rho \le 1$, excluding the case $(\rho, \delta) = (1,1)$. The symbol class $S_{\rho, \delta}^m\big(\mathbb{R}^d \times \mathbb{R}^d\big)$ is defined as the set of functions $\sigma \in C^{\infty}(\mathbb{R}^d \times \mathbb{R}^d)$ such that for all multi-indices $\alpha$ and $\beta$, there exists a constant $C_{\alpha, \beta}$ satisfying
$$
|\partial_{\xi}^\alpha \partial_x^\beta \sigma(x, \xi)| \le C_{\alpha, \beta}(1+|\xi|^2)^{\frac{m-\rho|\alpha|+\delta|\beta|}{2}}.
$$
Subsequently, the $\Psi$DO $\Op(\sigma)$ associated with $\sigma(x, \xi) \in S_{\rho, \delta}^m\big(\mathbb{R}^d \times \mathbb{R}^d\big)$ is defined as the linear operator on the Schwartz space $\mathscr{S}(\mathbb{R}^d)$ given by the integral formula
\begin{equation}\label{scalar pdo}
	\left(\Op(\sigma)f\right)(x)= \int_{\mathbb{R}^d}  \sigma(x, \xi) \hat{f}(\xi)e^{ ix \cdot \xi}  \bar{d} \xi, \quad x \in \mathbb{R}^d, f \in \mathscr{S}(\mathbb{R}^d),
\end{equation}
where $\hat{f}$ denotes the Fourier transform of $f$, and $\bar{d} \xi = (2 \pi)^{-d} d\xi$. It is well-known that $\Op(\sigma)$ defines a linear map from $\mathscr{S}(\mathbb{R}^d)$ to itself (\cite[Theorem 2.1.6]{RT10}). We denote by $\Psi_{\rho, \delta}^m(\mathbb{R}^d):=\{\Op(\sigma): \sigma \in S^m_{\rho,\delta}\big(\mathbb{R}^d \times \mathbb{R}^d\big)\}$ the class of $\Psi$DOs of order $m$.

\medskip

In many applications, it is necessary to allow the symbols $\sigma \in S_{\rho,\delta}^m$ to be matrix-valued, say $\sigma \in C^\infty(\mathbb{R}^d\times \mathbb{R}^d;M_N(\mathbb{C}))$ for some $N,$ and in this case $\Op(\sigma)$ is an operator on the matrix-valued Schwartz space $\mathscr{S}(\mathbb{R}^d,\mathbb{C}^N).$ The fact that the space is finite-dimensional allows most results to carry over from the scalar-valued to the matrix-valued case, with a cost of at most some $N$-dependent constant. Another generalization that sometimes occurs is that $\sigma$ should depend on some parameter $\omega\in \Omega$, where $\Omega$ is a set, we require that the functions $(x,\xi)\mapsto \sigma_{\omega}(x,\xi)$ belong to $S^{m}_{\rho,\delta}(\mathbb{R}^d\times \mathbb{R}^d),$ uniformly in $\omega$. Thus, it is natural to consider a mutual extension of these two extensions of pseudo-differential calculus: the case where $\sigma$ is valued in a semifinite von Neumann algebra $\mathcal{M}$. In this case, the corresponding operator $\Op(\sigma)$ will be affiliated to the von Neumann algebra tensor product $B(L_2(\mathbb{R}^d))\overline{\otimes} \mathcal{M}$, and its spectrum will be measured as the generalized singular value function---commonly referred
to as the singular value function in semifinite von Neumann algebras.

\medskip

Indeed, the study of $\Psi$DOs with symbols valued in operator algebras (more precisely, $C^*$-algebras) goes back at least to the work of Connes and Baaj \cite{AC80, Baaj88} on pseudo-differential calculus for $C^*$-dynamical systems. Their work aimed to generalize the Atiyah-Singer index theorem \cite{AS63} to Lie group actions on $C^*$-algebras. Approximately two decades later, Connes and Tretkoff \cite{CT11} employed this calculus to establish a version of the Gauss-Bonnet theorem for noncommutative 2-tori. This work initiated a significant wave of research applying pseudo-differential techniques to the differential geometry of noncommutative tori, including the development and analysis of noncommutative residues, $\zeta$-functions, and log-determinants of elliptic operators \cite{CT11, LM16,CF19,AC21}. Subsequent studies \cite{Tao18, LP20} provided detailed accounts of symbol calculus for $\Psi$DOs on noncommutative tori, and a weakly parametric pseudo-differential calculus for a twisted $C^*$-dynamical system was developed in \cite{LL25}.

Since the theory of von Neumann algebras generalizes classical measure theory to a more general setting, replacing the theory of $\Psi$DOs valued in $C^*$-algebras with that valued in von Neumann algebras facilitates the study of the mapping properties of $\Psi$DOs on noncommutative $L_p$ spaces and some of their generalizations such as Sobolev and Besov spaces. This is the central focus of works such as \cite{XX19, HLP19i, HLP19ii, GJP21}, where the authors have obtained the boundedness of $\Psi$DOs on function spaces in some noncommutative settings such as quantum tori and quantum euclidean spaces.

\medskip

This paper is devoted to the study of the spectral asymptotic properties of operator-valued $\Psi$DOs. We choose to work with $\Psi$DOs valued in von Neumann algebras. One reason is that,  they are mutual extensions of the above mentioned two extensions of $\Psi$DOs: the case where $\sigma$ is matrix-valued, and the case where $\sigma$ depends on a parameter $\omega\in \Omega$. Another reason, which is more crucial, is that, studying operators $\Op(\sigma)$ affiliated to the von Neumann algebra $B(L_2(\mathbb{R}^d))\otimes \mathcal{M}$ makes it easier to capture the generalized singular value function of the operators, thereby gaining a clearer understanding of their spectral asymptotic behavior.


\subsection{Noncommutative geometry}

The quantized calculus, introduced by Connes in \cite{AC85}, serves a replacement for the algebra of differential forms for applications in a noncommutative setting. Its relationship with the action functional of Yang-Mills theory was
subsequently investigated in \cite{AC88}. This calculus has been notably applied to derive formulas for the Hausdorff measure of Julia sets and for limit sets of quasi-Fuchsian groups in the plane \cite[Chapter 4, Section 3.$\gamma$]{AC94}; for a more recent exposition see \cite{CMSZ17,CSZ17}.

In the framework of \cite{AC85}, quantized calculus is built upon the notion of a Fredholm module, which in turn originates from Atiyah's work on $K$-homology \cite{A69} (see also \cite[Chapter 8]{HR00}). A Fredholm module consists of a separable Hilbert space $H$, a $C^*$-algebra $\mathcal{A}$ represented on $H$, and a unitary self-adjoint operator on $H$ such that the commutator $[F, a]$ is a compact operator on $H$ for every $a\in\mathcal{A}$. The quantized differential of an element $a\in\mathcal{A}$ is then defined by $\dbar a=i[F,a]$.
According to Connes, compact operators on $H$ serve as noncommutative analogues of infinitesimals, and the ``size'' of an infinitesimal $T$ is captured by the decay rate of its singular value sequence:
\[
\mu(n,T) := \inf\{\|T-R\|_{\infty}: \operatorname{rank}(R)\le n\},
\]
see \cite{AC95}. In this setting one can quantify the smoothness of an element $a \in \mathcal{A}$ in terms of the decay rate of $\{\mu(n, \dbar a)\}_{n=0}^{\infty}$. In operator theoretic language, if $\mu(n,\dbar a)  = O\big((n+1)^{-1/p}\big)$ when $ n \to \infty$, we say that $\dbar a$ belongs to the weak Schatten ideal $\mathcal{S}_{p,\infty}$; if $\sum_{n=0}^\infty \mu(n,\dbar a)^p  < \infty$, we say that $\dbar a$ belongs to the Schatten ideal $\mathcal{S}_{p}$; if $\sup_{n \geq 1} \frac{1}{\log(n+2)} \sum_{j=0}^n \mu(j,\dbar a)^p  < \infty$, we say that $\dbar a$ belongs to the Macaev-Dixmier ideal $\mathcal{M}_{1,\infty}$. A problem of particular interest in quantized calculus is to precisely quantify the Schatten (or Macaev-Dixmier) ideal properties of $\dbar a$ in terms of $a$.

In this context, integrals are implemented by positive (normalized) traces on the weak trace class ideal $\mathcal{S}_{1,\infty}$. A trace $\varphi : \mathcal{S}_{1,\infty} \to \mathbb{C}$ is a continuous linear functional invariant under unitary conjugation: $\varphi(UBU^*) = \varphi(B)$ for all unitaries $U$ and all $B \in \mathcal{S}_{1,\infty}$; equivalently, $\varphi(AB)=\varphi(BA)$ whenever $A \in \mathcal{S}_\infty$ and $B \in \mathcal{S}_{1,\infty}$. It is normalized if $\varphi\big(\operatorname{diag}\{(n+1)^{-1}\}_{n=0}^\infty\big) = 1$, and positive if $\varphi(A) \ge 0$ for all $A \in \mathcal{S}_{1,\infty}$ with $A \ge 0$. The most famous traces on $\mathcal{S}_{1, \infty}$ are the Dixmier traces $\operatorname{Tr}_\omega: \mathcal{S}_{1, \infty} \rightarrow \mathbb{C}$ (see \cite{D66}; see also \cite{AC94,LSZ12,LSZ19,LSZ21,P23,MP23}). Recall that, for $A\in \mathcal{S}_{1,\infty}$, the Dixmier trace $\operatorname{Tr}_\omega$ determined by the extended limit $\omega$ on $\ell_{\infty}$ is defined as
$$\operatorname{Tr}_\omega(A):=	\omega\Big(\big\{\frac{1}{\log (2+n)} \sum_{j=0}^n \mu(j, A)\big\}_{n=0}^{\infty}\Big).$$

The geometric interpretation of quantized calculus is elaborated in \cite{AC88}. A paradigmatic example arises from a compact $d$-dimensional Riemannian spin manifold $M$ ($d \geq 2$) equipped with its Dirac operator $D$.  Let $H$ denote the Hilbert space of square-integrable sections of the spinor bundle over $M$, modulo almost-everywhere equivalence. The algebra $\mathcal{A} = C(M)$ acts on $H$ by pointwise multiplication, and one defines $F := \chi_{[0,\infty)}(D) - \chi_{(-\infty,0)}(D)$, the difference of spectral projections of $D$. Then $\dbar f = i[F,M_f]$, where $M_f$ denotes the operator on $H$ of pointwise multiplication by $f \in C(M)$.
A fundamental problem in quantized calculus is to relate the degree of differentiability of $f \in C(M)$ to the decay rate of the singular values of $\dbar f$. Connes established the inclusion \cite[Theorem 3(1)]{AC88}:
\[
f\in C^\infty(M) \;\Longrightarrow\; |\dbar f|^d \in \mathcal{M}_{1,\infty}.
\]

On $\mathbb{R}^d$, the Schatten class membership of $\dbar f$ has been characterized. Janson-Wolff \cite{JW82} proved that, for $d<p<\infty$, $\dbar f\in\mathcal{S}_p$ if and only if $f\in L_{1,\mathrm{loc}}(\mathbb{R}^d)\cap B^{d/p}_{p,p}(\mathbb{R}^d)$, where $B^{d/p}_{p,p}$ denotes the Besov space. In the Appendix of Connes, Sullivan and Teleman's paper \cite[Page 679]{CST94}, a sketch is provided showing that $\dbar f \in \mathcal{S}_{d,\infty}$ if and only if $f \in L_{1,\mathrm{loc}}(\mathbb{R}^d)$ and $\nabla f \in L_d(\mathbb{R}^d)$; a proof using functional methods was later supplied in \cite{LMSZ17}. On fully noncommutative manifolds, characterizations of of the $\mathcal{S}_{d,\infty}$-ideal membership of quantized differentials via homogeneous Sobolev spaces have been established, for noncommutative $d$ dimensional tori \cite{MSX19}, and for noncommutative $d$ dimensional euclidean spaces \cite{MSX20}.

\medskip

Connes' trace theorem \cite{AC88}
states that every classical $\Psi$DO $P$ of order $-d$ on a compact manifold $M$ is measurable, and its noncommutative integral coincides with the Wodzicki residue:
\begin{equation}\label{connes_integral_Wod}
	\ncint P=\operatorname{Tr}_\omega(P)=\frac{1}{d(2 \pi)^d} \iint_{S^* M} \sigma(P)_{-d}(x, \xi) d x d \xi,
\end{equation}
where $\sigma(P)_{-d}$ is the principal symbol of $P$ (see Section \ref{section resol} below for concrete definition).  As a direct consequence, \cite[Theorem 3.3]{AC88} yields the geometric formula
\begin{equation}\label{connes_integral_formula}
\operatorname{Tr}_\omega(|\dbar f|^d) = c_d \int_M |df\wedge\star df|^{d/2},
\end{equation}
with $c_d$ a known constant, $d$ the exterior derivative, and $\star$ the Hodge star operator associated to the orientation of $M$. In the words of Connes, this formula ``shows how to pass from quantized 1-forms to ordinary forms, not by a classical limit, but by a direct application of the Dixmier trace'' \cite[Page 676]{AC88}.

It is observed by Connes that the formula \eqref{connes_integral_Wod} does not depend on the extended limit $\omega$. Motivated by this property, an operator $A \in \mathcal{S}_{1, \infty}$ is said to be measurable if the value of $\operatorname{Tr}_\omega(A)$ is independent of the choice of the Dixmier trace. Equivalently (see \cite{LSZ12,P23,MP23}), the operator $A$ is measurable if and only if it satisfies the following Tauberian condition,
$$
\ncint A:=\lim _{N \rightarrow \infty} \frac{1}{\log (N+1)} \sum_{j<N} \mu(j, A)\;\; \text { exists. }
$$
The limit $\ncint A$ is then called the noncommutative integral of $A$.


While Connes' trace theorem guarantees the independent of the choice of the extended limit $\omega$
for classical $\Psi$DOs of order $-d$, the definition remains non-constructive, relying on the existence of Dixmier traces. This limitation prompted Connes to formulate the following foundational question. During the conference ``Noncommutative geometry: state of the arts and future prospects'', held in honor of Alain Connes' 70th birthday at Fudan University in Shanghai, China, from March 29 to April 4, 2017, Alain Connes posed the following question:

\begin{question}[Connes]\label{Q-Connes}
	Is it possible to prove directly an asymptotic formula for the spectrums of a classical $\Psi$DO of order $-d$, which would allow one to deduce the integration theorem without employing ultrafilters (or extended limits, Banach limits, or similar tools)?
\end{question}

As it turns out there are numerous positive traces on $\mathcal{S}_{1, \infty}$ that are not Dixmier traces. In order to overcome this difficulty and answer Connes' Problem, \cite{SSUZ15} introduces a stronger notion of measurability, called $\mathcal{P T}$ measurability, by requiring the uniqueness of the trace value across all continuous normalized traces. It is proved in \cite[Proposition 7.2]{SSUZ15} that a positive operator $X \in \mathcal{S}_{1, \infty}$ is $\mathcal{P T}$ measurable with $\varphi(X)=c$ for all positive normalised traces $\varphi$ if and only if
$$
\lim _{n \rightarrow \infty} \frac{1}{n+1} \int_{2^m}^{2^{n+m+1}-2} \mu(t, X) \mathrm{d} t=c \cdot \log (2), \quad \text { uniformly in } m \geq 0.
$$

The $\mathcal{P T}$ measurability is by now well established for broad classes of operators arising in commutative and noncommutative settings, so that Connes' trace theorem holds for these operators. For instance, $M_f \Delta_g^{-\frac{d}{2}}$ for all $f \in L_2(M)$ \cite{KLPS13}, $\Delta_g^{-\frac{d}{4}} M_f \Delta_g^{-\frac{d}{4}}$ for any $f$ in the Orlicz space $L \log L(M)$ \cite{P22,R22,FZ23}, are $\mathcal{P T}$ measurable operators. It is also shown in \cite{LMSZ17} that for $f$ belongs to the homogeneous Sobolev space $\dot{W}_d^1(\mathbb{R}^d)$, the quantized derivative $|\dbar f|^{d}$ on $\mathbb{R}^d$ are n $\mathcal{P T}$ measurable. These results have also been partly extended to noncommutative and curved noncommutative tori \cite{MSZ19, MSX19, MSX20, P20ii, P20i, MP22, MP23}, and noncommutative euclidean spaces \cite{MSX20, MSX23}.

It is also worth mentioning that, in \cite{FZ18, MSZ19}, the authors develop a $C^*$-algebraic method to address Connes' trace theorem. Specifically, using representations $\pi_1: \mathcal{A}_1 \rightarrow B(H)$ and $\pi_2: \mathcal{A}_2 \rightarrow B(H)$, they show that a $C^*$-algebra homomorphism
$$
\operatorname{sym}: \Pi\left(\mathcal{A}_1, \mathcal{A}_2\right) \rightarrow \mathcal{A}_1 \otimes_{\min } \mathcal{A}_2
$$
exists, where $\Pi\left(\mathcal{A}_1, \mathcal{A}_2\right)$ is the $C^*$-algebra generated by $\pi_1\left(\mathcal{A}_1\right)$ and $\pi_2\left(\mathcal{A}_2\right)$. This approach enables them to study operators with non-smooth symbols and to derive Connes' trace theorem for $T(1-\Delta)^{-\frac{d}{2}}$ on noncommutative tori, commutative and noncommutative Euclidean spaces, and the Lie group $SU(2)$. Here $T \in \Pi(\mathcal{A}_1, \mathcal{A}_2)$ satisfies a compact support condition specifically in these cases.


The definitive resolution of Connes' Problem \ref{Q-Connes} lies in the spectral asymptotic approach pioneered by Birman and Solomyak \cite{BS70,BS77i,BS77,BS79,BS79a}. This approach culminates in a Weyl's law for classical $\Psi$DOs of negative order on closed manifolds and on $\mathbb{R}^d$, provided their Schwartz kernels have compact support.
Weyl's law provides a constructive definition of the noncommutative integral. Unlike the Dixmier trace, whose definition relies on the non-constructive theory of extended limits and the Hahn--Banach theorem, Weyl's law directly expresses the integral as the limit of the scaled singular value sequence. This limit is explicitly computable from the principal symbol via a classical Lebesgue integral, thereby resolving Connes' Problem \ref{Q-Connes} by eliminating the need for any singular trace machinery.

\medskip

In this paper, we will address Connes' Problem \ref{Q-Connes} directly by developing a purely spectral approach based on Weyl's law for $\mathcal{M}$-valued classical $\Psi$DOs, which provides the desired spectral justification of the noncommutative integral. Moreover, our approach applies to $\mathcal{M}$-valued classical $\Psi$DOs of arbitrary negative order, thereby extending the scope beyond order $-d$ operators with smooth symbols.
Our result constitutes a direct resolution of Connes' Problem \ref{Q-Connes} in the type II noncommutative geometry setting  introduced in \cite{BT06, CPRS04, CPRS06i, CPRS06ii, CGRS14}. First, it provides a definition of the noncommutative integral that is entirely independent of any trace construction; the integral is simply the limit of the scaled singular value function. Second, the uniqueness of the trace value across all continuous normalized traces follows as an immediate corollary, rather than serving as a definitional premise. Consequently, Weyl's law in the type II noncommutative geometry also eliminates the need for singular traces altogether, thereby fully answering Connes' Problem \ref{Q-Connes} in the affirmative.

\medskip


\subsection{Weyl's law}
Let us turn to the introduction of Weyl's law, which describes the asymptotic behavior of the number of eigenvalues of positive operators. As mentioned in the previous section, Weyl's law for classical $\Psi$DOs of order $-d$ will provide a direct answer to Connes' Problem \ref{Q-Connes}.

Weyl's law has a long history dating back to H. Weyl \cite{W11}. On a closed Riemannian manifold $M$, Weyl's law describes the asymptotic behavior of $N(\lambda)$, the number of eigenvalues of the positive Laplace-Beltrami operator $-\Delta_g$ on $M$ that are less than $\lambda$, as follows:
$$
N(\lambda)=(2 \pi)^{-d} \omega_d \operatorname{vol}(M) \lambda^{\frac{d}{2}}(1+o(1)) \quad \text{as } \lambda \rightarrow \infty,
$$
where $\omega_d=\frac{\pi^{\frac{d}{2}}}{\Gamma\left(\frac{d}{2}+1\right)}$ denotes the volume of the unit ball in $\mathbb{R}^d$. Several modern expositions are available; for the case of Euclidean domains, see e.g., \cite[Theorem 7.5.29]{Sim15}.
The inverse of $1-\Delta_g$ is compact and  belongs to a weak Schatten class. Weyl's law can equivalently be expressed in terms of the eigenvalues of $(1-\Delta_g)^{-1}$, as
\begin{equation}\label{weyllaw}
	\lim_{j\rightarrow\infty} j^{\frac{2}{d}} \mu(j,(1-\Delta_g)^{-1}) = (2 \pi)^{-2}(\omega_d \operatorname{vol}(M))^{\frac{2}{d}},
\end{equation}
where $\mu(j,(1-\Delta_g)^{-1})$ denotes the $j$-th largest singular value of the compact operator $(1-\Delta_g)^{-1}$. This restatement of Weyl's law in \eqref{weyllaw} is a very special case of a general result concerning the asymptotic behavior of the eigenvalues of $\Psi$DOs of negative order.

While Connes' Problem \ref{Q-Connes} is formulated for classical $\Psi$DOs of order $-d$ with smooth symbols, the original insight that Weyl's law persists for general negative orders, is due to the pioneering work of Birman and Solomyak \cite{BS70,BS77i,BS77,BS79,BS79a}. Later proofs and generalizations were obtained in \cite{DR87,Shu01,P08}.
Recent progress extends this spectral asymptotic approach further, demonstrating that Weyl's law holds for operators with non-smooth symbols and in various noncommutative settings:

On $\mathbb{R}^d$, Weyl's law for $T I^{-1}$ has been established in \cite{FSZ23}, where $T \in \Pi(C_0(\mathbb{R}^d) + \mathbb{C},\, C(\mathbb{S}^{d-1}))$ (as defined in the previous section) is compactly supported from the right and $I^{-1}$ denotes the Riesz potential of order $-1$, with $d \ge 2$.  As an application, Weyl's law has been established for the quantized derivative $\dbar f$ of a function $f$ from the homogeneous Sobolev space $\dot{W}_d^1(\mathbb{R}^d)$.
Weyl's law for the commutators $[(-\Delta)^{\frac{\epsilon}{2}}, M_f]$, where $d \ge 1$, $0 < \epsilon < 1$, or $d \ge 2$, $-\frac{d}{2} < \epsilon < 0$, has also been investigated in \cite{FSZ24}.

In the noncommutative case, Weyl's law for $T I_\theta^{-1}$ on noncommutative tori $\mathbb{T}_\theta^d$ has been established in \cite{SXZ23}, where $T \in \Pi(C(\mathbb{T}_\theta^d), C(\mathbb{S}^{d-1}))$, and $I_\theta^{-1}$ is the Riesz potential of order $-1$, with $d \ge 2$. As an application, Weyl's law has been established for the quantized derivative $\dbar x$ of an operator $x$ from the homogeneous Sobolev space $\dot{W}_d^1(\mathbb{T}_\theta^d)$. Weyl's law for $T I_\theta^{-1}$ on noncommutative euclidean spaces $\mathbb{R}_\theta^d$ in the $\det(\theta) \neq 0$ case has been established in \cite{Tian25}, where $T \in \Pi(C_0(\mathbb{R}_\theta^d)+ \mathbb{C}, C(\mathbb{S}^{d-1}))$ is compactly supported from the right, and $I_\theta^{-1}$ is the Riesz potential of order $-1$, with $d \ge 2$. As an application, Weyl's law has been established for the quantized derivative $\dbar x$ of an operator $x$ from the homogeneous Sobolev space $\dot{W}_d^1(\mathbb{R}_\theta^d)$.

For the case of $p$-summable spectral triples $(\mathcal{A}, H, D)$ under certain conditions, Weyl's law for the operators $|D|^{-\frac{1}{2}} a|D|^{-\frac{1}{2}}$ \cite{MSZ22} and $|D|^{-\frac{q}{2}} a|D|^{-\frac{q}{2}}$ with $q > 0$ \cite{P23} has bellipticen established, where $a = a^* \in \overline{\mathcal{A}}$, and these operators are compact operators. As pointed in \cite{MSZ22}, the  Weyl's law for $|D|^{-\frac{1}{2}} a|D|^{-\frac{1}{2}}$ implies directly the operator $|D|^{-\frac{1}{2}} a|D|^{-\frac{1}{2}}$
is strongly measurable, and its Dixmier trace is given by the associated noncommutative integral.

Note that the previous results are situated within the framework of type I noncommutative geometry, where the primary objects are compact operators $\mathcal{K}(H)$. In this setting, the spectra are discrete, and their asymptotic behavior is governed by the singular value sequence $\mu(j, \cdot)$  $(j \in \mathbb{N})$ within the usual weak Schatten ideals $\mathcal{S}_{p,\infty}(H)$.

\medskip

This paper shifts this perspective to the broader context of type II noncommutative geometry. Here, we move beyond ordinary compact operators and consider $\operatorname{Tr}$-compact operators $\mathcal{C}_0\big(B(L_2(\mathbb{R}^d)) \overline{\otimes} \mathcal{M}\big)$ (see Definition \ref{def-tau-compact} below for concrete definition). In this framework, the spectral data is encoded not by a discrete sequence, but by a continuous singular value function $\mu(t, \cdot)$ $(t > 0)$ within the noncommutative weak Schatten ideals $\mathcal{L}_{p,\infty}\big(B(L_2(\mathbb{R}^d)) \overline{\otimes} \mathcal{M}\big)$. A central objective of this work is to establish a version of Weyl's law tailored to this type II setting for $\mathcal{M}$-valued classical $\Psi$DOs of arbitrary negative order, thereby clarifying the continuous asymptotic behavior of their singular value functions.


\subsection{Main results}

We consider classical $\Psi$DOs with operator-valued symbols, specifically those taking values in a semifinite von Neumann algebran $\mathcal{M}$ equipped with a faithful, normal, semifinite trace $\tau$. Our primary objective is to establish Weyl's law for such operators. In pursuit of this goal, we extend Connes' trace theorem and integration formula to the operator-valued setting. The main results are summarized as follows:

Our first main result establishes the principle symbol mapping for complex powers of $\mathcal{M}$-valued elliptic  $\Psi$DOs:
\begin{theorem}\label{main_complex_powers_theorem}
	Let $A \in \mathrm{C}\Psi^m\big(\mathbb{R}^d; \mathcal{M}\big)$ with $ \mathfrak{m}=\Re m>0$ be elliptic, and $A^*A$ be strictly positive. Then $|A|^{z}\in \mathrm{C}\Psi^{ \mathfrak{m} z}\big(\mathbb{R}^d; \mathcal{M}\big)$ with principal symbol
	$$
	\sigma(|A|^{z})_{ \mathfrak{m}z}(x, \xi) = |\sigma(A)_{m}(x, \xi)|^{z}, \quad z\in \mathbb{C}.
	$$
\end{theorem}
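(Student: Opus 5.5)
The plan is to follow Seeley's classical construction, adapted to $\mathcal{M}$-valued symbols, and apply it to the positive elliptic operator $B:=A^*A\in \mathrm{C}\Psi^{2\mathfrak{m}}(\mathbb{R}^d;\mathcal{M})$. Since $|A|^z=B^{z/2}$, it suffices to show that $B^{w}\in \mathrm{C}\Psi^{2\mathfrak{m}w}$ with principal symbol $(\sigma(B)_{2\mathfrak{m}})^{w}$. Here $\sigma(B)_{2\mathfrak{m}}=\sigma(A)_m^*\sigma(A)_m=|\sigma(A)_m|^2$ by the composition and adjoint rules of the calculus; note that $m+\bar m=2\mathfrak m$.

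\textbf{Step 1: parametrix for the resolvent.} For $\lambda$ in a sector $\Lambda$ avoiding $[0,\infty)$, I will construct a parameter-dependent parametrix
\[
R(\lambda)=\Op\Bigl(\textstyle\sum_j r_{-2\mathfrak m-j}(x,\xi,\lambda)\Bigr).
\]
The leading term is $r_{-2\mathfrak m}=(b_{2\mathfrak m}-\lambda)^{-1}$. The lower terms $r_{-2\mathfrak m-j}$ are determined recursively by the usual composition formula, solving $\sigma(B-\lambda)\#r\sim 1$. The essential operator-valued input is ellipticity. It gives $\|\sigma(A)_m(x,\xi)^{-1}\|_{\mathcal M}\lesssim|\xi|^{-\mathfrak m}$ uniformly in $x$. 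Together with the positivity of $b_{2\mathfrak m}$, this yields the bound
\[
\|(b_{2\mathfrak m}-\lambda)^{-1}\|\lesssim (|\xi|^{2\mathfrak m}+|\lambda|)^{-1},
\]
obtained by functional calculus in $\mathcal M$, using that the spectrum of $b_{2\mathfrak m}$ lies in $[c|\xi|^{2\mathfrak m},\infty)$. Each $r_{-2\mathfrak m-j}$ is a finite sum of products of $(b_{2\mathfrak m}-\lambda)^{-1}$ with derivatives of the homogeneous components of $b$. Since these factors do not commute, the order of the products must be kept, but the homogeneity in $(\xi,\lambda^{1/2\mathfrak m})$ and the symbol estimates still go through. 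The remainder $(B-\lambda)R(\lambda)-1$ then has order $-N$ with decay in $|\lambda|$. Strict positivity of $A^*A$ makes $(B-\lambda)^{-1}$ bounded on all of $\Lambda\cup\{|\lambda|<\epsilon\}$, which allows the contour to avoid $0$.

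\textbf{Step 2: complex powers by contour integration.} For $\Re w<0$, I will define
\[
B^w=\frac{i}{2\pi}\int_\Gamma \lambda^w (B-\lambda)^{-1}\,d\lambda,
\]
with $\Gamma$ a contour around $\sigma(B)\subset[c,\infty)$. Substituting the parametrix gives the symbol terms
\[
b^{(w)}_{2\mathfrak m w-j}=\frac{i}{2\pi}\int_\Gamma\lambda^w r_{-2\mathfrak m-j}\,d\lambda .
\]
Each term is homogeneous of degree $2\mathfrak m w-j$ for $|\xi|\ge1$, which one sees by deforming $\Gamma$ and rescaling. In particular, for the leading term the holomorphic functional calculus in $\mathcal M$ gives
\[
\frac{i}{2\pi}\int_\Gamma\lambda^w(b_{2\mathfrak m}-\lambda)^{-1}d\lambda=b_{2\mathfrak m}^w .
\]
The contribution of the remainder is controlled in the operator-valued symbol seminorms, so it is smoothing to arbitrary order. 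For general $w$, I write $B^w=B^{k}B^{w-k}$ with $k\in\mathbb N$ and use the composition theorem for the class $\mathrm{C}\Psi$. Setting $w=z/2$ gives $|\sigma(A)_m|^{z}$.

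\textbf{Main obstacle.} The hard step is Step 2: identifying the operator defined by the contour integral with the $\Psi$DO defined by the symbol sum, with uniform control of the remainder as an element affiliated with $B(L_2(\mathbb R^d))\overline\otimes\mathcal M$. This requires symbol estimates uniform in $\lambda$ for non-commuting $\mathcal M$-valued symbols. I would handle it by proving parameter-dependent seminorm bounds $\|\partial_\xi^\alpha\partial_x^\beta r_{-2\mathfrak m-j}\|_{\mathcal M}\lesssim(1+|\xi|+|\lambda|^{1/2\mathfrak m})^{-2\mathfrak m-j-|\alpha|}$ by induction on $j$. I would then invoke the $L_2$-boundedness (Calderón–Vaillancourt type) for $\mathcal M$-valued symbols of order $0$ established earlier in the paper to integrate the remainder absolutely along $\Gamma$.
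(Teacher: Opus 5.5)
Your architecture is the paper's. Theorem~\ref{structurethm} is proved by exactly this Seeley construction: a parameter-dependent resolvent parametrix, the Dunford integral for $\Re z<0$, homogeneous terms as contour integrals of the parametrix terms, and extension via $A^kA_{z-k}$. The statement is then obtained in Corollary~\ref{prin map ellipaz} by applying this to $A^*A$ and reading the principal symbol off \eqref{Bmzj,0z}.

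The step that fails is the one you single out as the main obstacle. The strongly parametric bound $\|\partial_\xi^\alpha\partial_x^\beta r_{-2\mathfrak m-j}\|_{\mathcal M}\lesssim(1+|\xi|+|\lambda|^{1/2\mathfrak m})^{-2\mathfrak m-j-|\alpha|}$ is false for a genuine $\Psi$DO (cf.\ the paper's remark after the definition of $S^m_{\rho,\delta;h}$ in Section~\ref{section sym}). There the terms $b_{2\mathfrak m-l}$ with $l>2\mathfrak m$ (and $\xi$-derivatives of $b_{2\mathfrak m}$ of order $>2\mathfrak m$) need not vanish and are singular at $\xi=0$. Hence the homogeneous functions $r_{-2\mathfrak m-j}$ are unbounded near $\xi=0$ on $|\xi|+|\lambda|^{1/2\mathfrak m}=1$. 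Concretely, take $\mathcal M=\mathbb C$ and $A=J^{1/2}$, so $\mathfrak m=\tfrac12$, $B=J$ and $b\sim|\xi|+\tfrac12|\xi|^{-1}+\cdots$. The recursion gives $r_{-3}=-\tfrac12|\xi|^{-1}(|\xi|-\lambda)^{-2}$, which has size $\tfrac12(1+R)^{-2}$ at $|\xi|=1$, $\lambda=-R$, whereas your bound predicts $O(R^{-3})$. Consequently the parametrix remainder does not have order $-N$ \emph{together with} decay in $|\lambda|$, so the absolute integration of the remainder along $\Gamma$ cannot be justified the way you describe.

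What your induction does give is the weakly parametric bound below. Each term of $r_{-2\mathfrak m-j}$ contains at least one factor $(b_{2\mathfrak m}-\lambda)^{-1}$, and every further one is absorbed by $\|(b_{2\mathfrak m}-\lambda)^{-1}\|\lesssim|\xi|^{-2\mathfrak m}$, so
\[
\|\partial_\xi^\alpha\partial_x^\beta r_{-2\mathfrak m-j}(x,\xi,\lambda)\|_{\mathcal M}\lesssim|\xi|^{-j-|\alpha|}\bigl(|\xi|^{2\mathfrak m}+|\lambda|\bigr)^{-1},\qquad |\xi|\geq\tfrac12 .
\]
A product bound $(1+|\xi|)^{-2\mathfrak m-j-|\alpha|}(1+|\lambda|)^{-1}$ is not available either; it fails already for $j=0$ along $|\xi|^{2\mathfrak m}=|\lambda|\to\infty$. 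The bound above suffices for the homogeneous terms: the contour integrals converge at $|\xi|=1$, and homogeneity does the rest.

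It also makes $E_N(\lambda):=R_N(\lambda)(B-\lambda)-1$ bounded from $H^s$ to $H^{s+N}$ uniformly on $\Gamma$, but without decay in $\lambda$. Indeed, on $|\xi|<\tfrac12$ the cut-off parametrix vanishes, so $E_N(\lambda)$ is essentially $-1$ there. Take the decay from the exact resolvent instead. Write
\[
(B-\lambda)^{-1}-R_N(\lambda)=-E_N(\lambda)(B-\lambda)^{-1}.
\]
One has $\|(B-\lambda)^{-1}\|_{H^s\to H^s}=O(|\lambda|^{-1})$ on $\Gamma$: for $s=0$ this is the spectral theorem, since you already have $B$ self-adjoint with $B\geq c>0$. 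For $s=2\mathfrak m k$, $k\in\mathbb Z$, commute with $B^k$ and use $\|B^ku\|_{L_2}\simeq\|u\|_{H^{2\mathfrak m k}}$; then interpolate. The integral of the remainder against $\lambda^w$ then converges absolutely in $B(H^s,H^{s+N})$ for $\Re w<0$. Since $N$ is arbitrary, Proposition~\ref{smooth-op-kernel} shows that $B^w$ minus the asymptotic sum lies in $\Psi^{-\infty}$. With this correction your outline goes through along the same lines as the paper's proof.
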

Seeley's construction of complex powers \cite{Seeley67} did not assume that the symbols were scalar-valued, and in principle they could have been matrix-valued. The value
of Theorem \ref{main_complex_powers_theorem} is that it applies in the infinite-dimensional operator-valued case. While \cite[Theorem 3.3]{MSZ19} develops the theory of principal symbols for $\Psi$DOs of order 0, the above theorem provides a standard framework for symbolic calculus applicable to $\mathcal{M}$-valued elliptic  $\Psi$DOs. It thereby establishes the symbolic foundation necessary for defining localized Riemann $\zeta$-functions associated with such operator-valued $\Psi$DOs.

Our second main result provides the analytical property of localized Riemann $\zeta$-function for $\mathcal{M}$-valued elliptic $\Psi$DOs:

\begin{theorem}
	Let $A \in \mathrm{C}\Psi^m\big(\mathbb{R}^d; \mathcal{M}\big)$ with $m > 0$ be elliptic and strictly positive, and suppose its principal symbol $\sigma(A)_m(x, \xi)$ is positive for all $(x, \xi) \in \mathbb{R}^d \times (\mathbb{R}^d \setminus \{0\})$. For $\phi \in C_c(\mathbb{R}^d; L_1(\mathcal{M}) \cap \mathcal{M})$ and $\Re z > \frac{d}{m}$, define the localized Riemann $\zeta$-function
	$$
	\zeta_{A, \phi}(z) := \operatorname{Tr}(M_{\phi^{\ast}} A^{-z} M_{\phi})
	$$
	and the symbolic $\zeta$-function
	$$
	\zeta_{\sigma, \phi}(z) :=(2 \pi)^{-d} \int_{\mathbb{R}^d} \int_{\mathbb{R}^d} \tau\Big(\phi(x)^{\ast} \sigma(A)_m(x, \xi)^{-z} \phi(x)\Big) \, dx \, d\xi.
	$$
	
	Then both functions admit meromorphic continuations to $\mathbb{C}$, with the right-most simple pole at $z = \frac{d}{m}$, and their residues agree:
	\begin{equation*}
		\begin{split}
			\operatorname{Res}_{z=\frac{d}{m}} \zeta_{A, \phi}(z) &= \operatorname{Res}_{z=\frac{d}{m}} \zeta_{\sigma, \phi}(z) \\
			& = \frac{1}{m(2 \pi)^d} \int_{\mathbb{S}^{d-1} } \int_{\mathbb{R}^d} \tau\left(\phi(x)^{\ast} \sigma(A)_m(x, \xi)^{-\frac{d}{m}} \phi(x)\right) \, dx \, d\xi.
		\end{split}
	\end{equation*}
\end{theorem}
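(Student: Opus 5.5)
The plan has three parts: handle the symbolic function $\zeta_{\sigma,\phi}$ by an explicit polar-coordinate computation, express the operator side $\zeta_{A,\phi}$ through the full symbol of $A^{-z}$, and compare the two.

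\textbf{The symbolic $\zeta$-function.} Split the $\xi$-integral into $|\xi|\le 1$ and $|\xi|\ge 1$. On $|\xi|\ge 1$, use homogeneity $\sigma(A)_m(x,r\omega)=r^m\sigma(A)_m(x,\omega)$ and polar coordinates. This gives
\[
(2\pi)^{-d}\int_1^\infty r^{d-1-mz}\,dr \int_{\mathbb{S}^{d-1}}\int_{\mathbb{R}^d}\tau\big(\phi(x)^*\sigma(A)_m(x,\omega)^{-z}\phi(x)\big)\,dx\,d\omega
=\frac{(2\pi)^{-d}}{mz-d}\,F(z),
\]
where $F$ is entire. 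Entirety of $F$ holds because $\sigma(A)_m$ is continuous, positive and invertible on the compact set $\operatorname{supp}\phi\times\mathbb{S}^{d-1}$, so $z\mapsto\sigma^{-z}$ is an entire $\mathcal{M}$-valued function by holomorphic functional calculus, and $\phi\in C_c(\mathbb{R}^d;L_1\cap\mathcal{M})$ makes $\tau(\phi^*\cdot\phi)$ a bounded functional.

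The piece $|\xi|\le 1$ is likewise $\frac{(2\pi)^{-d}}{d-mz}F(z)$ for $\Re z<d/m$. I would therefore replace the homogeneous symbol near $0$ by a smoothed version $\chi(\xi)\sigma(A)_m$, which changes $\zeta_{\sigma,\phi}$ only by an entire function. The residue at $d/m$ is then $\frac{1}{m(2\pi)^d}F(d/m)$, which is the stated formula. Meromorphy on all of $\mathbb{C}$ with only this pole as the right-most one follows.

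\textbf{The operator $\zeta$-function.} By Theorem \ref{main_complex_powers_theorem}, $A^{-z}\in \mathrm{C}\Psi^{-mz}(\mathbb{R}^d;\mathcal{M})$ with principal symbol $\sigma(A)_m^{-z}$. Moreover, Seeley's construction underlying that theorem gives a full classical expansion $\sigma(A^{-z})\sim\sum_{j\ge0}\sigma_{-mz-j}(z)$. The terms are homogeneous of degree $-mz-j$ and holomorphic in $z$, with seminorm bounds locally uniform in $z$.

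For $\Re z>d/m$, the operator $M_{\phi^*}A^{-z}M_\phi$ is trace class in $B(L_2)\overline{\otimes}\mathcal{M}$. Its trace is given by integrating the kernel on the diagonal:
\[
\zeta_{A,\phi}(z)=(2\pi)^{-d}\iint\tau\big(\phi(x)^*\sigma(A^{-z})(x,\xi)\phi(x)\big)\,dx\,d\xi .
\]
I expect this to be the main technical point. It needs an operator-valued version of the trace-class criterion and kernel formula: a symbol of order $<-d$ sandwiched between $L_1\cap\mathcal{M}$-valued compactly supported functions is $\operatorname{Tr}$-trace class with this trace. I would prove it by factoring $M_{\phi^*}A^{-z}M_\phi$ as $(M_{\phi^*}(1-\Delta)^{-s})\cdot((1-\Delta)^{s}A^{-z}M_\phi)$, with two Hilbert--Schmidt pieces. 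The first is computed via $\tau(|\phi|^2)$, and the second is handled by $L_2$-boundedness of order-zero $\mathcal{M}$-valued $\Psi$DOs. Then an approximation argument gives the diagonal formula.

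\textbf{Comparison.} Subtract the first $N$ terms of the expansion. The remainder has order $-\Re(mz)-N$, so its contribution is holomorphic for $\Re z>(d-N)/m$. The $j$-th homogeneous term, cut off near $\xi=0$, yields by the same polar computation as above $\frac{G_j(z)}{mz+j-d}$ with $G_j$ entire. Hence $\zeta_{A,\phi}$ continues meromorphically to every half-plane, and so to $\mathbb{C}$, with possible simple poles only at $z=(d-j)/m$. The pole at $d/m$ comes only from $j=0$, whose symbol is $\sigma(A)_m^{-z}$. Its residue therefore coincides with that of $\zeta_{\sigma,\phi}$.

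Besides the trace formula, the remaining care point is the uniformity in $z$ of the symbol estimates for the remainder terms. This should follow from the contour-integral construction of $A^{-z}$ via the parametrix of $(A-\lambda)^{-1}$ with parameter, already used in Theorem \ref{main_complex_powers_theorem}.
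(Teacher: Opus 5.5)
Your proposal is correct and follows essentially the same route as the paper: the Hilbert--Schmidt factorization and diagonal-kernel trace formula (Theorem \ref{op-tr=intker}) for $\Re z>\frac{d}{m}$, the holomorphic classical expansion of $A^{-z}$ (Theorem \ref{structurethm}), and a polar-coordinate computation in which only the $j=0$ term $\varphi(\xi)\sigma(A)_m(x,\xi)^{-z}$ produces the pole at $z=\frac{d}{m}$, which is exactly Theorem \ref{holoextend} and Corollary \ref{sym holoextend} integrated against $\tau(\phi(x)^*\,\cdot\,\phi(x))$. You also correctly observe that the literal symbolic integral diverges at $\xi=0$ when $\Re z>\frac{d}{m}$, so the homogeneous symbol must be cut off near the origin before the residue is computed; the paper does not address this point explicitly.
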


This result establishes an operator-valued analogue of the Connes-Wodzicki residue for $\mathcal{M}$-valued elliptic $\Psi$DOs thereby creating a direct connection between the spectral asymptotics of A and the symbolic integral over the cosphere bundle. It provides an analytic framework for computing noncommutative residues in terms of principal symbols and confirms that the noncommutative integral for such operators depends solely on local geometric data, in full consistency with the scalar-valued Connes-Wodzicki residue.

Our third main result establishes the precise asymptotic behavior of singular value functions for right-compactly supported classical $\Psi$DOs with operator-valued symbols:
\begin{theorem}\label{Thm-main}
	Let $T \in \mathrm{C}\Psi^{-m}\big(\mathbb{R}^d; \mathcal{M}\big)$ with $m > 0$, and $f \in C_c(\mathbb{R}^d; L_{\frac{d}{m}}(\mathcal{M}) \cap \mathcal{M})$. Let $M_f$ be the multiplication operator induced by an operator-valued function $f$. Then $T M_f \in \mathcal{L}_{\frac{d}{m}, \infty}\big(B(L_2(\mathbb{R}^d)) \overline{\otimes} \mathcal{M}\big)$ and
	\begin{align*}
		\lim_{t \to \infty} t^{\frac{m}{d}} \mu(t, T M_f) &= (2 \pi)^{-m} \lim_{t \to \infty} t^{\frac{m}{d}} \mu(t, \sigma(T)_{-m}f) \\
		&= d^{-\frac{m}{d}} (2 \pi)^{-m} \left[ \int_{\mathbb{S}^{d-1} } \int_{\mathbb{R}^d} \tau\Big(|\sigma(T)_{-m}(x, \xi)f(x)|^{\frac{d}{m}}\Big)\, dx \, d\xi \right]^{\frac{m}{d}}.
	\end{align*}
	In particular, if $T$ is compactly supported from the right (i.e. there exists $f \in C_c(\mathbb{R}^d; L_{\frac{d}{m}}(\mathcal{M}) \cap \mathcal{M})$ such that $T = T M_f$). Then $T \in \mathcal{L}_{\frac{d}{m}, \infty}\big(B(L_2(\mathbb{R}^d)) \overline{\otimes} \mathcal{M}\big)$ and
	\begin{align*}
		\lim_{t \to \infty} t^{\frac{m}{d}} \mu(t, T) &= (2 \pi)^{-m} \lim_{t \to \infty} t^{\frac{m}{d}} \mu(t, \sigma(T)_{-m}) \\
		&= d^{-\frac{m}{d}} (2 \pi)^{-m} \left[ \int_{\mathbb{S}^{d-1} } \int_{\mathbb{R}^d} \tau\Big(|\sigma(T)_{-m}(x, \xi)|^{\frac{d}{m}}\Big)\, dx \, d\xi \right]^{\frac{m}{d}}.
	\end{align*}
\end{theorem}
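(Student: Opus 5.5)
The plan is to reduce the general statement to the residue formula of the second main theorem (the localized $\zeta$-function theorem). The bridge will be a Tauberian argument, followed by an approximation argument that passes from special model operators to a general $T$.

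\textbf{Step 1 (model case).} First treat $T M_f$ with $T=A^{-1}$, where $A\in \mathrm{C}\Psi^{m}(\mathbb{R}^d;\mathcal{M})$ is elliptic and strictly positive with positive principal symbol. By Theorem \ref{main_complex_powers_theorem}, $A^{-z}$ is a classical $\Psi$DO of order $-mz$ with principal symbol $\sigma(A)_m^{-z}$. Consider the positive $\operatorname{Tr}$-compact operator $X=M_{f^*}A^{-2\cdot\frac{d}{2m}\cdot\frac{2m}{d}}M_f$, or more precisely $|A^{-1}M_f|^2=M_{f^*}A^{-2}M_f$. I would apply the $\zeta$-residue theorem with $A$ replaced by $A^{2}$, taking $\phi=f$ (first for $f\in C_c(\mathbb{R}^d;L_1(\mathcal{M})\cap\mathcal{M})$). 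This gives meromorphy of
\[
\operatorname{Tr}\bigl((M_{f^*}A^{-2}M_f)^{z}\bigr)
\]
up to lower-order corrections, with a simple pole at $z=\frac{d}{2m}$ and explicit residue.

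There is a gap here: $\operatorname{Tr}(M_{f^*}A^{-2z}M_f)$ is not $\operatorname{Tr}((M_{f^*}A^{-2}M_f)^z)$. I would close it by a commutator estimate showing that the difference is holomorphic in a half-plane $\Re z>\frac{d}{2m}-\varepsilon$. The tool is the symbol calculus: $[A^{-s},M_f]$ has order $-ms-1$. At the level of singular values, this difference belongs to a smaller ideal $\mathcal{L}_{\frac{d}{m+1},\infty}$-type class. Once the exact $\zeta$-function of $X$ is known to have a simple pole with residue $c$, the Wiener--Ikehara Tauberian theorem (applied to the distribution function $n(s)=\tau$-measure of the spectral projection of $X$ above $s$) yields
\[
\lim_{t\to\infty} t\,\mu(t,X)^{\frac{d}{2m}}=\text{const}.
\]
Unwinding this gives the stated formula for $\lim t^{m/d}\mu(t,A^{-1}M_f)$. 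The normalization $d^{-m/d}(2\pi)^{-m}$ appears from $\int_0^\infty r^{d-1}\,dr$ integrated against $|\sigma|^{d/m}$ over the sphere.

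\textbf{Step 2 (general $T$).} Write $T=BA^{-1}+R$ with $B$ of order $0$ and $R$ of order $-m-1$. Since $RM_f\in\mathcal{L}_{\frac{d}{m+1},\infty}$, it does not affect the limit; this uses the Birman--Solomyak-type stability lemma that $\lim t^{1/p}\mu(t,\cdot)$ is unchanged under $\mathcal{L}_{p,\infty}$-perturbations by $(\mathcal{L}_{p,\infty})_0$ elements. For the order-zero factor, $|BA^{-1}M_f|^2=M_{f^*}A^{-1}B^*BA^{-1}M_f$ is, modulo lower order, of the form $M_{f^*}\tilde A^{-2}M_f$ with $\tilde A=(A(B^*B+\epsilon)^{-1}A)^{1/2}$ elliptic. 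This step requires $B^*B$ to be invertible, so I would perturb by $\epsilon$ and let $\epsilon\to0$ using continuity of the right-hand functional in $\sigma$, together with the standard quasi-norm estimate $\limsup t^{1/p}\mu(t,S_1+S_2)$ controlled via Ky Fan type inequalities. Finally, to pass from $f\in C_c(L_1\cap\mathcal{M})$ to $f\in C_c(L_{d/m}\cap\mathcal{M})$, I would approximate $f$ in the relevant norm. The needed Cwikel-type uniform bound
\[
\|TM_f\|_{\mathcal{L}_{\frac{d}{m},\infty}}\lesssim \|f\|_{L_{d/m}(L_{d/m}(\mathcal{M}))}
\]
would be established first, via the $\mathcal{M}$-valued Cwikel estimate for $g(\nabla)M_f$ with $g(\xi)=(1+|\xi|^2)^{-m/2}$ and a bounded order-zero left factor.

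\textbf{Middle equality and the particular case.} The equality with $\lim t^{m/d}\mu(t,\sigma(T)_{-m}f)$ is a direct computation of the distribution function of the multiplication-type operator on $L_\infty(\mathbb{R}^d\times\mathbb{R}^d)\overline{\otimes}\mathcal{M}$ in polar coordinates, using homogeneity. The "in particular" statement is immediate from $T=TM_f$.

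\textbf{Main obstacle.} I expect the hardest part to be the Tauberian step. The $\zeta$-function controlled by the earlier theorem is the localized $\operatorname{Tr}(M_{\phi^*}A^{-z}M_\phi)$ rather than a power of one fixed positive operator. Making the commutator error holomorphic past $\Re z=\frac{d}{2m}$ uniformly in $z$, with enough growth control for Wiener--Ikehara, needs care; I would try complex interpolation on the family $A^{-z}$ together with the $\operatorname{Tr}$-class bounds for orders below $-d$.
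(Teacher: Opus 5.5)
\textbf{Step 1 cannot work as stated.} The difference
$\operatorname{Tr}\bigl((M_{f^*}A^{-2}M_f)^z\bigr)-\operatorname{Tr}(M_{f^*}A^{-2z}M_f)$
is not holomorphic at $z_0=\frac{d}{2m}$ unless $d=2m$. Suppose it were. Then the first function would have a simple pole at $z_0$ whose residue equals that of the second, namely
$r(f)=\frac{1}{2m(2\pi)^d}\int_{\mathbb{S}^{d-1}}\int_{\mathbb{R}^d}\tau\bigl(f(x)^*\sigma(A)_m(x,\xi)^{-\frac{d}{m}}f(x)\bigr)\,dx\,d\xi\neq 0$.
Now replace $f$ by $cf$. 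The first function is multiplied by $|c|^{2z}$ and the second by $|c|^2$, which forces $|c|^{d/m}r(f)=|c|^2r(f)$ for all $c$, a contradiction. For the same reason, no unwinding of that residue, which is quadratic in $f$, can produce $\tau\bigl(|\sigma(A)_m^{-1}f|^{\frac{d}{m}}\bigr)$.

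The localizer has to carry the power $z$. The usable function is $\operatorname{Tr}(Z^zY^z)$, with $Z$ the $\Psi$DO and $Y$ a multiplication operator. Passing from its pole to the singular values of $Y^{1/2}ZY^{1/2}$ is exactly the McDonald--Sukochev--Zanin noncommutative Tauberian theorem (Lemma~\ref{nc-Tauberian}). The paper quotes this with $\operatorname{Tr}_{\mathrm p}(|A|^{-z}M_{g^{2z}})$ rather than running Wiener--Ikehara on $\operatorname{Tr}(X^z)$.

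\textbf{The order gain you rely on fails for $\mathcal{M}$-valued $f$.} The principal symbol of $[A^{-s},M_f]$ is $[\sigma(A)_m(x,\xi)^{-s},f(x)]$. This has full order $-ms$ unless $f(x)$ commutes with the symbol. So the hypothesis $[Z,Y^{1/2}]\in\mathcal{L}_{\frac p2,\infty}$ needed by any such Tauberian argument is unavailable. Your Step 2 inherits the problem, since it again reduces to $M_{f^*}\tilde A^{-2}M_f$.

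The missing idea is never to localize with $f$. The paper localizes only with a scalar $0\le g\in C_c(\mathbb{R}^d)$ and a $\tau$-finite projection $\mathrm p$, the latter handled through the localized trace $\operatorname{Tr}_{\mathrm p}$. Then $[|A|^{-1},M_g]\in\mathrm{C}\Psi^{-m-1}$, which gives Theorem~\ref{asymp ellip}: first for $m\le 1$ with $\frac dm>2$, then for all $m$ by applying it to $|A|^{1/2}$.

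The operator-valued $f$ is instead put inside the operator, as follows.
\begin{itemize}
\item Set $T_\lambda=TM_{f\mathrm p_\lambda}$.
\item Compare $|T_\lambda|^2+\varepsilon^2M_{\mathrm p_\lambda g}J^{-2m}M_{\mathrm p_\lambda g}$, modulo $\mathcal{L}_{\frac{d}{2m+1},\infty}$, with $M_{\mathrm p_\lambda g}|A_\varepsilon|^{-2}M_{\mathrm p_\lambda g}$. Here $A_\varepsilon$ is elliptic with principal symbol $\bigl(|\sigma(T)_{-m}f\mathrm p_\lambda|^2+\varepsilon^2|\xi|^{-2m}\bigr)^{-1/2}$ (Theorem~\ref{asymp-nu-pn}).
\item Remove $\varepsilon$ and $\mathrm p_\lambda$ using Lemma~\ref{ideal approximate}, Cwikel-type bounds and monotone convergence.
\end{itemize}
Thus the nonlinear dependence $|\sigma(T)_{-m}f|^{\frac dm}$ enters through the principal symbol of $A_\varepsilon$, not through the localizer. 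Your $\epsilon$-regularization in Step 2 is the right device, but it must be applied to the symbol $\sigma(T)_{-m}f$, with only scalar and finite-projection cutoffs outside.
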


This is the central spectral asymptotic result of the paper, establishing the Weyl' law for $\mathcal{M}$-valued classical $\Psi$DOs of any negative order. It resolves Connes' Problem \ref{Q-Connes} by providing an explicit spectral formula for the noncommutative integral of such operators, while avoiding ultrafilter-based constructions such as Dixmier traces.

This theorem indeed extends Connes' trace theorem to the context of type II noncommutative geometry: Let $T \in \mathrm{C}\Psi^{-d}\big(\mathbb{R}^d; \mathcal{M}\big)$ be compactly supported from the right. Denote $\mathfrak{B}= B(L_2(\mathbb{R}^d)) \overline{\otimes} \mathcal{M}$, and define Dixmier trace for $A \in \mathcal{L}_{1,\infty}(\mathfrak{B})$ as
$$\operatorname{Tr}_\omega(A) = \lim\limits_\omega \dfrac{1}{\log (s+1)} \int_0^s \mu(t,A)  dt$$
Then we have
$$
\Tr_{\omega}(T) = d^{-1}(2\pi)^{-d} \int_{\mathbb{S}^{d-1} } \int_{\mathbb{R}^d} \tau \big( \sigma(T)_{-d}(x,\xi) \big)\,dx\,d\xi.
$$
In particular, the value is independent of the choice of $\omega.$

\subsection{Applications}

\subsubsection{Weyl's law for the  commutators}

The first application presented in this paper is a systematic study of the singular value function asymptotics of commutators.

We first consider commutators of the form $[T_\phi, M_f]$, where $T_\phi$ denotes a Calder\'{o}n--Zygmund singular integral operator on $\mathbb{R}^d$ ($d\ge 2$) and $M_f$ is the multiplication operator induced by an operator-valued function $f$.
Specifically, $T_\phi$ is the Fourier multiplier with symbol $\phi \in C^\infty(\mathbb{S}^{d-1})$, homogeneous of degree $0$ and not identically zero. Such operators include, in particular, the Riesz transforms $R_j$.
For a function $f$ taking values in a semifinite von Neumann algebra $\mathcal{M}$, we consider the commutator $[T_\phi, M_f]$ acting on the Hilbert space $L_2(\mathbb{R}^d;L_2(\mathcal{M}))$ in the operator-valued setting.

\begin{theorem}
	Let $f \in \dot{W}_d^1(\mathbb{R}^d; L_d(\mathcal{M}))$ and $d\ge 2$. Then
	\begin{equation}\label{Commutator-CZ}
		\begin{split}
			&  \lim_{t \to \infty} t^{\frac{1}{d}} \mu(t, [T_\phi, M_f]) \\
			=&(2\pi)^{-1} d^{-\frac{1}{d}}  \Big(\int_{\mathbb{S}^{d-1}} \int_{\mathbb{R}^d}  \tau \big( \big| \sum_{|\gamma|=1} \partial_s^\gamma \phi(s) D_x^\gamma f(x) \big|^d \big)\, dx \, ds \Big)^{\frac{1}{d}},
		\end{split}
	\end{equation}
	where the integral over $\mathbb{S}^{d-1}$ is taken with respect to the rotation-invariant measure $ds$ on $\mathbb{S}^{d-1}$, and $s = (s_1, \ldots, s_d)$.
\end{theorem}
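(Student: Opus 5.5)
Our strategy is to settle the case of smooth compactly supported $f$ by means of Theorem \ref{Thm-main}, and then obtain the general case by a density argument that relies on a uniform weak-Schatten estimate.

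\textbf{Step 1 (smooth case).} Suppose first that $f\in C_c^\infty(\mathbb{R}^d;L_d(\mathcal{M})\cap\mathcal{M})$. Choose $\chi\in C_c^\infty(\mathbb{R}^d)$ with $\chi=1$ on a neighbourhood of $\operatorname{supp} f$, and split
\[
[T_\phi,M_f]=[T_\phi,M_f]M_\chi+[T_\phi,M_f]M_{1-\chi}=[T_\phi,M_f]M_\chi+T_\phi M_fM_{1-\chi}-M_fT_\phi M_{1-\chi}.
\]
The middle term vanishes because $f(1-\chi)=0$. The last term has a smooth kernel, since $f$ and $1-\chi$ have disjoint supports and $T_\phi$ has an off-diagonal Calder\'on--Zygmund kernel that is smooth and decays like $|x-y|^{-d}$. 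Its Schatten norm in every $\mathcal{L}_p$ with $p>0$ small (in particular $p<d$) should be finite, or at least it should be $o$ in $\mathcal{L}_{d,\infty}$, so by the standard perturbation lemma for limits $t^{1/d}\mu(t,\cdot)$ it does not affect the asymptotics.

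We must also handle the fact that $\phi$ is singular at $\xi=0$. Replace $\phi(\xi/|\xi|)$ by $\tilde\phi(\xi)=(1-\psi(\xi))\phi(\xi/|\xi|)$ with $\psi\in C_c^\infty$ equal to $1$ near $0$. The difference $[\psi(D)\phi,M_f]M_\chi$ is a product of bounded operators in which $M_f\,\psi\phi(D)$ and $\psi\phi(D)M_\chi$ lie in every $\mathcal{L}_p$ (Cwikel-type estimates with $L_p$-symbol of compact support), so it is negligible.

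The commutator $[\tilde\phi(D),M_f]$ belongs to $\mathrm{C}\Psi^{-1}(\mathbb{R}^d;\mathcal{M})$. By the composition formula its principal symbol is
\[
-\sum_{|\gamma|=1}\partial_\xi^\gamma\phi(\xi)\,D_x^\gamma f(x),
\]
which is homogeneous of degree $-1$. This operator is compactly supported from the right via $M_\chi$. Theorem \ref{Thm-main} with $m=1$ then gives the limit with $\tau(|\sum\partial_\xi^\gamma\phi\,D^\gamma f|^d)$ integrated over $\mathbb{S}^{d-1}\times\mathbb{R}^d$. The sign is irrelevant inside the absolute value, and $\chi=1$ on the support. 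This yields \eqref{Commutator-CZ}.

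\textbf{Step 2 (uniform estimate).} We next need the operator-valued Connes--Sullivan--Teleman/LMSZ-type bound
\[
\|[T_\phi,M_f]\|_{\mathcal{L}_{d,\infty}(\mathfrak{B})}\lesssim \|f\|_{\dot W^1_d(\mathbb{R}^d;L_d(\mathcal{M}))}.
\]
I would try first to prove this by writing $[T_\phi,M_f]$ through the double operator integral/Fourier-multiplier representation used in \cite{LMSZ17} (or \cite{MSX20} for the noncommutative analogue). That argument factors through Cwikel estimates $\|M_g\,h(D)\|_{\mathcal{L}_{d,\infty}}\lesssim\|g\|_{L_d(L_d(\mathcal{M}))}\|h\|_{L_{d,\infty}}$ applied to $g=\nabla f$ and $h(\xi)=|\xi|^{-1}$. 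Here $\mathcal{M}$-valued Cwikel estimates in $\mathfrak{B}$ hold by the semifinite version of Cwikel's inequality.

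This is the step I expect to be the main obstacle. The commutator is not literally a product $M_{\nabla f}|D|^{-1}$, so one needs a splitting in which the dangerous low-frequency part of $f$ (where only $\nabla f\in L_d$, not $f$ itself) is controlled. The fallback plan is a dyadic Littlewood--Paley decomposition together with the quasi-triangle inequality in $\mathcal{L}_{d,\infty}$; since $d>1$, $\mathcal{L}_{d,\infty}$ is normable, which makes this workable.

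\textbf{Step 3 (density).} The functional $f\mapsto\lim t^{1/d}\mu(t,[T_\phi,M_f])$ and the right-hand side of \eqref{Commutator-CZ} are both continuous with respect to the $\dot W^1_d$ seminorm. For the former this follows from Step 2 and the standard fact that the set where $\lim t^{1/p}\mu(t,\cdot)$ exists is closed in $\mathcal{L}_{p,\infty}$, with the limit continuous there (Birman--Solomyak lemma, valid in the semifinite setting). For the latter it follows from Minkowski's inequality in $L_d(\mathbb{S}^{d-1}\times\mathbb{R}^d;L_d(\mathcal{M}))$. Smooth compactly supported functions with values in $L_d(\mathcal{M})\cap\mathcal{M}$ are dense in $\dot W^1_d(\mathbb{R}^d;L_d(\mathcal{M}))$ modulo constants, and constants commute with $T_\phi$. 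Therefore Step 1 extends to all $f$, which completes the proof.
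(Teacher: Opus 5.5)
Your architecture matches the paper's: cut off $\phi$ near $\xi=0$, read off the principal symbol $\pm\sum_k\partial_{\xi_k}\phi\,D_kf$, apply Theorem \ref{Thm-main} for $f\in C_c^\infty(\mathbb{R}^d;L_d(\mathcal{M})\cap\mathcal{M})$, then extend by density. The density step uses the a priori $\mathcal{L}_{d,\infty}$ bound (your Step 2 is the paper's Lemma \ref{calderon upper}, which the paper also imports from the scalar literature) and Lemma \ref{ideal approximate}. But Step 1 has a genuine gap in the $\mathcal{M}$-direction.

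You assert that $[T_{\tilde\phi},M_f]M_\chi$ is compactly supported from the right. Theorem \ref{Thm-main}, however, requires the right factor to lie in $C_c(\mathbb{R}^d;L_d(\mathcal{M})\cap\mathcal{M})$. Your $\chi$ is scalar, so the right factor is $\chi\otimes 1_{\mathcal{M}}$, which takes values in $L_d(\mathcal{M})$ only if $\tau(1_{\mathcal{M}})<\infty$. For genuinely semifinite $\mathcal{M}$, $M_\chi$ supplies no $\mathrm{Tr}$-compactness at all: when $\tau(1_{\mathcal{M}})=\infty$ one has $\mu(t,M_\chi J^{-1})=\|M_\chi J^{-1}\|$ for every $t$. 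Moreover, in the term $M_fT_{\tilde\phi}M_\chi$ the only $\mathcal{M}$-compactness sits on the left, in $f$, and the values $f(x)$ need not have $\tau$-finite support projections. So Theorem \ref{Thm-main} does not apply to the operator you feed it.

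The missing idea is the additional truncation in $\mathcal{M}$ that the paper carries out in Lemma \ref{right support for compact f}. Take $\tau$-finite projections $\mathrm{p}_\lambda\nearrow 1_{\mathcal{M}}$ and apply Theorem \ref{Thm-main} to $[T_{\tilde\phi},M_f]M_{\chi\mathrm{p}_\lambda}$, whose right factor is admissible. Since $T_{\tilde\phi}$ has scalar symbol, it commutes with $1\otimes\mathrm{p}_\lambda$. Hence the remainder is $[T_{\tilde\phi},M_f]M_{\chi(1-\mathrm{p}_\lambda)}=[T_{\tilde\phi},M_{f(1-\mathrm{p}_\lambda)}]M_\chi$. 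Its $\mathcal{L}_{d,\infty}$-quasinorm is $\lesssim\|f(1-\mathrm{p}_\lambda)\|_{\dot W^1_d}\to0$ by the uniform estimate, plus Cwikel for the low-frequency piece $\psi\phi$. Lemma \ref{ideal approximate} together with monotone convergence of $\int\int\tau(|\sigma_{-1}(x,\xi)\mathrm{p}_\lambda|^d)\,dx\,d\xi$ then yields the formula. So your Step 2 is needed already in the smooth case, not only for density.

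The same oversight makes two side claims false, though harmlessly. First, $\psi\phi(D)M_\chi$ is not even $\mathrm{Tr}$-compact when $\tau(1_{\mathcal{M}})=\infty$. Second, $M_fT_\phi M_{1-\chi}$ is not in every $\mathcal{L}_p$, since $f(x)$ is only in $L_d(\mathcal{M})$. Both error terms are still negligible. Each contains $M_f$ adjacent to a Fourier multiplier with $L_d$ symbol; for $M_fT_{\tilde\phi}M_{1-\chi}$, write it as $M_fJ^{-N}$ ($N>1$) times a bounded operator, using disjoint supports. Lemma \ref{Cwikel} with $p=d$ then places them in $\mathcal{L}_d\subset(\mathcal{L}_{d,\infty})_0$, which suffices.
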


In the scalar case ($\mathcal{M} = \mathbb{C}$), taking $\phi(s) =s_j/|s|$, we recover the Weyl's law for the commutator $[R_j ,  M_f]$, where $R_j$ is the $j$-th Riesz transform. In this case, formula \eqref{Commutator-CZ} reduces identically to that in \cite{FSZ23}. Furthermore, from a methodological point of view, our proof is based on a $\Psi$DO approach, different from that in \cite{FSZ23}.
We first approximate the homogeneous symbol $\phi$ by a smooth symbol $\widetilde{\phi}$ that is supported away from the origin, so that $T_{\widetilde{\phi}}$ becomes a classical $\Psi$DO of order $0$.
For smooth, compactly supported operator-valued functions $f$ we compute the full symbol expansion of the commutator $[T_{\widetilde{\phi}}, M_f]$ using the asymptotic expansion for $\mathcal{M}$-valued classical $\Psi$DOs, thereby identifying its  principal symbol.

A general spectral asymptotics theorem for $\mathcal{M}$-valued classical $\Psi$DOs (Theorem~\ref{asymp most}) then directly yields the singular value asymptotics for the approximated commutator.
Approximation errors are controlled via Cwikel-type estimates and are thus negligible in the weak Schatten class $\mathcal{L}_{d,\infty}$.
Finally, a density argument together with a priori bounds (Lemma~\ref{calderon upper}) extends the result to the full Sobolev space $\dot W^1_d(\mathbb{R}^d;L_d(\mathcal{M}))$.
The whole argument is self-contained and does not rely on any $C^*$-algebraic machinery; it is entirely within the framework of $\Psi$DO calculus, which turns out to be particularly well adapted to the operator-valued setting.

It is important to emphasise that even in the scalar-valued setting ($\mathcal{M}=\mathbb{C}$, $\tau = |\cdot|$) such a Weyl's law for commutators of Calder\'on--Zygmund operators has never been obtained before.

\medskip

We will also provide the singular value function asymptotics of commutators of the form $[I^{\alpha}, M_f]$,
where $I^{\alpha}=(-\Delta)^{\frac{\alpha}{2}}$ denotes the fractional Laplacian (i.e., the Riesz potential), and $M_f$ denotes the operator of multiplication by an operator-valued function $f$. Our goal is to establish sharp endpoint Schatten class properties and to obtain the asymptotics of the singular value function of $[I^{\alpha},M_f]$ under minimal regularity assumptions on $f$.

For scalar-valued functions $f$ on $\mathbb{R}^d$ and for parameters $\alpha\in (-\frac{d}{2},0)\cup(0,1)$,  the landmark work of Frank, Sukochev, and Zanin \cite{FSZ24} fully characterized the condition for $[(-\Delta)^{\frac{\alpha}{2}}, M_f]$ to belong to the weak Schatten ideal $\mathcal{L}_{\frac{d}{1-\alpha},\infty}$: namely, that $f \in \dot{W}_{\frac{d}{1-\alpha}}^1(\mathbb{R}^d)$.
Their proof relied on a powerful combination of double operator integrals, Cwikel-type estimates and a delicate $C^*$-algebraic approach to spectral asymptotics, where the algebra $\Pi$ generated by multiplication operators and by functions of $\nabla(-\Delta)^{-\frac{1}{2}}$ is equipped with a symbol homomorphism $\operatorname{sym}:\Pi\to C(\mathbb{S}^{d-1},\mathbb{C}+C_0(\mathbb{R}^d))$.

The present work extends these results to the non-commutative setting where $f$ takes values in a semifinite von Neumann algebra $\mathcal{M}$. More precisely, we prove the following:

\begin{theorem}
	Let $d \ge 2$ and let $\alpha \in (-\frac{d}{2}, 0) \cup (0,1)$ (alternatively, let $d=1$ and $\alpha \in(0,1)$ ). If $f \in \dot{W}_{\frac{d}{1-\alpha}}^1\Big(\mathbb{R}^d; L_{\frac{d}{1-\alpha}}(\mathcal{M})\Big) $, then
	\[
	\lim_{t \to \infty} t^{\frac{1-\alpha}{d}} \mu(t, [I^{\alpha}, M_f]) = C_{d,\alpha} \Bigg( \int_{\mathbb{S}^{d-1}}\int_{\mathbb{R}^d}  \tau\Big(\left|s \cdot \nabla f(x)\right|^{\frac{d}{1-\alpha}}\Big)\, dx \, ds \Bigg)^{\frac{1-\alpha}{d}}
	\]
	with
	$$
	C_{d,\alpha}=|\alpha| d^{\frac{\alpha-1}{d}} (2\pi)^{\alpha-1}.
	$$
\end{theorem}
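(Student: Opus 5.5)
We follow the same scheme as for the Calder\'on--Zygmund commutators: prove the asymptotic first for smooth, compactly supported $f$ via Theorem \ref{Thm-main}, and then extend it to the whole Sobolev space by density and an a priori bound.

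\textbf{Step 1: smooth approximation of $I^\alpha$.} Let $f\in C_c^\infty(\mathbb{R}^d; L_{\frac{d}{1-\alpha}}(\mathcal{M})\cap\mathcal{M})$. Choose $\chi\in C^\infty(\mathbb{R}^d)$ with $\chi=0$ near the origin and $\chi=1$ for $|\xi|\ge1$, and split $I^\alpha = T_{\chi|\xi|^\alpha} + T_{(1-\chi)|\xi|^\alpha}$.
\begin{itemize}
\item The first piece is a classical $\Psi$DO of order $\alpha$. By the operator-valued composition formula,
\[
[T_{\chi|\xi|^\alpha},M_f]\in \mathrm{C}\Psi^{\alpha-1}(\mathbb{R}^d;\mathcal{M}),
\qquad
\sigma_{\alpha-1}(x,\xi) = -i\,\nabla_\xi|\xi|^\alpha\cdot\nabla f(x) = -i\alpha|\xi|^{\alpha-2}\,\xi\cdot\nabla f(x).
\]
\item This commutator is compactly supported from the right up to a smoothing term. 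Indeed, write $f=f\psi$ with $\psi\in C_c^\infty$; then $[T,M_f]M_\psi - [T,M_f] = -M_f T M_{1-\psi}$, and $M_fTM_{1-\psi}$ has a smooth, rapidly decaying kernel. It therefore lies in every $\mathcal{L}_{p,\infty}$ with $o(t^{-(1-\alpha)/d})$ singular values.
\item Theorem \ref{Thm-main} with $m=1-\alpha$ then gives the limit. On the unit sphere $|\sigma_{\alpha-1}(x,s)| = |\alpha|\,|s\cdot\nabla f(x)|$, so we obtain
\[
(2\pi)^{\alpha-1}d^{\frac{\alpha-1}{d}}|\alpha|\Big(\int_{\mathbb{S}^{d-1}}\int_{\mathbb{R}^d}\tau\big(|s\cdot\nabla f(x)|^{\frac{d}{1-\alpha}}\big)\,dx\,ds\Big)^{\frac{1-\alpha}{d}},
\]
which is the claimed constant $C_{d,\alpha}$.
\end{itemize}

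\textbf{Step 2: the low-frequency remainder.} It remains to show that $[T_{(1-\chi)|\xi|^\alpha},M_f]$ has singular values $o(t^{-(1-\alpha)/d})$.
\begin{itemize}
\item For $\alpha>0$ the symbol $(1-\chi)|\xi|^\alpha$ is compactly supported but not smooth at $0$. We bound the commutator by the Cwikel-type estimate $\|M_g\,h(\nabla)\|_{\mathcal{L}_{p}}\lesssim\|g\|_{L_p(L_p(\mathcal{M}))}\|h\|_{L_p}$, applied to each term $M_f T$ and $T M_f$ with $p>\frac{d}{1-\alpha}$. Both terms lie in $\mathcal{L}_p$ because $f$ and the symbol are compactly supported and bounded. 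Membership in $\mathcal{L}_p$ for such $p$ gives the required $o(t^{-(1-\alpha)/d})$ decay.
\item For $\alpha\in(-\frac d2,0)$ the symbol $|\xi|^\alpha(1-\chi)$ lies in $L_p$ only for $p<d/|\alpha|$. Since $\alpha>-\frac d2$, we have $\frac{d}{|\alpha|}>2$, and we need some admissible $p>\frac{d}{1-\alpha}$. This holds because $\frac{d}{1-\alpha}<\frac{d}{|\alpha|}$ always, and $p$ can also be taken $\ge 2$ as the type II Cwikel estimate requires. The same argument then applies.
\end{itemize}

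\textbf{Step 3: density.} We extend to $f\in\dot W^1_{\frac{d}{1-\alpha}}(\mathbb{R}^d;L_{\frac{d}{1-\alpha}}(\mathcal{M}))$. Write $p=\frac{d}{1-\alpha}$. The ingredients are:
\begin{itemize}
\item the a priori bound
\[
\|[I^\alpha,M_f]\|_{\mathcal{L}_{p,\infty}}\lesssim\|\nabla f\|_{L_p(L_p(\mathcal{M}))};
\]
\item density of smooth compactly supported $f$ in the homogeneous Sobolev space (modulo constants, which commute with $I^\alpha$);
\item the standard fact that the functional $\lim t^{1/p}\mu(t,\cdot)$, where it exists, is continuous in the $\mathcal{L}_{p,\infty}$ quasi-norm;
\item continuity of the right-hand side of the claimed formula in $\|\nabla f\|_{L_p}$.
\end{itemize}
Together these pass the limit from the smooth $f$ of Steps 1 and 2 to general $f$.

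\textbf{Main obstacle.} The a priori bound is the hard part. In the scalar case Frank--Sukochev--Zanin obtain it through double operator integrals. We would try to transfer their argument to $B(L_2)\overline{\otimes}\mathcal{M}$, as in Lemma \ref{calderon upper} used for the Calder\'on--Zygmund case. Concretely, we would write $[I^\alpha,M_f]$ via a double operator integral against $\nabla f$ composed with $I^{\alpha-1}$-type factors, and then invoke the operator-valued Cwikel estimate in $\mathcal{L}_{p,\infty}$.
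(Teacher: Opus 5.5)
\textbf{Step 2 has the Schatten exponent on the wrong side.} Membership $X\in\mathcal{L}_p$ only gives $\mu(t,X)=o(t^{-1/p})$. For $p>\frac{d}{1-\alpha}$ this is weaker than the required $o(t^{-\frac{1-\alpha}{d}})$: an operator with $\mu(t,X)=\min\{1,t^{-\frac{1-\alpha}{d}}\}$ lies in every such $\mathcal{L}_p$ but not in $(\mathcal{L}_{\frac{d}{1-\alpha},\infty})_0$. You need $p\le\frac{d}{1-\alpha}$. The paper takes $p=\frac{d}{1-\alpha}$ and uses $\mathcal{L}_p\subset(\mathcal{L}_{p,\infty})_0$ together with Lemma~\ref{weak weyl}. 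A smaller $p$ is not available in general anyway, since $f(x)$ is only assumed to lie in $L_{\frac{d}{1-\alpha}}(\mathcal{M})\cap\mathcal{M}$. With this correction your side condition $p\ge 2$ cannot always be met. Whenever $\frac{d}{1-\alpha}<2$ (e.g.\ $d=2$ and $\alpha<0$, or $d=1$ and $\alpha<\frac12$), you must use the Birman--Solomyak estimate of Lemma~\ref{Cwikel-0-2}, with the symbol in $\ell_{p}(L_2)(\mathbb{R}^d)$. For the compactly supported symbol $(1-\chi)|\xi|^\alpha$ this means that $|\xi|^\alpha$ is square integrable near the origin, i.e.\ $\alpha>-\frac d2$. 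That, rather than your condition $\frac{d}{|\alpha|}>2$, is where the hypothesis on $\alpha$ enters the low-frequency estimate. When $\frac{d}{1-\alpha}\ge 2$, Lemma~\ref{Cwikel} applies directly, since $(1-\chi)|\xi|^\alpha\in L_{\frac{d}{1-\alpha}}(\mathbb{R}^d)$ for every $\alpha<1$.

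\textbf{Step 1 applies Theorem~\ref{Thm-main} outside its hypotheses.} The right factor must belong to $C_c(\mathbb{R}^d;L_{\frac{d}{m}}(\mathcal{M})\cap\mathcal{M})$, and your scalar cutoff $\psi\otimes 1_{\mathcal{M}}$ does not when $\tau(1)=\infty$. The proof of that theorem passes through $\tau$-finite projections $\mathrm{p}_\lambda\nearrow 1$ and needs $\|\psi(1-\mathrm{p}_\lambda)\|_{L_{d/m}(\mathcal{N})}\to 0$, but here this norm is infinite. This is precisely what the paper's Lemma~\ref{right support for compact f} (extended to multipliers of order $\alpha$) handles, in three steps:
\begin{enumerate}
\item cut off with $\psi\mathrm{p}_\lambda$ instead of $\psi$;
\item since $T$ is a scalar Fourier multiplier, $[T,M_f]M_{\psi(1-\mathrm{p}_\lambda)}=[T,M_{f(1-\mathrm{p}_\lambda)}]M_\psi$, and as in that lemma this tends to $0$ in $\mathcal{L}_{\frac{d}{1-\alpha},\infty}$ by the a priori commutator bound;
\item conclude with Lemma~\ref{ideal approximate} and monotone convergence in $\lambda$.
\end{enumerate}
So the a priori bound (Lemma~\ref{endpoint upper}, which settles your ``main obstacle'') is needed already for smooth $f$, not only in the density step.

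With these two repairs your argument coincides with the paper's. The remaining difference is cosmetic. You regularise by $\chi|\xi|^\alpha$, which is exactly homogeneous for $|\xi|\ge1$ and leaves a compactly supported error. The paper instead replaces $I^\alpha$ by $J^\alpha$, which needs the binomial expansion of $(1+|\xi|^2)^{\alpha/2}$ and leaves the non-compactly supported error $(1+|\xi|^2)^{\alpha/2}-|\xi|^\alpha$; this error still lies in $L_{\frac{d}{1-\alpha}}$ (resp.\ $\ell_{\frac{d}{1-\alpha}}(L_2)$).
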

In the scalar case ($\mathcal{M} = \mathbb{C}$), this formula reduces identically to that in \cite{FSZ24}.
From a methodological standpoint, our approach departs fundamentally from the $C^*$-algebraic framework of \cite{FSZ24}. Instead of relying on an abstract symbol homomorphism on a $C^*$-algebra, we adopt an operator-valued classical $\Psi$DO viewpoint.
Thus, although our final results parallel those of \cite{FSZ24}, the proof is entirely independent and grounded in the well-established calculus of operator-valued classical $\Psi$DOs, which is particularly well-suited for handling operator-valued symbols and for obtaining explicit asymptotics.  The $C^*$-algebraic method of \cite{FSZ24} is elegant and powerful, but our $\Psi$DO approach offers several advantages in the present noncommutative setting: it obviates the need to construct a separate symbol homomorphism for each von Neumann algebra $\mathcal{M}$, and it naturally accommodates the n.s.f trace $\tau$ in the asymptotic formula.

To the best of our knowledge, this is the first time that Weyl's law for these commutators at the endpoint Schatten class are obtained for operator-valued functions, and the first time that a purely $\Psi$DO-based proof is given even for the scalar case, providing an alternative to the $C^*$-algebraic approach treatment.

\subsubsection{Random and almost periodic operators}

We obtain a nontrivial application of the above result in the case that $\mathcal{M}$ is abelian, namely if $\mathcal{M} = L_{\infty}(\Omega)$, where $(\Omega,\Sigma,\mathbb{P})$ is a probability space. $\Psi$DOs with $L_{\infty}(\Omega)$-valued coefficients can be interpreted as random operators. A substantial literature exists on random operators, see e.g. \cite{CarmonaLacroix}. Our emphasis is on random operators that are equivariant with respect to an ergodic transformation.

To illustrate, let $T \in \mathrm{C}\Psi^{m}(\mathbb{R}^d;L_{\infty}(\Omega))$ be elliptic and positive, where $m>0.$ For $\omega\in \Omega,$ we write $T(\omega)$
for the corresponding scalar-valued $\Psi$DO on $\mathbb{R}^d.$ The \emph{density of states} of $T$ is defined as the random function
\[
N(\lambda,T)(\omega):= \lim_{R\to\infty} \frac{1}{(2R)^d} \mathrm{tr}(M_{\chi_{[-R,R]^d}}\chi_{(-\infty,\lambda)}(T(\omega)))
\]
provided that the limit exists. If $T$ is equivariant with respect to the group $\mathbb{Z}^d$ acting by translations and also ergodically on $\Omega,$ then the limit does exist almost surely in $T,$ and is equal to the tensor product trace on $L_{\infty}(\Omega)\otimes B(L_2(\mathbb{R}^d)).$ Under these assumptions, the operator-valued Weyl's law we prove in Section \ref{section asymp} below imply asymptotic formulas for the density of states. In this example, we have
\[
N(\lambda,A) = \lambda^{\frac{d}{m}}\frac{m}{d(2\pi)^d}\int_{|\xi|=1}\int_{[0,1]^d} \mathbb{E}(\sigma(A)_{m}(x,\xi,\cdot)^{-\frac{d}{m}})\,dxd\xi+o(\lambda^{\frac{d}{m}}),\quad \lambda\to\infty.
\]
For further details and more precise statement, see Section \ref{random_section}.

A related example comes from almost-periodic operators, in the sense of Shubin \cite{Shubin1976,Shubin1979}. Formally speaking, almost-periodic operators can be embedded into the class of random operators, with $\Omega = \mathbb{R}^d_B$ being the Bohr compactification of $\mathbb{R}^d$ and $\mathbb{P}$ being the Haar measure. We are able to recover some of Shubin's Weyl's law for the density of states of almost-periodic operators.

\section{Preliminaries on noncommutative analysis}\label{sec:preliminaries}

This section systematically introduces the analytic and operator-algebraic tools used throughout the paper.\ We recall the definition and basic properties of operator-valued $\Psi$DOs, as well as noncommutative $L_p$-spaces associated with a semifinite von Neumann algebra $(\mathcal{M}, \tau)$.\ Particular emphasis is placed on singular value functions, noncommutative Lorentz spaces, and weak Schatten ideals, which form the spectral framework of type II noncommutative geometry.

\subsection{Operator-valued $\Psi$DOs}

We begin with an introduction to the notation and fundamental concepts of vector-valued Fourier analysis, specifically Fourier analysis on functions taking values in a Banach space $X$.

Let $\mathscr{S}(\mathbb{R}^d ; X)$ denote the space of $X$-valued rapidly decreasing and infinitely differentiable functions on $\mathbb{R}^d$, equipped with the standard Fr\'{e}chet topology. In particular, $\mathscr{S}(\mathbb{R}^d ; \mathbb{C})$ is simply denoted as $\mathscr{S}(\mathbb{R}^d)$.

The space $\mathscr{S}^{\prime}(\mathbb{R}^d ; X)$ consists of continuous linear maps from $\mathscr{S}(\mathbb{R}^d)$ to $X$, known as $X$-valued tempered distributions. Operations on $\mathscr{S}(\mathbb{R}^d)$ such as differentiation, convolution, and Fourier transform extend naturally to $\mathscr{S}^{\prime}(\mathbb{R}^d ; X)$.

Moreover, for $1 \le p \le \infty$, the space $L_p(\mathbb{R}^d ; X)$ of strongly $p$-integrable functions from $\mathbb{R}^d$ to $X$ embeds naturally into $\mathscr{S}^{\prime}(\mathbb{R}^d ; X)$. Consequently, Fourier transforms and Fourier multipliers on $\mathbb{R}^d$ extend to vector-valued tempered distributions in a natural manner.

Next, we introduce some typical Fourier multipliers that will be frequently used in the sequel. For a real number $\alpha$, the Bessel potential operator $J^\alpha=\left(1-\Delta\right)^{\frac{\alpha}{2}}$ is defined on $\mathscr{S}^{\prime}(\mathbb{R}^d ; X)$, where $\Delta$ denotes the Laplacian on $\mathbb{R}^d$. When $\alpha=1$, we abbreviate $J^1$ as $J$. Additionally, we denote $J_\alpha(\xi)=(1+|\xi|^2)^{\frac{\alpha}{2}}$ on $\mathbb{R}^d$, which serves as the symbol of the Fourier multiplier $J^\alpha$.

\medskip

Let us now recall the definitions of operator-valued $\Psi$DOs in \cite{XX19}.


The symbols of $\Psi$DOs considered here are $B(X)$-valued, where $X$ is a Banach space and $B(X)$ denotes the space of all bounded linear operators on $X$.

\begin{definition}\label{def of sym}
	Let $m, \rho, \delta$ be real numbers such that  $0 \le \delta \le \rho \le 1$, excluding the case $(\rho, \delta) = (1,1)$. The symbol class $S_{\rho, \delta}^m \big(\mathbb{R}^d \times \mathbb{R}^d ; B(X) \big)$ consists of functions $\sigma(x, \xi) \in C^{\infty}\big(\mathbb{R}^d \times \mathbb{R}^d ; B(X)\big)$ satisfying the following condition: for arbitrary multi-indices $\alpha$ and $\beta$, there exists a constant $C_{\alpha, \beta}$ such that
	$$
	\left\|\partial_{\xi}^\alpha \partial_x^\beta \sigma(x, \xi)\right\|_{B(X)} \le C_{\alpha, \beta}(1+|\xi|^2)^{\frac{m-\rho|\alpha|+\delta|\beta|}{2}}.
	$$
	For simplicity, we write $S^m\big(\mathbb{R}^d \times \mathbb{R}^d ; B(X)\big)$ instead of $S_{1,0}^{m}\big(\mathbb{R}^d \times \mathbb{R}^d ; B(X)\big)$, or simply $S_{\rho, \delta}^m$. We also define
	$$
	S^{-\infty}\big(\mathbb{R}^d \times \mathbb{R}^d ; B(X)\big) = \bigcap_{m \in \mathbb{R}} S_{\rho, \delta}^m\big(\mathbb{R}^d \times \mathbb{R}^d ; B(X)\big),
	$$
	where the right hand side does not depend on $\rho$ and $\delta$.
\end{definition}

\begin{definition}\label{def of pdo}
	Let $\sigma(x,\xi) \in S_{\rho, \delta }^m\big(\mathbb{R}^d\times \mathbb{R}^d; B(X)\big)$. For a function $f \in \mathscr{S}(\mathbb{R}^d;X)$, the $\Psi$DO $\Op(\sigma)$ is defined as the mapping $f \mapsto \Op(\sigma) f$, given by
	\begin{equation} \label{form}
		\Op(\sigma)f(x) = \int_{\mathbb{R}^d} \sigma(x, \xi) \hat{f} (\xi)  e^{ ix \cdot \xi}  \bar{d} \xi.
	\end{equation}
	
	The class of $B(X)$-valued $\Psi$DOs, defined by $\sigma(x, \xi) \in S_{\rho, \delta }^m\big(\mathbb{R}^d \times \mathbb{R}^d; B(X)\big)$, is denoted by $\Psi_{\rho, \delta}^m\big(\mathbb{R}^d; B(X)\big)$ or simply by $\Psi_{\rho, \delta}^m$, i.e.,
	$$
	\Psi_{\rho, \delta}^m = \mathrm{Op} S_{\rho, \delta}^m.
	$$
	We also write $\Psi^m$ instead of $\Psi_{1,0}^m$ and define $\Psi^{-\infty} = \bigcap\limits_m \Psi^m$.
\end{definition}

As in scalar-valued case, we can also define the class of narrower $B(X)$-valued $\Psi$DOs, which can be asymptotically expanded as the sum of $\xi$-homogeneous symbols. Instead of studying this class of $\Psi$DOs in $B(X)$-valued setting,  we will mainly focus on such class for $\mathcal{M}$-valued $\Psi$DOs (with $\mathcal{M}$ being a von Neumann algebra), which will be crucial to symbol calculus for $\mathcal{M}$-valued $\Psi$DOs.

\subsection{Noncommutative $L_p$ space}

Let us recall the definition and some basic properties of noncommutative $L_p$-spaces (see \cite{PX03} and \cite{Xu07} for further
information about noncommutative $L_p$-spaces).

To begin with, let $\mathcal{M}$ be a von Neumann algebra equipped with a normal, semifinite, faithful  (n.s.f.) trace $\tau$, and let $\mathcal{S}_{+}(\mathcal{M})$ be the set of all positive elements $x$ in $\mathcal{M}$ with $\tau(s(x))<\infty$, where $s(x)$ denotes the support of $x \in \mathcal{M}$, i.e., the smallest projection $e \in \mathcal{M}$ satisfying $ex=x=xe$. Let $\mathcal{S}(\mathcal{M})=\operatorname{span}\big(\mathcal{S}_{+}(\mathcal{M})\big)$, then every $\mathcal{S}(\mathcal{M})$ has finite trace, and $\mathcal{S}(\mathcal{M})$ is a $w^*$-dense $*$-ideal of $\mathcal{M}$. Given $1 \le p<\infty$, for any $x \in \mathcal{S}(\mathcal{M})$, the operator $|x|^p$ belongs to $\mathcal{S}_{+}(\mathcal{M})$, recalling that $|x|=(x^* x)^{\frac{1}{2}}$ is the modulus of $x$. We define
$$
\|x\|_{p}=\big(\tau\big(|x|^p\big)\big)^{\frac{1}{p}}, x \in \mathcal{S}(\mathcal{M}).
$$
Then $\|\cdot\|_p$ is a norm on $\mathcal{S}(\mathcal{M})$. The completion of $\big(\mathcal{S}(\mathcal{M}),\|\cdot\|_p\big)$ is called as the noncommutative $L_p$ space associated with $(\mathcal{M}, \tau)$, denoted by $L_p(\mathcal{M})$. For convenience, we define $L_{\infty}(\mathcal{M})=\mathcal{M}$ equipped with the operator norm $\|\cdot\|_{\infty}$. Similar to classical $L_p$-spaces, noncommutative $L_p$-spaces retain fundamental properties including duality and interpolation.

If $\mathcal{M}_1$ and $\mathcal{M}_2$ are two semifinite von Neumann algebras equipped with $\tau_1$ and $\tau_2$, respectively, the von Neumann tensor algebra $\mathcal{M}_1 \overline{\otimes} \mathcal{M}_2$ is equipped with the tensor trace $\tau_1 \otimes \tau_2$. We will often consider the tensor $B(H) \overline{\otimes} \mathcal{M}$ ($H$ being a Hilbert space), equipped with $\operatorname{Tr}= \operatorname{tr} \otimes \tau$.

In most part of this paper, we are interested in operator-valued functions. The involved von Neumann algebra is the semi-commutative algebra $L_{\infty}(\mathbb{R}^d) \overline{\otimes} \mathcal{M}$ with its tensor trace $\int_{\mathbb{R}^d} d x \otimes \tau$, denoted by $\mathcal{N}$ in the sequel. Notice that $L_p(\mathcal{N})$ isometrically coincides with $L_p(\mathbb{R}^d ; L_p(\mathcal{M}))$, the space of strongly $p$-integrable functions from $\mathbb{R}^d$ to $L_p(\mathcal{M})$.

\subsection{Noncommutative Lorentz space, operator ideals and traces}

The elements in $L_p(\mathcal{M})$ can be described as closed densely defined operators on a Hilbert space $H$, where $\mathcal{M}$ acts. A closed densely defined operator on $H$ is said to be affiliated with $\mathcal{M}$ if it commutes with all unitary operators in the commutant $\mathcal{M}'$ of $\mathcal{M}$. Specifically, an operator $A \in \mathcal{M}$ is called $\tau$-measurable (or simply measurable) with respect to $(\mathcal{M}, \tau)$ if for any $\delta > 0$, there exists a projection $e \in B(H)$ such that
$$
e(H) \subset \operatorname{Dom}(A) \quad \text{and} \quad \tau(e^{\perp}) \le \delta,
$$
where $\operatorname{Dom}(A)$ denotes the domain of the operator $A$.

Subsequently, we denote the set of $\tau$-measurable operators by $L_0(\mathcal{M})$. For $A \in L_0(\mathcal{M})$, the distribution function of $A$ is defined by
$$
d_s(A):= \tau(e_s^{\perp}(|A|)), \quad s > 0.
$$
Here, $e_s^{\perp}(|A|)$ denotes the spectral projection of $|A|$ associated with the interval $(s, \infty)$.
For $A \in L_0(\mathcal{M})$, the singular value function $\mu(t, A)$ is defined by
\begin{align*}
	\mu(t, A) :&= \inf \{s > 0 : d_s(A) \le t\}, \quad t > 0\\
	&=\inf \{\|A(1-\mathrm p)\|_{\infty} : \mathrm p \in \operatorname{Proj}(\mathcal{M}),\, \tau(\mathrm p) \le t\}\\
	&=\inf \{\|A-B\|_{\infty} : B\in L_0(\mathcal{M}),\, d_0(B) \le t\}.
\end{align*}
The function $t \mapsto \mu(t, A)$ is decreasing and right-continuous.

The following fundamental properties of singular values will be frequently used in the sequel:
$$
\begin{gathered}
	\mu(t, U A V) \le \|U\|_{\infty}\, \mu(t, A)\, \|V\|_{\infty}, \\
	\mu(t+s, A+B) \le \mu(s, A) + \mu(t, B), \quad \mu(t+s, AB) \le \mu(s, A)\, \mu(t, B),
\end{gathered}
$$
and
$$
\mu(t, A) = \mu(t, A^*) = \mu\big(t, |A|^p\big)^{\frac{1}{p}}
$$
for any $0 < p < \infty$. See e.g. \cite[Section 2.3]{LSZ12} or \cite[Section 1.5]{Xu07}.

The norm on $L_p(\mathcal{M})$ can be expressed in terms of the singular value function, as shown in \cite[Example 2.4.2]{LSZ12}:
$$
\|A\|_{L_p(\mathcal{M})}=\|\,\mu(t, A)\|_{L_p(\mathbb{R}_{+})} =
\begin{cases}
	\displaystyle \left( \int_0^{\infty} \mu(t, A)^p \, dt \right)^{\frac{1}{p}}, & 1 \le p < \infty, \\[3mm]
	\mu(0, A), & p = \infty.
\end{cases}
$$
The case $p = \infty$ was established in \cite[Lemma 2.3.12]{LSZ12}.

For $0 < p < \infty$ and $0 < q < \infty$, the noncommutative Lorentz space $L_{p,q}(\mathcal{M})$ consists of all $A \in L_0(\mathcal{M})$ such that
$$
\|A\|_{L_{p,q}(\mathcal{M})} := \left( \int_0^{\infty} \big(t^{\frac{1}{p}} \mu(t, A)\big)^q \frac{dt}{t} \right)^{\frac{1}{q}} < \infty.
$$
When $q = \infty$, the space $L_{p,\infty}(\mathcal{M})$ is known as the noncommutative weak $L_p$-space for $0 < p < \infty$, with norm given by
$$
\|A\|_{L_{p,\infty}(\mathcal{M})} := \sup_{t>0} t^{\frac{1}{p}} \mu(t, A) = \sup_{s > 0} s \, \tau(e_s^{\perp}(|A|))^{\frac{1}{p}}.
$$

In the sequel, we focus primarily on the noncommutative weak $L_p$-space when $\mathcal{M}$ is taken as the tensor product $B(H) \overline{\otimes} \mathcal{M}$ ($H$ being a Hilbert space), equipped with $\operatorname{Tr} = \operatorname{tr} \otimes \tau$. We denote the corresponding noncommutative weak $L_p$-space by $\mathcal{L}_{p, \infty}(B(H) \overline{\otimes} \mathcal{M})$, which is also referred to as the noncommutative weak Schatten class, abbreviated by $\mathcal{L}_{p, \infty}$.

In type II noncommutative geometry, the concept of a compact operator is extended to that of a $\tau$-compact operator, as defined below (see \cite[Section 1.2]{BT06}).

\begin{definition}\label{def-tau-compact}
	An element $A \in \mathcal{M}$ is called $\tau$-compact if
	$$
	\lim_{t \to \infty} \mu(t,A) = 0.
	$$
	The set of all $\tau$-compact elements in $\mathcal{M}$ is a norm-closed ideal of $\mathcal{M}$ and is denoted by $\mathcal{C}_0(\mathcal{M})$.
\end{definition}

The ideal $\mathcal{C}_0(\mathcal{M})$ of all $\tau$-compact operators can also be characterized as the closure, in the norm $\|\cdot\|_\infty$, of the linear span of all $\tau$-finite projections (see \cite[Definition 2.6.8]{LSZ12}). Equivalently, $\mathcal{C}_0(\mathcal{M})$ is the set of all elements $A \in \mathcal{M}$ such that $\tau(e_s^{\perp}(|A|)) < \infty$ for every $s > 0$ (see e.g. \cite[Chapter II, Section 4]{DPS23}).
In particular, if $\mathcal{M}$ is finite, then $\mathcal{M} = \mathcal{C}_0(\mathcal{M})$ (see e.g. \cite[Page 64]{LSZ12}).

It follows from \cite[Lemma 1.17]{MSZ22} that the asymptotics of $\mu(t, A)$ as $t \rightarrow \infty$ are equivalent to those of $\tau\big(e_{s}^{\perp}(A)\big)$ as $s \rightarrow 0$, to be precise, is the following lemma.
\begin{lemma} \label{singular func}
	Let $0<p<\infty$ and let $0 \le A \in \mathcal{L}_{p, \infty}$, then
	$$
	\lim _{s\rightarrow 0} s^{p} \tau\big(e_{s}^{\perp}(A)\big)=\lim _{t \rightarrow \infty} t \ \mu(t, A)^p
	$$
	if any of the limits exist.
\end{lemma}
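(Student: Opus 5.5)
The plan is to reduce the lemma to a change of variables between two generalized inverses of each other. Put $\mu(t):=\mu(t,A)$ and $n(s):=\tau(e_s^{\perp}(A))$. Both are nonincreasing. $\mu$ is right-continuous, and by the definition recalled above $\mu(t)=\inf\{s>0: n(s)\le t\}$. Since $A\in\mathcal{L}_{p,\infty}$, both $n(s)<\infty$ for $s>0$ and $\mu(t)\to 0$ as $t\to\infty$. Since $A \ge 0$, we have $|A|=A$, so the left-hand side is exactly the distribution function $d_s(A)$.

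\textbf{Step 1 (elementary inequalities).} From the definition of $\mu$ as an infimum and right-continuity of $s\mapsto n(s)$ (normality of $\tau$), we get
\[
n(\mu(t))\le t \quad\text{and}\quad \mu(t)\le s \ \text{ whenever } n(s)\le t .
\]
Conversely, $\mu(t)>s$ whenever $t<n(s)$. In words, $\mu$ and $n$ are right-continuous generalized inverses of each other, exactly as for the decreasing rearrangement of a function on $(0,\infty)$.

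\textbf{Step 2 (from $\mu$ to $n$).} Assume $t\mu(t)^p\to c$. If $c>0$, then $\mu(t)\to0$ and the hypothesis says $\mu(t)\approx (c/t)^{1/p}$.

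\emph{Upper bound.} Given $\varepsilon>0$ and $s$ small, choose $t_1=(1+\varepsilon)c\,s^{-p}$. Then $\mu(t_1)<s$ for $s$ small, so $n(s)\le t_1$ by the first part of Step 1. This gives $\limsup s^p n(s)\le (1+\varepsilon)c$.

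\emph{Lower bound.} Take $t_0=(1-\varepsilon)c\,s^{-p}$, so that $\mu(t_0)>s$ for small $s$. Suppose $n(s)\le t_0$; then Step 1 gives $\mu(t_0)\le s$, a contradiction. Hence $n(s)>t_0$, so $\liminf s^pn(s)\ge(1-\varepsilon)c$.

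The case $c=0$ uses only the upper-bound half, with $t_1=\varepsilon s^{-p}$.

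\textbf{Step 3 (from $n$ to $\mu$).} This direction is symmetric. Assume $s^pn(s)\to c$ and let $t$ be large.

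\emph{Upper bound.} Set $s_1=((1+\varepsilon)c/t)^{1/p}$. Then $n(s_1)<t$, so $\mu(t)\le s_1$.

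\emph{Lower bound.} Set $s_0=((1-\varepsilon)c/t)^{1/p}$. Then $n(s_0)>t$, which gives $\mu(t)>s_0$.

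One point needs care here: we need $s_0,s_1\to0$ as $t\to\infty$ so that the asymptotics of $n$ apply. This holds automatically. If $c=0$, use $s_1=(\varepsilon/t)^{1/p}$ only.

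The main (mild) obstacle is the bookkeeping at jumps and flat pieces of $\mu$ and $n$. This is handled entirely by the strict/non-strict inequalities of Step 1. Nothing noncommutative enters beyond the normality of $\tau$, which gives right-continuity of $n$. Alternatively, one may simply cite \cite[Lemma 1.17]{MSZ22} after noting that its proof only uses these inverse relations.
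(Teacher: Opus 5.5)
Your proof is correct: the four implications you draw from the infimum definition and monotonicity ($\mu(t_1)<s\Rightarrow n(s)\le t_1$, $n(s)\le t_0\Rightarrow\mu(t_0)\le s$, $n(s_1)<t\Rightarrow\mu(t)\le s_1$, $n(s_0)>t\Rightarrow\mu(t)>s_0$) all hold, and the $\varepsilon$-sandwich works in both directions, including the case $c=0$, since membership in $\mathcal{L}_{p,\infty}$ forces the limit to be finite. The paper gives no proof of its own and simply cites \cite[Lemma 1.17]{MSZ22}; your generalized-inverse argument is a self-contained proof of exactly that fact, and you note the citation as an alternative, so there is nothing to add.
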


We will frequently use the following quasi-triangle inequality:
$$
\|A + B\|_{p, \infty} \le 2^{\frac{1}{p}} \big( \|A\|_{p, \infty} + \|B\|_{p, \infty} \big).
$$
Moreover, for $\frac{1}{r}=\frac{1}{p}+\frac{1}{q}$, the H\"{o}lder inequality holds:
$$
\|AB\|_{r, \infty} \le 2^{\frac{1}{r}} \|A\|_{p, \infty} \|B\|_{q, \infty}.
$$
The best constants in such H\"{o}lder inequalities have recently been determined in \cite{FZ21}.

In noncommutative geometry, classical analytical concepts are generalized through operator algebras and singular value techniques. Table \ref{tab:quantum_concepts} summarizes this correspondence across three frameworks: classical measure theory, Connes' Hilbert space operator formalism, and type II noncommutative geometry introduced in \cite{BT06, CPRS04, CPRS06i, CPRS06ii, CGRS14}. Key parallels include the characterization of infinitesimals, $L_p$ spaces, and integration via singular value sequence $\mu(j,A)$ for compact operators versus the singular value function $\mu(t,A)$ in the noncommutative setting.

The correspondence is unified by the relation $\mu(t,A) = \mu([t],A)$ when $(\mathcal{M},\tau) = (B(H),\operatorname{tr})$, which reduces the singular value function to the standard singular value sequence at integer points. This demonstrates how noncommutative measure spaces extend the operator-theoretic framework through a continuous parameterization of singular values, while maintaining the conceptual hierarchy of infinitesimals, function spaces, and integration.

Let us define the ideal
$$
\big(\mathcal{L}_{p, \infty}\big)_0=\left\{B \in \mathcal{L}_{p, \infty}: \lim _{t \rightarrow \infty} t^{\frac{1}{p}} \mu(t, B)=0\right\} .
$$

The following two lemmas quoted from \cite[Section 4]{FZ23}, which evaluate the singular value sequence ($\mu(j,\cdot), j\in \mathbb{N}_{0}$) case in \cite[Section 11.6]{BS87} to the singular value function ($\mu(t,\cdot), t\in \mathbb{R}_{+}$) case.

\begin{lemma} \label{weak weyl}
	Let $A \in \mathcal{L}_{p, \infty}$ and $B \in (\mathcal{L}_{p, \infty})_0$. If $\lim\limits_{t \to \infty} t^{\frac{1}{p}} \mu(t, A)$ exists, then
	$$
	\lim_{t \to \infty} t^{\frac{1}{p}} \mu(t, A+B) = \lim_{t \to \infty} t^{\frac{1}{p}} \mu(t, A).
	$$
\end{lemma}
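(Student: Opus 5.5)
The plan is to use the subadditivity of singular values, $\mu(t+s,A+B)\le \mu(t,A)+\mu(s,B)$, with a small proportion $s=\varepsilon t$ of the total parameter assigned to the perturbation $B$. Write $c=\lim_{t\to\infty}t^{1/p}\mu(t,A)$ and fix $\varepsilon\in(0,1)$.

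\emph{Upper bound.} Applying subadditivity at the point $(1+\varepsilon)t$ gives
$$\mu((1+\varepsilon)t,A+B)\le \mu(t,A)+\mu(\varepsilon t,B).$$
Multiply both sides by $t^{1/p}$. The first term tends to $c$. For the second, write $t^{1/p}\mu(\varepsilon t,B)=\varepsilon^{-1/p}(\varepsilon t)^{1/p}\mu(\varepsilon t,B)$, which tends to $0$ because $B\in(\mathcal{L}_{p,\infty})_0$. Substituting $u=(1+\varepsilon)t$ then yields
$$\limsup_{u\to\infty}u^{1/p}\mu(u,A+B)\le (1+\varepsilon)^{1/p}c.$$

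\emph{Lower bound.} Write $A=(A+B)+(-B)$ and apply subadditivity again:
$$\mu(t,A)\le \mu((1-\varepsilon)t,A+B)+\mu(\varepsilon t,B).$$
Multiply by $t^{1/p}$ and use $\mu(t,-B)=\mu(t,B)$. The same argument gives
$$c\le (1-\varepsilon)^{-1/p}\liminf_{u\to\infty}u^{1/p}\mu(u,A+B).$$

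Letting $\varepsilon\to0$ makes the upper and lower bounds meet, so the limit for $A+B$ exists and equals $c$.

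No step is really an obstacle. The only care needed is the reparametrisation of $t$, handled by the scaling factors $(1\pm\varepsilon)^{1/p}$, and the fact that $A+B\in\mathcal{L}_{p,\infty}$, which follows from the quasi-triangle inequality and ensures that all quantities are finite.
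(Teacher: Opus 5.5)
Your proof is correct. The inequalities $\mu((1+\varepsilon)t,A+B)\le\mu(t,A)+\mu(\varepsilon t,B)$ and $\mu(t,A)\le\mu((1-\varepsilon)t,A+B)+\mu(\varepsilon t,B)$, the rescaling $u=(1\pm\varepsilon)t$, and the limit $\varepsilon\to0$ give exactly the needed $\limsup$ and $\liminf$ bounds. The paper does not prove this lemma itself; it quotes it from \cite[Section 4]{FZ23}, which transfers the sequence-case Birman--Solomyak argument of \cite[Section 11.6]{BS87}, and that argument splits the parameter between $A$ and $B$ in essentially the way you do.
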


\begin{lemma} \label{ideal approximate}
	Let $(A_n)_{n \ge 0} \subset \mathcal{L}_{p, \infty}$ satisfy the following conditions:
	
	\begin{enumerate}[$\rm (i)$]
		\item $A_n \to A$ in $\mathcal{L}_{p, \infty}$.
		
		\item For every $n \ge 0$, the limit
		$$
		\lim_{t \to \infty} t^{\frac{1}{p}} \mu(t, A_n) = c_n \quad \text{exists.}
		$$
	\end{enumerate}
	Then the following limits exist and are equal:
	$$
	\lim_{t \to \infty} t^{\frac{1}{p}} \mu(t, A) = \lim_{n \to \infty} c_n.
	$$
\end{lemma}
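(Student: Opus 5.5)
The plan is to compare the singular value functions of $A$ and $A_n$ directly, using the subadditivity inequality $\mu(t+s, X+Y)\le \mu(t,X)+\mu(s,Y)$ with a splitting $s=\varepsilon t$. This is the type II analogue of the argument in \cite[Section 11.6]{BS87}. Write $\delta_n:=\|A-A_n\|_{p,\infty}$, so $\delta_n\to 0$ by (i). Write $\overline{L}:=\limsup_{t\to\infty}t^{\frac1p}\mu(t,A)$ and $\underline{L}:=\liminf_{t\to\infty}t^{\frac1p}\mu(t,A)$; both are finite since $A\in\mathcal{L}_{p,\infty}$.

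\emph{Upper bound.} Fix $\varepsilon\in(0,1)$. Writing $A=A_n+(A-A_n)$ gives
$$
\mu\big((1+\varepsilon)t,A\big)\le \mu(t,A_n)+\mu(\varepsilon t,A-A_n)\le \mu(t,A_n)+(\varepsilon t)^{-\frac1p}\delta_n .
$$
Multiply by $t^{\frac1p}=(1+\varepsilon)^{-\frac1p}\big((1+\varepsilon)t\big)^{\frac1p}$ and let $t\to\infty$ using (ii). This yields
$$
(1+\varepsilon)^{-\frac1p}\,\overline{L}\le c_n+\varepsilon^{-\frac1p}\delta_n .
$$

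\emph{Lower bound.} Symmetrically, from $A_n=A+(A_n-A)$ we get
$$
(1+\varepsilon)^{-\frac1p}\,c_n\le \underline{L}+\varepsilon^{-\frac1p}\delta_n .
$$

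\emph{Convergence of $(c_n)$.} The same two estimates applied to the pair $A_n,A_k$ give
$$
(1+\varepsilon)^{-\frac1p}c_n\le c_k+\varepsilon^{-\frac1p}\|A_n-A_k\|_{p,\infty}.
$$
The sequence $(c_n)$ is bounded, since $0\le c_n\le\|A_n\|_{p,\infty}$ and $(A_n)$ is bounded in $\mathcal{L}_{p,\infty}$. Also $\|A_n-A_k\|_{p,\infty}\le 2^{\frac1p}(\delta_n+\delta_k)\to0$ by the quasi-triangle inequality. Letting $n,k\to\infty$ along suitable subsequences gives $\limsup_n c_n\le(1+\varepsilon)^{\frac1p}\liminf_n c_n$. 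Since $\varepsilon$ is arbitrary, $c:=\lim_n c_n$ exists.

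\emph{Conclusion.} Let $n\to\infty$ in the upper and lower bounds. This gives
$$
(1+\varepsilon)^{-\frac1p}\overline{L}\le c \quad\text{and}\quad (1+\varepsilon)^{-\frac1p}c\le \underline{L}.
$$
Letting $\varepsilon\to0$ yields $\overline{L}\le c\le\underline{L}$, so the limit of $t^{\frac1p}\mu(t,A)$ exists and equals $c$.

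The only delicate point is the order of limits. The error term $\varepsilon^{-\frac1p}\delta_n$ blows up as $\varepsilon\to0$, so one must send $t\to\infty$ first, then $n\to\infty$ with $\varepsilon$ fixed, and only then $\varepsilon\to0$. The dilation factor $(1+\varepsilon)^{\pm\frac1p}$ is what makes this ordering harmless.
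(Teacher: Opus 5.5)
Your proof is correct, and it is the standard Birman--Solomyak splitting argument: subadditivity with $s=\varepsilon t$, then $t\to\infty$, then $n\to\infty$, then $\varepsilon\to0$. The paper does not prove this lemma itself; it imports it from \cite[Section 4]{FZ23}, which adapts exactly this argument from \cite[Section 11.6]{BS87} to the singular value function. (Your separate Cauchy-type step for $(c_n)$ is harmless but unnecessary. Taking $\liminf_n$ in your upper bound and $\limsup_n$ in your lower bound already gives $(1+\varepsilon)^{-2/p}\overline{L}\le(1+\varepsilon)^{-1/p}\liminf_n c_n\le(1+\varepsilon)^{-1/p}\limsup_n c_n\le\underline{L}$, and then you let $\varepsilon\to0$.)
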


\begin{table}[H]
	\centering
	\caption{Classical and quantum concepts in N.C.G}
	\label{tab:quantum_concepts}
	\renewcommand{\arraystretch}{1.5}
	\begin{tabular}{|C{2.2cm}|C{5.4cm}|C{5.8cm}|}
		\hline
		\textbf{Classical Concept} &
		\textbf{Quantum Concept by Connes (operators on $\boldsymbol{H}$)} &
		\textbf{Quantum Concept in $\boldsymbol{(\mathcal{M},\tau)}$ (Type II N.C.G)} \\
		\hline
		Infinitesimal variable &
		compact operator \newline
		$\lim\limits_{j\to\infty} \mu(j,A) = 0$ &
		$\tau$-compact operator \newline
		$\lim\limits_{t\to\infty} \mu(t,A) = 0$ \\
		\hline
		Infinitesimal of order $\alpha>0$ &
		compact operator with \newline
		$\mu(j,A) = O(j^{-\alpha})$ &
		$\tau$-compact operator with \newline
		$\mu(t,A) = O(t^{-\alpha})$ \\
		\hline
		Function in $L_p$ space &
		Schatten class $\mathcal{S}_p{ \big(H\big)}$ \newline
		$\|A\|_{\mathcal{S}_p} = \Big( \sum\limits_{j=1}^{\infty} \mu(j,A)^p \Big)^{\frac{1}{p}} < \infty$ &
		N.C. Schatten class $\mathcal{L}_p{  \big(B(H)\overline{\otimes}\mathcal{M}\big)}$ \newline
		$\|A\|_{\mathcal{L}_p} = \Big( \int_0^\infty \mu(t,A)^p  dt \Big)^{\frac{1}{p}} < \infty$ \\
		\hline
		Function in $L_{p,\infty}$ space &
		weak Schatten class $\mathcal{S}_{p,\infty}$ \newline
		$\|A\|_{\mathcal{S}_{p,\infty}} = \sup\limits_{j\ge 1} j^{\frac{1}{p}} \mu(j,A) < \infty$ &
		N.C. weak Schatten class $\mathcal{L}_{p,\infty}$ \newline
		$\|A\|_{\mathcal{L}_{p,\infty}} = \sup\limits_{t>0} t^{\frac{1}{p}} \mu(t,A) < \infty$ \\
		\hline
		Integration &
		Dixmier trace for $T \in \mathcal{S}_{1,\infty}$ \newline
		$\operatorname{Tr}_\omega(A) = \lim\limits_\omega \dfrac{1}{\log (N+1)} \sum\limits_{j=1}^{N} \mu(j,A)$ &
		Dixmier trace for $T \in \mathcal{L}_{1,\infty}$ \newline
		$\operatorname{Tr}_\omega(A) = \lim\limits_\omega \dfrac{1}{\log (s+1)} \int_0^s \mu(t,A)  dt$ \\
		\hline
	\end{tabular}
	
\end{table}

\section{Operator-valued $\Psi$DOs with parameter}\label{section-parameter}

This section provides a systematic exposition of the calculus for operator-valued $\Psi$DOs with parameter $\lambda$. While the asymptotic expansions for composition and adjoints of unparameterized $B(X)$-valued $\Psi$DOs were previously obtained in \cite[Section 2]{XX19}, a unified, self-contained calculus of $B(X)$-valued $\Psi$DO with parameter calculus has not yet appeared in the literature. We fill this gap by detailing the algebra of $B(X)$-valued $\Psi$DOs with parameter in Section \ref{section sym}. In Section \ref{section l2}, we entend the existing $L_2$-boundedness results from \cite[Section 3]{XX19} to the parameterized setting, and carry out a refined analysis within the von Neumann tensor product $B(L_2(\mathbb{R}^d))\overline{\otimes}\mathcal{M}$, this refinement is essential for our subsequent definition of $\zeta$-functions in Section \ref{section asymp}.

\subsection{$B(X)$-valued $\Psi$DOs with parameter}\label{section sym}

\begin{definition}\label{sector}
	For a ray $\{\arg \lambda = \theta_0\}$ and constants $\theta, r > 0$, let $\Lambda = \Lambda(\theta_0, \theta, r)$ denote the following keyhole-shaped region:
	$$
	\Lambda = \Big\{\lambda \in \mathbb{C}\setminus \{0\} : |\lambda| < r \text{ or } |\arg \lambda - \theta_0| < \theta \Big\}.
	$$
\end{definition}

\begin{definition}
	Let $m, \rho, \delta, h$ be real numbers with $0 \le \delta \le \rho \le 1$ (excluding the case $(\rho, \delta) = (1,1)$). The class $S_{\rho, \delta ; h}^m\big(\mathbb{R}^d \times \mathbb{R}^d \times \Lambda ; B(X)\big)$ consists of the functions $\sigma(x, \xi, \lambda)$ such that

	\begin{enumerate}[$\rm (i)$]
		\item $\sigma (x, \xi, \lambda_0) \in C^{\infty}\big(\mathbb{R}^d \times \mathbb{R}^d ; B(X)\big)$ for every fixed $\lambda_0 \in \Lambda$.
		
		\item For arbitrary multi-indices $\alpha$ and $\beta$, there exist constants $C_{\alpha, \beta}$ such that
		
		$$
		\left\|\partial_\xi^\alpha \partial_x^\beta \sigma(x, \xi, \lambda)\right\|_{B(X)} \le C_{\alpha, \beta}(1+|\xi|^2)^{\frac{m-\rho|\alpha|+\delta|\beta|}{2}} (1+|\lambda|^2)^{\frac{h}{2}} .
		$$
	\end{enumerate}
	For $x \in \mathbb{R}^d, \xi \in \mathbb{R}^d, \lambda \in \Lambda$, we put
	$$
	S_{h}^{-\infty}\big(\mathbb{R}^d \times \mathbb{R}^d \times \Lambda ; B(X)\big)=\bigcap_{m \in \mathbb{R}} S_{\rho, \delta ; h}^m\big(\mathbb{R}^d \times \mathbb{R}^d \times \Lambda ; B(X)\big).
	$$
\end{definition}

The condition (ii) for parametric estimate in the above definition follows \cite{LP20}, although we omit an assumption of holomorphicity, as it is known that the approach in \cite{Shu01} works only for elliptic differential operator but not for general elliptic pseudo-differential operators.

The symbol $\sigma(x, \xi) \in S_{\rho, \delta}^m\big(\mathbb{R}^d \times \mathbb{R}^d ; B(X)\big)$ can be interpreted as an element of $S_{\rho,\delta;0 }^m\big(\mathbb{R}^d \times \mathbb{R}^d \times \Lambda ; B(X)\big)$ that is independent of $\lambda$.

\begin{definition}\label{symboldef}
	Let $\sigma(x,\xi,\lambda) \in S_{\rho, \delta;h }^m\big(\mathbb{R}^d\times \mathbb{R}^d\times \Lambda; B\big(X\big)\big)$. For function $f \in \mathscr{S}(\mathbb{R}^d;X)$, the $\Psi$DO $A_{\lambda}$ is a mapping $f \mapsto A_{\lambda} f$ given by
	\begin{equation}\label{eqsymdef}
		A_{\lambda}f(x)=\int_{\mathbb{R}^d} \sigma(x, \xi,\lambda) \hat{f} (\xi)  e^{ ix \cdot \xi}  \bar{d} \xi.
	\end{equation}
	The class of $\Psi D Os$ is denoted by $\Psi_{\rho, \delta;h}^m\big(\mathbb{R}^d\times \Lambda; B(X)\big)$ or simply by $\Psi_{\rho, \delta;h}^m$. We also put $\Psi_{h}^m$ instead of $\Psi_{1,0;h}^m$ and write $\Psi_{h}^{-\infty}=\bigcap\limits_m \Psi_{\rho, \delta;h}^m$.
\end{definition}

\begin{definition}Let $\sigma_j(x, \xi, \lambda) \in S_{\rho, \delta ; h}^{m_j} \big(\mathbb{R}^d \times \mathbb{R}^d \times \Lambda ; B(X)\big)$, where $j=0,1,2, \ldots$, $m_j \searrow-\infty$. For any $k$ ,if the function $\sigma(x, \xi,\lambda) \in C^{\infty} \big(\mathbb{R}^d \times \mathbb{R}^d \times \Lambda ; B(X)\big)$ holds for
	$$
	\sigma(x, \xi,\lambda)-\sum_{0\le j<k} \sigma_j(x, \xi,\lambda) \in S_{\rho, \delta ; h}^{m_k} \big(\mathbb{R}^d \times \mathbb{R}^d \times \Lambda ; B(X)\big),
	$$
	then $\sigma(x, \xi, \lambda)$ has asymptotic expansion $\sum_{j=0}^{\infty} \sigma_j(x, \xi, \lambda)$, denoted as
	$$
	\sigma(x, \xi,\lambda) \sim \sum_{j=0} ^{\infty} \sigma_j(x, \xi, \lambda).
	$$
\end{definition}

\begin{proposition}\label{asymp}
	Let $\sigma_j(x, \xi, \lambda) \in S_{\rho, \delta ; h}^{m_j}  \big(\mathbb{R}^d \times \mathbb{R}^d \times \Lambda ; B(X)\big)$, where $j=0,1,2, \ldots$, $m_j\searrow{-\infty}$, then there exits $\sigma(x, \xi,\lambda) \in  S_{\rho, \delta ; h}^{m_0} \big(\mathbb{R}^d \times \mathbb{R}^d \times \Lambda ; B(X)\big)$, and its asymptotic expansion is
	$$\sum_{j=0}^{\infty} \sigma_j(x, \xi, \lambda).$$
	Furthermore, if another function $\sigma^{\prime}(x, \xi,\lambda)$ has the same property
	$\sigma^{\prime}(x, \xi,\lambda) \sim \sum_{j=0} ^{\infty} \sigma_j(x, \xi, \lambda)$, then $\sigma(x, \xi, \lambda)-\sigma^{\prime}(x, \xi,\lambda)\in S^{-\infty}  \big(\mathbb{R}^d \times \mathbb{R}^d \times \Lambda ; B(X) \big)$.
\end{proposition}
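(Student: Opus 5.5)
This is the Borel-type lemma for symbols, and I will adapt the classical construction with cutoffs in $\xi$, carrying the parameter $\lambda$ along. The $B(X)$-valuedness plays no real role: every estimate is a norm estimate, and the weight $(1+|\lambda|^2)^{h/2}$ is the same for all $j$.

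\textbf{Construction.} Fix $\chi\in C^\infty(\mathbb{R}^d)$ with $\chi(\xi)=0$ for $|\xi|\le 1$ and $\chi(\xi)=1$ for $|\xi|\ge 2$. Choose an increasing sequence $t_j\to\infty$ and set
$$
\sigma(x,\xi,\lambda)=\sum_{j=0}^\infty \chi(\xi/t_j)\,\sigma_j(x,\xi,\lambda).
$$
For fixed $\xi$ the sum is locally finite, because $\chi(\xi/t_j)=0$ once $t_j>|\xi|$. Hence $\sigma$ is well defined, and for each fixed $\lambda$ it is smooth in $(x,\xi)$, with derivatives computed termwise.

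\textbf{Key estimate.} On the support of $\chi(\xi/t_j)$ we have $|\xi|\ge t_j\ge 1$, so $\langle\xi\rangle/t_j\lesssim 1$ there. Differentiating by Leibniz, every derivative of $\chi(\xi/t_j)$ is supported in $t_j\le|\xi|\le 2t_j$ and satisfies
$$
|\partial^\gamma_\xi \chi(\xi/t_j)|\le C_\gamma\langle\xi\rangle^{-|\gamma|},
$$
uniformly in $j$. Hence $\chi(\xi/t_j)$ belongs to $S^0_{1,0}\subset S^0_{\rho,\delta}$ with seminorms uniform in $j$.

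Writing $\langle\xi\rangle^{m_j}=\langle\xi\rangle^{m_j+1}\langle\xi\rangle^{-1}$ and using $\langle\xi\rangle^{-1}\le t_j^{-1}$ on the support, we get for $|\alpha|+|\beta|\le j$:
$$
\|\partial_\xi^\alpha\partial_x^\beta(\chi(\xi/t_j)\sigma_j)\|\le C_j\, t_j^{-1}\langle\xi\rangle^{m_j+1-\rho|\alpha|+\delta|\beta|}(1+|\lambda|^2)^{h/2}.
$$
Here $C_j$ depends only on finitely many seminorms of $\sigma_j$. Choosing $t_j\ge 2^j C_j$ inductively makes this at most $2^{-j}\langle\xi\rangle^{m_j+1-\rho|\alpha|+\delta|\beta|}(1+|\lambda|^2)^{h/2}$. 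Since $m_j\searrow-\infty$, we may also discard finitely many indices so that $m_j+1\le m_{k}$ for $j>N_k$.

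\textbf{Asymptotic expansion.} Fix $k$ and split
$$
\sigma-\sum_{j<k}\sigma_j=\sum_{j<k}\big(\chi(\xi/t_j)-1\big)\sigma_j+\sum_{k\le j\le N}\chi(\xi/t_j)\sigma_j+\sum_{j>N}\chi(\xi/t_j)\sigma_j,
$$
where $N$ is chosen large depending on $k$ and on the order of the derivative being estimated.
\begin{itemize}
\item The first sum has compact $\xi$-support, so it lies in $S^{-\infty}_h$ with the correct $\lambda$-weight.
\item The middle finite sum lies in $S^{m_k}_{\rho,\delta;h}$.
\item For the tail, take $N\ge|\alpha|+|\beta|$ and $N$ large enough that $m_j+1\le m_k$ for $j>N$. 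The key estimate then bounds it by $\sum_j 2^{-j}\langle\xi\rangle^{m_k-\rho|\alpha|+\delta|\beta|}(1+|\lambda|^2)^{h/2}$, which converges.
\end{itemize}
The tail is smooth because it converges uniformly together with all derivatives (the remaining finitely many derivative orders are handled by the finite part). Taking $k=1$ and adding $\sigma_0$ gives $\sigma\in S^{m_0}_{\rho,\delta;h}$.

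\textbf{Main obstacle.} The only delicate step is the diagonal choice of $t_j$ so that a single sequence controls all derivative orders simultaneously. This is the standard device of imposing the bound only for $|\alpha|+|\beta|\le j$. Note that we need $t_j$ independent of $\lambda$, which works because the $\lambda$-weight factors out uniformly.

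\textbf{Uniqueness.} If $\sigma'\sim\sum\sigma_j$ as well, then for every $k$,
$$
\sigma-\sigma'=\Big(\sigma-\sum_{j<k}\sigma_j\Big)-\Big(\sigma'-\sum_{j<k}\sigma_j\Big)\in S^{m_k}_{\rho,\delta;h}.
$$
Since $m_k\to-\infty$, the difference lies in $\bigcap_k S^{m_k}_{\rho,\delta;h}=S^{-\infty}_h$.
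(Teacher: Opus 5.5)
Your proposal is correct and is essentially the paper's proof. It uses the same cutoff series $\sum_j \varphi(t_j^{-1}\xi)\sigma_j$, the same diagonal choice of $t_j$ giving a $2^{-j}$ bound for $|\alpha|+|\beta|\le j$, the same three-way splitting of $\sigma-\sum_{j<k}\sigma_j$, and the same uniqueness argument.

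The differences are minor. You get smallness from $\langle\xi\rangle^{-1}\le t_j^{-1}$ at the cost of one order, whereas the paper uses the gap $m_j-m_{j-1}$; your version has the side benefit of not needing the $m_j$ to be strictly decreasing. Also, your remark $\langle\xi\rangle/t_j\lesssim 1$ holds only on the annulus $t_j\le|\xi|\le 2t_j$ where the derivatives of $\chi(\xi/t_j)$ live, not on the whole support of $\chi(\xi/t_j)$.
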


\begin{proof}
	
	Let the function $\varphi(\xi) \in C^{\infty}(\mathbb{R}^d)$, which is equal to 0 when $|\xi| \le \frac{1}{2}$ , and is equal to 1 when $|\xi|\ge 1$ . The following proves that there exists sequence $ \{t_j \} $ such that for all  $\alpha, \beta$ satisfying $|\alpha|+|\beta| \le j$, we have
	
	\begin{equation} \label{t j estimate}
		\left\|\partial_\xi^\alpha \partial_x^\beta\big(\varphi(t_j^{-1}\xi ) \sigma_j(x, \xi, \lambda)\big)\right\|_{B(X)} \le 2^{-j} \cdot(1+|\xi|^{2})^\frac{m_{j-1}-\rho|\alpha|+\delta|\beta|}{2}(1+|\lambda|^2)^{\frac{h}{2}}.
	\end{equation}
	By using the sequence  $\{t_j\}$ ,we set
	\begin{equation} \label{t j sum}
		\sigma(x, \xi,\lambda)=\sum_{j=0}^{\infty} \varphi(t_j^{-1}\xi) \sigma_j(x, \xi, \lambda),
	\end{equation}
	then asymptotic expansion of $\sigma(x, \xi,\lambda)$ is $\sum_{j=0}^{\infty} \sigma_j(x, \xi, \lambda)$ .
	
	Firstly, we prove that the sequence $ \{t_j \}$ can be selected. For fixed $\alpha, \beta$, we have
	$$
	\left\|\partial_\xi^\alpha \partial_x^\beta\big(\varphi(t_j^{-1}\xi) \sigma_j(x, \xi, \lambda)\big)\right\|_{B(X)} \le C \sum_{\alpha_1 \le \alpha}\left\|\partial_\xi^{\alpha_1} \varphi(t_j^{-1}\xi) \cdot \partial_\xi^{\alpha-\alpha_1} \partial_x^\beta \sigma_j(x, \xi, \lambda)\right\|_{B(X)},
	$$
	$$
	|\partial_\xi^{\alpha_1} \varphi(t_j^{-1}\xi)| \le C t_j^{-\left|\alpha_1\right|},
	$$
	$$
	\left\|\partial_\xi^{\alpha-\alpha_1} \partial_x^\beta \sigma_j(x, \xi, \lambda)\right\|_{B(X)} \le C(1+|\xi|^{2})^\frac{ m_j-\rho|\alpha|+\delta|\beta|+\rho\left|\alpha_1\right|}{2}(1+|\lambda|^2)^{\frac{h}{2}}.
	$$
	Notice that the derivative of $\varphi(t_j^{-1}\xi)$ equals to 0 when $|\xi|<\frac{1}{2} t_j$ and $|\xi|>t_j$, so if $\alpha_1 \neq 0$, we have
	$$
	|\partial_\xi^{\alpha_1 }\varphi(t_j^{-1}\xi)| \le C(1+|\xi|^{2})^{-\frac{|\alpha_1|}{2}}.
	$$
	This also holds for $\alpha_1=0$, thus
	\begin{align*}	
		&\left\|\partial_\xi^\alpha \partial_x^\beta\big(\varphi(t_j^{-1}\xi) \sigma_j(x, \xi, \lambda)\big)\right\|_{B(X)} \\
		\le & C(1+|\xi|^{2})^{\frac{m_j-\rho|\alpha|+\delta|\beta|}{2}}(1+|\lambda|^2)^{\frac{h}{2}} \\
		\le & C(1+|\xi|^{2})^{\frac{m_{j-1}-\rho|\alpha|+\delta|\beta|}{2}}(1+\frac{1}{2} t_j)^{m_j-m_{j-1}}(1+|\lambda|^2)^{\frac{h}{2}}.
	\end{align*}
	As long as $t_j$ is sufficiently large, \eqref{t j estimate} holds for fixed $\alpha, \beta$. But for fixed $j$, $\alpha$ and $\beta$ are finite and satisfy the condition $|\alpha|+|\beta| \le j$. Therefore,  the above requirements sequence $ \{t_j \} $ can be chosen.
	
	Secondly we prove that the series \eqref{t j sum} is convergent, and $\sigma(x, \xi,\lambda) \sim \sum_{j=0} ^{\infty} \sigma_j(x, \xi, \lambda)$.
	For fixed $\xi$, $\varphi (t_j^{-1}\xi ) \equiv 0$ when $t_j$ is sufficiently large, so this series for each $(x, \xi, \lambda)$ is actually the sum of finite terms, thus $\sigma(x, \xi,\lambda)$ is determined.
	
	Next we estimate $\|\partial_\xi^\alpha \partial_x^\beta(\sigma-\sum_{0\le j<k} \sigma_j)\|_{B(X)}$. Set $l=\max(|\alpha|+|\beta|, k)$, then by \eqref{t j sum} we have
		\begin{align*}
		& \big\|\partial_\xi^\alpha \partial_x^\beta\big(\sigma(x, \xi,\lambda)-\sum_{0\le j<k} \sigma_j(x, \xi, \lambda)\big)\big\|_{B(X)} \\
		\le &  \big\|\partial_\xi^a \partial_x^\beta \sum_{0\le j<k}\big(\varphi(t_j^{-1}\xi)-1\big) \sigma_j(x, \xi, \lambda)\big\|_{B(X)} + \big\|\partial_\xi^a \partial_x^\beta \sum_{j=k}^l \varphi(t_j^{-1}\xi) \sigma_j(x, \xi, \lambda)\big\|_{B(X)} \\
		& + \big\|\partial_\xi^a \partial_x^\beta \sum_{j>l} \varphi(t_j^{-1}\xi) \sigma_j(x, \xi, \lambda)\big\|_{B(X)} \\
		:=& 	\text { (I) }+	\text { (II) }+	\text { (III) }.
	\end{align*}
	In term (I), only the finite terms of the series are included, and $\varphi(t_j^{-1}\xi) \equiv 1 $ if $|\xi|$ is sufficiently large,
	so this term is zero when $|\xi|\rightarrow \infty$. The term (II) also contains only limited items, each item is estimated by $C(1+|\xi|^{2})^\frac{m_j-\rho|\alpha|+\delta|\beta|}{2}(1+|\lambda|^2)^{\frac{h}{2}}$. We have $m_j \le m_k$ when $k \le j \le l$ , hence (II) is estimated by  $C(1+|\xi|^{2})^\frac{m_k-\rho|\alpha|+\delta|\beta|}{2}(1+|\lambda|^2)^{\frac{h}{2}}$. In term (III), since $|\alpha|+|\beta| \le j$, then by \eqref{t j estimate},
	\begin{align*}
		\text { (III) } & \le (1+|\xi|^{2})^\frac{m_{j-1}-\rho|\alpha|+\delta|\beta|}{2}\cdot(\sum_{j>l} 2^{-j})(1+|\lambda|^2)^{\frac{h}{2}} \\ &\le(1+|\xi|^{2})^\frac{m_l-\rho|\alpha|+\delta|\beta|}{2} \cdot(\sum_{j>l} 2^{-j})(1+|\lambda|^2)^{\frac{h}{2}} \\
		& \le(1+|\xi|^{2})^\frac{m_{l}-\rho|\alpha|+\delta|\beta|}{2}(1+|\lambda|^2)^{\frac{h}{2}} \\
		&\le(1+|\xi|^{2})^\frac{m_k-\rho|\alpha|+\delta|\beta|}{2}(1+|\lambda|^2)^{\frac{h}{2}}.
	\end{align*}

	
	Based on the above estimation of (I), (II) and (III), we obtain $\sigma-\sum_{0\le j<k} \sigma_j \in S_{\rho, \delta ; h}^{m_k}$, hence $\sigma(x, \xi,\lambda) \sim \sum_{j=0} ^{\infty} \sigma_j(x, \xi, \lambda)$.
\end{proof}

The above proposition is not convenient to apply, because to verify $\sigma \sim \sum_{j=0}^{\infty} \sigma_j$ we need to estimate all $\partial_\xi^\alpha \partial_x^\beta(\sigma-\sum_{0\le j<k} \sigma_j)$, which is troublesome. Using the following proposition we can simplify the verification process.

\begin{proposition}\label{asy after}
	Let $\sigma_j(x, \xi, \lambda) \in S_{\rho, \delta ; h}^{m_j} \big(\mathbb{R}^d \times \mathbb{R}^d \times \Lambda ; B(X)\big)$,  $j=0,1,2, \ldots$,  $  m_j \searrow-\infty $, and let $\sigma(x, \xi,\lambda) \in C^{\infty} \big(\mathbb{R}^d \times \mathbb{R}^d \times \Lambda ; B(X)\big)$ such that for arbitrary multi-indices $\alpha,\beta$ there exist constants $\mu=\mu(\alpha,\beta)$ and $C=C(\alpha,\beta)$ with
	\begin{equation} \label{mu}
		\big\|\partial_{\xi}^\alpha \partial_x^\beta \sigma(x, \xi, \lambda)\big\|_{B(X)} \le C(1+|\xi|^{2})^{\frac{\mu}{2}}(1+|\lambda|^2)^{\frac{h}{2}} .
	\end{equation}
	Furthermore assume there exit numbers $\mu_{k}\searrow-\infty $ and constants $C_{k}$ such that for arbitrary $k$, the following estimate holds
	$$
	\big\|\sigma(x, \xi, \lambda)-\sum_{0 \le j<k} \sigma_j(x, \xi, \lambda)\big\|_{B(X)} \le C_{k}(1+|\xi|^{2})^{\frac{\mu_{k}}{2}}(1+|\lambda|^2)^{\frac{h}{2}}.
	$$
	Then $\sigma(x, \xi, \lambda) \in S_{\rho, \delta;h}^{m_0}\big(\mathbb{R}^d \times \mathbb{R}^d \times \Lambda ; B(X)\big)$ , and $ \sigma(x, \xi, \lambda) \sim \sum_{j=0}^{\infty} \sigma_j(x, \xi, \lambda) .$
\end{proposition}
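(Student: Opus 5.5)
The plan is the standard Hörmander argument: interpolation between a crude bound on high derivatives and a sharp bound on the function itself. First, by Proposition \ref{asymp}, I fix some $\sigma'\in S^{m_0}_{\rho,\delta;h}$ with $\sigma'\sim\sum_j\sigma_j$, and set $r:=\sigma-\sigma'$. It then suffices to show that $r\in S^{-\infty}_{h}$, i.e. that every derivative $\partial_\xi^\alpha\partial_x^\beta r$ is $O\big((1+|\xi|^2)^{-N/2}(1+|\lambda|^2)^{h/2}\big)$ for all $N$, uniformly in $x$ and $\lambda$. Indeed, then $\sigma-\sum_{j<k}\sigma_j=r+(\sigma'-\sum_{j<k}\sigma_j)\in S^{m_k}_{\rho,\delta;h}$ for every $k$, which gives both $\sigma\in S^{m_0}_{\rho,\delta;h}$ and $\sigma\sim\sum\sigma_j$.

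The zeroth-order decay of $r$ is immediate. For each $k$,
\[
r=\Big(\sigma-\sum_{j<k}\sigma_j\Big)-\Big(\sigma'-\sum_{j<k}\sigma_j\Big),
\]
and the first term is bounded by $C_k(1+|\xi|^2)^{\mu_k/2}(1+|\lambda|^2)^{h/2}$ by hypothesis. The second term lies in $S^{m_k}_{\rho,\delta;h}$, so it is bounded by $C(1+|\xi|^2)^{m_k/2}(1+|\lambda|^2)^{h/2}$. Since $\mu_k,m_k\to-\infty$, we get $\|r\|\le C_N\langle\xi\rangle^{-N}(1+|\lambda|^2)^{h/2}$ for every $N$. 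In addition, every derivative of $r$ has polynomial growth $\langle\xi\rangle^{\mu'(\alpha,\beta)}(1+|\lambda|^2)^{h/2}$, using \eqref{mu} for $\sigma$ and the symbol estimates for $\sigma'$.

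The main step, and the expected obstacle, is upgrading rapid decay of $r$ to rapid decay of all its derivatives using only the polynomial bounds on higher derivatives. I would use a one-variable interpolation (Landau–Kolmogorov type) inequality, applied to $B(X)$-valued functions along lines and with $\lambda$ fixed. If $g$ is $C^2$ on an interval of length at least $1$ around a point, then
\[
\|g'\|\le C\big(\sup\|g\|\big)^{1/2}\big(\sup\|g''\|\big)^{1/2}+C\sup\|g\|.
\]
This follows from Taylor's formula, which holds for Banach-valued functions (or by pairing with functionals in $X^*$ and vectors in $X$ to reduce to the scalar case). I apply it in each coordinate direction of $x$ and of $\xi$ on unit balls centred at $(x,\xi)$, where $\langle\eta\rangle\sim\langle\xi\rangle$. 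For a first derivative this gives decay $\langle\xi\rangle^{(-N+\mu')/2}(1+|\lambda|^2)^{h/2}$, and since $N$ is arbitrary this is rapid decay. Induction on $|\alpha|+|\beta|$ then gives rapid decay of all derivatives of $r$, because at each stage the next derivative still has polynomial bounds. The $\lambda$-factor $(1+|\lambda|^2)^{h/2}$ appears identically on both sides, so the geometric mean preserves it, and the estimates are uniform in $\lambda\in\Lambda$.

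Finally, $r\in S^{-\infty}_h$ independently of $\rho,\delta$, so $\sigma=\sigma'+r\in S^{m_0}_{\rho,\delta;h}$ and $\sigma-\sum_{j<k}\sigma_j\in S^{m_k}_{\rho,\delta;h}$ for all $k$. This is precisely the claim.
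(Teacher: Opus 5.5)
Your proposal is correct and follows the paper's proof. Like the paper, you subtract a symbol $b\sim\sum_j\sigma_j$ obtained from Proposition \ref{asymp}, observe that the difference decays rapidly while its derivatives are polynomially bounded, and then use a second-order Taylor expansion, iterated over derivatives, to get rapid decay of every derivative; the only cosmetic difference is that you package the Taylor step as a Landau--Kolmogorov interpolation inequality on unit intervals, whereas the paper directly takes the increment $\varepsilon=(1+|\xi|^2)^{-N/2}$.
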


\begin{proof}
	Utilizing Proposition \ref{asymp} we set $b(x, \xi, \lambda)\sim \sum_{j=0} ^{\infty} \sigma_j(x, \xi, \lambda)$, then $\tilde{\sigma}(x, \xi, \lambda)=\sigma(x, \xi, \lambda)-b(x, \xi, \lambda)$ satisfies \eqref{mu} and
	\begin{align*}
		&\|\tilde{\sigma}(x, \xi, \lambda)\|_{B(X)} \\
		\le& \big\|\sigma(x, \xi, \lambda)-\sum_{j<k} \sigma_j(x, \xi, \lambda)\big\|_{B(X)}+\big\|b(x, \xi, \lambda)-\sum_{j<k} \sigma_j(x, \xi, \lambda)\big\|_{B(X)} \\
		\le& C_k\big(1+|\xi|^{2}\big)^{\frac{\mu_k^{\prime}}{2}}(1+|\lambda|^2)^{\frac{h}{2}}.
	\end{align*}
	where $\mu_k^{\prime}=\max \big(m_k, \mu_k\big)$ monotonically declines to $-\infty$. Additionally
	\begin{equation} \label{rapidde}
		\|\tilde{\sigma}(x, \xi, \lambda)\|_{B(X)} \le C_r(1+|\xi|^2)^{-\frac{r}{2}}, \quad \forall r \in \mathbb{N}.
	\end{equation}
	
	For a unit vector $\eta$, by  \eqref{mu} and  Taylor formula we have for $0<\varepsilon<1$,
	$$
	\big\|\tilde{\sigma}(x, \xi+\varepsilon \eta, \lambda)-\tilde{\sigma}(x, \xi, \lambda)- \sum_j \partial_{\xi_j}\tilde{\sigma} (x, \xi, \lambda) \cdot \varepsilon \eta_j\big\|_{B(X)} \le C\varepsilon^2\big((1+|\xi|^{2}\big)^{\frac{\mu}{2}}(1+|\lambda|^2)^{\frac{h}{2}},
	$$
	then for any $|\eta|=1$,
	\begin{align*}
		&\big\| \sum_j \partial_{\xi_j} \tilde{\sigma} (x, \xi, \lambda) \cdot   \eta_j \big\|_{B(X)} \\
		\le & C \varepsilon\big(1+ |\xi|^2\big)^{\frac{\mu}{2}}(1+|\lambda|^2)^{\frac{h}{2}}+\varepsilon^{-1}\|\tilde{\sigma}(x, \xi, \lambda)-\tilde{\sigma}(x, \xi+\varepsilon \eta, \lambda)\|_{B(X)} .
	\end{align*}
	Put $\varepsilon=(1+|\xi|^{2})^{-\frac{N}{2}}$ and by \eqref{rapidde} we have
	$$\|\partial_{\xi_j} \tilde{\sigma}(x, \xi, \lambda)\|_{B(X)} \le C^{'}\big(1+|\xi|^2\big)^{\frac{\mu-N}{2}}(1+|\lambda|^2)^{\frac{h}{2}},\quad \forall N \in \mathbb{N} ,$$
	thus $\partial_{\xi_j} \tilde{\sigma} (x, \xi, \lambda)$ is rapidly decreasing.
	Using the same method we see that $\partial_{x_j} \tilde{\sigma} (x, \xi, \lambda)$ is rapidly decreasing; then using iteration we know that  $\partial_{\xi}^\alpha \partial_x^\beta \tilde{\sigma}(x, \xi, \lambda)$ are rapidly decreasing for all $\alpha, \beta$. This concludes $\tilde{\sigma}(x, \xi, \lambda) \in S_{ h}^{-\infty}\big(\mathbb{R}^d \times \mathbb{R}^d \times \Lambda ; B(X)\big)$, completing the proof.
\end{proof}

Equation \eqref{eqsymdef} in Definition \ref{symboldef} can also be written as
\begin{equation}\label{pseudo-double-integral}
	A_{\lambda}f(x)=\int_{\mathbb{R}^d}\int_{\mathbb{R}^d}   \sigma(x, \xi,\lambda) f(y)  e^{ i(x-y) \cdot \xi} dy \ \bar{d} \xi.
\end{equation}
However, the above $\xi$-integral does not necessarily converge absolutely, even for $f\in \mathscr{S} (\mathbb{R}^d, X)$. A standard way to overcome this difficulty is to approximate $\sigma$ by symbols with compact support. Specifically, fix a compactly supported infinitely differentiable function  $\eta$ defined on $\mathbb{R}^d\times \mathbb{R}^d$ such that $\eta =1$ near the origin, and set $\sigma_j (x, \xi ,\lambda) =  \sigma (x, \xi ,\lambda) \ \eta(2^{-j}x, 2^{-j}\xi)$. Then the double integral
$$
A_{\lambda,j}f(x):= \int_{\mathbb{R}^d} \int_{\mathbb{R}^d} \sigma_j(x, \xi,\lambda) f(y) e^{ i(x-y) \cdot \xi} d y \ \bar{d} \xi
$$
converges absolutely for each $j$, and $A_{\lambda,j} f \rightarrow A_{\lambda } f$ in $\mathscr{S}(\mathbb{R}^d ; X)$ as $j \rightarrow \infty$, which provides a rigorous interpretation of the oscillatory integral as the limit of absolutely convergent integrals.

Using \eqref{pseudo-double-integral}, we are able to represent the adjoint operator of $A_\lambda$ on $\mathscr{S}(\mathbb{R}^d ; X^*)$ using double integral. Since $$\mathscr{S}(\mathbb{R}^d ; X^*)\subset L_p(\mathbb{R}^d ; X^*)\subset  \mathscr{S}'(\mathbb{R}^d ; X^*)=  \big(\mathscr{S}(\mathbb{R}^d ; X )\big)'$$
the dual bracket for $f \in \mathscr{S}(\mathbb{R}^d ; X)$ and $g \in \mathscr{S}(\mathbb{R}^d ; X^*)$ is simply written as
\begin{equation}\label{dual-product}
	\langle   f, g \rangle=  \int_{\mathbb{R}^d} f(x) g(x) dx .
\end{equation}
By this dual relation, we obtain
\begin{equation}\label{pseudo-double-integral-Ad}
	A_{\lambda}^* g(x)= \int_{\mathbb{R}^d} \int_{\mathbb{R}^d}  \sigma(y, \xi,\lambda)^* g(y) e^{i(x-y) \cdot \xi} d y \ \bar{d} \xi.
\end{equation}
It follows from \cite[Proposition 2.3]{XX19} that $A_{\lambda}^*$ is continuous on $\mathscr{S}(\mathbb{R}^d ; X^*)$.
Using this adjoint operator, the original operator $A_{\lambda}$ admits a unique extension by duality to the distribution space
$$
A_{\lambda}: \mathscr{S}(\mathbb{R}^d ; X^*)^{\prime} \cong \mathscr{S}^{\prime}(\mathbb{R}^d ; X^{**}).
$$
This is rigorously justified in Definition 2.4 and Proposition 2.5 in \cite{XX19}.

The factor $\sigma(x, \xi,\lambda)$ on the right hand side of \eqref{pseudo-double-integral} does not depend on the variable $y$. But the factor $\sigma(y, \xi,\lambda)^*$ on the right hand side of \eqref{pseudo-double-integral-Ad} depends on the variable $y$, and thus cannot be directly transformed into the form given in \eqref{eqsymdef}. To meet the requirements of operations within the class of operator-valued $\Psi$DOs, we will extend Definition \ref{symboldef}.

\begin{definition}
	Let $m, \rho, \delta, h$ be real numbers with $0 \le \delta \le \rho \le 1$ (excluding the case $(\rho, \delta) = (1,1)$). The class $S_{\rho, \delta ; h}^m\big(\mathbb{R}^d \times \mathbb{R}^d \times \mathbb{R}^d \times \Lambda ; B(X)\big)$ consists of functions $\sigma(x,y, \xi, \lambda)$ such that
	
	\begin{enumerate}[$\rm (i)$]
		\item $\sigma\big(x, y,\xi, \lambda_0\big) \in C^{\infty}\big(\mathbb{R}^d \times \mathbb{R}^d \times \mathbb{R}^d ; B(X)\big)$ for every fixed $\lambda_0 \in \Lambda$.
		
		\item For arbitrary multi-indices $\alpha$ and $\beta$, there exist constants $C_{\alpha, \beta}$ such that
		$$
		\big\|\partial_\xi^\alpha \partial_{x, y}^\beta \sigma(x,y, \xi, \lambda)\big\|_{B(X)} \le C_{\alpha, \beta}(1+|\xi|^2)^{\frac{m-\rho|\alpha|+\delta|\beta|}{2}} (1+|\lambda|^2)^{\frac{h}{2}}.
		$$
		
	\end{enumerate}
	When $\rho=1, \delta=0$, we still simply denote $S_{\rho, \delta;h}^m$ as $S_h^m$.
\end{definition}

\begin{definition}\label{ampdef}
	If the function $\sigma(x,y, \xi, \lambda) \in S_{\rho, \delta ; h}^m\big(\mathbb{R}^d \times \mathbb{R}^d \times \mathbb{R}^d \times \Lambda ; B(X)\big)$, then the operator defined by
	\begin{equation} \label{eqampdef}
		A_{\lambda}f(x)=\int_{\mathbb{R}^d}\int_{\mathbb{R}^d}  \sigma(x,y, \xi,\lambda) f(y)   e^{ i(x-y) \cdot \xi} dy \ \bar{d} \xi
	\end{equation}
	is continuous on $\mathscr{S}(\mathbb{R}^d ; X)$, and it is also called an operator-valued $\Psi$DO, where $\sigma(x,y, \xi, \lambda)$ is referred to as the amplitude of the operator $A_{\lambda}$.
\end{definition}

Next, we define the symbol of a general form of an operator-valued $\Psi$DO. According to Definition \ref{symboldef}, if an operator-valued $\Psi$DO is defined by \eqref{eqsymdef} (or in \eqref{eqampdef} the amplitude does not depend on $y$), then $\sigma(x,\xi, \lambda)$ is called the symbol of $A_{\lambda}$.

\begin{definition}\label{1symdef}
	For the operator-valued $\Psi$DO $A_{\lambda}$ of the form \eqref{eqampdef}, define
	\begin{equation}\label{symalldef}
		e^{-i x \cdot \xi} A_{\lambda} (e^{i x \cdot \xi})
	\end{equation}
	as the symbol of $A_{\lambda}$, denoted by $\sigma(A)(x, \xi,\lambda)$.
\end{definition}

Notice that Definition \ref{1symdef} is reasonable and consistent with Definition \ref{symboldef} when the amplitude $\sigma(x,y,\xi, \lambda)$ does not depend on $y$. By Definition \ref{1symdef}, the operator-valued $\Psi$DO of the form \eqref{eqampdef} can also be transformed into the form \eqref{eqsymdef}. In fact, for $f \in \mathscr{S}(\mathbb{R}^d ; X)$, the inverse Fourier transform formula holds:
$$
f(x)=\int_{\mathbb{R}^d}  \hat{f}(\xi) e^{i x \cdot \xi} \bar{d} \xi.
$$
Since $A_{\lambda}$ is a linear and continuous mapping from $\mathscr{S}(\mathbb{R}^d ; X)$ to $\mathscr{S}(\mathbb{R}^d ; X)$, it follows that
\begin{equation}\label{eqsymall}
	\begin{aligned}
		A_{\lambda} f(x) & =\int_{\mathbb{R}^d}(A_{\lambda}e^{i x \cdot \xi}) \hat{f}(\xi) \ \bar{d} \xi \\
		& =\int_{\mathbb{R}^d} \sigma(A)(x, \xi,\lambda) \hat{f}(\xi)  e^{i x \cdot \xi} \bar{d} \xi.
	\end{aligned}
\end{equation}

From \eqref{eqsymall}, we can say that the operator-valued $\Psi$DO of the form \eqref{eqampdef} is essentially the same as that of the form \eqref{eqsymdef}. However, due to the more flexible writing style of \eqref{eqampdef}, it is often more convenient in operations and discussions of some problems. The symbol given by \eqref{symalldef} is not convenient for calculation, so in the following, we derive a formula for calculating the symbol through the amplitude.

\begin{theorem} \label{asympthm}
	Let $A_ \lambda \in \Psi_{\rho, \delta;h}^m\big(\mathbb{R}^d\times \Lambda; B(X)\big)$, whose symbol is $\sigma(A)(x, \xi, \lambda)$. Then
	\begin{equation} \label{asympex}
		\begin{split}
			&\sigma(A)(x, \xi, \lambda) - \left.\sum_{|\alpha| \le N-1} \frac{1}{\alpha !} \partial_{\xi}^\alpha D_x^\alpha \sigma(x,y,\xi, \lambda)\right|_{y=x} \\
			&\qquad \in S_{\rho, \delta ; h}^{m-(\rho-\delta) N}\big(\mathbb{R}^d \times \mathbb{R}^d \times \Lambda ; B(X)\big).
		\end{split}
	\end{equation}
\end{theorem}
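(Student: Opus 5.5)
The plan is to follow the classical Kuranishi-type argument, keeping track of the parameter $\lambda$ and using only norm estimates in $B(X)$, since the scalar proof never uses commutativity of the symbol. Here $A_\lambda$ is given by an amplitude $\sigma(x,y,\xi,\lambda)\in S^m_{\rho,\delta;h}$ as in Definition \ref{ampdef}. First compute the symbol from \eqref{symalldef}. Substituting $y = x+z$ and using the absolutely convergent cut-off approximations $\sigma_j$ described after \eqref{pseudo-double-integral}, I would show
\[
\sigma(A)(x,\xi,\lambda)=\int_{\mathbb{R}^d}\int_{\mathbb{R}^d}\sigma(x,x+z,\xi+\eta,\lambda)\,e^{-iz\cdot\eta}\,dz\,\bar d\eta,
\]
understood as an oscillatory integral. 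The integration by parts with $(1-\Delta_z)^{M}(1+|\eta|^2)^{-M}$ and $(1-\Delta_\eta)^{M}(1+|z|^2)^{-M}$ is legitimate for $B(X)$-valued integrands, because the scalar factors commute with everything. This representation also shows that $\sigma(A)$ is smooth and satisfies a crude bound of the form \eqref{mu}, where $\mu$ depends on $(\alpha,\beta)$ and the factor $(1+|\lambda|^2)^{h/2}$ is inherited uniformly.

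Next, Taylor-expand $\eta\mapsto\sigma(x,x+z,\xi+\eta,\lambda)$ at $\eta=0$ to order $N$ with integral remainder:
\[
\sigma(x,x+z,\xi+\eta,\lambda)=\sum_{|\alpha|<N}\frac{\eta^\alpha}{\alpha!}\partial_\xi^\alpha\sigma(x,x+z,\xi,\lambda)+N\sum_{|\alpha|=N}\frac{\eta^\alpha}{\alpha!}\int_0^1(1-t)^{N-1}\partial_\xi^\alpha\sigma(x,x+z,\xi+t\eta,\lambda)\,dt .
\]
For each term of the finite sum, write $\eta^\alpha e^{-iz\cdot\eta}=(-D_z)^\alpha e^{-iz\cdot \eta}$ and integrate by parts in $z$. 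Fourier inversion in $(z,\eta)$ then yields exactly $\frac1{\alpha!}\partial_\xi^\alpha D_y^\alpha\sigma(x,y,\xi,\lambda)|_{y=x}$. These terms lie in $S^{m-(\rho-\delta)|\alpha|}_{\rho,\delta;h}$, which is the main part of \eqref{asympex}. (Here $D_x$ in \eqref{asympex} should be read as acting on the $y$-variable.)

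The main obstacle is bounding the remainder
\[
r_N(x,\xi,\lambda)=\sum_{|\alpha|=N}\frac{N}{\alpha!}\int_0^1(1-t)^{N-1}\iint e^{-iz\cdot\eta}\,\partial_\xi^\alpha D_y^\alpha\sigma(x,x+z,\xi+t\eta,\lambda)\,dz\,\bar d\eta\,dt
\]
in $S^{m-(\rho-\delta)N}_{\rho,\delta;h}$, uniformly in $t$ and $\lambda$. I would split the $\eta$-integral into the regions $|\eta|\le\frac12(1+|\xi|)$ and $|\eta|>\frac12(1+|\xi|)$. On the first region, $1+|\xi+t\eta|\sim1+|\xi|$. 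Integrating by parts in $\eta$ produces decay $(1+|z|)^{-2M}$, and integrating by parts in $z$ costs powers $(1+|\xi|)^{\delta}$ per derivative but gains $(1+|\eta|)^{-1}$. Together these give the bound $C(1+|\xi|^2)^{(m-(\rho-\delta)N)/2}(1+|\lambda|^2)^{h/2}$. On the second region, I would use Peetre's inequality $(1+|\xi+t\eta|)^{s}\le C(1+|\xi|)^{s}(1+|\eta|)^{|s|}$ together with many $z$-integrations by parts, which gain $(1+|\eta|)^{-2M}$ against losses of only $(1+|\eta|)^{\delta\cdot 2M}$. This gain is exactly where $\delta<\rho\le1$ is needed, so that $(\rho,\delta)\neq(1,1)$ enters. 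The result is an arbitrarily negative power of $1+|\xi|$.

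Rather than controlling every derivative of $r_N$ directly, I would then invoke Proposition \ref{asy after}. The crude bounds of the form \eqref{mu} for $\sigma(A)$ come from the first step. The remainder estimates above hold for every $N$, with exponents $\mu_N=m-(\rho-\delta)N\searrow-\infty$. Proposition \ref{asy after}, applied with $\sigma_j=\sum_{|\alpha|=j}\frac1{\alpha!}\partial_\xi^\alpha D_y^\alpha\sigma|_{y=x}$, then gives $\sigma(A)\sim\sum_j\sigma_j$ in $S_{\rho,\delta;h}$. This is precisely \eqref{asympex}.
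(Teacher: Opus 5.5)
Your proposal is correct and follows essentially the same route as the paper. Both use the representation $\sigma(A)(x,\xi,\lambda)=\iint \sigma(x,x+z,\xi+\eta,\lambda)e^{-iz\cdot\eta}\,dz\,\bar{d}\eta$, a Taylor expansion in $\eta$ whose main terms are recovered by Fourier inversion, a split of the remainder into $|\eta|\lesssim|\xi|$ and its complement, and a final appeal to Proposition \ref{asy after}. You are also right that the derivative in the main terms falls on the second spatial variable, as in \eqref{remain2}.

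One small point: your loss-free bound on the inner region holds as stated only for $\delta=0$. For $\delta>0$, your $z$-integrations by parts leave a residual factor $(1+|\xi|)^{2K\delta}$ with $2K>d$. The paper instead accepts a loss of $(1+|\xi|)^{d}$ from the volume of the region. Either way this is harmless, because Proposition \ref{asy after} only needs the remainder exponents to decrease to $-\infty$.
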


\begin{proof}
	The symbol $\sigma(A)(x, \xi,\lambda)$  can be reformulated as follows
	\begin{align*}
		\sigma(A)(x, \xi,\lambda) & =e^{-i x \cdot \xi} A_{\lambda} e^{i x \cdot \xi}\\
		&=e^{-i x \cdot \xi}\int_{\mathbb{R}^d}\int_{\mathbb{R}^d} \sigma(x, y, \theta,\lambda) e^{i(x-y) \cdot \theta} e^{i y\cdot \xi} d y \ \bar{d} \theta\\
		& =\int_{\mathbb{R}^d}\int_{\mathbb{R}^d} \sigma(x, y, \theta,\lambda) e^{i(x-y) \cdot (\theta-\xi)} d y \ \bar{d} \theta.
	\end{align*}
	Putting $z=y-x$ and $\eta=\theta-\xi$ to simplify the exponent, we obtain
	\begin{equation} \label{exp}
		\sigma(A)(x, \xi,\lambda)=\int_{\mathbb{R}^d}\int_{\mathbb{R}^d} \sigma(x, x+z, \xi+\eta,\lambda) e^{-i z \cdot \eta} d z \ \bar{d} \eta.
	\end{equation}
	Using Taylor's formula to expand $\sigma(x, x+z, \xi+\eta,\lambda)$ in $\eta$ near $\eta=0$, we have
	$$
	\sigma(x, x+z, \xi+\eta,\lambda)=\sum_{|\alpha| \le N-1} \frac{\eta^\alpha}{\alpha !}\partial_{\xi}^\alpha \sigma(x, x+z, \xi,\lambda)  +r_N(x, x+z, \xi, \eta,\lambda),
	$$
	where
	\begin{equation} \label{remain}
		r_N(x, x+z, \xi, \eta,\lambda)=\sum_{|\alpha|=N} \frac{N \eta^\alpha}{\alpha !} \int_0^1(1-t)^{N-1} \partial_{\xi}^\alpha \sigma(x, x+z, \xi+t \eta,\lambda) d t.
	\end{equation}
	Observe that
	$$
	\int_{\mathbb{R}^d}\int_{\mathbb{R}^d} \partial_{\xi}^\alpha \sigma(x, x+z, \xi,\lambda) \eta^\alpha e^{-i z \cdot \eta} d z \ \bar{d} \eta
	=\left.\partial_{\xi}^\alpha D_z^\alpha \sigma(x, x+z, \xi,\lambda)\right|_{z=0},
	$$
	by applying the Fourier inversion formula, this yields the finite terms in equation  \eqref{asympex}.
	
	Let us first derive a rough estimate for $\sigma(A)(x, \xi,\lambda)$ the type given in \eqref{mu}. To achieve this, we rewrite equation \eqref{exp} by integrating by parts
	\begin{equation} \label{int1}
		\sigma(A)(x, \xi,\lambda)=\int_{\mathbb{R}^d}\int_{\mathbb{R}^d} e^{-i z \cdot \eta}(1+|D_z|^{2})^{\frac{\nu}{2}} \sigma(x, x+z, \xi+\eta,\lambda) \cdot (1+|\eta|^{2})^{-\frac{\nu}{2}} d z \ \bar{d} \eta,
	\end{equation}
	where $\nu$ is even and non-negative.
	
	Utilizing the inequality $(1+|\xi+\eta|^{2})^{\frac{1}{2}} \le 2 (1+|\xi|^{2})^{\frac{1}{2}}(1+|\eta|^{2})^{\frac{1}{2}}$, we can deduce from \eqref{int1} that
	$$
	\big\|\partial_{\xi}^\alpha \partial_x^\beta \sigma(A)(x, \xi,\lambda)\big\|_{B(X)} \le C(1+|\xi|^{2})^{\frac{p+\delta \nu}{2}}(1+|\lambda|^2)^{\frac{h}{2}} \int_{\mathbb{R}^d}(1+|\eta|^{2})^{\frac{p-(1-\delta) \nu}{2}}d \eta,
	$$
	where $p=\max(m-\rho|\alpha|+\delta|\beta|, 0)$ and $\nu$ is sufficiently large. This provides us with the desired estimates for the derivatives of $\sigma(A)(x, \xi,\lambda)$ of the type \eqref{mu}. It remains to estimate the remainder term.

	Inserting in \eqref{exp} the expression $r_N$ (formula \eqref{remain}) for $\sigma(x, x+z, \xi+\eta,\lambda)$ and interchanging the orders of integration over $t$ and over $z, \eta$, we see that it is essential to establish a uniform estimate in $t \in(0,1]$ for the integral
	$$
	R_{\alpha, t}(x, \xi,\lambda)=\int_{\mathbb{R}^d}\int_{\mathbb{R}^d} e^{-i z \cdot \eta} \eta^\alpha \partial_{\xi}^\alpha \sigma(x, x+z, \xi+t \eta,\lambda) \ d z \ \bar{d} \eta,
	$$
	where $|\alpha|=N$. By integrating by parts again, we obtain
	\begin{equation} \label{remain2}
		R_{\alpha, t}(x, \xi,\lambda)=\int_{\mathbb{R}^d}\int_{\mathbb{R}^d} e^{-i z \cdot \eta} \partial_{\xi}^\alpha D_z^\alpha \sigma(x, x+z, \xi+t \eta,\lambda) \ d z \ \bar{d} \eta,
	\end{equation}
	
	We will now decompose the integral presented in equation \eqref{remain2} into two distinct parts
	\begin{equation} \label{remain3}
		R_{\alpha, t}=R_{\alpha, t}^{\prime}+R_{\alpha, t}^{\prime \prime},
	\end{equation}
	here in $R_{\alpha, t}^{\prime}$ denotes integration over the set $\{(z, \eta):|\eta|  \le\frac{1}{2}|\xi| \}$, while $R_{\alpha, t}^{\prime \prime}$ refers to integration over its complement set. Note that, if $|\eta| \le\frac{1}{2}|\xi| $, then $\frac{1}{2}|\xi|   \le|\xi+t \eta| \le \frac{3}{2}|\xi|$. Furthermore, within $R_{\alpha, t}^{\prime}$ ,the volume of integration domain concerning $\eta$ does not exceed $C|\xi|^d$, thus,
	\begin{equation} \label{remain4}
		\big\|R_{\alpha, t}^{\prime}(x, \xi,\lambda)\big\|_{B(X)} \le C(1+|\xi|^{2})^{\frac{m-(\rho-\delta) N+d}{2}} (1+|\lambda|^2)^{\frac{h}{2}} ,
	\end{equation}
	where $C$ is independent of $\xi$ and $t$.
	
	Next, let us estimate $R_{\alpha, t}^{\prime \prime}$. By integrating by parts once more and employing the formula
	$$
	(1+|\eta|^{2})^{-\frac{\nu}{2}}(1+|D_z|^{2})^{\frac{\nu}{2}} e^{-i z \cdot \eta}=e^{-i z \cdot \eta},
	$$
	where $\nu$ is an even and non-negative number, then $R_{\alpha, t}^{\prime \prime}$ can be written as a finite sum of terms of the form
	\begin{equation} \label{finitesum}
		\begin{split}
			R_{\alpha, \beta ,t}(x, \xi,\lambda) &=
			\int_{|\eta|>\frac{1}{2}|\xi|} e^{-i z \cdot \eta}(1+|\eta|^{2})^{-\frac{\nu}{2}} \\
			&\qquad \cdot \partial_{\xi}^\alpha D_z^{\alpha+\beta} \sigma(x, x+z, \xi+t \eta,\lambda) \, dz \, \bar{d}\eta .
		\end{split}
	\end{equation}
	where $|\beta| \le \nu$.
	
	For $|\eta|\ge\frac{1}{2}|\xi| $, $\big\|\partial_{\xi}^\alpha D_z^{\alpha+\beta} \sigma(A)(x, x+z, \xi+t \eta,\lambda)\big\|_{B(X)}$
	is estimated by $C(1+|\eta|^{2})^{\frac{m-(\rho-\delta) N+\delta \nu}{2}}(1+|\lambda|^2)^{\frac{h}{2}}$ if $m-(\rho-\delta) N+\delta \nu \ge 0$, and estimated by $C(1+|\lambda|^2)^{\frac{h}{2}}$ if $m-(\rho-\delta) N+\delta \nu<0$ (in both cases $C$ is independent of $\xi, \eta$ and $t$ ). Then we obtain from \eqref{finitesum} that for sufficiently large $\nu$
	$$
	\big\|R_{\alpha, \beta ,t}(x, \xi,\lambda)\big\|_{B(X)} \le C (1+|\lambda|^2)^{\frac{h}{2}} \int_{|\eta|>\frac{1}{2}|\xi|  }(1+|\eta|^{2})^{\frac{p-(1-\delta) \nu}{2}} d \eta,
	$$
	where $p=\max \{m-(\rho-\delta) N, 0\}$. If $p-(1-\delta) \nu+d+1<0$, it follows that
	\begin{align*}
		\big\|R_{\alpha, \beta, t}(x, \xi,\lambda)\big\|_{B(X)}  &\le C(1+|\xi|^{2})^{\frac{p-(1-\delta) \nu+d+1}{2}} (1+|\lambda|^2)^{\frac{h}{2}} \int_{\mathbb{R}^d}(1+|\eta|^{2})^{\frac{-d-1}{2}} d \eta \\
		&\le C(1+|\xi|^{2})^{\frac{p-(1-\delta) \nu+d+1}{2}} (1+|\lambda|^2)^{\frac{h}{2}},
	\end{align*}
	where $C$ is independent of $x, \xi$ and $t$ $(t \in(0,1])$.
	
	Choosing a sufficiently large  $\nu$ and taking \eqref{remain3} and \eqref{remain4} into account, we obtain the estimate for $R_{\alpha, t}$
	$$
	\big\|R_{\alpha, t}(x, \xi,\lambda)\big\|_{B(X)} \le C(1+|\xi|^{2})^{\frac{m-(\rho-\delta) N+d}{2}}(1+|\lambda|^2)^{\frac{h}{2}}, t \in(0,1],
	$$
	which guarantees the applicability under Proposition \ref{asy after}, thereby concluding our proof.
\end{proof}

\begin{theorem}\label{adformula}
	Let $A_{\lambda}\in \Psi_{\rho, \delta;h}^m\big(\mathbb{R}^d\times \Lambda; B(X)\big)$, whose symbol is $ \sigma(A)(x, \xi,\lambda)$. Then $A^*_{\lambda}\in \Psi_{\rho, \delta;h}^m\big(\mathbb{R}^d\times \Lambda; B(X^*)\big)$ whose symbol satisfies
	\begin{equation} \label{compo}
		\sigma(A^{*})(x, \xi,\lambda) \sim \sum_\alpha \frac{1}{\alpha !} \partial_{\xi}^\alpha D_x^\alpha \sigma(A)(x, \xi,\lambda)^{*}.
	\end{equation}
\end{theorem}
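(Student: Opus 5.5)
The plan is to represent $A_\lambda^*$ as an operator-valued $\Psi$DO with an amplitude depending on $y$, and then read off its symbol from Theorem \ref{asympthm}. By the duality pairing \eqref{dual-product}, formula \eqref{pseudo-double-integral-Ad} gives, for $g\in\mathscr{S}(\mathbb{R}^d;X^*)$,
\[
A_\lambda^* g(x)=\int_{\mathbb{R}^d}\int_{\mathbb{R}^d} \sigma(A)(y,\xi,\lambda)^* g(y)\, e^{i(x-y)\cdot\xi}\,dy\,\bar{d}\xi .
\]
So $A_\lambda^*$ has the form \eqref{eqampdef} with amplitude $\widetilde\sigma(x,y,\xi,\lambda):=\sigma(A)(y,\xi,\lambda)^*$, valued in $B(X^*)$ via the Banach adjoint.

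The first step is to check that $\widetilde\sigma$ belongs to $S^m_{\rho,\delta;h}\big(\mathbb{R}^d\times\mathbb{R}^d\times\mathbb{R}^d\times\Lambda;B(X^*)\big)$. The map $T\mapsto T^*$ from $B(X)$ to $B(X^*)$ is a conjugate-linear isometry when $X$ is complex and the pairing is bilinear. It therefore commutes with $\partial_\xi^\alpha$ and $\partial_y^\beta$, which are limits of difference quotients in norm. Hence the estimates
\[
\|\partial_\xi^\alpha\partial_{x,y}^\beta\widetilde\sigma\|_{B(X^*)}=\|\partial_\xi^\alpha\partial_y^\beta\sigma(A)(y,\xi,\lambda)\|_{B(X)}
\]
transfer verbatim, with no $x$-dependence. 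Smoothness of $\widetilde\sigma$ for each fixed $\lambda$ follows in the same way.

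Next I apply Theorem \ref{asympthm} to $A_\lambda^*$, whose symbol is defined by \eqref{symalldef}. For each $N$,
\[
\sigma(A^*)(x,\xi,\lambda)-\sum_{|\alpha|\le N-1}\frac1{\alpha!}\partial_\xi^\alpha D_y^\alpha\,\sigma(A)(y,\xi,\lambda)^*\Big|_{y=x}\in S^{m-(\rho-\delta)N}_{\rho,\delta;h}.
\]
The surviving terms are exactly $\frac1{\alpha!}\partial_\xi^\alpha D_x^\alpha\sigma(A)(x,\xi,\lambda)^*$. The summands lie in $S^{m-(\rho-\delta)|\alpha|}_{\rho,\delta;h}$ with orders decreasing to $-\infty$ (assuming $\rho>\delta$). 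This is precisely the meaning of the asymptotic expansion \eqref{compo}, and taking $N=0$ gives $A_\lambda^*\in\Psi^m_{\rho,\delta;h}\big(\mathbb{R}^d\times\Lambda;B(X^*)\big)$.

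The main obstacle is the interpretation: one must justify that the operator defined on $\mathscr{S}(\mathbb{R}^d;X^*)$ by the amplitude formula is the genuine adjoint. One must also justify that Theorem \ref{asympthm}, written for $B(X)$-valued amplitudes, applies on $X^*$. For the latter I would note that its proof uses only norm estimates, Taylor expansion and integration by parts in the Banach space $B(X^*)$.

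There is also a convention issue. The derivative $D_x^\alpha$ combined with the adjoint produces conjugations, since $(D^\alpha a)^*=(-1)^{|\alpha|}D^\alpha(a^*)$ under conjugate-linear adjoints. I would settle this by using the bilinear pairing \eqref{dual-product}, where $*$ is the linear Banach adjoint, so $D_x^\alpha$ passes through unchanged.

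The identification of the operator itself I would justify by approximating with the compactly supported symbols $\sigma_j=\sigma\,\eta(2^{-j}\cdot,2^{-j}\cdot)$. For these, Fubini gives $\langle A_{\lambda,j}f,g\rangle=\langle f,A_{\lambda,j}^*g\rangle$ directly. I would then pass to the limit using convergence in $\mathscr{S}$, as in \cite[Proposition 2.3]{XX19}.
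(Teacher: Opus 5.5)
Your route is the paper's. The paper likewise uses \eqref{pseudo-double-integral-Ad} to write $A_\lambda^*$ as the amplitude operator with amplitude $\sigma(A)(y,\xi,\lambda)^*$, and reads \eqref{compo} off Theorem \ref{asympthm}. Your check that this amplitude lies in $S^m_{\rho,\delta;h}$, and your truncation argument identifying the adjoint, are details the paper omits; they are fine.

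What is wrong is the way you settle the convention issue. Take the bilinear bracket \eqref{dual-product} with the linear Banach transpose $T\mapsto T'$. Then the Fubini computation you propose gives
\[
A_\lambda' g(x)=\int_{\mathbb{R}^d}\int_{\mathbb{R}^d}\sigma(A)(y,\xi,\lambda)'\,g(y)\,e^{-i(x-y)\cdot\xi}\,dy\,\bar{d}\xi ,
\]
so the phase has the opposite sign to \eqref{pseudo-double-integral-Ad}. Equivalently, the amplitude is $\sigma(A)(y,-\xi,\lambda)'$, and Theorem \ref{asympthm} then yields the transpose formula
\[
\sigma(A')(x,\xi,\lambda)\sim\sum_\alpha\frac{(-1)^{|\alpha|}}{\alpha!}\big(\partial_\xi^\alpha D_x^\alpha\sigma(A)'\big)(x,-\xi,\lambda),
\]
not \eqref{compo}. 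Already for $X=\mathbb{C}$ and $A=D_{x_1}$ the transpose is $-D_{x_1}$, with symbol $-\xi_1$, while \eqref{compo} read with a linear adjoint gives $\xi_1$.

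Both \eqref{pseudo-double-integral-Ad} and \eqref{compo} are correct for the sesquilinear pairing $\int\langle f(x),g(x)\rangle\,dx$ with the conjugate-linear adjoint. That is the Hilbert-space adjoint on $L_2(\mathcal{N})$ with which the theorem is used later, which is why Proposition \ref{cl M}(i) produces the order $\bar m$. In that convention there is nothing to settle. Theorem \ref{asympthm} applied to the amplitude $\sigma(A)(y,\xi,\lambda)^*$ produces $\partial_\xi^\alpha D_x^\alpha$ acting on $\sigma(A)^*$, which is how \eqref{compo} must be read. Moving $D_x^\alpha$ outside the adjoint would introduce exactly the factor $(-1)^{|\alpha|}$ you computed.

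The paper's bilinear statement of \eqref{dual-product} is loose in the same way, so you inherited the ambiguity. But your explicit fix goes in the wrong direction, and the Fubini check you planned would itself expose the sign. Relatedly, for a bilinear pairing the transpose is linear, not conjugate-linear as you say in your first step; that slip is harmless for the norm estimates.
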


\begin{proof}
	$A_{\lambda}$ is given by
	$$
	A_{\lambda}f(x)=\int_{\mathbb{R}^d}\int_{\mathbb{R}^d}  \sigma(A)(x, \xi,\lambda) f(y) e^{i(x-y) \cdot \xi} d y  \ \bar{d} \xi.
	$$
	For any $f \in \mathscr{S}(\mathbb{R}^d; X)$ and $g \in \mathscr{S}(\mathbb{R}^d; X^{\ast})$, by the duality relation
	$$
	\langle A_{\lambda} f, g\rangle=\langle f,A^*_{\lambda} g\rangle,
	$$
	we have
	$$
	A^*_{\lambda}g(x)=\int_{\mathbb{R}^d}\int_{\mathbb{R}^d}  \sigma(A)(y, \xi,\lambda)^{*} g(y) e^{i(x-y) \cdot \xi} d y  \ \bar{d} \xi.
	$$
	Applying Theorem \ref{asympthm}, we obtain
	$$
	\sigma(A^{*})(x, \xi,\lambda) \sim \sum_\alpha \frac{1}{\alpha !} \partial_{\xi}^\alpha D_x^\alpha \sigma(A)(x, \xi,\lambda)^{*},
	$$
	completing the proof.
\end{proof}

\begin{theorem}\label{asympcomthm}
	Let $A_{\lambda}\in \Psi_{\rho, \delta;h_{1}}^{m_{1}}\big(\mathbb{R}^d\times \Lambda; B(X)\big)$ and $B_{\lambda}\in \Psi_{\rho, \delta;h_{2}}^{m_{2}}\big(\mathbb{R}^d\times \Lambda; B(X)\big)$, their symbols are $\sigma(A)(x, \xi,\lambda)$ and $\sigma(B)(x, \xi,\lambda)$ respectively. Then the composition $C_{\lambda} = B_{\lambda} \circ A_{\lambda}\in \Psi_{\rho, \delta;h_{1}+h_{2}}^{m_{1}+m_{2}}\big(\mathbb{R}^d\times \Lambda; B(X)\big)$ is an operator-valued $\Psi$DO with parameter, whose symbol satisfies the relation
	
	$$
	\sigma(C)(x, \xi,\lambda) \sim \sum_\alpha \frac{1}{\alpha !}\partial_{\xi}^\alpha \sigma(B)(x, \xi,\lambda) D_x^\alpha \sigma(A)(x, \xi,\lambda).
	$$
\end{theorem}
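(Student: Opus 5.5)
The plan is the classical route via amplitudes: write $B_\lambda A_\lambda$ as a single operator with a $y$-dependent amplitude, then read off its symbol from Theorem \ref{asympthm}. The natural tool is the adjoint formula, Theorem \ref{adformula}. Applied twice, it lets us represent $A_\lambda$ with an amplitude depending only on the right variable. Concretely, set $\tilde\sigma(x,\xi,\lambda):=\sigma(A^*)(x,\xi,\lambda)$, which lies in $S^{m_1}_{\rho,\delta;h_1}$ with values in $B(X^*)$. By the double-integral formula \eqref{pseudo-double-integral-Ad} for the adjoint of an operator with symbol $\tilde\sigma$ (valued in $B(X^{**})$ and restricted to $X$),
$$A_\lambda f(x)=\int\!\!\int \tilde\sigma(y,\xi,\lambda)^{*}f(y)e^{i(x-y)\cdot\xi}\,dy\,\bar d\xi .$$
Taking Fourier transforms, this gives
$$\widehat{A_\lambda f}(\xi)=\int \tilde\sigma(y,\xi,\lambda)^{*}f(y)e^{-iy\cdot\xi}\,dy .$$

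Next, insert this into \eqref{eqsymdef} for $B_\lambda$. That yields
$$B_\lambda A_\lambda f(x)=\int\!\!\int \sigma(B)(x,\xi,\lambda)\,\tilde\sigma(y,\xi,\lambda)^{*}f(y)e^{i(x-y)\cdot\xi}\,dy\,\bar d\xi .$$
So $C_\lambda$ is given by the amplitude $c(x,y,\xi,\lambda)=\sigma(B)(x,\xi,\lambda)\tilde\sigma(y,\xi,\lambda)^*$. By the Leibniz rule and submultiplicativity of the $B(X)$ norm, $c$ lies in $S^{m_1+m_2}_{\rho,\delta;h_1+h_2}(\mathbb R^d\times\mathbb R^d\times\mathbb R^d\times\Lambda;B(X))$, with the $\lambda$-weights multiplying to $(1+|\lambda|^2)^{(h_1+h_2)/2}$. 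Theorem \ref{asympthm}, whose proof only uses the amplitude estimates, then gives
$$\sigma(C)\sim\sum_\beta\frac1{\beta!}\,\partial_\xi^\beta\bigl(\sigma(B)(x,\xi,\lambda)\,D_x^\beta\tilde\sigma(x,\xi,\lambda)^*\bigr).$$

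The remaining work is algebraic. Theorem \ref{adformula}, applied to $A^*$, gives $\tilde\sigma^*=\sigma(A^{**})\sim\sum_\gamma\frac1{\gamma!}\partial_\xi^\gamma D_x^\gamma\sigma(A)$, using $\sigma(A)^{**}=\sigma(A)$ on $X$. Substitute this expansion and expand $\partial_\xi^\beta$ of the product by Leibniz. The coefficients $\frac{1}{\beta!}\binom{\beta}{\mu}\frac{1}{\gamma!}$ then reorganize, exactly as in the scalar proof, by the identity $\sum_{\mu+\nu=\alpha}\frac{(-1)^{|\nu|}}{\mu!\nu!}\cdots$ (the terms with $\nu\neq0$ cancel). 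The result is $\sum_\alpha\frac1{\alpha!}\partial_\xi^\alpha\sigma(B)\,D_x^\alpha\sigma(A)$. The asymptotic equalities survive this manipulation because truncation errors stay in $S^{m_1+m_2-(\rho-\delta)N}_{\rho,\delta;h_1+h_2}$, and Proposition \ref{asymp} guarantees uniqueness modulo $S^{-\infty}$.

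I expect the main obstacle to be rigor rather than algebra. First, the oscillatory integrals must be justified by the cutoff approximation $\sigma_j=\sigma\,\eta(2^{-j}\cdot)$ described before \eqref{pseudo-double-integral-Ad}, with uniformity in $\lambda$. Second, the bidual issue needs care: the symbols are $B(X)$-valued, so one must check that $\tilde\sigma^*$ restricts to $X$ and that Theorem \ref{adformula} applies twice. If the bidual step is troublesome, I would avoid it by computing $\sigma(C)=e^{-ix\cdot\xi}B_\lambda A_\lambda e^{ix\cdot\xi}$ directly. Writing $A_\lambda e^{i\cdot\xi}=e^{ix\cdot\xi}\sigma(A)(x,\xi,\lambda)$ gives
$$\sigma(C)(x,\xi,\lambda)=\int\!\!\int\sigma(B)(x,\xi+\eta,\lambda)\,\sigma(A)(x+z,\xi,\lambda)e^{-iz\cdot\eta}\,dz\,\bar d\eta .$$
I would then Taylor-expand $\sigma(B)$ in $\eta$ and estimate the remainder by splitting into $|\eta|\le\frac12|\xi|$ and its complement, exactly as in the proof of Theorem \ref{asympthm}, concluding with Proposition \ref{asy after}.
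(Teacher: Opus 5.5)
Your main route is the paper's own: realize $A_\lambda$ as $(A_\lambda^*)^*$, so that $B_\lambda A_\lambda$ has amplitude $\sigma(B)(x,\xi,\lambda)\,\sigma(A^*)(y,\xi,\lambda)^*$. You then apply Theorem \ref{asympthm} and collapse the double sum with an alternating binomial identity.

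The step that fails is your expansion of $\tilde\sigma^*=\sigma(A^*)^*$. Theorem \ref{adformula} applied to $A^*$ gives $\sigma(A)=\sigma(A^{**})\sim\sum_\gamma\frac1{\gamma!}\partial_\xi^\gamma D_x^\gamma\tilde\sigma^*$. That expresses $\sigma(A)$ through $\tilde\sigma^*$, not the other way round, and $\tilde\sigma^*\neq\sigma(A^{**})$ in general. What you need is the inverse relation. You get it by taking adjoints in Theorem \ref{adformula} for $A$ itself and using $(D_x^\gamma g)^*=(-1)^{|\gamma|}D_x^\gamma(g^*)$:
$$\tilde\sigma^*\sim\sum_\gamma\frac{(-1)^{|\gamma|}}{\gamma!}\,\partial_\xi^\gamma D_x^\gamma\sigma(A)(x,\xi,\lambda).$$
This is the formula the paper uses.

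The sign is essential. Without it there is no $(-1)^{|\nu|}$ for the identity $\sum_{\mu+\nu=\kappa}\frac{(-1)^{|\nu|}}{\mu!\,\nu!}=0$ ($\kappa\neq0$) to act on. Already at first order your unsigned formula yields $\sum_k\bigl(\partial_{\xi_k}\sigma(B)\,D_{x_k}\sigma(A)+2\,\sigma(B)\,\partial_{\xi_k}D_{x_k}\sigma(A)\bigr)$ instead of $\sum_k\partial_{\xi_k}\sigma(B)\,D_{x_k}\sigma(A)$. With the sign restored, your reorganization goes through exactly as in the paper.

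Your fallback is correct and genuinely different. Computing $e^{-ix\cdot\xi}B_\lambda A_\lambda e^{ix\cdot\xi}$ directly gives
$$\sigma(C)=\iint\sigma(B)(x,\xi+\eta,\lambda)\,\sigma(A)(x+z,\xi,\lambda)\,e^{-iz\cdot\eta}\,dz\,\bar d\eta.$$
Taylor-expanding $\sigma(B)$ in $\eta$ then produces the terms $\frac1{\alpha!}\partial_\xi^\alpha\sigma(B)\,D_x^\alpha\sigma(A)$ at once. There are no adjoints, no bidual, and no sign bookkeeping, so it sidesteps exactly the point where your main route went wrong.

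The cost is that the remainder is not literally the one treated in Theorem \ref{asympthm}. The frequency shift sits only in $\sigma(B)$ and the $z$-dependence only in $\sigma(A)$, so the integrand is not an amplitude of the form $a(x,x+z,\xi+\eta)$. You must therefore redo the remainder estimate by hand:
\begin{itemize}
\item split $|\eta|$ against $|\xi|$;
\item integrate by parts in both $z$ and $\eta$ so that the oscillatory integral actually converges;
\item control $\partial_\xi^\alpha\sigma(B)(x,\xi+t\eta,\lambda)$ via $(1+|\xi+t\eta|^2)^{1/2}\le2(1+|\xi|^2)^{1/2}(1+|\eta|^2)^{1/2}$.
\end{itemize}
The $\lambda$-weights of the two factors simply multiply. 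The paper's route instead reuses Theorems \ref{asympthm} and \ref{adformula} as black boxes, paying with the adjoint and bidual bookkeeping you flagged.
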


\begin{proof}
	For $f \in \mathscr{S}(\mathbb{R}^d;X)$,
	$$
	(B_{\lambda} \circ A_{\lambda}) f=\int_{\mathbb{R}^d} \sigma(B)(x, \xi,\lambda) \widehat{A_{\lambda}f} (\xi) e^{ix\cdot\xi} \bar{d} \xi.
	$$
	It follows from \cite[Proposition 2.3]{XX19} that $(A^{\ast}_{\lambda}) ^{\ast}$ is continuous on $\mathscr{S}^{\prime}(\mathbb{R}^d; X^{\ast\ast})$. Restricting $f \in \mathscr{S}(\mathbb{R}^d;X)$ we have
	$$
	A_{\lambda} f=\big(\big( A^{\ast}_{\lambda}\big) ^{\ast} f\big)=\int_{\mathbb{R}^d}\int_{\mathbb{R}^d} \sigma(A^{\ast})(y,\xi,\lambda)^{\ast} f(y) e^{i( x-y)\cdot \xi}  d y \ \bar{d} \xi,
	$$
	thus,
	$$
	\widehat{A_{\lambda} f}(\xi)=\int_{\mathbb{R}^d} \sigma(A^{\ast})(y,\xi,\lambda)^{\ast} f(y) e^{-i y\cdot\xi}  d y,
	$$
	then we obtain
	$$
	(B_{\lambda} \circ A_{\lambda}) f=\int_{\mathbb{R}^d}\int_{\mathbb{R}^d} \sigma(B)(x, \xi,\lambda) \sigma(A^{\ast})(y,\xi,\lambda)^{\ast} f(y) e^{i( x-y)\cdot\xi} d y \ \bar{d} \xi,
	$$
	therefore
	\begin{align*}
		\sigma(B \circ A)(x, \xi,\lambda) & \left.\sim \sum_\alpha \frac{1}{\alpha !} \partial_{\xi}^\alpha D_y^\alpha \sigma(B)(x, \xi,\lambda) \sigma(A^{\ast})(y,\xi,\lambda)^{\ast}\right|_{y=x} \\
		& =\sum_\alpha \frac{1}{\alpha !} \partial_{\xi}^\alpha\left[\sigma(B)(x, \xi,\lambda) D_x^\alpha \sigma(A^{\ast})(x,\xi,\lambda)^{\ast}\right].
	\end{align*}
	Using $\sigma(A^{\ast})(x,\xi,\lambda)^{\ast} \sim \sum_\beta \frac{(-1)^{|\beta|}}{\beta !}\partial_{\xi}^\beta D_x^\beta \sigma(A)(x, \xi,\lambda)$, we have
	$$
	\sigma(B \circ A)(x, \xi,\lambda) \sim \sum_{\alpha, \beta} \frac{(-1)^{|\beta|}}{\alpha ! \beta !} \partial_{\xi}^a\left[\sigma(B)(x, \xi,\lambda) \partial_{\xi}^\beta D_x^{\alpha+\beta} \sigma(A)(x, \xi,\lambda)\right].
	$$
	Moreover, using the following binomial formula
	$$
	\sum_{\alpha+\beta=\gamma} \frac{\eta^\alpha \theta^\beta}{\alpha ! \beta !}=\frac{(\eta+\theta)^\gamma}{\gamma !},
	$$
	we obtain that $\sum_{\alpha+\beta=\gamma} \frac{(-1)^{|\beta|}}{\alpha ! \beta !}=0$ when $|\gamma| \neq 0$ .
	Denote $D_x^{\alpha+\beta} \sigma(A)(x, \xi,\lambda)$ by $q(x, \xi,\lambda)$, then
	
	\begin{align*}
		\sigma(B \circ A)(x, \xi,\lambda) & \sim \sum_{\alpha, \beta} \frac{(-1)^{|\beta|}}{\alpha ! \beta !} \partial_{\xi}^\alpha\left[\sigma(B)(x, \xi,\lambda)\partial_{\xi}^\beta q(x, \xi,\lambda)\right] \\
		& =\sum_\gamma \sum_{\alpha+\beta=\gamma} \frac{(-1)^{|\beta|}}{\alpha ! \beta !} \partial_{\xi}^\alpha\left[\sigma(B)(x, \xi,\lambda) \partial_{\xi}^\beta q(x, \xi,\lambda)\right] \\
		& =\sum_\gamma \sum_{\alpha_1+\alpha_2+\beta=\gamma} \frac{(-1)^{|\beta|}}{\alpha_{1} ! \alpha_{2} ! \beta !} \partial_{\xi}^{\alpha_{1}} \sigma(B)(x, \xi,\lambda) \cdot \partial_{\xi}^{\alpha_{2}+\beta} q(x, \xi,\lambda)  \\
		& =\sum_\gamma \sum_{\alpha_1}\left[\frac{\partial_{\xi}^{\alpha_{1}} \sigma(B)(x, \xi,\lambda)}{\alpha_{1} !} \sum_{\alpha_2+\beta=\gamma-\alpha_1} \frac{(-1)^{|\beta|}}{\alpha_{2} ! \beta !} \partial_{\xi}^{\alpha_2+\beta} q(x, \xi,\lambda)\right].
	\end{align*}
	
	Since $\sum_{\alpha_2+\beta=\gamma-\alpha_1} \frac{(-1)^{|\beta|}}{\alpha_{2} ! \beta !}=0$ when $\gamma-\alpha_1 \neq 0$ , we get
	\begin{align*}
		\sigma(B \circ A)(x, \xi,\lambda) & \sim \sum_\gamma \frac{1}{\gamma !} \partial_{\xi}^\gamma \sigma(B)(x, \xi,\lambda) \cdot q(x, \xi,\lambda) \\
		& =\sum_\gamma \frac{1}{\gamma !} \partial_{\xi}^\gamma \sigma(B)(x, \xi,\lambda) D_x^\gamma \sigma(A)(x, \xi,\lambda).
	\end{align*}
\end{proof}

\subsection{$L_{2}$-boundedness}\label{section l2}

In the sequel, we will mainly consider $\Psi$DOs (with parameter) whose symbols take values in some von Neumann algebran $\mathcal{M}$. If we take $X=L_1(\mathcal{M})+\mathcal{M}$, then $\mathcal{M}$ admits an isometric embedding into $B(X)$ by left multiplication. In this way, these $\mathcal{M}$-valued symbols can be seen as a special case of the $B(X)$-valued symbols defined in the previous section.
In the sequel, the semi-commutative algebra $L_{\infty}(\mathbb{R}^d) \overline{\otimes} \mathcal{M}$ with tensor trace, denoted by $\mathcal{N}$ .

As in Section \ref{section sym}, we now provide the definitions for $\mathcal{M}$-valued $\Psi$DOs with parameter.

\begin{definition}
	Let $m, \rho, \delta, h$ be real numbers with $0 \le \delta \le \rho \le 1$ (excluding the case $(\rho, \delta) = (1,1)$). The class $S_{\rho, \delta ; h}^m\big(\mathbb{R}^d \times \mathbb{R}^d \times \Lambda ; \mathcal{M}\big)$ consists of functions $\sigma(x, \xi, \lambda)$ satisfying the following conditions:
	
	\begin{enumerate}[$\rm (i)$]
		\item For every fixed $\lambda_0 \in \Lambda$, $\sigma\big(x, \xi, \lambda_0\big) \in C^{\infty}(\mathbb{R}^d \times \mathbb{R}^d ; \mathcal{M})$.
		
		\item For arbitrary multi-indices $\alpha$ and $\beta$, there exist constants $C_{\alpha, \beta} > 0$ such that
		$$
		\big\|\partial_\xi^\alpha \partial_x^\beta \sigma(x, \xi, \lambda)\big\|_{\mathcal{M}} \le C_{\alpha, \beta}(1+|\xi|^2)^{\frac{m-\rho|\alpha|+\delta|\beta|}{2}} (1+|\lambda|^2)^{\frac{h}{2}},
		$$
		for all $x \in \mathbb{R}^d$, $\xi \in \mathbb{R}^d$, and $\lambda \in \Lambda$.
	\end{enumerate}
\end{definition}

\begin{definition}
	Let $\sigma(x,\xi,\lambda) \in S_{\rho, \delta;h }^m\big(\mathbb{R}^d\times \mathbb{R}^d\times \Lambda; \mathcal{M}\big)$. For a function $f \in \mathscr{S}(\mathbb{R}^d;L_{2}(\mathcal{M}))$, the $\Psi$DO $A_{\lambda}$ is defined as the mapping $f \mapsto A_{\lambda} f$, given by
	\begin{equation}\label{M valued pdo}
		A_{\lambda}f(x) = \int_{\mathbb{R}^d}  \sigma(x, \xi,\lambda) \hat{f} (\xi) e^{ix \cdot \xi} \bar{d} \xi.
	\end{equation}
	
\end{definition}

As for $B(X)$-valued setting, the symbol $\sigma(x, \xi) \in S_{\rho, \delta}^m\big(\mathbb{R}^d \times \mathbb{R}^d ;\mathcal{M}\big)$ can be interpreted as an element of $S_{\rho,\delta;0 }^m\big(\mathbb{R}^d \times \mathbb{R}^d \times \Lambda ; \mathcal{M}\big)$ that is independent of $\lambda$. Typical examples of $\mathcal{M}$-valued $\Psi$DOs are Fourier multipliers and pointwise multipliers by $f \in C_c^\infty (\mathbb{R}^d; \mathcal{M})$:
\begin{enumerate}[$\rm (i)$]
	\item The Bessel potential $J^\alpha=(1-\Delta )^{\frac{\alpha}{2}}$ for $\alpha \in \mathbb{R}$, which is initially defined on $L_2(\mathbb{R}^d)$, is a $\Psi$DO with symbol $J_\alpha(\xi)=(1+|\xi|^2)^{\frac{\alpha}{2}}$, with order $\alpha$.
	\item More generally, Fourier multipliers $T_\phi$ which are initially defined on $L_2(\mathbb{R}^d)$ as $T_\phi (f) =  \mathcal{F}^{-1} ( \phi \hat{f})$ are $\Psi$DOs with symbol $\phi(\xi)$, if $\phi\in C^\infty (\mathbb{R}^d )$ satisfies $|\partial_{\xi}^{\alpha}\phi(\xi)|\lesssim (1+|\xi|^2)^{\frac{m-|\alpha|}{2}}$ for some $m.$ The order of $T_\phi$ is $m.$
	\item For $f\in C_c^\infty (\mathbb{R}^d; \mathcal{M})$, define $M_f(g)= fg $ for $g \in \mathscr{S}(\mathbb{R}^d;L_{2}(\mathcal{M}))$. Then $M_f$ is a $\Psi$DO with symbol $f(x)$, with order $0$.
\end{enumerate}

In the following, let us state the $L_{2}$-boundedness of $\mathcal{M}$-valued $\Psi$DOs with parameter. In the commutative case, the $L_{2}$-boundedness of $\Psi$DOs is proved using the Cotlar-Stein Almost Orthogonality Lemma \cite{Co55,St93}, which can be found in numerous works, such as \cite{RT10,St93,Ta81}. In \cite{XX19}, the authors prove the $L_{2}$-boundedness of $\mathcal{M}$-valued $\Psi$DOs, also with the help of Cotlar-Stein Almost Orthogonality Lemma. Since the proof in \cite{XX19} easily extends to the current parametric situation, we record the $L_{2}$-boundedness below without proof.

\begin{proposition}\label{l2bdd}
	Let $\sigma(A)(x, \xi, \lambda) \in S_{\delta, \delta; h}^0\big(\mathbb{R}^d \times \mathbb{R}^d \times \Lambda ; \mathcal{M}\big)$ with $0 \le \delta < 1$ and $h \le 0$. Then $A_\lambda$ is bounded on $L_2(\mathcal{N})$.
\end{proposition}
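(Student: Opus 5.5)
The plan is to adapt the Cotlar--Stein almost orthogonality argument of \cite{XX19} and carry the parameter $\lambda$ through every estimate. Since $h\le 0$, we have $(1+|\lambda|^2)^{h/2}\le 1$. Hence every symbol seminorm of $\sigma(A)(\cdot,\cdot,\lambda)$ in $S^0_{\delta,\delta}\big(\mathbb{R}^d\times\mathbb{R}^d;\mathcal{M}\big)$ is bounded uniformly in $\lambda\in\Lambda$, by the constants $C_{\alpha,\beta}$ of the parametric class. The statement therefore reduces to the unparametrized $L_2$-boundedness of \cite{XX19}, provided we check that the operator bound there depends only on finitely many seminorms $\sup_{x,\xi}(1+|\xi|^2)^{\delta(|\alpha|-|\beta|)/2}\|\partial_\xi^\alpha\partial_x^\beta\sigma\|_{\mathcal{M}}$. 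We then get $\sup_{\lambda}\|A_\lambda\|_{L_2(\mathcal{N})\to L_2(\mathcal{N})}<\infty$.

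To make the constant dependence explicit, I will sketch the argument. Write $\mathcal{M}$ acting on $L_2(\mathcal{M})$ by left multiplication, so $L_2(\mathcal{N})=L_2(\mathbb{R}^d;L_2(\mathcal{M}))$ is a Hilbert space. The symbol then acts as a $B(L_2(\mathcal{M}))$-valued function whose operator norm equals its $\mathcal{M}$-norm.

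\textbf{Case $\delta=0$.} Take a partition of unity $\{\psi_k\}$ on $\mathbb{R}^{2d}$ by translates of a fixed bump at unit scale on the lattice $\mathbb{Z}^{2d}$, and set $A_k=\Op(\psi_k\sigma)$. Each $A_k$ has a kernel $K_k(x,y)=\int\psi_k\sigma(x,\xi)e^{i(x-y)\xi}\bar d\xi$ with values in $\mathcal{M}$. By Schur's test, which applies verbatim to operator-valued kernels acting on $L_2(\mathbb{R}^d;H)$, $\|A_k\|$ is bounded by finitely many seminorms.

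For the almost orthogonality estimates we integrate by parts in $x$, $y$ and $\xi$. This gives
\[
\|A_j^*A_k\|+\|A_jA_k^*\|\le C_N(1+|j-k|)^{-N}.
\]
The constant $C_N$ depends only on seminorms of $\sigma$. The adjoints are handled through the formula \eqref{pseudo-double-integral-Ad}; the products remain $\mathcal{M}$-valued and the $\mathcal{M}$-norm is submultiplicative. The Cotlar--Stein lemma then sums the pieces.

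\textbf{Case $0<\delta<1$.} Perform a dyadic decomposition in $\xi$. On the shell $|\xi|\sim 2^{l}$, rescale by $(x,\xi)\mapsto(2^{-l\delta}x,2^{l\delta}\xi)$. This is the standard device that turns $S^0_{\delta,\delta}$ symbols into uniformly bounded $S^0_{0,0}$ symbols, and it commutes with the $\mathcal{M}$-valued structure because the rescaling acts only on $\mathbb{R}^d$. We then apply the $\delta=0$ estimates again, with uniform constants.

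\textbf{Main obstacle.} The main obstacle I expect is the integration by parts in the almost orthogonality estimates with noncommuting symbols. The order of the factors $\overline{\sigma}^{*}$ and $\sigma$ must be preserved, and no scalar commutation may be used. Only norm bounds $\|ab\|_{\mathcal{M}}\le\|a\|\|b\|$ are needed, however, so the scalar proof goes through unchanged. Uniformity in $\lambda$ is then immediate from $h\le 0$.
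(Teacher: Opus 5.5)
Your reduction matches the paper's, which records this proposition without proof and states only that the Cotlar--Stein argument of \cite{XX19} carries over to the parametric setting. Your observation that $h\le 0$ makes every $S^0_{\delta,\delta}$ seminorm of $\sigma(\cdot,\cdot,\lambda)$ uniform in $\lambda$ is exactly what that remark needs. With it, it suffices that the \cite{XX19} bound depend on finitely many seminorms. For each fixed $\lambda$ boundedness holds even without $h\le0$; the uniform bound is what Corollary \ref{inver coro} actually uses. Your $\delta=0$ sketch, with phase-space unit cells, Schur's test and Cotlar--Stein, is fine.

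Your self-contained sketch for $0<\delta<1$, however, does not close. Rescaling the shell $|\xi|\sim 2^l$ by $(x,\xi)\mapsto(2^{-l\delta}x,2^{l\delta}\xi)$ only gives $\sup_l\|A_l\|<\infty$ for $A_l=\Op(\varphi_l\sigma)$. Summing infinitely many shells requires almost orthogonality between them.

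One half of this is free: $A_lA_{l'}^*=0$ for $|l-l'|\ge 2$, by the $\xi$-supports. The other half has to be proved:
\[
\|A_l^*A_{l'}\|\lesssim_N 2^{-N(1-\delta)\max(l,l')},\qquad |l-l'|\ge 2 .
\]
This comes from integrating by parts in $x$: each $x$-derivative costs $2^{\delta\max(l,l')}$ and gains $|\xi-\eta|^{-1}\lesssim 2^{-\max(l,l')}$. Since $\sigma$ has no decay in $x$, this also needs a further localization in $x$ at scale $2^{-l\delta}$, or an equivalent symbolic argument, before running Cotlar--Stein over the combined index set.

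This is precisely where $\delta<1$ enters. Your rescaling step works verbatim for $\delta=1$, where $L_2$-boundedness of $S^0_{1,1}$ operators fails, so it cannot be the whole argument. Either supply this cross-shell estimate, or simply invoke the $S^0_{\delta,\delta}$ theorem of \cite{XX19} together with its finite seminorm dependence, as the paper does.
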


\begin{rk}
	Let $h \le 0$ and $0 \le \delta \le \rho \le 1$ (excluding the case $(\rho, \delta) = (1,1)$). Since
	$$
	S_{\rho, \delta;h}^0\big(\mathbb{R}^d \times \mathbb{R}^d \times \Lambda ; \mathcal{M}\big) \subset S_{\delta, \delta;h}^0\big(\mathbb{R}^d \times \mathbb{R}^d \times \Lambda ; \mathcal{M}\big) \cap S_{\rho, \rho;h}^0\big(\mathbb{R}^d \times \mathbb{R}^d \times \Lambda ; \mathcal{M}\big),
	$$
	it follows that $A_\lambda \in B(L_2(\mathcal{N}))$ whenever $\sigma(A)(x, \xi,\lambda) \in S_{\rho, \delta;h}^0\big(\mathbb{R}^d \times \mathbb{R}^d \times \Lambda ; \mathcal{M}\big)$.
\end{rk}

Building upon the $L_2$-boundedness of operators $A_\lambda$ established in Proposition \ref{l2bdd}, we now investigate their intrinsic algebraic structure within the von Neumann algebra $B(L_2(\mathbb{R}^d)) \overline{\otimes} \mathcal{M}$. This progression elevates the analytic boundedness to operator algebraic containment.

\begin{theorem}\label{thm-tensor algebra}
	Let $\sigma(A)(x, \xi,\lambda) \in S_{\rho, \delta, h}^0\big(\mathbb{R}^d \times \mathbb{R}^d \times \Lambda ; \mathcal{M}\big)$ with $h \le 0$ and $0 \le \delta \le \rho \le 1$ (excluding the case $(\rho, \delta) = (1,1)$). Then
	$$
	A_\lambda \in B(L_2(\mathbb{R}^d)) \overline{\otimes} \mathcal{M}.
	$$
\end{theorem}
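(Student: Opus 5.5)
By the preceding Remark and Proposition \ref{l2bdd}, $A_\lambda$ extends to a bounded operator on $L_2(\mathcal{N}) = L_2(\mathbb{R}^d)\otimes L_2(\mathcal{M})$. So only the algebraic containment remains. I will use the bicommutant theorem: since $(B(L_2(\mathbb{R}^d))\overline{\otimes}\mathcal{M})' = \mathbb{C}1\otimes \mathcal{M}'$, it suffices to show that $A_\lambda$ commutes with $1\otimes y$ for every $y\in\mathcal{M}'$. Here $\mathcal{M}$ acts on $L_2(\mathcal{M})$ by left multiplication. Its commutant $\mathcal{M}'$ is the algebra of right multiplications $R_b\colon \xi\mapsto \xi b$, $b\in\mathcal{M}$ (standard form). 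So the key identity to prove is
\[
A_\lambda\big((1\otimes R_b) f\big) = (1\otimes R_b) A_\lambda f,\qquad b\in\mathcal{M}.
\]

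First I check this on the dense subspace $\mathscr{S}(\mathbb{R}^d;L_2(\mathcal{M}))$. For $f$ in this space, $(1\otimes R_b)f(x)=f(x)b$ is again Schwartz, and its Fourier transform is $\widehat{fb}(\xi)=\hat f(\xi)b$, because right multiplication by $b$ is a bounded linear map on $L_2(\mathcal{M})$ and so passes through the Bochner integral. Then by \eqref{M valued pdo},
\[
A_\lambda(fb)(x)=\int_{\mathbb{R}^d}\sigma(x,\xi,\lambda)\hat f(\xi)\,b\,e^{ix\cdot\xi}\,\bar d\xi=(A_\lambda f)(x)\,b .
\]
This uses associativity $\sigma(\hat f b)=(\sigma\hat f)b$ in $L_2(\mathcal{M})$, and again that $R_b$ can be pulled outside the oscillatory integral. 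I justify the latter through the absolutely convergent truncations $A_{\lambda,j}$ described after \eqref{pseudo-double-integral}, passing to the limit $j\to\infty$ using continuity of $R_b$.

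Next I extend from the dense subspace to all of $L_2(\mathcal{N})$. Both $A_\lambda$ and $1\otimes R_b$ are bounded, and $\mathscr{S}(\mathbb{R}^d;L_2(\mathcal{M}))$ is dense in $L_2(\mathbb{R}^d;L_2(\mathcal{M}))$, so the commutation relation holds everywhere. Hence $A_\lambda\in(\mathbb{C}\otimes\mathcal{M}')'=B(L_2(\mathbb{R}^d))\overline{\otimes}\mathcal{M}$, by the commutant theorem for tensor products, $(\mathcal{A}\overline{\otimes}\mathcal{B})'=\mathcal{A}'\overline{\otimes}\mathcal{B}'$.

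The main obstacle I expect is rigour rather than ideas. I must identify $\mathcal{M}'$ precisely on $L_2(\mathcal{M})$, which is harmless in the standard form, where $\mathcal{M}'=J\mathcal{M}J$ consists of right multiplications. I must also justify exchanging $R_b$ with the oscillatory integral. The latter is handled by the truncation $A_{\lambda,j}f\to A_\lambda f$ in $\mathscr{S}$, which implies convergence in $L_2$.
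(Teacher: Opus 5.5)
Your proposal is correct and follows the paper's argument: the bicommutant theorem with $(B(L_2(\mathbb{R}^d))\overline{\otimes}\mathcal{M})' = I\overline{\otimes}\mathcal{M}'$, plus the commutation $\sigma(x,\xi,\lambda)a' = a'\sigma(x,\xi,\lambda)$ for $a'\in\mathcal{M}'$ inside the integral defining $A_\lambda$. Your explicit identification of $\mathcal{M}'$ with right multiplications, the appeal to Proposition~\ref{l2bdd} for boundedness, and the density extension from $\mathscr{S}(\mathbb{R}^d;L_2(\mathcal{M}))$ make rigorous what the paper does directly on $L_2(\mathcal{N})$. The truncation detour is unnecessary, since for Schwartz $f$ the $\xi$-integral already converges absolutely as a Bochner integral.
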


\begin{proof}
	The closed tensor product $B(L_2(\mathbb{R}^d)) \overline{\otimes} \mathcal{M}$ is defined as the weak operator topology (WOT) completion of the algebraic tensor product. By the double commutant theorem for von Neumann algebras, we obtain
	$$
	(B(L_2(\mathbb{R}^d)) \overline{\otimes} \mathcal{M})'' = B(L_2(\mathbb{R}^d)) \overline{\otimes} \mathcal{M}.
	$$
	The commutant of this tensor product is given by
	$$
	(B(L_2(\mathbb{R}^d)) \overline{\otimes} \mathcal{M})' = I \overline{\otimes} \mathcal{M}' \quad (I:= I_{L_2(\mathbb{R}^d)}).
	$$
	
	For any $a' \in \mathcal{M}'$ and $f \in L_2(\mathcal{N})$, we compute
	$$
	A_\lambda(I\otimes a')f(x) = \int_{\mathbb{R}^d} \sigma(x, \xi, \lambda) a' \hat{f}(\xi)  e^{i x \cdot \xi}  \bar{d} \xi = (I\otimes a') A_\lambda f(x).
	$$
	This computation shows that $A_\lambda$ commutes with all elements of the form $1 \overline{\otimes} \mathcal{M}'$, implying that
	$$
	A_\lambda \in (I\overline{\otimes} \mathcal{M}')'.
	$$
	Since $B(L_2(\mathbb{R}^d))$ is a factor (i.e., its commutant is trivial), it follows that
	$$
	(I \overline{\otimes} \mathcal{M}')' = B(L_2(\mathbb{R}^d)) \overline{\otimes} \mathcal{M}.
	$$
	Thus, we conclude that
	$$
	A_\lambda \in B(L_2(\mathbb{R}^d)) \overline{\otimes} \mathcal{M}.
	$$
\end{proof}

\begin{corollary}\label{inver coro}
	Let $A_\lambda \in \Psi_{1,0; h}^m\big(\mathbb{R}^d \times \Lambda ; \mathcal{M}\big)$ with $h \le 0$ and $m \le 0$. Then
	$$
	\big\|A_\lambda\big\|_{B(L_2(\mathbb{R}^d)) \overline{\otimes} \mathcal{M}} \le C\big(1+|\lambda|\big)^{h}.
	$$
\end{corollary}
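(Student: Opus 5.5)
The plan is to reduce to the order-zero case and then apply Proposition \ref{l2bdd} together with Theorem \ref{thm-tensor algebra}, keeping track of how the parameter $\lambda$ enters the constants.

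\textbf{Step 1 (reduction to order zero).} Write $\sigma(A)(x,\xi,\lambda)$ for the symbol of $A_\lambda$. Since $m\le 0$, the inequality $(1+|\xi|^2)^{\frac{m-|\alpha|}{2}}\le (1+|\xi|^2)^{-\frac{|\alpha|}{2}}$ gives the inclusion $S^m_{1,0;h}\subset S^0_{1,0;h}$. So we may assume $m=0$.

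\textbf{Step 2 (normalisation in $\lambda$).} Set
$$
\tilde\sigma(x,\xi,\lambda):=(1+|\lambda|^2)^{-\frac{h}{2}}\,\sigma(A)(x,\xi,\lambda).
$$
By the defining estimates, for every $\alpha,\beta$ we have
$$
\|\partial_\xi^\alpha\partial_x^\beta\tilde\sigma(x,\xi,\lambda)\|_{\mathcal{M}}\le C_{\alpha,\beta}(1+|\xi|^2)^{-\frac{|\alpha|}{2}},
$$
with $C_{\alpha,\beta}$ the same constants as for $\sigma(A)$, hence independent of $\lambda$. Thus $\tilde\sigma\in S^0_{1,0;0}$, and the corresponding operator is $\tilde A_\lambda=(1+|\lambda|^2)^{-h/2}A_\lambda$.

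\textbf{Step 3 (uniform bound).} By Proposition \ref{l2bdd} (via the Remark, since $S^0_{1,0;0}\subset S^0_{0,0;0}$), the operator $\tilde A_\lambda$ is bounded on $L_2(\mathcal{N})$. By Theorem \ref{thm-tensor algebra}, it lies in $B(L_2(\mathbb{R}^d))\overline{\otimes}\mathcal{M}$.

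The main point to check is that the norm bound depends only on finitely many seminorms $C_{\alpha,\beta}$ of the symbol, and not on $\lambda$. This is what the Cotlar--Stein argument of \cite{XX19} provides: the almost-orthogonality estimates for the dyadic pieces involve only finitely many symbol seminorms. Since these seminorms of $\tilde\sigma$ are uniform in $\lambda$, we get
$$
\sup_{\lambda\in\Lambda}\|\tilde A_\lambda\|\le C .
$$

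\textbf{Step 4 (conclusion).} Undoing the normalisation,
$$
\|A_\lambda\|\le C(1+|\lambda|^2)^{\frac{h}{2}}\le C'(1+|\lambda|)^{h}.
$$
The last inequality uses $(1+|\lambda|)^2/2\le 1+|\lambda|^2\le(1+|\lambda|)^2$ together with $h\le0$. Finally, the norm in $B(L_2(\mathbb{R}^d))\overline{\otimes}\mathcal{M}$ is the operator norm on $L_2(\mathcal{N})=L_2(\mathbb{R}^d)\otimes L_2(\mathcal{M})$, so this is exactly the claimed estimate.
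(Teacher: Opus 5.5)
Your proof is correct, but it takes a more elementary route than the paper. The paper introduces the auxiliary multiplier $\Phi_{m,h}(\lambda)$ with symbol $(1+|\xi|^2)^{\frac m2}(1+|\lambda|^2)^{\frac h2}$. It then uses the parametric composition theorem (Theorem~\ref{asympcomthm}) to place $\Phi_{-m,-h}(\lambda)A_\lambda$ in $\Psi^0_{1,0;0}$, and bounds this product via Theorem~\ref{thm-tensor algebra}. Finally it factors $A_\lambda=\Phi_{m,h}(\lambda)\bigl(\Phi_{-m,-h}(\lambda)A_\lambda\bigr)$ and uses $\|\Phi_{m,h}(\lambda)\|=\sup_\xi(1+|\xi|^2)^{\frac m2}(1+|\lambda|^2)^{\frac h2}\le C(1+|\lambda|)^h$.

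You instead note two things. For fixed $\lambda$ the weight $(1+|\lambda|^2)^{-\frac h2}$ is just a scalar. For $m\le 0$ the $\xi$-weight can be dropped via $S^m_{1,0;h}\subset S^0_{1,0;h}$. So you need no symbol composition at all, and no unbounded intermediate factor $\Phi_{-m,-h}(\lambda)$ when $m<0$.

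Your Step~3 also makes explicit the point that carries the estimate in both arguments. Membership in $B(L_2(\mathbb{R}^d))\overline{\otimes}\mathcal{M}$ alone gives no uniformity in $\lambda$. What is needed is that the Cotlar--Stein bound behind Proposition~\ref{l2bdd} depends only on finitely many symbol seminorms, which are $\lambda$-independent after normalisation. The paper passes over this with an ``i.e.'' after invoking Theorem~\ref{thm-tensor algebra}.

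The paper's factorisation is the more portable device: it is the standard way to reduce positive-order or Sobolev-space statements to order zero. For this corollary, though, your argument is shorter and more transparent.
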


\begin{proof}
	Consider the operator $\Phi_{m, h}(\lambda) \in \Psi_{1,0;h}^m\big(\mathbb{R}^d \times \Lambda; \mathcal{M}\big)$ whose symbol is given by $\varphi_m(x, \xi, \lambda) = (1+|\xi|^2)^{\frac{m}{2}}(1+|\lambda|^2)^{ \frac{h}{2}}$. The operator $\Phi_{m, h}(\lambda)$ acts as a multiplication operator by $(1+|\xi|^2)^{\frac{m}{2}}(1+|\lambda|^2)^{ \frac{h}{2}}$ on the Fourier transform $\hat{u}(\xi)$ in $L_2(\mathcal{N})$, and thus
	$$
	\big\|\Phi_{m, h }(\lambda)\big\|_{B(L_2(\mathbb{R}^d)) \overline{\otimes} \mathcal{M}} = \sup_{\xi \in \mathbb{R}^d}(1+|\xi|^2)^{\frac{m}{2}}(1+|\lambda|^2)^{ \frac{h}{2}} \le C\big(1+|\lambda|\big)^{h}.
	$$
	
	For $A_\lambda \in \Psi_{1,0;h}^m\big(\mathbb{R}^d \times \Lambda ; \mathcal{M}\big)$, by Theorem \ref{asympcomthm}, $\Phi_{-m, -h} \cdot A_\lambda\in \Psi_{1,0;0}^0\big(\mathbb{R}^d \times \Lambda; \mathcal{M}\big)$. By Theorem \ref{thm-tensor algebra}, we deduce that $\Phi_{-m,-h}(\lambda)\cdot A_\lambda \in B(L_2(\mathbb{R}^d)) \overline{\otimes} \mathcal{M}$, i.e.,
	$$
	\big\|\Phi_{-m,-h}(\lambda)\cdot A_\lambda\big\|_{B(L_2(\mathbb{R}^d)) \overline{\otimes} \mathcal{M}} \le C.
	$$
	Therefore,
	\begin{align*}
		\big\|A_\lambda\big\|_{B(L_2(\mathbb{R}^d)) \overline{\otimes} \mathcal{M}} &= \big\|\Phi_{ m, h}(\lambda) \cdot \big(\Phi_{-m,-h} (\lambda) \cdot A_\lambda\big)\big\|_{B(L_2(\mathbb{R}^d)) \overline{\otimes} \mathcal{M}} \\
		&\le \big\|\Phi_{ m, h}(\lambda)\big\|_{B(L_2(\mathbb{R}^d)) \overline{\otimes} \mathcal{M}} \cdot \big\|\Phi_{-m,-h}(\lambda) A_\lambda\big\|_{B(L_2(\mathbb{R}^d)) \overline{\otimes} \mathcal{M}} \\
		&\le C\big(1+|\lambda|\big)^{h}.
	\end{align*}
\end{proof}

\section{ Resolvent analysis for $\mathcal{M}$-valued elliptic classical  $\Psi$DOs}\label{section resol}

This section is devoted to the resolvent theory of $\mathcal{M}$-valued elliptic $\Psi$DOs. We construct parametrices and obtain a precise description of the resolvent as a $\Psi$DOs with parameter. These results extend the classical elliptic theory to the operator-valued setting and provide the analytic basis for defining complex powers.

From now on, we focus on $\Psi$DOs with symbols in the class $S_{\rho, \delta}^m $ for the special case $(\rho, \delta )  =  (1, 0)$, and drop the index $(1, 0 )$.

\subsection{$\mathcal{M}$-valued elliptic  $\Psi$DOs}

As discussed at the beginning of Section \ref{section l2}, $\mathcal{M}$-valued symbols can be regarded as a special case of $B(X)$-valued symbols. It is sometimes convenient to consider a narrower class of $\mathcal{M}$-valued $\Psi$DOs that are closed under most operational conditions.

\begin{definition}
	Let $m \in \mathbb{C}$.
	Fix a smooth function
	\begin{equation}
		\label{eq-cutoff}
		\varphi \in C^{\infty}(\mathbb{R}^d),
		\quad
		\varphi(\xi) =
		\begin{cases}
			0, & |\xi| \le \frac12,\\[1mm]
			1, & |\xi| \ge 1,
		\end{cases}
		\quad 0 \le \varphi(\xi) \le 1 \text{ for all } \xi \in \mathbb{R}^d.
	\end{equation}
	A smooth function $\sigma(x, \xi) \in C^{\infty}(\mathbb{R}^d \times \mathbb{R}^d; \mathcal{M})$ is called an $\mathcal{M}$-valued  classical symbol of order $m$, denoted by
	$$
	\sigma(x,\xi) \in CS^m\big(\mathbb{R}^d \times \mathbb{R}^d; \mathcal{M}\big),
	$$
	if there exists a sequence of symbols $\big\{\sigma_{m-j}(x,\xi)\big\}_{j\ge 0}$, with each $\sigma_{m-j}(x,\xi)$ positively homogeneous of degree $m-j$ in $\xi$ for $\xi \neq 0$, such that
	$$
	\sigma(x,\xi) \sim \sum_{j=0}^{\infty} \varphi(\xi)\, \sigma_{m-j}(x,\xi).
	$$
	
	Here, $\varphi$ is the fixed cutoff function \eqref{eq-cutoff}, whose sole purpose is to localize the homogeneous components away from the singularity at $\xi=0$, making $\varphi(\xi)\sigma_{m-j}(x,\xi)$ globally smooth. Although no radiality condition is required, it is common to choose $\varphi(\xi)=\chi(|\xi|)$ with $\chi \in C^\infty(\mathbb{R}_+)$ such that $\chi(r)=0$ for $r\le \tfrac12$ and $\chi(r)=1$ for $r\ge 1$.
\end{definition}

\begin{definition}
	Denote by $\mathrm{C}\Psi^m\big(\mathbb{R}^d;\mathcal{M}\big)$ the class of $\mathcal{M}$-valued $\Psi$DOs that can be written in the form \eqref{M valued pdo} (independent of $\lambda$) with $\sigma(x,\xi) \in CS^m\big(\mathbb{R}^d \times \mathbb{R}^d;\mathcal{M}\big)$. These operators are referred to as $\mathcal{M}$-valued classical $\Psi$DOs.
\end{definition}

\begin{remark}
	It is easily verified that
	$$
	CS^m(\mathbb{R}^d \times \mathbb{R}^d;\mathcal{M}) \subset S^{\Re m}(\mathbb{R}^d \times \mathbb{R}^d;\mathcal{M}).
	$$
\end{remark}

By adopting the same approach used for the $B(X)$-valued case, an $\mathcal{M}$-valued $\Psi$DO $A$ is defined as a linear and continuous mapping from $\mathscr{S}(\mathbb{R}^d ; L_{2}(\mathcal{M}))$ to itself. Since $\mathscr{S}(\mathbb{R}^d ; L_{2}(\mathcal{M}))$ is dense in $L_2(\mathcal{N})$, the $\mathcal{M}$-valued $\Psi$DO $A$ is densely defined. As discussed in Section 3, when $m \le 0$, the $\mathcal{M}$-valued $\Psi$DO $A$ (with parameter for $h \le 0$ or independent of the parameter) is bounded on $L_{2}(\mathcal{N})$.
When $\Re m > 0$, the $\mathcal{M}$-valued $\Psi$DO $A$ becomes an unbounded operator on $L_{2}(\mathcal{N})$. It follows from \eqref{dual-product} that the adjoint $A^*$ is uniquely determined by the duality relation
$$
\langle A u, v\rangle = \langle u, A^* v\rangle \quad \forall u \in \mathscr{S}(\mathbb{R}^d; L_2(\mathcal{M})), \, v \in D(A^*),
$$
where $D(A^*)$ consists of all $v \in L_2(\mathcal{N})$ for which the map $u \mapsto \langle A u, v\rangle$ is continuous in the $L_2(\mathcal{N})$-norm.

The class of $\mathcal{M}$-valued classical  $\Psi$DOs remains stable under both composition and adjoint operations. Specifically, the following proposition holds:

\begin{proposition}\label{cl M}
	\begin{enumerate}[$\rm (i)$]
		\item If $A \in \mathrm{C}\Psi^m\big(\mathbb{R}^d; \mathcal{M}\big)$, then $A^* \in \mathrm{C}\Psi^{\bar{m}}\big(\mathbb{R}^d; \mathcal{M}\big)$ with
		$$
		\sigma(A^{\ast})_{\bar{m}}(x,\xi) = \sigma(A)_{m}(x,\xi)^{\ast}.
		$$
		
		\item If $A \in \mathrm{C}\Psi^{m_1}\big(\mathbb{R}^d; \mathcal{M}\big)$ and $B \in \mathrm{C}\Psi^{m_2}\big(\mathbb{R}^d; \mathcal{M}\big)$, then $B A \in \mathrm{C}\Psi^{m_1+m_2}\big(\mathbb{R}^d; \mathcal{M}\big)$  with
		$$
		\sigma(BA)_{m_{1}+m_{2}}(x,\xi) = \sigma(B)_{m_{2}}(x,\xi)\sigma(A)_{m_{1}}(x,\xi).
		$$
		
	\end{enumerate}
\end{proposition}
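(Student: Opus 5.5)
The plan is to derive both statements from the unparametrized versions ($h=0$, no dependence on $\lambda$) of Theorem \ref{adformula} and Theorem \ref{asympcomthm}. The only extra work is to check that the asymptotic series they produce can be regrouped into homogeneous pieces, so that the result is again a classical symbol. Throughout, the $\mathcal{M}$-valued symbols are viewed as $B(X)$-valued symbols with $X=L_1(\mathcal{M})+\mathcal{M}$, acting by left multiplication, as in Section \ref{section l2}. Since taking adjoints in $B(X^*)$ of left multiplications corresponds to the involution of $\mathcal{M}$, the symbols obtained stay $\mathcal{M}$-valued.

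For (i), write $\sigma(A)\sim\sum_{j\ge 0}\varphi(\xi)\sigma_{m-j}(x,\xi)$. Theorem \ref{adformula} gives
\[
\sigma(A^*)\sim\sum_\alpha\frac{1}{\alpha!}\partial_\xi^\alpha D_x^\alpha\sigma(A)^*.
\]
Substituting the expansion of $\sigma(A)$, each term $\partial_\xi^\alpha D_x^\alpha\big(\varphi\,\sigma_{m-j}^*\big)$ splits into two parts.
\begin{itemize}
\item[(a)] $\varphi(\xi)\,\partial_\xi^\alpha D_x^\alpha\sigma_{m-j}(x,\xi)^*$, which is homogeneous of degree $\bar m-j-|\alpha|$ for $|\xi|\ge1$. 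Note that $\overline{\sigma_{m-j}}$-homogeneity of the adjoint gives degree $\bar m-j$, because $(t^{m}s)^*=t^{\bar m}s^*$ for $t>0$.
\item[(b)] Terms in which a derivative hits $\varphi$. These are compactly supported in $\xi$ (in $\tfrac12\le|\xi|\le1$) with bounded $x$-derivatives, hence lie in $S^{-\infty}$.
\end{itemize}
Collecting the terms (a) with $j+|\alpha|=k$ defines
\[
\tau_{\bar m-k}=\sum_{j+|\alpha|=k}\frac{1}{\alpha!}\partial_\xi^\alpha D_x^\alpha\sigma_{m-j}^*,
\]
which is homogeneous of degree $\bar m-k$. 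We must still verify that $\sigma(A^*)-\sum_{k<N}\varphi\,\tau_{\bar m-k}\in S^{\Re m-N}$. This follows by truncating the two asymptotic sums at finite order: the double remainders sit in $S^{\Re m-N}$ by the definition of $\sim$ and by Theorem \ref{asympthm}. Proposition \ref{asymp} is not even needed, since only finitely many terms of each degree occur. The leading term is $\tau_{\bar m}=\sigma_m^*$, which is the claim.

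For (ii), apply Theorem \ref{asympcomthm}:
\[
\sigma(BA)\sim\sum_\gamma\frac{1}{\gamma!}\partial_\xi^\gamma\sigma(B)\,D_x^\gamma\sigma(A).
\]
Insert both classical expansions. Products $\varphi^2\,\partial_\xi^\gamma b_{m_2-i}\,D_x^\gamma a_{m_1-j}$ differ from $\varphi\,\partial_\xi^\gamma b_{m_2-i}\,D_x^\gamma a_{m_1-j}$ by a term compactly supported in $\xi$, hence in $S^{-\infty}$. Terms with derivatives on $\varphi$ are likewise smoothing. Grouping by $i+j+|\gamma|=k$ gives homogeneous components $c_{m_1+m_2-k}$ with
\[
c_{m_1+m_2}=b_{m_2}a_{m_1},
\]
and the remainder estimate is handled as in (i).

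The main obstacle is bookkeeping rather than analysis: one must make sure that the remainder of each truncated expansion lies in the correct symbol class $S^{\Re(m_1+m_2)-N}$. This uses that products and derivatives of symbols behave additively in order, and that $\|st\|_{\mathcal{M}}\le\|s\|\|t\|$ keeps the estimates uniform in the operator-valued setting. A second point to check is that the homogeneous pieces are smooth on $\xi\neq0$ with bounded $x$-derivatives, so that $\varphi\,c_{m-k}$ is a genuine symbol of order $\Re m-k$. This holds because each piece is a finite combination of derivatives and products of such functions.
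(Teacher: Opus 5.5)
Your proposal is correct and follows the paper's own proof: apply Theorem~\ref{adformula} and Theorem~\ref{asympcomthm}, insert the homogeneous expansions, group the terms by degree $\bar m-j$ (respectively $m_1+m_2-j$), and read off the leading term. Your extra bookkeeping fills in details the paper leaves implicit: terms where derivatives fall on the cutoff $\varphi$ lie in $S^{-\infty}$, terms carrying $\varphi^2-\varphi$ also lie in $S^{-\infty}$, and the truncation remainders land in $S^{\Re m-N}$.
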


\begin{proof}
	$\rm (i)$ Applying Theorem \ref{adformula}, we obtain
	$$
	\sigma(A^{\ast})(x,\xi)  \sim   \sum_{j=0}^{\infty} \sum_{|\alpha|+k=j}  \frac{1}{\alpha !} \partial_{\xi}^\alpha D_x^\alpha  \sigma(A)_{m-k}(x, \xi)^{*}.
	$$
	For fixed $j$, $\partial_{\xi}^\alpha D_x^\alpha  \sigma(A)_{m-k}(x, \xi)^{*}$ is  positively homogeneous of degree $\bar{m}-j$ in $\xi$, so $A^* \in \mathrm{C}\Psi^{\bar{m}}\big(\mathbb{R}^d; \mathcal{M}\big)$. The leading term of the above asymptotic expansion gives
	$$
	\sigma(A^{\ast})_{\bar{m}}(x,\xi) = \sigma(A)_{m}(x,\xi)^{\ast}.
	$$

	$\rm (ii)$ Using Theorem \ref{asympcomthm}, we derive
	\begin{equation}\label{compo classical}
		\sigma(BA)_{m_{1}+m_{2}}(x,\xi)   \sim  \sum_{j=0}^{\infty}\sum_{|\alpha|+k+l=j} \frac{1}{\alpha !}\partial_{\xi}^\alpha \sigma(B)_{m_{2}-k}(x, \xi) D_x^\alpha \sigma(A)_{m_{1}-l}(x, \xi).
	\end{equation}
	Similar to $\rm (i)$, it is straightforward to see that all terms in the above asymptotic expansion are homogeneous, so $B A \in \mathrm{C}\Psi^{m_1+m_2}\big(\mathbb{R}^d; \mathcal{M}\big)$
	with leading term
	$$
	\sigma(BA)_{m_{1}+m_{2}}(x,\xi) = \sigma(B)_{m_{2}}(x,\xi)\sigma(A)_{m_{1}}(x,\xi).
	$$
\end{proof}

Next, we introduce the concept of ellipticity for $\mathcal{M}$-valued classical  $\Psi$DOs.

We begin by discussing ellipticity in the context of scalar-valued classical $\Psi$DOs.

\begin{lemma}\label{scalar elliptic}
	Let $\sigma(A)_{m}(x, \xi) \in \mathbb{C}$ denote the scalar principal symbol of a classical $\Psi$DO of order $m$. The following conditions are equivalent:
	
	\begin{enumerate}[$\rm (i)$]
		\item $\sigma(A)_{m}(x, \xi) \neq 0$ for all $(x, \xi) \in \mathbb{R}^d \times (\mathbb{R}^d \setminus \{0\})$, and there exists a constant $C_1 > 0$ such that
		$$
		\left|\sigma(A)_{m}(x, \xi)^{-1}\right| \le C_1 |\xi|^{-\Re m}
		$$
		uniformly for all $(x, \xi) \in \mathbb{R}^d \times (\mathbb{R}^d \setminus \{0\})$.
		
		\item There exists a constant $C > 0$ such that
		$$
		\left|\sigma(A)_{m}(x, \xi)\right| \ge C |\xi|^{\Re m}
		$$
		uniformly for all $(x, \xi) \in \mathbb{R}^d \times (\mathbb{R}^d \setminus \{0\})$.
	\end{enumerate}
\end{lemma}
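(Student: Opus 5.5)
This is a direct equivalence for scalars, proved by writing both conditions for the same complex number $\sigma(A)_m(x,\xi)$ and inverting the bound. The key observation is that for a nonzero complex number $a$ one has $|a^{-1}| = |a|^{-1}$. Also $|\xi|^{\Re m}>0$ for $\xi\neq 0$, so dividing by it is harmless.

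For (i) $\Rightarrow$ (ii), fix $(x,\xi)$ with $\xi\neq0$. By (i), $\sigma(A)_m(x,\xi)\neq 0$, so
\[
|\sigma(A)_m(x,\xi)|^{-1} = |\sigma(A)_m(x,\xi)^{-1}| \le C_1|\xi|^{-\Re m}.
\]
Taking reciprocals of these positive quantities gives $|\sigma(A)_m(x,\xi)| \ge C_1^{-1}|\xi|^{\Re m}$. Hence (ii) holds with $C = C_1^{-1}$, and the constant is uniform in $(x,\xi)$ because $C_1$ is.

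For (ii) $\Rightarrow$ (i), fix $(x,\xi)$ with $\xi\neq 0$. Then (ii) gives $|\sigma(A)_m(x,\xi)| \ge C|\xi|^{\Re m} > 0$, so $\sigma(A)_m(x,\xi)\neq 0$ and its inverse exists. Inverting the inequality,
\[
|\sigma(A)_m(x,\xi)^{-1}| = |\sigma(A)_m(x,\xi)|^{-1} \le C^{-1}|\xi|^{-\Re m}.
\]
This is (i) with $C_1 = C^{-1}$, again uniformly in $(x,\xi)$.

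No step here is a genuine obstacle. The only point worth recording is the one the surrounding text is likely setting up: the proof uses the identity $|a^{-1}|=|a|^{-1}$, which is special to scalars. For an $\mathcal{M}$-valued symbol, invertibility together with a bound on $\|\sigma^{-1}\|$ is not equivalent to a lower bound on $\|\sigma\|$. One must instead use $\|\sigma v\|\ge C\|v\|$-type lower bounds, applied to both $\sigma$ and $\sigma^*$. This is why the operator-valued definition of ellipticity will need to be phrased through condition (i), and the scalar lemma is only a motivating equivalence.
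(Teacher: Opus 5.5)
Your proof is correct and is essentially the paper's argument. Both directions rest on $|a^{-1}| = |a|^{-1}$ for a nonzero complex number (the paper writes this as $1 = |\sigma(A)_m|\,|\sigma(A)_m^{-1}|$), and both yield the constants $C = C_1^{-1}$ and $C_1 = C^{-1}$. Your closing remark about the operator-valued case also matches the paper's subsequent discussion: there (E1) implies (E2), but the converse fails for $\mathcal{M}$-valued symbols, as the unilateral shift example shows.
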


\begin{proof}
	$\rm (i)$ $\Rightarrow$ $\rm (ii)$:
	Since $\sigma(A)_{m}(x, \xi) \neq 0$, it is invertible. By the definition of the inverse, we have
	$$
	1 = \left|\sigma(A)_{m}(x, \xi) \cdot \sigma(A)_{m}(x, \xi)^{-1}\right| = \left|\sigma(A)_{m}(x, \xi)\right| \cdot \left|\sigma(A)_{m}(x, \xi)^{-1}\right|.
	$$
	Using the bound from $\rm (i)$, we deduce
	$$
	\left|\sigma(A)_{m}(x, \xi)\right| = \frac{1}{\left|\sigma(A)_{m}(x, \xi)^{-1}\right|} \ge C_1^{-1} |\xi|^{\Re m}.
	$$
	This inequality holds uniformly for all $(x, \xi) \in \mathbb{R}^d \times (\mathbb{R}^d \setminus \{0\})$, establishing (ii) with $C = C_1^{-1}$.
	
	$\rm (ii)$ $\Rightarrow$ $\rm (i)$:
	The inequality $\left|\sigma(A)_{m}(x, \xi)\right| \ge C |\xi|^{\Re m} > 0$ immediately implies that $\sigma(A)_{m}(x, \xi) \neq 0$ for all $(x, \xi) \in \mathbb{R}^d \times (\mathbb{R}^d \setminus \{0\})$. Therefore, $\sigma(A)_{m}(x, \xi)$ is invertible in $\mathbb{C}$. Using the bound from (ii), we obtain
	$$
	\left|\sigma(A)_{m}(x, \xi)^{-1}\right| = \frac{1}{\left|\sigma(A)_{m}(x, \xi)\right|} \le C^{-1} |\xi|^{-\Re m}.
	$$
	This inequality holds uniformly for all $(x, \xi) \in \mathbb{R}^d \times (\mathbb{R}^d \setminus \{0\})$, establishing (i) with $C_1 = C^{-1}$.
\end{proof}

For the $\mathcal{M}$-valued case, the conditions (i) and (ii) in Lemma \ref{scalar elliptic} now involve the operator norm $\|\cdot\|_{\mathcal{M}}$ and the operator modulus $|\sigma(A)_{m}| = (\sigma(A)_{m}^* \sigma(A)_{m})^{\frac{1}{2}}$. Specifically, they are stated as follows:
\begin{enumerate}[$\rm (E1)$]
	\item  \label{E1-M} $\sigma(A)_{m}(x, \xi)^{-1} \in \mathcal{M}$ for all $(x, \xi) \in \mathbb{R}^d \times (\mathbb{R}^d \setminus \{0\})$, and there exists a constant $C_1 > 0$ such that
	$$
	\big\|\sigma(A)_{m}(x, \xi)^{-1}\big\|_{\mathcal{M}} \le C_1 |\xi|^{-\Re m}
	$$
	uniformly for all $(x, \xi) \in \mathbb{R}^d \times (\mathbb{R}^d \setminus \{0\})$.
	\item\label{E2-M}  There exists a constant $C_2 > 0$ such that
	$$
	|\sigma(A)_{m}(x, \xi)| \ge C_2  |\xi|^{\Re m}
	$$
	uniformly for all $(x, \xi) \in \mathbb{R}^d \times (\mathbb{R}^d \setminus \{0\})$.
\end{enumerate}

The following analysis reveals a critical difference: while these conditions are equivalent in the scalar-valued case, this equivalence generally fails for operator-valued symbols.

\begin{proposition}\label{M value elliptic}
	(E1) implies (E2).
\end{proposition}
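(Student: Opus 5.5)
The plan is to read (E2) as an operator inequality in $\mathcal{M}$: $|\sigma(A)_m(x,\xi)| \ge C_2|\xi|^{\Re m}\cdot 1$ in the order of self-adjoint elements. Fix $(x,\xi)$ with $\xi\neq 0$, write $a=\sigma(A)_m(x,\xi)$, and let $c=C_1|\xi|^{-\Re m}$, so that (E1) gives $a^{-1}\in\mathcal{M}$ with $\|a^{-1}\|_{\mathcal{M}}\le c$. Everything then reduces to a statement about a single invertible element $a$ of a von Neumann algebra: if $\|a^{-1}\|\le c$, then $|a|\ge c^{-1}$.

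To prove this, I work on the Hilbert space $H$ on which $\mathcal{M}$ acts. For every $h\in H$,
$$\|h\| = \|a^{-1}ah\| \le c\,\|ah\|,$$
so $\|ah\|\ge c^{-1}\|h\|$. Since $\|ah\|^2=\langle a^*a h,h\rangle$, this gives $\langle a^*a\,h,h\rangle\ge c^{-2}\|h\|^2$, that is, $a^*a\ge c^{-2}\cdot 1$. Then I take square roots. The map $t\mapsto t^{1/2}$ is operator monotone on $[0,\infty)$ (L\"owner--Heinz), so $|a|=(a^*a)^{1/2}\ge c^{-1}\cdot 1$. Alternatively, without operator monotonicity, the spectrum of $a^*a$ lies in $[c^{-2},\infty)$, so by functional calculus the spectrum of $|a|$ lies in $[c^{-1},\infty)$, which is the same inequality. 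Substituting $c^{-1}=C_1^{-1}|\xi|^{\Re m}$ yields (E2) with $C_2=C_1^{-1}$, uniformly in $(x,\xi)$, because $C_1$ is uniform.

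The only step needing care is passing from the norm bound on $a^{-1}$ to an \emph{operator} lower bound on $|a|$. In the scalar proof of Lemma \ref{scalar elliptic} the identity $|a||a^{-1}|=1$ was used, and it has no direct analogue here: $|a^{-1}|\neq|a|^{-1}$ in general. That is why I route the argument through the vector inequality $\|ah\|\ge c^{-1}\|h\|$ and spectral calculus rather than imitating the scalar computation. No earlier results of the paper are needed beyond the definitions of (E1) and (E2).
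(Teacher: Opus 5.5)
Your argument is correct and is essentially the paper's own. Both derive $\|ah\|\ge \|a^{-1}\|^{-1}\|h\|$ from (E1) and turn it into the spectral lower bound $|a|\ge C_1^{-1}|\xi|^{\Re m}\cdot 1$, giving $C_2=C_1^{-1}$. Your passage through $a^*a\ge c^{-2}$ followed by operator monotonicity (or spectral mapping) of the square root is slightly more careful than the paper's wording, which says the spectrum of $|a|$ consists of the squares of the singular values.
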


\begin{proof}
	Assume (E1) holds. Then $\sigma(A)_{m}(x, \xi)$ is invertible in $\mathcal{M}$, and
	$$
	\big\|\sigma(A)_{m}(x, \xi)^{-1}\big\|_{\mathcal{M}} \le C_1 |\xi|^{-\Re m}.
	$$
	For any vector $u \in L_2(\mathcal{M})$ with $\|u\|_{L_2(\mathcal{M})} = 1$, we have
	\begin{align*}
		&\big\|\sigma(A)_{m}(x, \xi) u\big\|_{L_2(\mathcal{M})} \\
		= &\big\|\sigma(A)_{m}(x, \xi) u\big\|_{L_2(\mathcal{M})} \cdot \big\|\sigma(A)_{m}(x, \xi)^{-1}\big\|_{\mathcal{M}} \cdot \big\|\sigma(A)_{m}(x, \xi)^{-1}\big\|_{\mathcal{M}}^{-1} \\
		\ge &C_1^{-1} |\xi|^{\Re m}.
	\end{align*}
	This implies that the smallest singular value of $\sigma(A)_{m}(x, \xi)$ is at least $C_1^{-1} |\xi|^{\Re m}$. The modulus $|\sigma(A)_{m}(x, \xi)| = (\sigma(A)_{m}^* \sigma(A)_{m})^{\frac{1}{2}}$ is a positive operator whose spectrum consists precisely of the squares of the singular values of $\sigma(A)_{m}(x, \xi)$. Consequently, the smallest point in the spectrum of $|\sigma(A)_{m}(x, \xi)|$ satisfies
	$$
	\inf \operatorname{Spec}\big(|\sigma(A)_{m}(x, \xi)|\big) \ge C_1^{-1} |\xi|^{\Re m}.
	$$
	The spectral infimum being bounded below by $C_1^{-1} |\xi|^{\Re m}$ is equivalent to the operator inequality
	$$
	|\sigma(A)_{m}(x, \xi)| \ge C_1^{-1} |\xi|^{\Re m} I.
	$$
	Thus, (E2) holds with $C_2 = C_1^{-1}$.
\end{proof}

\begin{remark}
	In general, (E2) does not imply (E1). A counterexample can be constructed on the separable Hilbert space $H=\ell^2(\mathbb{N})$ with orthonormal basis $\{e_n\}_{n\in\mathbb{N}}$. Define the unilateral shift operator $S:H\to H$ by
	$$
	S e_n = e_{n+1}, \quad n \in \mathbb{N}.
	$$
	Then $S^*S=I$, but $S$ is not surjective since $e_0$ does not belong to the range of $S$. Consider
	$$
	\sigma(A)_m(x,\xi) = S\,|\xi|^{m}.
	$$
	We have
	$$
	\|\sigma(A)_m(x,\xi)\|_{B(H)} = |\xi|^{\Re m},
	\qquad
	|\sigma(A)_m(x,\xi)| = (S^*S)^{\frac{1}{2}}\,|\xi|^{\Re m} = |\xi|^{\Re m} I.
	$$
	Thus (E2) holds with $C_2=1$. However, $\sigma(A)_m(x,\xi)$ is nowhere invertible since $S$ is not surjective, so (E1) fails.
\end{remark}

\begin{proposition}\label{sa M elliptic}
	If the principal symbol $\sigma(A)_{m}(x, \xi)$ is normal, i.e.,
	$$\sigma(A)_{m}(x, \xi) \sigma(A)_{m}(x, \xi)^* = \sigma(A)_{m}(x, \xi)^* \sigma(A)_{m}(x, \xi)$$
	for all $(x, \xi) \in \mathbb{R}^d \times (\mathbb{R}^d \setminus \{0\})$, then (E1) and (E2) are equivalent.
\end{proposition}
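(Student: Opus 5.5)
Since Proposition \ref{M value elliptic} already gives (E1) $\Rightarrow$ (E2), only the converse needs proof. So assume (E2) holds and that $a:=\sigma(A)_{m}(x,\xi)$ is normal, with $(x,\xi)$ fixed and $\xi\neq 0$. The plan is to show that $a$ is invertible in $\mathcal{M}$ and that
$$\|a^{-1}\|_{\mathcal{M}}\le C_2^{-1}|\xi|^{-\Re m}.$$
This gives (E1) with $C_1=C_2^{-1}$.

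\textbf{Step 1: $a$ is bounded below.} Set $c:=C_2|\xi|^{\Re m}>0$. Hypothesis (E2) says $|a|\ge c$. Since $t\mapsto t^2$ is operator monotone on commuting positive operators (here $|a|$ and $c$ commute), we get $a^*a=|a|^2\ge c^2$. Hence
$$\|au\|^2=\langle a^*a u,u\rangle\ge c^2\|u\|^2$$
for every vector $u$ in the Hilbert space on which $\mathcal{M}$ acts. In particular $a$ is injective and has closed range.

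\textbf{Step 2: $a$ is surjective.} This is where normality is used; the shift example in the preceding remark shows the step fails without it. Normality gives $aa^*=a^*a\ge c^2$, so the Step 1 argument applies to $a^*$ as well, and $a^*$ is injective. Therefore
$$\overline{\operatorname{ran}(a)}=\ker(a^*)^{\perp}$$
is the whole space. Since the range of $a$ is closed, $a$ is surjective, hence a bijection. By the bounded inverse theorem $a^{-1}$ is bounded, and the lower bound from Step 1 gives $\|a^{-1}\|\le c^{-1}$.

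\textbf{Step 3: $a^{-1}\in\mathcal{M}$.} Let $u'$ be a unitary in $\mathcal{M}'$. It commutes with $a$, and therefore also with $a^{-1}$, because
$$a^{-1}u'=a^{-1}u'aa^{-1}=a^{-1}au'a^{-1}=u'a^{-1}.$$
By the bicommutant theorem, $a^{-1}\in\mathcal{M}''=\mathcal{M}$.

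The bound $\|a^{-1}\|\le C_2^{-1}|\xi|^{-\Re m}$ holds uniformly in $(x,\xi)$, so (E1) follows. The only substantive point is the surjectivity in Step 2; the other steps are routine.
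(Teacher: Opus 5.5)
Your proof is correct, but it takes a different route from the paper's.

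The paper argues spectrally. For normal $a=\sigma(A)_m(x,\xi)$, the continuous functional calculus gives $\operatorname{Spec}(|a|)=\{|z|: z\in\operatorname{Spec}(a)\}$. So $|a|\ge c$ forces $0\notin\operatorname{Spec}(a)$. The norm bound is then dispatched by a one-line reference back to Proposition \ref{M value elliptic}, and membership of the inverse in $\mathcal{M}$ is left implicit (spectral permanence).

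You instead work directly in the Hilbert space. The bound $a^*a\ge c^2$ gives $\|au\|\ge c\|u\|$, hence injectivity and closed range. Normality is used only to transfer this lower bound to $aa^*$, which gives dense range. The bounded inverse theorem and the bicommutant theorem then finish the proof.

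The paper's version is shorter once the spectral mapping theorem is taken for granted. Yours is more elementary and makes two points explicit that the paper glosses over: the bound $\|a^{-1}\|\le C_2^{-1}|\xi|^{-\Re m}$ and the fact that $a^{-1}\in\mathcal{M}$. It also isolates exactly what normality contributes. Your argument proves (E1) under the weaker two-sided hypothesis that both $a^*a\ge c^2$ and $aa^*\ge c^2$, and the shift example shows the second condition cannot be dropped.
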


\begin{proof}
	For a normal operator, invertibility is equivalent to $0 \notin \operatorname{Spec}(\sigma(A)_{m})$. Since $\operatorname{Spec}(|\sigma(A)_{m}|) = |\operatorname{Spec}(\sigma(A)_{m})|$ for normal operators, $\sigma(A)_{m}$ is invertible if and only if $|\sigma(A)_{m}|$ is invertible. Norm estimates follow as in Proposition \ref{M value elliptic}.
\end{proof}

Therefore, we prefer to give the definition for $\mathcal{M}$-valued elliptic  $\Psi$DOs by (E1).

\begin{definition}\label{def M elliptic}
	An $\mathcal{M}$-valued classical  $\Psi$DO $A \in \mathrm{C}\Psi^m\big(\mathbb{R}^d; \mathcal{M}\big)$ is elliptic if its principal symbol satisfies
	$$
	\sigma(A)_m(x, \xi)^{-1} \in \mathcal{M}
	$$
	for all $(x, \xi) \in \mathbb{R}^d \times (\mathbb{R}^d \setminus \{0\})$, and there exists a constant $C > 0$ such that
	$$
	\big\|\sigma(A)_{m}(x, \xi)^{-1}\big\|_{\mathcal{M}} \le C |\xi|^{-\Re m}
	$$
	uniformly for all $(x, \xi) \in \mathbb{R}^d \times (\mathbb{R}^d \setminus \{0\})$.
\end{definition}

\subsection{The parametrices of $\mathcal{M}$-valued elliptic  $\Psi$DOs}

In the scalar-valued case, a key feature of elliptic $\Psi$DOs is the existence of parametrices. Analogously, in the $\mathcal{M}$-valued case, parametrices can be constructed for $\mathcal{M}$-valued elliptic  $\Psi$DOs.

\begin{definition}
	Let $A \in \mathrm{C}\Psi^m\big(\mathbb{R}^d; \mathcal{M}\big)$ be elliptic. If there exists an $\mathcal{M}$-valued $\Psi$DO $B$ such that
	\begin{align*}
		& B A = I + R, \quad R \in \Psi^{-\infty}\big(\mathbb{R}^d; \mathcal{M}\big), \\
		& A B = I + R', \quad R' \in \Psi^{-\infty}\big(\mathbb{R}^d; \mathcal{M}\big),
	\end{align*}
	then $B$ is called the parametrix of $A$.
\end{definition}

\begin{theorem}\label{parabelong}
	Let $A \in \mathrm{C}\Psi^m\big(\mathbb{R}^d; \mathcal{M}\big)$ be an $\mathcal{M}$-valued elliptic  $\Psi$DO. Then there exists an $\mathcal{M}$-valued $\Psi$DO $B \in \mathrm{C}\Psi^{-m}\big(\mathbb{R}^d; \mathcal{M}\big)$, such that $B$ is the parametrix of $A$.
\end{theorem}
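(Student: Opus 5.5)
We follow the classical symbolic parametrix construction, done with some care about the order of operator-valued factors. Write $a_m:=\sigma(A)_m$ and let $a_{m-j}$ be the lower homogeneous terms of $\sigma(A)$. By Definition \ref{def M elliptic}, $a_m(x,\xi)^{-1}\in\mathcal{M}$ for $\xi\neq0$ with $\|a_m(x,\xi)^{-1}\|_{\mathcal{M}}\le C|\xi|^{-\Re m}$. The function $b_{-m}(x,\xi):=a_m(x,\xi)^{-1}$ is smooth on $\mathbb{R}^d\times(\mathbb{R}^d\setminus\{0\})$ and positively homogeneous of degree $-m$. Smoothness holds because inversion is real-analytic on the open group of invertibles of the Banach algebra $\mathcal{M}$. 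Its derivatives are given by the usual formulas, for instance $\partial_{\xi_k}b_{-m}=-b_{-m}(\partial_{\xi_k}a_m)b_{-m}$. By induction, $\partial_\xi^\alpha\partial_x^\beta b_{-m}$ is a finite sum of products of the form
\[
b_{-m}(\partial^{\gamma_1}a_m)\,b_{-m}\cdots(\partial^{\gamma_r}a_m)\,b_{-m}.
\]
Homogeneity, together with the uniform bound on $b_{-m}$ and the bounds on $\partial^\gamma a_m$ on the unit sphere, then gives $\|\partial_\xi^\alpha\partial_x^\beta b_{-m}\|\lesssim|\xi|^{-\Re m-|\alpha|}$. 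Hence $\varphi(\xi)b_{-m}\in S^{-\Re m}$, with $\varphi$ the cutoff \eqref{eq-cutoff}.

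Next we build the lower-order terms recursively, keeping the composition order of Theorem \ref{asympcomthm}, in which $\sigma(BA)\sim\sum_\alpha\frac1{\alpha!}\partial_\xi^\alpha\sigma(B)\,D_x^\alpha\sigma(A)$. We require that the degree $-j$ part of \eqref{compo classical} vanish for each $j\ge1$. This determines
\[
b_{-m-j}:=-\Big(\sum_{\substack{|\alpha|+k+l=j\\ k<j}}\frac1{\alpha!}\,\partial_\xi^\alpha b_{-m-k}\,D_x^\alpha a_{m-l}\Big)\,a_m^{-1},
\]
where it is essential that $a_m^{-1}$ multiplies on the right. Each $b_{-m-j}$ is homogeneous of degree $-m-j$ and satisfies symbol estimates by the same derivative bookkeeping. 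Proposition \ref{asymp} then gives a symbol $\sigma_B\sim\sum_j\varphi\, b_{-m-j}$, and we set $B_L:=\mathrm{Op}(\sigma_B)\in \mathrm{C}\Psi^{-m}(\mathbb{R}^d;\mathcal{M})$. By Theorem \ref{asympcomthm}, $\sigma(B_LA)-1$ has every homogeneous component equal to zero. The cutoff differences $\varphi-1$ and $\varphi^2-\varphi$ are compactly supported in $\xi$, so they contribute only $S^{-\infty}$ terms. Hence $B_LA=I+R$ with $R\in\Psi^{-\infty}$. Symmetrically, we construct $B_R$ with $AB_R=I+R'$, this time putting $a_m^{-1}$ on the left in the recursion.

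Finally we identify the two one-sided parametrices. Consider $B_LAB_R$. Grouping it as $(B_LA)B_R$ gives $B_R+RB_R$, and grouping it as $B_L(AB_R)$ gives $B_L+B_LR'$. Therefore $B_L-B_R=RB_R-B_LR'\in\Psi^{-\infty}$, using Theorem \ref{asympcomthm} once more since smoothing operators form an ideal. So $B:=B_L$ satisfies both $BA=I+R$ and $AB=I+R'+(B_L-B_R)A$, and the error term $(B_L-B_R)A$ is again in $\Psi^{-\infty}$. This gives the theorem.

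The main obstacle is the noncommutativity of $\mathcal{M}$. In the recursion the order of factors has to follow the composition formula exactly, and the inverse must be placed on the right for the left parametrix and on the left for the right parametrix. Verifying that $a_m^{-1}$ and its derivatives lie in the symbol class, using only the invertibility hypothesis (E1) and nothing like (E2), is the other point needing care. It is handled by the resolvent-type derivative identities above.
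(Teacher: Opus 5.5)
Your proof is correct, but you build the one-sided parametrix differently from the paper.

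\textbf{The paper's route.} It inverts only the principal symbol, taking $B_1=\mathrm{Op}(\varphi\,\sigma(A)_m^{-1})$, so that $B_1A=I+R_1$ with $R_1\in\mathrm{C}\Psi^{-1}(\mathbb{R}^d;\mathcal{M})$. It then asymptotically sums the full symbols of the Neumann-series terms $(-1)^jR_1^jB_1\in\mathrm{C}\Psi^{-m-j}$. The identity $\bigl(\sum_{j<N}(-1)^jR_1^j\bigr)B_1A=I-(-1)^NR_1^N$ then yields $BA-I\in\Psi^{-\infty}$. Along that route, the noncommutative ordering, the homogeneity and the symbol estimates of all lower-order terms come for free from the composition calculus (Proposition \ref{cl M}).

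\textbf{Your route.} You solve for each homogeneous component $b_{-m-j}$ explicitly. This makes you track the factor order by hand: the inverse goes on the right for $B_L$ and on the left for $B_R$. You also have to check the symbol estimates for every $b_{-m-j}$ through the derivative-of-inverse formula, which the paper only records later as Lemma \ref{inverse derivative lemma}. In return you get explicit formulas for the components. This is exactly the scheme the paper itself uses later for the parameter-dependent resolvent parametrix (\eqref{mexpan}--\eqref{rexpan}, Proposition \ref{para sym prop}). Your justification that $\varphi\,\sigma(A)_m^{-1}$ is a genuine symbol, which the paper asserts without comment at this point, is a useful addition.

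The final step, reconciling the left and right parametrices via $B_LAB_R$, is the same as in the paper.

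\textbf{One small slip.} From $B_L=B_R+(B_L-B_R)$ you get $AB_L=I+R'+A(B_L-B_R)$, not $I+R'+(B_L-B_R)A$. Since both products are smoothing, the conclusion is unaffected.
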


\begin{proof}
	Let $\varphi$ be the fixed cutoff function \eqref{eq-cutoff}. Define an $\mathcal{M}$-valued $\Psi$DO $B_1$, whose symbol satisfies
	$$
	\sigma(B_1)(x,\xi) = \varphi(\xi) \sigma(A)_m(x,\xi)^{-1}.
	$$
	Then $B_1 \in \mathrm{C}\Psi^{-m}\big(\mathbb{R}^d; \mathcal{M}\big)$. For $|\xi| \ge 1$, by Theorem \ref{asympcomthm}, we have
	$$
	\sigma(B_{1} A)(x,\xi)
	\sim   \sum_{j=0}^{\infty}\sum_{|\alpha| =j} \frac{1}{\alpha !}\partial_{\xi}^\alpha \Big(\varphi(\xi)\sigma(A)_m(x, \xi)^{-1}\Big) D_x^\alpha \sigma(A) (x, \xi).
	$$
	When $|\alpha|+k=j$, the term $\partial_{\xi}^\alpha \big(\sigma(A)_m(x, \xi)^{-1}\big) D_x^\alpha \sigma(A)_{m-k}(x, \xi)$ is positively homogeneous of degree $-j$. Since $\varphi(\xi)$ does not affect the homogeneous degree, we have
	$$
	\sigma(B_{1} A)(x,\xi)
	\sim 1_\mathcal{M} + \sum_{j=1}^{\infty}\sum_{|\alpha|+k=j} \frac{1}{\alpha !}\partial_{\xi}^\alpha \big(\sigma(A)_m(x, \xi)^{-1}\big) D_x^\alpha \sigma(A)_{m-k}(x, \xi).
	$$
	By Proposition \ref{asymp}, there exists an $\mathcal{M}$-valued $\Psi$DO $R_1$ whose symbol $\sigma(R_1)(x,\xi)$ satisfies
	$$
	\sigma(R_1)(x,\xi)
	\sim \sum_{j=1}^{\infty}\sum_{|\alpha|+k=j} \frac{1}{\alpha !}\partial_{\xi}^\alpha \big(\sigma(A)_m(x, \xi)^{-1}\big) D_x^\alpha \sigma(A)_{m-k}(x, \xi).
	$$
	Thus,
	$$
	B_1 A = I + R_1, \quad R_1 \in \mathrm{C}\Psi^{-1}\big(\mathbb{R}^d; \mathcal{M}\big).
	$$
	Consequently, for any given $N \in \mathbb{Z}_{+}$, we have
	\begin{equation}\label{left-paramet}
		\big(\sum_{j=0}^{N-1} (-1)^j R_1^j\big) B_1 A = I - (-1)^{N} R_1^{N}.
	\end{equation}

	We are going to construct the required $\mathcal{M}$-valued $\Psi$DO $B \in \mathrm{C}\Psi^{-m}\big(\mathbb{R}^d; \mathcal{M}\big)$ with the help of $R_1^j$. Let $\tilde{\sigma}_j (x, \xi) $ be the symbol of $(-1)^j R_1^j  B_1$. Using the sequence $\{t_j\}$ selected in the proof of Proposition \ref{asymp}, we find a symbol of the form $ \sum_{j=0}^{\infty} \varphi(t_j^{-1}\xi) \tilde{\sigma}_j(x, \xi )$, and denote $B$ the corresponding $\Psi$DO, i.e.,
	$$\sigma(B)=  \sum_{j=0}^{\infty} \varphi(t_j^{-1}\xi) \tilde{\sigma}_j(x, \xi ).$$
	Since multiplying $\varphi(t_j^{-1}\xi)$ does not affect the homogeneous degree, by the above construction,
	$$\sigma(B)- \sigma\Big(\sum_{j= 0 }^{N-1}(-1)^j R_1^j  B_1\Big) \in CS^{-N-m}$$
	for any $N>0$. Therefore,
	$$\sigma(BA)- \sigma\Big(\sum_{j= 0 }^{N-1}(-1)^j R_1^j  B_1 A\Big) \in CS^{-N}.$$
	But by \eqref{left-paramet},
	$$\sigma\Big(\sum_{j= 0 }^{N-1}(-1)^j R_1^j  B_1 A\Big)=1_{\mathcal{M}}      -\sigma\Big( (-1)^{N+1} R_1^{N+1}\Big),$$
	forcing $\sigma(BA)-1_{\mathcal{M}}\in CS^{-N-1} $ for any $N>0$. Consequently, $B A = I + R$ with $ R \in \Psi^{-\infty}\big(\mathbb{R}^d; \mathcal{M}\big)$.

	Using the same construction method as for $B$, we can similarly construct an $\mathcal{M}$-valued $\Psi$DO $B^{\prime} \in \mathrm{C}\Psi^{-m}\big(\mathbb{R}^d; \mathcal{M}\big)$ such that
	$$
	A B^{\prime} = I + R^{\prime}, \quad R^{\prime} \in \Psi^{-\infty}\big(\mathbb{R}^d; \mathcal{M}\big).
	$$
	It follows that
	$$
	B - B^{\prime} = R B^{\prime} - B R^{\prime} \in \Psi^{-\infty}\big(\mathbb{R}^d; \mathcal{M}\big).
	$$
	Hence, either $B$ or $B^{\prime}$ can be chosen as the parametrix of $A$, which is unique modulo $\Psi^{-\infty}\big(\mathbb{R}^d; \mathcal{M}\big)$.
\end{proof}

Having constructed the parametrix for an $\mathcal{M}$-valued elliptic  $\Psi$DO, our next goal is to compare the parametrix for an $\mathcal{M}$-valued elliptic  $\Psi$DO with its inverse, if the $\Psi$DO is assumed to be invertable in some sense. To this end, we proceed to examine the equivalence of different definitions of smoothing operators, which will play a crucial role in the analysis of the remainder terms of parametrices.

For any $A \in \mathrm{C}\Psi^m\big(\mathbb{R}^d; \mathcal{M}\big)$ and any $s \in \mathbb{R}$, it follows from \cite[Corollary 3.3]{XX19} that the operator
$$
A : H^{s}(\mathbb{R}^d; L_2(\mathcal{M})) \to H^{s - \Re m}(\mathbb{R}^d; L_2(\mathcal{M}))
$$
is bounded, as a bounded linear operator. Here, the Sobolev space $H^s(\mathbb{R}^d ; L_2(\mathcal{M}))$ is defined as the space of all tempered distributions in $\mathscr{S}^{\prime}(\mathbb{R}^d ; L_2(\mathcal{M}))$ with finite Sobolev norm
\begin{equation}\label{potential-sobolev}
	\|u\|_{H^s(\mathbb{R}^d; L_2(\mathcal{M}))} = \|J^s u\|_{L_2(\mathcal{N})}\ ,
\end{equation}
where $J^s=\left(1-\Delta\right)^{\frac{s}{2}}$ denotes the Bessel potential operator.

For all $s \in \mathbb{R}$, it follows from \cite[Proposition 5.6.4]{Hyt16} that we have the following dense and continuous embeddings:
$$
\mathscr{S}(\mathbb{R}^d ; L_2(\mathcal{M})) \hookrightarrow H^{s}(\mathbb{R}^d ; L_2(\mathcal{M})) \hookrightarrow \mathscr{S}^{\prime}(\mathbb{R}^d ; L_2(\mathcal{M})).
$$

We will need a form of the Schwartz kernel theorem for von Neumann-algebra valued kernels. First, recall that for a Banach space $X$ we have a canonical isomorphism
\[
\mathscr{S}(\mathbb{R}^d;X) = \mathscr{S}(\mathbb{R}^d)\otimes_{\pi} X
\]
where $\otimes_{\pi}$ is the completed projective tensor product. Similarly, for a Banach space $Y$ we have defined $\mathscr{S}'(\mathbb{R}^d;Y^*)$ as the space of continuous linear maps
\[
\mathcal{L}(\mathscr{S}(\mathbb{R}^d),Y^*).
\]
This is isomorphically the dual of $\mathscr{S}(\mathbb{R}^d)\otimes_{\pi} Y,$ see \cite[Chapter 43]{Tre67} for details.
It follows that we have an isomorphism
\[
\mathcal{L}(\mathscr{S}(\mathbb{R}^d;X),\mathscr{S}'(\mathbb{R}^d;Y^*)) = (\mathscr{S}(\mathbb{R}^d)\otimes_{\pi} X\otimes_{\pi}\mathscr{S}(\mathbb{R}^d)\otimes_{\pi} Y)'
\]
where $(\cdot)'$ denotes the topological dual. Making the obvious identifications and using $\mathscr{S}(\mathbb{R}^d)\otimes_{\pi}\mathscr{S}(\mathbb{R}^d) = \mathscr{S}(\mathbb{R}^{2d}),$ we conclude that
\begin{equation}\label{Bochner_Schartz_kernel_theorem}
	\mathcal{L}(\mathscr{S}(\mathbb{R}^d;X),\mathscr{S}'(\mathbb{R}^d;Y^*)) = \mathscr{S}'(\mathbb{R}^{2d}; (X \otimes_{\pi} Y)^*).
\end{equation}
The above identity asserts that if $T$ is a continuous linear map from $\mathscr{S}(\mathbb{R}^d;X)$ to $\mathscr{S}'(\mathbb{R}^d;Y^*),$ then there exists a unique distributional kernel
$K_T \in \mathscr{S}^{\prime}(\mathbb{R}^{2 d} ; (X\otimes_{\pi} Y)^*)$ for $T$,
$$
(T f)(x) = \int_{\mathbb{R}^d} K_T(x, y) f(y) \, d y
$$
In the distribution sense, i.e. for any $f\in \mathscr{S}(\mathbb{R}^d;X),\, g\in\mathscr{S}(\mathbb{R}^d;Y)$,
$$ \langle T(f),g\rangle_{\mathscr{S}'(\mathbb{R}^d;Y^*);\mathscr{S}(\mathbb{R}^d;Y)} = \langle K_T,g\otimes f\rangle_{\mathscr{S}'(\mathbb{R}^{2d};(X\otimes_{\pi} Y)^*),\mathscr{S}(\mathbb{R}^{2d};X\otimes_{\pi} Y)} ,
$$
where the left hand side is the scalar-valued pairing of $\mathscr{S}'(\mathbb{R}^d;Y^*)$ with $\mathscr{S}(\mathbb{R}^d;Y)$ and the right hand side is the scalar-valued pairing of $\mathscr{S}'(\mathbb{R}^{2d},(X\otimes_{\pi} Y)^*)$ with $\mathscr{S}(\mathbb{R}^{2d};X\otimes_{\pi} Y).$ If we instead think of $K_T$ as a continuous linear functional
\[
K_T:\mathscr{S}(\mathbb{R}^{2d})\to (X\otimes_{\pi} Y)^*
\]
then \eqref{Bochner_Schartz_kernel_theorem} is to be interpreted as follows: if $x\in X, y\in Y$ and $f,g \in \mathscr{S}(\mathbb{R}^d),$ we have
\begin{equation}\label{dual_mapping_interpretation}
	\langle K_T(f\otimes g),x\otimes y\rangle_{(X\otimes_{\pi} Y)^*,X\otimes_{\pi} Y} = \langle T(xf),gy\rangle_{\mathscr{S}'(\mathbb{R}^d;Y^*),\mathscr{S}(\mathbb{R}^d;Y)}.
\end{equation}

Note that in the above we are using the prime $(\cdot)'$ to denote the topological dual of a locally convex space, but in what follows $\mathcal{M}'$ denotes the commutant of a von Neumann algebra $\mathcal{M}.$ This coincidence of notation should not cause problems since we will never refer to the topological dual of a von Neumann algebra.

\begin{lemma}\label{BS-Ed}
	Let
	$T: \mathscr{S}(\mathbb{R}^d ; L_2(\mathcal{M})) \rightarrow \mathscr{S}^{\prime}(\mathbb{R}^d ; L_2(\mathcal{M}))$
	be a continuous linear operator such that $T$ commutes with every $u\in \mathcal{M}'.$ That is, for any $f,g \in \mathscr{S}(\mathbb{R}^d;L_2(\mathcal{M}))$ and $u\in \mathcal{M}',$ we have
	\[
	\langle T(uf),g\rangle_{\mathscr{S}'(\mathbb{R}^d;L_2(\mathcal{M})),\mathscr{S}(\mathbb{R}^d;L_2(\mathcal{M}))} = \langle uT(f),g\rangle_{\mathscr{S}'(\mathbb{R}^d;L_2(\mathcal{M})),\mathscr{S}(\mathbb{R}^d;L_2(\mathcal{M}))}.
	\]
	Here, $\mathcal{M}'$ is the commutant of $\mathcal{M}$ in its left multiplication representation on $L_2(\mathcal{M}).$
	Then the distributional kernel $K_T$ of $T$ belongs to
	\[
	\mathscr{S}'(\mathbb{R}^{2d};\mathcal{M}),
	\]
	where we have identified $\mathcal{M}$ with its image in the embedding
	\[
	\iota:\mathcal{M} \to (L_2(\mathcal{M})\otimes_{\pi} L_2(\mathcal{M}))^*
	\]
	given by
	\[
	\langle \iota(x),y\otimes z\rangle_{(L_2(\mathcal{M})\otimes_{\pi}L_2(\mathcal{M}))^*,L_2(\mathcal{M})\otimes_{\pi} L_2(\mathcal{M})} = \tau(yxz),\quad y,z \in L_2(\mathcal{M}),\; x \in \mathcal{M}.
	\]
\end{lemma}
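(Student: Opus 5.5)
We want to show that for each fixed $\phi \in \mathscr{S}(\mathbb{R}^{2d})$, the functional $K_T(\phi) \in (L_2(\mathcal{M})\otimes_{\pi} L_2(\mathcal{M}))^*$ lies in $\iota(\mathcal{M})$. The plan is to use the standard identification
\[
(L_2(\mathcal{M})\otimes_{\pi} L_2(\mathcal{M}))^* \cong B(L_2(\mathcal{M})).
\]
Under this identification a bounded operator $S$ corresponds to the bilinear form $(y,z)\mapsto \langle Sz, y^*\rangle = \tau(y\,S(z))$. The operator in $\iota(\mathcal{M})$ attached to $x\in\mathcal{M}$ is then left multiplication by $x$. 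So the task reduces to showing that the operator $S_\phi$ associated with $K_T(\phi)$ lies in the left von Neumann algebra $\mathcal{M}\subset B(L_2(\mathcal{M}))$. By the bicommutant theorem, it suffices to show that $S_\phi$ commutes with every $u\in\mathcal{M}'$. Here $\mathcal{M}'$ is the right multiplication algebra; this is the only place where the structure of $L_2(\mathcal{M})$ enters.

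\textbf{Step 1.} Reduce to elementary tensors. By \eqref{dual_mapping_interpretation}, for $f,g\in\mathscr{S}(\mathbb{R}^d)$ and $y,z\in L_2(\mathcal{M})$ we have
\[
\langle S_{g\otimes f} z, y^*\rangle = \langle T(zf), gy\rangle .
\]
Depending on whether the pairing is bilinear or sesquilinear, the variables are matched accordingly; we fix the convention so that the pairing on $L_2(\mathcal{M})$ is $\tau(ab)$. Since finite sums of $g\otimes f$ are dense in $\mathscr{S}(\mathbb{R}^{2d})$, and both $K_T$ and the commutation relation are continuous, it suffices to prove commutation for $\phi=g\otimes f$. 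We use that $\phi\mapsto S_\phi$ is continuous into $B(L_2(\mathcal{M}))$ with the weak$^*$ topology, and that $\mathcal{M}$ is weak$^*$ closed.

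\textbf{Step 2.} Use the hypothesis. For $u\in\mathcal{M}'$,
\[
\langle S_{g\otimes f}(uz), y^*\rangle = \langle T(u zf), gy\rangle = \langle uT(zf), gy\rangle .
\]
It remains to move $u$ across the pairing. For the bilinear trace pairing, a right multiplication $u=R_a$ satisfies $\tau(y\,(w a)) = \tau((a y) w)$, so $u$ becomes its transpose acting on the other slot. One then checks, using the same transpose relation for $S$, that this equals $\langle u S_{g\otimes f} z, y^*\rangle$. This follows because the transpose of $R_a$ is $L_a$ on the $y$-slot, consistently on both sides. Hence $S_\phi u = u S_\phi$.

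The main obstacle is this bookkeeping. One must pin down the precise identification of $(X\otimes_\pi Y)^*$ with operators, check that the pairing convention in \eqref{dual_mapping_interpretation} matches $\iota$, and verify that the hypothesis in the pairing form really translates into commutation with the \emph{operator} $S_\phi$, rather than with some transposed version. A secondary point is checking that the resulting map $\phi\mapsto \iota^{-1}(K_T(\phi))$ is continuous into $\mathcal{M}$ with its norm topology. This holds because $\iota$ is isometric, as the norm of $L_x$ equals $\|x\|$, so continuity of $K_T$ gives continuity into $\mathcal{M}$. This yields $K_T\in\mathscr{S}'(\mathbb{R}^{2d};\mathcal{M})$.
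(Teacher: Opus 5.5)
Your proposal follows the paper's proof: identify $(L_2(\mathcal{M})\otimes_\pi L_2(\mathcal{M}))^*$ with $B(L_2(\mathcal{M}))$, characterize $\iota(\mathcal{M})$ by the bicommutant theorem as the operators commuting with $\mathcal{M}'$, verify that commutation on elementary tensors from the hypothesis, and pass to all of $\mathscr{S}(\mathbb{R}^{2d})$ by density, using that $\iota$ is isometric so its range is closed. The bookkeeping you defer does, however, force a flip in the tensor factors, which the statement itself gets wrong: your $S_{g\otimes f}$ is just $z\mapsto T(zf)(g)$, which commutes with $\mathcal{M}'$ directly, but by \eqref{dual_mapping_interpretation} it satisfies $\tau(y\,S_{g\otimes f}(z))=\langle K_T(f\otimes g),z\otimes y\rangle$, so you actually get $\iota(\mathcal{M})$ with the tensor factors swapped, and this cannot be avoided since for $T=M_b$ with $b$ non-central one finds $\langle K_T(f\otimes g),y\otimes z\rangle=(\int fg)\,\tau(byz)$, which is not of the form $\tau(yxz)$ when $\int fg\neq 0$ --- so this is a slip in the stated formula for $\iota$, which the paper's proof also passes over (it moves $u\in\mathcal{M}'$ across the trace pairing as if it were its own transpose), rather than a defect of your strategy.
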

\begin{proof}
	First we note that $\iota$ is an isometric embedding. Indeed, since $\mathcal{M}$ is the dual of $L_1(\mathcal{M})$ and every $h \in L_1(\mathcal{M})$ can be factorized as $h = gf$ where $g,f \in L_2(\mathcal{M})$ satisfy $\|h\|_1 = \|f\|_2\|g\|_2,$ we have
	\[
	\|\iota(x)\|_{(L_2(\mathcal{M})\otimes_{\pi} L_2(\mathcal{M}))^*} =\|x\|_{\mathcal{M}}.
	\]
	In particular, $\iota(\mathcal{M})$ is a closed subspace of $(L_2(\mathcal{M})\otimes_{\pi}L_2(\mathcal{M}))^*.$
	Secondly, we note that by the Schmidt decomposition, an arbitrary element of $L_2(\mathcal{M})\otimes_{\pi} L_2(\mathcal{M})$ can be identified with a trace-class operator on $L_2(\mathcal{M}).$ That is, for any element $\omega \in (L_2(\mathcal{M})\otimes_{\pi} L_2(\mathcal{M}))^*$ there exists a unique $X_{\omega} \in B(L_2(\mathcal{M}))$ such that
	\[
	\langle \omega,y\otimes z\rangle_{(L_2(\mathcal{M})\otimes_{\pi} L_2(\mathcal{M}))^*,L_2(\mathcal{M})\otimes_{\pi} L_2(\mathcal{M})} = \tau(yX_{\omega}(z)).
	\]
	We will have that $X_{\omega}\in \iota(\mathcal{M})$ if and only if $X_{\omega}$ commutes with every element of the commutant of $\mathcal{M},$ i.e. if and only if for all $u\in \mathcal{M}'$ we have
	\begin{align*}
		&\langle \omega,u(y)\otimes z\rangle_{(L_2(\mathcal{M})\otimes_{\pi} L_2(\mathcal{M}))^*,L_2(\mathcal{M})\otimes_{\pi} L_2(\mathcal{M})} \\
		=& \langle \omega,y\otimes u(z)\rangle_{(L_2(\mathcal{M})\otimes_{\pi} L_2(\mathcal{M}))^*,L_2(\mathcal{M})\otimes_{\pi} L_2(\mathcal{M})},\quad y,z \in L_2(\mathcal{M}).
	\end{align*}
	So we are now reduced to checking this commutativity. Let
	\[
	K_T \in \mathscr{S}'(\mathbb{R}^{2d};(L_2(\mathcal{M})\otimes_{\pi} L_2(\mathcal{M}))^*)
	\]
	be the Schwartz kernel of $T$ in the sense of \eqref{dual_mapping_interpretation}. For every $f,g \in \mathscr{S}(\mathbb{R}^d),$ we have
	\begin{align*}
		&\langle K_T(f\otimes g),y\otimes z\rangle_{(L_2(\mathcal{M})\otimes_{\pi} L_2(\mathcal{M}))^*,L_2(\mathcal{M})\otimes_{\pi} L_2(\mathcal{M})} \\
		=& \langle T(yf),zg\rangle_{\mathscr{S}'(\mathbb{R}^d;L_2(\mathcal{M})),\mathscr{S}(\mathbb{R}^d;L_2(\mathcal{M}))}.
	\end{align*}
	Hence, for $u\in \mathcal{M}'$ we have
	\begin{align*}
		&\langle K_T(f\otimes g),u(y)\otimes z\rangle_{(L_2(\mathcal{M})\otimes_{\pi} L_2(\mathcal{M}))^*,L_2(\mathcal{M})\otimes_{\pi} L_2(\mathcal{M})}\\
		=& \langle T(u(y)f),zg\rangle_{\mathscr{S}'(\mathbb{R}^d;L_2(\mathcal{M})),\mathscr{S}(\mathbb{R}^d;L_2(\mathcal{M}))}\\
		=& \langle T(yf),u(z)g\rangle_{\mathscr{S}'(\mathbb{R}^d;L_2(\mathcal{M})),\mathscr{S}(\mathbb{R}^d;L_2(\mathcal{M}))}\\
		=& \langle K_T(f\otimes g),y\otimes u(z)\rangle_{(L_2(\mathcal{M})\otimes_{\pi} L_2(\mathcal{M}))^*,L_2(\mathcal{M})\otimes_{\pi} L_2(\mathcal{M})}
	\end{align*}
	Hence,
	\[
	K_T(f\otimes g) \in \iota(\mathcal{M})
	\]
	for all $f,g \in \mathscr{S}(\mathbb{R}^d).$ By continuity, it follows that $K_T$ is a continuous map
	\[
	K_T:\mathscr{S}(\mathbb{R}^{2d})\to \iota(\mathcal{M}).
	\]
	That is, $K_T \in \mathscr{S}'(\mathbb{R}^{2d};\mathcal{M})$ in the required sense.
\end{proof}

\begin{proposition}\label{smooth-op-kernel}
	Let $T:\mathscr{S}(\mathbb{R}^d;L_2(\mathcal{M}))\to\mathscr{S}'(\mathbb{R}^d;L_2(\mathcal{M}))$
	be a continuous linear operator. The following statements are equivalent:
	
	\begin{enumerate}[$\rm (i)$]
		\item $T\in \Psi^{-\infty}\big(\mathbb{R}^d;\mathcal{M}\big)$, i.e., its symbol $\sigma(T)(x,\xi)\in S^{-\infty}\big(\mathbb{R}^d\times\mathbb{R}^d;\mathcal{M}\big)$ decays faster than any polynomial in $\xi$, along with all its derivatives.
		
		\item $T$ is smoothing in the sense that for every $s,t\in\mathbb{R}$, there exists a constant $C_{s,t} > 0$ such that
		$$
		\|T u\|_{H^t(\mathbb{R}^d;L_2(\mathcal{M}))} \le C_{s,t}\|u\|_{H^s(\mathbb{R}^d;L_2(\mathcal{M}))},\quad \forall u\in H^s(\mathbb{R}^d;L_2(\mathcal{M})),
		$$
		and $T$ commutes with every $u\in \mathcal{M}'$ in the sense that for any $f \in L_2(\mathcal{N})$, it holds that $
		T(I\otimes u)f  = (I\otimes u) T f
		$.
		\item $T$ has an $\mathcal{M}$-valued Schwartz kernel $K_T\in C^\infty(\mathbb{R}^{2d};\mathcal{M})$ which is rapidly decaying in $x - y$, along with all its derivatives.
	\end{enumerate}
	Moreover, if any of the three statements is fulfilled, the symbol and kernel are related by partial Fourier transforms:
	$$
	K_T(x,y) = \int_{\mathbb{R}^d} e^{ i(x-y)\cdot \xi} \sigma(T)(x,\xi)\,\bar{d}\xi,\qquad
	\sigma(T)(x,\xi) = \int_{\mathbb{R}^d} e^{- i(x-y)\cdot \xi} K_T(x,y)\,dy,
	$$
	interpreted as Bochner integrals in $\mathcal{M}$.
\end{proposition}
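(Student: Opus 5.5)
I will prove the cycle of implications (i) $\Rightarrow$ (iii) $\Rightarrow$ (ii) $\Rightarrow$ (i). The kernel–symbol formulas will come out of the first step.

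\emph{Step (i) $\Rightarrow$ (iii).} Let $\sigma=\sigma(T)\in S^{-\infty}$. Define $K_T(x,y)=\int e^{i(x-y)\cdot\xi}\sigma(x,\xi)\,\bar d\xi$ as a Bochner integral in $\mathcal{M}$. It converges absolutely together with all its $x,y$-derivatives, because each differentiation only multiplies the integrand by polynomials in $\xi$ and by $x$-derivatives of $\sigma$, and these are still rapidly decaying in $\xi$. Rapid decay in $x-y$ follows from the identity $(x-y)^\gamma e^{i(x-y)\cdot\xi}=D_\xi^\gamma e^{i(x-y)\cdot\xi}$ and integration by parts in $\xi$, which turns $(x-y)^\gamma K_T$ into a combination of integrals of $\partial_\xi^\gamma\sigma$; these are bounded uniformly in $x$. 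Inserting $\hat f(\xi)=\int e^{-iy\cdot\xi}f(y)\,dy$ into \eqref{M valued pdo} and applying Fubini, which is legitimate since the double integral is absolutely convergent, gives $Tf(x)=\int K_T(x,y)f(y)\,dy$. For fixed $x$, the function $y\mapsto K_T(x,y)$ is the inverse Fourier transform of $\sigma(x,\cdot)$ in the variable $x-y$, so Fourier inversion in $\mathscr{S}(\mathbb{R}^d;\mathcal{M})$ yields $\sigma(x,\xi)=\int e^{-i(x-y)\cdot\xi}K_T(x,y)\,dy$.

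\emph{Step (iii) $\Rightarrow$ (ii).} Commutation with $I\otimes u$ for $u\in\mathcal{M}'$ is immediate: the kernel takes values in $\mathcal{M}$, so $K_T(x,y)u f(y)=uK_T(x,y)f(y)$. For the Sobolev bounds, write $J^tTJ^{-s}$. Its kernel is obtained by applying $(1-\Delta_x)^{t/2}$ in $x$ and $(1-\Delta_y)^{-s/2}$ in $y$ (the transpose acting in $y$). For even integers $t\ge0$ and $-s\ge0$ this is a finite sum of derivatives of $K_T$, and the general case reduces to it by monotonicity of Sobolev norms. Each resulting kernel $k(x,y)$ satisfies $\|k(x,y)\|_{\mathcal{M}}\le C_N(1+|x-y|)^{-N}$. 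An operator-valued Schur test then gives boundedness on $L_2(\mathcal{N})$:
\[
\Big\|\int k(x,y)f(y)\,dy\Big\|_{L_2(\mathcal{M})}\le\int\|k(x,y)\|_{\mathcal{M}}\|f(y)\|_{L_2(\mathcal{M})}\,dy,
\]
followed by Young's inequality.

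\emph{Step (ii) $\Rightarrow$ (i).} This is the main obstacle. Lemma \ref{BS-Ed} applies and gives $K_T\in\mathscr{S}'(\mathbb{R}^{2d};\mathcal{M})$; the task is to upgrade this to smoothness and decay. First I will show that for all $s,t$ the operator $J^tTJ^{s}$ is bounded on $L_2(\mathcal{N})$ and belongs to $B(L_2(\mathbb{R}^d))\overline{\otimes}\mathcal{M}$, using the argument of Theorem \ref{thm-tensor algebra}. Fix the symbol candidate $\sigma(x,\xi)=e^{-ix\cdot\xi}T(e^{i\cdot\xi})$ from Definition \ref{1symdef}. To test it I pair against $L_2(\mathcal{M})$-vectors: for $a,b\in L_2(\mathcal{M})$, use $\chi(x)e^{ix\cdot\xi}a\in H^{-s}$ with Schwartz cutoffs $\chi$ of unit scale translated to any point. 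Smoothing then gives $\mathcal{M}$-valued bounds (via Lemma \ref{BS-Ed} and the isometry $\iota$) on $\chi' J^t T(\chi e^{i\cdot\xi})$, of size $\le C(1+|\xi|)^{s}$ uniformly in the translation. Choosing $s$ very negative gives rapid decay in $\xi$. Obtaining smoothness in $x$ together with decay requires the translation-uniform localization and a Sobolev embedding in $x$. The off-diagonal part, where $\chi$ and $\chi'$ are far apart, needs decay in $x-y$, and I expect this to be the delicate point: mere $H^s\to H^t$ smoothing does not obviously force off-diagonal decay. I would try to obtain it from commutators with multiplication by $x_j$. If that fails, I would interpret $\Psi^{-\infty}$ and the smoothing condition together with the implicit requirement that $T$ is given by a $\Psi$DO amplitude, so that decay in $x-y$ is inherited from Theorem \ref{asympthm} with $\sigma\in S^{-\infty}$. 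Derivatives in $\xi$ are handled by the same bounds applied to $[x_j,T]$.
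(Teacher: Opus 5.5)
Your steps (i) $\Rightarrow$ (iii) and (iii) $\Rightarrow$ (ii) are correct. The Schur--Young bound is in fact more careful than the paper's (i) $\Rightarrow$ (ii), which computes $\|Tu\|_{H^t}$ as though $\sigma(T)$ did not depend on $x$. The gap is (ii) $\Rightarrow$ (i), exactly where you suspected, and it cannot be closed: Sobolev smoothing gives no off-diagonal decay, and the implication is false. Take $\mathcal{M}=\mathbb{C}$ (or $T\otimes 1$ for general $\mathcal{M}$) and $d=1$. Choose real $\phi,\psi\in C_c^\infty((-1,1))$ with $\phi\ge 0$ and $\phi(0)\psi(0)\neq 0$, and set
\[
Tf=\sum_{k\ge 1}\Big(\int_{\mathbb{R}} f(y)\,\phi(y+4k)\,dy\Big)\,\psi(\cdot-4k).
\]
For $s\in-\mathbb{N}$, the functions $J^{-s}\phi(\cdot+4k)$ are mutually orthogonal with equal norms, since $J^{-2s}$ is a differential operator and the supports are disjoint. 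Hence the coefficient sequence lies in $\ell^2$ with norm at most $\|\phi\|_{H^{-s}}\|f\|_{H^s}$. The $\psi(\cdot-4k)$ are disjointly supported, so for $t\in\mathbb{N}$ we get $\|Tf\|_{H^t}\le\|\psi\|_{H^t}\|\phi\|_{H^{-s}}\|f\|_{H^s}$, and monotonicity of Sobolev norms gives all $s,t$. Yet $K_T(x,y)=\sum_k\psi(x-4k)\phi(y+4k)$ equals $\psi(0)\phi(0)\neq 0$ at $(4k,-4k)$. Moreover $\sigma(T)(4k,\xi)=\psi(0)e^{-8ik\xi}\hat\phi(-\xi)$, and since $\hat\phi(0)=\int\phi\neq 0$ its $\xi$-derivative at $\xi=0$ grows linearly in $k$. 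So neither (iii) nor (i) holds.

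Consequently your commutator idea cannot succeed. Hypothesis (ii) says nothing about $[M_{x_j},T]$, which here has kernel $(x-y)K_T(x,y)$ and is not even bounded on $L_2$. Your fallback, assuming $T$ is a priori given by an $S^{-\infty}$ amplitude, changes the statement.

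The paper's own proof is defective at the same place. It argues that uniform bounds on $(1+|\xi|)^N\hat K_T(x,\xi)$ give rapid decay of $K_T$ in $x-y$ ``by Fourier inversion''. But decay of the partial Fourier transform in $\xi$ controls smoothness of $K_T$ in $x-y$, not decay. Decay would need uniform control of the $\xi$-derivatives of $\hat K_T=\sigma(T)$, which the example shows can fail.

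What is actually true is (i) $\Leftrightarrow$ (iii) $\Rightarrow$ (ii). Your cycle loses the true implication (iii) $\Rightarrow$ (i), because it routes through the false step. Prove it directly, as the paper does: set $\sigma(T)(x,\xi)=\int e^{-iz\cdot\xi}K_T(x,x-z)\,dz$ and differentiate under the integral. A correct converse needs an extra hypothesis, for instance that every iterated commutator $\mathrm{ad}_{M_x}^{\gamma}T$ (kernel $(x-y)^\gamma K_T$) also satisfies the smoothing bounds. Then note that an operator $H^{-N}\to H^{N}$ with $N>d/2$ commuting with $\mathcal{M}'$ has a bounded continuous $\mathcal{M}$-valued kernel. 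This bounds every $(x-y)^\gamma\partial_x^\alpha\partial_y^\beta K_T$ and hence gives (iii).
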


\begin{proof}
	(i) $\Rightarrow$ (ii):
	Assume $ T \in \Psi^{-\infty}\big(\mathbb{R}^d; \mathcal{M}\big) $. By definition, for every multi-index $ \alpha, \beta $ and any $ N > 0 $, there exists a constant $ C_{\alpha, \beta, N} > 0 $ such that
	$$
	\big\|\partial_x^\beta \partial_{\xi}^\alpha \sigma(T)(x, \xi)\big\|_{\mathcal{M}} \le C_{\alpha, \beta, N}(1 + |\xi|)^{-N}.
	$$
	Using the definition of the potential Sobolev space \eqref{potential-sobolev}, for any $ u \in H^s(\mathbb{R}^d; L_2(\mathcal{M})) $, we have
	$$
	\|T u\|_{H^t}^2 = \int_{\mathbb{R}^d} (1 + |\xi|^2)^t \|\sigma(T)(x, \xi) \hat{u}(\xi)\|_{\mathcal{M}}^2 \, d\xi.
	$$
	Due to the rapid decay of $ \sigma(T) $ in $ \xi $, it follows that for any $ s, t \in \mathbb{R} $, there exists a constant $ C_{s,t} > 0 $ such that $ \|T u\|_{H^t} \le C_{s,t} \|u\|_{H^s} $. This confirms that $ T $ is a smoothing operator.  The commutativity is clear, see the proof of Theorem \ref{thm-tensor algebra}.

	(ii) $\Rightarrow$ (iii):
	Assume that
	$$
	T: \mathscr{S}(\mathbb{R}^d ; L_2(\mathcal{M})) \rightarrow \mathscr{S}^{\prime}(\mathbb{R}^d ; L_2(\mathcal{M}))
	$$
	is a smoothing operator, and assume that $T$ commutes with every $u\in \mathcal{M}'$.
	By the Bochner-Schwartz kernel theorem for vector-valued distributions (Lemma \ref{BS-Ed}), there exists a unique distribution kernel $K_T \in \mathscr{S}^{\prime}(\mathbb{R}^{2 d} ; \mathcal{M})$ such that
	$$
	(T u)(x) = \int_{\mathbb{R}^d} K_T(x, y) u(y) \, d y
	$$
	in the sense of distributions.
	
	The smoothing property of $T$ implies that for every integer $N > 0$, the operator $T$ extends continuously from $L_2(\mathbb{R}^d ; L_2(\mathcal{M}))$ to the Sobolev space $H^N(\mathbb{R}^d ; L_2(\mathcal{M}))$. It follows from \cite[Corollary 14.4.27]{Hyt23} that if $N > \frac{d}{2} + s$ for some $s > 0$, we have a continuous embedding
	$$
	H^N(\mathbb{R}^d ; L_2(\mathcal{M})) \hookrightarrow C_{\mathrm{ub}}^s(\mathbb{R}^d ; L_2(\mathcal{M})),
	$$
	where $C_{\mathrm{ub}}^s(\mathbb{R}^d ; L_2(\mathcal{M}))$ denotes the space of functions of non-integer smoothness $s = k + \gamma$, with $k = \lfloor s \rfloor \in \mathbb{N}_{0}$ and $\gamma = s - k \in (0, 1)$. More precisely, $C_{\mathrm{ub}}^s(\mathbb{R}^d ; L_2(\mathcal{M}))$ consists of functions whose derivatives up to order $k$ are uniformly continuous and bounded, and whose $k$-th order derivatives are bounded and $\gamma$-H\"{o}lder continuous, with seminorm
	$$
	\|f\|_{C_{\mathrm{ub}}^\gamma} := \sup_{x \neq y} \frac{\|f(x) - f(y)\|_{L_2(\mathcal{M})}}{|x - y|^\gamma}.
	$$
	Thus, for arbitrarily large $s$, we have $T: L_2(\mathcal{N}) \rightarrow C_{\mathrm{ub}}^s(\mathbb{R}^d ; L_2(\mathcal{M}))$, which implies that $T u$ is smooth and all derivatives are uniformly continuous and bounded. Consequently, one can differentiate under the integral defining $T u$ to obtain
	$$
	\partial_x^\alpha(T u)(x) = \int_{\mathbb{R}^d} \partial_x^\alpha K_T(x, y) u(y) \, d y
	$$
	for all multi-indices $\alpha$. Considering the adjoint operator $T^*$ and the action of $T$ on negative Sobolev spaces ensures that the derivatives with respect to $y$ also exist as continuous functions. Therefore, all mixed partial derivatives $\partial_x^\alpha \partial_y^\beta K_T(x, y)$ exist and are continuous.
	
	To establish rapid decay in $x - y$, fix an arbitrary $N > 0$ and consider the continuity of
	$$
	T: H^{-N}(\mathbb{R}^d ; L_2(\mathcal{M})) \rightarrow L_2(\mathbb{R}^d ; L_2(\mathcal{M})).
	$$
	Taking the partial Fourier transform of $K_T$ in the variable $x - y$,
	$$
	\hat{K}_T(x, \xi) := \int_{\mathbb{R}^d} e^{-i(x - y) \cdot \xi} K_T(x, y) \, d y,
	$$
	the smoothing property implies that $(1 + |\xi|)^N \hat{K}_T(x, \xi)$ is uniformly bounded in $x$ and $\xi$. By Fourier inversion, this uniform control in frequency space guarantees that $K_T(x, y)$ decays faster than any polynomial in $x - y$.
	
	
	(iii) $\Rightarrow$ (i):
	Finally, assume (iii), and define the symbol $\sigma(T)(x, \xi)$ via the partial Fourier transform
	$$
	\sigma(T)(x, \xi) := \int_{\mathbb{R}^d} e^{-i(x - y) \cdot \xi} K_T(x, y) \, dy.
	$$
	Differentiating under the integral sign and using the rapid decay of all derivatives of $K_T$, we obtain
	$$
	\big\|\partial_x^\beta \partial_{\xi}^\alpha \sigma(T)(x, \xi)\big\|_{\mathcal{M}} \le C_{\alpha, \beta, N}(1+|\xi|)^{-N}
	$$
	for all multi-indices $\alpha, \beta$ and for every $N > 0$. Therefore, $\sigma(T)(x, \xi) \in S^{-\infty}\big(\mathbb{R}^d \times \mathbb{R}^d ; \mathcal{M}\big)$, that is, $T \in \Psi^{-\infty}\big(\mathbb{R}^d;\mathcal{M}\big)$.
\end{proof}

Combining the parametrix construction in Theorem \ref{parabelong} with Proposition \ref{smooth-op-kernel}, we obtain the following result concerning the inverse operator of an $\mathcal{M}$-valued elliptic  $\Psi$DO.

\begin{theorem}\label{inverse A PDO}
	Let $A \in \mathrm{C}\Psi^{m}\big(\mathbb{R}^d; \mathcal{M}\big)$ with $\Re m>0$ be an $\mathcal{M}$-valued elliptic  $\Psi$DO. Suppose that for some $s \in \mathbb{R}$, the operator
	$$
	A : H^s(\mathbb{R}^d; L_2(\mathcal{M})) \to H^{s-\Re m}(\mathbb{R}^d; L_2(\mathcal{M}))
	$$
	is bounded and invertible as a bounded linear operator. Then the inverse operator
	$$
	A^{-1} : H^{s-\Re m}(\mathbb{R}^d; L_2(\mathcal{M})) \to H^s(\mathbb{R}^d; L_2(\mathcal{M}))
	$$
	is also an $\mathcal{M}$-valued classical  $\Psi$DO of order $-m$, i.e.,
	$$
	A^{-1} \in \mathrm{C}\Psi^{-m}\big(\mathbb{R}^d; \mathcal{M}\big).
	$$
\end{theorem}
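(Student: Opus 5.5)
We take the parametrix $B\in \mathrm{C}\Psi^{-m}(\mathbb{R}^d;\mathcal{M})$ given by Theorem \ref{parabelong}, so that $BA = I+R$ and $AB = I+R'$ with $R,R'\in\Psi^{-\infty}(\mathbb{R}^d;\mathcal{M})$. The goal is to show that $A^{-1}-B$ is a smoothing operator in the sense of Proposition \ref{smooth-op-kernel}(ii). Since $\Psi^{-\infty}$ symbols belong to every $CS^{-m-j}$ (the zero homogeneous sequence is allowed), this yields $A^{-1}= B + (A^{-1}-B)\in \mathrm{C}\Psi^{-m}(\mathbb{R}^d;\mathcal{M})$, with the same principal symbol $\sigma(A)_m^{-1}$.

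The algebra is the standard one. Multiplying $BA=I+R$ on the right by $A^{-1}$ gives $A^{-1} = B - RA^{-1}$ on $H^{s-\Re m}$. Multiplying $AB = I+R'$ on the left by $A^{-1}$ gives $A^{-1} = B - A^{-1}R'$. Substituting the second identity into the first yields
\[
A^{-1} = B - RB + RA^{-1}R'.
\]
Here $RB\in\Psi^{-\infty}$ by Theorem \ref{asympcomthm}, so it remains to show that $S:=RA^{-1}R'$ satisfies Proposition \ref{smooth-op-kernel}(ii). For arbitrary $t,t'$ we factor $S$ as
\[
H^{t'}\xrightarrow{R'} H^{s-\Re m}\xrightarrow{A^{-1}} H^{s}\xrightarrow{R} H^{t}.
\]
The outer maps are bounded by the implication (i)$\Rightarrow$(ii) of Proposition \ref{smooth-op-kernel}, and the middle map is bounded by hypothesis. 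Hence $S$ is bounded $H^{t'}\to H^{t}$ for all $t,t'$. A small point is that $A^{-1}$ is a priori only defined on $H^{s-\Re m}$. This causes no difficulty, because $R'$ maps every $H^{t'}$ into $H^{s-\Re m}$, and the identities above are first verified on $\mathscr{S}(\mathbb{R}^d;L_2(\mathcal{M}))$ and then extended by density and continuity.

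The step I expect to need the most care is the commutation of $A^{-1}$ (and hence of $S$) with $I\otimes u$ for $u\in\mathcal{M}'$. Theorem \ref{thm-tensor algebra} shows that $A$, $R$ and $R'$ commute with $I\otimes u$. I would check that $I\otimes u$ is bounded on each $H^\sigma$, which holds because it commutes with $J^\sigma$. Then, for $f\in H^{s-\Re m}$, we have $A(I\otimes u)A^{-1}f=(I\otimes u)f$. Applying $A^{-1}$ gives $(I\otimes u)A^{-1}=A^{-1}(I\otimes u)$, so $S$ commutes with $\mathcal{M}'$.

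With both conditions of Proposition \ref{smooth-op-kernel}(ii) established, that proposition gives $S\in\Psi^{-\infty}(\mathbb{R}^d;\mathcal{M})$. Consequently $A^{-1}=B-RB+S$ has symbol in $CS^{-m}$, which is the claim.
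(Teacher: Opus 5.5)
Your proof is correct, but it takes a different route from the paper's to show the remainder is smoothing.

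The paper first extends the hypothesis to the whole Sobolev scale. It uses $u=Ru$ together with elliptic regularity to prove that $A$ is injective and surjective $H^t\to H^{t-\Re m}$ for \emph{every} $t$, so $A^{-1}$ is bounded on all scales. Only then does it write the one-sided identity $A^{-1}=B+A^{-1}R'$ (with its sign convention $AB=I-R'$) and conclude that $A^{-1}R'$ maps each $H^t$ into $H^\infty$.

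You instead use the two-sided identity $A^{-1}=B-RB+RA^{-1}R'$, in which $A^{-1}$ is flanked by smoothing operators on both sides. The single given bound $A^{-1}:H^{s-\Re m}\to H^s$ then suffices, with no regularity or invertibility argument on other scales.

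Your route is shorter and uses strictly less. The paper's route yields the extra, independently useful fact that $A$ is an isomorphism $H^t\to H^{t-\Re m}$ for all $t$.

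You also explicitly verify the commutation with $I\otimes u$, $u\in\mathcal{M}'$, which is part of hypothesis (ii) of Proposition \ref{smooth-op-kernel} and which the paper's proof leaves unchecked. Your verification is the right one: extend $A(I\otimes u)=(I\otimes u)A$ to $H^s$ by density, then conjugate by $A^{-1}$.

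One small citation point: Theorem \ref{thm-tensor algebra} is stated only for order-zero symbols. For the unbounded $A$ you should cite the computation in its proof instead. That computation holds on $\mathscr{S}(\mathbb{R}^d;L_2(\mathcal{M}))$ for symbols of any order and then extends by continuity.
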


\begin{proof}
	Let $B \in \mathrm{C}\Psi^{-m}\big(\mathbb{R}^d;\mathcal{M}\big)$ be a parametrix of $A$, so that both the left and right parametrix relations hold:
	$$
	BA = I - R, \quad AB = I - R',
	$$
	where $R, R' \in \Psi^{-\infty}\big(\mathbb{R}^d;\mathcal{M}\big)$ are smoothing operators.
	
	Fix $t \in \mathbb{R}$, and suppose $u \in H^t(\mathbb{R}^d; L_2(\mathcal{M}))$ satisfies $Au = 0$. Using the left parametrix relation $BA = I - R$, we obtain
	$$
	u = BAu + Ru = Ru.
	$$
	Since $R$ is a smoothing operator, for any $N \ge 0$,
	$$
	\|u\|_{H^{t+N}} = \|Ru\|_{H^{t+N}} \le C_{t,N} \|u\|_{H^t}.
	$$
	Choose $N$ sufficiently large so that $t + N \ge s$. Then $\|u\|_{H^s} \le C \|u\|_{H^t}$. But $A$ is injective on $H^s(\mathbb{R}^d; L_2(\mathcal{M}))$, hence $u = 0$. This shows that $A$ is injective on $H^t(\mathbb{R}^d; L_2(\mathcal{M}))$.
	
	Next, let $f \in H^{t-\Re m}(\mathbb{R}^d; L_2(\mathcal{M}))$, and define
	$$
	u_0 := B f \in H^t(\mathbb{R}^d; L_2(\mathcal{M})).
	$$
	Then by the right parametrix relation $AB = I - R'$,
	$$
	A u_0 = A B f = f - R' f.
	$$
	Since $R'$ is smoothing, $R' f \in H^\infty(\mathbb{R}^d; L_2(\mathcal{M}))$. Here
	$H^{\infty}(\mathbb{R}^d ; L_2(\mathcal{M})):=\bigcap_{k \ge 0} H^k(\mathbb{R}^d ; L_2(\mathcal{M}))$,
	endowed with the Fr\'{e}chet topology defined by the seminorms $\|\cdot\|_{H^k}$.
	By the surjectivity of $A$ on $H^s(\mathbb{R}^d; L_2(\mathcal{M}))$, there exists $v \in H^s(\mathbb{R}^d; L_2(\mathcal{M}))$ such that
	$$
	A v = R' f.
	$$
	Elliptic regularity implies $v \in H^\infty(\mathbb{R}^d; L_2(\mathcal{M})) \subset H^t(\mathbb{R}^d; L_2(\mathcal{M}))$. Set $u := u_0 + v \in H^t(\mathbb{R}^d; L_2(\mathcal{M}))$. Then
	$$
	A u = A(u_0 + v) = (f - R'f) + R'f = f,
	$$
	so $A$ is surjective on $H^t(\mathbb{R}^d; L_2(\mathcal{M}))$.
	
	Combining injectivity and surjectivity,
	$$A : H^t(\mathbb{R}^d; L_2(\mathcal{M})) \to H^{t-\Re m}(\mathbb{R}^d; L_2(\mathcal{M}))$$
	is bijective with a bounded inverse. By the bounded inverse theorem, there exists a constant $C_t > 0$ such that
	$$
	\|A^{-1} g\|_{H^t} \le C_t \|g\|_{H^{t-\Re m}}, \quad \forall g \in H^{t-\Re m}(\mathbb{R}^d; L_2(\mathcal{M})).
	$$
	
	Now consider the identity
	$$
	A^{-1} = B + A^{-1} R'.
	$$
	We already know $B \in \mathrm{C}\Psi^{-m}\big(\mathbb{R}^d;\mathcal{M}\big)$. For the second term, note that $R' : H^{t-\Re m}(\mathbb{R}^d; L_2(\mathcal{M})) \to H^\infty(\mathbb{R}^d; L_2(\mathcal{M}))$ is continuous for all $t$, and $A^{-1} : H^\infty(\mathbb{R}^d; L_2(\mathcal{M})) \to H^\infty(\mathbb{R}^d; L_2(\mathcal{M}))$ is also continuous by elliptic regularity. Therefore,
	$$
	A^{-1} R' : H^{t-\Re m}(\mathbb{R}^d; L_2(\mathcal{M})) \to H^\infty(\mathbb{R}^d; L_2(\mathcal{M}))
	$$
	is continuous for all $t \in \mathbb{R}$.
	By Proposition \ref{smooth-op-kernel}, any operator mapping $H^t(\mathbb{R}^d; L_2(\mathcal{M}))$ continuously into $\bigcap_k H^k(\mathbb{R}^d; L_2(\mathcal{M}))$ belongs to $\Psi^{-\infty}\big(\mathbb{R}^d;\mathcal{M}\big)$. Hence
	$$
	A^{-1} R' \in \Psi^{-\infty}\big(\mathbb{R}^d;\mathcal{M}\big).
	$$
	Therefore, we conclude
	$$
	A^{-1} = B + A^{-1} R' \in \mathrm{C}\Psi^{-m}\big(\mathbb{R}^d;\mathcal{M}\big).
	$$
	This completes the proof.
\end{proof}

\subsection{The resolvent of an $\mathcal{M}$-valued elliptic  $\Psi$DOs }\label{section-resolvent}

As before, let
$$
A \in \mathrm{C}\Psi^m(\mathbb{R}^d;\mathcal{M})
$$
be an $\mathcal{M}$-valued elliptic $\Psi$DO of order $\Re m > 0$, acting on the dense domain $\mathscr{S}(\mathbb{R}^d; L_2(\mathcal{M})) \subset L_2(\mathcal{N})$. For $f \in \mathscr{S}(\mathbb{R}^d; L_2(\mathcal{M}))$, the quadratic form of $A^*A$ satisfies
\begin{equation}\label{eq:quadratic}
	\langle A^*A f, f \rangle
	= \int_{\mathbb{R}^d} \tau\!\left((A f(x))^* A f(x)\right)\,dx
	= \|A f\|_{L_2(\mathcal{N})}^2
	\ge 0.
\end{equation}
Thus, $A^*A$ is symmetric, densely defined, and bounded below by zero.

Although the Friedrichs extension theorem ensures that a symmetric operator which is bounded below admits a unique self-adjoint extension with the same lower bound, this extension does not necessarily coincide with the closure of $A^*A$ on the Schwartz space, nor is $A^*A$ automatically essentially self-adjoint without additional assumptions.

In the present setting, the ellipticity of $A$ implies that $A^*A$ is also elliptic, of order $2\Re m > 0$. By elliptic regularity, any solution $u \in L_2(\mathcal{N})$ to the equation
$$
(A^*A \pm i)u = 0
$$
must belong to $\mathscr{S}(\mathbb{R}^d; L_2(\mathcal{M}))$. Substituting such $u$ into the above equation and applying~\eqref{eq:quadratic}, we deduce that $u = 0$. Therefore,
$$
\ker(A^*A - i) = \ker(A^*A + i) = \{0\}.
$$
It follows that $A^*A$ is essentially self-adjoint on $\mathscr{S}(\mathbb{R}^d; L_2(\mathcal{M}))$, and hence its closure
$$
\overline{A^*A} = (A^*A)^*
$$
is the unique self-adjoint extension. Since this extension is bounded below by zero, it coincides with the Friedrichs extension of $A^*A$.

Finally, if $f \in D(\overline{A^*A})$, then by definition there exists a sequence $f_n \in \mathscr{S}(\mathbb{R}^d; L_2(\mathcal{M}))$ such that
$$
f_n \to f \quad \text{and} \quad A^*A f_n \to \overline{A^*A}\, f \quad \text{in } L_2(\mathcal{N}).
$$
By continuity of the inner product and~\eqref{eq:quadratic},
$$
\langle \overline{A^*A}\, f , f \rangle
= \lim_{n\to\infty} \langle A^*A f_n, f_n \rangle
= \lim_{n\to\infty} \|A f_n\|_{L_2(\mathcal{N})}^2
\ge 0.
$$
Thus, $\overline{A^*A}$ is a positive operator on its domain, and the Friedrichs extension agrees with the closure of $A^*A$ on $\mathscr{S}(\mathbb{R}^d; L_2(\mathcal{M}))$.

By Proposition \ref{cl M}, the principal symbol of $A^* A$ is $\sigma(A)_m(x, \xi)^{\ast}\sigma(A)_m(x, \xi)$. Consequently, $\sigma(A)_m(x, \xi)^{\ast}\sigma(A)_m(x, \xi)$ is a positive element in $\mathcal{M}$ for all $(x, \xi) \in \mathbb{R}^d \times (\mathbb{R}^d \setminus \{0\})$.
By ellipticity and Proposition \ref{sa M elliptic}, there exists a constant $C_e>0$ such that
$$
\sigma(A)_m(x,\xi)^* \sigma(A)_m(x,\xi) \ge C_e^2 |\xi|^{2\Re m} ,
$$
which ensures that $\sigma(A)_m(x,\xi)^* \sigma(A)_m(x,\xi)$ is invertible in $\mathcal{M}$ for $\xi\neq 0$.

Since our main interest is the singular spectrum of $A$, i.e., the spectrum of $|A| = (A^* A)^{\frac{1}{2}}$, it suffices to study the positive operator $A^* A$.
From now on, for notational simplicity, we will assume without loss of generality that $A$ is positive, and that its principal symbol $\sigma(A)_m(x, \xi)$ is positive for all $(x, \xi) \in \mathbb{R}^d \times (\mathbb{R}^d \setminus \{0\})$. In this case, we also know from Proposition \ref{cl M} that the order $m$ of $A$ is real.

\begin{rk}
	If $A$ is positive, then we may write $A=B^2$ with (densely defined unbounded) self-adjoint operator on $L_2(\mathcal{N}) $. If $B$ were known to be a $\Psi$DO, then we could see from Proposition \ref{cl M} that, the principal symbol $ \sigma(A)_m(x,\xi)$ is positive. We will indeed show that $A^{\frac 1 2 }$ is a $\Psi$DO under some certain assumption; but currently, we keep the assumptions that $A$ is positive, and that its principal symbol $\sigma(A)_m(x, \xi)$ is positive for all $(x, \xi) \in \mathbb{R}^d \times (\mathbb{R}^d \setminus \{0\})$.
\end{rk}

We then define the keyhole-shaped region $\Lambda_{\xi}$ as
$$
\Lambda_{\xi} = \Big\{ \lambda \in \mathbb{C} \setminus \{0\} : |\lambda| < \tfrac{1}{2}C_e(\xi) \text{ or } |\arg \lambda - \pi| < \tfrac{\pi}{4} \Big\},
$$
where $C_e(\xi) = C_e |\xi|^m$ for $0<|\xi| \le 1$, and $C_e(\xi) = C_e$ for $|\xi| \ge 1$.
As a result, for all $(x, \xi) \in \mathbb{R}^d \times (\mathbb{R}^d \setminus \{0\})$, the operator
$$
\sigma(A)_m(x, \xi) - \lambda 1_{\mathcal{M}} \quad (\text{abbreviated as } \sigma(A)_m(x, \xi) - \lambda)
$$
is invertible in $\mathcal{M}$ for any $\lambda \in \Lambda_{\xi}$.

We define the parameter-dependent principal symbol of $A-\lambda$ by
$$
\sigma(A)_m(x,\xi,\lambda) := \sigma(A)_m(x,\xi) - \lambda.
$$
For the lower order terms we set
$$
\sigma(A)_{m-j}(x,\xi,\lambda) := \sigma(A)_{m-j}(x,\xi), \quad j \ge 1.
$$
Moreover, for $\xi \neq 0$ and $t>0$,
\begin{align*}
	\sigma(A)_m(x, t\xi, t^m\lambda) &= \sigma(A)_m(x, t\xi) - t^m\lambda \\
	&= t^m (\sigma(A)_m(x, \xi) - \lambda) \\
	&= t^m \sigma(A)_m(x, \xi, \lambda).
\end{align*}
Thus, $\sigma(A)_m(x, \xi, \lambda)$ is positive homogeneous in $(\xi, \lambda^{\frac{1}{m}})$ of degree $m$.

Next, we are going to construct an $\mathcal{M}$-valued $\Psi$DO with parameter, that will be denoted by $B(\lambda)$, such that $B(\lambda) (A - \lambda ) \sim 1_{B(L_2(\mathbb{R}^d)) \overline{\otimes} \mathcal{M}}$, i.e., a left parametrix for $A - \lambda $. To achieve this construction, we solve the following equations:
\begin{equation} \label{mexpan}
	\sigma(B)_{-m}^0(x, \xi, \lambda) \sigma(A)_m(x, \xi, \lambda) = 1_{\mathcal{M}},
\end{equation}
\begin{equation} \label{rexpan}
	\begin{split}
		&\sigma(B)_{-m-j}^0(x, \xi, \lambda) \sigma(A)_{m}(x, \xi, \lambda)\\
		+ \sum_{\substack{k+l+|\alpha| = j \\ k < j}}& \frac{1}{\alpha!} \partial_\xi^\alpha \sigma(B)_{-m-k}^0(x, \xi, \lambda) D_x^\alpha \sigma(A)_{m-l}(x, \xi, \lambda) = 0,
	\end{split}
\end{equation}
where $j \ge 1$ and $k,l \ge 0$. The existence of the solution to this system is ensured in the following

\begin{proposition}\label{existence-sB}
	Let $A \in \mathrm{C}\Psi^m(\mathbb{R}^d;\mathcal{M})$ be elliptic with $  m>0$, and suppose that both $ A $ and its principal symbol $\sigma(A)_m(x, \xi)$ are positive. Let $\Lambda_\xi$ be the parameter region defined above. Then there exists a unique family of $\mathcal{M}$-valued smooth functions
	$$
	\big\{\sigma(B)_{-m-j}^0(x,\xi,\lambda)\big\}_{j\ge 0},
	$$
	such that, for each  $(x,\xi)\in \mathbb{R}^d\times(\mathbb{R}^d\setminus\{0\})$ and $\lambda \in \Lambda_\xi$,  \eqref{mexpan} and \eqref{rexpan} hold.
\end{proposition}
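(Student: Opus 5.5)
The plan is a recursive construction: \eqref{mexpan} fixes the leading term, and each equation in \eqref{rexpan} is linear in the new unknown, with a right-hand side built only from terms already constructed. The whole argument therefore rests on invertibility of $\sigma(A)_m(x,\xi)-\lambda$ in $\mathcal{M}$, which was established above for $\lambda\in\Lambda_\xi$.

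\textbf{Step 1 (leading term).} For $(x,\xi)\in\mathbb{R}^d\times(\mathbb{R}^d\setminus\{0\})$ and $\lambda\in\Lambda_\xi$, set
\[
\sigma(B)^0_{-m}(x,\xi,\lambda):=(\sigma(A)_m(x,\xi)-\lambda)^{-1}\in\mathcal{M}.
\]
Since $\sigma(A)_m(x,\xi)$ is positive with $\sigma(A)_m\ge C_e|\xi|^m$, its spectrum lies in $[C_e|\xi|^m,\infty)$. Every $\lambda\in\Lambda_\xi$ stays a positive distance from this spectrum, so the inverse exists. An inverse in a unital algebra is two-sided and unique, so \eqref{mexpan} has exactly one solution.

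Smoothness in $(x,\xi)$ follows because $\sigma(A)_m$ is $C^\infty$ with values in $\mathcal{M}$ on $\xi\neq0$, and inversion is a real-analytic map on the open set of invertible elements of the Banach algebra $\mathcal{M}$. Its derivative is $D(a^{-1})[h]=-a^{-1}ha^{-1}$. By the chain rule, $\sigma(B)^0_{-m}$ is smooth, and its derivatives are finite sums of products of $\sigma(B)^0_{-m}$ with derivatives of $\sigma(A)_m$. The resolvent identity also gives holomorphy in $\lambda$, although this is not needed.

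\textbf{Step 2 (induction on $j$).} Suppose $\sigma(B)^0_{-m-k}$ has been constructed uniquely and smoothly for all $k<j$. Equation \eqref{rexpan} then reads
\[
\sigma(B)^0_{-m-j}\,\sigma(A)_m(x,\xi,\lambda)=-\sum_{\substack{k+l+|\alpha|=j\\ k<j}}\frac{1}{\alpha!}\partial_\xi^\alpha\sigma(B)^0_{-m-k}\,D_x^\alpha\sigma(A)_{m-l}.
\]
The right-hand side involves only known smooth $\mathcal{M}$-valued functions. Multiplying on the right by $\sigma(B)^0_{-m}$ gives the unique solution
\[
\sigma(B)^0_{-m-j}=-\Big(\sum_{\substack{k+l+|\alpha|=j\\ k<j}}\frac{1}{\alpha!}\partial_\xi^\alpha\sigma(B)^0_{-m-k}\,D_x^\alpha\sigma(A)_{m-l}\Big)\sigma(B)^0_{-m}.
\]
Uniqueness holds because any solution can be multiplied on the right by the inverse $\sigma(B)^0_{-m}$, which recovers this formula. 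Smoothness follows because the expression consists of products and derivatives of smooth functions, and multiplication in $\mathcal{M}$ is a continuous bilinear map.

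\textbf{Main obstacle.} The only delicate point is that the $\xi$-derivatives $\partial_\xi^\alpha\sigma(B)^0_{-m-k}$ must make sense at each point of the set where $\lambda\in\Lambda_\xi$. Since $\Lambda_\xi$ varies with $\xi$, I would check that the set $\{(x,\xi,\lambda):\xi\neq0,\ \lambda\in\Lambda_\xi\}$ is open in $\mathbb{R}^d\times(\mathbb{R}^d\setminus\{0\})\times\mathbb{C}$, or at least that for fixed $\lambda$ the set of admissible $\xi$ is open. This holds because $C_e(\xi)$ is continuous and the defining inequalities are strict.

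On this open set, $\sigma(A)_m-\lambda$ is invertible. If necessary I would enlarge it slightly to the full open set where invertibility holds, namely $\lambda$ outside the spectrum of $\sigma(A)_m(x,\xi)$, which is open by upper semicontinuity of the spectrum. Smoothness and the recursion are then valid there. This gives existence and uniqueness on the stated region.

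The homogeneity of $\sigma(B)^0_{-m-j}$ of degree $-m-j$ in $(\xi,\lambda^{1/m})$ is not part of the statement. I would nonetheless note that it follows by the same induction, using the homogeneity of $\sigma(A)_m(x,\xi,\lambda)$ computed above.
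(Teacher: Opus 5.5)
Your proposal is correct and follows the paper's proof: you invert $\sigma(A)_m(x,\xi)-\lambda$ to solve \eqref{mexpan}, then solve each equation of \eqref{rexpan} recursively by right-multiplying by $\sigma(B)^0_{-m}$, which gives existence and uniqueness at once. You also justify two points the paper leaves implicit: smoothness, via analyticity of inversion in the Banach algebra $\mathcal{M}$, and openness of the set $\{(x,\xi,\lambda):\xi\neq 0,\ \lambda\in\Lambda_\xi\}$, which is needed for the $\xi$-derivatives in the recursion to make sense.
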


\begin{proof}
	Since $A$ is elliptic, $\sigma(A)_m(x,\xi,\lambda) = \sigma(A)_m(x,\xi)-\lambda$ is invertible in $\mathcal{M}$ for $\lambda\in \Lambda_\xi$.
	Equation \eqref{mexpan} then has the unique solution
	$$
	\sigma(B)_{-m}^0(x,\xi,\lambda) = \sigma(A)_m(x,\xi,\lambda)^{-1}.
	$$

	Suppose inductively that $\sigma(B)_{-m-k}^0(x,\xi,\lambda)$ are already defined for $k<j$.
	In \eqref{rexpan}, all terms except the first involve only these lower-order coefficients.
	Rearranging gives
	$$
	\sigma(B)_{-m-j}^0(x,\xi,\lambda)\,\sigma(A)_m(x,\xi,\lambda) = -r_j(x,\xi,\lambda),
	$$
	where $r_j(x,\xi,\lambda)$ is explicitly computable from $\big\{\sigma(B)_{-m-k}^0(x,\xi,\lambda) : k<j\big\}$ and $\big\{\sigma(A)_{m-l}(x,\xi,\lambda):l\ge 0 \big\}$.
	Right-multiplying by the invertible $\sigma(A)_m(x,\xi,\lambda)^{-1}$ yields
	$$
	\sigma(B)_{-m-j}^0(x,\xi,\lambda) = -r_j(x,\xi,\lambda)\,\sigma(A)_m(x,\xi,\lambda)^{-1}.
	$$
	Thus $\sigma(B)_{-m-j}^0(x,\xi,\lambda)$ exists uniquely.
\end{proof}

To continue, we need the following formula for derivatives of inverses.

\begin{lemma}\label{inverse derivative lemma}
	Let $F(x,\xi) \in \mathcal{M}$ be a smooth $\mathcal{M}$-valued function such that $F(x,\xi)$ is invertible for all $(x,\xi) \in \mathbb{R}^d \times (\mathbb{R}^d \setminus \{0\})$.
	Then for any multi-indices $\alpha,\beta$, the mixed derivatives of $F^{-1}(x,\xi)$ satisfy
	\begin{equation}\label{inverse derivative formula}
		\begin{split}
			\partial_\xi^\alpha \partial_x^\beta \big(F^{-1}\big)
			=& \sum_{k=1}^{|\alpha|+|\beta|} \
			\sum_{\substack{\alpha_1+\cdots+\alpha_k=\alpha \\ \beta_1+\cdots+\beta_k=\beta}}
			C_{\alpha_1\cdots \alpha_k;\, \beta_1\cdots \beta_k} \\
			&\cdot  F^{-1} \big(\partial_\xi^{\alpha_1}\partial_x^{\beta_1} F\big) F^{-1} \cdots
			F^{-1} \big(\partial_\xi^{\alpha_k}\partial_x^{\beta_k} F\big) F^{-1},
		\end{split}
	\end{equation}
	where the constants $C_{\alpha_1\cdots \alpha_k;\, \beta_1\cdots \beta_k}$ are combinatorial coefficients depending only on the partitions of $(\alpha,\beta)$.
\end{lemma}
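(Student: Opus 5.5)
We argue by induction on the total order $n=|\alpha|+|\beta|$, using only the product rule and the derivative of the inverse map in the Banach algebra $\mathcal{M}$. First we record that $F^{-1}$ is smooth on $\mathbb{R}^d\times(\mathbb{R}^d\setminus\{0\})$. Inversion $\mathrm{inv}\colon G(\mathcal{M})\to G(\mathcal{M})$ on the open group of invertibles is real-analytic, via the Neumann series
\[
(a+h)^{-1}=\sum_{k\ge 0}(-a^{-1}h)^k a^{-1}, \qquad \|h\|<\|a^{-1}\|^{-1}.
\]
Its Fréchet derivative is $D\,\mathrm{inv}(a)h=-a^{-1}ha^{-1}$. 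Composing with the smooth map $F$ shows that $F^{-1}$ is $C^\infty$, and for any single first-order derivative $\partial\in\{\partial_{\xi_i},\partial_{x_i}\}$ the chain rule gives
\[
\partial(F^{-1})=-F^{-1}(\partial F)F^{-1}.
\]
This is the case $n=1$ of \eqref{inverse derivative formula}, with $k=1$ and coefficient $-1$. The case $n=0$ is trivial, read with the convention that the empty sum is replaced by $F^{-1}$.

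For the inductive step, suppose \eqref{inverse derivative formula} holds for all $(\alpha,\beta)$ of total order $n$, and apply one more derivative $\partial$ to a generic term
\[
F^{-1}(\partial^{\gamma_1}F)F^{-1}\cdots F^{-1}(\partial^{\gamma_k}F)F^{-1},
\]
where $\gamma_i=(\alpha_i,\beta_i)$ and $\partial^{\gamma_i}=\partial_\xi^{\alpha_i}\partial_x^{\beta_i}$. By the Leibniz rule, which is valid in the noncommutative algebra $\mathcal{M}$ provided the order of the factors is kept, the derivative falls on one factor at a time. There are two cases.

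(a) It falls on a middle factor $\partial^{\gamma_i}F$. This produces a term of the same length $k$ in which $\gamma_i$ is replaced by $\gamma_i+e$, where $e$ is the unit multi-index of $\partial$.

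(b) It falls on one of the $k+1$ copies of $F^{-1}$. By the base case this copy becomes $-F^{-1}(\partial F)F^{-1}$, which produces a term of length $k+1$ with an inserted block $\gamma=e$.

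In both cases the new multi-indices still sum to $(\alpha,\beta)+e$. Each $\gamma_i$ remains nonzero, since every block $\partial^{\gamma}F$ originates from at least one derivative, and the length satisfies $1\le k'\le n+1$. Collecting terms gives \eqref{inverse derivative formula} at order $n+1$. The new coefficients are integer combinations of the old ones: coefficients of type (a) are inherited, and coefficients of type (b) acquire a factor $-1$. They depend only on the ordered partitions $(\gamma_1,\dots,\gamma_k)$ and not on $F$ or $\mathcal{M}$.

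If one wants it, an explicit formula is available by a standard count: for ordered nonzero blocks, $C=(-1)^k\,\alpha!\,\beta!/\prod_i(\alpha_i!\beta_i!)$. This follows from the analogous scalar identity, or from expanding $\mathrm{inv}(F(z+h))$ as a Taylor series in $h$ using the Neumann series. It is not needed for the statement.

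The only delicate point is justifying the base case, namely that $F^{-1}$ is differentiable with the stated derivative. Differentiability here is in the norm topology of $\mathcal{M}$, and norm-smoothness of $F$ is part of the definition of $C^\infty(\cdot\,;\mathcal{M})$. The argument uses the openness of $G(\mathcal{M})$, the continuity of inversion (which follows from the Neumann series bound $\|(a+h)^{-1}-a^{-1}\|\le \|a^{-1}\|^2\|h\|/(1-\|a^{-1}\|\|h\|)$), and the identity
\[
(a+h)^{-1}-a^{-1}+a^{-1}ha^{-1}=(a+h)^{-1}ha^{-1}ha^{-1}=O(\|h\|^2).
\]
Once this is in place, the rest of the proof is purely combinatorial bookkeeping that preserves the order of the factors.
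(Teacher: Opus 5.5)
Your proof is correct and follows the same route as the paper: induction on $|\alpha|+|\beta|$, with base case $\partial(F^{-1})=-F^{-1}(\partial F)F^{-1}$ and an inductive step that applies the order-preserving Leibniz rule to each term. Your Neumann-series justification that $F^{-1}$ is norm-smooth, and your explicit coefficient formula $C=(-1)^k\,\alpha!\,\beta!/\prod_i(\alpha_i!\,\beta_i!)$ over ordered nonzero blocks, are correct additions that the paper does not spell out.
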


\begin{proof}
	The case $|\alpha|+|\beta|=1$ follows from the basic identity
	$$
	\partial(F^{-1}) = -F^{-1} (\partial F) F^{-1}.
	$$
	Assume the formula \eqref{inverse derivative formula} holds for all multi-indices with total order $\le N$.
	For total order $N+1$, differentiating \eqref{inverse derivative formula} once more and applying Leibniz' rule yields a sum of products of the form
	$$
	F^{-1} \big(\partial F\big) F^{-1} \cdots F^{-1} \big(\partial F\big) F^{-1},
	$$
	with the correct combinatorial factors. This establishes the induction step.
	Hence the formula \eqref{inverse derivative formula} holds for all multi-indices $(\alpha,\beta)$.
\end{proof}

The properties of the family $\big\{\sigma(B)_{-m-j}^0(x,\xi,\lambda)\big\}_{j\ge 0}$ determined by \eqref{mexpan} and \eqref{rexpan} are provided in the following proposition.

\begin{proposition}\label{para sym prop}
	For $(x, \xi) \in \mathbb{R}^d \times (\mathbb{R}^d \backslash \{0\})$, and $\lambda$ in the keyhole-shaped region $\Lambda_{\xi}$, the following properties hold:
	
	\begin{enumerate}[$\rm (i)$]
		\item $\big\| \sigma(B)_{-m}^0(x,\xi, \lambda) \big\|_{\mathcal{M}} \le C (1 + |\xi|^2)^{-\frac{ m }{2}} (1 + |\lambda|)^{-1}$.
		
		\item For each $j \ge 0$, multi-indices $\alpha, \beta$, and  $t > 0$,
		$$
		\partial_{\xi}^\alpha \partial_x^\beta \sigma(B)_{-m-j}^0\big(x, t \xi, t^m \lambda\big) = t^{-m-j-|\alpha|} \partial_{\xi}^\alpha \partial_x^\beta \sigma(B)_{-m-j}^0(x,\xi, \lambda).
		$$
		
		\item For $j + |\alpha| + |\beta| > 0$,
		$$
		\big\| \partial_\xi^\alpha \partial_x^\beta \sigma(B)_{-m-j}^0(x,\xi, \lambda) \big\|_{\mathcal{M}} \le C (1 + |\xi|^2)^{\frac{- m -j-|\alpha|}{2}} (1 + |\lambda|)^{-2}.
		$$
	\end{enumerate}
\end{proposition}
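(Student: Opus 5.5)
We prove the three items in the order (i), (ii), (iii): first the principal term via the spectral theorem in $\mathcal{M}$, then homogeneity by induction on $j$ using \eqref{mexpan}--\eqref{rexpan}, and finally the estimates by combining homogeneity with a compactness bound on a "unit sphere" in $(\xi,\lambda^{1/m})$.

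For (i), we have $\sigma(B)^0_{-m}(x,\xi,\lambda)=(\sigma(A)_m(x,\xi)-\lambda)^{-1}$. Since $\sigma(A)_m(x,\xi)$ is positive with $\sigma(A)_m(x,\xi)\ge C_e|\xi|^m$, its spectrum lies in $[C_e|\xi|^m,\|\sigma(A)_m(x,\xi)\|]$. The spectral theorem then gives
$$\|(\sigma(A)_m-\lambda)^{-1}\|_{\mathcal{M}}\le \sup_{s\ge C_e|\xi|^m}|s-\lambda|^{-1}.$$
For $\lambda\in\Lambda_\xi$ the elementary geometric bound $|s-\lambda|\ge c\,(s+|\lambda|)$ holds. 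On the sector $|\arg\lambda-\pi|<\pi/4$ this is clear. On the small disc $|\lambda|<\frac12 C_e(\xi)$ we have $|s-\lambda|\ge s/2$, and $s+|\lambda|\le \frac32 s$. Hence
$$\|\sigma(B)^0_{-m}\|\le C\,(C_e|\xi|^m+|\lambda|)^{-1}.$$
This is $\lesssim (|\xi|^m+|\lambda|)^{-1}$, and it is the quantity we will really carry along. For $|\xi|\ge 1$ it is bounded by $C(1+|\xi|^2)^{-m/2}(1+|\lambda|)^{-1}$ up to constants, using $(a+b)^{-1}\le$ const$\cdot a^{-1}$ and a split into $|\lambda|\le|\xi|^m$ or not. (As stated, (i) for $|\xi|<1$ must be read with $|\xi|^{-m}$ weights or for $|\xi|\ge1$ only, the region where these symbols are used after the cutoff $\varphi$.)

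For (ii) we argue by induction on $j$. The case $j=0$ follows from $\sigma(A)_m(x,t\xi,t^m\lambda)=t^m\sigma(A)_m(x,\xi,\lambda)$, so its inverse has homogeneity $-m$. In \eqref{rexpan}, each term $\partial_\xi^\alpha\sigma(B)^0_{-m-k}\,D_x^\alpha\sigma(A)_{m-l}$ with $k+l+|\alpha|=j$ is homogeneous of degree $-m-k-|\alpha|+m-l=-j$. Multiplying by $\sigma(A)_m(\cdot,\lambda)^{-1}$ gives degree $-m-j$ for $\sigma(B)^0_{-m-j}$. Note that $\sigma(A)_{m-l}$ is $\lambda$-independent for $l\ge1$ but homogeneous in $\xi$, so the scaling is consistent. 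Derivatives $\partial_x^\beta$ preserve the degree and $\partial_\xi^\alpha$ lowers it by $|\alpha|$, by the chain rule. Here $\Lambda_\xi$ must be invariant under $(\xi,\lambda)\mapsto(t\xi,t^m\lambda)$; we check this (at least for the sector part, and for the disc part via $C_e(\xi)$), restricting to $|\xi|\le1$ scalings if needed.

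For (iii) we use the structure from Lemma \ref{inverse derivative lemma} and the recursion. By induction, $\partial_\xi^\alpha\partial_x^\beta\sigma(B)^0_{-m-j}$ is a finite sum of products
$$R\,(\partial^{\gamma_1}\sigma(A)_{m-l_1})\,R\cdots(\partial^{\gamma_r}\sigma(A)_{m-l_r})\,R,\qquad R=(\sigma(A)_m-\lambda)^{-1},$$
with at least two factors of $R$ whenever $j+|\alpha|+|\beta|>0$. The point is that any genuine derivative or lower-order correction inserts an extra resolvent, since $\partial^\gamma(\sigma(A)_m-\lambda)=\partial^\gamma\sigma(A)_m$ kills $\lambda$ when $\gamma\ne0$. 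Each factor $\partial^{\gamma}\sigma(A)_{m-l}$ is bounded by $C|\xi|^{m-l-|\gamma_\xi|}$, and each $R$ by $C(|\xi|^m+|\lambda|)^{-1}$. Counting degrees, the product is bounded by
$$C\,|\xi|^{-j-|\alpha|}\,|\xi|^{m(r)}\,(|\xi|^m+|\lambda|)^{-(r+1)}.$$
We then trade $r-1$ resolvent factors against the $|\xi|^{mr}$ growth via $|\xi|^m(|\xi|^m+|\lambda|)^{-1}\le1$, keeping two. This yields
$$C\,|\xi|^{-m-j-|\alpha|}\,\frac{|\xi|^{m}}{(|\xi|^m+|\lambda|)}\,(|\xi|^m+|\lambda|)^{-1}\lesssim |\xi|^{-m-j-|\alpha|}(1+|\lambda|)^{-1}\cdot\ldots,$$
and for $|\xi|\ge1$ one converts to the stated $(1+|\xi|^2)^{(-m-j-|\alpha|)/2}(1+|\lambda|)^{-2}$ form.

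\textbf{Main obstacle.} The delicate step is exactly this bookkeeping: ensuring that the degree count matches the number of resolvents so that two full powers $(1+|\lambda|)^{-1}$ survive. If the exponent of $|\xi|$ does not balance, I would fall back on homogeneity (ii). Reduce to the compact set $\{|\xi|^{2m}+|\lambda|^2=1\}\cap\overline{\Lambda}$, where continuity gives a uniform bound. Then check separately the regime $|\lambda|\to\infty$ along the sector, where the two-resolvent structure supplies the $|\lambda|^{-2}$ decay.
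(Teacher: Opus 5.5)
Your item (ii) is the same induction the paper uses and is fine, and your intermediate bounds in (i) and (iii) are correct. The gap is the final step, where you convert those bounds into the stated product-type estimates. That step cannot be carried out, because (i) and (iii) are false as written. This holds not only for $|\xi|<1$, which you noticed, but also for large $|\xi|$.

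Splitting into $|\lambda|\le|\xi|^m$ and $|\lambda|>|\xi|^m$ turns $C(|\xi|^m+|\lambda|)^{-1}$ into $C\min(|\xi|^{-m},|\lambda|^{-1})$. It never gives the product $|\xi|^{-m}(1+|\lambda|)^{-1}$. Concretely, take $\sigma(A)_m(x,\xi)=|\xi|^m 1_{\mathcal{M}}$ (e.g.\ $A=J^m$), $\xi=(r,0,\dots,0)$ and $\lambda=-r^m$, which lies on the axis of the sector $|\arg\lambda-\pi|<\frac{\pi}{4}$. Then
\[
\|\sigma(B)^0_{-m}(x,\xi,\lambda)\|_{\mathcal{M}}=\tfrac12 r^{-m},\qquad \|\partial_{\xi_1}\sigma(B)^0_{-m}(x,\xi,\lambda)\|_{\mathcal{M}}=\tfrac{m}{4}\,r^{-m-1}.
\]
By contrast, the right-hand sides of (i) and of (iii) (with $j=0$, $|\alpha|=1$) are $O(r^{-2m})$ and $O(r^{-3m-1})$ as $r\to\infty$. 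So your ``for $|\xi|\ge1$ one converts to the stated form'' is exactly where the argument breaks.

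Your homogeneity fallback cannot rescue it either. Any bound obtained by scaling $(\xi,\lambda)\mapsto(t\xi,t^m\lambda)$ is itself homogeneous of degree $-m-j-|\alpha|$, which the product weight is not. The paper's proof makes the same leap. From $\operatorname{dist}(\lambda,\operatorname{Spec}\sigma(A)_m)\ge c(|\xi|^m+|\lambda|)$ it passes to the product bound ``by choosing a sufficiently large constant $C$''. It then proves (iii) by inserting that bound for every resolvent factor, so (iii) inherits the error.

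What your argument actually establishes, and what should replace (i) and (iii), is the strongly parametric form. Uniformly for $\xi\neq0$ and $\lambda\in\Lambda_\xi$,
\[
\|\sigma(B)^0_{-m}\|_{\mathcal{M}}\le C(|\xi|^m+|\lambda|)^{-1},\qquad \|\partial_\xi^\alpha\partial_x^\beta\sigma(B)^0_{-m-j}\|_{\mathcal{M}}\le C|\xi|^{m-j-|\alpha|}(|\xi|^m+|\lambda|)^{-2}\quad (j+|\alpha|+|\beta|>0).
\]
Your ``at least two resolvents'' structure is the right mechanism for the second bound, but there is a slip in your bookkeeping. Trading $r-1$ factors $|\xi|^m(|\xi|^m+|\lambda|)^{-1}\le1$ leaves $|\xi|^{m-j-|\alpha|}(|\xi|^m+|\lambda|)^{-2}$. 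Your displayed expression instead simplifies to $|\xi|^{-j-|\alpha|}(|\xi|^m+|\lambda|)^{-2}$, which has the wrong homogeneity $-2m-j-|\alpha|$. Product-type weights in $(\xi,\lambda)$ are available only after trading powers, e.g.\ $(|\xi|^m+|\lambda|)^{-1}\le|\xi|^{-m\theta}|\lambda|^{\theta-1}$ for $\theta\in[0,1]$. Any later step that cites (i) or (iii) in product form has to be adjusted accordingly.
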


\begin{proof}
	$\rm (i)$ By the positivity and ellipticity of $ \sigma(A)_m(x, \xi) $, the spectrum satisfies
	$$
	\operatorname{Spec}\big(\sigma(A)_{m}(x, \xi)\big) \subset [C_e|\xi|^m, \infty).
	$$
	For $ 0 < |\lambda| < \tfrac{1}{2}C_e(\xi) $, the distance from $ \lambda $ to $ \operatorname{Spec}\big(\sigma(A)_{m}(x, \xi)\big) $ is given by
	\begin{align*}
		\operatorname{dist}\Bigl(\lambda, \operatorname{Spec}\bigl(\sigma(A)_{m}(x, \xi)\bigr)\Bigr)
		&= \inf_{\mu \in [C_e|\xi|^m, \infty)} |\lambda - \mu| \notag \\
		&\ge C_e|\xi|^m - |\lambda| \notag \\
		&\ge \tfrac{1}{2}C_e|\xi|^m \notag \\
		&\ge \tfrac{1}{4}C_e|\xi|^m + \tfrac{1}{2}|\lambda|.
	\end{align*}
	For $ |\arg \lambda - \pi| < \tfrac{\pi}{4} $ with $ \lambda \ne 0 $, the distance from $ \lambda $ to $ \operatorname{Spec}\big(\sigma(A)_{m}(x, \xi)\big) $ is given by
	$$
	\operatorname{dist}\Big(\lambda, \operatorname{Spec}\big(\sigma(A)_{m}(x, \xi)\big)\Big) = \inf_{\mu \in [C_e|\xi|^m, \infty)} |\lambda - \mu| \ge C_e|\xi|^m + \tfrac{\sqrt{2}}{2}|\lambda|.
	$$
	Therefore, by choosing a sufficiently large constant $ C > 0 $, we obtain
	$$
	\big\| \sigma(B)_{-m}^0(x, \xi, \lambda) \big\|_{\mathcal{M}} \le C (1 + |\xi|^2)^{-\frac{m}{2}} (1 + |\lambda|)^{-1},
	$$
	which establishes the estimate in $\rm (i)$.
	
	$\rm (ii)$ For  $t > 0$, it holds that
	$$
	\sigma(B)_{-m}^0\big(x, t \xi, t^m \lambda\big) = t^{-m} \sigma(B)_{-m}^0(x, \xi, \lambda).
	$$
	Applying Lemma \ref{inverse derivative lemma},
	\begin{equation*}
		\begin{split}
			\partial_{\xi}^\alpha \partial_x^\beta \sigma(A)_m^{-1} &= \sum_{k=1}^{|\alpha|+|\beta|} \sum_{\substack{\sum \alpha_j=\alpha \\ \sum \beta_j=\beta}} C_{\alpha_1 \cdots \alpha_k ; \beta_1 \cdots \beta_k} \\
			&\cdot \sigma(A)_m^{-1} \big(\partial_{\xi}^{\alpha_1} \partial_x^{\beta_1} \sigma(A)_m\big) \sigma(A)_m^{-1} \cdots \sigma(A)_m^{-1} \big(\partial_\xi^{\alpha_k} \partial_x^{\beta_k} \sigma(A)_m\big) \sigma(A)_m^{-1},
		\end{split}
	\end{equation*}
	combined with the positive homogeneity of $\sigma(A)_m(x, \xi, \lambda)$, we deduce that for $|\xi| \ge 1$ and $t > 0$,
	$$
	\partial_{\xi}^\alpha \partial_x^\beta \sigma(B)_{-m}^0\big(x, t \xi, t^m \lambda\big) = t^{-m-|\alpha|} \partial_{\xi}^\alpha \partial_x^\beta \sigma(B)_{-m}^0(x, \xi, \lambda).
	$$
	This confirms the claimed homogeneity for $j = 0$.
	
	When $j \ge 1$, assume by induction that the homogeneity property holds for all symbols of order $-m-k$ with $k < j$. The recursive definition of $\sigma(B)_{-m-j}^0(x, \xi, \lambda)$ is given by
	\begin{align*}
		\sigma(B)_{-m-j}^0(x, \xi, \lambda)
		&= -\sigma(B)_{-m}^0(x, \xi, \lambda) \notag \\
		&\qquad \cdot \sum_{\substack{k+l+|\gamma|=j \\ l<j}}
		\frac{1}{\gamma!} \partial_{\xi}^\gamma \sigma(B)_{-m-k}^0(x, \xi, \lambda)
		\cdot D_x^\gamma \sigma(A)_{m-l}(x, \xi).
	\end{align*}
	Under the scaling $(t \xi, t^m \lambda)$, each term transforms as
	$$
	\partial_{\xi}^\gamma \sigma(B)_{-m-k}^0\big(x, t \xi, t^m \lambda\big) \cdot D_x^\gamma \sigma(A)_{m-l}(x, t \xi).
	$$
	By the induction hypothesis,
	$$
	\partial_{\xi}^\gamma \sigma(B)_{-m-k}^0\big(x, t \xi, t^m \lambda\big) = t^{-m-k-|\gamma|} \partial_{\xi}^\gamma \sigma(B)_{-m-k}^0(x, \xi, \lambda),
	$$
	and since $\sigma(A)_{m-l}(x, t \xi) = t^{m-l} \sigma(A)_{m-l}(x, \xi)$, it follows that
	$$
	D_x^\gamma \sigma(A)_{m-l}(x, t \xi) = t^{m-l} D_x^\gamma \sigma(A)_{m-l}(x, \xi).
	$$
	Thus, each term scales as
	$$
	t^{-m-k-|\gamma|} \cdot t^{m-l} = t^{-(k+l+|\gamma|)} = t^{-j}.
	$$
	This implies
	$$
	\sigma(B)_{-m-j}^0\big(x, t \xi, t^m \lambda\big) = t^{-m-j} \sigma(B)_{-m-j}^0(x, \xi, \lambda),
	$$
	and consequently,
	$$
	\partial_{\xi}^\alpha \partial_x^\beta \sigma(B)_{-m-j}^0\big(x, t \xi, t^m \lambda\big) = t^{-m-j-|\alpha|} \partial_{\xi}^\alpha \partial_x^\beta \sigma(B)_{-m-j}^0(x, \xi, \lambda).
	$$
	By induction, the homogeneity of all symbols $\sigma(B)_{-m-j}^0(x, \xi, \lambda)$ is rigorously established.
	
	$\rm (iii)$
	For $j=0$ and $|\alpha|+|\beta|>0$, we have
	\begin{align*}
		&\bigl\| \partial_{\xi}^\alpha \partial_x^\beta \sigma(B)_{-m}^0 \bigr\|_{\mathcal{M}} \\
		=& \Bigl\| \sum_{k=1}^{|\alpha|+|\beta|} \sum_{\substack{\sum \alpha_j=\alpha \\ \sum \beta_j=\beta}}
		C_{\alpha_1 \cdots \alpha_k ; \beta_1 \cdots \beta_k}
		\sigma(A)_m^{-1} \bigl(\partial_{\xi}^{\alpha_1} \partial_x^{\beta_1} \sigma(A)_m\bigr) \sigma(A)_m^{-1} \notag \\
		&\hspace{2em} \cdots \sigma(A)_m^{-1} \bigl(\partial_\xi^{\alpha_k} \partial_x^{\beta_k} \sigma(A)_m\bigr) \sigma(A)_m^{-1} \Bigr\|_{\mathcal{M}} \\
		\le &C \bigl\| \sigma(A)_m^{-1} \bigr\|_{\mathcal{M}}
		\bigl\|\partial_{\xi}^{\alpha_1}\partial_x^{\beta_1} \sigma(A)_m \bigr\|_{\mathcal{M}}
		\bigl\|\sigma(A)_m^{-1} \bigr\|_{\mathcal{M}} \notag \\
		&\hspace{2em} \cdots \bigl\|\sigma(A)_m^{-1} \bigr\|_{\mathcal{M}}
		\bigl\|\partial_\xi^{\alpha_k} \partial_x^{\beta_k} \sigma(A)_m\bigr\|_{\mathcal{M}}
		\bigl\|\sigma(A)_m^{-1} \bigr\|_{\mathcal{M}} \\
		\le& C (1+|\xi|^2)^{\frac{- m -|\alpha|}{2}}(1+|\lambda|)^{-2}.
	\end{align*}
	
	When $j\ge1$, assume the estimate holds for all $k<j$. From the recursive equation
	\begin{align*}
		\partial_{\xi}^\alpha \partial_x^\beta \sigma(B)_{-m-j}^0
		= -\sum \binom{\alpha}{\alpha_1}& \binom{\beta}{\beta_1}
		\partial_{\xi}^{\alpha_1} \partial_x^{\beta_1} \sigma(B)_{-m}^0 \\
		&\cdot \partial_{\xi}^{\alpha_2} \partial_x^{\beta_2} \Bigl(
		\sum_{\substack{k+l+|\gamma|=j \\ l<j}}
		\frac{1}{\gamma!} \partial_{\xi}^\gamma \sigma(B)_{-m-k}^0 \cdot D_x^\gamma \sigma(A)_{m-l}
		\Bigr).
	\end{align*}
	each term in the second sum splits via Leibniz rule into
	\begin{equation} \label{leib product}
		\partial_{\xi}^{\alpha_2+\gamma_1} \partial_x^{\beta_2} \sigma(B)_{-m-k}^0 \cdot D_x^{\gamma_2} \partial_x^{\beta_2} \sigma(A)_{m-l}.
	\end{equation}
	By the induction hypothesis,
	$$
	\big\|\partial_{\xi}^{\alpha_2+\gamma_1} \partial_x^{\beta_2} \sigma(B)_{-m-k}^0\big\|_{\mathcal{M}} \le C\big(1+|\xi|^2\big)^{\frac{- m -k-\left|\alpha_2+\gamma_1\right|}{2}}(1+|\lambda|)^{-2}.
	$$
	For $\sigma(A)_{m-l}$, classical symbol estimates yield
	$$
	\big\|D_x^{\gamma_2} \partial_x^{\beta_2} \sigma(A)_{m-l}\big\|_{\mathcal{M}} \le C\big(1+|\xi|^2\big)^{\frac{ m -l-\left|\gamma_2\right|}{2}}.
	$$
	The norm of the product \eqref{leib product} is bounded by
	$$
	C\big(1+|\xi|^2\big)^{\frac{- m -k-|\alpha_2|-|\gamma_1|+ m -l-|\gamma_2|}{2}}(1+|\lambda|)^{-2} = C\big(1+|\xi|^2\big)^{\frac{-j-|\alpha_2|}{2}}(1+|\lambda|)^{-2}.
	$$
	Absorbing lower-order terms into $C$, we obtain
	$$
	\Big\|\partial_{\xi}^{\alpha_2} \partial_x^{\beta_2}\Big(\sum_{\substack{k+l+|\gamma|=j \\ l<j}} \frac{1}{\gamma!} \partial_{\xi}^\gamma \sigma(B)_{-m-k}^0 \cdot D_x^\gamma \sigma(A)_{m-l}\Big)\Big\|_{\mathcal{M}} \le C\big(1+|\xi|^2\big)^{\frac{-j-\left|\alpha_2\right|}{2}}(1+|\lambda|)^{-2}.
	$$
	Combining with the estimate for $\partial_{\xi}^{\alpha_1} \partial_x^{\beta_1} \sigma(B)_{-m}^0$:
	$$
	\big\|\partial_{\xi}^{\alpha_1} \partial_x^{\beta_1} \sigma(B)_{-m}^0\big\|_{\mathcal{M}} \le C\big(1+|\xi|^2\big)^{\frac{- m -\left|\alpha_1\right|}{2}}(1+|\lambda|)^{-1},
	$$
	the total product satisfies
	$$
	\big\|\partial_{\xi}^\alpha \partial_x^\beta \sigma(B)_{-m-j}^0\big\|_{\mathcal{M}} \le C\big(1+|\xi|^2\big)^{\frac{- m -j-|\alpha|}{2}}(1+|\lambda|)^{-2}.
	$$
	By induction, all higher-order symbols satisfy the claimed derivative estimates. The factor $(1+|\lambda|)^{-2}$ arises from the recursive accumulation of parametric inverses.
\end{proof}

In order to construct $B(\lambda)$ from the family $\big\{\sigma(B)_{-m-j}^0(x,\xi,\lambda)\big\}_{j\ge 0}$, we should select a keyhole-shaped region that is suitable for all $|\xi| \ge \frac{1}{2}$. To achieve this, we define the keyhole-shaped region
$$
\Lambda_{0} := \Big\{ \lambda \in \mathbb{C} \setminus \{0\} : |\lambda| < \tfrac{1}{2^{m+1}} C_e \text{ or } |\arg \lambda - \pi| < \tfrac{\pi}{4} \Big\}.
$$

In the following theorem, we are able to construct $B(\lambda)$, and then establish that the resolvent of an $\mathcal{M}$-valued elliptic  $\Psi$DO is invertible provided $|\lambda|$ is sufficiently large.

\begin{theorem}\label{inverse resolvent}
	Let $A \in \mathrm{C}\Psi^m\big(\mathbb{R}^d; \mathcal{M}\big)$ be elliptic with $  m>0$, and suppose that both $ A $ and its principal symbol $\sigma(A)_m(x, \xi)$ are positive.  Suppose $\lambda$ lies in the keyhole-shaped region $\Lambda_{0}$. Then there exists $l > 0$ such that for $\lambda \in \Lambda_l = \Lambda_{0} \cap \{\lambda : |\lambda| \ge l\}$, the operator $A - \lambda $ is invertible, and the following norm estimate holds:
	\begin{equation} \label{inver estimate}
		\big\| (A - \lambda )^{-1} \big\|_{B(L_2(\mathbb{R}^d)) \overline{\otimes} \mathcal{M}} \le C (1 + |\lambda|)^{-1}.
	\end{equation}
\end{theorem}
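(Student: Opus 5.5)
We build a parameter-dependent parametrix $B(\lambda)$ from the homogeneous family of Proposition \ref{existence-sB}, show that $B(\lambda)(A-\lambda)=I-R(\lambda)$ with $\|R(\lambda)\|\to 0$ as $|\lambda|\to\infty$ in $\Lambda_0$, and invert by a Neumann series.

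\textbf{Step 1: the parametrix.} With $\varphi$ the cutoff \eqref{eq-cutoff}, set $b_j(x,\xi,\lambda)=\varphi(\xi)\sigma(B)^0_{-m-j}(x,\xi,\lambda)$. For $|\xi|\ge\frac12$ we have $C_e(\xi)\ge 2^{-m}C_e$, so $\Lambda_0\subset\Lambda_\xi$ and the $b_j$ are well defined and smooth. Proposition \ref{para sym prop} gives
\[
b_j\in S^{-m-j}_{1,0;-1}\quad\text{for every } j.
\]
This uses (i) for $b_0$ and (iii) for the derivatives and for $j\ge1$, which give the better weight $(1+|\lambda|)^{-2}$. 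By Proposition \ref{asymp} there is a symbol $b\sim\sum_j b_j$ in $S^{-m}_{1,0;-1}$, and we set $B(\lambda)=\Op(b)$.

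\textbf{Step 2: the remainder.} We apply the composition formula of Theorem \ref{asympcomthm} to $B(\lambda)$ and $A-\lambda$. The symbol $\sigma(A)-\lambda$ is only in $S^m_{1,0;1}$, so its expansion terms carry a weight of size $(1+|\lambda|)$. By the recursion \eqref{mexpan}--\eqref{rexpan}, all homogeneous terms of degree $-j$ cancel for $|\xi|\ge1$, leaving $1_{\mathcal{M}}$. What remains consists of:
\begin{enumerate}[$\rm (a)$]
\item cutoff errors supported in $|\xi|\le1$;
\item the tail $b-\sum_{j<N}b_j$;
\item the composition remainder.
\end{enumerate}

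The key point is to obtain decay in $\lambda$. I would not use the crude class bookkeeping $S_{;-1}\cdot S_{;1}=S_{;0}$, since it gives no decay. Instead:

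\emph{Term (a).} On $|\xi|\le1$ with $|\lambda|$ large, $\sigma(A)_m-\lambda$ is dominated by $\lambda$. We then use $b_0(\sigma(A)-\lambda)=1+b_0(\sigma(A)-\sigma(A)_m)$ and the estimate $\|b_0\|\lesssim(1+|\lambda|)^{-1}$, which gives $O(|\lambda|^{-1})$.

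\emph{Terms of the composition with $j\ge1$.} These involve $\partial_\xi^\alpha b_k$ for $k+|\alpha|\ge1$, which carry the weight $(1+|\lambda|)^{-2}$. They multiply either $x$-derivatives of $\sigma(A)$ (note $D_x^\alpha\lambda=0$) or $\lambda$ itself, but $\lambda$ only appears paired with $\partial_\xi^\alpha$ for $\alpha=0$. This pairing is handled by the cancellation built into \eqref{rexpan}.

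Conclusion of Step 2: one expects $R(\lambda)$ to be a symbol in $S^{-1}_{1,0;-1}$, or at least of order $0$ with weight $-\epsilon$. This is the step I expect to be the main obstacle: one must track the $\lambda$-weights through the remainder of Theorem \ref{asympcomthm} and verify that the remainder integral estimates there hold with the improved weight. A cleaner alternative, which I would try first, is to work with the operator $(A-\lambda)$ directly on $H^m$ and estimate the commutator-type error $B(\lambda)(A-\lambda)-I=\Op(r)$. Writing $r=\sum$ of terms each containing a factor $\partial^\alpha_\xi b_k$ with $k+|\alpha|\ge1$ times symbols of nonpositive order, Corollary \ref{inver coro} then gives $\|R(\lambda)\|\le C(1+|\lambda|)^{-1}$.

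\textbf{Step 3: inversion.} Choose $l$ so that $\|R(\lambda)\|\le\frac12$ for $|\lambda|\ge l$. Then $(I-R)^{-1}B(\lambda)$ is a left inverse of $A-\lambda$.

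For a right inverse, build $B'(\lambda)$ analogously, with $(A-\lambda)B'(\lambda)=I-R'(\lambda)$ and $\|R'\|\le\frac12$. Alternatively, use self-adjointness: $A$ is positive and self-adjoint (the closure discussed in Section \ref{section-resolvent}), so $\lambda\in\Lambda_l$ off $[0,\infty)$ already lies in the resolvent set. The only exceptions are the small-disc part of $\Lambda_0$, which is excluded once $|\lambda|\ge l$.

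\textbf{Step 4: the bound.} We have
\[
(A-\lambda)^{-1}=(I-R)^{-1}B(\lambda),
\]
and $\|B(\lambda)\|\le C(1+|\lambda|)^{-1}$ by Corollary \ref{inver coro} applied to $b\in S^{-m}_{1,0;-1}$ with $m>0$. Together with $\|(I-R)^{-1}\|\le2$, this gives \eqref{inver estimate}.
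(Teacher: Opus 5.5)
\textbf{Verdict: the proposal has a genuine gap.} Your strategy is the paper's own: a Borel-summed parametrix built from $\varphi\,\sigma(B)^0_{-m-j}$, a Neumann series, and $\|B(\lambda)\|$ controlled via Corollary~\ref{inver coro}. But Step~2 is not merely unfinished. For the $B(\lambda)$ you construct in Step~1, $R(\lambda)=I-B(\lambda)(A-\lambda)$ is \emph{not} small.

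\textbf{The low-frequency cutoff.} Every $b_j$ carries the factor $\varphi(\xi)$, so $b$ vanishes on $\{|\xi|\le\frac12\}$. Take $\mathcal{M}=\mathbb{C}$ and $A=a(D)$ with $a>0$ smooth and $a(\xi)=|\xi|^m$ for $|\xi|\ge1$. The recursion gives $\sigma(B)^0_{-m-j}=0$ for $j\ge1$, and $R(\lambda)$ is a Fourier multiplier whose symbol equals $1$ on $\{|\xi|\le\frac12\}$ for every $\lambda$. Hence $\|R(\lambda)\|\ge1$, and Step~3 never starts. Your term~(a) rests on $b_0(\sigma(A)-\lambda)=1+b_0(\sigma(A)-\sigma(A)_m)$. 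With the cutoff this is false: the correct identity has $\varphi$ in place of $1$.

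\textbf{The $\lambda$-weights.} Independently, the weights you import from Proposition~\ref{para sym prop}(i),(iii) are not correct. With $\sigma(A)_m=|\xi|^m$, $\lambda=-t$ and $|\xi|^m=t$, one has $(\sigma(A)_m-\lambda)^{-1}=(2t)^{-1}$, not $O(t^{-2})$. What homogeneity actually yields, for $\xi\neq0$, is $\|\partial_\xi^\alpha\partial_x^\beta\sigma(B)^0_{-m-j}\|\lesssim|\xi|^{-j-|\alpha|}(|\xi|^m+|\lambda|)^{-1}$. So $b_j\in S^{-j}_{1,0;-1}$, not $S^{-m-j}_{1,0;-1}$. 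That still gives $\|B(\lambda)\|\lesssim(1+|\lambda|)^{-1}$. But decay of the remainder then needs a genuinely finer count of resolvent factors than the one you sketch.

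The paper's proof has the same defect. It asserts $\sigma\bigl(\sum_{j\le N}B_{-m-j}(\lambda)(A-\lambda)\bigr)-1_{\mathcal{M}}\in S^{-N-1}_{-1}$. This fails on $\{|\xi|<1\}$, as the example shows. It is also not what Theorem~\ref{asympcomthm} provides, since the weights $-1$ and $+1$ add up to $0$.

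\textbf{How to close the gap.} Push your own Step~3 idea one step further.
\begin{enumerate}
\item[(1)] $A$ is positive self-adjoint and commutes with $I\otimes\mathcal{M}'$ (as in the proof of Theorem~\ref{thm-tensor algebra}). So it is affiliated with $B(L_2(\mathbb{R}^d))\overline{\otimes}\mathcal{M}$, and every bounded Borel function of $A$ lies in that algebra.
\item[(2)] Take $l=\max\{1,2^{-m-1}C_e\}$. Then $\Lambda_l$ lies in the sector $|\arg\lambda-\pi|<\frac{\pi}{4}$, where $\Re\lambda<0$ and hence $\operatorname{dist}(\lambda,[0,\infty))=|\lambda|$.
\item[(3)] The spectral theorem gives that $A-\lambda$ is invertible, with $\|(A-\lambda)^{-1}\|\le\sup_{s\ge0}|s-\lambda|^{-1}=|\lambda|^{-1}\le 2(1+|\lambda|)^{-1}$.
\end{enumerate}
This is \eqref{inver estimate}, and no parametrix is needed for it.

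The parametrix is only needed later, to identify the resolvent symbolically (Proposition~\ref{inverse-bn}). For that purpose, replace the low-frequency part by a globally invertible symbol such as $(\varphi\,\sigma(A)_m-\lambda)^{-1}$. For this choice your identity in~(a) holds with $\varphi\,\sigma(A)_m$ in place of $\sigma(A)_m$. Then redo the remainder estimate with the non-product bounds above.
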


\begin{proof}
	In order to construct a real parametrix from the $\mathcal{M}$-valued functions $\sigma(B)_{-m-j}^0(x, \xi, \lambda)$, it is essential to eliminate their singularities at $\xi = 0$. This can be achieved by multiplying these functions by a smooth cut-off function $\varphi$, which is the fixed cutoff function defined in \eqref{eq-cutoff}.
	
	We define the regularized resolvent symbols via spatial truncation as
	$$
	\sigma(B)_{-m-j}(x, \xi, \lambda) = \varphi(\xi) \sigma(B)_{-m-j}^0(x, \xi, \lambda).
	$$
	Let $B_{-m-j}(\lambda)$ denote the corresponding $\mathcal{M}$-valued $\Psi$DO, this operator is well-defined for all $\lambda \in \Lambda_{0}$.

	Using the sequence $\{t_j\}$ selected in the proof of Proposition \ref{asymp}, we find a symbol of the form $ \sum_{j=0}^{\infty} \varphi(t_j^{-1}\xi) \sigma(B)_{-m-j}(x, \xi, \lambda)$, and denote $B(\lambda)$ the corresponding $\Psi$DO, i.e.,
	\begin{equation}
		\label{B-lambda}
		\sigma(B)(x, \xi, \lambda) =  \sum_{j=0}^{\infty} \varphi(t_j^{-1}\xi) \sigma(B)_{-m-j}(x, \xi, \lambda) .\end{equation}
	Since multiplying $\varphi(t_j^{-1}\xi)$ does not affect the homogeneous degree, by the above construction,
	$$\sigma(B)(x, \xi, \lambda)-\sum_{j= 0 }^N\sigma(B)_{-m-j}(x, \xi, \lambda) \in S_{-1}^{-N-1-m}\big(\mathbb{R}^d \times \mathbb{R}^d \times \Lambda_{0}; \mathcal{M}\big)$$
	for any $N>0$. Therefore,
	$$\sigma\Big(B(\lambda) (A - \lambda )\Big)- \sigma\Big(\sum_{j= 0 }^N B_{-m-j}(\lambda)(A - \lambda ) \Big) \in S_{-1}^{-N-1}.$$
	By the construction of $B(\lambda)$ and Proposition \ref{para sym prop}, it follows that
	$$
	\sigma\Big(\sum_{j= 0 }^N B_{-m-j}(\lambda)(A - \lambda ) \Big) - 1_{\mathcal{M}} \in S_{-1}^{-N-1}.
	$$
	Therefore, we conclude that $\sigma\Big(B(\lambda) (A - \lambda )\Big)-1_{\mathcal{M}}\in S_{-1}^{-N-1} $ for any $N>0$. Consequently, $B(\lambda) (A - \lambda ) = I + R(\lambda)$ with $ R(\lambda) \in \Psi_{-1}^{-\infty}\big(\mathbb{R}^d \times \Lambda_{0}; \mathcal{M}\big)$.

	Similarly, using the same construction method, it can be demonstrated that
	$$
	(A - \lambda )B(\lambda) = I + R'(\lambda),
	$$
	where $R(\lambda)' \in \Psi_{-1}^{-\infty}\big(\mathbb{R}^d \times \Lambda_{0}; \mathcal{M}\big)$.
	
	By Corollary \ref{inver coro}, there exist constants $C_1, C_2, C_3 > 0$ such that the following norm estimates hold:
	$$
	\big\| B(\lambda) \big\|_{B(L_2(\mathbb{R}^d)) \overline{\otimes} \mathcal{M}} \le C_1 (1 + |\lambda|)^{-1},
	$$
	\begin{equation}\label{R lamda estimate}
		\big\| R(\lambda) \big\|_{B(L_2(\mathbb{R}^d)) \overline{\otimes} \mathcal{M}} \le C_2 (1 + |\lambda|)^{-1},
	\end{equation}
	$$
	\big\| R'(\lambda) \big\|_{B(L_2(\mathbb{R}^d)) \overline{\otimes} \mathcal{M}} \le C_3 (1 + |\lambda|)^{-1}.
	$$
	Hence, there exists $l > 0$ such that for $|\lambda| \ge l$ we have
	\[
	\| R(\lambda) \|_{B(L_2(\mathbb{R}^d)) \overline{\otimes} \mathcal{M}} < \frac{1}{2}
	\quad \text{and} \quad
	\| R'(\lambda) \|_{B(L_2(\mathbb{R}^d)) \overline{\otimes} \mathcal{M}} < \frac{1}{2}.
	\]
	Consequently, for $|\lambda| \ge l$,
	$$
	\big\| (I + R(\lambda))^{-1} \big\|_{B(L_2(\mathbb{R}^d)) \overline{\otimes} \mathcal{M}} \le (1 - \| R(\lambda) \|_{B(L_2(\mathbb{R}^d)) \overline{\otimes} \mathcal{M}})^{-1} < 2,
	$$
	$$
	\big\| (I + R'(\lambda))^{-1} \big\|_{B(L_2(\mathbb{R}^d)) \overline{\otimes} \mathcal{M}} \le (1 - \| R(\lambda)' \|_{B(L_2(\mathbb{R}^d)) \overline{\otimes} \mathcal{M}})^{-1} < 2.
	$$
	This implies that $I + R(\lambda)$ and $I + R(\lambda)'$ are invertible. Therefore, a left inverse of $A - \lambda $ on $L_2(\mathcal{N})$ is given by $(I + R(\lambda))^{-1}B(\lambda)$. Since
	\begin{align*}
		&(A - \lambda )(I + R(\lambda))^{-1}B(\lambda) \\
		= & (A - \lambda ) \Big[\sum_{j=0}^{\infty}\big(I-B(\lambda)(A - \lambda )\big)^{j} \Big] B(\lambda) \\
		= &  \Big[\sum_{j=0}^{\infty}\big(I-(A - \lambda )B(\lambda)\big)^{j} \Big] (A - \lambda )B(\lambda)= I,
	\end{align*}
	the left inverse coincides with the right inverse, implying that $A - \lambda $ is invertible. Moreover, we obtain the desired norm estimate:
	\begin{equation} \label{inver A estimate}
		\begin{split}
			\big\| (A - \lambda )^{-1} \big\|_{B(L_2(\mathbb{R}^d)) \overline{\otimes} \mathcal{M}}
			&\le \big\| (I + R(\lambda))^{-1} \big\|_{B(L_2(\mathbb{R}^d)) \overline{\otimes} \mathcal{M}} \cdot \big\| B(\lambda) \big\|_{B(L_2(\mathbb{R}^d)) \overline{\otimes} \mathcal{M}} \\
			&\le C (1 + |\lambda|)^{-1}.
		\end{split}
	\end{equation}
	for $\lambda \in \Lambda_l$.
\end{proof}

In the following corollary, we provide the connection between the inverse and the parametrix of $A-\lambda $.

\begin{corollary}\label{inverse-para}
	Let $A \in \mathrm{C}\Psi^m\big(\mathbb{R}^d; \mathcal{M}\big)$ be elliptic with $  m>0$, and suppose that both $ A $ and its principal symbol $\sigma(A)_m(x, \xi)$ are positive.  Suppose $\lambda$ lies in the keyhole-shaped region $\Lambda_{0}$. Then there exists $l>0$ such that for $\lambda \in \Lambda_l$,
	$$
	\big\|B(\lambda)-(A-\lambda )^{-1}\big\|_{B(L_2(\mathbb{R}^d)) \overline{\otimes} \mathcal{M}} \le C(1+|\lambda|)^{-1}.
	$$
\end{corollary}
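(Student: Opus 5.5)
The estimate follows from the algebraic relations between $B(\lambda)$, $A-\lambda$ and the remainder $R(\lambda)$ obtained in the proof of Theorem \ref{inverse resolvent}. There we constructed $B(\lambda)$ with symbol \eqref{B-lambda} such that
\[
B(\lambda)(A-\lambda) = I + R(\lambda), \qquad R(\lambda)\in \Psi_{-1}^{-\infty}\big(\mathbb{R}^d\times\Lambda_0;\mathcal{M}\big).
\]
For $\lambda\in\Lambda_l$ we also know that $A-\lambda$ is invertible with $\|(A-\lambda)^{-1}\|\le C(1+|\lambda|)^{-1}$, by \eqref{inver estimate}. Multiplying the parametrix identity on the right by $(A-\lambda)^{-1}$ gives
\[
B(\lambda) - (A-\lambda)^{-1} = R(\lambda)(A-\lambda)^{-1}.
\]
This holds as an identity of bounded operators on $L_2(\mathcal{N})$ in $B(L_2(\mathbb{R}^d))\overline{\otimes}\mathcal{M}$. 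It first holds on the range of $A-\lambda$ applied to $\mathscr{S}(\mathbb{R}^d;L_2(\mathcal{M}))$, and that range is dense because $A-\lambda$ is a bijection from its domain (the closure of $A$ on Schwartz functions) onto $L_2(\mathcal{N})$. Both sides are bounded, so the identity extends by continuity.

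We then estimate the right-hand side by submultiplicativity of the norm:
\[
\|R(\lambda)(A-\lambda)^{-1}\| \le \|R(\lambda)\|\,\|(A-\lambda)^{-1}\| \le C_2(1+|\lambda|)^{-1}\cdot C(1+|\lambda|)^{-1}.
\]
Here the bound on $\|R(\lambda)\|$ is \eqref{R lamda estimate}, which comes from Corollary \ref{inver coro} applied with $m=0$ and $h=-1$. This yields the decay $(1+|\lambda|)^{-2}$, which is stronger than the claimed $(1+|\lambda|)^{-1}$.

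There is also a cruder route that avoids $R(\lambda)$ entirely: $\|B(\lambda)\|\le C_1(1+|\lambda|)^{-1}$, so the triangle inequality with \eqref{inver estimate} already gives the stated bound. I would present the first argument and note the second as the immediate bound.

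The only delicate point is justifying the operator identity on the correct domains, since $A-\lambda$ is unbounded. I would handle this by working on $\mathscr{S}(\mathbb{R}^d;L_2(\mathcal{M}))$, which the $\Psi$DOs preserve, then passing to closures using the invertibility established in Theorem \ref{inverse resolvent}.
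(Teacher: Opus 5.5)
Your main argument is the paper's own proof: write $B(\lambda)-(A-\lambda)^{-1}=R(\lambda)(A-\lambda)^{-1}$ and multiply \eqref{R lamda estimate} by \eqref{inver estimate}. The bonus decay $(1+|\lambda|)^{-2}$ you extract should have been a warning sign, because it is false for the $B(\lambda)$ of \eqref{B-lambda}. Every term of that symbol carries the factor $\varphi(\xi)$, so $\sigma(B)(x,\xi,\lambda)=0$ for $|\xi|\le\frac12$, whereas $(A-\lambda)^{-1}$ behaves like $-\lambda^{-1}$ at low frequencies. Concretely, take $\mathcal{M}=\mathbb{C}$ and $A=1-\Delta$, which is elliptic with $m=2$ and satisfies $A\ge 1$. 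All operators involved are then Fourier multipliers. At $\lambda=-r$ with $r>0$, the symbol of $B(\lambda)-(A-\lambda)^{-1}$ equals $-(1+r)^{-1}$ at $\xi=0$, so its norm is at least $(1+|\lambda|)^{-1}$. Likewise $R(\lambda)=B(\lambda)(A-\lambda)-I$ has symbol $-1$ on $|\xi|\le\frac12$, so $\|R(\lambda)\|\ge 1$ for every $\lambda$. Hence \eqref{R lamda estimate} (equivalently, $R(\lambda)\in\Psi^{-\infty}_{-1}$) fails, and the product-of-norms step does not go through; this affects the paper's own proof as well.

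The statement is still true, and in fact sharp. Your ``cruder route'' is the proof to give, provided both ingredients are obtained without \eqref{R lamda estimate}.

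For the resolvent, choose $l>\max(1,2^{-m-1}C_e)$. Then every $\lambda\in\Lambda_l$ lies in the sector $|\arg\lambda-\pi|<\frac{\pi}{4}$, so $\Re\lambda<0$ and $\operatorname{dist}(\lambda,\operatorname{Spec}A)\ge|\lambda|$. The spectral theorem for the self-adjoint $A\ge 0$ now gives $\|(A-\lambda)^{-1}\|\le|\lambda|^{-1}\le 2(1+|\lambda|)^{-1}$ directly. Do not take this bound from Theorem \ref{inverse resolvent}, whose proof runs through \eqref{R lamda estimate}.

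For $B(\lambda)$, use that its symbol lies in $S^{0}_{-1}$. For $\xi\neq 0$ one has $\|(\sigma(A)_m-\lambda)^{-1}\|\le (C_e|\xi|^m+\frac{\sqrt2}{2}|\lambda|)^{-1}$. Via Lemma \ref{inverse derivative lemma} and the recursion \eqref{rexpan}, $\partial_\xi^\alpha\partial_x^\beta\sigma(B)^0_{-m-j}$ obeys the same bound times $C|\xi|^{-j-|\alpha|}$. Corollary \ref{inver coro} with order $0$ and $h=-1$ then gives $\|B(\lambda)\|\le C_1(1+|\lambda|)^{-1}$, and the triangle inequality finishes.

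Do not justify that last bound through the product estimate of Proposition \ref{para sym prop}(i). At $|\xi|^m=|\lambda|$ the resolvent symbol has size comparable to $|\lambda|^{-1}$, not $|\lambda|^{-2}$, so that estimate fails.
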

\begin{proof}
	It follows from \eqref{R lamda estimate} and \eqref{inver A estimate} that
	$$
	\begin{aligned}
		\big\|B(\lambda)-(A-\lambda )^{-1}\big\|_{B(L_2(\mathbb{R}^d)) \overline{\otimes} \mathcal{M}} & =\big\|R(\lambda)(A-\lambda )^{-1}\big\|_{B(L_2(\mathbb{R}^d)) \overline{\otimes} \mathcal{M}} \\
		& \le \big\|R(\lambda)\big\|_{B(L_2(\mathbb{R}^d)) \overline{\otimes} \mathcal{M}}\big\|(A-\lambda )^{-1}\big\|_{B(L_2(\mathbb{R}^d)) \overline{\otimes} \mathcal{M}} \\
		& \le C(1+|\lambda|)^{-1}.
	\end{aligned}
	$$
\end{proof}

\begin{rk}\label{inver A-lamda}
	If $\lambda \in \Lambda_0$, and $\lambda$ also belongs to the resolvent set of $A$, then the proof of Theorem \ref{inverse A PDO} also indicates that $(A - \lambda)^{-1}$ defines an $\mathcal{M}$-valued $\Psi$DO, i.e.,
	$$
	(A - \lambda)^{-1} \in \Psi^{-  m}(\mathbb{R}^d ; \mathcal{M}).
	$$
	Indeed, the resolvent
	$$
	(A - \lambda)^{-1} : L_2(\mathbb{R}^d; L_2(\mathcal{M})) \to L_2 (\mathbb{R}^d; L_2(\mathcal{M}))
	$$
	is known a priori to be bounded, ensuring $ (A - \lambda)^{-1} = B(\lambda) - R(\lambda) (A - \lambda )^{-1}   $. But by Proposition \ref{smooth-op-kernel}, $R(\lambda) (A - \lambda )^{-1} $ still belongs to $\Psi^{-\infty}\big(\mathbb{R}^d;\mathcal{M}\big)$, so $
	(A - \lambda)^{-1} \in \Psi^{-  m}(\mathbb{R}^d ; \mathcal{M})
	$.
\end{rk}

\section{Structure theory for complex powers of $\mathcal{M}$-valued elliptic classical  $\Psi$DOs }\label{Section complex1}

In this section we develop the theory of complex powers of $\mathcal{M}$-valued elliptic $\Psi$DOs. Extending Seeley's construction, we show that complex powers remain within the classical pseudo-differential calculus and admit explicit symbolic descriptions. The meromorphic continuation of the Schwartz kernel of $A^z$ is analyzed, identifying the poles and residues that are essential for the $\zeta$-function approach in Section \ref{section asymp}.

\subsection{Complex powers of $\mathcal{M}$-valued elliptic  $\Psi$DOs }\label{Section complex}

\subsubsection{Holomorphic semigroup construction} \label{semititle}

In order to develop the parameter-dependent pseudo-differential calculus and to perform analytic continuation arguments for zeta functions, we shall work with $X$-valued holomorphic functions on domains of the complex plane. For convenience, we recall the following standard notion (see \cite[Section B.3]{Hyt16}), which will be used repeatedly in the sequel.

\begin{definition}
	A domain is a connected open subset of $\mathbb{C}$. If $D \subseteq \mathbb{C}$ is a domain and $X$ is a Banach space, a function $f: D \to X$ is called holomorphic if for all $z_0 \in D$, the limit
	$$
	f'(z_0) := \lim_{z \to z_0} \frac{f(z) - f(z_0)}{z - z_0}
	$$
	exists in $X$.
\end{definition}
This definition coincides with the usual one in the scalar-valued case $X = \mathbb{C}$, and ensures that familiar tools from complex analysis, such as Cauchy's integral formula and analytic continuation, remain valid in the $\mathcal{M}$-valued setting.

As in the previous section, let $ A \in \mathrm{C}\Psi^m\big(\mathbb{R}^d; \mathcal{M}\big) $ be elliptic with $  m>0$, and suppose that both $ A $ and its principal symbol $\sigma(A)_m(x, \xi)$ are positive for all $(x, \xi) \in \mathbb{R}^d \times (\mathbb{R}^d \setminus \{0\})$.
For the remainder of the analysis, we may assume without loss of generality that $A$ is strictly positive, i.e., $A\ge 0$, and there exists $\rho>0$ such that
$$
\operatorname{dist}\Big( \operatorname{Spec}A,0\Big)\ge \rho.
$$
This assumption is not essential, as it can always be fulfilled by replacing $ A $ with $ A + \rho $ for some $ \rho > 0 $.

Now consider the keyhole-shaped region defined by
$$
\Lambda_{\rho} = \left\{ \lambda \in \mathbb{C} \setminus \{0\} : |\lambda| < \rho \text{ or } |\arg \lambda - \pi| < \tfrac{\pi}{4} \right\}.
$$

We now construct a spectral contour $ \Gamma = \Gamma_{\frac{\rho}{2}} $ as follows:

\begin{definition}\label{az definition}
	Let $\Gamma$ be the contour consisting of a descending radial segment from $-\infty + i\epsilon$ to $-\sqrt{(\frac{\rho}{2})^2 - \epsilon^2} + i\epsilon$ along $\arg \lambda = \pi$, a clockwise circular arc around the origin with radius $\frac{\rho}{2}$ connecting $-\sqrt{(\frac{\rho}{2})^2 - \epsilon^2} + i\epsilon$ to $-\sqrt{(\frac{\rho}{2})^2 - \epsilon^2} - i\epsilon$, and an ascending radial segment from $-\sqrt{(\frac{\rho}{2})^2 - \epsilon^2} - i\epsilon$ to $-\infty - i\epsilon$ along $\arg \lambda = -\pi$, where $\epsilon > 0$ is a fixed small parameter. Define the complex powers of $A$ through the Dunford integral
	\begin{equation} \label{az}
		A_z = \frac{i}{2 \pi} \int_{\Gamma} \lambda^z (A - \lambda)^{-1} d\lambda,
	\end{equation}
	where $\Re z < 0$, and the complex logarithm is defined using the standard branch cut along $(-\infty, 0]$, with $\lambda^z := e^{z(\ln |\lambda| + i \arg \lambda)}$ for $\lambda \in \mathbb{C} \setminus (-\infty, 0]$. Along $\Gamma$, we explicitly define:
	$$
	\lambda^z =
	\begin{cases}
		|\lambda|^z e^{i z \pi}, & \text{on the upper radial segment}; \\
		|\lambda|^z e^{-i z \pi}, & \text{on the lower radial segment}; \\
		|\lambda|^z e^{i z \theta}, & \text{on the circular arc, } \theta \in (-\pi, \pi].
	\end{cases}
	$$
\end{definition}

\begin{figure}[htbp]
	\centering
	\begin{tikzpicture}[>={Stealth[scale=1.2]}, every node/.style={font=\small}]
		
		\draw[->] (-4,0) -- (2,0) node[below] {$\mathrm{Re}\,\lambda$};
		\draw[->] (0,-1.5) -- (0,1.5) node[left] {$\mathrm{Im}\,\lambda$};

		\draw[thick] (-4,0) -- (0,0) node[midway, below=8pt] {};
		
		\def\radius{1}

		\draw[thick, ->] (-3.8,0.08) -- (-\radius,0.08)
		node[midway, above=2pt] {$\arg\lambda = \pi$};

		\draw[thick, ->] (180:\radius) arc (180:-180:\radius);
		\node at (0.98*\radius,-0.98*\radius) {\scriptsize $r = \frac{\rho}{2}$};
		
		\draw[thick, ->] (-\radius,-0.08) -- (-3.8,-0.08)
		node[midway, below=2pt] {$\arg\lambda = -\pi$};
		
		\node at (-\radius,0) [above left=-1pt] {};
	\end{tikzpicture}
	\caption{Integration contour $\Gamma$ for the Dunford integral (\ref{az})}.
	\label{fig:contour}
\end{figure}

\begin{rk}\label{remark of az}
	$\rm (i)$ Let $l > 0$ be the constant from Theorem \ref{inverse resolvent}. For $\Re z < 0$ and $|\lambda| \ge l$, the radial integrals converge absolutely due to the decay of order $O(|\lambda|^{\Re z - 1})$. Since $\Re z - 1 < -1$, the integral converges at infinity.
	
	$\rm (ii)$
	Consider the circular arc $|\lambda| = \frac{\rho}{2}$. For each $\lambda_0$ on the arc, $\lambda_0 \notin \operatorname{Spec}(A)$, so $(A - \lambda_0)^{-1} \in B(L_2(\mathbb{R}^d)) \overline{\otimes} \mathcal{M}$ is well-defined. For $\lambda$ in a sufficiently small neighborhood of $\lambda_0$, we can write
	$$
	A - \lambda = (A - \lambda_0)\Big(I - (\lambda - \lambda_0)(A - \lambda_0)^{-1}\Big).
	$$
	If $|\lambda - \lambda_0| < \big\|(A - \lambda_0)^{-1}\big\|_{B(L_2(\mathbb{R}^d))\overline{\otimes} \mathcal{M}}^{-1}$, the Neumann series
	$$
	\Big(I - (\lambda - \lambda_0)(A - \lambda_0)^{-1}\Big)^{-1} = \sum_{k=0}^\infty (\lambda - \lambda_0)^k (A - \lambda_0)^{-k}
	$$
	converges in the operator norm, giving the local expansion
	$$
	(A - \lambda)^{-1} = \sum_{k=0}^{\infty} (\lambda - \lambda_0)^k (A - \lambda_0)^{-k-1}.
	$$
	This shows that $(A - \lambda)^{-1}$ is analytic in a neighborhood of $\lambda_0$. By compactness of the circular arc, $(A - \lambda)^{-1}$ is analytic along the entire arc. Moreover, by continuity, the integral
	$\int_{|\lambda| = \frac{\rho}{2}} \lambda^z (A - \lambda)^{-1} \, d\lambda$ converges absolutely in the operator norm, with
	$$
	\sup_{|\lambda| = \frac{\rho}{2}} \big\|\lambda^z (A - \lambda)^{-1}\big\|_{B(L_2(\mathbb{R}^d))\overline{\otimes} \mathcal{M}} \le \Big(\tfrac{\rho}{2}\Big)^{\Re z} M(\rho),
	$$
	where $M(\rho) = \sup_{|\lambda| = \frac{\rho}{2}} \big\|(A - \lambda)^{-1}\big\|_{B(L_2(\mathbb{R}^d))\overline{\otimes} \mathcal{M}}$.

	$\rm (iii)$ Consider an alternative contour $ \Gamma_{\frac{\rho'}{2}}$ with $\rho < \rho' < 2\rho$, where the radial segments lie along $\arg \lambda = \pi - \delta$ and $\arg \lambda = -\pi + \delta$ for $0 < \delta < \tfrac{\pi}{4}$. Since the resolvent $(A - \lambda)^{-1}$ is holomorphic in $\mathbb{C} \setminus \operatorname{Spec}(A)$ and $\operatorname{Spec}(A) \cap \Lambda = \emptyset$, both $\Gamma_{\frac{\rho}{2}}$ and $\Gamma_{\frac{\rho'}{2}}$ are homotopic in $\Lambda_\rho$. By Cauchy's integral theorem,
	$$
	\frac{i}{2\pi} \int_{\Gamma_{\frac{\rho'}{2}}} \lambda^z (A - \lambda)^{-1} d\lambda = \frac{i}{2\pi} \int_{\Gamma_{\frac{\rho}{2}}} \lambda^z (A - \lambda)^{-1} d\lambda.
	$$
	Thus, Definition \ref{az definition} is independent of the choice of $\rho$ and $\delta$.
\end{rk}

\begin{proposition}\label{azprop}
	The complex powers $A_z$ defined in \eqref{az} satisfy the following properties:
	
	\begin{enumerate}[$\rm (i)$]
		\item For $\Re z < 0$ and $\Re w < 0$, the semigroup property holds:
		\begin{equation} \label{azwsemi}
			A_z A_w = A_{z+w}.
		\end{equation}
		
		\item For $k \in \mathbb{N}$,
		\begin{equation} \label{ak}
			A_{-k} = \big(A^{-1}\big)^k.
		\end{equation}
		
		\item $A_z$ defines a $B(L_2(\mathbb{R}^d)) \overline{\otimes} \mathcal{M}$-valued holomorphic function  for $\Re z < 0$.
	\end{enumerate}
\end{proposition}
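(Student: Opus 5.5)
The plan is to treat $A_z$ as the holomorphic functional calculus of the positive self-adjoint operator $A$, but to argue directly with the Dunford integral \eqref{az}. All contour manipulations take place inside $\Lambda_\rho$, where the resolvent is holomorphic and obeys the estimate \eqref{inver estimate} for large $|\lambda|$. Norm convergence of all integrals for $\Re z<0$ is already given by Remark \ref{remark of az}. I prove (iii) first, then (i), and deduce (ii) from (i) together with a direct computation of $A_{-1}$.

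\emph{Step 1: holomorphy.} For $\Re z\le -\delta<0$ with $|z|$ bounded, the integrand $\lambda^z(A-\lambda)^{-1}$ is holomorphic in $z$ for each fixed $\lambda\in\Gamma$. Its $z$-derivative $\lambda^z\log\lambda\,(A-\lambda)^{-1}$ is bounded in norm by $C|\lambda|^{-\delta-1}\log|\lambda|$ on the radial parts, by \eqref{inver estimate}, and is bounded on the arc. Dominated convergence for Bochner integrals in the Banach space $B(L_2(\mathbb{R}^d))\overline{\otimes}\mathcal{M}$ then gives complex differentiability, which proves (iii). The resolvent takes values in this algebra, so the integral does too.

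\emph{Step 2: semigroup.} I represent $A_w$ on a second contour $\Gamma'=\Gamma_{\rho'/2}$ with $\rho'<\rho$ and slightly tilted rays, chosen so that $\Gamma'$ lies strictly to the left of $\Gamma$ (``inside'' the keyhole). This is legitimate by Remark \ref{remark of az}(iii). I then write
\[
A_zA_w=\Big(\tfrac{i}{2\pi}\Big)^2\int_\Gamma\int_{\Gamma'}\lambda^z\mu^w(A-\lambda)^{-1}(A-\mu)^{-1}\,d\mu\,d\lambda ,
\]
and use Fubini, justified by the absolute convergence from Step 1. Next I apply the resolvent identity $(A-\lambda)^{-1}(A-\mu)^{-1}=(\lambda-\mu)^{-1}\big[(A-\lambda)^{-1}-(A-\mu)^{-1}\big]$, which splits the integral into two scalar contour integrals. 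The first is $\int_{\Gamma'}\mu^w(\lambda-\mu)^{-1}d\mu$ with $\lambda$ on $\Gamma$. I close the contour $\Gamma'$ to the left, using the decay $|\mu|^{\Re w-1}$; this integral vanishes because $\lambda$ lies outside the region enclosed by $\Gamma'$ (that region is on the branch-cut side). The second is $\int_\Gamma\lambda^z(\lambda-\mu)^{-1}d\lambda$ with $\mu$ on $\Gamma'$, which encloses $\mu$ and by Cauchy gives $\pm 2\pi i\,\mu^z$ with the orientation fixed by Definition \ref{az definition}. Collecting the constants then gives $A_{z+w}$.

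\emph{Step 3: integer powers.} For $z=-1$ I deform $\Gamma$, closing it at infinity; this is allowed since $|\lambda^{-1}(A-\lambda)^{-1}|=O(|\lambda|^{-2})$. The only singularity of $\lambda^{-1}(A-\lambda)^{-1}$ left inside is the pole at $\lambda=0$, because $\lambda^{-1}$ has no branch cut and the spectrum of $A$ lies outside. The residue there is $A^{-1}$, and with the sign conventions this yields $A_{-1}=A^{-1}$. Iterating (i) then gives $A_{-k}=(A^{-1})^k$.

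The main obstacle is the bookkeeping of orientation and branch in Step 2. I have to check carefully which of the two scalar integrals vanishes and that the normalisation $\frac{i}{2\pi}$ together with the clockwise arc produces exactly $A_{z+w}$ and not its negative. I would pin the signs down first by testing the scalar case, where $A$ is replaced by a positive number $a>\rho$ and $A_z$ should equal $a^z$.
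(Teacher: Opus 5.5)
Your overall architecture matches the paper's. Holomorphy comes from differentiating under the integral, the semigroup law from Fubini plus the resolvent identity, and the integer case from the residue at $\lambda=0$. Steps 1 and 3 are fine, and computing $A_{-1}=A^{-1}$ then iterating (i) is slightly cleaner than the paper's substitution $\mu=\lambda^{-1}$.

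Step 2, however, fails at exactly the point you flagged. You cannot close either contour on the branch-cut side. For non-integer $w$, the function $\mu^w$ is not holomorphic, or even single-valued, in the region between $\Gamma'$ and $(-\infty,0]$; that is why the two rays carry the different factors $e^{\pm i\pi w}$. The only legitimate closure is through the right half-plane, on the spectrum side. There $\mu^w$ and $\lambda^z$ are holomorphic, and the decay $|\mu|^{\Re w-1}$ kills the large arc. With your nesting ($\Gamma'$ between the cut and $\Gamma$), the point $\lambda\in\Gamma$ lies on the spectrum side of $\Gamma'$, and $\mu\in\Gamma'$ lies on the cut side of $\Gamma$. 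Hence
\[
\int_{\Gamma'}\frac{\mu^w}{\lambda-\mu}\,d\mu=-2\pi i\,\lambda^w\neq 0,\qquad \int_{\Gamma}\frac{\lambda^z}{\lambda-\mu}\,d\lambda=0,
\]
which is the reverse of what you assert.

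The identity still holds, but it comes from the other term:
\[
-\frac{1}{4\pi^2}\int_\Gamma\lambda^z(A-\lambda)^{-1}\bigl(-2\pi i\,\lambda^w\bigr)\,d\lambda=\frac{i}{2\pi}\int_\Gamma\lambda^{z+w}(A-\lambda)^{-1}\,d\lambda=A_{z+w}.
\]
This is exactly the paper's computation. There too the contour on the spectrum side carries $\lambda$, the $\mu$-integral produces the residue, and the $\lambda$-integral vanishes. As written, your argument reaches $A_{z+w}$ only because two errors compensate. The scalar check $a^za^w=a^{z+w}$ would not expose this, since it tests only the final formula.

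Your two claims would be true with the opposite nesting ($\Gamma'$ on the spectrum side of $\Gamma$). Even then they hold because you close on the spectrum side, not by closing across the cut.

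A minor geometric point: tilted rays cannot stay strictly between the cut and the horizontal rays of $\Gamma$ for large $|\lambda|$. Use horizontal rays at height $\epsilon'<\epsilon$, or tilt $\Gamma$ instead.
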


\begin{proof}
	$\rm (i)$
	Let $\Gamma^{\prime}$ denote the modified contour consisting of a radial segment from $-\infty$ to $-\frac{3\rho}{4}$ along $\arg \lambda=\frac{7\pi}{8}$, a clockwise circular arc around the origin with radius $\frac{3\rho}{4}$, and a radial segment from $-\frac{3\rho}{4}$ to $-\infty$ along $\arg \lambda=-\frac{7\pi}{8}$, where $\Gamma^{\prime}$  maintains spectral avoidance.

	\begin{figure}[htbp]
		\centering
		\begin{tikzpicture}[>={Stealth[scale=1.2]},
			every node/.style={font=\scriptsize, inner sep=1pt},
			scale=0.9]
			\draw[->] (-4,0) -- (2,0) node[below] {$\mathrm{Re}\,\lambda$};
			\draw[->] (0,-1.8) -- (0,1.8) node[left] {$\mathrm{Im}\,\lambda$};
			\draw[thick] (-3.9,0) -- (0,0);
			\def\radius{1.5}
			\def\deltaAngle{15}
			\draw[thick, ->] (180-\deltaAngle:4) -- (180-\deltaAngle:\radius)
			node[midway, above=3pt, sloped] {$\arg\lambda = \frac{7\pi}{8}$};
			\draw[thick, ->] (180-\deltaAngle:\radius) arc (180-\deltaAngle:-180+\deltaAngle:\radius);
			\node at (1.05*\radius,-1.05*\radius) {$|\lambda| = \frac{3\rho}{4}$};
			\draw[thick, ->] (-180+\deltaAngle:\radius) -- (-180+\deltaAngle:4)
			node[midway, below=3pt, sloped] {$\arg\lambda = -\frac{7\pi}{8}$};
		\end{tikzpicture}
		\caption{Modified contour $\Gamma'$ with angular offset $\frac{\pi}{8}$ and
			radius $\frac{3\rho}{4}$. }
		\label{fig:modified_contour}
	\end{figure}

	The integral \eqref{az} remains invariant under contour deformation by Remark \ref{remark of az} $\rm (iii)$. Applying the resolvent identity and Fubini's theorem, we obtain
	\begin{align*}
		A_z A_w &= -\frac{1}{4 \pi^2} \int_{\Gamma'} \int_{\Gamma} (A - \lambda )^{-1} (A - \mu )^{-1} \lambda^z \mu^w d\mu d\lambda \\
		&= -\frac{1}{4 \pi^2} \int_{\Gamma'} \int_{\Gamma} \frac{\lambda^z \mu^w}{\lambda - \mu} \left[(A - \lambda )^{-1} - (A - \mu )^{-1}\right] d\mu d\lambda \\
		&= \frac{i}{2 \pi} \int_{\Gamma'} \lambda^{z+w} (A - \lambda )^{-1} d\lambda + \frac{1}{4 \pi^2} \int_{\Gamma}(A - \mu )^{-1} \int_{\Gamma'}  \frac{\lambda^z \mu^w}{\lambda - \mu} d\lambda d\mu \\
		&= A_{z+w} + 0 = A_{z+w}.
	\end{align*}
	
	$\rm (ii)$ If $z = -1, -2, \ldots$, then $(r e^{i \pi})^z = (r e^{-i \pi})^z$, and the integrals along the straight line parts of $\Gamma$ in \eqref{az} cancel. Therefore,
	$$
	A_{-k}=\frac{i}{2 \pi} \oint_{|\lambda|=\frac{\rho}{2}} \lambda^{-k}(A-\lambda )^{-1} d \lambda.
	$$
	Under the substitution $\mu=\lambda^{-1}$ with $d \mu=-\lambda^{-2} d \lambda$, we have
	\begin{align*}
		A_{-k} &= -\frac{i}{2\pi} \oint_{|\mu|=\frac{2}{\rho}} \mu^{k-2} (A - \mu^{-1}I)^{-1} d\mu \\
		&= -\frac{i}{2\pi} A^{-1} \oint_{|\mu|=\frac{2}{\rho}} \mu^{k-1} (\mu  - A^{-1})^{-1} d\mu \\
		&= A^{-1}(A^{-1})^{k-1} = (A^{-1})^k .
	\end{align*}
	
	$\rm (iii)$
	For $\Re z \le-\varepsilon<0$, by differentiating the integral \eqref{az} with respect to $z$, we obtain
	$$
	\frac{d^n}{d z^n} A_z=\frac{i}{2 \pi} \int_{\Gamma} \lambda^z(\ln \lambda)^{n}(A-\lambda )^{-1} d \lambda, \quad n\in \mathbb{Z}_{+}.
	$$
	Due to the exponential decay $O(|\lambda|^{\Re z-1})$ from \eqref{inver estimate} on radial segments, and the uniform boundedness of $(\ln \lambda)^{n}(A-\lambda )^{-1}$ on the circular arc, the integral above converges absolutely in operator norm. This establishes the holomorphy of $z \mapsto A_z$ through norm-differentiability.
\end{proof}

\subsubsection{Extension to general complex exponents} \label{entiretitle}

For complex exponents $z \in \mathbb{C}$ and integer regulators $k \in \mathbb{Z}$ satisfying $\Re z<k$, we extend the complex power operators through the recursive relation.

\begin{definition}
	Let $z \in \mathbb{C}$ and $k= \lfloor \Re z \rfloor +1$, we define
	\begin{equation} \label{azw}
		A^{z}:=A^{k} A_{z-k},
	\end{equation}
	where $A_{z-k}$ is defined via the integral representation \eqref{az}.
\end{definition}

The following theorem establishes the consistency and fundamental properties of this extension.

\begin{theorem}\label{groupaz}
	
	\begin{enumerate}[$\rm (i)$]
		\item The operator $A^z$ as defined by \eqref{azw} is independent of the choice of integer $k$, provided $\Re z<k$.
		
		\item If $\Re z<0$, then $A^z=A_z$.
		
		\item For all $z, w \in \mathbb{C}$, the group property holds:
		\begin{equation}\label{agroup}
			A^z A^w=A^{z+w}.
		\end{equation}
		
		\item For $k \in \mathbb{Z}$, \eqref{azw} reduces to standard operator powers, i.e. $A^0 = I$, $A^1 = A$, and $A^{-1}$ coincides with the inverse operator.
	\end{enumerate}
\end{theorem}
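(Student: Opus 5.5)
The whole argument rests on one algebraic fact, which I would prove first: for integers $k\ge 1$ and $\Re w<0$,
\[
A^{k}A_{w-k}=A^{k-1}A_{w-k+1}.
\]
Equivalently, it suffices to show $A\,A_{w-1}=A_w$ for $\Re w<0$, interpreted on the domain of $A$ (or after applying to $\mathscr{S}(\mathbb{R}^d;L_2(\mathcal{M}))$). I would try two routes.

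\emph{Contour route.} Write
\[
A(A-\lambda)^{-1}=I+\lambda(A-\lambda)^{-1}.
\]
Then
\[
A A_{w-1}=\frac{i}{2\pi}\int_\Gamma\lambda^{w-1}\,d\lambda\; I+\frac{i}{2\pi}\int_\Gamma \lambda^{w}(A-\lambda)^{-1}\,d\lambda .
\]
The first integral vanishes, since $\lambda^{w-1}$ is holomorphic off the cut and decays like $|\lambda|^{\Re w-1}$. Collapsing the contour to infinity leaves $0$, and the contribution of the small circle tends to zero in the limit.

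Justifying the interchange of $A$ with the Dunford integral needs care, because $A$ is unbounded. I would handle it by applying both sides to $f$ in the domain and using closedness of $A$ together with absolute convergence. The term $\lambda(A-\lambda)^{-1}$ is only $O(1)$, so the second integral converges only because $\Re w<0$.

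\emph{Semigroup route (the one I expect to be cleaner).} Use Proposition~\ref{azprop}(i) and (ii):
\[
A_{w-1}=A_{-1}A_{w}=A^{-1}A_w .
\]
Applying $A$ gives $A A_{w-1}=A_w$ directly; note $A_w=A_{w-1}A_{1}$ is not needed. More generally,
\[
A_{w-k}=(A^{-1})^{k}A_{w},
\]
so that
\[
A^{k}A_{w-k}=A_w .
\]
This is independent of $k$ whenever $\Re(w)<0$. For an arbitrary admissible pair $k<k'$ with $\Re z<k$, set $w=z-k$, which has $\Re w<0$. Then
\[
A^{k'}A_{z-k'}=A^{k}\bigl(A^{k'-k}(A^{-1})^{k'-k}\bigr)A_{z-k}=A^kA_{z-k}.
\]
This gives (i). Here one must check domain issues: $A_{z-k}$ maps $L_2(\mathcal N)$ into the domain of $A^k$, because it equals $(A^{-1})^{k}$ times something bounded once the exponent is further shifted. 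This is the main technical obstacle, and I would address it by the factorization $A_{z-k}=(A^{-1})^{j}A_{z-k+j}$ for any $j$ with $\Re(z-k+j)<0$.

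With (i) in hand, the remaining parts follow quickly.

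(ii): For $\Re z<0$, take $k=0$ in (i); alternatively take $k=\lfloor\Re z\rfloor+1\le 0$ and use $A^{k}A_{z-k}=(A^{-1})^{|k|}A_{z-k}=A_z$ by Proposition~\ref{azprop}(i),(ii).

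(iii): Choose integers $k,l$ with $\Re z<k$ and $\Re w<l$. Then
\[
A^zA^w=A^kA_{z-k}A^lA_{w-l}.
\]
I would show that $A_{z-k}$ commutes with $A^l$ on the appropriate domain. This holds because the resolvent commutes with $A$, so the Dunford integral does too, with $A^l$ pulled through by closedness. Then
\[
A^zA^w=A^{k+l}A_{z-k}A_{w-l}=A^{k+l}A_{z+w-k-l}=A^{z+w},
\]
using Proposition~\ref{azprop}(i) and (i) above with the regulator $k+l$.

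(iv): $A^{-1}$ with $k=0$ equals $A_{-1}=A^{-1}$ by Proposition~\ref{azprop}(ii). Next, $A^0=A^1A_{-1}=AA^{-1}=I$. Finally, $A^1=A^2A_{-1}=A^2A^{-1}=A$ on $D(A)$.
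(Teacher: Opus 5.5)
Your semigroup route is essentially the paper's proof: (i) reduces to $A_{w-p}=(A^{-1})^{p}A_{w}$ for $\Re w<0$ via Proposition~\ref{azprop}(i),(ii), (ii) is the case $k=0$, (iii) commutes $A_{z-k}$ past $A^{l}$ (which the paper does silently) and then applies the semigroup law together with (i), and (iv) is the same computation. The one slip is your domain remark: for $\Re z\ge 0$ and $k=\lfloor \Re z\rfloor+1$, the operator $A_{z-k}$ does not map $L_2(\mathcal{N})$ into $D(A^{k})$; for example, the range of $A_{-1/2}$ is $D(A^{1/2})$, which is not contained in $D(A)$, so $A^{z}$ is genuinely unbounded there. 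What your factorization actually shows, and all that is needed, is that $A^{k'}A_{z-k'}$ and $A^{k}A_{z-k}$ have the same natural domain $\{f: A_{z-k}f\in D(A^{k})\}$ and agree on it.
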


\begin{proof}
	$\rm (i)$ Let $k, l \in \mathbb{Z}$ with $k, l>\Re z$. Without loss of generality, assume $k>l$ and set $p:=k-l \in \mathbb{N}, w:=z-k$. The equality $A^k A_{z-k}=A^l A_{z-l}$ reduces through left-multiplication by $A^{-l}$ to demonstrating
	\begin{equation} \label{awp}
		A_{w} = A^{-p} A_{w+p}.
	\end{equation}
	Since $\Re(w+p)=\Re(z-l)<0$, Proposition \ref{azprop} $\rm (i)$ and the integer power relation \eqref{ak} yield
	$$
	A^{-p} A_{w+p}=(A^{-1})^{p} A_{w+p}=A_{-p} A_{w+p}=A_{w}.
	$$

	$\rm (ii)$ Immediate from $\rm (i)$ with $k=0$ and $\Re z<0$.
	
	$\rm (iii)$ Select $k, j \in \mathbb{Z}$ with $k>\Re z, j>\Re w$, then
	\begin{align*}
		A^z A^w &= (A^k A_{z-k})(A^j A_{w-j}) \\
		&= A^{k+j} A_{z-k} A_{w-j}  \\
		&= A^{k+j} A_{(z+w)-(k+j)}  \\
		&= A^{z+w}.
	\end{align*}
	
	$\rm (iv)$ For $k=0$,
	$$
	A^0=A^1 A_{-1}=A A^{-1}= I.
	$$
	Positive integers follow inductively
	$$
	A^{k+1}=A^k A^1=A^k A.
	$$
	Negative integers are covered by Proposition \ref{azprop} $\rm (ii)$. The inverse property $A^{-1} A= I$ follows from \eqref{ak} with $k=1$.
\end{proof}

\subsection{Symbolic calculus for complex powers with $\Re z < 0$}

As before, let $A \in \mathrm{C}\Psi^m\big(\mathbb{R}^d; \mathcal{M}\big)$ be elliptic with $  m>0$, and suppose that both $ A $ and its principal symbol $\sigma(A)_m(x, \xi)$ are positive for all $(x, \xi) \in \mathbb{R}^d \times (\mathbb{R}^d \setminus \{0\})$.
By Proposition \ref{sa M elliptic}, there exists a constant $C_e > 0$ such that $\sigma(A)_m(x, \xi) \ge C_e$. To construct the homogeneous components of the symbol of $A^z$, we extend the parametrix construction via resolvent integration.

Recall the keyhole-shaped region $\Lambda_{\xi}$ defined as
$$
\Lambda_{\xi} = \Big\{ \lambda \in \mathbb{C} \setminus \{0\} : |\lambda| < \tfrac{1}{2}C_e(\xi) \text{ or } |\arg \lambda - \pi| < \tfrac{\pi}{4} \Big\},
$$
where $C_e(\xi) = C_e|\xi|^m$ for $0 < |\xi| \le 1$, and $C_e(\xi) = C_e$ for $|\xi| \ge 1$.

We define the integration contours $\Gamma_{\xi}$ and $\Gamma_{\xi}'$ to be chosen inside $\Lambda_{\xi}$, having the same geometric shape as those in Section \ref{Section complex}, with the parameter $\rho$ replaced by $\frac{1}{2} C_e(\xi)$.

\begin{definition}\label{complex_symbols}
	For $\Re z < 0$, define the homogeneous symbol components of order $m z - j$ by
	\begin{equation} \label{bmzj}
		\sigma(B)_{m z - j}^{(z),0}(x, \xi) = \frac{i}{2\pi} \int_{\Gamma_{\xi}} \lambda^z \sigma(B)_{-m-j}^0(x, \xi, \lambda)  d\lambda, \quad j \in \mathbb{N}_0,
	\end{equation}
	which is well-defined for all $\xi \neq 0$. In particular, for $j = 0$, Cauchy's integral formula yields
	\begin{equation} \label{Bmzj,0z}
		\sigma(B)_{m z}^{(z),0}(x, \xi) = \frac{i}{2\pi} \int_{\Gamma_{\xi}} \lambda^z (\sigma(A)_m(x, \xi) - \lambda)^{-1} d\lambda = \sigma(A)_m(x, \xi)^z.
	\end{equation}
\end{definition}

\begin{proposition}  \label{bzhomo}
	For $\Re z < 0$ and $\xi\neq 0$, the symbols $\sigma(B)_{m z - j}^{(z),0}$ exhibit positive homogeneity of degree $m z - j$:
	$$
	\sigma(B)_{m z - j}^{(z),0}(x, t \xi) = t^{m z - j} \sigma(B)_{m z - j}^{(z),0}(x, \xi), \quad \forall t > 0.
	$$
\end{proposition}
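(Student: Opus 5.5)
The plan is to do a change of variables $\lambda = t^m\mu$ in the contour integral \eqref{bmzj}, using the joint homogeneity of Proposition \ref{para sym prop}(ii) with $\alpha=\beta=0$:
$$\sigma(B)^0_{-m-j}(x,t\xi,t^m\mu) = t^{-m-j}\,\sigma(B)^0_{-m-j}(x,\xi,\mu).$$

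Fix $x$, $\xi\neq 0$ and $t>0$. By definition,
$$\sigma(B)^{(z),0}_{mz-j}(x,t\xi)=\frac{i}{2\pi}\int_{\Gamma_{t\xi}}\lambda^z\,\sigma(B)^0_{-m-j}(x,t\xi,\lambda)\,d\lambda .$$
The first step is to replace $\Gamma_{t\xi}$ by the dilated contour $t^m\Gamma_\xi=\{t^m\mu:\mu\in\Gamma_\xi\}$. Dilation by $t^m>0$ preserves arguments, so $t^m\Gamma_\xi$ has the same keyhole shape as $\Gamma_\xi$, with circle radius $t^m\cdot\tfrac12 C_e(\xi)$.

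The two contours must be shown homotopic within a region where the integrand is holomorphic in $\lambda$ and decays on the rays. Since $\sigma(A)_m(x,t\xi)\ge C_e t^m|\xi|^m$, the spectrum of $\sigma(A)_m(x,t\xi)$ lies in $[C_e t^m|\xi|^m,\infty)$. Both circle radii, $\tfrac12 C_e(t\xi)$ and $\tfrac12 t^m C_e(\xi)$, are at most $\tfrac12 C_e|t\xi|^m$, because $C_e(\eta)\le C_e|\eta|^m$ in both regimes $|\eta|\le1$ and $|\eta|\ge1$. So both contours separate $(-\infty,0]$ from the spectrum.

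On the complement of that spectrum, $\sigma(B)^0_{-m-j}(x,t\xi,\lambda)$ is holomorphic in $\lambda$. For $j=0$ it is the resolvent. For $j\ge1$ the recursion in Proposition \ref{existence-sB} writes it as a finite sum of products of $\lambda$-independent factors $D_x^\gamma\sigma(A)_{m-l}$ with $\xi$-derivatives of resolvents; these are themselves products of resolvents by Lemma \ref{inverse derivative lemma}. Along the rays the integrand is $O(|\lambda|^{\Re z-1})$ by Proposition \ref{para sym prop}(i),(iii) (extended to $|\arg\lambda-\pi|<\pi/4$), and $\Re z<0$. Cauchy's theorem, applied on a truncated region with the arcs at infinity vanishing, therefore lets us deform $\Gamma_{t\xi}$ into $t^m\Gamma_\xi$, exactly as in Remark \ref{remark of az}(iii).

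The second step is the substitution $\lambda=t^m\mu$, $d\lambda=t^m\,d\mu$. With the branch fixed by the cut along $(-\infty,0]$ we have $\arg(t^m\mu)=\arg\mu$, so $\lambda^z=t^{mz}\mu^z$ consistently on each piece of the contour. Combined with the homogeneity identity this gives
$$\frac{i}{2\pi}\int_{\Gamma_\xi}t^{mz}\mu^z\,t^{-m-j}\sigma(B)^0_{-m-j}(x,\xi,\mu)\,t^m\,d\mu=t^{mz-j}\,\sigma(B)^{(z),0}_{mz-j}(x,\xi),$$
which is the claim.

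The main obstacle is justifying the contour deformation. This needs holomorphy of $\sigma(B)^0_{-m-j}$ in $\lambda$ across the region between the two contours, not just on $\Lambda_\xi$, together with the uniform decay on the rays. I would establish both through the explicit resolvent-product structure coming from the recursion, rather than relying only on the norm estimates of Proposition \ref{para sym prop}.
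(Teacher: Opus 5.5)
Your proposal is correct and follows the paper's argument: substitute $\lambda=t^m\mu$ in \eqref{bmzj} and use the joint homogeneity of Proposition \ref{para sym prop}(ii). The paper moves silently between $\Gamma_{t\xi}$, $\Gamma_\xi$ and $t^{-m}\Gamma_\xi$. Your explicit justification of the contour deformation fills in the step it leaves implicit: both circle radii lie below $C_e|t\xi|^m$, $\sigma(B)^0_{-m-j}(x,t\xi,\cdot)$ is holomorphic off the spectrum by the resolvent-product structure, and the integrand decays like $O(|\lambda|^{\Re z-1})$ on the rays.
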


\begin{proof}
	Let $\Gamma_{\xi}^t = t^{-m} \Gamma_{\xi}$. Using the homogeneity $\sigma(B)_{-m-j}^0(x, t\xi, t^m \lambda) = t^{-m-j} \sigma(B)_{-m-j}^0(x, \xi, \lambda)$, we have
	\begin{align*}
		\sigma(B)_{m z - j}^{(z),0}(x, t \xi)
		& = \frac{i}{2\pi} \int_{\Gamma_{\xi}} \lambda^z \sigma(B)_{-m-j}^0(x, t \xi, \lambda)  d\lambda \\
		& = \frac{i}{2\pi} \int_{\Gamma_{\xi}^t} (t^m \mu)^z \sigma(B)_{-m-j}^0(x, t \xi, t^m \mu) \cdot t^m  d\mu \quad (\text{setting } \lambda = t^m \mu) \\
		&= t^{m z - j} \sigma(B)_{m z - j}^{(z),0}(x, \xi).
	\end{align*}
\end{proof}

The symbolic calculus extends naturally to complex exponents:
\begin{proposition}\label{prop-semigroup}
	\begin{enumerate}[$\rm (i)$]
		\item For $\Re z, \Re w < 0$ and $j \in \mathbb{N}_0$, the semigroup property
		\begin{equation} \label{sym semigroup}
			\sum_{|\alpha|+p+q=j} \frac{1}{\alpha!} \partial_\xi^\alpha \sigma(B)_{m z-p}^{(z),0}(x, \xi) D_x^\alpha \sigma(B)_{m w-q}^{(w),0} (x, \xi) = \sigma(B)_{m(z+w)-j}^{(z+w),0}(x, \xi),
		\end{equation}
		valid for all $|\xi| \ge 1$.
		
		\item For $k \in \mathbb{N}$, the sequence $\Big\{\sigma(B)_{-mk-j}^{(-k),0}(x, \xi)\Big\}_{j\ge0}$ constitutes homogeneous components of the parametrix for $A^{k}$.
		
		\item For any multi-indices $\alpha,\beta$, the derivative $\partial_\xi^\alpha\partial_x^\beta\sigma(B)_{mz-j}^{(z),0}(x, \xi)$ is a holomorphic $\mathcal{M}$-valued function in $z$ for $\Re z < 0$ and $(x, \xi) \in \mathbb{R}^d \times (\mathbb{R}^d \backslash \{0\})$.
	\end{enumerate}
\end{proposition}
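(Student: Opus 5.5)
We argue at the level of homogeneous symbols, working always with the Dunford-type integrals \eqref{bmzj} and the recursion \eqref{mexpan}--\eqref{rexpan} for the parametrix components $\sigma(B)^0_{-m-j}(x,\xi,\lambda)$. The guiding idea is the classical one of Seeley: the resolvent identity
\[
(A-\lambda)^{-1}(A-\mu)^{-1}=\frac{(A-\lambda)^{-1}-(A-\mu)^{-1}}{\lambda-\mu}
\]
has an exact symbolic counterpart at the level of the formal series $\sum_j \sigma(B)^0_{-m-j}(\cdot,\lambda)$. We then integrate this counterpart against $\lambda^z\mu^w$ over two nested contours, exactly as in the proof of Proposition \ref{azprop}(i).

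For (i), the first step is the symbolic resolvent identity:
\[
\sum_{|\alpha|+p+q=j}\frac{1}{\alpha!}\partial_\xi^\alpha\sigma(B)^0_{-m-p}(x,\xi,\lambda)\,D_x^\alpha\sigma(B)^0_{-m-q}(x,\xi,\mu)
=\frac{\sigma(B)^0_{-m-j}(x,\xi,\lambda)-\sigma(B)^0_{-m-j}(x,\xi,\mu)}{\lambda-\mu}.
\]
We would prove this by induction on $j$, using only the defining relations \eqref{mexpan}--\eqref{rexpan}. Equivalently, both sides are the degree-$(-2m-j)$ components of formal left inverses. Indeed, the formal symbol $b(\lambda)\circ b(\mu)$ composed on the right with $(a-\mu)\circ(a-\lambda)$ gives $1$ in the formal symbol algebra. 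Here $a$ is the full homogeneous expansion of $\sigma(A)$ and $\circ$ is the Leibniz product of Theorem \ref{asympcomthm}, which is associative on formal sums of homogeneous terms. Uniqueness of formal left inverses, as in Proposition \ref{existence-sB}, then identifies the two sides. The only noncommutativity issue is the order of factors, and the algebra $\mathcal{M}$ handles this automatically since all inverses are two-sided in $\mathcal{M}$.

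Next we multiply the identity by $(\tfrac{i}{2\pi})^2\lambda^z\mu^w$ and integrate $\mu$ over $\Gamma_\xi$ and $\lambda$ over a slightly larger contour $\Gamma'_\xi$ enclosing it. This works for $|\xi|\ge1$, where $\Lambda_\xi$ has fixed width so the contours can be chosen uniformly. The left side becomes the left side of \eqref{sym semigroup}, after differentiating under the integral sign, which is justified by Proposition \ref{para sym prop}(iii). On the right side, the $\sigma(B)^0_{-m-j}(\lambda)$ term gives $\sigma(B)^{(z+w),0}_{m(z+w)-j}$ by Cauchy's formula in $\mu$. The $\sigma(B)^0_{-m-j}(\mu)$ term vanishes because $\lambda\mapsto\lambda^z/(\lambda-\mu)$ can be closed off to the left with $\Re z<0$. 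Convergence of these integrals needs decay $|\lambda|^{\Re z-1}$, which is supplied by Proposition \ref{para sym prop}(i),(iii).

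For (ii), with $z=-k$ the two radial pieces of $\Gamma_\xi$ cancel. The integral \eqref{bmzj} then becomes a residue at $\lambda=0$, where $\sigma(B)^0_{-m-j}(\cdot,\lambda)$ is holomorphic. So $\sigma(B)^{(-1),0}_{-m-j}(x,\xi)=\sigma(B)^0_{-m-j}(x,\xi,0)$, and these are exactly the homogeneous components of the parametrix of $A$ produced in Theorem \ref{parabelong}. Iterating (i) $k$ times shows that the $\sigma(B)^{(-k),0}_{-mk-j}$ are the components of the $k$-fold symbolic product. By Proposition \ref{cl M}(ii) this product is the parametrix of $A^k$, since the product of parametrices is a parametrix of the product modulo $\Psi^{-\infty}$.

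For (iii), we differentiate under the integral in $z$, obtaining the integrand $\lambda^z\ln\lambda\,\partial_\xi^\alpha\partial_x^\beta\sigma(B)^0_{-m-j}$. Locally uniform absolute convergence for $\Re z\le-\varepsilon$ follows from the bounds $(1+|\lambda|)^{-1}$ or $(1+|\lambda|)^{-2}$, as in Proposition \ref{azprop}(iii). We then pass derivatives in $(x,\xi)$ inside the integral; this is allowed because the contour $\Gamma_\xi$ can be frozen locally in $\xi$. The main obstacle is the rigorous symbolic resolvent identity in (i), in particular verifying associativity of the formal composition for $\lambda$-dependent symbols without commuting operator-valued factors. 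We would do this by checking it degree by degree from Theorem \ref{asympcomthm}.
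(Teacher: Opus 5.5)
Your proposal is correct. For (i) and (iii) it follows the paper's proof: a symbolic resolvent identity for the components $\sigma(B)^0_{-m-j}(x,\xi,\lambda)$ is integrated against $\lambda^z\mu^w$ over the nested contours $\Gamma'_\xi,\Gamma_\xi$ exactly as in Proposition \ref{azprop}(i), and (iii) follows by differentiating under the contour integral. The differences lie in how the two nontrivial inputs are justified.

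\emph{The symbolic resolvent identity.} The paper obtains it by matching homogeneous components in the expansions of $\sigma\big((A-\lambda)^{-1}(A-\mu)^{-1}\big)$ and $(\lambda-\mu)^{-1}\big[\sigma((A-\lambda)^{-1})-\sigma((A-\mu)^{-1})\big]$. That argument tacitly uses the operator resolvent identity and uniqueness of jointly homogeneous expansions in the parameter-dependent classes, and it is stated only for $|\xi|\ge 1$.

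Your route is purely algebraic. At each level the Leibniz product is a finite sum, and it is associative without commuting any $\mathcal{M}$-valued factors. Both $b(\lambda)\circ b(\mu)$ and $(\lambda-\mu)^{-1}\big(b(\lambda)-b(\mu)\big)$ are graded left inverses of $(a-\mu)\circ(a-\lambda)$, so uniqueness of solutions of the recursion identifies them. For the second of these you should record that it uses $(a-\lambda)\circ(a-\mu)=(a-\mu)\circ(a-\lambda)$. This gives $b(\lambda)\circ(a-\mu)\circ(a-\lambda)=a-\mu$ and $b(\mu)\circ(a-\mu)\circ(a-\lambda)=a-\lambda$. Your version needs no remainder estimates and holds pointwise for all $\xi\neq 0$ and all $\lambda\neq\mu$.

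\emph{Part (ii).} The paper only cites $A_{-k}=(A^{-1})^k$ and says the parametrix construction ``mirrors'' the recursion. Your argument is explicit and entirely symbolic:
\begin{itemize}
\item the residue at $\lambda=0$ gives $\sigma(B)^{(-1),0}_{-m-j}(x,\xi)=\sigma(B)^0_{-m-j}(x,\xi,0)$, the graded left inverse of $\sigma(A)$;
\item by the same uniqueness, this coincides with the expansion of the parametrix from Theorem \ref{parabelong};
\item iterating (i) then yields the expansion of $B^k$, which is a parametrix of $A^k$.
\end{itemize}

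One small correction in (i). The $\mu$-term vanishes because $\mu\in\Gamma_\xi$ lies between $\Gamma'_\xi$ and the branch cut. Hence $\lambda\mapsto\lambda^z(\lambda-\mu)^{-1}$ is holomorphic on the other side of $\Gamma'_\xi$ (towards $\Re\lambda\to+\infty$) and is $O(|\lambda|^{\Re z-1})$ there, so you must close that way. Closing towards the negative real axis would instead pick up the pole at $\lambda=\mu$ and the cut. If ``to the left'' meant left relative to the orientation of $\Gamma'_\xi$, that is the same side and your statement is fine.
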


\begin{proof}
	$\rm (i)$
	From the resolvent identity, we have
	$$
	\sigma\big((A - \lambda )^{-1} (A - \mu )^{-1}\big) = \frac{1}{\lambda - \mu} \left[\sigma((A - \lambda )^{-1}) - \sigma((A - \mu )^{-1})\right].
	$$
	Given that
	$$
	\sigma\big((A - \lambda )^{-1} (A-\mu )^{-1}\big) \sim \sum_{j=0}^{\infty} \sum_{|\alpha|+p+q=j}\frac{1}{\alpha!}\partial_\xi^\alpha \sigma(B)_{-m-p}^0(x, \xi, \lambda) D_x^\alpha \sigma(B)_{-w-q}^0(x, \xi, \mu),
	$$
	and
	$$
	\sigma\big((A - \lambda )^{-1}\big) - \sigma((A-\mu )^{-1}) \sim \sum_{j=0}^{\infty} \left[\sigma(B)_{-m-j}^0(x, \xi, \lambda) - \sigma(B)_{-w-j}^0(x, \xi, \mu)\right],
	$$
	by matching the homogeneous components for $j \in \mathbb{N}_{0}$, we obtain
	\begin{align*}
		&\sum_{|\alpha|+p+q=j} \frac{1}{\alpha!} \partial_\xi^\alpha \sigma(B)_{-m-p}^0(x, \xi, \lambda) D_x^\alpha \sigma(B)_{-w-q}^0(x, \xi, \mu) \\
		&= \frac{\sigma(B)_{-m-j}^0(x, \xi, \lambda) - \sigma(B)_{-w-j}^0(x, \xi, \mu)}{\lambda - \mu}, \quad |\xi| \ge 1.
	\end{align*}
	Applying Definition \ref{bmzj}, we derive
	\begin{align*}
		&\sum_{|\alpha|+p+q=j} \frac{1}{\alpha!} \partial_\xi^\alpha \sigma(B)_{m z-p}^{(z),0}(x, \xi) D_x^\alpha \sigma(B)_{m w-q}^{(w),0} (x, \xi)\\
		=& -\frac{1}{4 \pi^2} \int_{\Gamma_{\xi}'} \int_{\Gamma_{\xi}} \lambda^z \mu^w \Biggl(
		\sum_{\substack{|\alpha|+p+q=j}} \frac{1}{\alpha!} \partial_\xi^\alpha \sigma(B)_{-m-p}^0(x, \xi, \lambda) \\
		&\hspace{15em} \cdot D_x^\alpha \sigma(B)_{-w-q}^0(x, \xi, \mu)
		\Biggr) d\mu \, d\lambda\\
		= & -\frac{1}{4 \pi^2} \int_{\Gamma_{\xi}'} \int_{\Gamma_{\xi}} \frac{\lambda^z \mu^w}{\lambda - \mu} \Big(\sigma(B)_{-m-j}^0(x, \xi, \lambda) - \sigma(B)_{-w-j}^0(x, \xi, \mu)\Big) d\mu d\lambda \\
		= & \frac{i}{2 \pi} \int_{\Gamma_{\xi}'} \lambda^{z+w} \sigma(B)_{-m-j}^0(x, \xi, \lambda) d\lambda + \frac{1}{4 \pi^2} \int_{\Gamma_{\xi}}\sigma(B)_{-w-j}^0(x, \xi, \mu) \int_{\Gamma_{\xi}'}  \frac{\lambda^z \mu^w}{\lambda - \mu} d\lambda d\mu \\
		= & \sigma(B)_{m (z+w)-j}^{(z+w),0}(x, \xi).
	\end{align*}

	$\rm (ii)$ For $k \in \mathbb{N}$, Proposition \ref{azprop} $\rm (ii)$ implies $A_{-k} = (A^{-1})^k$. The construction of the parametrix for $A^k$ mirrors the symbol recursion described in Definition \ref{complex_symbols}.
	
	$\rm (iii)$ For any multi-indices $\alpha, \beta$, we have
	$$
	\partial_{\xi}^\alpha \partial_x^\beta \sigma(B)_{mz-j}^{(z), 0}(x, \xi) = \frac{i}{2 \pi} \int_{\Gamma_{\xi}} \lambda^z \partial_{\xi}^\alpha \partial_x^\beta \sigma(B)_{-m-j}^0(x, \xi, \lambda) d \lambda.
	$$
	Differentiating this integral with respect to $z$, we obtain
	\begin{equation} \label{dz}
		\frac{d^n}{d z^n}  \partial_{\xi}^\alpha \partial_x^\beta \sigma(B)_{mz-j}^{(z), 0}(x, \xi) = \frac{i}{2 \pi} \int_{\Gamma_{\xi}} \lambda^z (\ln\lambda)^{n} \partial_{\xi}^\alpha \partial_x^\beta \sigma(B)_{-m-j}^0(x, \xi, \lambda) d \lambda, \quad n\in \mathbb{Z}_{+}.
	\end{equation}
	By Proposition \ref{para sym prop} $\rm (i)$ and $\rm (iii)$, the integral in \eqref{dz} converges in $\mathcal{M}$ for $\Re z \le -\varepsilon < 0$. Hence, the derivative $\partial_{\xi}^\alpha \partial_x^\beta \sigma(B)_{mz-j}^{(z), 0}(x, \xi)$ is a holomorphic $\mathcal{M}$-valued function of $z$ for $\Re z < 0$ and $\xi \neq 0$.
\end{proof}

\subsection{Regularized symbols for $\Re z < 0$}

The homogeneous components $\sigma(B)_{mz-j}^{(z),0}(x,\xi)$ constructed in the previous section via \eqref{bmzj} are smooth for $\xi\neq0$ but typically singular at $\xi=0$. To eliminate this singularity while preserving the asymptotic expansion as $|\xi|\to\infty$, we regularize them by multiplying with the smooth cutoff function $\varphi(\xi)$ from \eqref{eq-cutoff}.
The resulting symbols are globally smooth and belong to the standard $\mathcal{M}$-valued classical symbol classes.

For $k\in \mathbb{N}$, let $\sigma(A^k)_j(x, \xi)$ denote the degree-$j$ homogeneous components of the symbol $\sigma(A^k)(x, \xi)$, satisfying the asymptotic expansion
$$
\sigma(A^k)(x, \xi) \sim \sum_{j=-\infty}^{m k} \sigma(A^k)_j(x, \xi), \quad |\xi| \ge 1.
$$

We now introduce regularized symbols by applying the fixed smooth cutoff function $\varphi$, defined in \eqref{eq-cutoff}, to the homogeneous symbol components constructed in Section \ref{Section complex}.

\begin{definition}\label{def:regularized-sym}
	Let $\Re z <0$. For each $j \in \mathbb{N}_0$, define the \emph{regularized symbol} by
	$$
	\sigma(B)_{mz-j}^{(z)}(x, \xi) \coloneqq \varphi(\xi) \, \sigma(B)_{mz-j}^{(z),0}(x, \xi),
	$$
	where $\sigma(B)_{mz-j}^{(z),0}$ is the homogeneous component from \eqref{bmzj}. The corresponding $\mathcal{M}$-valued $\Psi$DO is denoted by $B_{mz-j}^{(z)}$.
\end{definition}

\begin{definition}\label{def:truncated-sum}
	Let $\Re z <0$. For $N \in \mathbb{N}$, define the truncated sum
	$$
	B_{(N)}^{(z)} \coloneqq \sum_{j=0}^{N-1} B_{mz-j}^{(z)} \in \mathrm{C}\Psi^{mz}(\mathbb{R}^d; \mathcal{M}).
	$$
\end{definition}

For $ \Re z < 0 $, it follows from Proposition \ref{asymp} that there exists a classical $ \Psi DO $
$$
B^{(z)} \in \mathrm{C}\Psi^{mz}\big(\mathbb{R}^d; \mathcal{M}\big)
$$
whose symbol admits the asymptotic expansion
$$
\sigma(B^{(z)})(x, \xi) \sim \sum_{j=0}^\infty \sigma(B)_{mz-j}^{(z)}(x, \xi), \quad |\xi| \ge 1.
$$
Equivalently, for every $ N $,
$$
B^{(z)} - B_{(N)}^{(z)} \in \mathrm{C}\Psi^{mz - N}\big(\mathbb{R}^d; \mathcal{M}\big).
$$

In order to relate $A^z$ and $B_{(N)}^{(z)}$, let us first relate $(A - \lambda)^{-1}$ and $B(\lambda)$ given by \eqref{B-lambda}. Recall that $B_{-m-j}(\lambda)$ denote the $\mathcal{M}$-valued $\Psi$DO with symbols $
\sigma(B)_{-m-j}(x, \xi, \lambda) = \varphi(\xi) \sigma(B)_{-m-j}^0(x, \xi, \lambda)$, and denote the truncated sum
$$
B_{(N)}(\lambda)=\sum_{j=0}^{N-1}B_{-m-j}(\lambda) .
$$

\begin{proposition}\label{inverse-bn}
	For $N \in \mathbb{N}$ and $\lambda \in \Lambda_0$, where
	$$
	\Lambda_0= \Big\{ \lambda \in \mathbb{C} \setminus \{0\} : |\lambda| < \tfrac{1}{2^{m+1}} C_e \text{ or } |\arg \lambda - \pi| < \tfrac{\pi}{4} \Big\},
	$$
	we have
	$$
	(A - \lambda)^{-1} - B_{(N)}(\lambda) \in \Psi_{-2}^{- m -N}(\mathbb{R}^d \times \Lambda_0; \mathcal{M}).
	$$
\end{proposition}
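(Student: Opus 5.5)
The plan is to compare $(A-\lambda)^{-1}$ with the full parametrix $B(\lambda)$ from \eqref{B-lambda}, and then compare $B(\lambda)$ with the truncation $B_{(N)}(\lambda)$. We split
$$
(A-\lambda)^{-1}-B_{(N)}(\lambda)=\big[(A-\lambda)^{-1}-B(\lambda)\big]+\big[B(\lambda)-B_{(N)}(\lambda)\big]
$$
and show that each bracket lies in $\Psi_{-2}^{-m-N}(\mathbb{R}^d\times\Lambda_0;\mathcal{M})$.

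\textbf{Second bracket.} Here the key input is Proposition \ref{para sym prop}(iii). For $j\ge1$ it gives the improved parameter decay $(1+|\lambda|)^{-2}$ for every $\sigma(B)^0_{-m-j}$ and for all its derivatives. Multiplication by $\varphi(\xi)$ and by $\varphi(t_j^{-1}\xi)$ preserves these bounds, since those cutoffs only act where $|\xi|\ge 1/2$, and there $\Lambda_0\subset\Lambda_\xi$. We then rerun the construction of Proposition \ref{asymp} with $h=-2$, choosing the $t_j$ so that the bounds \eqref{t j estimate} hold with $(1+|\lambda|^2)^{-1}$. This gives $\sigma(B)-\sum_{j<N}\sigma(B)_{-m-j}\in S^{-m-N}_{-2}$.

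One point needs care: the term $j=0$ has only $(1+|\lambda|)^{-1}$ decay. However, it appears exactly in $B_{(N)}$ and also in $\sigma(B)$ up to the factor $\varphi(t_0^{-1}\xi)-1$. That difference is compactly supported in $\xi$, and on that compact set $\sigma(A)_m(x,\xi)\ge C_e|\xi|^m$ with $|\xi|\ge1/2$. So the difference is of order $-\infty$. It still carries only $(1+|\lambda|)^{-1}$ decay, and we resolve this with the resolvent identity below. Alternatively, we can choose $t_0\le 1/2$ so that this factor is identically $1$ on the support of $\varphi$.

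\textbf{First bracket.} Theorem \ref{inverse resolvent} gives $B(\lambda)(A-\lambda)=I+R(\lambda)$ with $R(\lambda)\in\Psi^{-\infty}_{-1}$. Hence
$$
(A-\lambda)^{-1}-B(\lambda)=-R(\lambda)(A-\lambda)^{-1}.
$$
This form only gives $h=-2$ once we know that $(A-\lambda)^{-1}$ is itself a parameter-dependent $\Psi$DO of order $-m$ with $h=-1$. To improve the decay, we would iterate using the right parametrix $(A-\lambda)B(\lambda)=I+R'(\lambda)$:
$$
(A-\lambda)^{-1}=B(\lambda)-B(\lambda)R'(\lambda)+R(\lambda)(A-\lambda)^{-1}R'(\lambda).
$$
The middle term lies in $\Psi^{-\infty}_{-2}$ by the composition Theorem \ref{asympcomthm}. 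For the last term we argue as in Remark \ref{inver A-lamda} and Proposition \ref{smooth-op-kernel}, now keeping track of $\lambda$. The operator $R(\lambda)(A-\lambda)^{-1}R'(\lambda)$ maps $H^{-s}\to H^{t}$ with norm at most $C(1+|\lambda|)^{-3}$, using \eqref{inver estimate} and the uniform $H^s\to H^t$ bounds $(1+|\lambda|)^{-1}$ for $R$ and $R'$. Its kernel is then smooth with $\lambda$-weighted bounds. Taking the partial Fourier transform gives a symbol in $S^{-\infty}_{-2}$ (indeed in $S^{-\infty}_{-3}$).

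\textbf{Main obstacle.} The hardest step is this last one: turning the operator-norm smoothing bounds, uniform in $\lambda$, into symbol estimates with the correct power of $(1+|\lambda|)$. This requires a parametric version of Proposition \ref{smooth-op-kernel} (ii)$\Rightarrow$(iii)$\Rightarrow$(i), and we would check that each constant $C_{s,t}$ there scales linearly with the operator bounds.
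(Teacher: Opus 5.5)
Your decomposition $(A-\lambda)^{-1}-B_{(N)}(\lambda)=[(A-\lambda)^{-1}-B(\lambda)]+[B(\lambda)-B_{(N)}(\lambda)]$ is exactly the paper's. The paper, however, only places both pieces in $\Psi_{-1}$ and ends with \eqref{rn estimate}, which carries $(1+|\lambda|)^{-1}$. So it never reaches the subscript $-2$ of the statement, and only $h=-1$ is used later, in Proposition \ref{appro-Re-leq0}.

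\textbf{Where your upgrade to $h=-2$ fails.} The failure is in the second bracket, not where you expected. That step relies on Proposition \ref{para sym prop}(iii) in the product form $(1+|\xi|^2)^{\frac{-m-j-|\alpha|}{2}}(1+|\lambda|)^{-2}$, and that bound (like (i)) does not hold. The $\sigma(B)^0_{-m-j}$ are homogeneous jointly in $(\xi,\lambda^{1/m})$, so every power of decay in $\lambda$ costs $m$ orders of decay in $\xi$. Already $\|(\sigma(A)_m-\lambda)^{-1}\|_{\mathcal{M}}\approx(|\xi|^m+|\lambda|)^{-1}$, which is not $O(|\xi|^{-m}|\lambda|^{-1})$ along $|\lambda|=|\xi|^m$.

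\textbf{Counterexample.} Take $\mathcal{M}=\mathbb{C}$ and $A=1-\Delta$, so $m=2$, $\sigma(A)_2=|\xi|^2$, $\sigma(A)_1=0$, $\sigma(A)_0=1$. Take $N=1$. For $|\xi|\ge 1$,
\[
r_1(\xi,\lambda)=(1+|\xi|^2-\lambda)^{-1}-(|\xi|^2-\lambda)^{-1}=-\bigl[(1+|\xi|^2-\lambda)(|\xi|^2-\lambda)\bigr]^{-1}.
\]
At $\lambda=-|\xi|^2\in\Lambda_0$ this has size about $\tfrac14|\xi|^{-4}$. Membership in $S^{-3}_{-2}$ would require $O(|\xi|^{-7})$, and even $S^{-3}_{-1}$ would require $O(|\xi|^{-5})$. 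In the same example $\sigma(B)^0_{-4}=-(|\xi|^2-\lambda)^{-2}$ violates (iii).

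\textbf{What can actually be proved.} The statement, read literally in the classes $S^{\mu}_{h}$, is false, and rerunning Proposition \ref{asymp} with $h=-2$ has no valid input. The paper's weaker $h=-1$ version at $\xi$-order $-m-N$ fails for the same reason. What does hold for the finite-order terms is the bound $C(1+|\xi|)^{-j-|\alpha|}(1+|\xi|^m+|\lambda|)^{-1}$. This is because each $\partial_\xi^\alpha\partial_x^\beta\sigma(B)^0_{-m-j}$ is a sum of products of $k\ge1$ factors $(\sigma(A)_m-\lambda)^{-1}$ and $k-1$ homogeneous symbol factors of total $\xi$-order $(k-1)m-j-|\alpha|$. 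The natural target is therefore $r_N\in S^{-N}_{-1}\cap S^{-m-N}_{0}$. This suffices for Proposition \ref{appro-Re-leq0}: since $\int_\Gamma|\lambda|^{\Re z}|\ln\lambda|^k(1+|\lambda|)^{-1}|d\lambda|<\infty$ for $\Re z<0$, one still gets order $-N$ there.

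\textbf{The step you flagged in the first bracket.} That step is not settled by a parametric version of Proposition \ref{smooth-op-kernel}. Bounds $H^{-s}\to H^{t}$ give smoothness of the kernel but not decay in $x-y$. Yet $\partial_\xi^\alpha$ of the symbol is $\int(-i(x-y))^\alpha e^{-i(x-y)\cdot\xi}K(x,y)\,dy$, which needs that decay. You would need commutator estimates with multiplication by $x_j$, using $[x_j,(A-\lambda)^{-1}]=(A-\lambda)^{-1}[A,x_j](A-\lambda)^{-1}$, i.e.\ a Beals-type argument.
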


\begin{proof}
	By \eqref{B-lambda}, the symbol of $B(\lambda)$  admits the asymptotic expansion
	$$
	\sigma(B)(x, \xi, \lambda) \sim \sum_{j=0}^\infty \sigma(B)_{-m-j}(x, \xi, \lambda).
	$$
	Denote the symbol of the remainder $(A-\lambda)^{-1} - B_{(N)}(\lambda)$ by $r_N(x, \xi, \lambda)$. We decompose
	$$
	(A-\lambda)^{-1} - B_{(N)}(\lambda) = \big(B(\lambda) - B_{(N)}(\lambda)\big) + \big((A-\lambda)^{-1} - B(\lambda)\big).
	$$
	
	From the asymptotic expansion
	$$
	\sigma(B)(x, \xi, \lambda) - \sum_{j=0}^{N-1} \sigma(B)_{-m-j}(x, \xi, \lambda) \in S_{-1}^{- m -N}(\mathbb{R}^d \times \mathbb{R}^d \times \Lambda_0; \mathcal{M}),
	$$
	the first term satisfies
	$$
	B(\lambda) - B_{(N)}(\lambda) \in \Psi_{-1}^{- m -N}(\mathbb{R}^d \times \Lambda_0; \mathcal{M}).
	$$
	
	From Theorem \ref{inverse resolvent}, define
	$$
	R(\lambda) := I - B(\lambda)(A-\lambda) \in \Psi_{-1}^{-\infty}(\mathbb{R}^d \times \Lambda_0; \mathcal{M}).
	$$
	Then
	$$
	(A-\lambda)^{-1} - B(\lambda) = R(\lambda) (A-\lambda)^{-1} \in \Psi_{-1}^{-\infty}(\mathbb{R}^d \times \Lambda_0; \mathcal{M}).
	$$
	Consequently, the total remainder satisfies
	\begin{equation}\label{rn estimate}
		\begin{split}
			r_N(x, \xi, \lambda) &\in S_{-1}^{- m -N}(\mathbb{R}^d \times \mathbb{R}^d \times \Lambda_0; \mathcal{M}), \\
			\| r_N(x, \xi, \lambda) \|_{\mathcal{M}} &\le C (1+|\xi|^2)^{\frac{- m -N}{2}} (1+|\lambda|)^{-1}.
		\end{split}
	\end{equation}
	This completes the proof.
\end{proof}

\begin{definition}
	A symbol family $\sigma(A)(x, \xi, z) \in C^{\infty}\big(\mathbb{R}^d \times \mathbb{R}^d \times G ; \mathcal{M}\big)$ belongs to
	$$
	\operatorname{Hol}\left(G, S_{\rho, \delta}^m\big(\mathbb{R}^d \times \mathbb{R}^d ; \mathcal{M}\big)\right)
	$$
	if the following conditions hold:
	\begin{enumerate}[$\rm (i)$]
		\item $z \mapsto \sigma(A)(x, \xi, z)$ is analytic in $G \subset \mathbb{C}$;
		\item For any multi-indices $\alpha, \beta$, any $k \in \mathbb{N}$, and compact $K \subset G$, there exists a constant $C=C(\alpha, \beta, k, K)>0$ such that
		$$
		\big\|\partial_z^k\partial_{\xi}^\alpha \partial_x^\beta  \sigma(A)(x, \xi, z) \big\|_{\mathcal{M}} \le C (1+|\xi|^2 )^\frac{m-\rho|\alpha|+\delta|\beta|}{2},
		$$
		uniformly for $(x, \xi, z) \in \mathbb{R}^d \times \mathbb{R}^d \times K$.
	\end{enumerate}
	An operator family $A(z)$ belongs to
	$$
	\operatorname{Hol}\left(G, \Psi_{\rho, \delta}^m\big(\mathbb{R}^d; \mathcal{M}\big)\right)
	$$
	if its symbol satisfies the above conditions.
\end{definition}

\begin{rk}
	Repeating the arguments in Sections \ref{section sym} and \ref{section resol}, it follows that the class of $\mathcal{M}$-valued holomorphic pseudo-differential families is closed under composition, adjoint operations, and taking the parametrix of an $\mathcal{M}$-valued elliptic  $\Psi$DO, provided the ellipticity condition holds uniformly in $z$.
\end{rk}

We now relate $A^z$ and $B^{(z)}$ for $\Re z < 0$.

\begin{proposition}\label{appro-Re-leq0}
	Let $A \in \mathrm{C}\Psi^m\big(\mathbb{R}^d; \mathcal{M}\big)$ with $  m > 0$ be elliptic and strictly positive, and suppose its principal symbol $\sigma(A)_m(x, \xi)$ is positive for all $(x, \xi) \in \mathbb{R}^d \times (\mathbb{R}^d \setminus \{0\})$. Then for any $N \in \mathbb{N}_0$,
	$$
	A^z - B_{(N)}^{(z)} \in \operatorname{Hol}\left(\Re z < 0, \Psi ^{- N}\big(\mathbb{R}^d; \mathcal{M}\big)\right).
	$$
\end{proposition}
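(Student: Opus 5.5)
The plan is to write both $A^z$ and $B_{(N)}^{(z)}$ as Dunford integrals of resolvent-type objects over the same contour. The difference then becomes the contour integral of the remainder symbol $r_N$ from Proposition \ref{inverse-bn}, and the symbol estimates of $r_N$ can be integrated in $\lambda$.

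\textbf{Step 1: integral representation of $B_{(N)}^{(z)}$.} Fix $\Re z<0$. By Definition \ref{complex_symbols} and Definition \ref{def:regularized-sym}, for $|\xi|\ge 1$ we have
\[
\sigma(B)^{(z)}_{mz-j}(x,\xi)=\frac{i}{2\pi}\int_{\Gamma}\lambda^z\,\sigma(B)_{-m-j}(x,\xi,\lambda)\,d\lambda .
\]
For $|\xi|\ge 1$ the region $\Lambda_\xi$ contains a fixed keyhole, so $\Gamma_\xi$ can be deformed by Cauchy's theorem to a single $\xi$-independent contour $\Gamma\subset\Lambda_0\cap\Lambda_\rho$. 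For $\frac12\le|\xi|\le1$ the cutoff $\varphi$ localizes, and the same deformation works after shrinking the arc radius; this is the reason $\Lambda_0$ was defined with the constant $2^{-m-1}C_e$. The contour $\Gamma$ can be chosen so that it also serves for $A_z$ in \eqref{az}, using Remark \ref{remark of az}(iii).

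\textbf{Step 2: reduce to the remainder.} Summing Step 1 over $j<N$ and subtracting from \eqref{az} gives
\[
\sigma\bigl(A^z-B^{(z)}_{(N)}\bigr)(x,\xi)=\frac{i}{2\pi}\int_\Gamma\lambda^z\, r_N(x,\xi,\lambda)\,d\lambda .
\]
Here $r_N$ is the symbol of $(A-\lambda)^{-1}-B_{(N)}(\lambda)$. Commuting the symbol map with the $\lambda$-integral is justified by absolute convergence in the symbol seminorms, which Step 3 supplies. Far out on $\Gamma$, Proposition \ref{inverse-bn} gives $\partial_\xi^\alpha\partial_x^\beta r_N=O\bigl((1+|\xi|)^{-m-N-|\alpha|}(1+|\lambda|)^{-1}\bigr)$. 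On the compact arc, $(A-\lambda)^{-1}$ is a $\Psi$DO of order $-m$ by Remark \ref{inver A-lamda}, and its symbol depends continuously on $\lambda$, so uniform bounds hold there as well.

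\textbf{Step 3: estimates and holomorphy.} Multiplying by $|\lambda^z|\le C|\lambda|^{\Re z}$ and integrating, the integral converges absolutely because $(1+|\lambda|)^{\Re z-1}$ is integrable at infinity when $\Re z<0$. The resulting bound is $(1+|\xi|)^{-m-N-|\alpha|}\le(1+|\xi|)^{-N-|\alpha|}$, which places the difference in $S^{-N}$. Differentiating in $z$ only inserts factors $(\ln\lambda)^k$, which preserve integrability locally uniformly on compact subsets of $\{\Re z<0\}$. Holomorphy in $z$ then follows exactly as in Proposition \ref{azprop}(iii) and Proposition \ref{prop-semigroup}(iii).

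\textbf{Main obstacle.} The hardest point is the claim, used in Step 2, that the symbol map commutes with the Dunford integral. This requires the symbol of $(A-\lambda)^{-1}$ to depend measurably, indeed continuously, on $\lambda$ in each seminorm, uniformly along $\Gamma$. I would obtain this from the formula $(A-\lambda)^{-1}=B(\lambda)-R(\lambda)(A-\lambda)^{-1}$. The term $B(\lambda)$ has explicit symbol dependence on $\lambda$. The term $R(\lambda)(A-\lambda)^{-1}$ is smoothing, and Proposition \ref{smooth-op-kernel} converts its operator-norm bounds between Sobolev spaces, which hold uniformly in $\lambda$, into symbol bounds. Those Sobolev bounds come from Theorem \ref{inverse resolvent} together with $R(\lambda)\in\Psi^{-\infty}_{-1}$.
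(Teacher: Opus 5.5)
Your proposal is essentially the paper's proof: both write $A^z-B_{(N)}^{(z)}=\frac{i}{2\pi}\int_\Gamma\lambda^z\bigl((A-\lambda)^{-1}-B_{(N)}(\lambda)\bigr)\,d\lambda$ over a single contour in $\Lambda_0$, insert the remainder bound \eqref{rn estimate} from Proposition \ref{inverse-bn}, and differentiate under the integral in $z$; your separate treatment of the arc and of interchanging the symbol map with the contour integral only spells out what the paper takes for granted. One caution, inherited from the paper rather than introduced by you: the product bound $(1+|\xi|)^{-m-N-|\alpha|}(1+|\lambda|)^{-1}$ that you quote is false (already for $A=J^m$, $N=2$, $\lambda=-|\xi|^m$ one has $|r_2|\approx\frac{m}{8}|\xi|^{-m-2}$), but your Step 3 discards the factor $(1+|\xi|)^{-m}$ anyway, so the bound $(1+|\xi|)^{-N-|\alpha|}(1+|\lambda|)^{-1}$, which is the form to expect from the joint homogeneity in $(\xi,\lambda^{1/m})$, would still give the $\Psi^{-N}$ conclusion.
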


\begin{proof}
	Let $\rho$ in Section \ref{Section complex} be $\rho = \tfrac{1}{2^{m+1}} C_e$. Then the integration contours $\Gamma$ and $\Gamma'$ are chosen inside $\Lambda_0$ and have the same geometric shape as in Section \ref{Section complex}, with the parameter $\rho$ replaced by $\frac{1}{2^{m+1}} C_e$.
	
	The regularization enables operator integrals over $\Gamma$:
	\begin{align*}
		B_{m z-j}^{(z)} &= \frac{i}{2\pi} \int_{\Gamma} \lambda^z B_{-m-j}(\lambda) \, d\lambda, \\
		B_{(N)}^{(z)} &= \frac{i}{2\pi} \int_{\Gamma} \lambda^z B_{(N)}(\lambda) \, d\lambda.
	\end{align*}
	Define
	$$
	R_N^{(z)} := A^z - B_{(N)}^{(z)} = \frac{i}{2 \pi} \int_{\Gamma} \lambda^z \Big((A-\lambda )^{-1} - B_{(N)}(\lambda)\Big) d \lambda,
	$$
	with symbol
	$$
	r_N^{(z)}(x, \xi) = \frac{i}{2 \pi} \int_{\Gamma} \lambda^z r_N(x, \xi, \lambda) \, d \lambda.
	$$
	By \eqref{rn estimate}, we have
	$$
	\big\|\partial_{\xi}^\alpha \partial_x^\beta r_N(x, \xi, \lambda)\big\|_{\mathcal{M}} \le C(1+|\xi|^2)^{\frac{-m-N-|\alpha|}{2}}(1+|\lambda|)^{-1}.
	$$
	Differentiating under the integral for $\Re z < 0$ and $k \in \mathbb{N}$, we obtain
	\begin{align*}
		\big\|\partial_z^{k}\partial_{\xi}^\alpha \partial_x^\beta r_N^{(z)}(x, \xi)\big\|_{\mathcal{M}} & \le C^{\prime}(1+|\xi|^2)^{\frac{-m-N-|\alpha|}{2}} \int_{\Gamma} |\lambda|^{\Re z} |\ln \lambda|^k (1+|\lambda|)^{-1}|d \lambda| \\
		& \le C^{\prime \prime}(1+|\xi|^2)^{\frac{-m-N-|\alpha|}{2}}.
	\end{align*}
	Hence,
	$$
	R_N^{(z)} \in \operatorname{Hol}\left(\Re z < 0, \Psi ^{-N}\big(\mathbb{R}^d; \mathcal{M}\big)\right).
	$$
\end{proof}


\subsection{Extension to entire complex plane}
We now extend the definition of $ \sigma(B)_{mz-j}^{(z), 0} $ to all $ z \in \mathbb{C} $ by following the operator extension procedure in Section \ref{entiretitle}. Let $ k \in \mathbb{Z} $ with $ \Re z < k $. Proposition \ref{appro-Re-leq0} yields
$$
A^{z-k} - B_{(N)}^{(z-k)} \in \Psi^{-N}\big(\mathbb{R}^d; \mathcal{M}\big),
$$
while the asymptotic definition of $ B^{(z-k)} $ gives
$$
B^{(z-k)} - B_{(N)}^{(z-k)} \in \Psi^{m(z-k) - N}\big(\mathbb{R}^d; \mathcal{M}\big).
$$
By choosing $ N $ sufficiently large, both terms become arbitrarily smoothing, hence
\begin{equation}\label{Az-Bz}
	A^{z-k} - B^{(z-k)} \in \Psi^{-\infty}(\mathbb{R}^d; \mathcal{M}).
\end{equation}

For $ k = \lfloor \Re z \rfloor + 1 $, we define $ B^{(z)} := A^k B^{(z-k)} $. By\eqref{azw} and \eqref{Az-Bz}, we have
$$
A^k B^{(z-k)} \equiv B^{(z-k)} A^k \quad (\bmod \Psi^{-\infty}).
$$
Correspondingly, for all $ j \in \mathbb{N}_0 $ and $ |\xi| \ge 1 $, the symbol components $ \sigma(B)_{mz-j}^{(z),0}(x,\xi) $ can be recursively defined by
\begin{equation}\label{bmzjj}
	\sigma(B)_{mz-j}^{(z),0}(x,\xi)
	= \sum_{p+q+|\alpha|=j} \frac{1}{\alpha!} \, \partial_\xi^\alpha \sigma(A^k)_{mk-p}(x,\xi) \, D_x^\alpha \sigma(B)_{m(z-k)-q}^{(z-k),0}(x,\xi),
\end{equation}
where the terms $ \sigma(A^k)_{mk-p}(x,\xi) $ and $ \sigma(B)_{m(z-k)-q}^{(z-k),0}(x,\xi) $ may be interchanged in position, i.e.,
\begin{equation*}
	\sigma(B)_{mz-j}^{(z),0}(x,\xi)
	= \sum_{p+q+|\alpha|=j} \frac{1}{\alpha!} \, D_x^\alpha \sigma(B)_{m(z-k)-q}^{(z-k),0}(x,\xi)\, \partial_\xi^\alpha \sigma(A^k)_{mk-p}(x,\xi) .
\end{equation*}

The following two definitions are therefore the natural counterparts of Definition \ref{def:regularized-sym} and Definition \ref{def:truncated-sum}, now formulated for the $z \in \mathbb{C}$.

\begin{definition}
	Let $z \in \mathbb{C}$. For each $j \in \mathbb{N}_0$, define the \emph{regularized symbol} by
	$$
	\sigma(B)_{mz-j}^{(z)}(x, \xi) \coloneqq \varphi(\xi) \, \sigma(B)_{mz-j}^{(z),0}(x, \xi),
	$$
	where $\sigma(B)_{mz-j}^{(z),0}$ is the homogeneous component from \eqref{bmzj}. The corresponding $\mathcal{M}$-valued $\Psi$DO is denoted by $B_{mz-j}^{(z)}$.
\end{definition}

\begin{definition}
	Let $z \in \mathbb{C}$. For $N \in \mathbb{N}$, define the truncated sum
	$$
	B_{(N)}^{(z)} \coloneqq \sum_{j=0}^{N-1} B_{mz-j}^{(z)} \in \mathrm{C}\Psi^{mz}(\mathbb{R}^d; \mathcal{M}).
	$$
\end{definition}

\begin{theorem}\label{bmz sym group}
	The extended symbols satisfy the following properties:
	\begin{enumerate}[$\rm (i)$]
		\item The $\mathcal{M}$-valued function $\sigma(B)_{m z-j}^{(z), 0}(x, \xi)$ defined by \eqref{bmzjj} is independent of the choice of the integer $k$, as long as $\Re z < k$.
		
		\item If $\Re z < 0$, then the $\mathcal{M}$-valued functions $\sigma(B)_{m z-j}^{(z), 0}(x, \xi)$ obtained by formula \eqref{bmzjj} coincide with the $\mathcal{M}$-valued functions (denoted by the same symbol) obtained by the contour integral \eqref{bmzj}.
		
		\item The group property \eqref{sym semigroup} holds for all $z, w \in \mathbb{C}$.
		
		\item If $k \in \mathbb{Z}$, then
		$$
		\sigma(B)_{mk-j}^{(k), 0}(x, \xi) = \sigma(A^{k})_{m k-j}(x, \xi), \quad |\xi| \ge 1.
		$$
		
		\item For any multi-indices $\alpha, \beta$, any $(x, \xi) \in \mathbb{R}^d \times (\mathbb{R}^d \setminus \{0\})$, and any $j \in \mathbb{N}_0$, the function $\partial_{\xi}^\alpha \partial_x^\beta \sigma(B)_{mz-j}^{(z),0}(x, \xi)$ is an entire $\mathcal{M}$-valued function of $z$.
	\end{enumerate}
\end{theorem}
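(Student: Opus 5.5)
The whole argument reduces to the symbolic semigroup identity \eqref{sym semigroup} for $\Re z,\Re w<0$ (Proposition \ref{prop-semigroup} (i)), together with its counterpart for integer powers of $A$. First we record that the homogeneous components of $A^k$, $k\in\mathbb{N}$, compose exactly like those of \eqref{sym semigroup}. That is,
$$
\sum_{|\alpha|+p+q=j}\frac{1}{\alpha!}\,\partial_\xi^\alpha\sigma(A^k)_{mk-p}\,D_x^\alpha\sigma(A^l)_{ml-q}=\sigma(A^{k+l})_{m(k+l)-j}.
$$
This follows from Proposition \ref{cl M} (ii), i.e. from \eqref{compo classical} applied to $A^kA^l=A^{k+l}$ and matching homogeneous degrees for $|\xi|\ge1$. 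The same matching gives the mixed identity between $\sigma(A^k)$ and $\sigma(B)^{(w),0}$ whenever $\Re w<0$. Indeed, by Proposition \ref{appro-Re-leq0} and \eqref{Az-Bz}, $B^{(w)}$ agrees with $A^w$ modulo $\Psi^{-\infty}$, and we have $A^kA^w=A^{k+w}$ by Theorem \ref{groupaz}. Comparing the full symbols via Theorem \ref{asympcomthm}, and using that homogeneous expansions are unique (two homogeneous functions of the same degree agreeing modulo $S^{-\infty}$ for $|\xi|\ge1$ coincide), we get
$$
\sigma(A^k)\#\sigma(B)^{(w),0}=\sigma(B)^{(k+w),0}\quad\text{whenever }\Re(k+w)<0,
$$
where $\#$ denotes the graded sum on the left of \eqref{sym semigroup}. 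We also need the case $w=-k$. Part (ii) of Proposition \ref{prop-semigroup} identifies $\sigma(B)^{(-k),0}$ with the homogeneous parametrix symbols of $A^k$. The parametrix relation then gives $\sigma(A^k)\#\sigma(B)^{(-k),0}=1$ in the graded sense, and likewise with the factors in the other order.

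\emph{(i)} Let $k>l>\Re z$, put $p=k-l$, and write $\#$ for the graded composition, which is associative by associativity of operator composition and uniqueness of expansions. Then
$$
\sigma(A^k)\#\sigma(B)^{(z-k),0}=\sigma(A^l)\#\sigma(A^p)\#\sigma(B)^{(z-k),0}=\sigma(A^l)\#\sigma(B)^{(z-l),0},
$$
using the mixed identity with $\Re(z-l)<0$. This gives independence of $k$, and the interchangeability of factors noted after \eqref{bmzjj} follows the same way.

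\emph{(ii)} If $\Re z<0$, take $k=0$. Then \eqref{bmzjj} returns the contour definition \eqref{bmzj} trivially, and by (i) so does any admissible $k$.

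\emph{(iv)} For $z=k\in\mathbb{Z}$, choose the integer $k'=k+1$. Then
$$
\sigma(B)^{(k),0}=\sigma(A^{k+1})\#\sigma(B)^{(-1),0}.
$$
By (ii) of Proposition \ref{prop-semigroup}, $\sigma(B)^{(-1),0}$ is the graded inverse of $\sigma(A)$, so $\sigma(B)^{(k),0}=\sigma(A^k)$. For negative $k$ this is immediate from Proposition \ref{prop-semigroup} (ii).

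\emph{(iii)} Choose integers $k>\Re z$ and $l>\Re w$. Then
$$
\sigma(B)^{(z),0}\#\sigma(B)^{(w),0}=\sigma(A^k)\#\sigma(B)^{(z-k),0}\#\sigma(A^l)\#\sigma(B)^{(w-l),0}.
$$
We commute the middle factors $\sigma(B)^{(z-k),0}$ and $\sigma(A^l)$, which is allowed by the interchangeability established in (i). The integer identity and \eqref{sym semigroup} for negative real parts then reduce the product to $\sigma(A^{k+l})\#\sigma(B)^{(z+w-k-l),0}=\sigma(B)^{(z+w),0}$. The main obstacle is justifying associativity and commutativity of $\#$ at the level of homogeneous components. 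We handle it by passing through actual operators ($A^k$, and $B^{(w)}\equiv A^w$ modulo $\Psi^{-\infty}$), whose composition is associative and for which $A^kA^w=A^wA^k$ by Theorem \ref{groupaz}. We then extract components by uniqueness of classical expansions, exactly as in the derivation of \eqref{Az-Bz}.

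\emph{(v)} On the half-plane $\Re z<k$, formula \eqref{bmzjj} is a finite sum of products of $z$-independent smooth functions $\partial_\xi^\alpha\sigma(A^k)_{mk-p}$ with $D_x^\alpha\sigma(B)^{(z-k),0}_{m(z-k)-q}$. The latter, together with all $x,\xi$-derivatives, is holomorphic in $z$ by Proposition \ref{prop-semigroup} (iii), so each such derivative is holomorphic on $\{\Re z<k\}$. By (i) these half-plane definitions agree on overlaps, so they patch to an entire $\mathcal{M}$-valued function of $z$.
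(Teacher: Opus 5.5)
Your proposal is correct and takes essentially the paper's route: every identity between homogeneous components is read off from operator congruences modulo $\Psi^{-\infty}$ (from Theorem \ref{groupaz} and \eqref{Az-Bz}) using uniqueness of classical expansions, and (v) follows from holomorphy of the contour-defined factor on each half-plane $\Re z<k$, patched by (i). Your graded product $\#$ spells out the associativity and commutativity bookkeeping, and gives an actual argument for (iv), where the paper only says "directly"; the one omission is that in (i) you need the integer identity also for nonpositive exponents $l$, but it holds by the same argument because $A^l$ is a classical $\Psi$DO for every $l\in\mathbb{Z}$.
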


\begin{proof}
	(i) Let $k_1, k_2 \in \mathbb{Z}$ with $\Re z < k_1, k_2$. Then it follows from Theorem \ref{groupaz} (i) and \eqref{Az-Bz} that
	$$
	A^{k_1} B^{(z-k_1)} \equiv A^{k_2} B^{(z-k_2)} \quad (\bmod \Psi^{-\infty}),
	$$
	which implies
	$$
	\sigma(B)_{mz-j}^{(z),0}(x,\xi) \text{ is independent of } k, \quad |\xi|\ge 1.
	$$
	
	(ii) This follows immediately from (i) by choosing $k = 0$ and using the assumption $\Re z < 0$.
	
	(iii) Let $z,w \in \mathbb{C}$ and choose $k,l \in \mathbb{Z}$ such that $\Re z < k$ and $\Re w < l$. Then it follows from \eqref{agroup} that
	$$
	B^{(z)} B^{(w)} \equiv (A^k B^{(z-k)}) (A^{l} B^{(w-l)}) \equiv A^{k+l} (B^{(z-k)} B^{(w-l)}) \quad (\bmod \Psi^{-\infty}).
	$$
	By Proposition \ref{prop-semigroup} (i) and the construction of $B^{(z-k)}$ and $B^{(w-l)}$ at the operator level, we have
	$$
	B^{(z-k)} B^{(w-l)} \equiv B^{(z+w-k-l)} \quad (\bmod \Psi^{-\infty}),
	$$
	so that
	$$
	B^{(z)} B^{(w)} \equiv A^{k+l} B^{(z+w-k-l)} \equiv B^{(z+w)} \quad (\bmod \Psi^{-\infty}).
	$$
	Therefore, for each $j \in \mathbb{N}_0$, the corresponding symbols satisfy
	$$
	\sum_{|\alpha|+p+q=j} \frac{1}{\alpha!} \partial_\xi^\alpha \sigma(B)_{mz-p}^{(z),0}(x,\xi) D_x^\alpha \sigma(B)_{mw-q}^{(w),0}(x,\xi)
	= \sigma(B)_{m(z+w)-j}^{(z+w),0}(x,\xi), \quad |\xi|\ge 1.
	$$
	
	(iv) For $z = k \in \mathbb{Z}$, the recursive formula directly yields
	$$
	\sigma(B)_{m k-j}^{(k), 0}(x, \xi) = \sigma(A^k)_{m k-j}(x, \xi), \quad |\xi| \ge 1.
	$$
	
	(v) Choose $k \in \mathbb{Z}$ such that $\Re z < k$. For any multi-indices $\alpha, \beta$, any $(x, \xi) \in \mathbb{R}^d \times (\mathbb{R}^d \setminus \{0\})$, and any $j \in \mathbb{N}_0$, we have
	\begin{align*}
		\partial_{\xi}^\alpha \partial_x^\beta \sigma(B)_{m z-j}^{(z), 0}(x, \xi)
		&= \sum_{p+q+|\gamma|=j} \frac{i}{2 \pi \gamma!} \partial_{\xi}^\alpha \partial_x^\beta \Bigl(
		\partial_{\xi}^\gamma \sigma(A^k)_{m k-p}(x, \xi) \\
		&\qquad \cdot \int_{\Gamma_{\xi}} \lambda^{z-k} D_x^\gamma \sigma(B)_{-m-q}^0(x, \xi, \lambda) \, d\lambda \Bigr).
	\end{align*}
	Differentiating with respect to $z$, for any $n \in \mathbb{N}$, we obtain
	\begin{align*}
		\frac{d^n}{d z^n} \partial_{\xi}^\alpha \partial_x^\beta \sigma(B)_{m z-j}^{(z), 0}(x, \xi)
		&= \sum_{p+q+|\gamma|=j} \frac{i}{2 \pi \gamma!} \partial_{\xi}^\alpha \partial_x^\beta \Bigl(
		\partial_{\xi}^\gamma \sigma(A^k)_{m k-p}(x, \xi) \\
		&\qquad \cdot \int_{\Gamma_{\xi}} \lambda^{z-k} (\ln \lambda)^n D_x^\gamma \sigma(B)_{-m-q}^0(x, \xi, \lambda) \, d\lambda \Bigr).
	\end{align*}
	By Proposition \ref{para sym prop} (i) and (iii), this integral converges absolutely in $\mathcal{M}$ for $\Re(z-k) \le -\varepsilon < 0$. Since $k$ can be chosen arbitrarily large for fixed $z$, the derivative exists for all $z \in \mathbb{C}$ and is holomorphic. This proves that $\partial_{\xi}^\alpha \partial_x^\beta \sigma(B)_{m z-j}^{(z),0}(x, \xi)$ is an entire $\mathcal{M}$-valued function of $z$.
\end{proof}

\subsection{Structure Theorem}

Utilizing the construction developed in the previous sections, we now obtain the following structure theorem:

\begin{theorem}\label{structurethm}
	Let $A \in \mathrm{C}\Psi^m\big(\mathbb{R}^d; \mathcal{M}\big)$ with $  m > 0$ be elliptic and strictly positive, and suppose its principal symbol $\sigma(A)_m(x, \xi)$ is positive for all $(x, \xi) \in \mathbb{R}^d \times (\mathbb{R}^d \setminus \{0\})$. Then the complex powers of $A$ satisfy:
	\begin{enumerate}[$\rm (i)$]
		\item For any $N \in \mathbb{N}_0$ and $t \in \mathbb{R}$,
		$$
		A^z - B_{(N)}^{(z)} \in \operatorname{Hol}\left(\Re z < t, \Psi^{  m  t - N}\big(\mathbb{R}^d; \mathcal{M}\big)\right).
		$$
		
		\item $A^z \in \mathrm{C}\Psi^{m z}\big(\mathbb{R}^d; \mathcal{M}\big)$ for all $z \in \mathbb{C}$.
	\end{enumerate}
\end{theorem}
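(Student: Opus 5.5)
We reduce to the case $\Re z<0$ already handled in Proposition \ref{appro-Re-leq0}, and shift by integer powers of $A$ using the group law of Theorem \ref{groupaz} and the symbolic group law of Theorem \ref{bmz sym group}.

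\emph{Step 1: the region $\Re z<t$ with $t\le 0$.} Here $\Re z<0$, and Proposition \ref{appro-Re-leq0} gives $A^z-B^{(z)}_{(N)}\in\operatorname{Hol}(\Re z<0,\Psi^{-N})$. This is not yet sharp, since we want order $mt-N$. Fix $N$ and choose $N'\ge N$ so large that $-N'\le mt-N$. Then
$$A^z-B^{(z)}_{(N)}=\big(A^z-B^{(z)}_{(N')}\big)+\sum_{j=N}^{N'-1}B^{(z)}_{mz-j}.$$
The first term lies in $\operatorname{Hol}(\Re z<t,\Psi^{mt-N})$. For each $j\ge N$, the symbol $\varphi(\xi)\sigma(B)^{(z),0}_{mz-j}$ is homogeneous of degree $mz-j$ for $|\xi|\ge1$ (Proposition \ref{bzhomo}) and holomorphic in $z$ (Proposition \ref{prop-semigroup}(iii)). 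Because $\Re(mz-j)\le mt-N$, derivative bounds uniform on compact $z$-sets give membership in $\operatorname{Hol}(\Re z<t,\Psi^{mt-N})$. The $z$-derivatives produce factors $\log|\xi|$, which are absorbed because $\Re z<t$ is strict on compact subsets.

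\emph{Step 2: general $t$.} Pick an integer $k>t$ and write $A^z=A^kA^{z-k}$ (Theorem \ref{groupaz}). For $\Re z<t$ we have $\Re(z-k)<t-k<0$, so Step 1 gives
$$A^{z-k}=B^{(z-k)}_{(N')}+R_{N'}(z),\qquad R_{N'}\in\operatorname{Hol}(\Re z<t,\Psi^{m(t-k)-N'}).$$
By Proposition \ref{cl M}, $A^k\in\mathrm{C}\Psi^{mk}$ is $z$-independent, so $A^kR_{N'}\in\operatorname{Hol}(\Re z<t,\Psi^{mt-N'})$ by the composition theorem (Theorem \ref{asympcomthm}, applied with holomorphic families as in the remark on holomorphic families). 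Expand $A^kB^{(z-k)}_{(N')}$ symbolically via Theorem \ref{asympcomthm}. By the recursion \eqref{bmzjj}, which is independent of $k$ by Theorem \ref{bmz sym group}(i), its homogeneous components of degree $mz-j$ for $j<N$ are exactly $\sigma(B)^{(z),0}_{mz-j}$ on $|\xi|\ge1$. The remaining terms, and the cutoff discrepancies supported in $|\xi|\le1$ (which are smoothing), are of order $\le m\Re z-N<mt-N$, provided $N'\ge N$. Hence $A^kB^{(z-k)}_{(N')}-B^{(z)}_{(N)}\in\operatorname{Hol}(\Re z<t,\Psi^{mt-N})$, which proves (i).

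\emph{Main obstacle.} The delicate point is the holomorphic control of the composition remainder in Theorem \ref{asympcomthm}, uniformly in $z$ on compacts. That theorem is stated for fixed symbols, so one must track that its constants depend only on finitely many seminorms, which are locally bounded in $z$ for holomorphic families. The cancellation of the $j<N$ components relies on Theorem \ref{bmz sym group}(i),(iii).

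\emph{Part (ii).} Fix $z$, take $t>\Re z$ close to $\Re z$, and apply (i) for every $N$. The operators $B^{(z)}_{(N)}$ are partial sums of a classical expansion with homogeneous terms $\sigma(B)^{(z),0}_{mz-j}$, and the remainders have order $mt-N\to-\infty$. Proposition \ref{asy after} then shows that $\sigma(A^z)\sim\sum_j\varphi\,\sigma(B)^{(z),0}_{mz-j}$, so $A^z\in\mathrm{C}\Psi^{mz}$ with principal symbol $\sigma(A)_m^z$ by \eqref{Bmzj,0z}, extended to all $z$ by the group law.
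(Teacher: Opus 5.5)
Your proposal is correct and follows the paper's strategy: start from the case $\Re z<0$ (Proposition \ref{appro-Re-leq0}), shift by a power of $A$ using the group laws of Theorems \ref{groupaz} and \ref{bmz sym group} together with closure of holomorphic families under composition, and then let $N\to\infty$ for part (ii). Your version is more careful than the paper's one-line identity $A^w-B^{(w)}_{(N)}=A^tR_N^{(w-t)}$, which holds only modulo terms of order $m\Re w-N$ and uses a possibly non-integer power $A^t$; you instead shift by the integer power $A^k\in\mathrm{C}\Psi^{mk}$ and explicitly reconcile $A^kB^{(z-k)}_{(N')}$ with $B^{(z)}_{(N)}$ through \eqref{bmzjj}.
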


\begin{proof}
	(i) In Proposition \ref{appro-Re-leq0}, we have established the case for $t = 0$. For the general case, set $w = z + t$. Then
	$$
	R_N^{(w-t)} \in \operatorname{Hol}\left(\Re w < t, \Psi^{-N}\big(\mathbb{R}^d; \mathcal{M}\big)\right).
	$$
	Using the group property in Theorems \ref{groupaz} and \ref{bmz sym group}, along with the fact that the composition of holomorphic families yields again a holomorphic family, we conclude
	$$
	R_N^{(w)} = A^{w} - B_{(N)}^{(w)} = A^{t} R_N^{(w-t)} \in \operatorname{Hol}\left(\Re w < t, \Psi^{mt-N}\big(\mathbb{R}^d; \mathcal{M}\big)\right).
	$$
	
	(ii) Fix $z \in \mathbb{C}$ and choose $t > \Re z$. By Proposition \ref{asymp}, there exists an $\mathcal{M}$-valued $\Psi$DO $B^{(z)}$ with symbol $\sigma(B^{(z)})(x, \xi)$ such that
	$$
	\sigma(B ^{(z)})(x, \xi) \sim \sum_{j=0}^{\infty} \sigma(B)_{m z-j}^{(z)}(x, \xi),\quad |\xi| \ge 1.
	$$
	Then
	$$
	B^{(z)} \in \mathrm{C}\Psi^{m z}\big(\mathbb{R}^d; \mathcal{M}\big).
	$$
	From part (i), as $N \to \infty$,
	$$
	A^z - B^{(z)} \in \bigcap_{k \in \mathbb{Z}} \Psi^k\big(\mathbb{R}^d; \mathcal{M}\big) = \Psi^{-\infty}\big(\mathbb{R}^d; \mathcal{M}\big).
	$$
	Therefore,
	$$
	A^z \in \mathrm{C}\Psi^{m z}\big(\mathbb{R}^d; \mathcal{M}\big).
	$$
\end{proof}

Theorem \ref{structurethm} establishes that the complex powers $A^z$ are $\mathcal{M}$-valued classical  $\Psi$DOs of order $m z$.
A natural next step is to identify the principal symbol of such operators.
In particular, since $|A| = (A^\ast A)^{\frac{1}{2}}$ plays a fundamental role in spectral theory,
we now turn to the structure of its complex powers $|A|^z$.
The following corollary shows that the principal symbol of $|A|^z$ is simply the $z$-th power of the modulus of the principal symbol of $A$.

\begin{corollary}\label{prin map ellipaz}
	Let $A \in \mathrm{C}\Psi^m\big(\mathbb{R}^d; \mathcal{M}\big)$ with $ \mathfrak{m}=\Re m>0$ be elliptic, and $A^*A$ be strictly positive. Then $|A|^{z}\in \mathrm{C}\Psi^{ \mathfrak{m} z}\big(\mathbb{R}^d; \mathcal{M}\big)$ with principal symbol
	$$
	\sigma(|A|^{z})_{ \mathfrak{m}z}(x, \xi) = |\sigma(A)_{m}(x, \xi)|^{z}, \quad z\in \mathbb{C}.
	$$
\end{corollary}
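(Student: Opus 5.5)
The plan is to reduce to Theorem \ref{structurethm} applied to the positive operator $P:=A^*A$. By Proposition \ref{cl M}, $A^*\in \mathrm{C}\Psi^{\bar m}(\mathbb{R}^d;\mathcal{M})$ with principal symbol $\sigma(A)_m^*$. Composition then gives $P\in \mathrm{C}\Psi^{2\mathfrak{m}}(\mathbb{R}^d;\mathcal{M})$, since $\bar m+m=2\mathfrak{m}$ is real. Its principal symbol is
$$\sigma(P)_{2\mathfrak{m}}(x,\xi)=\sigma(A)_m(x,\xi)^*\sigma(A)_m(x,\xi)=|\sigma(A)_m(x,\xi)|^2,$$
which is a positive element of $\mathcal{M}$. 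Every homogeneous component of $P$ of degree $2\mathfrak{m}-j$ is a finite sum of terms built from homogeneous components of $A^*$ and $A$ of degrees $\bar m-k$ and $m-l$ with $k+l+|\alpha|=j$; such terms are homogeneous of the correct real degree.

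Next I would check ellipticity of $P$ in the sense of Definition \ref{def M elliptic}. Since $\sigma(A)_m^{-1}\in\mathcal{M}$, the element $|\sigma(A)_m|^2$ is invertible, and
$$\big\| (\sigma(A)_m^*\sigma(A)_m)^{-1}\big\|_{\mathcal{M}}\le \|\sigma(A)_m^{-1}\|^2_{\mathcal{M}}\le C^2|\xi|^{-2\mathfrak{m}}.$$
Together with the discussion at the start of Section \ref{section-resolvent}, this shows that $P$ is elliptic, essentially self-adjoint, positive and, by hypothesis, strictly positive, with a positive principal symbol. All hypotheses of Theorem \ref{structurethm} therefore hold for $P$ with order $2\mathfrak{m}>0$.

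The identification $|A|^z=P^{z/2}$ is a definitional point that I would settle from the functional calculus. For $\Re z<0$, the Dunford integral \eqref{az} applied to $P$ coincides with the spectral-theorem power $\int s^{z}\,dE_P(s)$, because the scalar Cauchy formula $\frac{i}{2\pi}\int_\Gamma \lambda^z(s-\lambda)^{-1}d\lambda=s^z$ holds for every $s\in\operatorname{Spec}P\subset[\rho,\infty)$ and can be integrated against the spectral measure. For general $z$ I would use \eqref{azw} together with Theorem \ref{groupaz}(iv) to extend the identification. Hence $|A|^z=(P^{1/2})^z=P^{z/2}$ for all $z\in\mathbb{C}$.

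Applying Theorem \ref{structurethm}(ii) with exponent $z/2$ gives $|A|^z\in \mathrm{C}\Psi^{2\mathfrak{m}\cdot z/2}=\mathrm{C}\Psi^{\mathfrak{m}z}(\mathbb{R}^d;\mathcal{M})$. By Theorem \ref{structurethm}(i), modulo lower order the symbol of $P^{z/2}$ is the leading term of $B^{(z/2)}_{(N)}$, namely $\varphi(\xi)\,\sigma(B)^{(z/2),0}_{2\mathfrak{m}\cdot z/2}$. For $\Re z<0$, formula \eqref{Bmzj,0z} identifies this with $\sigma(P)_{2\mathfrak{m}}^{z/2}=(|\sigma(A)_m|^2)^{z/2}=|\sigma(A)_m|^z$, using the holomorphic functional calculus of the positive invertible element $|\sigma(A)_m|^2$. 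For general $z$, I would use the recursion \eqref{bmzjj} with $j=0$, which gives $\sigma(P^k)_{2\mathfrak{m}k}\,\sigma(P)_{2\mathfrak{m}}^{z/2-k}=\sigma(P)_{2\mathfrak{m}}^{z/2}$. Here $\sigma(P^k)_{2\mathfrak{m}k}=\sigma(P)_{2\mathfrak{m}}^k$ by Proposition \ref{cl M}, and the two factors commute as functions of the same element.

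The main obstacle I expect is the identification $|A|^z=P^{z/2}$, that is, matching the Dunford-integral powers of Section \ref{Section complex1} with the spectral-theorem square root and its powers. The symbolic computations are routine; the operator identification has to be done carefully on the closure $\overline{A^*A}$.
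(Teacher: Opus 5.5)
Your proposal is correct and follows the paper's proof: apply Proposition \ref{cl M} to get $A^*A\in\mathrm{C}\Psi^{2\mathfrak{m}}(\mathbb{R}^d;\mathcal{M})$ with principal symbol $\sigma(A)_m^*\sigma(A)_m$, invoke Theorem \ref{structurethm} for $(A^*A)^{z/2}$, and read off the principal symbol from \eqref{Bmzj,0z}. The extra points you supply---ellipticity of $A^*A$, the identification $|A|^z=(A^*A)^{z/2}$ of the Dunford-integral power with the spectral-theorem power, and the extension of the symbol formula from $\Re z<0$ to all $z$ via \eqref{bmzjj}---are steps the paper uses without comment, so your version is, if anything, more complete.
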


\begin{proof}
	Applying Proposition \ref{cl M}, we conclude that $A^{\ast}A \in \mathrm{C}\Psi^{2 \mathfrak{m} }\big(\mathbb{R}^d; \mathcal{M}\big)$ with principal symbol
	$$
	\sigma(A^{\ast}A)_{2 \mathfrak{m}}(x,\xi) = \sigma(A)_{m}(x,\xi)^{\ast}\sigma(A)_{m}(x,\xi).
	$$
	Combining this with Theorem \ref{structurethm}, we obtain that $|A|^{z}= (A^{\ast}A)^{\frac{z}{2}}\in \mathrm{C}\Psi^{ \mathfrak{m} z}\big(\mathbb{R}^d; \mathcal{M}\big)$, and it follows from \eqref{Bmzj,0z} that
	$$
	\sigma(|A|^{z})_{ \mathfrak{m} z}(x, \xi)= (\sigma(A)_{m}(x,\xi)^{\ast}\sigma(A)_{m}(x,\xi))^{\frac{z}{2}} = |\sigma(A)_{m}(x, \xi)|^{z}.
	$$
\end{proof}

\begin{rk}
	From the above corollary, we see that, if A is elliptic and strictly positive, then automatically its principal symbol $\sigma(A)_m(x, \xi)$ is positive for all $(x, \xi) \in \mathbb{R}^d \times (\mathbb{R}^d \setminus \{0\})$. So in the rest part of the paper, we usually assume that the involved $\Psi$DOs to be elliptic and strictly positive, without assuming the positivity of the principal symbols.
\end{rk}

\subsection{Analytic continuation of the kernels of complex powers }\label{section ker}

We begin by recalling the definition of $\mathcal{M}$-valued $\Psi$DOs. For a symbol $\sigma(x,\xi) \in S_{\rho,\delta}^m\big(\mathbb{R}^d\times \mathbb{R}^d; \mathcal{M}\big)$ and a test function $f \in \mathscr{S}(\mathbb{R}^d ; L_{2}(\mathcal{M}))$, the corresponding $\Psi$DO $A$ is defined as the mapping $f \mapsto A f$, given by
$$
A f(x) = \int_{\mathbb{R}^d}  \sigma(A)(x, \xi) \hat{f} (\xi) e^{i x \cdot \xi} \bar{d} \xi,
$$
where $\ \bar{d} \xi = (2 \pi)^{-d} d\xi$.

If we express the $\Psi$DO $A \in \Psi_{\rho,\delta}^m\big(\mathbb{R}^d; \mathcal{M}\big)$ in the form
$$
A f(x) = \int_{\mathbb{R}^d} \int_{\mathbb{R}^d} \sigma(A)(x, \xi) f(y)   e^{i(x-y) \cdot \xi} dy \ \bar{d} \xi,
$$
where $\sigma(A)(x, \xi) \in S_{\rho,\delta}^m\big(\mathbb{R}^d\times \mathbb{R}^d; \mathcal{M}\big)$, then when $m < -d$, the distributional kernel $A(x, y)$, which takes values in $ \mathcal{M}$, is continuous and has the form
$$
A(x, y) = \int_{\mathbb{R}^d} \sigma(A)(x, \xi) e^{i(x-y) \cdot \xi}  \bar{d} \xi.
$$

\begin{theorem}\label{holoextend}
	Let $A \in \mathrm{C}\Psi^m\big(\mathbb{R}^d; \mathcal{M}\big)$ with $  m > 0$ be elliptic and strictly positive. Let $A_z(x, y)$ denote the kernels of the complex powers $A^z$, initially defined for $\Re z<-\frac{d}{m}$ and $x, y \in \mathbb{R}^d$. Then $A_z(x, x)$ admits a meromorphic continuation to $\mathbb{C}$ with  at most simple poles located at $z_j = \frac{j-d}{m}$ for $j \in\mathbb{N}_0$. The $\mathcal{N}$-valued residue of $A_z(x, x)$ at $z_j$ is given by
	\begin{equation}\label{residuemost}
		-\frac{1}{m} \int_{\mathbb{S}^{d-1} } \sigma(B)_{-d}^{(z_j), 0}(x, \xi)  \ \bar{d} \xi,
	\end{equation}
	where $\ \bar{d} \xi = (2 \pi)^{-d} d \xi$, and $d \xi$ denotes the surface measure on the unit sphere $\mathbb{S}^{d-1} \subset \mathbb{R}^d$.
\end{theorem}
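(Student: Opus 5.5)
We follow Seeley's classical argument and split the symbol of $A^z$ into finitely many homogeneous pieces plus a remainder whose kernel is continuous on the diagonal. Fix $t\in\mathbb{R}$ and $N\in\mathbb{N}$ with $mt-N<-d$. By Theorem \ref{structurethm}(i),
\[
A^z=B^{(z)}_{(N)}+R_N^{(z)},\qquad R_N^{(z)}\in \operatorname{Hol}\bigl(\Re z<t,\Psi^{mt-N}(\mathbb{R}^d;\mathcal{M})\bigr).
\]
Since the symbol of $R_N^{(z)}$ is holomorphic in $z$ with estimates of order $mt-N<-d$ that are locally uniform in $z$, the diagonal kernel
\[
R_N^{(z)}(x,x)=\int_{\mathbb{R}^d}r_N^{(z)}(x,\xi)\,\bar d\xi
\]
is a Bochner integral in $\mathcal{M}$. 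We then differentiate under the integral sign to see that it is holomorphic on $\{\Re z<t\}$. As $t$ is arbitrary, all possible poles come from the finitely many terms $B^{(z)}_{mz-j}$ with $0\le j\le N-1$.

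For each such $j$ and $\Re z<-d/m$ (where everything converges absolutely), we split the integral over $|\xi|\le1$ and $|\xi|\ge1$:
\[
\int_{\mathbb{R}^d}\varphi(\xi)\sigma(B)^{(z),0}_{mz-j}(x,\xi)\,\bar d\xi=\int_{|\xi|\le1}\varphi\,\sigma(B)^{(z),0}_{mz-j}\,\bar d\xi+\int_{|\xi|\ge1}\sigma(B)^{(z),0}_{mz-j}\,\bar d\xi .
\]
The first piece is over a compact set on which $\varphi\sigma^{(z),0}_{mz-j}$ is smooth, since $\varphi$ vanishes near $0$. By Theorem \ref{bmz sym group}(v) this piece is therefore entire in $z$.

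For the second piece we pass to polar coordinates $\xi=r\omega$ and use the homogeneity from Proposition \ref{bzhomo}, extended to all $z$ through \eqref{bmzjj}. This gives
\[
\int_1^\infty r^{mz-j+d-1}\,dr\int_{\mathbb{S}^{d-1}}\sigma(B)^{(z),0}_{mz-j}(x,\omega)\,\bar d\omega=-\frac{1}{mz-j+d}\int_{\mathbb{S}^{d-1}}\sigma(B)^{(z),0}_{mz-j}(x,\omega)\,\bar d\omega .
\]
The numerator is entire in $z$, again by Theorem \ref{bmz sym group}(v); the holomorphy of the sphere integral follows by dominated convergence, since $\mathbb{S}^{d-1}$ is compact and all derivatives are locally bounded. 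The right-hand side is therefore meromorphic with at most a simple pole at $z_j=(j-d)/m$. Since $mz-j+d=m(z-z_j)$, the residue there is
\[
-\frac1m\int_{\mathbb{S}^{d-1}}\sigma(B)^{(z_j),0}_{mz_j-j}(x,\omega)\,\bar d\omega ,
\]
and $mz_j-j=-d$, which is exactly \eqref{residuemost}. Because $A_z(x,x)$ is independent of $N$, uniqueness of analytic continuation glues these continuations over increasing $t$ into a meromorphic function on $\mathbb{C}$.

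The main obstacle is making the remainder step rigorous in the $\mathcal{M}$-valued setting. One must show that the kernel of $R_N^{(z)}$ is given on the diagonal by the absolutely convergent Bochner integral above, and that the symbol estimates in the $\operatorname{Hol}$ class are uniform on compact subsets of $\{\Re z<t\}$. I would check this directly from the definition of $\operatorname{Hol}(G,S^{mt-N})$, using the bound $\|\partial_z^k r_N^{(z)}\|_{\mathcal{M}}\le C(1+|\xi|^2)^{(mt-N)/2}$, which is integrable in $\xi$. A secondary point is identifying the kernel of $A^z$ for $\Re z<-d/m$ with the sum of the two kernel pieces; this follows from the kernel formula for symbols of order $<-d$ recalled just before the statement. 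The residues are understood as functions of $x$, which is what makes them $\mathcal{N}$-valued.
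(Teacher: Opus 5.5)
Your proposal is correct and follows essentially the same route as the paper: it uses Theorem \ref{structurethm}(i) to make the remainder's diagonal kernel holomorphic, which leaves only the finitely many regularized homogeneous terms, and then polar coordinates and homogeneity isolate the factor $\int_1^\infty r^{mz-j+d-1}\,dr=-\frac{1}{m(z-z_j)}$ against an entire sphere integral. The only difference is cosmetic: you split the $\xi$-integral at $|\xi|=1$ rather than factoring out $\int_0^\infty\varphi(r)r^{mz-j+d-1}\,dr$, which has the small advantage of not needing the cutoff $\varphi$ to be radial (the paper assumes this tacitly).
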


\begin{proof}
	Following the notation from Theorem \ref{structurethm}, consider the remainder operator
	$R_{(N)}^{(z)} = A^z - B_{(N)}^{(z)}$ with symbol $r_{(N)}^{(z)}(x, \xi)$. Its kernel is given by
	$$
	R_{(N)}^{(z)}(x, y) = \int_{\mathbb{R}^d} r_{(N)}^{(z)}(x, \xi) e^{i(x-y) \cdot \xi} \ \bar{d} \xi,
	$$
	which converges in operator norm for $\Re z < \frac{N-d}{m}$ and defines a holomorphic $L_{\infty}(\mathbb{R}^d\times \mathbb{R}^d)\overline{\otimes}\mathcal{M}$-valued function. Thus, the analytic properties of $A^z$ are reduced to studying those of $B_{(N)}^{(z)}$.
	
	The homogeneous components of $B_{(N)}^{(z)}$ have kernels
	$$
	B_{m z-j}^{(z)}(x, y) = \int_{\mathbb{R}^d} \sigma(B)_{m z-j}^{(z)}(x, \xi) e^{i(x-y) \cdot \xi}  \bar{d} \xi.
	$$
	For $x = y$,
	\begin{equation} \label{bmzj2}
		B_{m z-j}^{(z)}(x, x) = \int_{\mathbb{R}^d} \varphi(\xi) \sigma(B)_{m z-j}^{(z), 0}(x, \xi) \ \bar{d} \xi,
	\end{equation}
	where $\varphi \in C^{\infty}(\mathbb{R}^d)$, $\varphi(\xi) = 0$ for $|\xi| \le \frac{1}{2}$, and $\varphi(\xi) = 1$ for $|\xi| \ge 1$.
	Passing to polar coordinates in \eqref{bmzj2}, we obtain
	$$
	B_{m z-j}^{(z)}(x, x) = \Big( \int_0^{\infty} \varphi(r) r^{m z - j + d - 1} dr \Big) \Big( \int_{\mathbb{S}^{d-1} } \sigma(B)_{m z-j}^{(z), 0}(x, \xi) \ \bar{d} \xi \Big).
	$$
	Here we abbreviate the notion $\varphi(|\xi|) : = \varphi(\xi)$.
	The second factor is an entire function of $z$. The first factor can be decomposed into the sum
	\begin{equation} \label{omega}
		\int_0^{\infty} \varphi(r) r^{m z - j + d - 1} dr = \int_0^1 \varphi(r) r^{m z - j + d - 1} dr + \int_1^{\infty} r^{m z - j + d - 1} dr.
	\end{equation}
	The first integral in \eqref{omega} is an entire function of $z$, and the second one can be computed as
	\begin{equation} \label{2int}
		\int_1^{\infty} r^{m z - j + d - 1} dr = -\frac{1}{m z - j + d} = -\frac{1}{m} \cdot \frac{1}{z - z_j},
	\end{equation}
	where $z_j = \frac{j-d}{m}$. Therefore, $ B_{m z-j}^{(z)}(x, x)$ has a simple pole at $z_j$ with $\mathcal{N}$-valued residue given by the formula \eqref{residuemost}.
\end{proof}

\begin{corollary}\label{sym holoextend}
	Let $A \in \mathrm{C}\Psi^m\big(\mathbb{R}^d; \mathcal{M}\big)$ with $  m > 0$ be elliptic and strictly positive. The integral $\int_{\mathbb{R}^d} \sigma(A)_m(x, \xi)^z d\xi$ is initially defined for $\Re z < -\frac{d}{m}$ and $x \in \mathbb{R}^d$. Then $\int_{\mathbb{R}^d} \sigma(A)_m(x, \xi)^z d\xi$ admits a meromorphic continuation to $\mathbb{C}$ with simple poles located at $z_j = \frac{j-d}{m}$ ($j \in \mathbb{N}_0$). The $\mathcal{N}$-valued residue of $\int_{\mathbb{R}^d} \sigma(A)_m(x, \xi)^z d\xi$ at $z_j$ is given by
	$$
	-\frac{1}{m} \int_{\mathbb{S}^{d-1} } \sigma(B)_{-d}^{(z_j), 0}(x, \xi) d\xi.
	$$
\end{corollary}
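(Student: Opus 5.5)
The plan is to read this as the symbolic counterpart of Theorem \ref{holoextend} and to repeat its polar-coordinate argument directly on the homogeneous symbols. The operator-kernel remainder $R^{(z)}_{(N)}$ then plays no role. First I fix how the integral is to be understood. A homogeneous function of degree $mz$ is integrable at infinity only for $\Re z<-\frac dm$ and near $\xi=0$ only for $\Re z>-\frac dm$, so the integrand must be regularized at the origin by the fixed cutoff $\varphi$ of \eqref{eq-cutoff}. By \eqref{Bmzj,0z}, extended to all $z$ via Theorem \ref{bmz sym group}(ii),(iii), we have $\sigma(A)_m(x,\xi)^z=\sigma(B)^{(z),0}_{mz}(x,\xi)$, which is the $j=0$ member of the family $\{\sigma(B)^{(z),0}_{mz-j}\}_{j\ge0}$. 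Accordingly, I interpret $\int_{\mathbb{R}^d}\sigma(A)_m(x,\xi)^z\,d\xi$ as the symbolic diagonal integral attached to this family, term by term:
\[
I_j(x,z):=\int_{\mathbb{R}^d}\varphi(\xi)\,\sigma(B)^{(z),0}_{mz-j}(x,\xi)\,d\xi ,\qquad j\in\mathbb{N}_0 .
\]
This is exactly $(2\pi)^d B^{(z)}_{mz-j}(x,x)$ from \eqref{bmzj2}. Each $I_j(x,z)$ is defined for $\Re z<\frac{j-d}m$, and in particular for $\Re z<-\frac dm$.

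Next, for each $j$ I pass to polar coordinates using the homogeneity in Proposition \ref{bzhomo}, extended to all $z$ by the recursion \eqref{bmzjj}. This gives
\[
I_j(x,z)=\Big(\int_0^\infty\varphi(r)\,r^{mz-j+d-1}\,dr\Big)\Big(\int_{\mathbb{S}^{d-1}}\sigma(B)^{(z),0}_{mz-j}(x,\xi)\,d\xi\Big).
\]
The sphere integral is entire in $z$. This follows from Theorem \ref{bmz sym group}(v), which gives entirety pointwise in $\xi$, combined with local uniform bounds on the compact sphere coming from Proposition \ref{para sym prop}, so that holomorphy passes under the integral in the $\mathcal{N}$-valued (Bochner) sense. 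The radial factor splits as in \eqref{omega} into an entire piece over $[0,1]$ and the piece $-\frac1m(z-z_j)^{-1}$ from \eqref{2int}. Hence $I_j$ is meromorphic on $\mathbb{C}$ with at most a simple pole at $z_j=\frac{j-d}m$. Its residue there is $-\frac1m\int_{\mathbb{S}^{d-1}}\sigma(B)^{(z_j),0}_{mz_j-j}\,d\xi$, and since $mz_j-j=-d$ this equals the stated formula $-\frac1m\int_{\mathbb{S}^{d-1}}\sigma(B)^{(z_j),0}_{-d}(x,\xi)\,d\xi$.

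Finally I assemble the pieces. On a half-plane $\Re z<\frac{N-d}m$ only the terms $j<N$ can produce poles. Any tail of the expansion is of order $mz-N$, and its $\xi$-integral converges and is holomorphic there, by the same estimate used for $r^{(z)}_{(N)}$ in Theorem \ref{holoextend}. Letting $N\to\infty$ therefore gives a meromorphic continuation to all of $\mathbb{C}$. The poles lie in $\{z_j\}$, each is simple, and the residue at $z_j$ comes solely from the $j$-th term, which yields the stated formula.

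I expect the main obstacle to be justifying this interpretation: the literal principal term $I_0$ alone only has a pole at $z_0$, so the poles at $z_j$ with $j\ge1$ arise from the lower homogeneous components in the same symbolic family. I would make this explicit by stating that the corollary is the symbol-level reading of \eqref{residuemost}. I would also check that the residues are genuinely $\mathcal{N}$-valued, that is, measurable and bounded in $x$. That follows from the uniform-in-$x$ symbol estimates of Proposition \ref{para sym prop}(iii).
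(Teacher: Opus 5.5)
This is essentially the paper's route. The paper likewise subtracts a remainder whose $\xi$-integral is holomorphic on $\Re z<\frac{N-d}{m}$, which reduces the problem to the finitely many cut-off terms $\int_{\mathbb{R}^d}\varphi(\xi)\,\sigma(B)^{(z),0}_{mz-j}(x,\xi)\,d\xi$ with $j<N$. It then repeats the polar-coordinate computation \eqref{omega}--\eqref{2int} of Theorem \ref{holoextend}. Your reinterpretation is what the paper's proof tacitly relies on. Its remainder $\sigma(A)_m(x,\xi)^z-\sum_{j<N}\varphi(\xi)\,\sigma(B)^{(z),0}_{mz-j}(x,\xi)$ equals $-\sum_{1\le j<N}\sigma(B)^{(z),0}_{mz-j}$ for $|\xi|\ge 1$. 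So it is only $O(|\xi|^{m\Re z-1})$, and its integral is holomorphic on the claimed half-plane only if $\sigma(A)_m^z$ is read as the full symbol of $A^z$.

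The one thing you still need to pin down is the function being continued. The series $\sum_j I_j(x,z)$ need not converge. An asymptotic sum produced by Proposition \ref{asymp} for each fixed $z$ is not known to be holomorphic in $z$. Take the function to be $\int_{\mathbb{R}^d}\sigma(A^z)(x,\xi)\,d\xi=(2\pi)^d A_z(x,x)$ for $\Re z<-\frac{d}{m}$. Then each tail is exactly the symbol $r^{(z)}_{(N)}$ of $A^z-B^{(z)}_{(N)}$. By Theorem \ref{structurethm}(i) this lies in $\operatorname{Hol}\bigl(\Re z<t,\Psi^{mt-N}(\mathbb{R}^d;\mathcal{M})\bigr)$, so its $\xi$-integral is holomorphic on $\Re z<\frac{N-d}{m}$. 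The continuations for different $N$ then agree, because they all extend the same function. In particular the operator-level remainder is not dispensable, contrary to your opening remark. With this choice your argument is complete, and the corollary is $(2\pi)^d$ times Theorem \ref{holoextend}.
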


\begin{proof}
	Following the notation in Theorem \ref{structurethm}, consider the remainder symbol
	$$r_{(N)}^{(z)}(x, \xi) = \sigma(A)_m(x, \xi)^z - \sum_{j=0}^{N-1} \varphi(\xi) \sigma(B)_{m z-j}^{(z), 0}(x, \xi).$$
	The integral $\int_{\mathbb{R}^d} r_{(N)}^{(z)}(x, \xi) d\xi$ converges in operator norm for $\Re z < \frac{N-d}{m}$ and defines a holomorphic $\mathcal{N}$-valued function. Thus, the analytic properties of $\int_{\mathbb{R}^d} \sigma(A)_m(x, \xi)^z d\xi$ reduce to studying those of $\sum_{j=0}^{N-1} \int_{\mathbb{R}^d}\varphi(\xi) \sigma(B)_{m z-j}^{(z), 0}(x, \xi)d\xi$. So we can complete the proof as that of Theorem \ref{holoextend}.
	%
	%
\end{proof}

\section{Weyl's law and Connes' Trace Theorem  for $\mathcal{M}$-valued  classical $\Psi$DOs}\label{section asymp}

This section presents the main results of the paper. We establish Weyl's law for $\mathcal{M}$-valued classical $\Psi$DOs of negative order in the type II noncommutative geometry setting. We will first introduce the localized Riemann $\zeta$-functions associated to $\mathcal{M}$-valued elliptic $\Psi$DOs, then study the simple poles of these $\zeta$-functions, and finally deduce the Weyl's law for $\mathcal{M}$-valued classical $\Psi$DOs of negative order, thereby obtaining a direct spectral characterization of the noncommutative integral. As a consequence, Connes' trace theorem is recovered without the use of Dixmier traces.

\subsection{Traces and Schatten estimates of operators in $B(L_2(\mathbb{R}^d)) \overline{\otimes} \mathcal{M}$}

Recall that $\mathcal{M}$ is a semifinite von Neumann algebra equipped with a normal, semifinite, faithful (n.s.f.) trace $\tau$. The operator trace on $B(L_2(\mathbb{R}^d)) \overline{\otimes} \mathcal{M}$ is denoted by $\operatorname{Tr}$, where $\operatorname{Tr} = \operatorname{tr} \otimes \tau$, and $\operatorname{tr}$ denotes the canonical trace on $B(L_2(\mathbb{R}^d))$.

The algebra $L_{1}(\mathcal{M}) \cap \mathcal{M}$ is a two-sided $\ast$-ideal of $\mathcal{M}$ and is dense in $L_{p}(\mathcal{M})$ for $1 \le p < \infty$.

\begin{theorem}\label{op-tr=intker}
	Let $ \mathcal{K} \in \mathcal{L}_1\big(B(L_2(\mathbb{R}^d)) \overline{\otimes} \mathcal{M})\big) $, and assume that $ \mathcal{K} $ admits a continuous kernel
	$$
	K \in C^0(\Omega \times \Omega;\, L_1(\mathcal{M}) \cap \mathcal{M}),
	$$
	where $\Omega$ in a compact set in $\mathbb{R}^d$, i.e.,
	$$\mathcal{K} f(x)  =  \int _{\Omega} K(x,y)f(y) dy ,\quad \forall \, f\in L_2( \mathcal{N}).$$
	Then
	$$
	\operatorname{Tr}(\mathcal{K}) = \int_\Omega \tau(K(x,x)) \, dx.
	$$
\end{theorem}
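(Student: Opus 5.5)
Our plan is to reduce the statement to the scalar Mercer/Brislawn-type trace formula, applied one matrix entry at a time. We work in the standard form: $\mathcal{M}$ acts on $L_2(\mathcal{M})$, and $\operatorname{Tr}=\operatorname{tr}\otimes\tau$ on $B(L_2(\mathbb{R}^d))\overline{\otimes}\mathcal{M}$. Since $\mathcal{K}$ is trace class and the kernel is supported on $\Omega\times\Omega$, we may regard $\mathcal{K}$ as an element of $\mathcal{L}_1\big(B(L_2(\Omega))\overline{\otimes}\mathcal{M}\big)$.

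Fix an orthonormal basis $(e_n)$ of $L_2(\Omega)$ consisting of continuous functions; for instance, take polynomials orthonormalized by Gram–Schmidt. With respect to this basis, $\mathcal{K}$ has $\mathcal{M}$-valued matrix entries
\[
\mathcal{K}_{nk}=\iint_{\Omega\times\Omega}\overline{e_n(x)}\,K(x,y)\,e_k(y)\,dx\,dy\in L_1(\mathcal{M})\cap\mathcal{M}.
\]
For a positive trace-class element of $B(\ell_2)\overline{\otimes}\mathcal{M}$ we have $\operatorname{Tr}(\mathcal{K})=\sum_n\tau(\mathcal{K}_{nn})$. By linearity, after writing $\mathcal{K}$ as a combination of four positive trace-class operators, this identity holds for every $\mathcal{K}\in\mathcal{L}_1$, with an absolutely convergent sum. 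Plugging in the matrix entries gives
\[
\operatorname{Tr}(\mathcal{K})=\sum_n\iint\overline{e_n(x)}\,e_n(y)\,\tau(K(x,y))\,dx\,dy.
\]
The goal is therefore to show that this diagonal sum equals $\int_\Omega\tau(K(x,x))\,dx$.

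We do this by approximating the diagonal. Let $\phi_\varepsilon$ be an approximate identity and set $P_\varepsilon=\phi_\varepsilon*\cdot$, regarded as a bounded operator on $L_2(\Omega)$ (using the extension by zero and restriction). Then $P_\varepsilon\to I$ strongly, so $(P_\varepsilon\otimes 1)\mathcal{K}\to\mathcal{K}$ in $\mathcal{L}_1$. This is the standard fact that multiplying a fixed trace-class element by a strongly convergent, uniformly bounded net gives convergence in the trace norm; it is proved by approximating $\mathcal{K}$ with $\tau$-finite elements.

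Hence $\operatorname{Tr}(\mathcal{K})=\lim_{\varepsilon\to0}\operatorname{Tr}\big((P_\varepsilon\otimes1)\mathcal{K}\big)$. The operator $(P_\varepsilon\otimes 1)\mathcal{K}$ has kernel $K_\varepsilon(x,y)=\int\phi_\varepsilon(x-u)K(u,y)\,du$, which is continuous with values in $L_1(\mathcal{M})$. Better still, we can factor it through a Hilbert–Schmidt decomposition: write $P_\varepsilon=Q_\varepsilon^*Q_\varepsilon$ with $Q_\varepsilon$ a Hilbert–Schmidt operator with smooth kernel, by taking $\phi_\varepsilon=\psi_\varepsilon*\tilde\psi_\varepsilon$. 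Cyclicity of $\operatorname{Tr}$ on products of two $\mathcal{L}_2$ elements then gives
\[
\operatorname{Tr}\big((P_\varepsilon\otimes1)\mathcal{K}\big)=\operatorname{Tr}\big((Q_\varepsilon\otimes1)\mathcal{K}(Q_\varepsilon^*\otimes1)\big).
\]
The operator $(Q_\varepsilon\otimes1)\mathcal{K}(Q_\varepsilon^*\otimes1)$ has a jointly smooth kernel. Computing its trace by Fubini, using the $\mathcal{L}_2$ pairing $\operatorname{Tr}(S^*T)=\iint\tau(S(x,y)^*T(x,y))$ for Hilbert–Schmidt kernels, yields
\[
\operatorname{Tr}\big((P_\varepsilon\otimes1)\mathcal{K}\big)=\iint\phi_\varepsilon(y-x)\,\tau(K(x,y))\,dx\,dy.
\]
Now let $\varepsilon\to0$. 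Since $(x,y)\mapsto\tau(K(x,y))$ is continuous on the compact set $\Omega\times\Omega$ (continuity into $L_1(\mathcal{M})$ gives continuity of $\tau\circ K$), the right-hand side converges to $\int_\Omega\tau(K(x,x))\,dx$.

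The main obstacle is justifying the $\mathcal{L}_2$ pairing formula $\operatorname{Tr}(S^*T)=\iint\tau(S^*T)$ for $\mathcal{M}$-valued kernels, equivalently the identification $\mathcal{L}_2\big(B(L_2)\overline{\otimes}\mathcal{M}\big)\cong L_2(\Omega\times\Omega;L_2(\mathcal{M}))$. To handle it, we first verify the formula on elementary tensors $\sum_i k_i(x,y)\,a_i$, with $k_i$ scalar Hilbert–Schmidt kernels and $a_i\in L_2(\mathcal{M})$, where it reduces to $\operatorname{tr}\otimes\tau$. We then extend it by density and isometry. A secondary subtlety concerns the boundary of $\Omega$ in the convolution step. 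We avoid this by extending $K$ continuously to a neighbourhood of $\Omega\times\Omega$ via Tietze's theorem for Banach-valued maps; alternatively, we can simply note that the convergence in the last step only requires uniform continuity of $K$.
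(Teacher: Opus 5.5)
Your overall scheme (approximate the identity in the $L_2(\Omega)$ factor, pass to the limit in trace norm, then shrink onto the diagonal) is sound and parallels the paper's. However, the tools you invoke do not justify the step where you actually evaluate $\operatorname{Tr}\big((P_\varepsilon\otimes 1)\mathcal{K}\big)$. When $\tau(1)=\infty$, neither $Q_\varepsilon\otimes 1$ nor $Q_\varepsilon^*\otimes 1$ belongs to $\mathcal{L}_2\big(B(L_2(\mathbb{R}^d))\overline{\otimes}\mathcal{M}\big)$, because $\|Q_\varepsilon\otimes 1\|_{\mathcal{L}_2}^2=\|Q_\varepsilon\|_{\mathcal{S}_2}^2\,\tau(1)$. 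Equivalently, its kernel is a scalar Hilbert--Schmidt kernel times $1_{\mathcal{M}}$, and $1_{\mathcal{M}}\notin L_2(\mathcal{M})$. Hence neither ``cyclicity on products of two $\mathcal{L}_2$ elements'' nor the pairing $\operatorname{Tr}(S^*T)=\iint\tau(S(x,y)^*T(x,y))\,dx\,dy$ applies to the factorization $(Q_\varepsilon^*\otimes 1)\cdot\big((Q_\varepsilon\otimes 1)\mathcal{K}\big)$ that you need. Your density argument via elementary tensors $k_i(x,y)a_i$ with $a_i\in L_2(\mathcal{M})$ only ever produces $\mathcal{L}_2$ operators. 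Nor does the smoothness of the kernel of $(Q_\varepsilon\otimes 1)\mathcal{K}(Q_\varepsilon^*\otimes 1)$ help: reading off $\operatorname{Tr}$ from a continuous $L_1(\mathcal{M})$-valued kernel is exactly the statement you are proving. The cyclicity identity itself is true, but it follows from bounded-times-$\mathcal{L}_1$ cyclicity, not from $\mathcal{L}_2$ considerations.

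The repair is already contained in your diagonal formula $\operatorname{Tr}(X)=\sum_n\tau(X_{nn})$, and it makes $Q_\varepsilon$ superfluous. For $X=(P_\varepsilon\otimes 1)\mathcal{K}$, Fubini and the continuity of $\tau$ on $L_1(\mathcal{M})$ give $\tau(X_{nn})=\langle P_\varepsilon k\,e_n,e_n\rangle$. Here $k$ is the scalar integral operator on $L_2(\Omega)$ with continuous kernel $\tau(K(x,y))$. Both $P_\varepsilon$ and $k$ are scalar Hilbert--Schmidt operators on $L_2(\Omega)$, so $P_\varepsilon k\in\mathcal{S}_1$ and $\operatorname{Tr}(X)=\operatorname{tr}(P_\varepsilon k)=\iint\phi_\varepsilon(y-x)\,\tau(K(x,y))\,dx\,dy$. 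Equivalently, $k=(\mathrm{id}\otimes\tau)(\mathcal{K})$ is the partial trace, hence trace class, so you could apply the scalar theorem to $k$ directly; this is the reduction your first sentence announces.

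The paper avoids the issue differently. Its approximants $(\mathcal{P}_n\otimes 1)\mathcal{K}(\mathcal{P}_n\otimes 1)$, with $\mathcal{P}_n$ the dyadic projections, are finite matrices over $L_1(\mathcal{M})\cap\mathcal{M}$. Their trace is trivially $\sum_\ell\tau(x^{(n)}_{\ell\ell})$, so only a Riemann-sum estimate based on uniform continuity remains.

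Finally, your boundary remark is off. A Tietze extension of $K$ does not change the domain $\Omega\times\Omega$ of integration, and uniform continuity alone does not give the limit. You also need $\int_\Omega\phi_\varepsilon(y-x)\,dy\to 1$ for a.e.\ $x\in\Omega$, which follows from Lebesgue's density theorem. The paper's passage from cubes to general compact $\Omega$ glosses over the same point.
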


\begin{proof}
	The proof follows the structure of the scalar case presented in \cite[Theorem 3.9]{Si05}, and also see \cite[Theorem 3.1]{JYE25} for the situation $\mathcal{M}=B(H)$.
	
	For simplicity, assume $ \Omega = [0,1] $. However, the proof can easily be extended first to $\Omega=[0,1]^d$ and then to any compact set $\Omega \subset \mathbb{R}^d$.
	
	For each $ n \in \mathbb{N} $, consider the dyadic partition of $ [0,1] $ into intervals
	$$
	\chi_{n,\ell} = \Big[\frac{\ell - 1}{2^n}, \frac{\ell}{2^n}\Big), \quad \ell = 1, \dots, 2^n,
	$$
	and define the orthonormal functions
	$$
	\varphi_{n,\ell}(x) = \begin{cases}
		2^{\frac{n}{2}}, & x \in \chi_{n,\ell}, \\
		0, & \text{otherwise.}
	\end{cases}
	$$
	Let $ \mathcal{P}_n $ denote the orthogonal projection from $ L_2(\Omega) $ onto $\mathcal{H}_n:= \operatorname{Span}\{\varphi_{n,\ell}\}_{\ell=1}^{2^n} $, and denote
	$$\mathcal{K}_n  =  (\mathcal{P}_n \otimes 1_{\mathcal{M}}   )\cdot  \mathcal{K} \cdot (\mathcal{P}_n \otimes 1_{\mathcal{M}}  ) .$$
	Then for any $e\in L_2( \mathcal{M})$,
	$$\mathcal{K}_n (\varphi_{n,\ell} \otimes e ) =\sum_{k, \ell=1}^{2^n} \int_{\Omega\times \Omega }  \varphi_{n,k }  (x) K(x, y) \varphi_{n,\ell}(y)  \ e \ dx \ dy.$$
	Therefore, denoting
	$$
	x_{k,\ell}^{(n)} := \int_{\Omega\times \Omega }  \varphi_{n,k }  (x) K(x, y) \varphi_{n,\ell}(y)   \ dx \ dy
	$$
	we are able to write $\mathcal{K}_n $ in block-diagonal form
	$$
	\mathcal{K}_n := \sum_{k, \ell=1}^{2^n} (\varphi_{n,k } \otimes \varphi_{n,\ell}) \otimes x_{k,\ell}^{(n)} .
	$$
	Using this block-diagonal form, we see that
	$$
	\operatorname{Tr}(\mathcal{K}_n) = \sum_{\ell=1}^{2^n} \tau(x_{\ell,\ell}^{(n)} )
	= \sum_{\ell=1}^{2^n} 2^n \int_{\chi_{n,\ell}} \int_{\chi_{n,\ell}} \tau(K(x,y)) \, dx \, dy.
	$$
	
	By the trace-class property of $ \mathcal{K} $ and the normality of $ \operatorname{Tr} $, we have
	\begin{equation}\label{Tr-int-K}
		\operatorname{Tr}(\mathcal{K}) = \lim_{n\to\infty} \operatorname{Tr}(\mathcal{K}_n).
	\end{equation}
	Let us now analyse this limit.
	Since $ K $ is continuous on the compact set $ \Omega \times \Omega $ and takes values in $ L_1(\mathcal{M})\cap \mathcal{M}$, it is uniformly continuous. Therefore, for any $ \varepsilon > 0 $, there exists $ N \in \mathbb{N} $ such that for all $ n > N $ and for all $ \ell = 1, \dots, 2^n $, we have
	$$
	\sup_{x, y \in \chi_{n,\ell}} \|K(x, y) - K(x, x)\|_{L_1(\mathcal{M})} \le \varepsilon.
	$$
	This implies that for all $ x, y \in \chi_{n,\ell} $,
	$$
	|\tau(K(x, y)) - \tau(K(x, x))| \le \|K(x, y) - K(x, x)\|_{L_1(\mathcal{M})} \le \varepsilon.
	$$
	Consider the difference between $ \operatorname{Tr}(\mathcal{K}_n) $ and the Riemann sum:
	\begin{align*}
		&\Bigg| \operatorname{Tr}(\mathcal{K}_n) - \sum_{\ell=1}^{2^n} \int_{\chi_{n,\ell}} \tau(K(x,x))\,dx \Bigg| \\
		&= \Bigg| \sum_{\ell=1}^{2^n} 2^n \int_{\chi_{n,\ell}} \int_{\chi_{n,\ell}} \big( \tau(K(x,y)) - \tau(K(x,x)) \big) \, dx \, dy \Bigg| \\
		&\le \sum_{\ell=1}^{2^n} 2^n \int_{\chi_{n,\ell}} \int_{\chi_{n,\ell}} \big| \tau(K(x,y)) - \tau(K(x,x)) \big| \, dx \, dy \\
		&\le \sum_{\ell=1}^{2^n} 2^n \int_{\chi_{n,\ell}} \int_{\chi_{n,\ell}} \varepsilon \, dx \, dy \\
		&= \sum_{\ell=1}^{2^n} 2^n \cdot (\varepsilon \cdot 2^{-2n}) = \varepsilon.
	\end{align*}
	Thus, for all $ n > N $,
	$$
	\Bigg|\operatorname{Tr}(\mathcal{K}_n) - \int_{\Omega} \tau(K(x, x))\, dx \Bigg| \le \varepsilon.
	$$
	This proves that
	\begin{equation}\label{lim-Tr-int}
		\lim_{n\to\infty} \operatorname{Tr}(\mathcal{K}_n) = \int_{\Omega} \tau(K(x,x)) \, dx.
	\end{equation}
	Combining \eqref{Tr-int-K} with \eqref{lim-Tr-int}, we obtain
	$$
	\operatorname{Tr}(\mathcal{K}) = \int_\Omega \tau(K(x,x)) \, dx.
	$$
	This completes the proof.
\end{proof}

However, there is no simple necessary and sufficient condition for $ \mathcal{K} \in \mathcal{L}_1\big(B(L_2(\mathbb{R}^d)) \overline{\otimes} \mathcal{M}\big) $. Thus, in order to make use of Theorem \ref{op-tr=intker}, we include the following characterization of $ \mathcal{K} \in \mathcal{L}_2\big(B(L_2(\mathbb{R}^d)) \overline{\otimes} \mathcal{M}\big) $, which is an easy corollary of its scalar-valued counterpart \cite[Theorem~2.11]{Si05}.

\begin{proposition}\label{op-S=intker}
	If $ \mathcal{K} \in \mathcal{L}_2\big(B(L_2(\mathbb{R}^d)) \overline{\otimes} \mathcal{M}\big) $, then there exists a unique function $K \in L_2(\mathbb{R}^d \times \mathbb{R}^d; L_2(  \mathcal{M}))$ such that
	\begin{equation}\label{op-S-Ker}
		\mathcal{K} f(x)  =  \int K(x, y )  f(y) dy.
	\end{equation}
	Conversely, any $K \in L_2(\mathbb{R}^d \times \mathbb{R}^d; L_2(  \mathcal{M}))$ defines an operator
	$$\mathcal{K} \in \mathcal{L}_2\big(B(L_2(\mathbb{R}^d)) \overline{\otimes} \mathcal{M}\big) \quad \text{with} \quad \|\mathcal{K}\|_{\mathcal{L}_2}  =  \|K\|_{L_2(\mathbb{R}^d \times \mathbb{R}^d; L_2(  \mathcal{M}))}.$$
\end{proposition}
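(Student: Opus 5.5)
The plan is to reduce the statement to the well-known Hilbert–Schmidt theory of vector-valued kernels. The identification runs through the isometries
\[
L_2\big(B(L_2(\mathbb{R}^d))\overline{\otimes}\mathcal{M}\big)\cong \mathcal{L}_2(L_2(\mathbb{R}^d))\otimes_2 L_2(\mathcal{M})\cong L_2(\mathbb{R}^d\times\mathbb{R}^d)\otimes_2 L_2(\mathcal{M})\cong L_2(\mathbb{R}^d\times\mathbb{R}^d;L_2(\mathcal{M})).
\]
Here the first is the standard fact that the noncommutative $L_2$ space of a tensor product with tensor trace is the Hilbert tensor product of the $L_2$ spaces. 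The second is the scalar result \cite[Theorem~2.11]{Si05}.

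\textbf{Step 1: the converse direction.} I start with elementary tensors $K=\sum_{i=1}^n k_i\otimes a_i$, where $k_i\in L_2(\mathbb{R}^{2d})$ and $a_i\in L_2(\mathcal{M})\cap\mathcal{M}$. The associated operator is $\mathcal{K}=\sum_i \mathcal{K}_{k_i}\otimes a_i$, with $\mathcal{K}_{k_i}$ the scalar Hilbert–Schmidt operator. I compute
\[
\operatorname{Tr}(\mathcal{K}^*\mathcal{K})=\sum_{i,j}\operatorname{tr}(\mathcal{K}_{k_i}^*\mathcal{K}_{k_j})\,\tau(a_i^*a_j)=\sum_{i,j}\langle k_j,k_i\rangle\,\tau(a_i^*a_j)=\|K\|^2_{L_2(\mathbb{R}^{2d};L_2(\mathcal{M}))}.
\]
This shows the map $K\mapsto\mathcal{K}$ is isometric on a dense subspace. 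It therefore extends to an isometry into $\mathcal{L}_2(B(L_2(\mathbb{R}^d))\overline{\otimes}\mathcal{M})$, which is complete as a noncommutative $L_2$ space.

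I must also check that the extended operator is still given by the integral formula \eqref{op-S-Ker}. For $f\in L_2(\mathcal{N})$, the map $K\mapsto \int K(\cdot,y)f(y)\,dy$ is bounded from $L_2(\mathbb{R}^{2d};L_2(\mathcal{M}))$ into $L_2(\mathbb{R}^d;L_1(\mathcal{M}))$ by Cauchy–Schwarz and Hölder. Hence both sides converge in $L_0$ along the approximating sequence, and they agree in the limit.

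\textbf{Step 2: surjectivity and uniqueness.} Uniqueness follows from injectivity of the isometry. For surjectivity, the range is closed because the map is an isometry. So it suffices that the range contains a dense subset of $\mathcal{L}_2(B(L_2(\mathbb{R}^d))\overline{\otimes}\mathcal{M})$. Elements $T\otimes a$ with $T$ finite rank and $a\in L_2(\mathcal{M})\cap\mathcal{M}$ of finite-trace support span a dense subspace; this is where I would cite density of $\mathcal{S}(\cdot)$ in $L_2$ of the tensor product. Each such element clearly has a kernel $k_T\otimes a$. This gives existence of $K$ for every $\mathcal{K}$ in $\mathcal{L}_2$.

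\textbf{Main obstacle.} The delicate point is the density claim in Step 2. The algebraic tensor product $\mathcal{L}_2(B(L_2))\odot L_2(\mathcal{M})$ must be dense in $L_2$ of the von Neumann tensor product. I would prove it via the unitary $L_2(B(H)\overline{\otimes}\mathcal{M},\operatorname{tr}\otimes\tau)\cong L_2(B(H),\operatorname{tr})\otimes_2 L_2(\mathcal{M},\tau)$, which comes from the GNS construction of the tensor trace. In that construction, the vectors $x\otimes y$, with $x,y$ of finite trace, span a dense subspace.

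Once density is in hand, identifying the abstract element with a concrete operator requires care. I would do this through the $L_0$-convergence argument used in Step 1, which handles that identification.
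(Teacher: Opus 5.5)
Your proposal is correct and follows the same route as the paper. The paper identifies $\mathcal{L}_2\big(B(L_2(\mathbb{R}^d))\overline{\otimes}\mathcal{M}\big)$ with $\mathcal{S}_2(L_2(\mathbb{R}^d))\otimes_2 L_2(\mathcal{M})$, applies \cite[Theorem~2.11]{Si05} to the scalar factor, and concludes via $L_2(\mathbb{R}^{2d})\otimes_2 L_2(\mathcal{M})=L_2(\mathbb{R}^{2d};L_2(\mathcal{M}))$; you make explicit the isometry computation on elementary tensors, the density and closed-range argument, and the identification of the limit with the integral formula, all of which the paper leaves implicit.
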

\begin{proof}
	Since $\mathcal{L}_2\big(B(L_2(\mathbb{R}^d)) \overline{\otimes} \mathcal{M}\big) =  \mathcal{S}_2( L_2(\mathbb{R}^d  ))\otimes_2 L_2( \mathcal{M}) $, the Hilbert space tensor product of $\mathcal{S}_2( L_2(\mathbb{R}^d  ))$ and $ L_2( \mathcal{M})$, the operator $ \mathcal{K} \in \mathcal{L}_2\big(B(L_2(\mathbb{R}^d)) \overline{\otimes} \mathcal{M}\big)  $ may be approximated by sequences in the algebraic tensor product of $\mathcal{S}_2( L_2(\mathbb{R}^d  ))$ and $ L_2( \mathcal{M})$. But by \cite[Theorem~2.11]{Si05}, elements $\mathcal{K}\in \mathcal{S}_2( L_2(\mathbb{R}^d  ))$ are represented as \eqref{op-S-Ker}, with $K \in L_2(\mathbb{R}^d \times \mathbb{R}^d)$. The desired assertion then follows from the easy fact $L_2(\mathbb{R}^d \times \mathbb{R}^d)\otimes_2 L_2( \mathcal{M})= L_2(\mathbb{R}^d \times \mathbb{R}^d; L_2(  \mathcal{M}))$.
\end{proof}

Unlike in the scalar-valued case, $ \mathcal{K} $ with kernel $K \in L_2(\mathbb{R}^d \times \mathbb{R}^d; L_2(  \mathcal{M}))$ may not be a bounded operator in $B(L_2(\mathbb{R}^d)) \overline{\otimes} \mathcal{M}$.

\medskip

Proposition \ref{op-S=intker} gives necessary and sufficient condition for an operator $ \mathcal{K} \in \mathcal{L}_2\big(B(L_2(\mathbb{R}^d)) \overline{\otimes} \mathcal{M}\big) $. In order to estimate a priori the (weak) $\mathcal{L}_p$ norms of $\Psi$DOs of different orders, we will also need the so called Cwikel-type estimates, that we include below.

Let $d \ge 1$. Denote by
$$
\mathcal{L}_p\big(B(L_2(\mathbb{R}^d)) \overline{\otimes} \mathcal{M}\big)
\quad \text{and} \quad
\mathcal{L}_{p,\infty}\big(B(L_2(\mathbb{R}^d)) \overline{\otimes} \mathcal{M}\big)
$$
the noncommutative (weak) Schatten classes associated with the semifinite trace $\operatorname{Tr}=\operatorname{tr} \otimes\tau$ on $B(L_2(\mathbb{R}^d)) \overline{\otimes} \mathcal{M}$.
Let $f:\mathbb{R}^d \to \mathcal{M}$ be a strongly measurable operator-valued function and $g:\mathbb{R}^d \to \mathbb{C}$ a scalar-valued measurable function.

Recall that, for $\phi: \mathbb{R}^d \rightarrow\mathbb{C}$, we use $T_\phi$ to denote the Fourier multiplier with symbol $\phi(\xi)$; For $f: \mathbb{R}^d \rightarrow \mathcal{M}$, define $M_f(g)= fg $ for $g \in \mathscr{S}(\mathbb{R}^d;L_{2}(\mathcal{M}))$ the (left) multiplication operator. Initially we do not assume any smoothness or integrability conditions on $\phi$ or $f$.

We now present the following Cwikel-type estimates:

\begin{lemma}\label{Cwikel}
	If  $\phi \in L_{p  }(\mathbb{R}^{d})$, $f \in L_p(\mathbb{R}^d; L_p(\mathcal{M}))$ with $2\le p <\infty$, then $T_\phi M_f \in \mathcal{L}_{p }\big(B(L_2(\mathbb{R}^d))\overline{\otimes} \mathcal{M}\big) $ with
	\begin{equation}\label{Cwikel-p}
		\|T_\phi M_f\|_{\mathcal{L}_{p } }  \le C_{p,d  } \,  \|f\|_{L_p(\mathbb{R}^d; L_p(\mathcal{M}))} \|\phi \|_{L_{p  }(\mathbb{R}^{d})} .
	\end{equation}
	If $\phi \in L_{p,\infty }(\mathbb{R}^{d})$, $f \in L_p(\mathbb{R}^d; L_p(\mathcal{M}))$ with $2<p <\infty$, then $T_\phi M_f \in \mathcal{L}_{p, \infty}\big(B(L_2(\mathbb{R}^d))\overline{\otimes} \mathcal{M}\big) $ with
	\begin{equation}\label{Cwikel-wp}
		\|T_\phi M_f\|_{\mathcal{L}_{p, \infty} }  \le C_{p,d  } \,  \|f\|_{L_p(\mathbb{R}^d; L_p(\mathcal{M}))} \|\phi \|_{L_{p,\infty }(\mathbb{R}^{d})} .\end{equation}
\end{lemma}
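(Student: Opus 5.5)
We prove the two estimates of Lemma \ref{Cwikel} by interpolation, after reducing to the scalar-type structure of $T_\phi M_f$. Throughout we use that $T_\phi$ acts only on the $L_2(\mathbb{R}^d)$ factor, as $T_\phi\otimes 1_{\mathcal{M}}$, and that $M_f$ is affiliated with $\mathcal{N}=L_\infty(\mathbb{R}^d)\overline{\otimes}\mathcal{M}\subset B(L_2(\mathbb{R}^d))\overline{\otimes}\mathcal{M}$.

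\textbf{Strong estimate \eqref{Cwikel-p}.} We treat the two endpoints and then interpolate.

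\emph{Endpoint $p=2$.} The operator $T_\phi M_f$ has kernel
\[
K(x,y)=\check\phi(x-y)f(y),
\]
where $\check\phi$ is the inverse Fourier transform of $\phi$ (with the normalisation $\bar d\xi$). By Proposition \ref{op-S=intker},
\[
\|T_\phi M_f\|_{\mathcal{L}_2}^2=\int\!\!\int |\check\phi(x-y)|^2\,\|f(y)\|_{L_2(\mathcal{M})}^2\,dx\,dy
=(2\pi)^{-d}\|\phi\|_{L_2}^2\,\|f\|_{L_2(L_2(\mathcal{M}))}^2,
\]
by Plancherel. This gives \eqref{Cwikel-p} at $p=2$ with an explicit constant.

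\emph{Endpoint $p=\infty$.} Here we have the trivial bound
\[
\|T_\phi M_f\|_\infty\le \|\phi\|_\infty\,\|f\|_{L_\infty(\mathcal{N})}.
\]

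\emph{Interpolation for $2<p<\infty$.} We apply bilinear complex interpolation to the map $(\phi,f)\mapsto T_\phi M_f$. For simple functions $\phi$ and $f$ (with $f$ taking finitely many values in $\mathcal{S}(\mathcal{M})$), write the polar decompositions $f=u|f|$ and $\phi=v|\phi|$, and set
\[
F(z)=T_{v|\phi|^{pz/2}}\,M_{u|f|^{pz/2}} .
\]
On the line $\Re z=1$ we control $F(z)$ in $\mathcal{L}_2$, and on $\Re z=0$ we control it in operator norm. The standard three-lines argument for noncommutative $L_p$ spaces then yields \eqref{Cwikel-p} at $z=2/p$, since $\mathcal{L}_p=[\mathcal{L}_\infty,\mathcal{L}_2]_{2/p}$. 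Density of simple functions extends the estimate to all $\phi\in L_p$ and $f\in L_p(L_p(\mathcal{M}))$.

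\textbf{Weak estimate \eqref{Cwikel-wp}.} Complex interpolation cannot produce the weak space directly, so here we follow Cwikel's dyadic argument adapted to $\operatorname{Tr}$.

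\emph{Normalisation and decomposition.} Normalise so that $\|\phi\|_{L_{p,\infty}}=\|f\|_p=1$, and fix $t>0$. Split $\phi=\sum_{k\in\mathbb{Z}}\phi\chi_{\{2^k<|\phi|\le 2^{k+1}\}}$, and similarly decompose $f$ into dyadic level pieces via its spectral projections in $\mathcal{N}$. Each piece $\phi_k$ satisfies $\|\phi_k\|_2^2\lesssim 2^{2k}\,|\{|\phi|>2^k\}|\lesssim 2^{k(2-p)}$, and $\|\phi_k\|_\infty\le 2^{k+1}$.

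\emph{Estimating the pieces.} For each pair of pieces $(\phi_k,f_j)$ we use the $\mathcal{L}_2$ bound from Proposition \ref{op-S=intker} on the pieces with small $|j+k|$. These are then controlled through
\[
\mu(t,T)\le t^{-1/2}\|T\|_{\mathcal{L}_2}
\]
and the subadditivity $\mu(t+s,A+B)\le\mu(s,A)+\mu(t,B)$ recalled in Section \ref{sec:preliminaries}. The remaining pieces, with large $|j+k|$, are controlled by the operator-norm bound.

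\emph{Conclusion and main obstacle.} Summing the resulting geometric series gives $\mu(t,T_\phi M_f)\lesssim t^{-1/p}$. The main obstacle is this bookkeeping, namely choosing the cutoff in $j+k$ as a function of $t$ so that both tails sum correctly for $p>2$. An alternative, if it works, is to transfer the scalar result: one could use the known operator-valued Cwikel estimate obtained by viewing $\mathcal{L}_{p,\infty}$ as an interpolation space. Its justification, however, again requires the dyadic argument above. In every step of the dyadic argument only $\operatorname{Tr}$ and singular value functions enter, so no finiteness of $\mathcal{M}$ is needed.
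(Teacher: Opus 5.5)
Your proof of \eqref{Cwikel-p} is fine and follows the paper's route. You use the Hilbert--Schmidt identity from Proposition \ref{op-S=intker} at $p=2$, the trivial bound at $p=\infty$, and interpolation between them.

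For \eqref{Cwikel-wp} the routes differ. The paper feeds the same two endpoint bounds into the abstract Cwikel estimate \cite[Corollary~1.2]{LSZ20}. Your hand-made dyadic argument has a genuine gap at the step ``estimating the pieces''.

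If you bound each pair $(\phi_k,f_j)$ separately and recombine with subadditivity or the triangle inequality, no geometric series appears. Two things go wrong:
\begin{itemize}
\item In operator norm, the only bound for a single pair is $2^{j+k+2}$. Every anti-diagonal $\{j+k=n\}$ contains infinitely many pairs, so these bounds do not sum.
\item In $\mathcal{L}_2$, the triangle inequality gives $\sum_{j+k>K}\|T_{\phi_k}M_{f_j}\|_{\mathcal{L}_2}\lesssim 2^{K(1-p/2)}\sum_j (2^{jp}\nu_j)^{1/2}$, where $\nu_j$ is the $(\int\otimes\tau)$-measure of $\chi_{(2^j,2^{j+1}]}(|f|)$. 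This sum of square roots is not controlled by $\|f\|_p^p\approx\sum_j 2^{jp}\nu_j$, and it diverges for some $f\in L_p$.
\end{itemize}
Your description also assigns the pieces the wrong way round. The pairs with $j+k$ above a $t$-dependent threshold $K$ (large values on small sets) must go to $\mathcal{L}_2$, and those with $j+k\le K$ to the operator norm. Splitting by $|j+k|$ does not work.

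The missing idea is the orthogonality that comes from disjointness of the dyadic pieces. Normalise $\|\phi\|_{p,\infty}=\|f\|_p=1$. Take $f_j=fe_j$ with $e_j=\chi_{(2^j,2^{j+1}]}(|f|)\in\mathcal{N}$, so that $f_{j'}f_j^*=0$ for $j\neq j'$. Take the $\phi_k$ with pairwise disjoint supports.

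For the large pieces, all cross terms in $\|\sum_{j+k>K}T_{\phi_k}M_{f_j}\|_{\mathcal{L}_2}^2$ vanish. This uses $\bar\phi_k\phi_{k'}=0$ for $k\neq k'$, and cyclicity of $\operatorname{Tr}$ together with $f_{j'}f_j^*=0$. Summing the geometric series in $k$ then gives the bound $\lesssim 2^{K(2-p)}$.

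For the remaining pieces, group along anti-diagonals: $B_l=\sum_j T_{\phi_{K-l-j}}M_{f_j}$ for $l\ge 0$. Disjointness of the $\phi_k$ kills the cross terms of $B_l^*B_l$, and
\[
B_l^*B_l=\sum_j M_{f_j^*}T_{|\phi_{K-l-j}|^2}M_{f_j}\le \sum_j 2^{2(K-l-j)+2}\,2^{2j+2}M_{e_j}\le 2^{2(K-l)+4}.
\]
Hence $\bigl\|\sum_{l\ge0}B_l\bigr\|\le 2^{K+3}$; this is where the geometric series actually comes from.

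Combining the two parts gives $\mu(t,T_\phi M_f)\lesssim 2^K+t^{-1/2}2^{K(1-p/2)}$. Choosing $2^K\approx t^{-1/p}$ yields \eqref{Cwikel-wp}. Alternatively, cite the abstract estimate of \cite{LSZ20} as the paper does; it needs only the two endpoint bounds you already proved.
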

\begin{proof}
	This assertion is the operator-valued version of the Cwikel inequality due to \cite{Cwikel77}. Its proof follows closely from the proof of the abstract Cwikel estimate \cite[Corollary~1.2]{LSZ20}, by an interpolation argument. When $p= \infty$, we have easily
	$$\|T_\phi M_f\|_{B(L_2(\mathbb{R}^d))\overline{\otimes} \mathcal{M} }  \le     \|f\|_{L_\infty(\mathbb{R}^d)\overline{\otimes} \mathcal{M} } \|\phi \|_{L_{ \infty }(\mathbb{R}^{d})} .$$
	When $p= 2$, it follows from Proposition \ref{op-S=intker} that
	$$\|T_\phi M_f\|_{\mathcal{L}_2\big(B(L_2(\mathbb{R}^d))\overline{\otimes} \mathcal{M} \big)}  =    \|f\|_{L_{ 2 }(L_\infty(\mathbb{R}^d)\overline{\otimes} \mathcal{M} )} \|\phi \|_{L_{ 2 }(\mathbb{R}^{d})} .$$
	The desired estimates then follows from Corollary 1.2 and Lemma 2.3 in \cite{LSZ20}.
\end{proof}

To formulate the Schatten estimate for $0 < p < 2$ and the weak Schatten estimate for $0 < p \le 2$, we employ the Birman--Solomyak block spaces $\ell_p(L_q)(\mathbb{R}^d)$ and $\ell_{p,\infty}(L_q)(\mathbb{R}^d)$ (see \cite{BS77}, \cite[Chapter~4]{Si05}, \cite[Subsection~5.6]{BGS90}), as well as their logarithmic refinement $\ell_{2,\log}(L_\infty)(\mathbb{R}^d)$ defined in \cite[Definition~5.5]{LSZ20}. All these function spaces contain $C_c(\mathbb{R}^d)$. By analogy, we now define their operator-valued counterparts.

Fix the unit cube $Q = [0,1)^d$, and for $\mathbf{m} \in \mathbb{Z}^d$, define
$$
f_{\mathbf{m}}(x) := f(x)\,\mathbf{1}_{Q + \mathbf{m}}(x),
\qquad
\phi_{\mathbf{m}}(\xi) := \phi(\xi)\,\mathbf{1}_{Q + \mathbf{m}}(\xi).
$$
For $0 < p < 2$, we introduce the sequence-space norms with block $L_2$-based norms
(with operator-valued $f$ and scalar $\phi$):
\begin{align*}
	\|f\|_{\ell_p(L_2)(\mathbb{R}^d;\mathcal{M})}
	&:= \biggl( \sum_{\mathbf{m} \in \mathbb{Z}^d} \|f_{\mathbf{m}}\|_{L_2(\mathbb{R}^d;\mathcal{M})}^p \biggr)^{\frac{1}{p}}, \\[2ex]
	\|\phi\|_{\ell_p(L_2)(\mathbb{R}^d)}
	&:= \biggl( \sum_{\mathbf{m} \in \mathbb{Z}^d} \|\phi_{\mathbf{m}}\|_{L_2(\mathbb{R}^d)}^p \biggr)^{\frac{1}{p}} .
\end{align*}
where
$$
\|f_{\mathbf{m}}\|_{L_2(\mathbb{R}^d;\mathcal{M})}
= \left( \int_{Q + \mathbf{m}} \|f(x)\|_{\mathcal{M}}^2 \, dx \right)^{\frac{1}{2}}.
$$
For the weak space, let $a_{\mathbf{m}} := \|\phi_{\mathbf{m}}\|_{L_2(\mathbb{R}^d)}$ and denote by $(a_k^*)_{k \ge 1}$ the decreasing rearrangement of the sequence $(a_{\mathbf{m}})_{\mathbf{m} \in \mathbb{Z}^d}$. We then set
\begin{equation}\label{def:weaklpL2}
	\|\phi\|_{\ell_{p,\infty}(L_2)(\mathbb{R}^d)}
	:= \sup_{k \ge 1} k^{\frac{1}{p}} a_k^*,
	\qquad 0 < p < \infty.
\end{equation}
For the logarithmic space, we define
$$
\|f\|_{\ell_{2,\log}(L_\infty)(\mathbb{R}^d;\mathcal{M})}
:= \left( \sum_{\mathbf{m}} (1 + \log(1 + |\mathbf{m}|)) \|f_{\mathbf{m}}\|_{L_\infty(\mathbb{R}^d)\overline{\otimes} \mathcal{M}}^2 \right)^{\frac{1}{2}}.
$$

\begin{lemma}\label{Cwikel-0-2}
	Let $T_\phi M_f$ be the operator defined by multiplication by $f$ followed by Fourier multiplier with symbol $\phi$. Then:
	\begin{enumerate}[$\rm(i)$]
		\item If $f \in \ell_p(L_2)(\mathbb{R}^d;\mathcal{M})$, $\phi \in \ell_p(L_2)(\mathbb{R}^d)$ with $0 < p < 2$, then $T_\phi M_f \in \mathcal{L}_p\big(B(L_2(\mathbb{R}^d))\overline{\otimes} \mathcal{M}\big)$ and
		$$
		\|T_\phi M_f\|_{\mathcal{L}_p}
		\le C_{p,d}\,
		\|f\|_{\ell_p(L_2)(\mathbb{R}^d;\mathcal{M})}\,
		\|\phi\|_{\ell_p(L_2)(\mathbb{R}^d)}.
		$$
		
		\item If $f \in \ell_p(L_2)(\mathbb{R}^d;\mathcal{M})$, $\phi \in \ell_{p,\infty}(L_2)(\mathbb{R}^d)$ with $0 < p < 2$, then $T_\phi M_f \in \mathcal{L}_{p,\infty}\big(B(L_2(\mathbb{R}^d))\overline{\otimes} \mathcal{M}\big)$ and
		$$
		\|T_\phi M_f\|_{\mathcal{L}_{p,\infty}}
		\le C_{p,d}\,
		\|f\|_{\ell_p(L_2)(\mathbb{R}^d;\mathcal{M})}\,
		\|\phi\|_{\ell_{p,\infty}(L_2)(\mathbb{R}^d)}.
		$$
		
		\item If $f \in \ell_{2,\log}(L_\infty)(\mathbb{R}^d;\mathcal{M})$ and $\phi \in \ell_{2,\infty}(L_4)(\mathbb{R}^d)$ (i.e., $p=2$), then $T_\phi M_f \in \mathcal{L}_{2,\infty}\big(B(L_2(\mathbb{R}^d))\overline{\otimes} \mathcal{M}\big)$ and
		$$
		\|T_\phi M_f\|_{\mathcal{L}_{2,\infty}}
		\le C_{2,d}\,
		\|f\|_{\ell_{2,\log}(L_\infty)(\mathbb{R}^d;\mathcal{M})}\,
		\|\phi\|_{\ell_{2,\infty}(L_4)(\mathbb{R}^d)}.
		$$
	\end{enumerate}
\end{lemma}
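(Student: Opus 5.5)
The approach is to transfer the scalar Birman--Solomyak block estimates to the operator-valued setting. The transfer mechanism is the same one used for Lemma \ref{Cwikel}: an exact $p=2$ identity from Proposition \ref{op-S=intker}, combined with a decomposition into unit cubes in both $x$ and $\xi$.

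The first step is the basic building block. For $\mathbf{m},\mathbf{n}\in\mathbb{Z}^d$, write
\[
T_{\phi}M_f=\sum_{\mathbf{m},\mathbf{n}} T_{\phi_{\mathbf{n}}}M_{f_{\mathbf{m}}}.
\]
By Proposition \ref{op-S=intker}, each block is Hilbert--Schmidt with
\[
\|T_{\phi_{\mathbf{n}}}M_{f_{\mathbf{m}}}\|_{\mathcal{L}_2}=(2\pi)^{-d/2}\|\phi_{\mathbf{n}}\|_{L_2}\,\|f_{\mathbf{m}}\|_{L_2(\mathbb{R}^d;L_2(\mathcal{M}))}.
\]
This is not yet what is needed, because the $f$-norm in the statement uses $\|f\|_{\mathcal{M}}$ rather than $\|f\|_{L_2(\mathcal{M})}$.

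The second step refines the blocks to get $\mathcal{L}_p$ bounds with $p<2$. As in the scalar proof (Simon, Chapter 4; Birman--Solomyak), one expands $f_{\mathbf{m}}$ in the Fourier basis of the cube $Q+\mathbf{m}$, or equivalently $\phi_{\mathbf{n}}$ after modulation. This exhibits each block $T_{\phi_{\mathbf{n}}}M_{f_{\mathbf{m}}}$ as a sum of rank-one-in-$L_2(\mathbb{R}^d)$ operators tensored with elements of $\mathcal{M}$, namely $\sum_k (u_k\otimes v_k)\otimes a_k$ with $a_k\in\mathcal{M}$.

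Here is the obstacle I expect. In the scalar case one uses that an operator of rank one has $\mathcal{L}_p$-quasinorm equal to its operator norm. Operator-valued, one only has $\mu(t,(u\otimes v)\otimes a)=\|u\|\|v\|\mu(t,a)$, which is an $\mathcal{L}_p$ quantity of $a$ in $\mathcal{M}$, not $\|a\|_{\mathcal{M}}$. I would handle this by working first with $f$ taking values in $L_p(\mathcal{M})\cap\mathcal{M}$ and estimating $\tau$-norms fiberwise, then interpreting the $\ell_p(L_2)$ norm via Bochner norms. If that is insufficient, I would fall back on a more abstract route. The idea is to identify $B(L_2(\mathbb{R}^d))\overline{\otimes}\mathcal{M}\subset B(L_2(\mathbb{R}^d;L_2(\mathcal{M})))$ and use the abstract Cwikel framework of \cite{LSZ20}, as in Lemma \ref{Cwikel}, whose Corollary 1.2 and the $\ell_p(L_2)$/$\ell_{2,\log}$ variants (\cite[Section 5]{LSZ20}) are stated for general semifinite pairs $(\mathcal{M}_1,\mathcal{M}_2)$ with a trace. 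In that framework one verifies the hypotheses: $M_f$ is affiliated with $L_\infty(\mathbb{R}^d)\overline{\otimes}\mathcal{M}$, $T_\phi$ lies in $L_\infty(\mathbb{R}^d)\otimes 1$, and the $p=2$ trace identity holds.

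Given the blocks, the three parts follow. For (i), the $p$-triangle inequality $\|\sum X_j\|_p^p\le\sum\|X_j\|_p^p$ applied to the double sum over $(\mathbf{m},\mathbf{n})$ gives
\[
\Big(\sum\|f_{\mathbf{m}}\|^p\Big)\Big(\sum\|\phi_{\mathbf{n}}\|^p\Big),
\]
using only local $L_2$ norms. For (ii), I would use real interpolation in $\phi$ between two values $p_0<p<p_1<2$ of (i). Equivalently, one splits $\phi$ by thresholding the decreasing rearrangement $a_k^*$ at level $t$ and uses the $\mathcal{L}_{p_0}$ bound on the large part and the $\mathcal{L}_{p_1}$ bound on the small part, with $f$ fixed; the triangle inequality for $\mu(t,\cdot)$ gives the $\mathcal{L}_{p,\infty}$ estimate. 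For (iii), I would invoke the operator-valued version of \cite[Theorem 5.6]{LSZ20}. The logarithmic weight in $\ell_{2,\log}(L_\infty)$ compensates for the divergent $p=2$ endpoint of the summation over dyadic levels of $\phi$.
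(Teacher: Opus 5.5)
\noindent The gap is the obstacle you flag in your third paragraph, and it is fatal rather than technical. Your derivation of (i) ("using only local $L_2$ norms") presupposes a block bound $\|T_{\phi_{\mathbf n}}M_{f_{\mathbf m}}\|_{\mathcal{L}_p}\lesssim\|\phi_{\mathbf n}\|_{L_2}\,\|f_{\mathbf m}\|_{L_2(\mathbb{R}^d;\mathcal{M})}$, with the operator-norm block norm $\big(\int_{Q+\mathbf m}\|f(x)\|_{\mathcal M}^2\,dx\big)^{1/2}$ used in the statement. You never establish this bound. Both of your fallbacks (fiberwise $\tau$-norms, or the abstract framework of \cite{LSZ20}, which is calibrated on the $\mathcal L_2$ identity with $\|f\|_{L_2(\mathbb{R}^d;L_2(\mathcal M))}$) produce $\tau$-dependent norms of $f$, so they prove a different inequality. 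Your own $\mathcal{L}_2$ identity already exposes the problem: $\|a\|_{L_2(\mathcal M)}\le\tau(1)^{1/2}\|a\|_{\mathcal M}$ is sharp.

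\noindent In fact the statement as written is false once $\tau(1)=\infty$. Take $f=\chi_Q\otimes 1_{\mathcal M}$ and $\phi=\chi_Q$. Every norm on the right-hand sides of (i)--(iii) equals $1$. However, $T_\phi M_f=A\otimes 1_{\mathcal M}$ with $A:=T_{\chi_Q}M_{\chi_Q}\neq 0$. Hence $d_s(T_\phi M_f)=\mathrm{tr}(e_s^\perp(|A|))\,\tau(1)=\infty$ for $0<s<\|A\|$, so $\mu(t,T_\phi M_f)=\|A\|$ for all $t>0$, and the operator is not even $\mathrm{Tr}$-compact. For finite $\tau$, the same example shows the constant must grow like $\tau(1)^{1/p}$.

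\noindent The paper's proof avoids blocks entirely. It sets $h(x)=\|f(x)\|_{\mathcal M}$ and uses $ff^*\le h^2\otimes 1$ to get $\mu(t,T_\phi M_f)\le\mu\big(t,(T_\phi M_h)\otimes 1_{\mathcal M}\big)$. It then invokes $\mu(t,A\otimes B)\le\|B\|_{\mathcal M}\,\mu(t,A)$ to reduce all three parts to the scalar Birman--Solomyak/Cwikel bounds for $T_\phi M_h$, whose right-hand sides are exactly the stated ones. That last inequality is your obstacle in disguise: it holds precisely when $\tau(1)\le 1$, and fails otherwise (take $A$ a rank-one projection and $B=1$). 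So under $\tau(1)\le1$ (e.g.\ $\mathcal M=L_\infty(\Omega)$ for a probability space), the majorant argument is a few-line proof of (i)--(iii) and your block machinery is unnecessary.

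\noindent For general semifinite $\mathcal M$ the lemma must be restated with a $\tau$-sensitive block norm, and there your approach is the right tool. Done correctly, it yields block norms $\big\|\big(\int_{Q+\mathbf m}f(y)f(y)^*\,dy\big)^{1/2}\big\|_{L_p(\mathcal M)}$, but several steps need correcting:
\begin{itemize}
\item The rank-one pieces come from Taylor-expanding the entire kernel $\check{\phi}_{\mathbf n}(x-y)$ in $y$ about $\mathbf m$, after modulating $\phi_{\mathbf n}$ into $Q$. A Fourier series of $f_{\mathbf m}$ does not give rank-one operators: $T_{\phi_{\mathbf n}}M_{e_k\chi_{Q+\mathbf m}}$ has infinite rank.
\item The $\mathcal M$-part of each piece is not an element $a_k\in\mathcal M$. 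It is the row operator $u\mapsto\int(y-\mathbf m)^\alpha f_{\mathbf m}(y)u(y)\,dy$, whose singular value function is that of $\big(\int_{Q+\mathbf m}(y-\mathbf m)^{2\alpha}f(y)f(y)^*\,dy\big)^{1/2}\in\mathcal M$.
\item For $1<p<2$, the inequality $\|\sum_j X_j\|_p^p\le\sum_j\|X_j\|_p^p$ is false in general. You must replace it by the block-matrix inequality for the orthogonal decompositions $\{T_{\chi_{Q+\mathbf n}}\}$ and $\{M_{\chi_{Q+\mathbf m}}\}$.
\item In (ii), interpolating in $\phi$ with $f$ fixed requires an $\mathcal L_{p_0}$ bound at some $p_0<p$. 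That bound needs $f\in\ell_{p_0}(L_2)$, which does not follow from $f\in\ell_p(L_2)$.
\end{itemize}
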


\begin{proof}
	The proof of the lemma following easily from  the scalar-valued Birman--Solomyak and Cwikel counterparts. Indeed, for $f: \mathbb{R}^d \rightarrow \mathcal{M}$, 	Define the scalar majorant
	$$
	h(x) := \|f(x)\|_{\mathcal{M}}  \quad (\text{pointwise operator norm}).
	$$
	For each $x$, we have $0 \le |f(x)| \le h(x) \otimes 1_{\mathcal{M}}$, hence as multiplication operators on $L_2(\mathbb{R}^d) \otimes L_2(\mathcal{M})$,
	$$
	0 \le M_{|f|} \le M_{h \otimes 1_{\mathcal{M}}}.
	$$
	Therefore,
	$$
	\big| (T_{\phi } M_{f })^*\big|^2
	= T_{\phi} M_{|f|^2} T_{\phi}^*
	\le T_{\phi} M_{h ^2 \otimes 1_{\mathcal{M}}} T_{\phi}^*.
	$$
	Since $\mu(t,X) \le \mu(t,Y)$ for all $t \ge 0$ whenever $0 \le X \le Y$, it follows that for all $t \ge 0$,
	$$
	\mu\big(t,|T_{\phi } M_{f }|\big)=	\mu\big(t,|(T_{\phi } M_{f })^*|\big)
	\le \mu\big(t,T_{\phi}M_{h^2 \otimes 1_{\mathcal{M}}} T_{\phi}^* \big)^{\frac{1}{2}}= \mu (t, M_{h \otimes 1_{\mathcal{M}}} T_{\phi}^* ).
	$$
	Furthermore, since
	$$
	\mu(t,A \otimes B) \le \|B\|_{\mathcal{M}}\,\mu(t,A), \qquad t \ge 0
	$$
	for general operator $A\otimes B \in \mathcal{N} \otimes \mathcal{M}$, we have
	$$
	\mu\big(t,|T_{\phi } M_{f }|\big)\leq  \mu (t, M_{h  } T_{\phi}^* ).
	$$
	The desired assertions then follow from the scalar-valued Birman--Solomyak and Cwikel counterparts.
\end{proof}

\begin{rk}\label{BS-example}
	Even though the operator-valued norms of $\ell_p(L_2)(\mathbb{R}^d;\mathcal{M})$ and $\ell_{2,\log}(L_\infty)(\mathbb{R}^d;\mathcal{M})$ do not seem easy to be calculated, the following facts are evident, and will frequently be used in the sequel:
	\begin{enumerate}[$\rm (i)$]
		\item $\displaystyle C_c (\mathbb{R}^d; \mathcal{M})\subset \ell_p(L_2)(\mathbb{R}^d;\mathcal{M})\cap \ell_{2,\log}(L_\infty)(\mathbb{R}^d;\mathcal{M});$
		\item $f(x)= \mathbf{1}_{\Omega}(x) \mathrm{p} \in \ell_p(L_2)(\mathbb{R}^d;\mathcal{M})   \cap    \ell_{2,\log}(L_\infty)(\mathbb{R}^d;\mathcal{M})$, where $\Omega\subset \mathbb{R}^d$ is compact, and $\mathrm{p}\in \mathcal{M} $ is a finite projection;
		\item the symbol function $\phi(\xi)=  (1+|\xi|^2)^{-\frac{m}{2}}$ of Bessel potential $J^{-m}$ satisfies
		$$ (1+|\xi|^2)^{-\frac{m}{2}} \in \ell_{\frac d m ,\infty}(L_2)(\mathbb{R}^d), \quad (1+|\xi|^2)^{-\frac{m}{2}} \in \ell_{\frac d m ,\infty}(L_4)(\mathbb{R}^d) , \quad m>0. $$
	\end{enumerate}
\end{rk}

\subsection{Localized Riemann $\zeta$-functions}
\label{section-def-zeta}

Appealing to Theorem \ref{op-tr=intker}, we are able to calculate the trace of elliptic $\Psi$DOs using its kernel.

\begin{proposition}
	Let $A \in \mathrm{C}\Psi^m\big(\mathbb{R}^d; \mathcal{M}\big)$ with $  m <-d  $. For $\phi \in C_c(\mathbb{R}^d ; L_1(\mathcal{M}) \cap \mathcal{M})$ with compact support $\Omega \subset \mathbb{R}^d$, we have
	$$
	\operatorname{Tr}(M_{\phi^{\ast}} A M_\phi) = \int_{\mathbb{R}^d} \tau(\phi(x)^{\ast} A(x, x) \phi(x)) \, dx.
	$$
	Similarly, for $\mathrm{P} (x) =   \mathbf{1}_{\Omega}(x) \mathrm{p} $, where $\mathrm{p}\in \mathcal{M} $ is a finite projection, we have
	$$
	\operatorname{Tr}(M_{\mathrm{P}} A M_{\mathrm{P}} ) = \int_{\Omega } \tau(  A(x, x) \mathrm{p}) \, dx.
	$$
\end{proposition}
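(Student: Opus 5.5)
The plan is to reduce the statement to Theorem \ref{op-tr=intker}. Its hypotheses are two: that $M_{\phi^*}AM_\phi$ is trace class in $B(L_2(\mathbb{R}^d))\overline{\otimes}\mathcal{M}$, and that it has a continuous kernel with values in $L_1(\mathcal{M})\cap\mathcal{M}$ on the compact set $\Omega\times\Omega$.

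For the kernel, since $m<-d$ the symbol $\sigma(A)(x,\xi)$ is Bochner integrable in $\xi$ in $\mathcal{M}$-norm. So $A(x,y)=\int\sigma(A)(x,\xi)e^{i(x-y)\cdot\xi}\,\bar d\xi$ is a bounded, continuous $\mathcal{M}$-valued function, by dominated convergence using the bound $\|\sigma(A)(x,\xi)\|\lesssim(1+|\xi|)^{m}$. The operator $M_{\phi^*}AM_\phi$ therefore has kernel $K(x,y)=\phi(x)^*A(x,y)\phi(y)$, supported in $\Omega\times\Omega$. This kernel is continuous into $L_1(\mathcal{M})\cap\mathcal{M}$, because $\phi$ is continuous into $L_1\cap\mathcal{M}$ and the product estimate $\|a b c\|_{1}\le\|a\|_1\|b\|_\infty\|c\|_\infty$ holds (and likewise with the $L_1$-factor on the right). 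The same holds for the projection case, with $\mathrm{P}$ in place of $\phi$; there the kernel $\mathbf 1_\Omega(x)\mathrm{p}A(x,y)\mathrm{p}\mathbf 1_\Omega(y)$ is continuous on $\Omega\times\Omega$ with values in $\mathrm{p}\mathcal{M}\mathrm{p}\subset L_1\cap\mathcal{M}$.

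For trace class, I would factor $A=A_1A_2$ with $A_1=AJ^{-m/2}\cdot$, or more simply
\[
M_{\phi^*}AM_\phi=\big(M_{\phi^*}A J^{-m/2}\big)\big(J^{m/2}M_\phi\big),
\]
where $-m/2>d/2$. The second factor $J^{m/2}M_\phi=T_{\psi}M_\phi$ with $\psi(\xi)=(1+|\xi|^2)^{m/4}\in L_2(\mathbb{R}^d)$ is Hilbert--Schmidt by Lemma \ref{Cwikel} with $p=2$. This uses $\phi\in L_2(\mathbb{R}^d;L_2(\mathcal{M}))$, which follows from compact support and $L_1\cap\mathcal{M}\subset L_2$. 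For the first factor, $AJ^{-m/2}\in\mathrm{C}\Psi^{m/2}$ by Proposition \ref{cl M}, so I would write $M_{\phi^*}AJ^{-m/2}=M_{\phi^*}J^{m/2}\,(J^{-m/2}AJ^{-m/2})$. Here $J^{-m/2}AJ^{-m/2}$ is of order $0$ and hence bounded in the von Neumann algebra by Theorem \ref{thm-tensor algebra}. The remaining piece $M_{\phi^*}J^{m/2}=(J^{m/2}M_\phi)^*$ is Hilbert--Schmidt. Writing $M_{\phi^*}AM_\phi=(J^{m/2}M_\phi)^*\,(J^{-m/2}AJ^{-m/2})\,(J^{m/2}M_\phi)$ then exhibits it as a product of two $\mathcal{L}_2$ operators and a bounded one, hence in $\mathcal{L}_1$ by Hölder. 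The same argument applies verbatim to $\mathrm{P}$, using Remark \ref{BS-example}(ii) or directly $\mathbf 1_\Omega\mathrm{p}\in L_2(\mathbb{R}^d;L_2(\mathcal{M}))$ since $\tau(\mathrm p)<\infty$.

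Applying Theorem \ref{op-tr=intker} then gives $\operatorname{Tr}=\int_\Omega\tau(\phi(x)^*A(x,x)\phi(x))\,dx$, and in the projection case $\int_\Omega\tau(\mathrm pA(x,x)\mathrm p)\,dx=\int_\Omega\tau(A(x,x)\mathrm p)\,dx$ by traciality and $\mathrm p^2=\mathrm p$. The main obstacle I expect is checking that the operator defined by the factorization agrees with the integral operator given by the kernel $K$. I would handle this by testing both on $\mathscr{S}(\mathbb{R}^d;L_2(\mathcal{M}))$, where the oscillatory integral is absolutely convergent (since $m<-d$) and Fubini applies, and then extending by density and boundedness.
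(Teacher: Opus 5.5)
Your proposal is correct and follows essentially the same route as the paper. The paper likewise uses continuity of the $L_1(\mathcal{M})\cap\mathcal{M}$-valued kernel $\phi(x)^*A(x,y)\phi(y)$ on $\Omega\times\Omega$, the factorization $M_{\phi^*}J^{m/2}\,(J^{-m/2}AJ^{-m/2})\,J^{m/2}M_\phi$ into two Hilbert--Schmidt factors and a bounded order-zero factor, and then Theorem~\ref{op-tr=intker}. The differences are minor: you get the Hilbert--Schmidt property from the $p=2$ case of Lemma~\ref{Cwikel} instead of applying Proposition~\ref{op-S=intker} directly to the kernel (which is how that case is proved anyway), and you explicitly check that the factorized operator coincides with the integral operator, a point the paper leaves implicit.
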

\begin{proof}
	Define
	$$
	K_{A,\phi}(x,y) := \phi(x)^* A(x,y) \phi(y), \quad (x,y) \in \Omega\times\Omega,
	$$
	which is the kernel of the operator $M_{\phi^*} A M_\phi$.
	
	Since $ m  < -d$, the kernel $A(x,y)$ is continuous on compact subsets of $\mathbb{R}^d \times \mathbb{R}^d$ and takes values in $\mathcal{M}$. Given that  $\phi \in C_c(\mathbb{R}^d ; L_1(\mathcal{M}) \cap \mathcal{M})$, it follows that
	$$
	K_{A,\phi} \in C^0(\Omega\times\Omega; L_1(\mathcal{M})\cap \mathcal{M}).
	$$
	On the other hand, write
	$$M_{\phi^{\ast}} A M_\phi =  M_{\phi^{\ast}} J^{\frac{ m }{2}}J^{-\frac{ m }{2}} AJ^{-\frac{ m }{2}}  J^{\frac{ m }{2}}M_\phi . $$
	with a bounded $\Psi$DO $J^{-\frac{ m }{2}} AJ^{-\frac{ m }{2}}  $. For $M_{\phi^{\ast}} J^{\frac{ m }{2}}$, since its kernel is the operator valued function $$\phi(x)^{\ast}\  \mathcal{F}^{-1} \big((1+|\xi|^2) ^{\frac{ m }{4}}\big)(x-y)\in L_2(\mathbb{R}^d \times \mathbb{R}^d; L_2(  \mathcal{M})) ,$$
	Proposition \ref{op-S=intker} ensures that
	$$M_{\phi^{\ast}} J^{\frac{ m }{2}} \in \mathcal{L}_2\big(B(L_2(\mathbb{R}^d)) \overline{\otimes} \mathcal{M}\big) .$$
	It then follows from the H\"older inequality that
	$$
	M_{\phi^*} A M_\phi \in \mathcal{L}_1\big(B(L_2(\mathbb{R}^d)) \,\overline{\otimes}\, \mathcal{M}\big).
	$$
	Now we are able to apply Theorem \ref{op-tr=intker} to get
	$$
	\operatorname{Tr}\big(M_{\phi^*} A M_\phi\big) = \int_\Omega \tau\big(\phi(x)^* A(x,x) \phi(x)\big)\,dx.
	$$
	The assertion for $
	\operatorname{Tr}(M_{\mathrm{P}} A M_{\mathrm{P}} ) $ is treated similarly, so we omit the details.
\end{proof}

Therefore, we can define the localized $\zeta$-functions for $\mathcal{M}$-valued elliptic  $\Psi$DOs.

\begin{definition}\label{az zeta def}
	Let $A \in \mathrm{C}\Psi^m\big(\mathbb{R}^d; \mathcal{M}\big)$ with $ m  > 0$ be elliptic and strictly positive. For $\phi \in C_c(\mathbb{R}^d ; L_1(\mathcal{M}) \cap \mathcal{M})$, the localized Riemann $\zeta$-functions of $A$ and $\sigma(A)_m(x, \xi)$ are defined as
	$$
	\zeta_{A,\phi}(z) := \operatorname{Tr}(M_{\phi^{\ast}} A^{-z} M_\phi) = \int_{\mathbb{R}^d} \tau(\phi(x)^{\ast} A_{-z}(x, x) \phi(x)) dx, \quad \Re z > \tfrac{d}{m},
	$$
	and
	$$
	\zeta_{\sigma,\phi}(z) :=(2 \pi)^{-d} \int_{\mathbb{R}^d} \int_{\mathbb{R}^d} \tau(\phi(x)^{\ast} \sigma(A)_m(x, \xi)^{-z} \phi(x)) dx d\xi, \quad \Re z > \tfrac{d}{m}.
	$$
	Similarly, for $\mathrm{P} (x) =   \mathbf{1}_{\Omega}(x) \mathrm{p} $, we define
	$$
	\zeta_{A,\mathrm{P}}(z) := \operatorname{Tr}(M_{\mathrm{P}} A^{-z} M_{\mathrm{P}}) = \int_{\Omega} \tau(  A_{-z}(x, x) \mathrm{p}) dx, \quad \Re z > \tfrac{d}{m},
	$$
	and
	$$
	\zeta_{\sigma,\phi}(z) :=(2 \pi)^{-d} \int_{\mathbb{R}^d} \int_{\Omega} \tau(  \sigma(A)_m(x, \xi)^{-z} \mathrm{p}) dx d\xi, \quad \Re z > \tfrac{d}{m}.
	$$
\end{definition}

Combining this with Theorem \ref{holoextend} and Corollary \ref{sym holoextend}, we derive the analogous properties of the localized Riemann $\zeta$-functions in the following theorem.

\begin{theorem}\label{zeta pole}
	Let $A \in \mathrm{C}\Psi^m\big(\mathbb{R}^d; \mathcal{M}\big)$ with $ m  > 0$ be elliptic and strictly positive.
	\begin{enumerate}[$\rm (i)$]
		\item  For $\phi \in C_c(\mathbb{R}^d ; L_1(\mathcal{M}) \cap \mathcal{M})$, the localized Riemann $\zeta$-functions $\zeta_{A,\phi}(z)$ and $\zeta_{\sigma,\phi}(z)$ admit meromorphic continuations to $\mathbb{C}$, with the right-most simple pole located at $z = \frac{d}{m}$. The residues at $z = \frac{d}{m}$ are given by
		$$
		\frac{1}{m(2 \pi)^d} \int_{\mathbb{S}^{d-1} } \int_{\mathbb{R}^d} \tau(\phi(x)^{\ast} \sigma(A)_m(x, \xi)^{-\frac{d}{m}} \phi(x)) dx d \xi.
		$$
		\item  For $\mathrm{P} (x) =   \mathbf{1}_{\Omega}(x) \mathrm{p} $ with $\Omega\subset \mathbb{R}^d$ compact and $\mathrm{p}\in \mathcal{M} $ finite projection, the localized Riemann $\zeta$-functions $\zeta_{A,\mathrm{P}}(z)$ and $\zeta_{\sigma,\mathrm{P}}(z)$ admit meromorphic continuations to $\mathbb{C}$, with the right-most simple pole located at $z = \frac{d}{m}$. The residues at $z = \frac{d}{m}$ are given by
		$$
		\frac{1}{m(2 \pi)^d} \int_{\mathbb{S}^{d-1} } \int_{\Omega} \tau( \sigma(A)_m(x, \xi)^{-\frac{d}{m}} \mathrm{p}) dx d \xi.
		$$
	\end{enumerate}
	
\end{theorem}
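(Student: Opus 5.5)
The plan is to reduce both $\zeta$-functions to the diagonal-kernel meromorphic continuations already obtained in Theorem \ref{holoextend} and Corollary \ref{sym holoextend}. I treat case (i) in detail; case (ii) is identical, with $\phi$ replaced by $\mathrm{P}$ and the integrand written as $\tau(A_{-z}(x,x)\mathrm{p})$ over $\Omega$.

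\emph{Step 1: the operator $\zeta$-function.} Fix $N\in\mathbb{N}$ and write, as in Theorem \ref{structurethm}, $A^{-z}=B_{(N)}^{(-z)}+R_{(N)}^{(-z)}$. The kernel of the remainder $R_{(N)}^{(-z)}$ at $x=y$ is holomorphic for $\Re(-z)<\frac{N-d}{m}$, with values in $L_\infty(\mathbb{R}^d)\overline{\otimes}\mathcal{M}$ and locally uniformly bounded. Since $\phi$ is compactly supported with values in $L_1(\mathcal{M})\cap\mathcal{M}$, the map
$$z\mapsto \int\tau\big(\phi(x)^*R_{(N)}^{(-z)}(x,x)\phi(x)\big)\,dx$$
is holomorphic on that half-plane. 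The bound $|\tau(\phi^*K\phi)|\le\|K\|_{\mathcal M}\|\phi\|_2^2$ lets me move the holomorphy through $\tau$ and the $x$-integral.

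For each homogeneous piece $B^{(-z)}_{-mz-j}$, the polar-coordinate computation in the proof of Theorem \ref{holoextend} gives
$$B^{(-z)}_{-mz-j}(x,x)=E_j(x,z)+\frac{1}{m}\cdot\frac{1}{z-\frac{d-j}{m}}\int_{\mathbb{S}^{d-1}}\sigma(B)^{(-z),0}_{-mz-j}(x,\xi)\,\bar d\xi,$$
where $E_j$ is entire. The sign flips because $z\mapsto -z$. Pairing with $\phi^*\cdot\phi$ under $\tau$ and integrating in $x$ preserves meromorphy, using local uniformity in $x$ of the entire functions from Theorem \ref{bmz sym group}(v) together with compact support. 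Letting $N\to\infty$ shows that the poles lie in $\{\frac{d-j}{m}:j\in\mathbb{N}_0\}$, so the right-most possible pole is $z=\frac dm$. Its residue is
$$\frac1m\int_{\mathbb{R}^d}\int_{\mathbb{S}^{d-1}}\tau\big(\phi^*\sigma(B)^{(-d/m),0}_{-d}(x,\xi)\phi\big)\,\bar d\xi\,dx.$$
By \eqref{Bmzj,0z}, which extends to all $z$ by Theorem \ref{bmz sym group}, the principal component is $\sigma(B)^{(-d/m),0}_{-d}=\sigma(A)_m^{-d/m}$. This yields exactly the stated residue. At this point I must also justify that $\zeta_{A,\phi}$ genuinely equals $\int\tau(\phi^*A_{-z}(x,x)\phi)\,dx$ for $\Re z>\frac dm$. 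For $\Re z>d/m$, $A^{-z}$ has order $-m\Re z<-d$, so the trace formula proposition preceding Definition \ref{az zeta def} applies.

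\emph{Step 2: the symbolic $\zeta$-function.} Here there is no smoothing remainder to control. The integrand $\sigma(A)_m(x,\xi)^{-z}$ is exactly homogeneous of degree $-mz$, so I split the $\xi$-integral into $|\xi|\le1$ and $|\xi|\ge1$. On $|\xi|\ge1$, polar coordinates give
$$\int_1^\infty r^{-mz+d-1}\,dr\cdot\int_{\mathbb{S}^{d-1}}\tau\big(\phi^*\sigma(A)_m(x,\xi)^{-z}\phi\big)\,d\xi=\frac{1}{m}\cdot\frac{1}{z-\frac dm}\,(\text{entire in } z).$$
So this piece has a single simple pole at $\frac dm$ with the same residue. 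The piece $|\xi|\le1$ is the problem: it converges only for $\Re z<\frac dm$, i.e. it has the form $\frac{-1}{m}\cdot\frac{1}{z-d/m}\times(\text{entire})$. I would instead interpret $\zeta_{\sigma,\phi}$ as in Corollary \ref{sym holoextend}, with the symbol regularised near $\xi=0$ by the cutoff $\varphi$. Then only the $|\xi|\ge1$ region carries the pole, and the residues coincide.

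\emph{Main obstacle.} The delicate points are the interchange of $\tau$, the $x$-integral and the meromorphic continuation, and above all the low-frequency behaviour of the symbolic $\zeta$-function. The latter requires this cutoff convention, for which I would follow Corollary \ref{sym holoextend}. Finally, I would check that the residue at $\frac dm$ is nonzero when $\phi\neq0$, since $\sigma(A)_m^{-d/m}>0$ and $\tau$ is faithful. This shows the pole is genuinely present and is the right-most one.
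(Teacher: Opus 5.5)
Your proposal is correct and follows the paper's route. Both $\zeta$-functions are reduced to the diagonal-kernel continuation of Theorem~\ref{holoextend} and the symbolic continuation of Corollary~\ref{sym holoextend}; you then pair with $\phi^*\cdot\phi$ under $\tau$, integrate in $x$, and identify the $j=0$ component as $\sigma(A)_m^{-d/m}$ via \eqref{Bmzj,0z}.

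Your observation about the symbolic $\zeta$-function is also correct, and it is a point the paper passes over when it invokes Corollary~\ref{sym holoextend}. As literally defined, $\zeta_{\sigma,\phi}$ diverges at $\xi=0$ throughout its stated domain $\Re z>\frac dm$, because the integrand is homogeneous of degree $-mz$ in $\xi$. So a cutoff near $\xi=0$ is genuinely needed, and then only the region $|\xi|\ge 1$ carries the pole. With that convention the residues agree exactly as you compute.
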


\begin{proof}
	By Theorem \ref{holoextend}, the kernel $A_{-z}(x,x)$ of the complex power
	$A^{-z}$ (which is of order $-m z$) admits a meromorphic continuation to
	$\mathbb{C}$ with simple poles at $z_j = \frac{d-j}{m}$, $j\in\mathbb{N}_0$.
	Equivalently, as an operator-valued function of $z$, $A_{-z}(x,x)$ has the right-most simple pole at
	$z = \frac{d}{m}$, and its $\mathcal{N}$-valued residue there is obtained from
	\eqref{residuemost} by substituting $z_j = -\frac{d}{m}$ (i.e., $j=0$):
	\begin{equation}\label{residue-A}
		\mathrm{Res}_{z=\frac{d}{m}} A_{-z}(x,x)
		= -\frac{1}{m} \int_{\mathbb{S}^{d-1}} \sigma(B)_{-d}^{(-\frac{d}{m}),0}(x,\xi) \, \bar{d}\xi.
	\end{equation}
	According to \eqref{Bmzj,0z} (applied with $z = -\frac{d}{m}$), we have
	\[
	\sigma(B)_{-d}^{(-\frac{d}{m}),0}(x,\xi) = \sigma(A)_m(x,\xi)^{-\frac{d}{m}}.
	\]
	Inserting this into \eqref{residue-A} and recalling that $\bar{d}\xi = (2\pi)^{-d}d\xi$,
	we obtain
	\[
	\mathrm{Res}_{z=\frac{d}{m}} A_{-z}(x,x)
	= \frac{1}{m(2\pi)^d} \int_{\mathbb{S}^{d-1}} \sigma(A)_m(x,\xi)^{-\frac{d}{m}} \, d\xi.
	\]
	
	Similarly, Corollary \ref{sym holoextend} shows that the symbolic integral
	$\int_{\mathbb{R}^d} \sigma(A)_m(x,\xi)^{-z} d\xi$ has a meromorphic continuation
	with a simple pole at $z = \frac{d}{m}$, and its $\mathcal{N}$-valued residue is
	\[
	\mathrm{Res}_{z=\frac{d}{m}} \int_{\mathbb{R}^d} \sigma(A)_m(x,\xi)^{-z} d\xi
	= \frac{1}{m} \int_{\mathbb{S}^{d-1}} \sigma(A)_m(x,\xi)^{-\frac{d}{m}} \, d\xi.
	\]
	
	Now, for $\phi \in C_c(\mathbb{R}^d; L_1(\mathcal{M})\cap\mathcal{M})$, the localized
	$\zeta$-function is given by
	$\zeta_{A,\phi}(z) = \int_{\mathbb{R}^d} \tau\big(\phi(x)^* A_{-z}(x,x) \phi(x)\big) dx$.
	Multiplying the residue of $A_{-z}(x,x)$ by $\phi(x)^*$ and $\phi(x)$, integrating
	over $\mathbb{R}^d$, and taking the trace $\tau$ yields the residue formula stated in
	the theorem. The computation for $\zeta_{\sigma,\phi}(z)$ is entirely parallel,
	using the definition in Definition \ref{az zeta def} and the residue of the
	symbolic integral computed above.
\end{proof}

\subsection{Microlocal Weyl's law}
In this section we prove the following \emph{microlocal Weyl's law} for operator-valued pseudo-differential operators.
\begin{theorem}\label{microlocal_weyl_law}
	Let $A \in \mathrm{C}\Psi^m(\mathbb{R}^d;\mathcal{M})$ with $m>0$ be elliptic and positive semidefinite. Let $Q \in \mathrm{C}\Psi^0(\mathbb{R}^d;\mathcal{M}).$ For $\phi \in C_c(\mathbb{R}^d;L_1(\mathcal{M})\cap \mathcal{M}),$ we have
	\begin{align*}
		&\lim_{\lambda\to\infty} \lambda^{-\frac{d}{m}} \Tr\bigl(M_\phi Q \chi_{[0,\lambda]}(A)\bigr)\\
		=& \frac{m}{d(2\pi)^{d}} \int_{\mathbb{R}^d} \int_{\mathbb{S}^{d-1}}
		\tau\bigl(\phi(x) \sigma(Q)_0(x,\xi) \sigma(A)_m(x,\xi)^{-\frac{d}{m}}\bigr) dx \, d\xi.
	\end{align*}
\end{theorem}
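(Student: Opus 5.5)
The plan is a Tauberian argument. I would first turn the spectral counting quantity into a Laplace--Stieltjes transform whose behaviour is governed by a $\zeta$-type function. After replacing $A$ by $A+\rho$, the localized spectral functional is insensitive to this shift at leading order, since $\chi_{[0,\lambda]}(A+\rho)$ and $\chi_{[0,\lambda-\rho]}(A)$ differ only by a bounded window. So we may assume $A$ is strictly positive and use all of Section \ref{Section complex1}. Next I would reduce to positive functionals. Writing $\phi = \sum_k c_k \phi_k^*\phi_k$ is awkward for operator-valued $\phi$. Instead I would polarize: write $M_\phi Q$ as a combination of terms $M_{\psi^*} Q' M_{\psi}$, modulo lower order. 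The commutator $[Q,M_\psi]\in \mathrm{C}\Psi^{-1}$ contributes $O(\lambda^{(d-1)/m})$ by Cwikel/Weyl bounds, so it is negligible. For self-adjoint $Q'$ with $0\le Q'\le 1$, up to adding constants, the measure
\[
N(\lambda):=\Tr\bigl(M_{\psi^*}Q'M_\psi\,\chi_{[0,\lambda]}(A)\bigr)
\]
is nondecreasing in $\lambda$. This holds after symmetrizing with $\chi_{[0,\lambda]}(A)^{1/2}$ on both sides and using traciality. In particular, $N$ is the distribution function of a positive measure on $[\rho,\infty)$.

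Second, I would compute the Mellin transform
\[
\int \lambda^{-z}\,dN(\lambda)=\Tr\bigl(M_{\psi^*}Q'M_\psi A^{-z}\bigr)=\Tr\bigl(M_{\psi^*}Q'A^{-z}M_\psi\bigr)+(\text{lower order}).
\]
By Theorem \ref{structurethm}, $Q'A^{-z}\in \mathrm{C}\Psi^{-mz}$ with principal symbol $\sigma(Q')_0\,\sigma(A)_m^{-z}$, by Proposition \ref{cl M}. Its kernel on the diagonal is then analysed exactly as in Theorem \ref{holoextend}, since the homogeneous expansion of the composed symbol is entire in $z$. The trace formula of Theorem \ref{op-tr=intker} and the argument of Theorem \ref{zeta pole} then give meromorphic continuation to $\Re z>\frac{d-1}{m}$, with a single simple pole at $z=\frac dm$. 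The residue there is
\[
\frac{1}{m(2\pi)^d}\int\!\!\int_{\mathbb{S}^{d-1}}\tau\bigl(\psi^*\sigma(Q')_0\sigma(A)_m^{-d/m}\psi\bigr)\,dx\,d\xi .
\]
The lower-order correction from commuting $A^{-z}$ past $M_\psi$ has order $-mz-1$, so it is holomorphic near $z=d/m$.

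Third, I would apply the Ikehara--Wiener Tauberian theorem to the nondecreasing function $N$. This requires the continuation to be continuous up to the line $\Re z=\frac dm$ away from the pole, which we have because the only pole in $\Re z>\frac{d-1}{m}$ is at $\frac dm$. The conclusion is
\[
N(\lambda)\sim \frac{\operatorname{Res}}{d/m}\,\lambda^{d/m},
\]
which produces exactly the factor $\frac{m}{d(2\pi)^d}$. Finally, undo the polarization and recombine. Traciality of $\tau$ converts $\tau(\psi^*\sigma(Q)_0\sigma(A)_m^{-d/m}\psi)$ back into $\tau(\phi\,\sigma(Q)_0\sigma(A)_m^{-d/m})$.

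The main obstacle is the positivity reduction. Ikehara needs a monotone, hence positive, counting function, but $M_\phi Q$ is neither positive nor even self-adjoint, and $\phi$ is operator-valued. I would handle this in two ways: decompose $\phi$ into real and imaginary parts and then into differences of squares $\psi^*\psi$ in $L_1\cap\mathcal M$, and decompose $Q$ similarly. The commutator errors must then be shown to be $o(\lambda^{d/m})$. I would control them through the trace of $|[Q,M_\psi]|\chi_{[0,\lambda]}(A)$, bounded using $\chi_{[0,\lambda]}(A)\le (1+\lambda)^{s}A^{-s}$ and the Cwikel estimates of Lemma \ref{Cwikel-0-2}. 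A secondary concern is checking that, in the relevant half-plane, the kernel trace is finite in $L_1(\mathcal M)$, which the hypothesis $\phi\in C_c(\mathbb{R}^d;L_1(\mathcal{M})\cap\mathcal{M})$ supplies.
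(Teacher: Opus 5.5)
Your scheme (shift to $A+\rho$, continue $s\mapsto\Tr(XA^{-s})$ through the kernel diagonal as in Theorems \ref{holoextend} and \ref{zeta pole}, then apply Wiener--Ikehara) is the paper's. You are also right that the positivity step needs care. The paper takes $\phi\ge0$ and $Q\ge0$ and asserts that $\lambda\mapsto\Tr(M_\phi Q\chi_{[0,\lambda]}(A))$ is nondecreasing. That does not follow, because $M_\phi Q$ is not a positive operator.

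\textbf{Gap 1: the commutators are not lower order.} Your repair fails in the operator-valued setting. Even for smooth $\psi$, Proposition \ref{cl M} gives
\[
\sigma([Q,M_\psi])_0=[\sigma(Q)_0,\psi],\qquad \sigma([M_\psi,A^{-z}])_{-mz}=[\psi,\sigma(A)_m^{-z}].
\]
These do not vanish when $\psi(x)$ is a genuine element of $\mathcal{M}$. So the commutators contribute at order $\lambda^{d/m}$, not $O(\lambda^{(d-1)/m})$.

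This already fails for $Q=1$. Your monotone function $\Tr(M_{\psi^*\psi}\chi_{[0,\lambda]}(A))$ has leading coefficient built from $\tau(\psi^*\psi\,\sigma(A)_m^{-d/m})$. Moving $A^{-z}$ past $M_\psi$ as you propose instead produces $\tau(\psi\psi^*\sigma(A)_m^{-d/m})$. To see they differ, take $\mathcal{M}=M_2(\mathbb{C})$, $\psi=g\,e_{12}$ and $\sigma(A)_m=|\xi|^m\mathrm{diag}(1,2)$. The same issue is why your closing traciality step needs $\phi=\psi\psi^*$, although you decomposed $\phi$ as $\psi^*\psi$. Moreover, $\phi$ is only continuous, so $M_\psi$ is not a $\Psi$DO in the first place.

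\textbf{A repair that needs no commutators.} Dominate instead of decomposing; this also needs no smoothness. Write $\phi=u|\phi|$, $a=u|\phi|^{1/2}$ and $b=|\phi|^{1/2}$. For a spectral projection $E$ of $A$ on a bounded interval,
\[
|\Tr(M_\phi QE)|=|\Tr(EM_a\cdot M_bQE)|\le\tfrac12\bigl[\Tr(M_{|\phi^*|}E)+\Tr(Q^*M_{|\phi|}QE)\bigr].
\]
Let $N_1,N_2$ be the (nondecreasing) counting functions of $M_{|\phi^*|}$ and $Q^*M_{|\phi|}Q$. Then $\frac12(N_1+N_2)\pm\mathrm{Re}\,N$ and $\frac12(N_1+N_2)\pm\mathrm{Im}\,N$ are nondecreasing. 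Their Mellin transforms are continued by the same diagonal-kernel argument, using $\Tr(Q^*M_{|\phi|}QA^{-s})=\Tr(M_{|\phi|^{1/2}}QA^{-s}Q^*M_{|\phi|^{1/2}})$. Wiener--Ikehara applied to each of these functions then gives the asymptotics of $N$.

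\textbf{Gap 2: the constant.} Your last line is an arithmetic slip, and it hides a real problem. Your residue $\frac{1}{m(2\pi)^d}\int\!\int_{\mathbb{S}^{d-1}}\tau(\cdots)$ is the correct one; it is Theorem \ref{zeta pole}, coming from \eqref{2int}. Wiener--Ikehara gives $N(\lambda)\sim\frac md\operatorname{Res}\cdot\lambda^{d/m}$, that is, the constant $\frac{1}{d(2\pi)^d}$, not $\frac{m}{d(2\pi)^d}$.

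The displayed statement is in fact off by the factor $m$. Take $\mathcal{M}=\mathbb{C}$, $Q=1$, $A=(1-\Delta)^{m/2}$ and $0\le\phi\in C_c(\mathbb{R}^d)$. A direct computation gives
\[
\Tr(M_\phi\chi_{[0,\lambda]}(A))=(2\pi)^{-d}\bigl|\{\xi:(1+|\xi|^2)^{m/2}\le\lambda\}\bigr|\int\phi\,dx\sim\frac{|\mathbb{S}^{d-1}|}{d(2\pi)^d}\,\lambda^{d/m}\int\phi\,dx .
\]
The stated limit is $\frac{m|\mathbb{S}^{d-1}|}{d(2\pi)^d}\int\phi\,dx$; for $m=2$ this doubles the classical Weyl constant. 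The paper reaches $\frac{m}{d(2\pi)^d}$ only because Theorem \ref{microlocal_zeta_function} records the residue as $(2\pi)^{-d}\int\!\int$, dropping the $\frac1m$.

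So your argument, once repaired as above, proves the formula with $\frac{1}{d(2\pi)^d}$. No argument can give the printed constant when $m\neq1$.
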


Theorem \ref{microlocal_weyl_law} can be deduced, via the Wiener-Ikehara theorem, from the following result on the analytic continuation of zeta functions.
\begin{theorem}\label{microlocal_zeta_function}
	Let $A \in \mathrm{C}\Psi^m(\mathbb{R}^d;\mathcal{M})$ with $m>0$ be elliptic and strictly positive. Let $Q \in \mathrm{C}\Psi^0(\mathbb{R}^d;\mathcal{M}).$ For $\phi \in C_c(\mathbb{R}^d;L_1(\mathcal{M})\cap \mathcal{M}),$ the microlocalized zeta function
	\[
	\zeta_{\phi,Q,A}(s) := \Tr(M_\phi QA^{-s}),\quad \Re(s)>\frac{d}{m}
	\]
	admits a meromorphic continuation to the complex plane, with at most simple poles located at the points
	\[
	\{\frac{d-j}{m}\}_{j\geq 0}.
	\]
	The residue of $\zeta_{\phi,Q,A}$ at $s=\frac{d}{m}$ is given by
	\[
	\mathrm{Res}_{s=\frac{d}{m}} \zeta_{\phi,Q,A}(s) = (2\pi)^{-d}\int_{\mathbb{R}^d}\int_{\mathbb{S}^{d-1} } \tau(\phi(x)\sigma(Q)_0(x,\xi)\sigma(A)_m(x,\xi)^{-\frac{d}{m}})\,dxd\xi.
	\]
\end{theorem}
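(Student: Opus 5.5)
The plan is to reduce the microlocalized zeta function to the kernel machinery of Theorem \ref{holoextend}. By Theorem \ref{structurethm}, $A^{-s}\in \mathrm{C}\Psi^{-ms}(\mathbb{R}^d;\mathcal{M})$. By Proposition \ref{cl M}, the composition $QA^{-s}$ is a classical $\Psi$DO of order $-ms$. Its homogeneous components are
\[
c^{(s)}_{-ms-j}(x,\xi)=\sum_{|\alpha|+p+q=j}\frac{1}{\alpha!}\,\partial_\xi^\alpha\sigma(Q)_{-p}(x,\xi)\,D_x^\alpha\sigma(B)^{(-s),0}_{-ms-q}(x,\xi),
\]
with leading term $\sigma(Q)_0(x,\xi)\,\sigma(A)_m(x,\xi)^{-s}$ by \eqref{Bmzj,0z}. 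Each $c^{(s)}_{-ms-j}$ is entire in $s$ by Theorem \ref{bmz sym group}(v).

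I then truncate. Write $QA^{-s}=\sum_{j<N}C^{(s)}_j+R^{(s)}_N$, where $C^{(s)}_j$ has symbol $\varphi(\xi)c^{(s)}_{-ms-j}$. By Theorem \ref{structurethm}(i) composed with $Q$, the remainder satisfies $R_N^{(s)}\in\operatorname{Hol}(\Re s>t,\Psi^{-mt-N})$. For $\Re(ms)+N>d$ its kernel $R_N^{(s)}(x,y)$ is continuous, $\mathcal{M}$-valued, and holomorphic in $s$.

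Next I express the trace through the kernel, as in the Proposition of Section \ref{section-def-zeta}. Take $\phi\in C_c(\mathbb{R}^d;L_1(\mathcal{M})\cap\mathcal{M})$ supported in $\Omega$. Insert a cutoff $\psi\in C_c^\infty$ equal to $1$ on $\Omega$, and factor
\[
M_\phi QA^{-s}M_\psi=\bigl(M_\phi J^{-\frac{ms}{2}}\bigr)\bigl(J^{\frac{ms}{2}}QA^{-s}J^{\frac{ms}{2}}\bigr)\bigl(J^{-\frac{ms}{2}}M_\psi\bigr).
\]
Proposition \ref{op-S=intker} puts the outer factors in $\mathcal{L}_2$, so the product is trace class. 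Theorem \ref{op-tr=intker} then gives $\Tr(M_\phi K)=\int\tau(\phi(x)K(x,x))\,dx$.

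The $\psi$ issue is where I expect the main obstacle: $\Tr(M_\phi QA^{-s})$ versus $\Tr(M_\phi QA^{-s}M_\psi)$. My first attempt is cyclicity, writing $M_\phi=M_\phi M_\psi$ and using $\Tr(M_\phi M_\psi X)=\Tr(M_\phi XM_\psi)$ modulo commutators. The commutator $[M_\psi,QA^{-s}]$ has order $-ms-1$ and still carries a factor localized near $\operatorname{supp}\phi$ after multiplying by $M_\phi$. One could either define $\zeta_{\phi,Q,A}$ through the kernel diagonal directly, or verify that $M_\phi QA^{-s}(1-M_\psi)$ has a smooth, rapidly decaying kernel with zero diagonal on $\Omega$. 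The latter follows from off-diagonal decay of $\Psi$DO kernels, so it contributes zero to the trace. This gives
\[
\zeta_{\phi,Q,A}(s)=\int\tau\bigl(\phi(x)\,K_{QA^{-s}}(x,x)\bigr)\,dx .
\]

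For the meromorphic continuation, I argue exactly as in Theorem \ref{holoextend}. The remainder term $\int\tau(\phi(x)R_N^{(s)}(x,x))\,dx$ is holomorphic for $\Re s>(d-N)/m$, using that $\phi$ takes values in $L_1(\mathcal{M})$ and $R_N^{(s)}(x,x)\in\mathcal{M}$. For each $j<N$, pass to polar coordinates:
\[
C_j^{(s)}(x,x)=\Bigl(\int_0^\infty\varphi(r)r^{-ms-j+d-1}dr\Bigr)\int_{\mathbb{S}^{d-1}}c^{(s)}_{-ms-j}(x,\xi)\,\bar d\xi .
\]
The radial integral equals an entire function plus $\frac{1}{ms+j-d}$, which gives simple poles at $s=\frac{d-j}{m}$. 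Letting $N\to\infty$ yields the continuation to all of $\mathbb{C}$.

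The residue at $s=\frac dm$ comes only from $j=0$, and equals
\[
\frac{1}{m}(2\pi)^{-d}\int\int_{\mathbb{S}^{d-1}}\tau\bigl(\phi\,\sigma(Q)_0\,\sigma(A)_m^{-\frac dm}\bigr)\,dx\,d\xi .
\]
This carries a factor $\frac1m$ not present in the stated formula, so I will recheck the normalization against Theorem \ref{zeta pole}, which does include $\frac{1}{m}$. The Wiener–Ikehara step in Theorem \ref{microlocal_weyl_law} then produces $\frac{m}{d}\cdot\frac{1}{m}$ correctly only if the stated residue is read with that factor.
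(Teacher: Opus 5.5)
Your strategy is the paper's own: it states that the proof is identical to that of Theorem~\ref{zeta pole}, i.e.\ a holomorphic remainder plus polar coordinates on the homogeneous pieces of the kernel diagonal. Your factor $\frac1m$ is also correct, and the stated residue is off by a factor $m$. For $\mathcal{M}=\mathbb{C}$, $Q=1$, $A=(1-\Delta)^{m/2}$ one gets $\zeta_{\phi,Q,A}(s)=(2\pi)^{-d}\int\phi\,dx\int_{\mathbb{R}^d}(1+|\xi|^2)^{-\frac{ms}{2}}\,d\xi$. Its residue at $s=\frac dm$ is $\frac{|\mathbb{S}^{d-1}|}{m(2\pi)^d}\int\phi\,dx$, which matches Theorem~\ref{zeta pole}. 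The error propagates rather than cancels. With the correct residue, Wiener--Ikehara gives the constant $\frac{1}{d(2\pi)^d}$ in Theorem~\ref{microlocal_weyl_law}, not the printed $\frac{m}{d(2\pi)^d}$; for $m=2$ this is the classical $(2\pi)^{-d}\omega_d$.

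The genuine gap is the trace-class step.
\begin{itemize}
\item The kernel of $J^{-\frac{ms}{2}}M_\psi$ is $k(x-y)\psi(y)1_{\mathcal{M}}$, with $k$ the Bessel kernel. Since $\|1_{\mathcal{M}}\|_{L_2(\mathcal{M})}=\tau(1_{\mathcal{M}})^{1/2}$, Proposition~\ref{op-S=intker} puts this factor in $\mathcal{L}_2$ only when $\tau$ is finite.
\item Likewise, your claim that $M_\phi QA^{-s}(1-M_\psi)$ contributes zero presupposes that this operator is trace class, which you never show.
\item The Proposition in Section~\ref{section-def-zeta} works only because $\phi$ sits on both sides, so both Hilbert--Schmidt factors carry an $L_2(\mathcal{M})$-valued function.
\end{itemize}
With a one-sided $M_\phi$ and $\tau(1_{\mathcal{M}})=\infty$, the operator can genuinely fail to be trace class. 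Take $\mathcal{M}=B(\ell_2)$, $Q=1$, $A=(1-\Delta)^{m/2}$, and $\phi=\sum_j\chi_j e_{1j}$, where the $\chi_j$ are rescaled copies of one bump $\chi$ placed in the $\epsilon^{-d}$ subcubes of side $\epsilon$ of $[0,1]^d$. Then $\|\phi(x)\|_{L_1(\mathcal{M})}\le\|\chi\|_\infty$. Yet $T=M_\phi J^{-m\Re s}$ satisfies $TT^*=\bigl(\bigoplus_jM_{\chi_j}J^{-2m\Re s}M_{\chi_j}\bigr)\otimes e_{11}$. Hence $\|T\|_{\mathcal{L}_1}=\sum_j\|J^{-m\Re s}M_{\chi_j}\|_{\mathcal{S}_1}\gtrsim\epsilon^{-d}\cdot\epsilon^{d/2}$, since each summand dominates its top singular value, which is of order $\epsilon^{d/2}$. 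Superposing such blocks with weights $2^{-n}$, disjoint index sets, and scales $\epsilon_n\to0$ fast enough that $2^{-n}\epsilon_n^{-d/2}\to\infty$ produces $\phi\in C_c(\mathbb{R}^d;L_1(\mathcal{M})\cap\mathcal{M})$ with $M_\phi A^{-s}\notin\mathcal{L}_1$ for every $s$. For such $\phi$, $\Tr(M_\phi QA^{-s})$ does not exist.

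So the statement needs an extra hypothesis, and your proof should say which one it uses. Three options work.
\begin{itemize}
\item If $\tau(1_{\mathcal{M}})<\infty$, as in Section~\ref{random_section}, your factorization is valid. For the off-diagonal piece, write it as $(M_\phi J^{-N})\bigl(J^{N}M_{\tilde\psi}QA^{-s}(1-M_\psi)\bigr)$, with $N$ even and large, $\tilde\psi=1$ on $\operatorname{supp}\phi$ and $\psi=1$ near $\operatorname{supp}\tilde\psi$. This is a product of two $\mathcal{L}_2$ operators, and cyclicity gives trace zero.
\item For infinite $\tau$ it suffices that $\phi$ be a finite sum of elementary tensors $\phi(x)=\chi(x)a$. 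Then $M_\phi QA^{-s}=\bigl(M_\chi J^{-ms}\otimes a\bigr)J^{ms}QA^{-s}$ is trace class by the scalar Birman--Solomyak estimate. The same cyclicity argument, with $a=a_1a_2$ and $a_i\in L_2(\mathcal{M})$, kills the off-diagonal part.
\item Alternatively, define $\zeta_{\phi,Q,A}(s):=\int\tau\bigl(\phi(x)K_{QA^{-s}}(x,x)\bigr)\,dx$, which is what your continuation argument actually computes.
\end{itemize}
Under any of these, the rest of your argument goes through.
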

The proof is identical to that of Theorem \ref{zeta pole}, and is hence omitted.

Let us now introduce the Wiener-Ikehara Tauberian theorem, which was originally published in \cite{Ike31} by Ikehara. It is a special case of Wiener's Tauberian theorems, which were published by Wiener in \cite{Wie32} one year later. This theorem plays an important role in analytic number theory.
Let us state the Wiener-Ikehara Tauberian theorem in the following:

Let $F: \mathbb{R}_{+} \rightarrow \mathbb{R}_{+}$ be a monotone non-decreasing function such that $F(t)=0$ for $t \le \epsilon$, where $\epsilon>0$. Suppose
$$
\zeta(z)=\int_\epsilon^{\infty} t^{-z} d F(t)
$$
converges absolutely for $\Re z>k_0$ and there is a constant $\gamma$ such
that
$$\zeta(z)-\frac{\gamma}{z-k_0}$$
extends to a continuous function in $\Re z \ge k_0$. Then
$$
F(t) \sim \frac{\gamma}{k_0} t^{k_0} \quad \text { as } t \rightarrow \infty.
$$

\begin{proof}[Proof of Theorem \ref{microlocal_weyl_law}]
	Assume initially that $Q$ and $\phi$ are non-negative. Let
	\[
	N_{\phi,Q,A}(\lambda) := \Tr(M_{\phi}Q\chi_{[0,\lambda]}(A)).
	\]
	Since $Q$ and $\phi$ are non-negative, $N$ is non-decreasing.
	Since
	\[
	\zeta_{\phi,Q,A}(s) = \int_0^\infty \lambda^{-s}\,dN_{\phi,Q,A}(\lambda)
	\]
	the result of Theorem \ref{microlocal_zeta_function} and the Wiener-Ikehara theorem ensures that
	\[
	N_{\phi,Q,A}(\lambda) \sim \frac{m}{d}\lambda^{\frac{d}{m}}\mathrm{Res}_{s=\frac{d}{m}}\zeta_{\phi,Q,A}(s).
	\]
	This proves the result in the case that $\phi$ and $Q$ are positive. If $Q$ is merely self-adjoint, then since it has order zero it is bounded on $L_2(\mathbb{R}^d),$ so there exists a constant $C\geq 0$ such that $Q+C\geq 0.$ We have
	\begin{align*}
		N_{\phi,Q,A}(\lambda) &= N_{\phi,Q+C,A}(\lambda)-N_{\phi,C,A}(\lambda)\\
		&\sim \frac{m}{d}\lambda^{\frac{d}{m}}\mathrm{Res}_{s=\frac{d}{m}}(\zeta_{\phi,Q+C,A}(s)-\zeta_{\phi,C,A}(s))\\
		&= \frac{m}{d}\lambda^{\frac{d}{m}}\mathrm{Res}_{s=\frac{d}{m}}\zeta_{\phi,Q,A}(s).
	\end{align*}
	If $Q$ is non-self-adjoint, write $Q = \frac12(Q+Q^*)+\frac12(Q-Q^*)$ and again exploit the linearity of $\zeta_{\phi,Q,A}$ and $N_{\phi,Q,A}$ in $Q.$
	
	Similarly, for non-positive $\phi$ we decompose $\phi$ into a linear combination of four positive functions and deduce the result by linearity.
\end{proof}

\medskip

\subsection{Weyl's law for $\mathcal{M}$-valued  classical $\Psi$DOs of negative order}\label{weyl-pdo-negative}

In the semifinite von Neumann algebraic setting, the notion of singular value sequences is generalized to the singular value function $\mu(\cdot, A)$, which is defined in terms of the distribution function of $|A|$ with respect to the trace. For $\Re m  \ge 0$, by Proposition \ref{l2bdd} and Theorem \ref{thm-tensor algebra}, the following inclusions hold:
$$
\mathrm{C}\Psi^{-m}\big(\mathbb{R}^d; \mathcal{M}\big) \subset \Psi^{-\Re m}\big(\mathbb{R}^d \times \Lambda ; \mathcal{M}\big) \subset B(L_2(\mathbb{R}^d)) \overline{\otimes} \mathcal{M}.
$$
Then for $A \in \mathrm{C}\Psi^{-m}\big(\mathbb{R}^d; \mathcal{M}\big)$ ($\Re m > 0$), its singular value function is given by
$$
\mu(t, A) = \inf \{s > 0 : \operatorname{Tr}(e_s^\perp(|A|)) \le t\}, \quad t > 0,
$$
where $e_s^\perp$ represents the spectral projection associated with the interval $(s, \infty)$.

The principal symbol of $|A|$ is $|\sigma(A)_{-m}|$, which belongs to
\[
L_{\infty}(\mathbb{R}^d)\overline{\otimes} L_{\infty}(\mathbb{S}^{d-1})\overline{\otimes}\mathcal{M}.
\]
Its singular value function is given by
$$
\mu(t, \sigma(A)_{-m}) = \inf \Bigg\{s > 0 : \int_{\mathbb{S}^{d-1} } \int_{\mathbb{R}^d} \tau(e_s^\perp(|\sigma(A)_{-m}(x, \xi)|)) \, dx \, d\xi \le t \Bigg\}, \quad t > 0.
$$

To study Weyl's law and Connes' trace theorem on the product space formed by the locally compact space $\mathbb{R}^d$ and the semifinite von Neumann algebra $\mathcal{M}$, we first consider the local case, and then extend the results to the full space and algebra via approximation. For the truncation in $\mathcal{M}$, we utilize a $\tau$-finite projection $\mathrm{p} \in \mathcal{M}$, while for the spatial cutoff, we utilize nonnegative functions from $C_c^\infty(\mathbb{R}^d)$.

The core of this section is the following

\begin{theorem} \label{asymp ellip}
	Let $A \in \mathrm{C}\Psi^{m}(\mathbb{R}^d; \mathcal{M})$ with $m > 0$ be elliptic, and assume that $|A|$ is strictly positive. Let $ \mathrm p \in \mathcal{M}$ be a $\tau$-finite projection.
	Then for every  $0\le g\in C_c (\mathbb{R}^d)$ the following hold:
	\begin{align*}
		&\lim_{t \rightarrow \infty} t^{\frac{m}{d}} \mu(t, M_{\mathrm{p}g}|A|^{-1}M_{\mathrm{p}g})\\
		= &(2\pi)^{-m} \lim_{t \rightarrow \infty} t^{\frac{m}{d}} \mu(t, \mathrm{p}g^{2}|\sigma(A)_m|^{-1}\mathrm{p}) \\
		=& d^{-\frac{m}{d}} (2\pi)^{-m} \left[ \int_{\mathbb{S}^{d-1} } \int_{\mathbb{R}^d} \tau\left(  g(x)^{\frac{2d}{m}} |\sigma(A)_m(x, \xi)|^{-\frac{d}{m}} \mathrm{p} \right) dx \, d\xi \right]^{\frac{m}{d}}.
	\end{align*}
	Furthermore, $M_{\mathrm{p}g}|A|^{-1}M_{\mathrm{p}g} \in \mathcal{L}_{\frac{d}{m}, \infty}\big(B(L_2(\mathbb{R}^d)) \overline{\otimes} \mathcal{M}\big)$.
\end{theorem}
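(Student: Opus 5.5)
The plan is to go through the localized $\zeta$-function and the Wiener--Ikehara theorem, as in the proof of Theorem \ref{microlocal_weyl_law}, with the roles reversed: we count the eigenvalues of the positive compact-type operator $T:=M_{\mathrm{p}g}|A|^{-1}M_{\mathrm{p}g}$ rather than those of $A$.

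\textbf{Step 1 (membership in $\mathcal{L}_{\frac dm,\infty}$).} Write $T=(M_{\mathrm{p}g}J^{-\frac m2})(J^{\frac m2}|A|^{-1}J^{\frac m2})(J^{-\frac m2}M_{\mathrm{p}g})$. By Corollary \ref{prin map ellipaz}, $|A|^{-1}\in \mathrm{C}\Psi^{-m}$, so the middle factor is bounded. The outer factors lie in $\mathcal{L}_{\frac{2d}{m},\infty}$ by Lemma \ref{Cwikel} or Lemma \ref{Cwikel-0-2} (depending on whether $\frac{2d}m$ is above or below $2$) together with Remark \ref{BS-example}. Hölder's inequality then gives $T\in\mathcal{L}_{\frac dm,\infty}$.

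\textbf{Step 2 (Tauberian reduction).} By Lemma \ref{singular func}, it suffices to find the asymptotics of $N(s):=\Tr(e_s^\perp(T))$ as $s\to0$. Equivalently, set $F(\lambda):=\Tr(\chi_{[\lambda^{-1},\infty)}(T))$; then $\zeta_T(z):=\Tr(T^{z})=\int\lambda^{-z}dF(\lambda)$ for $\Re z>\frac dm$.

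We aim to show that $\zeta_T(z)$ has a simple pole at $z=\frac dm$, with the same residue as $\zeta_{A,\mathrm{P}}$ computed with $g^2$ in place of $\mathbf 1_\Omega$, namely
$$\frac{1}{m(2\pi)^d}\int_{\mathbb{S}^{d-1}}\int_{\mathbb{R}^d}\tau\bigl(g^{\frac{2d}m}|\sigma(A)_m|^{-\frac dm}\mathrm p\bigr)\,dx\,d\xi,$$
and that $\zeta_T(z)$ minus this pole part extends continuously to $\Re z\ge\frac dm$. Wiener--Ikehara then yields $F(\lambda)\sim\frac{m}{d}\,\mathrm{Res}\,\lambda^{\frac dm}$. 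Converting back through Lemma \ref{singular func}, this gives $\lim t^{\frac md}\mu(t,T)=\bigl(\tfrac md\mathrm{Res}\bigr)^{\frac md}$, which is exactly the stated constant $d^{-\frac md}(2\pi)^{-m}[\cdots]^{\frac md}$.

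The same computation, done for the multiplication operator $\mathrm{p}g^2|\sigma(A)_m|^{-1}\mathrm p$ on $L_2(\mathbb{R}^d\times\mathbb{R}^d)\otimes L_2(\mathcal{M})$, is elementary. There the distribution function is computed directly in polar coordinates using homogeneity of degree $-m$. This produces the middle expression, with the factor $(2\pi)^{-m}$ coming from the normalization $\bar d\xi$.

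\textbf{Step 3 (main obstacle: comparing $T^z$ with $M_{\mathrm{p}g}|A|^{-z}M_{\mathrm{p}g}$ up to a lower-order error).} The difficulty is that $T^{z}\neq M_{\mathrm{p}g^{z}}|A|^{-z}M_{\mathrm{p}g^z}$, because multiplication and $|A|$ do not commute. I would avoid analytic continuation of $\Tr(T^z)$ itself and instead use the stability Lemmas \ref{weak weyl} and \ref{ideal approximate}.

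First take $g$ smooth, and reduce by density (Step 4) to $g$ constant equal to $1$ near a compact set. Split $g=\sum_k g_k$ by a fine partition of unity, and freeze $\sigma(A)_m$ on each piece. The commutators $[M_{\mathrm p g_k},|A|^{-\frac12}]$ lie in $\mathrm{C}\Psi^{-\frac m2-1}$, so after localization they lie in $(\mathcal{L}_{\frac{2d}{m},\infty})_0$ by the Cwikel estimates. Hence $T$ equals $|A|^{-\frac12}M_{\mathrm pg^2}|A|^{-\frac12}$ modulo $(\mathcal{L}_{\frac dm,\infty})_0$.

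For this operator, powers are controlled through the microlocal Weyl law: applying Theorem \ref{microlocal_weyl_law} with $Q=M_{g^2}$, $\phi=\mathrm p\mathbf 1$, the counting functions of $\chi_{[0,\lambda]}(A)$ localized by $g^2$ are known. A Birman--Solomyak-type comparison (the variational principle, $\mu$ being monotone under $0\le X\le Y$) then transfers these to the counting function of $|A|^{-\frac12}M_{\mathrm pg^2}|A|^{-\frac12}$. The ordering is obtained by sandwiching $g^2$ between piecewise-constant step functions on small cubes; on each cube one has $\mathbf 1_Q\mathrm p$, and Theorem \ref{zeta pole}(ii) applies directly there.

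\textbf{Step 4 (density).} Approximate a general $0\le g\in C_c$ by smooth functions in sup norm with a common support. By the Cwikel bound of Step 1 the corresponding operators converge in $\mathcal{L}_{\frac dm,\infty}$, and the right-hand sides converge, so Lemma \ref{ideal approximate} concludes. I expect Step 3 — controlling the non-commutativity so that the localized zeta/Weyl asymptotics genuinely compute $\mu(t,T)$ — to be the main work.
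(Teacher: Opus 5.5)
Steps~1 and~4 are fine; they match the paper's Cwikel--H\"older bound and its final uniform approximation of $g$. Step~3, however, is where the actual work lies, and it has a genuine gap.

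Both results you invoke compute quantities in which the cutoff sits \emph{outside} a function of $|A|$. Theorem~\ref{zeta pole}(ii) gives the poles of $\Tr(M_{\mathrm P}|A|^{-z}M_{\mathrm P})$, and Theorem~\ref{microlocal_weyl_law} gives the growth of $\Tr(M_{\mathrm pg^2}\chi_{[0,\lambda]}(|A|))$. Neither is a spectral invariant of $M_{\mathrm P}|A|^{-1}M_{\mathrm P}$ or of $|A|^{-\frac12}M_{\mathrm pg^2}|A|^{-\frac12}$. Indeed, in general $\Tr(PX^zP)\neq\Tr((PXP)^z)$ and $\Tr(P\chi_{(s,\infty)}(X)P)\neq\Tr(\chi_{(s,\infty)}(PXP))$ when the projection $P$ does not commute with $X$.

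So applying Theorem~\ref{zeta pole}(ii) on a cube does not give what you need. Passing to step functions does not remove the non-commutativity; it only destroys the smoothness that made it controllable, since $[|A|^{-1},M_{\mathbf 1_Q}]$ is not a lower-order $\Psi$DO. Monotonicity of $\mu$ only sandwiches your operator between the operators $\sum_k c_k^{\pm}|A|^{-\frac12}M_{\mathrm p\mathbf 1_{Q_k}}|A|^{-\frac12}$, whose asymptotics are just as unknown. Adding up the cubes would further require a Birman--Solomyak asymptotic-orthogonality lemma, which you do not address. Since the Wiener--Ikehara route through $\Tr(T^z)$ in Step~2 is, as you concede, unavailable, nothing in the proposal actually produces $\lim_{t\to\infty}t^{\frac md}\mu(t,T)$.

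The missing idea is the noncommutative Tauberian theorem, Lemma~\ref{nc-Tauberian}. Take $Z=|A|^{-1}$, $Y=M_{g^2}$, and the trace localized by $\mathrm P=\mathbf 1_\Omega\mathrm p$ with $\Omega\supset\operatorname{supp}g$. The lemma converts the \emph{computable} function $\Tr(Z^zY^z)=\Tr_{\mathrm P}(|A|^{-z}M_{g^{2z}})$ into the asymptotics of $Y^{\frac12}ZY^{\frac12}=M_g|A|^{-1}M_g$. The simple pole of that function at $z=\frac dm$ comes from the kernel analysis of Theorems~\ref{holoextend} and~\ref{zeta pole}. The hypotheses are $p=\frac dm>2$, $Z\in\mathcal L_{p,\infty}$ and $[Z,Y^{\frac12}]=[|A|^{-1},M_g]\in\mathcal L_{\frac p2,\infty}$.

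This is why the paper keeps $g$ smooth and scalar. Then $[|A|^{-1},M_g]\in\mathrm C\Psi^{-m-1}$, which after localization lies in $\mathcal L_{\frac d{m+1},\infty}\subset\mathcal L_{\frac d{2m},\infty}$ for $m\le1$. General $m$ is reached by applying the result to $|A|^{\frac12}$ and $\sqrt g$ and using $\mu(t,|X|^2)=\mu(t,X)^2$. Only afterwards is $g$ allowed to be merely continuous.

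Your commutator reduction is not needed. With $X=|A|^{-\frac12}M_{\mathrm pg}$ one has $T=X^*X$ and $|A|^{-\frac12}M_{\mathrm pg^2}|A|^{-\frac12}=XX^*$, which have the same singular value function. Moreover, $[M_{\mathrm pg_k},|A|^{-\frac12}]$ has order $-\frac m2-1$ only if $\mathrm p$ commutes with the principal symbol of $|A|^{-\frac12}$.

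Finally, a caution about the constant. The polar-coordinate computation you describe for $\mathrm pg^2|\sigma(A)_m|^{-1}\mathrm p$ yields the integrand $\tau\bigl((g^2\mathrm p|\sigma(A)_m|^{-1}\mathrm p)^{\frac dm}\bigr)$. This coincides with $\tau\bigl(g^{\frac{2d}m}|\sigma(A)_m|^{-\frac dm}\mathrm p\bigr)$ if $\mathrm p$ commutes with $|\sigma(A)_m(x,\xi)|$ (or if $d=m$), but not in general.

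Without such commutation, the last equality in the statement, and the residue you assign to $\Tr(T^z)$, can fail. Take $\mathcal M=M_2(\mathbb C)$, $\mathrm p=e_{11}$ and $A=J^m\otimes X_0^{-1}$ with $X_0>0$ non-diagonal. Then $T=(M_gJ^{-m}M_g)\otimes(X_0)_{11}e_{11}$, so the left-hand side carries the factor $(X_0)_{11}$. The right-hand side carries $\bigl((X_0^{\frac dm})_{11}\bigr)^{\frac md}$ instead, and these differ for $d\neq m$ by strict Jensen.

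In the paper's proof this is obscured by treating $\tau_{\mathrm p}(a)=\tau(\mathrm pa\mathrm p)$ as a trace, which it is not unless $\mathrm p$ is central. The commuting case is the one actually used in Theorem~\ref{asymp-nu-pn}, where $|\sigma(T)_{-m}|^2=\mathrm p|\sigma(T)_{-m}|^2\mathrm p$.
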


The proof of Theorem \ref{asymp ellip}  is based on the analysis on the Riemann $\zeta$-functions provided in Section \ref{section-def-zeta}, and a type of Tauberian theorem. However, in order to use the classical Wiener-Ikehara Tauberian theorem introduced in the previous section, we have to determine whether the function
$$
\operatorname{Tr}\big((M_{\mathrm{p}g}|A|^{-1}M_{\mathrm{p}g})^z\big) - \frac{c}{z-p}, \quad \Re z > p= \frac d m
$$
admits a continuous extension to the closed half-plane $\Re z \ge \frac d m$. In fact, verifying this condition is nontrivial. Instead, it follows from Definition \ref{az zeta def} and Theorem \ref{zeta pole} that a more tractable condition is that the function
$$
\operatorname{Tr}\big(|A|^{-z}M_{\mathrm{p}g}^z\big) - \frac{c}{z-p}, \quad \Re z > p =\frac d m
$$
extends continuously to $\Re z \geq \frac  d m $. Therefore, due to the noncommutativity of $|A|^{-1} $ and $M_{\mathrm{p}g}$, the classical Wiener-Ikehara Tauberian theorem cannot be directly applied to the current situation.\ To overcome these difficulties, we apply the noncommutative Tauberian theorem \cite[Theorem 1.2]{MSZ22}, which is stated in the following lemma.

\begin{lemma}\label{nc-Tauberian}
	Let $p > 2$ and let $0 \le Y, Z \in \mathcal{L}_{\infty}$ be such that $Z \in \mathcal{L}_{p, \infty}$, $[Z, Y^{\frac{1}{2}}] \in \mathcal{L}_{\frac{p}{2}, \infty}$. Suppose there exists a constant $c \ge 0$ such that the function
	$$
	\operatorname{Tr}\big(Z^z Y^z\big) - \frac{c}{z-p}, \quad z \in \mathbb{C}, \quad \Re z > p,
	$$
	admits a continuous extension to the closed half-plane $\{z \in \mathbb{C} : \Re z \ge p\}$. Then
	$$
	\lim_{s \rightarrow 0} s^p \operatorname{Tr}\big(e_s^{\perp}(Y^{\frac{1}{2}} Z Y^{\frac{1}{2}})\big) = \lim_{h \rightarrow 0} h \operatorname{Tr}\big(e_h^{\perp}(Y^{\frac{p}{2}} Z^p Y^{\frac{p}{2}})\big) = \frac{c}{p}.
	$$
	Equivalently,
	$$
	\lim_{t \rightarrow \infty} t\ \mu(t, Y^{\frac{1}{2}} Z Y^{\frac{1}{2}})^p = \lim_{t \rightarrow \infty} t\ \mu(t, Y^{\frac{p}{2}} Z^p Y^{\frac{p}{2}}) = \frac{c}{p}.
	$$
\end{lemma}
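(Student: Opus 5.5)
This lemma is quoted from \cite[Theorem 1.2]{MSZ22}, where it is proved for $(B(H),\operatorname{tr})$. The plan is to check that the argument there uses only properties of singular value functions that hold in any semifinite von Neumann algebra with its trace, here $(B(L_2(\mathbb{R}^d))\overline{\otimes}\mathcal{M},\operatorname{Tr})$. The proof has three steps: a Tauberian step for a positive operator, a commutator-replacement step, and a conversion between distribution functions and singular values.

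\textbf{Step 1 (Tauberian step).} Set $X := Y^{\frac p2}Z^pY^{\frac p2}\ge 0$. Since $Z\in\mathcal{L}_{p,\infty}$, we have $X\in\mathcal{L}_{1,\infty}$. Put $F(\lambda):=\operatorname{Tr}(e^\perp_{1/\lambda}(X))$, which is non-decreasing, so that
\[
\operatorname{Tr}(X^{z/p})=\int \lambda^{-z/p}\,dF(\lambda).
\]
Suppose we know that $\operatorname{Tr}(X^{z/p})-\frac{c}{z-p}$ extends continuously to $\Re z\ge p$. Then the Wiener--Ikehara theorem, after the change of variable $w=z/p$ (which turns the pole $\frac{c}{z-p}$ into $\frac{c/p}{w-1}$), gives $F(\lambda)\sim \frac{c}{p}\lambda$. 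This is the middle identity, $h\operatorname{Tr}(e_h^\perp(X))\to c/p$.

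\textbf{Step 2 (commutator replacement).} We must show that
\[
\operatorname{Tr}\big((Y^{\frac p2}Z^pY^{\frac p2})^{z/p}\big)-\operatorname{Tr}(Z^zY^z)
\]
is holomorphic on a neighbourhood of $\Re z\ge p$, or at least continuous up to $\Re z=p$. This is where the hypothesis $[Z,Y^{1/2}]\in\mathcal{L}_{p/2,\infty}$ enters. Write $W:=Y^{1/2}ZY^{1/2}$. By repeatedly commuting factors of $Y^{1/2}$ past $Z$, one shows that $W^n-Y^{n/2}Z^nY^{n/2}$ is a sum of terms each containing one commutator and $n-2$ further factors of $Z$. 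Such terms lie in $\mathcal{L}_{q,\infty}$ for some $q<n/p$, and are therefore trace class with uniform control once $\Re z>p-\varepsilon$. The obstacle is passing from integer powers to complex powers $z$. In \cite{MSZ22} this is done with double operator integrals: one uses estimates of $\|f(A)-f(B)\|$ in weak Schatten ideals for $f(t)=t^{z}$, together with the H\"older and quasi-triangle inequalities in $\mathcal{L}_{p,\infty}$. These tools are valid in general semifinite algebras (see \cite{LSZ12,DPS23}). The restriction $p>2$ is exactly what ensures $\mathcal{L}_{p/2,\infty}$ is a quasi-Banach ideal on which the trace-class arguments close up. I expect this step to be the main difficulty, and the plan is to transcribe the argument of \cite{MSZ22} line by line, replacing $\mu(j,\cdot)$ by $\mu(t,\cdot)$ and $\operatorname{tr}$ by $\operatorname{Tr}$.

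\textbf{Step 3 (comparing the two operators and converting to singular values).} Compare $W^p$ with $X$ using \cite[Lemma 1.17]{MSZ22}, or the same commutator estimate: their difference lies in $(\mathcal{L}_{1,\infty})_0$. Lemma \ref{weak weyl} then transfers the asymptotics from $X$ to $W^p$. Finally, Lemma \ref{singular func} converts the statements about distribution functions into the equivalent statements about $t\,\mu(t,\cdot)^p$ as $t\to\infty$.
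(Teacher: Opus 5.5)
Your plan is the paper's own: the authors give no independent argument, but invoke \cite[Theorem 1.2]{MSZ22} and assert that its proof carries over mutatis mutandis to $\operatorname{Tr}=\operatorname{tr}\otimes\tau$ on $B(L_2(\mathbb{R}^d))\overline{\otimes}\mathcal{M}$ (and to the localized trace $\operatorname{Tr}_{\mathrm{P}}$ needed later in the proof of Theorem~\ref{asymp ellip}). Two small fixes to your sketch: a single commutation replaces one factor of $Z$, so the one-commutator terms of $W^n-Y^{n/2}Z^nY^{n/2}$ carry $n-1$ further factors of $Z$ and lie in $\mathcal{L}_{p/(n+1),\infty}$ (with $n-2$ factors there would be no gain over $W^n\in\mathcal{L}_{p/n,\infty}$); and since this argument controls $\operatorname{Tr}(W^z)-\operatorname{Tr}(Z^zY^z)$ rather than $\operatorname{Tr}(X^{z/p})-\operatorname{Tr}(Z^zY^z)$, you should run Wiener--Ikehara on $W$ first and only then use Step 3 to pass to $X$.
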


Indeed, we need to make some adjustments of the above lemma to our situation. Firstly, the (weak) $\mathcal{L}_{p }$ norms are associated to the tensor trace $\operatorname{Tr}$ on $B(L_2(\mathbb{R}^d)) \overline{\otimes} \mathcal{M}$. Secondly, we have to consider the local version, namely, the trace $\operatorname{Tr}_{\mathrm{P}} := \operatorname{Tr}(M_{ \mathrm{P}} \cdot  M_{ \mathrm{P}}) $ on $B(L_2(\Omega)) \overline{\otimes} \mathcal{M}$. But since the proofs in \cite{MSZ22} work in the current situation {\textit{  mutatis mutandi}}, we omit the proof of the above lemma.

\begin{proof}[Proof of Theorem \ref{asymp ellip}]
	For the $\tau$-finite projection $ \mathrm{p} \in 	\mathcal{M}$, define the trace $\tau_{\mathrm{p}}$ for $a \in \mathcal{M}$ as
	$$
	\tau_{\mathrm{p}}(a) := \tau(\mathrm{p} a \mathrm{p}) \;\;\big(= \tau(a \mathrm{p}) = \tau(\mathrm{p} a)\big).
	$$
	The singular value function with respect to this localized trace is given by
	$$
	\mu_{\mathrm{p}}(t, a) := \inf \big\{ s > 0 : \tau_{\mathrm{p}}( e_s^\perp(|a|) ) \le t \big\}, \quad t > 0,
	$$
	or equivalently,
	$$	\mu_{\mathrm{p}}(t, a) := \inf \big\{\|ae\|: e\in\mathcal{M} \; \text{is a projection},\;  \tau_{\mathrm{p}} (e^\perp)\leq t \big\}.$$
	Using this equivalent definition of singular value function, we see clearly that
	$$\mu_{\mathrm{p}}(t, a) = \mu (t, a\mathrm{p}), $$
	where $\mu (t, \cdot)=\inf \big\{ s > 0 : \tau ( e_s^\perp(|\cdot |) ) \le t \big\}$ denotes the usual singular value function. Accordingly, if $a\in \mathcal{M} $ is positive,
	$$\mu_{\mathrm{p}}(t, a) = \mu (t, a\mathrm{p}) = \mu_{\mathrm{p}}(t, a\mathrm{p}) =\mu_{\mathrm{p}}(t, \mathrm{p}a)=\mu (t, \mathrm{p}a\mathrm{p}) .$$
	For the same reason, if we consider $	\Tr_{\mathrm{p}} =  \operatorname{tr} \otimes \tau_{\mathrm{p}}$ on $B(L_2(\mathbb{R}^d)) \,\overline{\otimes}\, \mathcal{M}$ and denote
	$$
	\mu_{\mathrm{p}}(t, A) := \inf \big\{ s > 0 : \Tr_{\mathrm{p}}( e_s^\perp(|A|) ) \le t \big\}, \quad t > 0,
	$$
	then we have
	\begin{equation}
		\label{svf-local=global}
		\mu(t, M_{\mathrm{p}g}|A|^{-1}M_{\mathrm{p}g})=\mu_{\mathrm{p}}(t, M_{ g}|A|^{-1}M_{ g}).
	\end{equation}

	We firstly assume that $0\le g\in C_c^\infty(\mathbb{R}^d)$, and let $\Omega $ be a compact set containing $ \operatorname{supp}(g) $. Assume also $m  \in  (0, 1 ] \cap (0, \frac d 2 )$. Consider the function
	$$z \mapsto  \operatorname{Tr}_{\mathrm{P}}  \Big(|A|^{-z}   M_{g^{2z}} \Big), \quad \Re z > \frac d m . $$
	By Definition \ref{az zeta def} and Theorem \ref{zeta pole}, this function admits a meromorphic continuation to $\Re z > 0$, with the right-most simple pole at $z = \frac{d}{m}$. The residue at this pole is
	$$
	\operatorname{Res}_{z=\frac{d}{m}}  \operatorname{Tr}_{\mathrm{P}}  \Big(|A|^{-z}   M_{g^{2z}} \Big)
	= \frac{1}{m (2\pi)^d} \int_{\mathbb{S}^{d-1} } \int_{\Omega} \tau\Big( g(x)^{\frac{2d}{m}} |\sigma(A)_m(x,\xi)|^{-\frac{d}{m}}  \mathrm{p}\Big)\, dx \, d\xi.
	$$
	Next, in order to apply Lemma \ref{nc-Tauberian}, we need to verify the assumptions for $p = \frac d m > 2 $ and $Z = |A|^{-1} , \ Y = M_{g^2 }$. Writing $ |A|^{-1} = |A|^{-1} J^m J^{-m}$, and using the fact
	$$   \|J^{-m} \|_{\mathcal{L}_{p,\infty}\big(B(L_2(\Omega))\overline{\otimes}    \mathcal{M}_\mathrm{p}\big)}= \|J^{-m} M_{\mathrm{P}}\|_{\mathcal{L}_{p,\infty}\big(B(L_2(\mathbb{R}^d))\overline{\otimes} \mathcal{M}\big)}, $$
	we know from Lemma \ref{Cwikel} and the H\"older inequality that the operator $ |A|^{-1}  \in  \mathcal{L}_{p,\infty}\big(B(L_2(\Omega))\overline{\otimes}\   \mathcal{M}_\mathrm{p}\big)$. For the commutator $[|A|^{-1},  M_{g }]$, notice that by Proposition \ref{cl M}, $[|A|^{-1},  M_{g }]\in \mathrm{C}\Psi^{-m-1}(\mathbb{R}^d; \mathcal{M})$. So the same argument as for $ |A|^{-1} $ gives the condition
	$$[|A|^{-1},  M_{g }]\in \mathcal{L}_{\frac{d}{m+1},\infty}\big(B(L_2(\Omega))\overline{\otimes}  \mathcal{M}_\mathrm{p}\big)\subset  \mathcal{L}_{\frac{d}{2m},\infty}\big(B(L_2(\Omega))\overline{\otimes} \mathcal{M}_\mathrm{p}\big).$$
	So we have verified that the operators $Z =   |A|^{-1}  $ and $Y = M_{g^2 }$ satisfy the conditions in Lemma \ref{nc-Tauberian}. Thus,
	\begin{equation}\label{asymp ellip-eq-s-l}
		\begin{split}
			&\lim_{t \to \infty} t^{\frac{m}{d}} \, \mu_{\mathrm{P}}\big(t, M_{g} |A|^{-1} M_{g} \big)\\
			= &\left[
			\frac{1}{(2\pi)^d} \int_{\mathbb{S}^{d-1}} \int_{\mathbb{R}^d} \tau \Big( g(x)^{\frac{2d}{m}} |\sigma(A)_m(x,\xi)|^{-\frac{d}{m}} \mathrm{p} \Big)\, dx \, d\xi
			\right]^{\frac{m}{d}}.
		\end{split}
	\end{equation}
	Then by \eqref{svf-local=global} (also holds true for $\mu_{\mathrm{P}}$), we get back to $\mu(t, \cdot )$, namely,
	\begin{equation}\label{asymp ellip-eq-s}
		\begin{split}
			&\lim_{t \to \infty} t^{\frac{m}{d}} \, \mu\big(t, M_{g \mathrm{p}} |A|^{-1} M_{g \mathrm{p}} \big)\\
			=&\left[
			\frac{1}{(2\pi)^d} \int_{\mathbb{S}^{d-1} } \int_{\mathbb{R}^d} \tau \Big( g(x)^{\frac{2d}{m}} |\sigma(A)_m(x,\xi)|^{-\frac{d}{m}} \mathrm{p} \Big)\, dx \, d\xi
			\right]^{\frac{m}{d}}.
		\end{split}
	\end{equation}

	Let us now get rid of the assumption $ g\in C_c^\infty(\mathbb{R}^d)$. Note that
	\begin{equation*}
		|A|^{-1} M_{g^2  } - M_{g } |A|^{-1} M_{g  }
		=    [ |A|^{-1} , M_{g}]    M_{g }  \in \Big(\mathcal{L}_{\frac{d}{m}, \infty}\Big)_0.
	\end{equation*}
	So \eqref{asymp ellip-eq-s-l} is equivalent to
	\begin{equation}\label{asymp ellip-eq-ns-l}
		\lim_{t \to \infty} t^{\frac{m}{d}} \, \mu_{\mathrm{P}}\big(t,   |A|^{-1} M_{g ^2 } \big)
		=
		\left[
		\frac{1}{(2\pi)^d} \int_{\mathbb{S}^{d-1} } \int_{\mathbb{R}^d} \tau \Big( g(x)^{\frac{2d}{m}} |\sigma(A)_m(x,\xi)|^{-\frac{d}{m}} \mathrm{p} \Big)\, dx \, d\xi
		\right]^{\frac{m}{d}}.
	\end{equation}
	and thus \eqref{asymp ellip-eq-s} is equivalent to
	\begin{equation}\label{asymp ellip-eq-ns}
		\lim_{t \to \infty} t^{\frac{m}{d}} \, \mu\big(t,    |A|^{-1} M_{g^2 \mathrm{p}}  \big)
		=
		\left[
		\frac{1}{(2\pi)^d} \int_{\mathbb{S}^{d-1} } \int_{\mathbb{R}^d} \tau \Big( g(x)^{\frac{2d}{m}} |\sigma(A)_m(x,\xi)|^{-\frac{d}{m}} \mathrm{p} \Big)\, dx \, d\xi
		\right]^{\frac{m}{d}}.
	\end{equation}
	With this formula, for a general continuous nonnegative function $\tilde{g}$, one can approximate $\tilde{g}$ by a sequence $\{g_k\}_{k\geq 1} \subset C_c^\infty(\mathbb{R}^d)$ uniformly. Taking the limit on $k$ in both \eqref{asymp ellip-eq-ns-l} and \eqref{asymp ellip-eq-ns}, we find that both \eqref{asymp ellip-eq-ns-l} and \eqref{asymp ellip-eq-ns} hold for general nonnegative $\tilde{g}\in  C_c(\mathbb{R}^d)$.

	The conclusion for all $m>0$ is done by iteration. Take the range $m  \in  (0, 2 ] \cap (0, d )$ for instance.
	For $A \in \mathrm{C}\Psi^{m}(\mathbb{R}^d; \mathcal{M})$ and $g \in C_c (\mathbb{R}^d)$, applying the proved formula \eqref{asymp ellip-eq-ns-l} to $|A|^{\frac 1 2 } \in \mathrm{C}\Psi^{\frac m  2}(\mathbb{R}^d; \mathcal{M})$ and $\sqrt{g}\in C_c (\mathbb{R}^d)$, we have
	\begin{equation*}
		\lim_{t \to \infty} t^{\frac{m}{2d}} \, \mu_{\mathrm{P}}\big(t,   |A|^{-\frac 1 2 } M_{g   } \big)
		=
		\left[
		\frac{1}{(2\pi)^d} \int_{\mathbb{S}^{d-1} } \int_{\mathbb{R}^d} \tau \Big( g(x)^{\frac{2d}{m}} |\sigma(A)_m(x,\xi)|^{-\frac{ d}{m}} \mathrm{p} \Big)\, dx \, d\xi
		\right]^{\frac{m}{2d}},
	\end{equation*}
	where we have substituted $\sigma(|A|^{\frac 1 2 })_{\frac m 2 }(x,\xi) $ by $|\sigma(A)_m(x,\xi) |^{\frac 1 2 }$.
	Then using the fact $ \mu  (t,   |x|^2)  = \mu  (t,  x)^2 $, we arrive at
	\begin{equation*}
		\begin{split}
			&\lim_{t \to \infty} t^{\frac{m}{2d}} \, \mu_{\mathrm{P}}\big(t,  M_{g   }  |A|^{-1} M_{g   } \big)^{\frac 1 2 }\\
			=
			&\left[
			\frac{1}{(2\pi)^d} \int_{\mathbb{S}^{d-1} } \int_{\mathbb{R}^d} \tau \Big( g(x)^{\frac{2d}{m}} |\sigma(A)_m(x,\xi)|^{-\frac{d}{m}} \mathrm{p} \Big)\, dx \, d\xi
			\right]^{\frac{m}{2d}}
		\end{split}
	\end{equation*}
	which is nothing but
	\begin{equation*}
		\begin{split}
			&
			\lim_{t \to \infty} t^{\frac{m}{ d}} \, \mu_{\mathrm{P}}\big(t,  M_{g   }  |A|^{-1} M_{g   } \big)\\
			=
			&\left[
			\frac{1}{(2\pi)^d} \int_{\mathbb{S}^{d-1} } \int_{\mathbb{R}^d} \tau \Big( g(x)^{\frac{2d}{m}} |\sigma(A)_m(x,\xi)|^{-\frac{d}{m}} \mathrm{p} \Big)\, dx \, d\xi
			\right]^{\frac{m}{d}} .
		\end{split}
	\end{equation*}

	The proof for $\lim_{t \rightarrow \infty} t^{\frac{m}{d}} \mu(t, \mathrm{p}g^{2}|\sigma(A)_m|^{-1}\mathrm{p}) $ is much simpler: When we consider the trace $\int_{\mathbb{R}^d} \otimes \int_{\mathbb{S}^{d-1} } \otimes  \tau_{\mathrm{p}}$ on  $L_{\infty}(\mathbb{R}^d)\overline{\otimes} L_{\infty}(\mathbb{S}^{d-1})\overline{\otimes}\mathcal{M}$, the involved elements $ g$ and $ |\sigma(A)_m|^{-1}$ are commutative, and the weak $L_p$ properties of $ |\sigma(A)_m|^{-1}$ are guaranteed by the decay of the $\Psi$DOs. Simply applying Theorem \ref{zeta pole} and then Lemma \ref{nc-Tauberian} concludes the desired result.
\end{proof}

By applying Lemma \ref{weak weyl} and Lemma \ref{ideal approximate}, we generalize the perturbation technique of Birman-Solomyak introduced in \cite[Section 4]{BS70}. This generalization extends the singular value sequence ($\mu(n,\cdot), n \in \mathbb{N}$) of a compact operator to the singular value function ($\mu(t,\cdot), t \in \mathbb{R}_{+}$) of an element in a semifinite von Neumann algebra. Furthermore, by adapting the proof strategy from \cite{BS77,BS79,BS79a} and \cite[Theorem 6.1]{P23}, we extend the singular spectral asymptotic limit for the inverses of strictly positive $\mathcal{M}$-valued elliptic $\Psi$DOs (of order $m> 0$) established in Theorem \ref{asymp ellip} to general $\mathcal{M}$-valued classical  $\Psi$DOs of negative order.


\begin{theorem}\label{asymp-nu-pn}
	Let $ T \in \mathrm{C}\Psi^{-m}\big(\mathbb{R}^d; \mathcal{M}\big) $ with $ m > 0 $, and suppose there exists a function $ \nu \in C_c (\mathbb{R}^d) $ and a $ \tau $-finite projection $ \mathrm{p} \in \mathcal{M} $ such that $ T = T M_{\mathrm{p}\nu} $. Then $ T \in \mathcal{L}_{\frac{d}{m}, \infty}\big(B(L_2(\mathbb{R}^d)) \overline{\otimes} \mathcal{M}\big)$, and
	\begin{align*}
		\lim_{t \to \infty} t^{\frac{m}{d}} \mu(t, T) &= (2 \pi)^{-m} \lim_{t \to \infty} t^{\frac{m}{d}} \mu(t, \sigma(T)_{-m}) \\
		&= d^{-\frac{m}{d}} (2 \pi)^{-m} \left[ \int_{\mathbb{S}^{d-1} } \int_{\mathbb{R}^d} \tau\Big(|\sigma(T)_{-m}(x, \xi)|^{\frac{d}{m}}\Big)\, dx \, d\xi \right]^{\frac{m}{d}}.
	\end{align*}
\end{theorem}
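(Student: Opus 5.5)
The plan is to reduce the statement to Theorem \ref{asymp ellip} by writing $|T|^2$, localized by $M_{\mathrm{p}\nu}$, as a compression of an inverse elliptic operator, and then to remove the ellipticity assumption by a Birman--Solomyak perturbation argument built on Lemma \ref{weak weyl} and Lemma \ref{ideal approximate}.

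\textbf{Step 1: reduce to $T^*T$.} Since $T=TM_{\mathrm{p}\nu}$, we have $\mu(t,T)^2=\mu(t,T^*T)$ and $T^*T=M_{\mathrm{p}\bar\nu}\,T^*T\,M_{\mathrm{p}\nu}$. By Proposition \ref{cl M}, $T^*T\in \mathrm{C}\Psi^{-2m}(\mathbb{R}^d;\mathcal{M})$ with principal symbol $|\sigma(T)_{-m}|^2$. It therefore suffices to prove Weyl's law for localized positive operators $M_{\mathrm{p}g}SM_{\mathrm{p}g}$, where $S\in\mathrm{C}\Psi^{-2m}$ is positive and $g\in C_c$. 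The target is the limit $d^{-2m/d}(2\pi)^{-2m}\bigl[\iint\tau(|\sigma(S)_{-2m}|^{d/(2m)}\mathrm{p})\bigr]^{2m/d}$, together with the analogous statement for the symbol side.

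\textbf{Step 2: the elliptic model case.} If $\sigma(S)_{-2m}$ is invertible with uniform bounds, then $S=|A|^{-1}$ modulo lower order for the elliptic operator $A$ with principal symbol $\sigma(S)_{-2m}^{-1}$. To arrange this:
\begin{itemize}
\item Construct $A$ via Theorem \ref{main_complex_powers_theorem} and the parametrix of Theorem \ref{parabelong}, adding a constant if needed to make it strictly positive.
\item Write $S=|A|^{-1}+R$ with $R\in\mathrm{C}\Psi^{-2m-1}$.
\item Observe that $M_{\mathrm{p}g}RM_{\mathrm{p}g}\in\mathcal{L}_{d/(2m+1),\infty}\subset(\mathcal{L}_{d/(2m),\infty})_0$, using Cwikel estimates (Lemma \ref{Cwikel}, Lemma \ref{Cwikel-0-2}) with $J^{-2m-1}$ and Remark \ref{BS-example}.
\end{itemize}
Theorem \ref{asymp ellip} then gives the asymptotics of $M_{\mathrm{p}g}|A|^{-1}M_{\mathrm{p}g}$, and Lemma \ref{weak weyl} transfers them to $M_{\mathrm{p}g}SM_{\mathrm{p}g}$.

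\textbf{Step 3: non-elliptic $S$.} For general $S\ge0$, set $S_\varepsilon=S+\varepsilon J^{-2m}$. Its principal symbol $|\sigma(T)_{-m}|^2+\varepsilon|\xi|^{-2m}$ is uniformly invertible, so Step 2 applies to $S_\varepsilon$. By the Cwikel estimates, $\|M_{\mathrm{p}g}(S_\varepsilon-S)M_{\mathrm{p}g}\|_{\mathcal{L}_{d/(2m),\infty}}\lesssim\varepsilon$. The right-hand sides converge as $\varepsilon\to0$ by dominated convergence for $\tau(\cdot\,\mathrm{p})$, which is where the $\tau$-finiteness of $\mathrm{p}$ is used. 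Lemma \ref{ideal approximate} then yields the limit for $S$. Since $\mathrm{p}$ and $\nu$ may leave some $\mathrm{p}$-factors inside $\tau$, note that $\tau(|\sigma(T)_{-m}\nu\mathrm{p}|^{d/m})$ equals $\tau(|\sigma(T)_{-m}|^{d/m})$ because $\sigma(T)_{-m}=\sigma(T)_{-m}\nu\mathrm{p}$; this follows from the symbolic identity $\sigma(T)=\sigma(TM_{\mathrm{p}\nu})$ at principal level. This fixes the precise localization in the formula.

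\textbf{Step 4: the symbol side and the main difficulty.} The equality with $(2\pi)^{-m}\lim t^{m/d}\mu(t,\sigma(T)_{-m})$ is a direct computation of the distribution function on $\mathbb{R}^d\times\mathbb{R}^d$ with trace $dx\,d\xi\otimes\tau$. The homogeneity of $\sigma(T)_{-m}$ in $\xi$ gives exactly $d^{-m/d}\bigl[\iint_{\mathbb{S}^{d-1}}\tau(|\sigma(T)_{-m}|^{d/m})\bigr]^{m/d}$; here the symbol is understood as a function on all of $\mathbb{R}^d\times\mathbb{R}^d$, with the factor $(2\pi)^{-d}$ absorbed.

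I expect the main obstacle to be Step 2, and in particular two points:
\begin{itemize}
\item Ensuring that the range restrictions on $m$ used inside Theorem \ref{asymp ellip} are covered for the order $2m$ needed here.
\item Justifying that $\sigma(S)_{-2m}^{-1}$, which grows like $|\xi|^{2m}$, defines a classical elliptic $A$ whose inverse matches $S$ modulo lower order even though $\sigma(S)$ is only controlled where $\nu\ne0$. To handle this, I would first try replacing $S$ by $S+\varepsilon J^{-2m}$ globally, as in Step 3, so that ellipticity holds everywhere.
\end{itemize}
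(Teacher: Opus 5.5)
Your proposal is correct and is essentially the paper's proof. The paper also regularizes $|T|^2$ by adding $\varepsilon^2 M_{\mathrm{p}g}J^{-2m}M_{\mathrm{p}g}$, compares it with $M_{\mathrm{p}g}|A_\varepsilon|^{-2}M_{\mathrm{p}g}$ for an elliptic $A_\varepsilon$ with principal symbol $(|\sigma(T)_{-m}|^2+\varepsilon^2|\xi|^{-2m})^{-\frac12}$ (the $\mathrm{C}\Psi^{-2m-1}$ difference lying in $(\mathcal{L}_{\frac{d}{2m},\infty})_0$ by the Cwikel estimates), applies Theorem \ref{asymp ellip} and Lemma \ref{weak weyl}, and lets $\varepsilon\to0$ via dominated convergence and Lemma \ref{ideal approximate}.

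The one detail to tighten is the choice of $g$: take $0\le g\in C_c^\infty(\mathbb{R}^d)$ with $g\equiv1$ near $\operatorname{supp}\nu$. Then $T=TM_{\mathrm{p}g}$, $T^*T=M_{\mathrm{p}g}T^*TM_{\mathrm{p}g}$, and $\sigma(T)_{-m}=\sigma(T)_{-m}\,g\,\mathrm{p}$ all follow from Proposition \ref{cl M}, which cannot be applied to $M_{\mathrm{p}\nu}$ directly because $\nu$ is only continuous and possibly complex-valued.
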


\begin{proof}
	By assumption, we have $|T|^{2} = M_{\mathrm{p}\bar{\nu}} |T|^{2} M_{\mathrm{p}\nu}$.
	Choose $0\le g \in C_c^\infty(\mathbb{R}^d)$ such that $g \equiv 1$ on an open neighborhood of $ \operatorname{supp}(\nu) $. Since $ T = T M_{\mathrm{p}\nu} $, it also holds true that $ T = T M_{\mathrm{p} g} $, so
	$$  |\sigma(T)_{-m}(x, \xi)|^{2} = g(x)\ \mathrm{p}\ |\sigma(T)_{-m}(x, \xi)|^{2}\ \mathrm{p}\  g(x).$$
	Define for $\varepsilon > 0$ the regularized operator as
	$$
	P_\varepsilon := \Big(|T|^2 + \varepsilon^2 M_{\mathrm{p}g} J^{-2m} M_{\mathrm{p}g}\Big)^{\frac{1}{2}}.
	$$
	By Proposition \ref{cl M}, $|T|^2 \in \mathrm{C}\Psi^{-2m}\big(\mathbb{R}^d ; \mathcal{M}\big)$, and
	$$
	P_\varepsilon^2 - |T|^2 = \varepsilon^2 M_{\mathrm{p}g} J^{-2m}  M_{\mathrm{p}g} \in \mathrm{C}\Psi^{-2m}\big(\mathbb{R}^d ; \mathcal{M}\big).
	$$
	It follows from Lemmas \ref{Cwikel} and \ref{Cwikel-0-2} that $P_\varepsilon^2 \to |T|^2$ in $\mathcal{L}_{\frac{d}{2m}, \infty}$ as $\varepsilon \to 0$.

	Let $A_\varepsilon \in \mathrm{C}\Psi^m\big(\mathbb{R}^d; \mathcal{M}\big)$ be an elliptic strictly positive operator, with principal symbol
	$$
	\sigma(A_\varepsilon)_m(x, \xi) = \big(|\sigma(T)_{-m}(x, \xi)|^2 + \varepsilon^2 |\xi|^{-2m} \big)^{-\frac{1}{2}};
	$$
	the ellipticity of $A_\varepsilon$ follows clearly from the above formula, while the strict positivity can be obtained by adding a small constant. So we are able to apply Theorem \ref{asymp ellip} to deduce that
	\begin{align*}
		&\lim_{t \to \infty} t^{\frac{2m}{d}} \mu(t, M_{\mathrm{p}g} |A_\varepsilon|^{-2} M_{\mathrm{p}g}) \\
		=& d^{-\frac{2m}{d}} (2 \pi)^{-2m} \left[ \int_{\mathbb{S}^{d-1} } \int_{\mathbb{R}^d} \tau\Big(\big(|\sigma(T)_{-m}(x, \xi)|^2 + \varepsilon^2 g(x)^2 \mathrm{p} \big)^{\frac{d}{2m}}\Big)\, dx \, d\xi\right]^{\frac{2m}{d}}.
	\end{align*}
	As $\varepsilon \to 0$, dominated convergence yields
	\begin{equation}\label{T square}
		\begin{split}
			&\lim_{\varepsilon \to 0} \lim_{t \to \infty} t^{\frac{2m}{d}} \mu(t, M_{\mathrm{p}g} |A_\varepsilon|^{-2} M_{\mathrm{p}g}) \\
			=& d^{-\frac{2m}{d}} (2 \pi)^{-2m} \left[ \int_{\mathbb{S}^{d-1} } \int_{\mathbb{R}^d} \tau\Big(|\sigma(T)_{-m}(x, \xi)|^{\frac{d}{m}}\Big)\, dx \, d\xi\right]^{\frac{2m}{d}}.
		\end{split}
	\end{equation}

	Let us now link the spectral asymptotic of $T$ with that in \eqref{T square}.
	The principal symbol of $|T|^2 + \varepsilon^2 J^{-2m}$ is $|\sigma(T)_{-m}(x, \xi)|^2 + \varepsilon^2 |\xi|^{-2m}$, so
	$$
	\sigma\big( |A_\varepsilon|^{-2}\big)_{-2m}(x, \xi) = \sigma\big(|T|^2 + \varepsilon^2J^{-2m}\big)_{-2m}(x, \xi).
	$$
	Hence,
	$$
	|T|^2 + \varepsilon^2 J^{-2m} - |A_\varepsilon|^{-2} \in \mathrm{C}\Psi^{-2m-1}\big(\mathbb{R}^d ; \mathcal{M}\big).
	$$
	Moreover, we have
	\begin{align*}
		P_\varepsilon^2 - M_{\mathrm{p}g} |A_\varepsilon|^{-2} M_{\mathrm{p}g} &= M_{\mathrm{p}g}\Big(|T|^2 + \varepsilon^2J^{-2m}-|A_\varepsilon|^{-2}\Big)J^{2m+1}J^{-2m-1} M_{\mathrm{p}g}.
	\end{align*}
	Applying  Lemmas \ref{Cwikel} and \ref{Cwikel-0-2}, we obtain
	$$
	P_\varepsilon^2 - M_{\mathrm{p}g} |A_\varepsilon|^{-2} M_{\mathrm{p}g} \in \mathcal{L}_{\frac{d}{2m+1}, \infty}\big(B(L_2(\mathbb{R}^d)) \overline{\otimes} \mathcal{M}\big) \subset \big(\mathcal{L}_{\frac{d}{2m}, \infty}\big)_0\big(B(L_2(\mathbb{R}^d)) \overline{\otimes} \mathcal{M}\big).
	$$
	Consequently, by Lemma \ref{weak weyl}, $P_\varepsilon^2$ and $M_{\mathrm{p}g} |A_\varepsilon|^{-2} M_{\mathrm{p}g}$ have the same asymptotic singular value behavior:
	$$
	\lim_{t \to \infty} t^{\frac{2m}{d}} \mu(t, P_\varepsilon^2) = \lim_{t \to \infty} t^{\frac{2m}{d}} \mu(t, M_{\mathrm{p}g} |A_\varepsilon|^{-2}M_{\mathrm{p}g}).
	$$
	Since $P_\varepsilon^2 \to |T|^2$ in $\mathcal{L}_{\frac{d}{2m}, \infty}$ as $\varepsilon \to 0$, Lemma \ref{ideal approximate} ensures
	$$
	\lim_{t \to \infty} t^{\frac{2m}{d}} \mu(t, |T|^2) = \lim_{\varepsilon \to 0} \lim_{t \to \infty} t^{\frac{2m}{d}} \mu(t, P_\varepsilon^2) = \lim_{\varepsilon \to 0} \lim_{t \to \infty} t^{\frac{2m}{d}} \mu(t, M_{\mathrm{p}g} |A_\varepsilon|^{-2} M_{\mathrm{p}g}).
	$$
	It follows from \eqref{T square} that
	$$
	\lim_{t \to \infty} t^{\frac{m}{d}} \mu(t, T) = d^{-\frac{m}{d}} (2 \pi)^{-m} \left[ \int_{\mathbb{S}^{d-1} } \int_{\mathbb{R}^d} \tau\Big(|\sigma(T)_{-m}(x, \xi)|^{\frac{d}{m}}\Big)\, dx \, d\xi\right]^{\frac{m}{d}}.
	$$

	The proof for $
	\lim _{t \rightarrow \infty} t^{\frac{m}{d}} \mu(t, \sigma(T)_{-m})$ is similar to that in Theorem \ref{asymp ellip}; the details are therefore omitted.
\end{proof}

We now extend this result to the broader class of $\mathcal{M}$-valued $\Psi$DOs that are compactly supported from the right,
following \cite[Section~6]{FSZ24}.
\begin{definition}
	An $\mathcal{M}$-valued $\Psi$DO $T \in \mathrm{C}\Psi^{-m}\big(\mathbb{R}^d; \mathcal{M}\big)$ with $m > 0$ is said to be compactly supported from the right if there exists $f \in C_c(\mathbb{R}^d; L_{\frac{d}{m}}(\mathcal{M}) \cap \mathcal{M})$ such that $T = T M_f$.
\end{definition}

\begin{theorem} \label{asymp most}
	Let $T \in \mathrm{C}\Psi^{-m}\big(\mathbb{R}^d; \mathcal{M}\big)$ with $m > 0$, and $f \in C_c(\mathbb{R}^d; L_{\frac{d}{m}}(\mathcal{M}) \cap \mathcal{M})$. Then
	\[
	T M_f \in \mathcal{L}_{\frac{d}{m}, \infty}\bigl(B(L_2(\mathbb{R}^d)) \overline{\otimes} \mathcal{M}\bigr)
	\]
	and
	\begin{align*}
		\lim_{t \to \infty} t^{\frac{m}{d}} \mu(t, T M_f) &= (2 \pi)^{-m} \lim_{t \to \infty} t^{\frac{m}{d}} \mu(t, \sigma(T)_{-m}f) \\
		&= d^{-\frac{m}{d}} (2 \pi)^{-m} \left[ \int_{\mathbb{S}^{d-1} } \int_{\mathbb{R}^d} \tau\Big(|\sigma(T)_{-m}(x, \xi)f(x)|^{\frac{d}{m}}\Big)\, dx \, d\xi \right]^{\frac{m}{d}}.
	\end{align*}
	In particular, if $T$ is compactly supported from the right. Then
	\[
	T \in \mathcal{L}_{\frac{d}{m}, \infty}\bigl(B(L_2(\mathbb{R}^d)) \overline{\otimes} \mathcal{M}\bigr)
	\]
	and
	\begin{align*}
		\lim_{t \to \infty} t^{\frac{m}{d}} \mu(t, T) &= (2 \pi)^{-m} \lim_{t \to \infty} t^{\frac{m}{d}} \mu(t, \sigma(T)_{-m}) \\
		&= d^{-\frac{m}{d}} (2 \pi)^{-m} \left[ \int_{\mathbb{S}^{d-1} } \int_{\mathbb{R}^d} \tau\Big(|\sigma(T)_{-m}(x, \xi)|^{\frac{d}{m}}\Big)\, dx \, d\xi \right]^{\frac{m}{d}}.
	\end{align*}
\end{theorem}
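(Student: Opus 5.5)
The plan is to reduce to Theorem \ref{asymp-nu-pn} by approximating $f$ with functions of the form $\sum_k \nu_k \mathrm{p}_k$ (finite sums of scalar cutoffs times $\tau$-finite projections, or more generally elements $f_n$ with $f_n = f_n \mathrm{p}_n \nu_n$ for a single $\tau$-finite projection $\mathrm{p}_n$ and $\nu_n\in C_c(\mathbb{R}^d)$). First I handle the case where $f = \nu\,\mathrm{p}\, f\,$ for some $\nu\in C_c(\mathbb{R}^d)$ with $\nu\equiv 1$ on $\operatorname{supp} f$ and a $\tau$-finite projection $\mathrm{p}$ with $f=f\mathrm{p}$. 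Then $S:=TM_f\in \mathrm{C}\Psi^{-m}(\mathbb{R}^d;\mathcal{M})$ by Proposition \ref{cl M}, with principal symbol $\sigma(T)_{-m}(x,\xi)f(x)$ (this requires $f$ smooth; so first I take $f\in C_c^\infty(\mathbb{R}^d;\mathrm{p}\mathcal{M}\mathrm{p})$ and pass to continuous $f$ at the end), and $S=SM_{\mathrm{p}\nu}$. Theorem \ref{asymp-nu-pn} then gives both the membership $S\in\mathcal{L}_{\frac dm,\infty}$ and the asserted limit with $|\sigma(T)_{-m}f|^{\frac dm}$, together with the identity relating $\mu(t,TM_f)$ and $\mu(t,\sigma(T)_{-m}f)$.

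Next I need an a priori bound to run Lemma \ref{ideal approximate}. Writing $TM_f = (TJ^{m})(J^{-m}M_f)$ with $TJ^m$ bounded (order $0$, Theorem \ref{thm-tensor algebra}), the Cwikel estimates give
\[
\|TM_f\|_{\mathcal{L}_{\frac dm,\infty}}\le C_T\,\|f\|_{X},
\]
where for $\frac dm>2$ one takes $X=L_{\frac dm}(\mathbb{R}^d;L_{\frac dm}(\mathcal{M}))$ via \eqref{Cwikel-wp}, and for $\frac dm\le 2$ one uses Lemma \ref{Cwikel-0-2} with the Birman--Solomyak norms and Remark \ref{BS-example}(iii). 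Similarly the symbolic side satisfies $\|\sigma(T)_{-m}f\|_{L_{\frac dm,\infty}}\lesssim \|f\|_{L_{\frac dm}}$ by homogeneity, and the right-hand integral is continuous in $f$ in $L_{\frac dm}(\mathbb{R}^d;L_{\frac dm}(\mathcal{M}))$ (for $\frac dm<1$ via the quasi-norm continuity of $x\mapsto\tau(|x|^{q})$).

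The approximation step: given $f\in C_c(\mathbb{R}^d;L_{\frac dm}(\mathcal{M})\cap\mathcal{M})$, pick $\tau$-finite projections $\mathrm{p}_n\uparrow$ (e.g. spectral projections of $\int |f(x)|^2\,dx$-type majorants, or using density of $\mathcal{S}(\mathcal{M})$) and mollify in $x$, obtaining $f_n\in C_c^\infty(\mathbb{R}^d;\mathrm{p}_n\mathcal{M}\mathrm{p}_n)$ supported in a fixed compact set with $f_n\to f$ in the relevant norm $X$ and $\|f_n\|_\infty$ bounded. Then $TM_{f_n}\to TM_f$ in $\mathcal{L}_{\frac dm,\infty}$, and Lemma \ref{ideal approximate} transfers the limits, the constants $c_n$ converging by the continuity just noted; the same for the symbolic side. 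The \emph{main obstacle} is this density step in the regime $\frac dm\le 2$, where the controlling norm $\ell_p(L_2)(\mathbb{R}^d;\mathcal{M})$ uses the pointwise operator norm and so does not see $L_{\frac dm}(\mathcal{M})$ convergence; there I would instead factor $f=f\,\mathrm{q}$-approximately, splitting $f-f_n=(f-f\mathrm{p}_n)+(f\mathrm{p}_n-f_n)$, treating the second term by operator-norm control on a fixed compact set and the first by writing $TM_{f(1-\mathrm{p}_n)}$ as a power: estimate $|TM_g|^{2k}$ with $2k\frac{m}{d}$ pushed into the $\frac dm>2$-type range via iterated Hölder, i.e.\ apply \eqref{Cwikel-wp} to $J^{-m/k}M_{g}$ factors. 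The final statement for right-compactly supported $T$ follows by taking $T=TM_f$.
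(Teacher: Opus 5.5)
For $\frac dm>2$ your argument is correct and is essentially the paper's. You truncate by $\tau$-finite projections, apply Theorem~\ref{asymp-nu-pn}, bound the tail $TM_{f\mathrm{q}_n}$ (with $\mathrm{q}_n=1-\mathrm{p}_n$) by \eqref{Cwikel-wp} in $L_{\frac dm}(\mathcal{N})$, and conclude with Lemma~\ref{ideal approximate}. Your mollification step is a genuine improvement: the paper applies Theorem~\ref{asymp-nu-pn} to $TM_{f\mathrm{p}_\lambda}$, which is not a classical $\Psi$DO when $f$ is merely continuous. Your first step also works for every $\frac dm$ when $f=f\mathrm{p}$ with $\tau(\mathrm{p})<\infty$. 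In that case $|f(x)|^2\le\|f(x)\|_{\mathcal{M}}^2\,\mathrm{p}$, so the majorant argument of Lemma~\ref{Cwikel-0-2} goes through with a constant depending on $\tau(\mathrm{p})$.

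The gap is the tail $TM_{f\mathrm{q}_n}$ when $\frac dm\le 2$. You rightly note that the norms in Lemma~\ref{Cwikel-0-2} only see $\|f(x)\|_{\mathcal{M}}$. In fact that lemma fails for $\tau(1)=\infty$ without a $\tau$-finite support: for $f=\chi\otimes 1_{\mathcal{M}}$ the right-hand side is finite, but $J^{-m}M_\chi\otimes 1_{\mathcal{M}}$ is not even $\mathrm{Tr}$-compact. The paper's claim that $\|f\mathrm{q}_\lambda\|\to0$ in these norms is also unjustified, since $\|a\mathrm{q}_\lambda\|_{\mathcal{M}}\not\to0$ in general.

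Your repair does not close this gap. Writing $J^{-m}M_g$, $g=f\mathrm{q}_n$, as a product of $k$ factors (bounded)$\cdot J^{-m/k}M_{g_j}$ with $g_j$ of size $|g|^{1/k}$ requires commuting multiplication operators past Bessel potentials. The resulting commutators involve derivatives of $|g|^{1/k}$, which need not exist and which you do not control. Nor can the main term carry the argument. H\"older and \eqref{Cwikel-wp} would give $\|J^{-m}M_g\|_{\mathcal{L}_{p,\infty}}\lesssim\prod_j\||g|^{1/k}\|_{L_{kp}(\mathcal{N})}=\|g\|_{L_p(\mathcal{N})}$ with $p=\frac dm$, and this is false for $p<2$ even for scalar $g$. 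Since $2m>d$, $J^{-2m}$ has a bounded continuous kernel $G$ with $G(0)>0$. So a bump of height $h$ on a ball of radius $w\to0$ gives $\|J^{-m}M_g\|\gtrsim hw^{d/2}$, while $\|g\|_{L_p}\approx hw^{d/p}\ll hw^{d/2}$. At $p=2$ the same bump instead loses a factor $\log^{1/2}(1/w)$.

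Moreover, your final step of passing to merely continuous $f$ cannot work for $\frac dm<2$, because the statement itself fails there. Take $\mathcal{M}=\ell_\infty(\mathbb{N})$ with the counting trace, $d=m=1$, $T=J^{-1}$, and $f=\sum_n f_n e_n$. Here the $f_n$ are disjointly supported bumps of height $1/\log n$ and width $(n\log^2 n)^{-1}$, accumulating at one point.
\begin{itemize}
\item Then $f\in C_c(\mathbb{R};\ell_1\cap\ell_\infty)$ and the right-hand side of the theorem is finite.
\item But $J^{-1}M_f=\bigoplus_n J^{-1}M_{f_n}$. The kernel of $J^{-2}$ is $\tfrac12 e^{-|x-y|}$, so $\|J^{-1}M_{f_n}\|\gtrsim\|f_n\|_{L_2}\approx n^{-1/2}(\log n)^{-2}$.
\item Hence $t\,\mu(t,J^{-1}M_f)\to\infty$, so $J^{-1}M_f\notin\mathcal{L}_{1,\infty}$.
\end{itemize}

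What is missing for $\frac dm\le 2$ is an operator-valued Birman--Solomyak estimate that measures the $\mathcal{M}$-variable in $L_{\frac dm}(\mathcal{M})$. A plausible form is via $\big\|\big(\int_Q f(x)f(x)^*\,dx\big)^{1/2}\big\|_{L_{\frac dm}(\mathcal{M})}$ over unit cubes $Q$, e.g.\ obtained from separated-variables expansions and $\|A\otimes a\|_{\mathcal{L}_{p,\infty}}\le\|A\|_{\mathcal{L}_{p,\infty}}\|a\|_{L_p(\mathcal{M})}$. You would also need a hypothesis on $f$ beyond continuity that makes this quantity finite.
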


\begin{proof}
	Let $(\mathrm{p}_\lambda)_{\lambda \in \Lambda} \subset \mathcal{M}$ be an increasing net of projections such that $\mathrm{p}_\lambda \nearrow 1$ in the strong operator topology and $\tau(\mathrm{p}_\lambda) < \infty$ for all $\lambda$, and set $\mathrm{q}_\lambda := 1 - \mathrm{p}_\lambda$.
	Let $a \in L_\frac{d}{m}(\mathcal{M}) \cap \mathcal{M}$. Then for any $ p >0$,
	\begin{equation}\label{lem:finite-trace-approx}
		\|a(1 - \mathrm{p}_\lambda)\|_p \longrightarrow 0 \quad \text{as } \lambda \to \infty.
	\end{equation}

	For $f \mathrm{p}_\lambda \in C_c(\mathbb{R}^d; L_\frac{d}{m}(\mathcal{M}) \cap \mathcal{M})$, define
	$$
	T_\lambda := T M_{f \mathrm{p}_\lambda}.
	$$
	Since $\mathrm{p}_\lambda$ is $\tau$-finite, by Theorem \ref{asymp-nu-pn} we have
	\begin{equation}\label{eq:Tn-asymp}
		T_\lambda \in \mathcal{L}_{\frac{d}{m}, \infty}, \qquad
		\lim_{t \to \infty} t^{\frac{m}{d}} \mu(t, T_\lambda) =: C_\lambda,
	\end{equation}
	with
	\begin{equation}\label{eq:Cn2}
		\begin{split}
			C_\lambda & = d^{-\frac{m}{d}} (2\pi)^{-m}
			\left[ \int_{\mathbb{S}^{d-1} } \int_{\mathbb{R}^d}
			\tau\Big(|\sigma(T_\lambda)_{-m}(x,\xi)|^{\frac{d}{m}}\Big)\, dx \, d\xi \right]^{\frac{m}{d}}\\
			&= d^{-\frac{m}{d}} (2\pi)^{-m}
			\left[ \int_{\mathbb{S}^{d-1} } \int_{\mathbb{R}^d}
			\tau\Big(|\sigma(T)_{-m}(x,\xi)f(x)\, \mathrm{p}_\lambda|^{\frac{d}{m}}\Big)\, dx \, d\xi \right]^{\frac{m}{d}}.
		\end{split}
	\end{equation}
	
	We now show that $T_\lambda$ approximates $T M_f$ in $\mathcal{L}_{\frac{d}{m}, \infty}$. Write
	$$
	T M_f = T_\lambda + R_\lambda, \qquad R_\lambda := T M_{f \mathrm{q}_\lambda}.
	$$
	We claim that $\|R_\lambda\|_{\mathcal{L}_{\frac{d}{m}, \infty}} \to 0$ as $\lambda \to \infty$. To prove this, we apply Lemmas \ref{Cwikel} and \ref{Cwikel-0-2}: for any $f \mathrm{q}_\lambda \in C_c(\mathbb{R}^d; L_\frac{d}{m}(\mathcal{M}) \cap \mathcal{M})$,
	$$
	\|R_\lambda\|_{\mathcal{L}_{\frac{d}{m}, \infty}}
	\lesssim \|J^{-m} M_{f \mathrm{q}_\lambda}\|_{\mathcal{L}_{\frac{d}{m}, \infty}}
	\lesssim \|f \mathrm{q}_\lambda\|_{L_{\frac{d}{m}}(\mathcal{N})},
	$$
	for $  \frac{d}{m} >2$.
	The norm on the right-hand side is interpreted as $\ell_p(L_2)(\mathbb{R}^d; \mathcal{M})$ when $0 < \frac{d}{m} < 2$, and as $\ell_{2,\log}(L_\infty)(\mathbb{R}^d; \mathcal{M})$ when $\frac{d}{m} = 2$. Since $f \in C_c (\mathbb{R}^d; L_\frac{d}{m}(\mathcal{M}) \cap \mathcal{M})$, for each fixed $x \in \mathbb{R}^d$, we have $f(x) \in L_\frac{d}{m}(\mathcal{M}) \cap \mathcal{M}$. By \eqref{lem:finite-trace-approx},
	$$
	\|f(x)   \mathrm{q}_\lambda\|_{L_{\frac{d}{m}}(\mathcal{M})} \longrightarrow 0 \quad \text{as } \lambda \to \infty.
	$$
	Integrating over $x \in \mathbb{R}^d$, we conclude
	$$
	\|f \mathrm{q}_\lambda\|_{L_{\frac{d}{m}}(\mathcal{N})} \longrightarrow 0,
	$$
	with the norm understood in the sense of $\ell_p(L_2)(\mathbb{R}^d; \mathcal{M})$ for $0 < \frac{d}{m} < 2$ and $\ell_{2,\log}(L_\infty)(\mathbb{R}^d; \mathcal{M})$ for $\frac{d}{m} = 2$. Therefore,
	$$
	\|R_\lambda\|_{\mathcal{L}_{\frac{d}{m}, \infty}} \longrightarrow 0 \quad \text{as } \lambda \to \infty.
	$$
	Consequently, by Lemma~\ref{ideal approximate},
	$$
	\lim_{t \to \infty} t^{\frac{m}{d}} \mu(t, T M_f) =  \lim_{\lambda \to \infty} \lim_{t \to \infty} t^{\frac{m}{d}} \mu(t, T_\lambda).
	$$
	Letting $\lambda \to \infty$ and using \eqref{eq:Tn-asymp}, we obtain
	\begin{equation}\label{eq:swap-limits}
		\lim_{t \to \infty} t^{\frac{m}{d}} \mu(t, T M_f) = \lim_{\lambda \to \infty} C_\lambda.
	\end{equation}
	
	Finally, for each $(x,\xi)$, we have
	$$
	\tau\Big(|\sigma(T)_{-m}(x,\xi)f(x) \, \mathrm{p}_\lambda|^{\frac{d}{m}}\Big) \nearrow \tau\Big(|\sigma(T)_{-m}(x,\xi)f(x)|^{\frac{d}{m}}\Big).
	$$
	The monotone convergence theorem then gives
	\begin{align*}
		&\int_{\mathbb{S}^{d-1}} \int_{\mathbb{R}^d}
		\tau\Bigl( \bigl| \sigma(T)_{-m}(x,\xi) f(x) \, \mathrm{p}_\lambda \bigr|^{\frac{d}{m}} \Bigr)
		\, dx \, d\xi \\
		&\qquad \nearrow
		\int_{\mathbb{S}^{d-1}} \int_{\mathbb{R}^d}
		\tau\Bigl( \bigl| \sigma(T)_{-m}(x,\xi) f(x) \bigr|^{\frac{d}{m}} \Bigr)
		\, dx \, d\xi .
	\end{align*}
	Combining this with \eqref{eq:Cn2} and \eqref{eq:swap-limits} completes the proof:
	$$
	\lim_{t \to \infty} t^{\frac{m}{d}} \mu(t, T M_f)
	= d^{-\frac{m}{d}} (2 \pi)^{-m}
	\left[ \int_{\mathbb{S}^{d-1} } \int_{\mathbb{R}^d} \tau\Big(|\sigma(T)_{-m}(x, \xi)f(x)|^{\frac{d}{m}}\Big)\, dx \, d\xi \right]^{\frac{m}{d}}.
	$$
\end{proof}

By following the $A_\varepsilon$-regularization strategy employed for $\mathcal{M}$-valued elliptic $\Psi$DOs in Theorem~\ref{asymp-nu-pn}, we can extend the singular value asymptotics to the positive and negative parts of a self-adjoint operator. This formulation aligns more closely with the original Birman--Solomyak approach, which focuses on the distribution of positive and negative eigenvalues rather than singular values.

\begin{corollary}
	Let $T \in \mathrm{C}\Psi^{-m}(\mathbb{R}^d;\mathcal{M})$ with $m>0$, and suppose there exists a function $\nu \in C_c(\mathbb{R}^d)$ and a $\tau$-finite projection $\mathrm{p} \in \mathcal{M}$ such that $T = T M_{\mathrm{p}\nu}$.
	Then $T \in \mathcal{L}_{\frac{d}{m},\infty}\big(B(L_2(\mathbb{R}^d)) \overline{\otimes} \mathcal{M}\big)$, and
	$$
	\lim_{t \to \infty} t^{\frac{m}{d}} \mu^\pm(t, T)
	= d^{-\frac{m}{d}} (2\pi)^{-m} \left[ \int_{\mathbb{S}^{d-1} } \int_{\mathbb{R}^d}
	\tau\Big( (\sigma(T)_{-m}(x,\xi)^\pm)^{\frac{d}{m}} \Big) \, dx \, d\xi \right]^{\frac{m}{d}},
	$$
	where $\mu^\pm(t,T)$ denote the positive and negative eigenvalue functions of $T$, respectively, and $\sigma(T)_{-m}^\pm$ denote the positive and negative parts of the principal symbol $\sigma(T)_{-m}$.
\end{corollary}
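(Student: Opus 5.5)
The plan is to reduce the signed statement to the singular value asymptotics of Theorem \ref{asymp-nu-pn}, using the same $A_\varepsilon$-regularization as there but now carried out separately on the positive and negative spectral parts. Here $T$ is implicitly self-adjoint, so $\sigma(T)_{-m}$ is self-adjoint. Membership $T\in\mathcal{L}_{\frac dm,\infty}$ is already given by Theorem \ref{asymp-nu-pn}. Since $\mu^-(t,T)=\mu^+(t,-T)$ and $-T$ satisfies the same hypotheses, it suffices to treat $\mu^+$. Replacing $T$ by $M_{\mathrm{p}g}TM_{\mathrm{p}g}$ with $g\equiv1$ near $\operatorname{supp}\nu$ changes nothing, because $T=M_{\mathrm{p}g}TM_{\mathrm{p}g}$ when $T$ is self-adjoint and $T=TM_{\mathrm{p}\nu}$. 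So we may assume $T$ is localized on both sides.

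\textbf{Step 1 (splitting into symbolic positive and negative parts).}
\begin{itemize}
\item Write $\sigma(T)_{-m}=\sigma_+-\sigma_-$ with $\sigma_\pm=(\sigma(T)_{-m})^\pm$, both homogeneous of degree $-m$. These are only continuous, not smooth, where $\sigma(T)_{-m}$ is not invertible.
\item To fix this, perturb: for $\varepsilon>0$, choose classical elliptic positive operators $A_{\varepsilon,\pm}\in\mathrm{C}\Psi^{m}$ whose principal symbols are $(\sigma_\pm^2+\varepsilon^2|\xi|^{-2m})^{-1/2}$.
\item Smoothness of these symbols is the delicate point: the functional calculus of $\sigma(T)_{-m}$ is smooth only away from $0$. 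I would first try replacing $\sigma_\pm$ by $h_\varepsilon^\pm(\sigma(T)_{-m})$, where $h^\pm_\varepsilon$ are smooth, homogeneous of degree one, and uniformly within $\varepsilon$ of $t\mapsto t^\pm$ on the relevant range. Smooth functional calculus of a smooth self-adjoint $\mathcal{M}$-valued symbol is again smooth, via the Cauchy/Helffer--Sj\"ostrand representation in the same spirit as Lemma \ref{inverse derivative lemma}.
\item Form $T_{\varepsilon,\pm}:=M_{\mathrm{p}g}\bigl(|A_{\varepsilon,\pm}|^{-1}-\varepsilon J^{-m}\bigr)M_{\mathrm{p}g}$, or more simply a localized operator with principal symbol $h^\pm_\varepsilon(\sigma(T)_{-m})$ made positive up to an error of size $\varepsilon$.
\end{itemize}
The aim is $T=T_{\varepsilon,+}-T_{\varepsilon,-}+E_\varepsilon+K_\varepsilon$, where:
\begin{itemize}
\item $\|E_\varepsilon\|_{\mathcal{L}_{\frac dm,\infty}}\lesssim\varepsilon$, using the Cwikel estimates (Lemmas \ref{Cwikel}, \ref{Cwikel-0-2});
\item $K_\varepsilon\in\mathrm{C}\Psi^{-m-1}$ localized, hence in $(\mathcal{L}_{\frac dm,\infty})_0$;
\item $T_{\varepsilon,\pm}\ge0$;
\item $T_{\varepsilon,+}T_{\varepsilon,-}\in(\mathcal{L}_{\frac d{2m},\infty})_0$, because its principal symbol $h^+_\varepsilon h^-_\varepsilon$ can be arranged to vanish identically if $h^\pm_\varepsilon$ have disjoint supports on $\mathbb{R}$.
\end{itemize}

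\textbf{Step 2 (asymptotics of each part).} Theorem \ref{asymp-nu-pn}, applied to $T_{\varepsilon,\pm}$, gives
\[
\lim_{t\to\infty} t^{\frac md}\mu(t,T_{\varepsilon,\pm})=d^{-\frac md}(2\pi)^{-m}\Bigl[\int_{\mathbb{S}^{d-1}}\int_{\mathbb{R}^d}\tau\bigl(h^\pm_\varepsilon(\sigma(T)_{-m})^{\frac dm}\bigr)\,dx\,d\xi\Bigr]^{\frac md}.
\]
By dominated convergence as $\varepsilon\to0$, this tends to the claimed constant. Dominated convergence applies because $\mathrm{p}$ is $\tau$-finite and $g$ is compactly supported.

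\textbf{Step 3 (passing to $T$).} This is the main obstacle: a type II version of the Birman--Solomyak lemma on eigenvalues of almost orthogonal sums.
\begin{itemize}
\item Statement needed: if $X,Y\ge0$ lie in $\mathcal{L}_{p,\infty}$ with $XY\in(\mathcal{L}_{p/2,\infty})_0$, then $\mu^\pm(t,X-Y)$ have the same asymptotics as $\mu(t,X)$ and $\mu(t,Y)$ respectively.
\item I would prove it through $(X-Y)^2=X^2+Y^2-(XY+YX)$, which gives $|X-Y|^2\equiv X^2+Y^2$ modulo $(\mathcal{L}_{p/2,\infty})_0$.
\item Then combine with the Ky Fan/Weyl type inequality $\mu^+(t+s,A+B)\le\mu^+(t,A)+\mu(s,B)$, which holds verbatim for $\tau$-measurable operators via the distribution-function definition.
\item Finally, the type II analogues of Lemmas \ref{weak weyl} and \ref{ideal approximate} for $\mu^\pm$ handle the $(\mathcal{L}_{\frac dm,\infty})_0$ perturbation $K_\varepsilon$ and the $\varepsilon$-small perturbation $E_\varepsilon$. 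One takes $\limsup$ and $\liminf$ first, then lets $\varepsilon\to0$.
\end{itemize}
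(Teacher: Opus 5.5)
Your architecture (smooth sign-splitting of the principal symbol, Theorem~\ref{asymp-nu-pn} on each piece, then a type II Birman--Solomyak lemma for $X-Y$) is viable. It is also genuinely different from, and much more explicit than, the paper, which only remarks that the corollary follows by the $A_\varepsilon$-regularization of Theorem~\ref{asymp-nu-pn}. That regularization acts on $|T|^2$ and sees only $|\sigma(T)_{-m}|$, so some sign-separating device is needed anyway, and yours is the natural one. As written, however, three steps fail and the key lemma is incomplete.

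\textbf{(1) Homogeneity.} A smooth function on $\mathbb{R}$ that is positively homogeneous of degree one is linear, so your $h^\pm_\varepsilon$ do not exist. Apply smooth, non-homogeneous $h^\pm_\varepsilon$ to the degree-zero symbol $s(x,\xi)=|\xi|^m\sigma(T)_{-m}(x,\xi)$ and use $|\xi|^{-m}h^\pm_\varepsilon(s)$. Also impose $h^\pm_\varepsilon(0)=0$; otherwise $h^\pm_\varepsilon(s)$ is not supported under $\mathrm{p}$ and over $\operatorname{supp}\nu$, and the trace integrals diverge.

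\textbf{(2) Positivity.} A nonnegative principal symbol does not make an operator nonnegative. Neither $M_{\mathrm{p}g}(|A_{\varepsilon,\pm}|^{-1}-\varepsilon J^{-m})M_{\mathrm{p}g}$ nor ``positive up to an error of size $\varepsilon$'' gives the $T_{\varepsilon,\pm}\ge0$ that Step 3 needs. Instead take $h^\pm_\varepsilon=(k^\pm_\varepsilon)^2$ with $k^+_\varepsilon(t)=\sqrt t\,\chi(t/\varepsilon)$, where $\chi$ is smooth, equal to $0$ on $(-\infty,1]$ and to $1$ on $[2,\infty)$, and $k^-_\varepsilon(t)=k^+_\varepsilon(-t)$. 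Set $T_{\varepsilon,\pm}=B_\pm^*B_\pm$ with $B_\pm=\Op\bigl(\varphi(\xi)|\xi|^{-m/2}k^\pm_\varepsilon(s)\bigr)M_{\mathrm{p}g}$. These operators are nonnegative, localized, and classical of order $-m$, with principal symbols $|\xi|^{-m}h^\pm_\varepsilon(s)$. Moreover $T_{\varepsilon,+}T_{\varepsilon,-}$ has zero principal symbol, because $h^+_\varepsilon h^-_\varepsilon=0$ and $h^\pm_\varepsilon(s)$ commutes with $\mathrm{p}$.

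\textbf{(3) The remainder bound.} The bound $\|E_\varepsilon\|_{\mathcal{L}_{d/m,\infty}}\lesssim\varepsilon$ does not follow from Lemmas~\ref{Cwikel} and~\ref{Cwikel-0-2}. The remainder is not of product form, and the symbol seminorms controlling it involve derivatives of $h^\pm_\varepsilon$, which blow up as $\varepsilon\to0$. You only need an asymptotic bound, and it comes for free. The operator $R_\varepsilon:=T-T_{\varepsilon,+}+T_{\varepsilon,-}$ is a localized element of $\mathrm{C}\Psi^{-m}$ whose principal symbol has norm $O(\varepsilon)|\xi|^{-m}$ and lives under $\mathrm{p}$. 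Hence Theorem~\ref{asymp-nu-pn} gives $\lim_{t\to\infty}t^{m/d}\mu(t,R_\varepsilon)=O(\varepsilon)$. The proof of Lemma~\ref{ideal approximate}, and of its $\mu^\pm$ analogue via the signed Ky Fan inequality, uses only such $\limsup$ bounds, so you need not separate $E_\varepsilon$ from $K_\varepsilon$.

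\textbf{(4) Step 3.} Ky Fan alone gives only upper bounds; the lower bound for $\mu^+(t,X-Y)$ needs a counting argument. Let $Z=X-Y$, $p=d/m$ and $c_X=\lim t^{1/p}\mu(t,X)$.
\begin{itemize}
\item From $Z\le X$ and $-Z\le Y$ you get $d_s(Z_+)\le d_s(X)$ and $d_s(Z_-)\le d_s(Y)$.
\item For the asymptotics of $|Z|^2\equiv X^2+Y^2$, use $W=\bigl(\begin{smallmatrix}X&0\\Y&0\end{smallmatrix}\bigr)$ in $M_2(\mathfrak{B})$, where $\mathfrak{B}=B(L_2(\mathbb{R}^d))\overline{\otimes}\mathcal{M}$. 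Then $\mu(t,W^*W)=\mu(t,WW^*)$, and $WW^*$ equals $X^2\oplus Y^2$ plus an off-diagonal term in $(\mathcal{L}_{p/2,\infty})_0$.
\item Lemmas~\ref{weak weyl} and~\ref{singular func} then give $\lim_{s\to0}s^p d_s(|Z|)=c_X^p+c_Y^p$.
\item Since $d_s(|Z|)=d_s(Z_+)+d_s(Z_-)$, this forces $\lim_{s\to0}s^pd_s(Z_+)=c_X^p$.
\end{itemize}
With these repairs your argument goes through.
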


As a direct consequence of Theorem~\ref{asymp most}, this result extends to the broader class of $\mathcal{M}$-valued $\Psi$DOs that are compactly supported from the right.

\begin{corollary}
	Let $T \in \mathrm{C}\Psi^{-m}(\mathbb{R}^d;\mathcal{M})$ with $m>0$ be a self-adjoint operator that is compactly supported from the right.
	Then $T \in \mathcal{L}_{\frac{d}{m},\infty}\big(B(L_2(\mathbb{R}^d)) \overline{\otimes} \mathcal{M}\big)$, and
	$$
	\lim_{t \to \infty} t^{\frac{m}{d}} \mu^\pm(t, T)
	= d^{-\frac{m}{d}} (2\pi)^{-m} \left[ \int_{\mathbb{S}^{d-1} } \int_{\mathbb{R}^d}
	\tau\Big( (\sigma(T)_{-m}(x,\xi)^\pm)^{\frac{d}{m}} \Big) \, dx \, d\xi \right]^{\frac{m}{d}}.
	$$
\end{corollary}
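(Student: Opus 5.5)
The idea is to reduce the corollary directly to the preceding corollary, which treats $T$ with $T=TM_{\mathrm{p}\nu}$, $\nu\in C_c(\mathbb{R}^d)$ and $\mathrm{p}$ a $\tau$-finite projection. The reduction goes through the same net approximation used in the proof of Theorem~\ref{asymp most}, but applied to the eigenvalue functions $\mu^\pm$ instead of $\mu$.

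\textbf{Step 1 (approximating operators).} Write $T=TM_f$ with $f\in C_c(\mathbb{R}^d;L_{\frac dm}(\mathcal{M})\cap\mathcal{M})$. Since $T=T^*$, we also have $T=M_{f^*}TM_f$. Fix an increasing net of $\tau$-finite projections $\mathrm{p}_\lambda\nearrow 1$, and choose $0\le g\in C_c(\mathbb{R}^d)$ equal to $1$ on $\operatorname{supp} f$. Set
\[
T_\lambda:=M_{\mathrm{p}_\lambda}M_{f^*}TM_fM_{\mathrm{p}_\lambda}.
\]
This is self-adjoint, lies in $\mathrm{C}\Psi^{-m}$, and satisfies $T_\lambda=T_\lambda M_{\mathrm{p}_\lambda g}$.

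\textbf{Step 2 (asymptotics for $T_\lambda$).} The previous corollary applies to each $T_\lambda$. By Proposition~\ref{cl M}, its principal symbol is
\[
\mathrm{p}_\lambda f(x)^*\sigma(T)_{-m}(x,\xi)f(x)\mathrm{p}_\lambda=\mathrm{p}_\lambda\sigma(T)_{-m}(x,\xi)\mathrm{p}_\lambda,
\]
using $\sigma(T)_{-m}=f^*\sigma(T)_{-m}f$, which follows from $T=M_{f^*}TM_f$ at the symbol level. Hence
\[
\lim_{t\to\infty}t^{\frac md}\mu^\pm(t,T_\lambda)=c^\pm_\lambda:=d^{-\frac md}(2\pi)^{-m}\Big[\int\!\!\int\tau\big((\mathrm{p}_\lambda\sigma(T)_{-m}\mathrm{p}_\lambda)^{\pm}\big)^{\frac dm}\,dx\,d\xi\Big]^{\frac md}.
\]

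\textbf{Step 3 (norm convergence).} Write
\[
T-T_\lambda=M_{\mathrm{q}_\lambda}T+M_{\mathrm{p}_\lambda}TM_{\mathrm{q}_\lambda},\qquad \mathrm{q}_\lambda=1-\mathrm{p}_\lambda .
\]
Each term factors as $(\text{bounded})\cdot J^{-m}M_{f\mathrm{q}_\lambda}$ or its adjoint, since $T=TM_f$ and $TJ^{m}$ is bounded of order $0$. Exactly as in the proof of Theorem~\ref{asymp most}, Lemmas~\ref{Cwikel} and \ref{Cwikel-0-2} give
\[
\|T-T_\lambda\|_{\mathcal{L}_{\frac dm,\infty}}\lesssim\|f\mathrm{q}_\lambda\|\to0.
\]
In particular $T\in\mathcal{L}_{\frac dm,\infty}$.

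\textbf{Step 4 (passing to the limit).} This is the main obstacle. Lemma~\ref{ideal approximate} is stated for $\mu$, not $\mu^\pm$, so I need its self-adjoint analogue: if $A_n\to A$ in $\mathcal{L}_{p,\infty}$ with $A_n=A_n^*$ and $\lim t^{1/p}\mu^\pm(t,A_n)=c_n^\pm$, then $\lim t^{1/p}\mu^\pm(t,A)=\lim c_n^\pm$. I would prove it by Birman–Solomyak's argument, mirroring the proof in \cite{FZ23}. The only new input is the Ky Fan type inequality
\[
\mu^\pm(t+s,A+B)\le\mu^\pm(t,A)+\mu(s,B),
\]
which follows from the min–max characterisation via the distribution function $\operatorname{Tr}(\chi_{(s,\infty)}(\pm A))$ and subadditivity of spectral projection traces. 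With $B=A-A_n$ and $s=\varepsilon t$, this yields upper and lower bounds whose $\limsup$ and $\liminf$ differ by $O(\|A-A_n\|^{\theta}_{p,\infty})$.

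\textbf{Step 5 (limit of the constants).} It remains to show $c^\pm_\lambda\to c^\pm$. The map $a\mapsto\tau((a^\pm)^{d/m})$ is continuous in $L_{\frac dm}(\mathcal{M})$, and $\mathrm{p}_\lambda\sigma(T)_{-m}\mathrm{p}_\lambda\to\sigma(T)_{-m}$ in $L_{\frac dm}(\mathcal{N}\overline{\otimes}L_\infty(\mathbb{S}^{d-1}))$. This holds because $\sigma(T)_{-m}=\sigma(T)_{-m}f$ with $f$ valued in $L_{\frac dm}$, together with \eqref{lem:finite-trace-approx}. Dominated convergence then gives $c^\pm_\lambda\to c^\pm$, which completes the argument.
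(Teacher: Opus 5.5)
Your proposal is essentially the argument the paper has in mind when it calls this corollary a direct consequence of Theorem~\ref{asymp most}. You rerun the $\mathrm{p}_\lambda$-approximation from that proof on top of the preceding corollary, with exactly the adjustments that are needed: the two-sided compression $M_{\mathrm{p}_\lambda}TM_{\mathrm{p}_\lambda}$ to keep the approximants self-adjoint, a $\mu^{\pm}$ version of Lemma~\ref{ideal approximate} based on $\mu^{\pm}(t+s,A+B)\le\mu^{\pm}(t,A)+\mu(s,B)$, and continuity of $a\mapsto\tau((a^{\pm})^{d/m})$ instead of monotone convergence, since $(\mathrm{p}\sigma\mathrm{p})^{\pm}$ is not monotone in $\mathrm{p}$. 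It is sound to the same extent as that proof.

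The one weak point, inherited verbatim from Theorem~\ref{asymp most}, is Step~3 for $d/m\le 2$. The block norms in Lemma~\ref{Cwikel-0-2} are built from $\|f(x)\|_{\mathcal{M}}$, which need not tend to $0$ after right multiplication by $\mathrm{q}_\lambda$: take $\mathcal{M}=L_\infty(0,\infty)$, $f(x)=\chi(x)\mathbf{1}_{[0,1]}$ with $0\neq\chi\in C_c(\mathbb{R}^d)$, and $\mathrm{p}_n=\mathbf{1}_{[1/n,n]}$. So in that range the convergence $T_\lambda\to T$ in $\mathcal{L}_{d/m,\infty}$ needs a $\tau$-sensitive Cwikel-type bound, whereas for $d/m>2$ your appeal to Lemma~\ref{Cwikel} is fine.
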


\subsection{Trace formulas}
In this section we briefly explain why Theorem \ref{asymp most} implies a trace formula for operators of order $-d.$ For general details on traces we direct the reader to \cite{LSZ12} and more specifically to \cite{LevitinaUsachevI,LevitinaUsachevII,LevitinaUsachevIII}.

Denote $\mathfrak{B}= B(L_2(\mathbb{R}^d)) \overline{\otimes} \mathcal{M}$. Recall that a densely-defined linear operator $T$ is said to be $\Tr$-compact if for every $\varepsilon>0$ there exists a projection $P\in\mathfrak{B}$ with $\Tr(P)<\infty$ such that
\[
\|T(1-P)\|_{\mathfrak{B}} < \varepsilon.
\]
The singular value functional $\mu(T) = \{\mu(t,T)\}_{t\geq 0}$ of a $\Tr$-compact operator $T$ is defined as
\[
\mu(t,T) = \inf\{\|T(1-P)\|_{\infty}\;:\; P\text{ is a projection such that}\Tr(P)\leq t\}.
\]
Note that we might have $\mu(t,T)=\infty$ for some $t,$ however by definition $\mu(t,T)\to 0$ as $t\to\infty.$

The operator space $\mathcal{L}_{1,\infty}(\mathfrak{B},\Tr)$ is a bimodule of $\mathfrak{B}$ which be defined as the set of $\mathrm{Tr}$-measurable operators $T$ whose singular value function $\mu(t,T)$ obeys
\[
\|T\|_{\mathcal{L}_{1,\infty}(\mathfrak{B})} := \sup_{t>0} t\mu(t,T) < \infty.
\]
We also have
\[
\|T\|_{\mathcal{L}_{1,\infty}(\mathfrak{B})} = \sup_{t>0} t\mathrm{Tr}(\chi_{t,\infty}(|T|)) < \infty.
\]
A symmetric functional on $\mathcal{L}_{1,\infty}(\mathfrak{B})$ is a linear functional $\varphi$ with the property
\[
\mu(T)=\mu(S)\Rightarrow \varphi(T)=\varphi(S).
\]
and a symmetric functional is continuous if it is continuous in the $\mathcal{L}_{1,\infty}(\mathfrak{B})$ quasinorm, i.e.
\[
|\varphi(T)| \leq C\|T\|_{\mathcal{L}_{1,\infty}(\mathfrak{B})}.
\]
Following the terminology of \cite[Definition 4.11]{LevitinaUsachevI}, we say that a symmetric functional is \emph{supported at infinity} if $\varphi(X\chi_{(a,\infty)}(|X|))=0$ for all $a>0$ and $X \in \mathcal{L}_{1,\infty}(\mathfrak{B},\Tr).$

An example of a continuous symmetric functional supported at infinity is the Dixmier-type trace, defined on $0\leq T \in \mathcal{L}_{1,\infty}(\mathfrak{B},\Tr)$ by
\begin{equation}\label{dixmier_definition}
	\Tr_{\omega}(T) := \omega\left(\{\frac{1}{\log(N)}\sum_{n=0}^N \mu(n,T)\}\right)
\end{equation}
where $\omega\in \ell_{\infty}(\mathbb{N})^*$ is an extended limit, i.e. a positive functional for which $\omega(x)=\lim_{n\to\infty} x_n$ whenever the limit exists. Because of the possible divergence of $\mu(t,T)$ at $t=0,$ there is no universal convention for the definition of Dixmier traces on $\mathcal{L}_{1,\infty}(\mathfrak{B},\Tr).$ We follow \cite[Definition 4.1]{LevitinaUsachevII}.

As a direct consequence of Theorem~\ref{asymp most}, we obtain the Dixmier trace formula for operator-valued $\Psi$DOs of order $-d$.
In other words, the following result can be viewed as the operator-valued version of Connes' trace theorem,
generalizing the scalar-valued case when $\mathcal{M} = \mathbb{C}.$

\begin{corollary}
	Let $T \in \mathrm{C}\Psi^{-d}\big(\mathbb{R}^d; \mathcal{M}\big)$ be compactly supported from the right. Then $T \in \mathcal{L}_{1,\infty}(\mathfrak{B},\Tr)$ and for any symmetric functional $\varphi$ supported at infinity, the value of $\varphi(T)$ is independent of $\varphi$ up to normalization in the following sense: if $\varphi,\psi$ are two symmetric functionals supported at infinity, then
	\[
	\varphi(T)\psi(S) = \varphi(S)\psi(T)
	\]
	for all $T,S \in \mathrm{C}\Psi^{-d}\big(\mathbb{R}^d;\mathcal{M}\big)$ compactly supported from the right.
	
	If $\Tr_{\omega}$ is a Dixmier trace defined as in \eqref{dixmier_definition}, we have
	$$
	\Tr_{\omega}(T) = d^{-1}(2\pi)^{-d} \int_{\mathbb{S}^{d-1} } \int_{\mathbb{R}^d} \tau \big( \sigma(T)_{-d}(x,\xi) \big)\,dx\,d\xi.
	$$
	In particular, the value is independent of the choice of $\omega.$
\end{corollary}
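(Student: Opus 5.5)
The plan is to deduce the corollary from Theorem~\ref{asymp most} with $m=d$, applied to real and imaginary parts, and then to invoke the known structure of symmetric functionals supported at infinity on $\mathcal{L}_{1,\infty}$.

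\textbf{Step 1: membership.} Since $T$ is compactly supported from the right, Theorem~\ref{asymp most} with $m=d$ gives $T\in\mathcal{L}_{1,\infty}(\mathfrak{B},\Tr)$.

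\textbf{Step 2: reduction to self-adjoint pieces.} The reduction to the self-adjoint case has to preserve right compact support, which $\Re T=\tfrac12(T+T^*)$ does not do automatically. I would therefore first write $T=TM_f=M_{\tilde g}TM_f+(1-M_{\tilde g})TM_f$. Here $\tilde g\in C_c^\infty(\mathbb{R}^d)$ is a scalar cutoff equal to $1$ near $\operatorname{supp} f$.
\begin{itemize}
\item The second term is pseudolocal: its symbol is of order $-\infty$ by the composition formula of Proposition~\ref{cl M}, because the principal symbol $(1-\tilde g)\sigma f$ vanishes identically.
\item Combined with the right cutoff $M_f$, this term lies in $\mathcal{L}_p$ for every $p>0$. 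This follows from Lemma~\ref{Cwikel-0-2} after factoring through $J^{-N}$.
\item Hence the second term belongs to $(\mathcal{L}_{1,\infty})_0$, and every functional supported at infinity vanishes on it. For continuous functionals this is because they annihilate $(\mathcal{L}_{1,\infty})_0$. In general it is because trace-class-like operators whose singular values are integrable at infinity are killed; I would cite \cite{LevitinaUsachevI}.
\end{itemize}
The first term is compactly supported on both sides, so its real and imaginary parts are again compactly supported from the right. Each of these is then a difference of positive and negative parts, handled by the self-adjoint corollary above.

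\textbf{Step 3: exact asymptotics for each piece.} For each self-adjoint piece $S$, the previous corollary gives
\[
\mu^\pm(t,S)\sim \frac{c_\pm(S)}{t},\qquad c_\pm(S)=d^{-1}(2\pi)^{-d}\iint \tau\bigl((\sigma(S)_{-d})^\pm\bigr).
\]
A symmetric functional supported at infinity depends only on the tail behaviour of $\mu$. The key fact, which I would take from \cite{LevitinaUsachevI,LevitinaUsachevII}, is this: if $X\geq0$ satisfies $\mu(t,X)=c\,t^{-1}+o(t^{-1})$, then $\varphi(X)=c\,\varphi(X_0)$. Here $X_0$ is any fixed positive operator with $\mu(t,X_0)\sim 1/t$, chosen for instance as $M_{g\mathrm p}J^{-d}M_{g\mathrm p}$ normalized.

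This fact is proved in two steps.
\begin{itemize}
\item The $o(t^{-1})$ error lies in $(\mathcal{L}_{1,\infty})_0$.
\item Two operators with $\mu(t,X)\sim\mu(t,Y)$ are, after modification on a set of finite trace, equimeasurable up to a $(\mathcal{L}_{1,\infty})_0$ perturbation. This requires extending the eigenvalue-function version of the Lidskii-type additivity $\varphi(X_+-X_-)=\varphi(X_+)-\varphi(X_-)$ to the type II setting.
\end{itemize}
This yields $\varphi(T)=\kappa(T)\,\varphi(X_0)$, where $\kappa(T)=d^{-1}(2\pi)^{-d}\iint\tau(\sigma(T)_{-d})$. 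The identity $\varphi(T)\psi(S)=\varphi(S)\psi(T)$ then follows immediately, since both sides equal $\kappa(T)\kappa(S)\varphi(X_0)\psi(X_0)$.

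\textbf{Step 4: the Dixmier trace.} For $\Tr_\omega$, one computes $\Tr_\omega(X_0)=1$ directly from \eqref{dixmier_definition}, since $\frac1{\log N}\sum_{n\le N} n^{-1}\to1$. This gives the stated formula.

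\textbf{Main obstacle.} The hard step is Step 3, namely showing that symmetric functionals supported at infinity factor through the leading coefficient of $t\mu(t,\cdot)$ in the semifinite setting. This includes linearity across the positive and negative parts of non-positive operators. My first attempt would be to transport the discrete results of \cite{LevitinaUsachevI} through the standard embedding of $\mu(\cdot,X)$ into $L_{1,\infty}(0,\infty)$, using that $\varphi$ is supported at infinity to discard the behaviour near $t=0$.
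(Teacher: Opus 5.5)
Your route differs from the paper's, and it puts the difficulty in the right place.

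\textbf{How the paper argues.} The paper never isolates a ``factorization through the leading coefficient'' lemma. It defines $c(T)=d^{-1}(2\pi)^{-d}\iint\tau(\sigma(T)_{-d})$ for $T\ge0$ with $\sigma(T)_{-d}\ge0$. It observes from Theorem~\ref{asymp most} that $c(T)=0$ forces $t\mu(t,T)\to0$, hence $\varphi(T)=0$ for continuous $\varphi$ supported at infinity. It then applies this to $c(T)S-c(S)T$ to conclude $c(T)\varphi(S)=c(S)\varphi(T)$. Self-adjoint $T$ are reduced to positive ones via $CM_{f^*}J^{-d}M_f-T\ge0$, which is possible since $J^{d/2}TJ^{d/2}$ is bounded. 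The Dixmier formula is read off directly from $\mu$ for positive operators with positive symbol.

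\textbf{Where that argument is incomplete.} The operator $c(T)S-c(S)T$ is neither positive nor has positive principal symbol. For it, Theorem~\ref{asymp most} only gives the limit $d^{-1}(2\pi)^{-d}\iint\tau(|c(T)\sigma(S)_{-d}-c(S)\sigma(T)_{-d}|)$, which is in general non-zero. So that step tacitly uses exactly your key fact, and you are right that this is where the content lies. The additivity over $X_\pm$ is mere linearity of $\varphi$; no Lidskii-type result is involved. In the other direction, the paper's trick lets you avoid the $\mu^\pm$ corollary, which the paper only states without proof: apply your key fact to the two positive operators $CM_{f^*}J^{-d}M_f$ and $CM_{f^*}J^{-d}M_f-T$, and use linearity of $\kappa$.

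\textbf{Gap 1: the key fact fails at this generality.} The key fact is false in the generality you use, and so is the first assertion of the statement.
\begin{itemize}
\item It needs continuity. The $o(t^{-1})$ remainder only lies in $(\mathcal{L}_{1,\infty})_0$, and Theorem~\ref{asymp most} gives no rate. Already for $\mathcal{M}=\mathbb{C}$ there are traces on $\mathcal{L}_{1,\infty}$ that do not vanish on the positive, non-trace-class operator $\operatorname{diag}\{((n+2)\log(n+2))^{-1}\}\in(\mathcal{L}_{1,\infty})_0$, since it is not in the commutator subspace. Citing \cite{LevitinaUsachevI} cannot remove this.
\item Even for continuous positive $\varphi$ it fails without a hypothesis on $\mathfrak{B}$. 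Take $\mathcal{M}=\mathbb{C}^2$ with $\tau(a,b)=a+\sqrt2\,b$, so $\mathfrak{B}=B(L_2(\mathbb{R}^d))\oplus B(L_2(\mathbb{R}^d))$ with trace $\mathrm{tr}\oplus\sqrt2\,\mathrm{tr}$. For $0\le Z=Z_1\oplus Z_2\in\mathcal{L}_{1,\infty}$ one has $\Tr(\chi_{(s,\infty)}(Z))=n_1(s)+\sqrt2\,n_2(s)$ with $n_i(s)\in\mathbb{N}$, so $\mu(Z)$ determines $\mu(Z_1)$ and $\mu(Z_2)$ separately. Hence $\varphi(Z):=\Tr_\omega(Z_1)$ and $\psi(Z):=\Tr_\omega(Z_2)$ (scalar Dixmier traces) are positive, continuous, symmetric on positive operators, and supported at infinity. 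Symmetry has to be understood on positive operators: read literally for all $T,S$, the paper's definition forces $\varphi\equiv0$, because $\mu(T)=\mu(-T)$.
\item Concretely, for $0\ne g\in C_c^\infty(\mathbb{R}^d)$ put $T=M_{ge_1}J^{-d}M_{ge_1}$ and $S=M_{ge_2}J^{-d}M_{ge_2}$. Then $\varphi(T)\psi(S)\neq0=\varphi(S)\psi(T)$. Moreover $T$ and $2^{-1/2}S$ have the same $\lim_{t}t\mu(t,\cdot)$ but different $\varphi$-values.
\end{itemize}

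What your argument does prove is the statement for continuous $\varphi$ when $\mathfrak{B}$ is nonatomic or $\mathcal{M}$ is a factor. In that case, realise $\mu(X)$ and $c\,\mu(X_0)$ as commuting operators, via a trace-preserving copy of $L_\infty(0,\infty)$ or a common eigenbasis respectively. Their difference is $o(t^{-1})$ off a finite-trace spectral projection. Support at infinity plus continuity, in the form $|\varphi(Y)|\le C\limsup_{t\to\infty}t\mu(t,Y)$, then give $\varphi(X)=c\,\varphi(X_0)$. The Dixmier formula survives in general, in both routes, because $\Tr_\omega$ is an explicit function of $\mu$.

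\textbf{Gap 2: real and imaginary parts in Step~2.} When $\tau(1)=\infty$, the real and imaginary parts of $M_{\tilde g}TM_f$ are not compactly supported from the right. The left cutoff $\tilde g$ is scalar, and $\tilde g\otimes 1_{\mathcal{M}}\notin C_c(\mathbb{R}^d;L_1(\mathcal{M})\cap\mathcal{M})$, so $M_{f^*}T^*M_{\tilde g}$ is not of the form $SM_h$ with admissible $h$. The paper has the same lacuna when it passes to real and imaginary parts.

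For continuous $\varphi$ you can repair this as follows. Write
\[
TM_f=\mathrm{p}_\lambda TM_{f\mathrm{p}_\lambda}+(1-\mathrm{p}_\lambda)TM_f+\mathrm{p}_\lambda TM_{f(1-\mathrm{p}_\lambda)}
\]
with $\tau$-finite projections $\mathrm{p}_\lambda\nearrow1$.
\begin{itemize}
\item Theorem~\ref{asymp most}, applied to the classical operators $(1-\mathrm{p}_\lambda)T$ and $\mathrm{p}_\lambda T$, shows that the last two terms have $\lim_{t\to\infty}t\mu(t,\cdot)$ tending to $0$ as $\lambda\to\infty$.
\item Modulo a smoothing term, the first term equals $(M_{\tilde g\mathrm{p}_\lambda}T)M_{\chi\mathrm{p}_\lambda}$, where $\chi=1$ on $\operatorname{supp}\tilde g$. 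Its adjoint $(M_{\chi\mathrm{p}_\lambda}T^*M_{\tilde g})M_{\chi\mathrm{p}_\lambda}$ is again right-supported by the admissible function $\chi\mathrm{p}_\lambda$.
\item Finally let $\lambda\to\infty$.
\end{itemize}
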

\begin{proof}
	Given $0\leq T\in \mathrm{C}\Psi^{-d}\big(\mathbb{R}^d; \mathcal{M}\big)$ with the property that $\sigma_{-d}(x,\xi)\geq 0$ for all $(x,\xi) \in \mathbb{R}^d\times \mathbb{R}^d\setminus \{0\},$ define
	\[
	c(T) := d^{-1} (2 \pi)^{-d}\int_{\mathbb{S}^{d-1} } \int_{\mathbb{R}^d} \tau\Big(\sigma(T)_{-d}(x, \xi)\Big)\, dx \, d\xi.
	\]
	By Lemma \ref{asymp most}, if $c(T)=0,$ then
	\[
	\lim_{t\to\infty} t\mu(t,T) = 0
	\]
	and hence $\varphi(T)=0$ for all continuous symmetric functionals $\varphi$ supported at infinity. Let $0\leq S,T \in \mathrm{C}\Psi^{-d}\big(\mathbb{R}^d;\mathcal{M}\big)$ with non-negative principal symbol, we have
	\[
	c(c(T)S-c(S)T) = 0
	\]
	and hence
	\[
	c(T)\varphi(S)=c(S)\varphi(T)
	\]
	for all continuous symmetric functionals $\varphi$ supported at infinity. Since for any other continuous symmetric functional $\psi$ supported at infinity we have
	\[
	c(T)\psi(S)=c(S)\psi(T)
	\]
	we deduce that
	\[
	c(T)c(S)\varphi(S)\psi(T) = c(S)c(T)\varphi(T)\psi(S).
	\]
	If either of $c(T),c(S)=0,$ then $\varphi(S)\varphi(T)=0$ so we can exclude this case. It follows that
	\[
	\varphi(T)\psi(S) = \varphi(S)\psi(T).
	\]
	Next, let $T$ be selfadjoint with selfadjoint principal symbol. By the decay of $\sigma(T)_{-d}(x,\xi)$, we may find $C>0 $ large enough so that $ |\sigma(T)_{-d}(x,\xi)|\leq C(1+|\xi|^2) ^{-\frac d 2}$; since $T=T M_f$ is compactly supported, we may also require $C J^{-d} \geq |T|$. Thus,
	$$\widetilde{T} :=M_{f^*} (C J^{-d} -T)M_{f} =C M_{f^*}  J^{-d} M_{f} -T $$ is nonnegative with non-negative principal symbol. Hence by the preceding paragraph,
	\[
	\varphi(\widetilde{T})\psi(S) = \varphi(S)\psi(\widetilde{T}).
	\]
	Since $\varphi$ and $\psi$ are linear, we deduce the result for self-adjoint $T.$ Similarly, for non-self-adjoint $T,$ we write $T$ as a linear combination of its real and imaginary parts. Reasoning the same for $S,$ we deduce that
	\[
	\varphi(T)\psi(S) = \varphi(S)\psi(T)
	\]
	for all $T,S \in \mathrm{C}\Psi^{-d}\big(\mathbb{R}^d;\mathcal{M}\big).$
	
	Next we verify the Dixmier trace formula. Once again we consider the case $T \ge 0$ and $\sigma(T)_{-d}(x,\xi)\geq 0$ pointwise. Applying Theorem \ref{asymp most} to $ T$ with $m =  d$, we have
	$$
	\lim_{t \to \infty} t\,\mu(t,T) =: A,
	$$
	where, according to Theorem \ref{asymp most},
	$$
	A = (2\pi)^{-d} d^{-1} \int_{\mathbb{S}^{d-1} } \int_{\mathbb{R}^d} \tau \big(  \sigma(T)_{-d}(x,\xi)  \big)\,dx\,d\xi.
	$$
	Thus, for large $t$, $\mu(t,T) = A t^{-1} + o(t^{-1})$. Integrating yields
	$$
	\sum_{n=0}^N \mu(n,T)\,dt = A \log N + o(\log N), \quad N \to \infty.
	$$
	Therefore, for any extended limit functional $\omega$,
	$$
	\Tr_\omega(T) = A.
	$$
	This establishes the claimed formula for $T \ge 0$ with $\sigma(T)_{-d}(x,\xi)\geq 0$ pointwise.
	
	Splitting $T$ into a sum of four positive operators with positive principal symbol as above, we deduce the result for general $T.$
	%
	%
\end{proof}

\section{Weyl's law for $\mathcal{M}$-valued commutators}\label{op-valued-commutator}

In this section, we apply general Weyl's law to commutators of the form $[T_\phi, M_f]$, where $\phi$ is a homogeneous function on $\mathbb{R}^d$ (or $\mathbb{S}^{d-1}$), and $f$ is an operator-valued function. We will also give Weyl's law for the commutator $[I^\alpha,M_f]$, where $I^\alpha=(- \Delta)^{\frac{\alpha}{2}}$ denotes the Riesz potential.


Referring to the definition provided in \cite[Definition 2.1]{FSZ24} for the scalar-valued case, we formally define $\nabla$ as the self-adjoint gradient operator on $L_2(L_\infty(\mathbb{R}^d)\overline{\otimes} \mathcal{M})$, explicitly given by
$$
\nabla = \Big(\frac{1}{i} \frac{\partial}{\partial x_1}, \cdots, \frac{1}{i} \frac{\partial}{\partial x_d}\Big) = \big(D_{1}, \cdots, D_{d}\big).
$$

From the perspective of functional calculus, we may write the Fourier multiplier $T_\phi$ with symbol $\phi: \mathbb{R}^d \rightarrow \mathbb{C}$ as
\begin{equation*}
	T_\phi  = \phi (\nabla).
\end{equation*}
In the same spirit, for the tuple $ \nabla (-\Delta)^{-\frac{1}{2}}  =\big\{R_{ k} \big\}_{k=1}^d=\big\{D_{k} (-\Delta)^{-\frac{1}{2}}\big\}_{k=1}^d$ whose spectrum lies in $\mathbb{S}^{d-1}$, we can apply the functional calculus by $\phi \in C^\infty(\mathbb{S}^{d-1}) $ and get a Fourier multiplier $T_\phi= \phi( \nabla (-\Delta)^{-\frac{1}{2}} )$ whose symbol is a homogeneous function on $\mathbb{R}^d$ of order $0$.

For a real number $\alpha$, the Riesz potential $I^\alpha=(- \Delta)^{\frac{\alpha}{2}}$  and Bessel potential $J^\alpha=(1- \Delta)^{\frac{\alpha}{2}}$ are the operators  defined on $\mathscr{S}^{\prime}(\mathbb{R}^d ; L_1(\mathcal{M})+\mathcal{M})$, where $\Delta$ denotes the Laplacian on $\mathbb{R}^d$. Their symbols are the Fourier multipliers  $I_\alpha(\xi)=|\xi|^{\alpha}$ and $J_\alpha(\xi)=(1+|\xi|^2)^{\frac{\alpha}{2}}$.

We now introduce the definition of operator-valued Sobolev spaces:

\begin{definition}
	For $1 \le p \le \infty$ and an integer $k \ge 1$, the  Sobolev space $W_p^k(\mathbb{R}^d; L_p(\mathcal{M}))$
	is defined as the set of all $\mathcal{M}$-valued tempered distributions $f \in \mathscr{S}'(\mathbb{R}^d; L_1(\mathcal{M}) + \mathcal{M})$ such that all distributional derivatives $D^\beta f$ with $|\beta| \le k$ belong to $L_p(\mathbb{R}^d; L_p(\mathcal{M}))$.
	This space is equipped with the norm
	$$
	\|f\|_{W_p^k(\mathbb{R}^d; L_p(\mathcal{M}))}
	:=
	\begin{cases}
		\Bigg( \sum\limits_{|\beta| \le k} \|D^\beta f\|_{L_p(\mathbb{R}^d; L_p(\mathcal{M}))}^p \Bigg)^{\frac{1}{p}}, & 1 \le p < \infty, \\[2mm]
		\max\limits_{|\beta| \le k} \|D^\beta f\|_{L_\infty(\mathbb{R}^d)\overline{\otimes} \mathcal{M}}, & p = \infty.
	\end{cases}
	$$
\end{definition}

Let us give the definition of homogeneous  Sobolev spaces that will be used in this section.


\begin{definition}
	The homogeneous Sobolev space $\dot{W}_p^1(\mathbb{R}^d; L_p(\mathcal{M})), 1 \le p \le \infty$, is defined as the set of all functions $f \in \mathscr{S}^{\prime}(\mathbb{R}^d ; L_1(\mathcal{M})+\mathcal{M})$
	such that their distributional gradient lies in $L_p(\mathcal{N})$. This is a space of $\mathcal{M}$-valued functions modulo $1_{\mathbb{R}^d} \otimes m \, (m \in \mathcal{M})$, equipped with the norm
	$$
	\|f\|_{\dot{W}_p^1(\mathbb{R}^d; L_p(\mathcal{M}))} :=\Big(\sum_{j=1}^{d} \big\|D_k f\big\|_{L_p(\mathcal{N})}^{p}\Big)^{\frac{1}{p}}.
	$$
\end{definition}

\begin{rk}
	In the scalar-valued case, it follows from \cite[Theorem 11.43]{Leo17} that $C_c^\infty(\mathbb{R}^d)$ is dense in $\dot{W}_p^1(\mathbb{R}^d)$ when either $d \ge 2$ or $p > 1$. Since $L_p(\mathcal{M}) \cap \mathcal{M}$ is dense in $L_p(\mathcal{M})$ for $p>0$, this result extends to the vector-valued function space. Specifically, $C_c^{\infty}(\mathbb{R}^d;L_p(\mathcal{M})\cap\mathcal{M})$ is dense in $\dot{W}_p^1(\mathbb{R}^d; L_p(\mathcal{M}))$ when either $d \ge 2$ or $p > 1$.
\end{rk}

\subsection{ Commutator estimates}\label{Commutator estimates}

Before stating and proving our main results on Weyl's law for $\mathcal{M}$-valued commutators, we provide some a priori estimates of the commutators in this section, by extending the commutator estimates in \cite{LMSZ17,FSZ24} to operator-valued setting. Comparing to the main results therein, one feature of the commutator estimates in the current operator-valued setting is that we have to consider the operator-valued Schatten norm $\mathcal{L}_{p, \infty}\big(B(L_2(\mathbb{R}^d))\overline{\otimes} \mathcal{M}\big)$ of the commutators.

Based on the Lemma \ref{Cwikel}, the commutator estimates for Riesz transforms in \cite{LMSZ17, MSX19, MSX20} remain valid in our setting with a similar proof.

\begin{lemma}\label{Riesz upper}
	Let  $f \in \dot{W}_d^1(\mathbb{R}^d; L_d(\mathcal{M}))$ with $d \ge 2$. Then for $1\le  k \le d$, the commutator $[R_k, M_f]\in \mathcal{L}_{d, \infty}\big(B(L_2(\mathbb{R}^d))\overline{\otimes} \mathcal{M}\big)$ with
	$$
	\|[R_k, M_f]\|_{\mathcal{L}_{d, \infty}} \le C_{  d}\|f\|_{\dot{W}_{d}^1(\mathbb{R}^d; L_{d}(\mathcal{M}))}.
	$$
\end{lemma}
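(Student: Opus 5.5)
We plan to prove the estimate first for $f \in C_c^{\infty}(\mathbb{R}^d; L_d(\mathcal{M})\cap\mathcal{M})$ with a constant depending only on $d$, and then extend it to all of $\dot{W}_d^1(\mathbb{R}^d; L_d(\mathcal{M}))$ by density, using the remark following the definition of the homogeneous Sobolev space. For the density step, suppose $f_n \to f$ in $\dot W^1_d$. The a priori bound makes $[R_k,M_{f_n}]$ Cauchy in the quasi-Banach space $\mathcal{L}_{d,\infty}$. Its limit is identified with $[R_k,M_f]$ by testing against $\mathscr{S}(\mathbb{R}^d;L_2(\mathcal{M}))$. This identification needs the elementary bound $\|[R_k,M_g]\|_\infty\lesssim \|g\|_{\mathrm{BMO}}$, which one obtains by the scalar majorant trick of Lemma \ref{Cwikel-0-2}, or else an argument by weak convergence. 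The bound is insensitive to adding constants $1\otimes m$, so working modulo constants is harmless.

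For smooth compactly supported $f$, we follow the scheme of \cite{LMSZ17}. The first move is to write $R_k = D_k (-\Delta)^{-1/2}$ and split the frequency space with a smooth cutoff. The low-frequency part is handled by noting that $[R_k,M_f]$ differs from $[R_k\chi(\nabla),M_f]$ by an operator controlled by Lemma \ref{Cwikel}. For the high-frequency part we use the identity
\[
[(-\Delta)^{-1/2}D_k, M_f] = (-\Delta)^{-1/2}[D_k,M_f] + [(-\Delta)^{-1/2},M_f]D_k,
\]
where $[D_k,M_f]=M_{D_kf}$. The term $(-\Delta)^{-1/2}M_{D_kf}$ lies in $\mathcal{L}_{d,\infty}$ by the weak Cwikel estimate \eqref{Cwikel-wp} with $p=d>2$. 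This uses $|\xi|^{-1}\in L_{d,\infty}(\mathbb{R}^d)$ and gives the bound $\|D_kf\|_{L_d(\mathcal{N})}$. When $d=2$ we instead use Lemma \ref{Cwikel-0-2}(iii) or its dual.

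The second term is the crux. Following \cite{LMSZ17}, we represent $[(-\Delta)^{-1/2},M_f]$ through the Stieltjes/Balakrishnan integral
\[
(-\Delta)^{-1/2}=\frac{1}{\pi}\int_0^\infty \lambda^{-1/2}(\lambda-\Delta)^{-1}\,d\lambda .
\]
We then use
\[
[(\lambda-\Delta)^{-1},M_f] = (\lambda-\Delta)^{-1}\bigl(\textstyle\sum_j (M_{D_jf}D_j + D_jM_{D_jf})\bigr)(\lambda-\Delta)^{-1}.
\]
With this, $[(-\Delta)^{-1/2},M_f]D_k$ becomes a sum of terms of the form
\[
\int_0^\infty \lambda^{-1/2}(\lambda-\Delta)^{-1}D_j\, M_{D_jf}\,(\lambda-\Delta)^{-1}D_k\,d\lambda
\]
together with adjoint-type terms. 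Each term has the structure $($bounded Fourier multiplier$)\cdot M_{D_jf}\,(-\Delta)^{-1/2}\cdot($bounded Fourier multiplier$)$. These are handled by a double operator integral/Schur multiplier argument: the map $\lambda\mapsto$ integrand is rewritten as a Fourier-side double operator integral whose symbol is bounded as a Schur multiplier on $\mathcal{L}_{d,\infty}$.

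The main obstacle is this last step in the operator-valued setting, namely showing that the relevant double operator integral is bounded on $\mathcal{L}_{d,\infty}(B(L_2(\mathbb{R}^d))\overline{\otimes}\mathcal{M})$. Since all multipliers act only on the $L_2(\mathbb{R}^d)$ factor and commute with $1\otimes\mathcal{M}'$, I expect to transfer the scalar Schur multiplier bound by the standard fact that such double operator integrals are bounded on any fully symmetric space of a semifinite algebra, with the constant depending only on the symbol. The outcome is a bound $\lesssim \|M_{D_jf}(-\Delta)^{-1/2}\|_{\mathcal{L}_{d,\infty}}\lesssim\|D_jf\|_{L_d(\mathcal{N})}$ via Lemma \ref{Cwikel} again. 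Summing over $j$ gives the claimed estimate.
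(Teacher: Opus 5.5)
Your route has a genuine gap in dimension $d=2$, which the lemma includes. The splitting
\[
[R_k,M_f]=(-\Delta)^{-1/2}M_{D_kf}+[(-\Delta)^{-1/2},M_f]D_k
\]
does not produce bounded operators when $d=2$. Take $f\in C_c^\infty(\mathbb{R}^2;L_2(\mathcal{M})\cap\mathcal{M})$ and a test function $u$ with $\int D_kf(x)u(x)\,dx\neq0$. Then the Fourier transform of $D_kf\,u$ is continuous and nonzero at the origin, so $\int_{|\xi|<1}|\xi|^{-2}\|\widehat{D_kf\,u}(\xi)\|_{L_2(\mathcal{M})}^2\,d\xi=\infty$. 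Hence $(-\Delta)^{-1/2}M_{D_kf}$ is not even bounded on $L_2(\mathcal{N})$, so neither is the second term; only their sum is bounded.

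Consequently no Cwikel-type estimate can place either piece in $\mathcal{L}_{2,\infty}$. Lemma \ref{Cwikel-0-2}(iii) does not help either. First, $|\xi|^{-1}$ is not locally in $L_4(\mathbb{R}^2)$. Second, even after removing low frequencies, that lemma bounds by $\|D_kf\|_{\ell_{2,\log}(L_\infty)}$, not by $\|D_kf\|_{L_2}$. The unsymmetric weak estimate $\|T_\phi M_g\|_{\mathcal{L}_{2,\infty}}\lesssim\|\phi\|_{L_{2,\infty}}\|g\|_{L_2}$ is known to fail at $p=2$.

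The same defect sits inside your Balakrishnan term. After the $\lambda$-integration, the pieces coming from $M_{D_jf}D_j$ (your ``adjoint-type'' terms) have Fourier symbol $\frac{\eta_j\eta_k}{|\xi||\eta|(|\xi|+|\eta|)}$. This blows up like $|\xi|^{-1}$ as $\xi\to0$ and cannot be written as a bounded Schur multiplier times $|\xi|^{-1/2}|\eta|^{-1/2}$.

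The missing idea is to keep the derivative in \emph{symmetric} position, which is exactly what the factor $\psi_3$ does in the paper. There $T^{\mathcal{D},\mathcal{D}}_{\psi_3}[\mathcal{D},1\otimes M_f]=\sum_k\gamma_k\otimes(1-\Delta)^{-1/4}M_{D_kf}(1-\Delta)^{-1/4}$. After writing $D_kf=|(D_kf)^*|^{1/2}\,u\,|D_kf|^{1/2}$, each half is controlled by Lemma \ref{Cwikel} with exponent $2d\ge4$, for every $d\ge2$. The error term $[\operatorname{sgn}(\mathcal{D})-g(\mathcal{D}),1\otimes M_f]$, bounded by $\|f\|_{L_d}$, is then removed by dilation. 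A homogeneous variant also works: the identity $\frac{\operatorname{sgn}(s)-\operatorname{sgn}(t)}{s-t}=|s|^{-1/2}|t|^{-1/2}\cdot\frac{2|s|^{1/2}|t|^{1/2}}{|s|+|t|}\mathbf{1}_{\{st<0\}}$ reduces $[\operatorname{sgn}(\mathcal{D}),1\otimes M_f]$ to $|\nabla|^{-1/2}M_{D_kf}|\nabla|^{-1/2}$, with no low-frequency term.

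For $d\ge3$ your route can be made to work, but two justifications need repair.
\begin{itemize}
\item The ``fully symmetric space'' fact covers symbols like $a(s)b(t)$, or $h(\log|s|-\log|t|)$ with $\widehat h\in L_1$. Your unsymmetric reduction instead needs $\frac{|\eta|}{|\xi|+|\eta|}$, a Hilbert-transform-type Schur multiplier that is unbounded on $\mathcal{L}_1$ and $\mathcal{L}_\infty$. You would need $L_p$/UMD bounds plus interpolation.
\item If you keep your low-frequency cutoff, that piece is bounded only by $\|f\|_{L_d}$, so you also need the paper's dilation step.
\end{itemize}
Finally, the scalar-majorant trick of Lemma \ref{Cwikel-0-2} uses the factorized form $T_\phi M_f$ and gives no BMO bound for a commutator. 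In the density step, identify the limit by weak convergence instead.
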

\begin{proof}
	Let us sketch the idea of the proof following closely the argument in \cite{LMSZ17}, and point out the main difference from that in \cite{LMSZ17}. We need to estimate the commutator
	$$\big[  \sum_{k=1}^d  \gamma_k \otimes  R_k , 1\otimes M_f\big]  =  \big[ {\rm sgn}(\mathcal{D}), 1\otimes M_f \big]$$
	as an operator affiliated to $\mathbb{C}^N\otimes  B(L_2(\mathbb{R}^d))\overline{\otimes} \mathcal{M}  $, where $\gamma_k$'s are Pauli matrices on $\mathbb{C}^N$ and $\mathcal{D}= \sum_{k=1}^d  \gamma_k \otimes  D_{x_k} $.
	
	Write
	$$  \big[ {\rm sgn}(\mathcal{D}), 1\otimes M_f \big]   = \big[ {\rm sgn}(\mathcal{D})-g(\mathcal{D}), 1\otimes M_f \big]   + \big[ g(\mathcal{D}), 1\otimes M_f \big]  $$
	with $g(t) =  t/(1+t^2)^{1/2}$.
	Similar to \cite[Lemma~7]{LMSZ17}, the first term is estimated as
	$$ \|\big[ {\rm sgn}(\mathcal{D})-g(\mathcal{D}), 1\otimes M_f \big]\|_{\mathcal{L}_{d, \infty}}\le C_{d } \|f\|_{ L_d (\mathbb{R}^d; L_d(\mathcal{M}))}.$$
	The term $ \big[ g(\mathcal{D}), 1\otimes M_f \big]  $ is treated as in \cite[Section~3]{LMSZ17}:  write
	$$\big[ g(\mathcal{D}), 1\otimes M_f \big]  =  T_{g^{[1]}}^{\mathcal{D},\mathcal{D}} \big[  \mathcal{D}, 1\otimes M_f \big] $$
	with $g^{[1]}(s, t ) =  \frac{g(s)-g(t)}{s-t}$.
	Here, for general self-adjoint (potentially unbounded) operators $\mathcal{E}_1$ and $\mathcal{E}_2$, and a two-variable funtion $\phi$, the double operator integral $ T_{\phi }^{\mathcal{E}_1,\mathcal{E}_2} $ is defined (see \cite{PSW02, PS09}) as
	$$T_{\phi }^{\mathcal{E}_1,\mathcal{E}_2} (A) =  \int_{\mathbb{R}^2} \phi(s, t )  \  d\mu_1(s) \ A \ d\mu_2(t) $$
	with $d\mu_j$ ($j=1,2$) the associated spectral measures of $\mathcal{E}_1$ and $\mathcal{E}_2$. The transformer $ T_{g^{[1]}}^{\mathcal{D},\mathcal{D}} $ is decomposed as
	$$ T_{g^{[1]}}^{\mathcal{D},\mathcal{D}}  =   T_{\psi_1}^{\mathcal{D},\mathcal{D}} \circ  T_{\psi_2}^{\mathcal{D},\mathcal{D}}  \circ T_{\psi_3}^{\mathcal{D},\mathcal{D}} $$
	with
	\begin{align*}
		\psi_1(s,t) &= 1 + \frac{1 - s t}{(1+s^2)^{\frac{1}{2}} (1+t^2)^{\frac{1}{2}}}, \\[1ex]
		\psi_2(s,t) &= \frac{(1+s^2)^{\frac{1}{4}} (1+t^2)^{\frac{1}{4}}}{(1+s^2)^{\frac{1}{2}} + (1+t^2)^{\frac{1}{2}}}, \\[1ex]
		\psi_3(s,t) &= \frac{1}{(1+s^2)^{\frac{1}{4}} (1+t^2)^{\frac{1}{4}}}.
	\end{align*}
	
	Unlike in \cite{LMSZ17}, we require the complete boundedness of $ T_{\psi_j}^{\mathcal{D},\mathcal{D}} $'s in our operator-valued setting. Note that, operators in $B(L_2(\mathbb{R}^d))$ (or $B(L_2(\mathbb{R}^d))\overline{\otimes} \mathcal{M}  $) can be represented as infinite matrices, and transformers $ T_{\psi_j}^{\mathcal{D},\mathcal{D}} $ can then be viewed as Schur multipliers acting on these matrices. For $j=1,3$, the functions $\psi_j$ can be written as a linear combination of terms of the form $a(s) b(t)$ for some bounded functions $a, b$ on $\mathbb{R}$, so $ T_{\psi_j}^{\mathcal{D},\mathcal{D}} $ are completely bounded Schur multipliers (see \cite[Theorem~5.1]{Pisier95}, or \cite[Chapter~8]{Pisier98}). For $\psi_2(s,t )$, we refer to \cite[Lemma~8]{LMSZ17}, which transfers the operator $T_{\psi_2}^{\mathcal{D},\mathcal{D}} $ to $ T_{\psi_0}^{\mathcal{B},\mathcal{B}} $ for a certain operator $\mathcal{B}$ and the function $\psi_0(s,t ) = (2\cosh (s-t))^{-1} =  h(s-t)$. Since $\widehat{h} \in \mathscr{S}(\mathbb{R})\subset L_1(\mathbb{R})$, $h$ is a completely bounded Fourier multiplier on $L_p(\mathbb{R})$ for any $1\le p \le \infty$, so by the transference between Fourier and Schur multipliers (see \cite{NR11} for discrete case and \cite{CS15} for locally compact group case), $\psi_0$ is a completely bounded Schur multiplier on $\mathcal{S}_p $.
	
	With the complete boundedness of $T_{\psi_j}^{\mathcal{D},\mathcal{D}} $, we are able to conclude the proof of the lemma. Noting that
	\begin{equation*}
		\begin{split}
			T_{\psi_3}^{\mathcal{D},\mathcal{D}} \big[  \mathcal{D}, 1\otimes M_f \big] &=  (1+ \mathcal{D}^2)^{-\frac 1 4 }\big[  \mathcal{D}, 1\otimes M_f \big]   (1+  \mathcal{D}^2)^{-\frac 1 4 }\\
			&= \sum_{k=1}^d \Big(1\otimes   (1 - \Delta)^{-\frac 1 4 }\Big)\big[ \gamma_k \otimes D_{x_k}, 1\otimes M_f \big]  \Big(1\otimes   (1 - \Delta)^{-\frac 1 4 }\Big)\\
			&= \sum_{k=1}^d \Big(1\otimes   (1 - \Delta)^{-\frac 1 4 }\Big) \big(  \gamma_k \otimes M_{D_{x_k}f } \big)  \Big(1\otimes   (1 - \Delta)^{-\frac 1 4 }\Big),
		\end{split}
	\end{equation*}
	it follows from \eqref{Cwikel-wp} that
	$$\| T_{\psi_3}^{\mathcal{D},\mathcal{D}} \big[  \mathcal{D}, 1\otimes M_f \big] \|_{\mathcal{L}_{d, \infty}} \le C_{  d}\|f\|_{\dot{W}_{d}^1(\mathbb{R}^d; L_{d}(\mathcal{M}))},$$
	and hence
	$$\| \big[ g(\mathcal{D}), 1\otimes M_f \big] \|_{\mathcal{L}_{d, \infty}}  = \| T_{\psi_1}^{\mathcal{D},\mathcal{D}}\circ   T_{\psi_2}^{\mathcal{D},\mathcal{D}} \circ  T_{\psi_3}^{\mathcal{D},\mathcal{D}} \big[  \mathcal{D}, 1\otimes M_f \big] \|_{\mathcal{L}_{d, \infty}} \le C_{  d}\|f\|_{\dot{W}_{d}^1(\mathbb{R}^d; L_{d}(\mathcal{M}))}.$$
	Combining with the estimate for $\big[ {\rm sgn}(\mathcal{D})-g(\mathcal{D}), 1\otimes M_f \big]$, we get
	$$
	\|[R_k, M_f]\|_{\mathcal{L}_{d, \infty}} \le C_{  d}\big(\|f\|_{ L_d (\mathbb{R}^d; L_d(\mathcal{M}))}+ \|f\|_{\dot{W}_{d}^1(\mathbb{R}^d; L_{d}(\mathcal{M}))} \big).
	$$
	Finally, a dilation argument as in the proof of \cite[Theorem~11]{LMSZ17} can get rid of the $L_d$-norm of $f$, completing the proof of the lemma.
\end{proof}

The result in \cite[Proposition 4.5.2]{Zeng23} remains valid in our setting with an identical proof. Hence, we state the analogue of \cite[Proposition 4.5.2]{Zeng23} in our setting as the following lemma.

\begin{lemma}\label{calderon upper}
	Let  $\phi \in C^\infty(\mathbb{S}^{d-1})$, $f \in \dot{W}_d^1(\mathbb{R}^d; L_d(\mathcal{M}))$, and $d \ge 2$. Then the commutator $[T_\phi, M_f]\in \mathcal{L}_{d, \infty}\big(B(L_2(\mathbb{R}^d))\overline{\otimes} \mathcal{M}\big)$ and
	$$
	\|[T_\phi, M_f]\|_{\mathcal{L}_{d, \infty}} \le C_{\phi, d}\|f\|_{\dot{W}_{d}^1(\mathbb{R}^d; L_{d}(\mathcal{M}))}.
	$$
\end{lemma}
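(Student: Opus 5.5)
The strategy is to reduce $[T_\phi,M_f]$ to finitely many Riesz-transform commutators, which are handled by Lemma \ref{Riesz upper}, plus a smoothing error that we control with the Cwikel estimates.

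\textbf{Step 1: reduction to Riesz transforms.} Extend $\phi\in C^\infty(\mathbb{S}^{d-1})$ to a smooth compactly supported function $\Phi$ on a neighbourhood of the sphere in $\mathbb{R}^d$. Functional calculus gives $T_\phi=\Phi(R_1,\dots,R_d)$, where $R=\nabla(-\Delta)^{-\frac12}$ has joint spectrum in $\mathbb{S}^{d-1}$. Expand $\Phi$ in a Fourier series on a cube containing the sphere:
\[
T_\phi=\sum_{n\in\mathbb{Z}^d} c_n\, e^{i n\cdot R},
\]
where $|c_n|$ decays faster than any power of $|n|$.

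\textbf{Step 2: commutator of a single exponential.} For each unitary $U_n=e^{in\cdot R}$, the Duhamel formula gives
\[
[U_n,M_f]=\int_0^1 e^{isn\cdot R}\,\Big[i\,n\cdot R,\,M_f\Big]\,e^{i(1-s)n\cdot R}\,ds .
\]
The exponentials are unitaries in $B(L_2(\mathbb{R}^d))\overline{\otimes}\mathcal{M}$, since they commute with $1\otimes\mathcal{M}'$, and the $\mathcal{L}_{d,\infty}$ quasinorm is unitarily invariant. To avoid integrating in a quasi-Banach space, pass to an equivalent norm on $\mathcal{L}_{d,\infty}$; this is possible because $d\ge2>1$, and the equivalent norm is still unitarily invariant. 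We obtain
\[
\|[U_n,M_f]\|_{\mathcal{L}_{d,\infty}}\lesssim |n|\sum_k\|[R_k,M_f]\|_{\mathcal{L}_{d,\infty}}\le C_d\,|n|\,\|f\|_{\dot W^1_d},
\]
where the last inequality is Lemma \ref{Riesz upper}.

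\textbf{Step 3: summation.} Summing over $n$ with the rapidly decaying $c_n$ gives
\[
\|[T_\phi,M_f]\|_{\mathcal{L}_{d,\infty}}\le C_d\Big(\sum_n |c_n|\,|n|\Big)\|f\|_{\dot W^1_d}=C_{\phi,d}\|f\|_{\dot W^1_d}.
\]
The series converges in the equivalent norm, so it also converges in the quasinorm.

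\textbf{Domain issues.} The computations in Steps 2 and 3 are first done for $f\in C_c^\infty(\mathbb{R}^d;L_d(\mathcal{M})\cap\mathcal{M})$. We then extend to the whole space by density (the Remark after the definition of $\dot W^1_p$) and the closedness of $\mathcal{L}_{d,\infty}$ under the corresponding approximation: the commutators converge weakly and the quasinorm is weakly lower semicontinuous (Fatou property).

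\textbf{Alternative route.} If the Fourier-series route proves awkward, one can instead mimic the double operator integral argument of Lemma \ref{Riesz upper} directly. The main obstacle is then the complete boundedness of the Schur multipliers involved, and the Fourier expansion above is the way around it.
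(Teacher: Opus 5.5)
Your proposal is correct and follows essentially the paper's route: the paper proves the lemma by citing \cite[Lemma~3.5]{SXZ23} and \cite[Proposition~4.5.2]{Zeng23}, which transfer the Riesz-transform bound of Lemma~\ref{Riesz upper} to smooth functions of the tuple $\nabla(-\Delta)^{-\frac12}$. Your Fourier expansion of $\Phi$ combined with Duhamel's formula makes that transfer explicit, and it is rigorous because $n\cdot R$ is bounded, so the Duhamel integrand is norm-continuous in $\mathcal{L}_{d,\infty}$, which is normable since $d\ge 2$.
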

\begin{proof}
	The proof for \cite[Lemma~3.5]{SXZ23} holds true for commutators in $\mathcal{L}_{d, \infty}\big(B(L_2(\mathbb{R}^d))\overline{\otimes} \mathcal{M}\big)$ without any changes. So the proof for \cite[Proposition 4.5.2]{Zeng23} remains valid in our setting.
\end{proof}

In the scalar-valued case, it follows from \cite[Proposition 4.9]{FSZ24} that when $d \ge 2$ and $\alpha \in (-\frac{d}{2}, 1)$ (alternatively, let $d=1$ and $\alpha \in (0,1)$), the operator $[I^{\alpha}, M_f]: C_c^{\infty}(\mathbb{R}^d) \to (C_c^{\infty})'(\mathbb{R}^d)$ has a unique bounded extension in $B(L_2(\mathbb{R}^d))$. Therefore, in our setting, we adopt the same conditions as in the scalar-valued case.

The results in \cite[Theorem 1.1 (i), Theorem 1.2 (i)]{FSZ24} and their proofs hold verbatim in our setting. Hence, we state the counterparts of \cite[Theorem 1.1 (i), Theorem 1.2 (i)]{FSZ24} in our setting as the following lemma.

\begin{lemma}\label{endpoint upper}
	Let $d \ge 2$ and let $\alpha \in (-\frac{d}{2}, 0) \cup (0,1)$ (alternatively, let $d=1$ and $\alpha \in(0,1)$ ).
	If $f \in \dot{W}_{\frac{d}{1-\alpha}}^1\Big(\mathbb{R}^d; L_{\frac{d}{1-\alpha}}(\mathcal{M})\Big)$, then $[I^{\alpha}, M_f] \in \mathcal{L}_{\frac{d}{1-\alpha}, \infty}$ and
	$$
	\big\|[I^{\alpha}, M_f]\big\|_{\frac{d}{1-\alpha}, \infty} \le c_{d, \alpha}\|f\|_{\dot{W}_{\frac{d}{1-\alpha}}^1\Big(\mathbb{R}^d; L_{\frac{d}{1-\alpha}}(\mathcal{M})\Big)} .
	$$
\end{lemma}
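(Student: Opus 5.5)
The plan is to follow the scalar-valued argument of \cite{FSZ24} for Theorems 1.1(i) and 1.2(i), replacing each scalar ingredient by its operator-valued counterpart already available here. First reduce by density: by the preceding Remark, $C_c^\infty(\mathbb{R}^d;L_p(\mathcal{M})\cap\mathcal{M})$ with $p=\frac{d}{1-\alpha}$ is dense in $\dot W^1_p(\mathbb{R}^d;L_p(\mathcal{M}))$. The quasi-norm $\mathcal{L}_{p,\infty}$ is complete, and the map $f\mapsto[I^\alpha,M_f]$ is linear. Once the estimate holds on this dense class, the commutator for general $f$ is the $\mathcal{L}_{p,\infty}$-limit; it coincides with the bounded extension from $B(L_2)$, which is weak-operator continuous in $f$. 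So it suffices to prove the bound for smooth compactly supported $f$ with a constant independent of $f$.

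For such $f$, I would split $I^\alpha=I^\alpha\varphi(\nabla)+I^\alpha(1-\varphi(\nabla))$, where $\varphi$ is the cutoff \eqref{eq-cutoff}. The high-frequency part $I^\alpha\varphi(\nabla)$ is a classical Fourier multiplier of order $\alpha$. Its commutator with $M_f$ is handled with the double operator integral of Lemma \ref{Riesz upper}. Write $\psi(t)=|t|^\alpha\varphi(t)$ as a function of $\mathcal{D}=|\nabla|$, so that $[\psi(|\nabla|),M_f]=T^{|\nabla|,|\nabla|}_{\psi^{[1]}}([|\nabla|,M_f])$. Then factor $\psi^{[1]}(s,t)$ as a product of a completely bounded Schur multiplier and $(1+s)^{\frac{\alpha-1}{2}}(1+t)^{\frac{\alpha-1}{2}}$. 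The point is the homogeneity estimate $|\psi^{[1]}(s,t)|\lesssim (1+s+t)^{\alpha-1}$ for $\alpha<1$.

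Here $[|\nabla|,M_f]$ is itself a commutator of order-one type. In \cite{FSZ24} this is handled by writing $|\nabla|=\sum_k R_k D_k$, so that $[|\nabla|,M_f]=\sum_k([R_k,M_f]D_k+R_kM_{D_kf})$. Combined with the weights $J^{\frac{\alpha-1}{2}}$, the Cwikel estimate \eqref{Cwikel-wp} (with $p>2$) or Lemma \ref{Cwikel-0-2} (with $p\le 2$), applied to $J^{\alpha-1}M_{\nabla f}$, gives the bound $\lesssim\|\nabla f\|_{L_p(\mathcal{N})}$. The reduction to scalar majorants in the proof of Lemma \ref{Cwikel-0-2} is what transfers the scalar estimates.

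The low-frequency part, and more seriously the inhomogeneous weight $J$ versus the homogeneous $|\nabla|$, break scaling. I would remove them by the dilation argument of \cite[Theorem 11]{LMSZ17}. The dilation $U_r$ acts unitarily on $L_2(\mathbb{R}^d)$, preserves $\mathrm{Tr}$-singular values, and satisfies $U_r[I^\alpha,M_f]U_r^{*}=r^{\alpha}[I^\alpha,M_{f(r\cdot)}]$. The norm $\|f\|_{\dot W^1_p}$ scales by exactly $r^{1-d/p}=r^{\alpha}$, the same factor. Thus any inhomogeneous error term bounded by $\|f\|_{L_p}$ or by a lower-order quantity vanishes as $r\to 0$ or $r\to\infty$, leaving the homogeneous bound.

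I expect the main obstacle to be the range $\alpha\in(-\frac d2,0)$. There $I^\alpha$ is singular at $\xi=0$ and is not of the form ``bounded times smooth,'' so the low-frequency piece cannot simply be discarded. Following \cite[Proposition 4.9]{FSZ24}, I would handle it by writing $[I^\alpha,M_f]$ through the kernel $c|x-y|^{-d-\alpha}(f(x)-f(y))$ and estimating with $\ell_{p,\infty}(L_2)$-type block norms. These block norms are controlled via the scalar majorant $\|f(x)-f(y)\|_{\mathcal{M}}$, or rather via $L_p(\mathcal{M})$-valued Hardy-type inequalities. Checking that the Schur multiplier steps are completely bounded, so that they tensor with $\mathcal{M}$, is the second point requiring care, as in Lemma \ref{Riesz upper}.
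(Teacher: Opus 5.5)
Your plan breaks down at the step that is supposed to reduce everything to Cwikel estimates. Taking $\mathcal{D}=|\nabla|$ gives $[\psi(|\nabla|),M_f]=T^{|\nabla|,|\nabla|}_{\psi^{[1]}}([|\nabla|,M_f])$, but $[|\nabla|,M_f]$ is not a multiplication operator. Your splitting $[|\nabla|,M_f]=\sum_k([R_k,M_f]D_k+R_kM_{D_kf})$ only disposes of the second term. The first one, once weighted, is $J^{\frac{\alpha-1}{2}}[R_k,M_f]D_kJ^{\frac{\alpha-1}{2}}$. This is a weighted commutator whose membership in $\mathcal{L}_{\frac{d}{1-\alpha},\infty}$ is a statement of exactly the type you are trying to prove, so the argument is circular. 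Lemma \ref{Riesz upper} does not help: it concerns the exponent $d$ and the norm $\|f\|_{\dot W^1_d}$, and it cannot absorb the factor $D_kJ^{\frac{\alpha-1}{2}}$. The missing idea, which the paper uses following \cite{FSZ24}, is the Dirac operator $\mathcal{D}=\sum_k\gamma_k\otimes D_k$. Then $|\mathcal{D}|^{\alpha}=1\otimes I^{\alpha}$, while $[\mathcal{D},1\otimes M_f]=\sum_k\gamma_k\otimes M_{D_kf}$ is an honest multiplication operator. So $F_\alpha$ and the terms of $G_\alpha$ in \eqref{Fa}, \eqref{Ga} are handled by Cwikel estimates alone, and $1\otimes[I^\alpha,M_f]$ is recovered from them via \eqref{link-commutator-FG}.

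Two further steps fail on part of the parameter range.

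\emph{The multiplier bound.} Your key estimate $|\psi^{[1]}(s,t)|\lesssim(1+s+t)^{\alpha-1}$ is false for every $\alpha<0$: for fixed $s\ge1$ and $t\to\infty$ one has $\psi^{[1]}(s,t)\approx-\psi(s)/t$. Consequently, for $\alpha<-1$ (which lies in $(-\frac d2,0)$ as soon as $d\ge3$), the quotient $\psi^{[1]}(s,t)(1+s)^{\frac{1-\alpha}{2}}(1+t)^{\frac{1-\alpha}{2}}$ grows like $(t/s)^{-\frac{1+\alpha}{2}}$. So no symmetric weighting produces a bounded Schur multiplier, and even where a pointwise bound holds it does not by itself give one. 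This is precisely why the paper adds the asymmetrically weighted terms $|\mathcal{D}|^{\alpha+l-1}[\mathcal{D},1\otimes M_f]|\mathcal{D}|^{-l}$ in $G_\alpha$. The remaining multiplier $\Psi_\delta$ then has $\delta\in[-1,1)$, and $\Phi_\delta(s,t)=h_\delta(s/t)$, with $h_\delta\circ\exp$ Schwartz, is a completely bounded Schur multiplier.

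\emph{The Cwikel step for small $p$.} When $p=\frac{d}{1-\alpha}\le2$ (e.g.\ $d=1$, $\alpha\le\frac12$, or $d=2$, $\alpha<0$), Lemma \ref{Cwikel-0-2} bounds $J^{\alpha-1}M_{\nabla f}$ only by a block norm of type $\ell_p(L_2)$ or $\ell_{2,\log}(L_\infty)$. Such a norm is neither controlled by $\|\nabla f\|_{L_p}$ nor dilation-homogeneous, so your scaling step cannot close. The paper instead proves \eqref{Cwikel-unsymm}, splitting the multiplier by polar decomposition as $g=|g^*|^{\frac{\beta}{\beta+\gamma}}u|g|^{\frac{\gamma}{\beta+\gamma}}$. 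Each one-sided factor then falls into the range $p>2$ of \eqref{Cwikel-wp}.

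Because everything is done with the homogeneous $|\mathcal{D}|$ and homogeneous weights, the paper needs no high/low frequency splitting, no dilation argument, and no kernel or Hardy-type estimate for $\alpha<0$, which you leave unspecified.
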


\begin{proof}
	As Lemma \ref{Riesz upper}, the proof of this lemma is similar to that of \cite[Theorem 1.1 (i), Theorem 1.2 (i)]{FSZ24}, the only difference being that the involved double operator integrals (or Schur multipliers) are completely bounded. Let us give a simplified proof below.
	
	The starting point is still the Cwikel-type estimate: Let $ \beta,\gamma \in(0,\frac{d}{4})$ and $p=\frac{d}{2(\beta+\gamma)}$, then for every $f\in L_p(  (\mathbb{R}^d; L_p(\mathcal{M})))$, we have
	\begin{equation}
		\label{Cwikel-unsymm}
		\Big\|(-\Delta)^{-\beta}M_f(-\Delta)^{-\gamma}\Big\|_{\mathcal{L}_{\frac{d}{2(\beta+\gamma)},\infty}}\le C_{d,\beta,\gamma}\|f\|_{L_p(  (\mathbb{R}^d; L_p(\mathcal{M})))}.
	\end{equation}
	To see this, we just need to use polar decomposition to find a partial isometry $u\in L_{\infty}(\mathbb{R}^d)\overline{\otimes} \mathcal{M} $ such that
	$$f=|f^{\ast}|^{\frac{\beta }{\beta +\gamma}}\ u\ |f|^{\frac{\gamma}{\beta +\gamma}},$$
	so that
	$$(-\Delta)^{-\beta }M_f(-\Delta)^{-\gamma}=(-\Delta)^{-\beta }M_{|f^{\ast}|^{\frac{\beta }{\beta +\gamma}}}\cdot M_u\cdot M_{|f|^{\frac{\gamma}{\beta +\gamma}}}(-\Delta)^{-\gamma}.$$
	Applying \eqref{Cwikel-wp} and the noncommutative H\"older inequality, we conclude \eqref{Cwikel-unsymm}. Next, let $-\frac{d}{2}< \alpha <1$ and let $f\in L_{\frac{d}{1-\alpha }}(  (\mathbb{R}^d; L_{\frac{d}{1-\alpha }}(\mathcal{M})))$. Denote
	\begin{equation}
		\label{Fa}
		F_{\alpha }=|\mathcal{D} |^{\frac{\alpha -1}{2}}[\mathcal{D} ,1\otimes M_f]|\mathcal{D} |^{\frac{\alpha -1}{2}},
	\end{equation}
	and
	\begin{equation}
		\label{Ga}
		G_\alpha =\sum_{l=1}^{\lceil\frac{1-\alpha }{2}\rceil-1}|\mathcal{D} |^{\alpha +l-1}[\mathcal{D} ,1\otimes M_f]|\mathcal{D} |^{-l}+|\mathcal{D} |^{-l}[\mathcal{D} ,1\otimes M_f]|\mathcal{D} |^{\alpha +l-1}.
	\end{equation}
	Then we deduce from \eqref{Cwikel-unsymm} that
	\begin{equation}
		\label{estimateFG}
		\big\|F_\alpha \big\|_{\mathcal{L}_{\frac{d}{1-\alpha },\infty}}, \; \big\|G_\alpha \big\|_{\mathcal{L}_{\frac{d}{1-\alpha },\infty}} \le C_{\alpha , d } \|f\|_{ L_{\frac{d}{1-\alpha }}(  (\mathbb{R}^d; L_{\frac{d}{1-\alpha }}(\mathcal{M})))}.
	\end{equation}

	The next step is to link $1\otimes \big[ I^\alpha , M_f \big] =\big[ | \mathcal{D} |^{\alpha },1\otimes M_f\big]$ with the above constructed $F_\alpha$ and $G_\alpha$ via double operator integrals (or Schur multipliers). Set
	\begin{equation*}
		\Psi_\epsilon(s ,t )=\frac{|s |^{\epsilon}-|t |^{\epsilon}}{s -t  }\cdot |s |^{ \frac{1-\epsilon}{2}}|t |^{\frac{1-\epsilon}{2}},\quad  s ,t \in\mathbb{R}
	\end{equation*}
	when $s  \neq t $, and $\Psi_\epsilon(s ,t )=\epsilon$ when $s  = t $.
	Denote $\psi(s) =|s|$ and set (with the convention $\psi^{[1]}(s , s )=0 $)
	$$\psi^{[1]}(s , t )  =  \frac{|s  |- |t |}{s -t }.$$
	Then the relation between $\big[ | \mathcal{D} |^{\alpha },1\otimes M_f\big]$ and $F_\alpha,G_\alpha$ is
	\begin{equation}\label{link-commutator-FG}
		\big[|\mathcal{D} |^{\alpha},1\otimes M_f\big]=T^{\mathcal{D},\mathcal{D} }_{\Psi_{\delta}}(F_\alpha)-T^{\mathcal{D} ,\mathcal{D} }_{\psi^{[1]}}(G_\alpha),
	\end{equation}
	where $ \delta=\alpha-2+2\lceil\frac{1-\alpha}{2}\rceil \in [-1,1)$. The reason for \eqref{link-commutator-FG} is the following easily verified arithmetic equation:
	$$|s |^{\alpha }-|t |^{\alpha }=\Psi_{\delta}(s ,t )\cdot X_{\alpha }(s ,t )-\psi^{[1]}(s ,t )\cdot Y_{\alpha }(s ,t ),$$
	where
	$$X_{\alpha }(s ,t )=|s |^{\frac{\alpha -1}{2}}|t |^{\frac{\alpha -1}{2}}(s -t ),\quad s ,t \in\mathbb{R},$$
	$$Y_{\alpha }(s ,t )=\Big(\sum_{l=1}^{\lceil\frac{1-\alpha }{2}\rceil-1}|s |^{\alpha +l-1}|t |^{-l}+|s |^{-l}|t |^{\alpha +l-1}\Big)(s -t ),\quad s ,t \in\mathbb{R}.$$

	By virtue of \eqref{estimateFG} and \eqref{link-commutator-FG}, we are reduced to showing the complete boundedness of the two transformers $T^{\mathcal{D},\mathcal{D} }_{\Psi_{\delta}} $ and $T^{\mathcal{D} ,\mathcal{D} }_{\psi^{[1]}}$. For $T^{\mathcal{D} ,\mathcal{D} }_{\psi^{[1]}}$, it is clear by \cite[Section~3.2]{CGPT23} that $\psi^{[1]}(s, t)$ is completely bounded Schur multiplier on $\mathcal{S}_{p,\infty}$ for $1<p<\infty$. For $T^{\mathcal{D},\mathcal{D} }_{\Psi_{\delta}} $ with $\delta  \in [-1,1)$, we may set	
	$$\Phi_\delta  ( s ,  t  )=\frac{ s ^{\delta  }- t ^{ \delta  }}{ s - t }\cdot   s ^{\frac{1-\delta  }{2}}  t ^{\frac{1-\delta  }{2}},\quad  s , t >0$$
	and write
	$$\Psi_{\delta  }( s , t )=\Phi_{\delta  }(| s |,| t |)\cdot\psi^{[1]}( s , t ),\quad  s , t \in\mathbb{R}.$$
	It is immediate that
	$$T_{\Psi_\delta  }^{\mathcal{D},\mathcal{D} }=T_{\Phi_\delta  }^{| \mathcal{D}|,|\mathcal{D} |}\circ T_{\psi^{[1]}}^{\mathcal{D},\mathcal{D} }.$$
	The complete boundedness is proceeded as in \cite[Lemma~4.2]{FSZ24}, following \cite[Lemma~9]{PS09}:
	$$\Phi_\delta (s,t ) =  h_\delta (\frac s t )$$
	with $h_\delta \circ \exp:=g $ being a Schwartz function on $\mathbb{R}$.
	Then we may write
	$$\Phi_\delta (s,t ) =  \int_{\mathbb{R}}  \widehat{g}(\xi) s^{i\xi} t^{-i\xi} d\xi, $$
	forcing the complete boundedness of $T_{\Psi_\delta  }^{\mathcal{D},\mathcal{D} }$ on $\mathcal{S}_{p,\infty}$ for $1<p<\infty$.
\end{proof}

\subsection{ Weyl's law for the  commutator $[T_\phi , M_f]$ }

In the scalar-valued case, Calder\'{o}n--Zygmund  singular integral operators are characterized by kernels that have a mean value of zero on the unit sphere $\mathbb{S}^{d-1}$ and are homogeneous of degree $-d$. It follows from \cite[Theorem II.4.2]{St70} that each Calder\'{o}n-Zygmund  singular integral operator can be expressed as a Fourier multiplier $T_\phi$, where the symbol $ \phi$ is smooth and homogeneous of degree 0. In the sequel, we fix a function $\phi \in C^{\infty}(\mathbb{S}^{d-1} )$ which is homogeneous of degree 0 and non identically zero. We set $\phi(0)=0$, $\phi$ can be viewed as a function in $C^{\infty}(\mathbb{S}^{d-1})$.

The main result in this subsection is the following

\begin{theorem}\label{asymp limit cz commutator}
	Let $f \in \dot{W}_d^1(\mathbb{R}^d; L_d(\mathcal{M}))$ and $d\ge 2$. Then
	\begin{equation}\label{asy limit cz}
		\lim_{t \to \infty} t^{\frac{1}{d}} \mu(t, [T_\phi, M_f]) =(2\pi)^{-1} d^{-\frac{1}{d}}  \Bigg(\int_{\mathbb{S}^{d-1}} \int_{\mathbb{R}^d}  \tau \Big( \Big| \sum_{|\gamma|=1} \partial_s^\gamma \phi(s) D_x^\gamma f(x) \Big|^d \Big)\, dx \, ds \Bigg)^{\frac{1}{d}}.
	\end{equation}
	where the integral over $\mathbb{S}^{d-1}$ is taken with respect to the rotation-invariant measure $ds$ on $\mathbb{S}^{d-1}$, and $s = (s_1, \ldots, s_d)$.
\end{theorem}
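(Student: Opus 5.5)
The plan has four steps: reduce to smooth compactly supported $f$, pass to a smoothed order-zero multiplier, compute the principal symbol of the commutator, and apply Theorem \ref{asymp most}. The approach follows the outline given in the introduction.

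\textbf{Step 1 (density).} By Lemma \ref{calderon upper}, the map $f\mapsto [T_\phi,M_f]$ is bounded from $\dot W^1_d(\mathbb{R}^d;L_d(\mathcal{M}))$ into $\mathcal{L}_{d,\infty}$. The right-hand side of \eqref{asy limit cz} is continuous in $f$ for the same norm: it is the $L_d(L_\infty(\mathbb{R}^d\times\mathbb{S}^{d-1})\overline{\otimes}\mathcal{M})$-norm of a linear expression in $\nabla f$. By the density remark, it therefore suffices to treat $f\in C_c^\infty(\mathbb{R}^d;L_d(\mathcal{M})\cap\mathcal{M})$. The passage to general $f$ is then given by Lemma \ref{ideal approximate}.

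\textbf{Step 2 (smoothing the symbol).} Fix $f$ as in Step 1 and choose $\chi\in C_c^\infty(\mathbb{R}^d)$ with $\chi\equiv1$ on a neighbourhood of $\operatorname{supp} f$. Set $\widetilde\phi(\xi)=\varphi(\xi)\phi(\xi)$, where $\varphi$ is the cutoff \eqref{eq-cutoff}. Then $T_{\widetilde\phi}\in\mathrm{C}\Psi^0(\mathbb{R}^d;\mathcal{M})$, and $T_\phi-T_{\widetilde\phi}=T_{(1-\varphi)\phi}$ is a multiplier with bounded compactly supported symbol.

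The error $[T_{(1-\varphi)\phi},M_f]$ splits as $T_{(1-\varphi)\phi}M_f - M_fT_{(1-\varphi)\phi}$. The first term lies in $\mathcal{L}_{p,\infty}$ for every $p>0$ by Lemmas \ref{Cwikel} and \ref{Cwikel-0-2}, since the symbol lies in $\ell_{p,\infty}(L_2)$ for all $p$. The second term is its adjoint with $f$ replaced by $f^*$. Hence the error lies in $(\mathcal{L}_{d,\infty})_0$, and by Lemma \ref{weak weyl} it suffices to treat $[T_{\widetilde\phi},M_f]$.

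\textbf{Step 3 (symbol of the commutator).} By Theorem \ref{asympcomthm} and Proposition \ref{cl M},
\[
[T_{\widetilde\phi},M_f]\in\mathrm{C}\Psi^{-1}(\mathbb{R}^d;\mathcal{M}),
\]
with principal symbol $\sum_{|\gamma|=1}\partial_\xi^\gamma\phi(\xi)\,D_x^\gamma f(x)$, homogeneous of degree $-1$. Here the symbol of $T_{\widetilde\phi}M_f$ is $\sum_\alpha\frac1{\alpha!}\partial_\xi^\alpha\widetilde\phi\,D_x^\alpha f$, while the symbol of $M_fT_{\widetilde\phi}$ is exactly $f\widetilde\phi$; the zeroth-order terms cancel.

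To meet the right-compact-support hypothesis, write
\[
[T_{\widetilde\phi},M_f]=[T_{\widetilde\phi},M_f]M_\chi+[T_{\widetilde\phi},M_f]M_{1-\chi}.
\]
Since $f(1-\chi)=0$, the second term equals $M_f T_{\widetilde\phi}M_{1-\chi}-T_{\widetilde\phi}M_fM_{1-\chi}=M_fT_{\widetilde\phi}M_{1-\chi}$. Because $\operatorname{supp}f$ and $\operatorname{supp}(1-\chi)$ are disjoint, this is smoothing, $M_f\Psi^{-\infty}$ up to the off-diagonal kernel. Writing it as $M_f J^{-N}\cdot(\text{bounded})$ places it in $(\mathcal{L}_{d,\infty})_0$ by the Cwikel estimates applied to the adjoint.

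\textbf{Step 4 (Weyl's law).} Apply Theorem \ref{asymp most} with $m=1$ and the right cutoff $M_\chi$, with $\chi\in C_c(\mathbb{R}^d;L_d(\mathcal{M})\cap\mathcal{M})$ when $\mathcal{M}$ is finite. For general $\mathcal{M}$, use instead $f$-dependent right factors, for instance $[T_{\widetilde\phi},M_f]=T_{\widetilde\phi}M_f-M_fT_{\widetilde\phi}$, treating the second term through adjoints. This gives the limit $(2\pi)^{-1}d^{-1/d}\big(\int\!\int\tau(|\sigma_{-1}(x,s)|^d)\,dx\,ds\big)^{1/d}$, with $\sigma_{-1}=\sum_{|\gamma|=1}\partial_s^\gamma\phi(s)D_x^\gamma f(x)$. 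Since $\chi=1$ on $\operatorname{supp}\nabla f$, this is exactly the stated formula.

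The main obstacle is this last application when $\mathcal{M}$ is not finite, because $\chi\cdot1_\mathcal{M}\notin L_d(\mathcal{M})$. I would handle it by inserting the finite projections $\mathrm{p}_\lambda$ from the proof of Theorem \ref{asymp most}: use $M_{\chi\mathrm{p}_\lambda}$ on the right and $M_{\chi\mathrm{p}_\lambda}$ on the left via adjoints. The remainders involve $f\mathrm{q}_\lambda$ and $\mathrm{q}_\lambda f$, which tend to zero in $\mathcal{L}_{d,\infty}$ by Lemma \ref{calderon upper}. One then concludes with Lemma \ref{ideal approximate} and monotone convergence of the symbol integrals.
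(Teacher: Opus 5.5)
Your proposal is correct and follows the paper's own route. You use density via Lemma~\ref{calderon upper} and Lemma~\ref{ideal approximate}, replace $\phi$ by a smooth cut-off symbol with a Cwikel-controlled error, and read off the principal symbol from Theorem~\ref{asympcomthm}. Then, exactly as in Lemma~\ref{right support for compact f}, you use the right factor $M_{\chi\mathrm{p}_\lambda}$: $[T_{\widetilde\phi},M_f]M_{1-\chi}$ is a localized smoothing error, and $[T_{\widetilde\phi},M_f]M_{\chi\mathrm{q}_\lambda}=[T_{\widetilde\phi},M_{f\mathrm{q}_\lambda}]M_\chi$ is controlled by Lemma~\ref{calderon upper}, before applying Theorem~\ref{asymp most} and monotone convergence.

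Drop two things. First, the interim idea of splitting $[T_{\widetilde\phi},M_f]=T_{\widetilde\phi}M_f-M_fT_{\widetilde\phi}$ fails: each summand has order $0$, so Theorem~\ref{asymp most} does not apply, and Weyl limits are not additive anyway. Second, the extra left cutoff is unnecessary, since the right cutoff alone already puts the operator within the scope of Theorem~\ref{asymp most}.
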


The main idea for the proof of Theorem \ref{asymp limit cz commutator} is the application of Theorem \ref{asymp most}. However, since neither $\phi$ nor $f$ is globally smooth on $\mathbb{R}^d$, neither of them is viewed as $\Psi$DOs. So the first step of the proof is to replace $\phi\in C^{\infty}(\mathbb{S}^{d-1})$ and $f\in \dot{W}_d^1(\mathbb{R}^d; L_d(\mathcal{M}))$ with smooth elements.

To eliminate the singularity of $\phi\in C^{\infty}(\mathbb{S}^{d-1})$ at the origin, we replace $T_\phi$ with another Fourier multiplier $T_{\widetilde{\phi}}$ whose symbol $\widetilde{\phi}$ is smooth on the entire $\mathbb{R}^d$. Such a function $\widetilde{\phi}$ is constructed as the following: let $\eta \in C^{\infty}(\mathbb{R}^d)$ be such that
\begin{equation}\label{cutoff-eta}
	\eta(\xi) =
	\begin{cases}
		0, & |\xi| \le \frac{1}{4},\\[1mm]
		1, & |\xi| \ge \frac{1}{2},
	\end{cases}
	\quad 0 \le \eta(\xi) \le 1 \text{ for all } \xi \in \mathbb{R}^d,
\end{equation}
and take $\widetilde{\phi}=\eta \phi$.

The operator-valued function $f\in \dot{W}_d^1(\mathbb{R}^d; L_d(\mathcal{M}))$ is approximated by operator-valued functions in $  C_c^{\infty}(\mathbb{R}^d; L_d(\mathcal{M}) \cap \mathcal{M})$, such approximation works due to the a priori estimate Lemma \ref{calderon upper}.

However, given $\widetilde{\phi}\in C ^{\infty}(\mathbb{R}^d)  $ and $f \in C_c^{\infty}(\mathbb{R}^d; L_d(\mathcal{M}) \cap \mathcal{M})$, the commutator $[T_{\widetilde{\phi}}, M_f]$ is not  right-compactly supported. In order to apply Theorem \ref{asymp most}, we need to remove the right-compact support restriction.
The following lemma provides the required reduction, by showing that the Weyl law for $[T_\phi, M_f]$ follows from the corresponding result for right-compactly supported functions.

\begin{lemma}\label{right support for compact f}
	Let $\phi \in C ^{\infty}(\mathbb{R}^d) $, $f \in C_c^{\infty}(\mathbb{R}^d; L_d(\mathcal{M}) \cap \mathcal{M})$, and set $A := [T_\phi, M_f]$. Let $(\mathrm{p}_\lambda)_{\lambda \in \Lambda} \subset \mathcal{M}$ be an increasing sequence of $\tau$-finite projections such that $\tau(\mathrm{p}_\lambda) < \infty$ and $\mathrm{p}_\lambda \nearrow 1_{\mathcal{M}}$ in the strong operator topology. Then the following statements hold:
	
	\begin{enumerate}[$\rm (i)$]
		\item There exists a compact set $K \subset\subset \mathbb{R}^d$ with $\operatorname{supp} f \subset K$ and a cutoff function $\chi \in C_c^\infty(\mathbb{R}^d)$ such that $\chi \equiv 1$ on a neighborhood of $K$, for which the truncated operator
		$$
		A_{\chi,\mathrm{p}_\lambda} := A M_{\chi \mathrm{p}_\lambda}
		$$
		is compactly supported from the right, with right-compact support $\chi \mathrm{p}_\lambda \in C_c^\infty(\mathbb{R}^d; L_d(\mathcal{M}) \cap \mathcal{M})$, and
		$$
		A_{\chi,\mathrm{p}_\lambda} \to A \text{ in } \mathcal{L}_{d,\infty}.
		$$
		
		\item For every $\lambda$, the operator $A_{\chi,\mathrm{p}_\lambda}$ satisfies the Weyl's law
		$$
		\lim_{t \to \infty} t^{\frac{1}{d}} \mu(t, A_{\chi,\mathrm{p}_\lambda}) = C_\lambda ,
		$$
		and the limit for $A$ exists and is equal to the limit of the sequence $C_\lambda $, i.e.,
		$$
		\lim_{t \to \infty} t^{\frac{1}{d}} \mu(t, A) = \lim_{\lambda \to \infty} C_\lambda  = (2\pi)^{-1} d^{-\frac{1}{d}} \Bigg( \int_{\mathbb{S}^{d-1} } \int_{\mathbb{R}^d} \tau\big(|\sigma(A)_{-1}(x,\xi)|^d \big)\,dx\,d\xi \Bigg)^{\frac{1}{d}}.
		$$
	\end{enumerate}
\end{lemma}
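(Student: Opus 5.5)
The plan is to split $A=[T_\phi,M_f]=T_\phi M_f - M_f T_\phi$ and treat the two terms separately. This reduces (ii) to Theorem \ref{asymp most} together with Lemma \ref{ideal approximate}. Throughout I read $\phi$ as $\widetilde\phi=\eta\phi$, so that $T_\phi\in \mathrm{C}\Psi^0(\mathbb{R}^d;\mathcal{M})$ and, by Proposition \ref{cl M}, $A\in \mathrm{C}\Psi^{-1}(\mathbb{R}^d;\mathcal{M})$. Its principal symbol is
\[
\sigma(A)_{-1}(x,\xi)=-\sum_{|\gamma|=1}\partial_\xi^\gamma\phi(\xi)\,D_x^\gamma f(x),
\]
read off from the first-order term of the composition formula in Theorem \ref{asympcomthm}.

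For (i), choose a compact $K\supset\operatorname{supp}f$ and $\chi\in C_c^\infty$ with $\chi\equiv1$ near $K$. Since $\chi\mathrm{p}_\lambda\in C_c^\infty(\mathbb{R}^d;L_d(\mathcal{M})\cap\mathcal{M})$, the operator $AM_{\chi\mathrm{p}_\lambda}$ is compactly supported from the right. For the convergence, write
\[
A-AM_{\chi\mathrm{p}_\lambda}=A M_{1-\chi}+AM_{\chi \mathrm{q}_\lambda},\qquad \mathrm{q}_\lambda=1-\mathrm{p}_\lambda .
\]

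\textbf{First term.} We have $AM_{1-\chi}=M_fT_\phi M_{1-\chi}\cdot(-1)$, because $M_fM_{1-\chi}=0$. The functions $f$ and $1-\chi$ have disjoint supports at positive distance, so $M_fT_\phi M_{1-\chi}$ has a smooth kernel that decays rapidly off the diagonal and is localized in $x$ to $\operatorname{supp}f$. By the symbolic calculus it lies in $\Psi^{-\infty}$ after left-multiplying by $M_f$. Writing it as $M_f J^{-N}\cdot J^{N}T_\phi M_{1-\chi}$, with the second factor bounded, and applying Lemma \ref{Cwikel} to $M_fJ^{-N}$ (taking adjoints), puts it in $\mathcal{L}_{d/N,\infty}$ for large $N$. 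Hence it belongs to $(\mathcal{L}_{d,\infty})_0$.

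This piece does not depend on $\lambda$, so it must vanish in the limit rather than merely be small. To handle this I will instead take $\chi_k\uparrow1$, or more simply note that it lies in $(\mathcal{L}_{d,\infty})_0$. I then use Lemma \ref{weak weyl} to discard it, and apply the approximation lemma only to $AM_\chi$.

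\textbf{Second term.} $AM_{\chi\mathrm{q}_\lambda}$ tends to $0$ in $\mathcal{L}_{d,\infty}$, exactly as $R_\lambda$ does in the proof of Theorem \ref{asymp most}. Write $A=A'J^{-1}$ with $A'=AJ\in\mathrm{C}\Psi^0$ bounded. Lemma \ref{Cwikel} (the case $d>2$; for $d=2$ use Lemma \ref{Cwikel-0-2}(iii)) then gives
\[
\|AM_{\chi\mathrm{q}_\lambda}\|_{d,\infty}\lesssim\|\chi\mathrm{q}_\lambda\|_{L_d(\mathcal{N})}.
\]

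Here lies the main obstacle: $\chi\mathrm{q}_\lambda$ does not tend to zero in $L_d$, since $\mathrm{q}_\lambda$ is not $\tau$-small. I would instead exploit that $A=T_\phi M_f-M_fT_\phi$ and the fact that $f\mathrm{q}_\lambda\to0$ in $L_d$ by \eqref{lem:finite-trace-approx}. The piece $T_\phi M_{f}M_{\chi\mathrm{q}_\lambda}=T_\phi M_{f\mathrm{q}_\lambda}$ still has order $0$, however, so this is not yet enough. The cleanest route is to write
\[
AM_{\chi\mathrm{q}_\lambda}=[T_\phi,M_{f\mathrm{q}_\lambda}]+T_\phi\,[M_f,M_{\chi\mathrm{q}_\lambda}],
\]
where the last term vanishes provided $\mathrm{q}_\lambda$ commutes with $f$. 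In general it does not, so I would reorder and use right multiplication, $M_{\chi\mathrm{q}_\lambda}$ acting on the $\mathcal{M}$-side. Then Lemma \ref{calderon upper} gives
\[
\|[T_\phi,M_{f\mathrm{q}_\lambda}]\|_{d,\infty}\lesssim\|f\mathrm{q}_\lambda\|_{\dot W^1_d}\to0,
\]
since $\nabla(f\mathrm{q}_\lambda)=(\nabla f)\mathrm{q}_\lambda\to0$ in $L_d$ by \eqref{lem:finite-trace-approx} and dominated convergence.

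For (ii), Theorem \ref{asymp most}, applied with $T=A$ and right support $\chi\mathrm{p}_\lambda$, gives
\[
C_\lambda=(2\pi)^{-1}d^{-1/d}\Big(\int\!\!\int\tau\big(|\sigma(A)_{-1}\chi\mathrm{p}_\lambda|^d\big)\Big)^{1/d}.
\]
Lemma \ref{ideal approximate} then transfers this to $A$. Finally, $\chi\equiv1$ on $\operatorname{supp}\sigma(A)_{-1}$, and monotone convergence as $\mathrm{p}_\lambda\nearrow1$, as at the end of the proof of Theorem \ref{asymp most}, yields the stated integral.
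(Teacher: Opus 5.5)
Your overall route is the paper's. You use the same splitting $A-AM_{\chi\mathrm{p}_\lambda}=AM_{1-\chi}+AM_{\chi\mathrm{q}_\lambda}$, apply Lemma~\ref{calderon upper} to $[T_\phi,M_{f\mathrm{q}_\lambda}]$, and finish with Theorem~\ref{asymp most}, Lemma~\ref{ideal approximate} and monotone convergence.

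You are right that $AM_{1-\chi}=-M_fT_\phi M_{1-\chi}$ is a fixed, generally nonzero operator, so $A_{\chi,\mathrm{p}_\lambda}\to A$ in $\mathcal{L}_{d,\infty}$ cannot hold literally. The paper asserts it anyway. Your repair is the correct way to run the argument: show $A_{\chi,\mathrm{p}_\lambda}\to AM_\chi$, then discard $A-AM_\chi\in(\mathcal{L}_{d,\infty})_0$ via Lemma~\ref{weak weyl}.

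The genuine error is in the second term. The identity $AM_{\chi\mathrm{q}_\lambda}=[T_\phi,M_{f\mathrm{q}_\lambda}]+T_\phi[M_f,M_{\chi\mathrm{q}_\lambda}]$ is false. Using $\chi f=f$, the right side minus the left side equals $T_\phi M_{f\mathrm{q}_\lambda-\mathrm{q}_\lambda f}-M_{f\mathrm{q}_\lambda}T_\phi M_{1-\chi}$, which contains an order-zero term not controlled in $\mathcal{L}_{d,\infty}$.

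The obstruction that $\mathrm{q}_\lambda$ need not commute with $f(x)$ is therefore an artifact of this wrong identity. Switching to right multiplication is not a legitimate fix. Right multiplication by $\mathrm{q}_\lambda$ lies in the commutant $1\otimes\mathcal{M}'$, so the resulting operator is not the $A_{\chi,\mathrm{p}_\lambda}$ of the statement. It also need not lie in $B(L_2(\mathbb{R}^d))\overline{\otimes}\mathcal{M}$, where $\mu(t,\cdot)$ is computed.

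The missing observation is simple. $M_{\mathrm{q}_\lambda}$ is left multiplication by a constant element of $\mathcal{M}$, so it commutes with $T_\phi$ (which acts only in the $L_2(\mathbb{R}^d)$ variable) and with $M_\chi$. Also $M_fM_{\chi\mathrm{q}_\lambda}=M_{f\mathrm{q}_\lambda}$. Together these give
\[
AM_{\chi\mathrm{q}_\lambda}=T_\phi M_{f\mathrm{q}_\lambda}-M_{f\mathrm{q}_\lambda}T_\phi M_\chi=[T_\phi,M_{f\mathrm{q}_\lambda}]M_\chi ,
\]
which is the paper's identity. Your Lemma~\ref{calderon upper} estimate then finishes the step.

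The first term also needs repair as written.
\begin{itemize}
\item $J^{N}T_\phi M_{1-\chi}$ is of order $N$, hence unbounded. The smoothing comes only from sandwiching $T_\phi$ between cutoffs with disjoint supports. Insert a scalar $\tilde\chi\in C_c^\infty(\mathbb{R}^d)$ with $\tilde\chi f=f$ and $\chi\equiv1$ near $\operatorname{supp}\tilde\chi$, and write
\[
M_fT_\phi M_{1-\chi}=M_fJ^{-N}\cdot J^{N}M_{\tilde\chi}T_\phi M_{1-\chi}.
\]
The second factor lies in $\Psi^{-\infty}$, since $\tilde\chi\,D^\alpha(1-\chi)\equiv0$ for all $\alpha$, and is therefore bounded.
\item Lemma~\ref{Cwikel} does not give $\mathcal{L}_{d/N,\infty}$ for $f$ only in $C_c(\mathbb{R}^d;L_d(\mathcal{M})\cap\mathcal{M})$, because it needs $p\ge2$ and $f\in L_p(\mathbb{R}^d;L_p(\mathcal{M}))$. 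It is also unnecessary: for $N\ge2$ one has $(1+|\xi|^2)^{-N/2}\in L_d(\mathbb{R}^d)$. So \eqref{Cwikel-p} with $p=d$ puts $J^{-N}M_{f^*}$, hence $M_fJ^{-N}$, in $\mathcal{L}_d\subset(\mathcal{L}_{d,\infty})_0$.
\end{itemize}

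A minor point: the principal symbol of $[T_\phi,M_f]$ is $+\sum_{|\gamma|=1}\partial_\xi^\gamma\phi\,D_x^\gamma f$, not with a minus sign. Only its modulus enters, so this does not affect the result.
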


\begin{proof}
	(i)
	Since $f \in C_c^\infty(\mathbb{R}^d; L_d(\mathcal{M}) \cap \mathcal{M})$, its $\mathbb{R}^d$-support
	$$
	K := \operatorname{supp}_x(f) =\overline{   \{x\in \mathbb{R}^d:  f(x) \neq 0 \} }
	$$
	is compact. Choose $\chi \in C_c^\infty(\mathbb{R}^d)$ such that $\chi \equiv 1$ on an open neighborhood of $K$, and denote the support of $\chi$ by $\Omega$.
	
	Consider the multiplication operator $ M_{\chi \mathrm{p}_\lambda} $, where $ \chi \in C_c^\infty(\mathbb{R}^d) $ and $ \mathrm{p}_\lambda \in L_d(\mathcal{M}) \cap \mathcal{M} $, so that $ \chi \mathrm{p}_\lambda \in C_c^\infty(\mathbb{R}^d; L_d(\mathcal{M}) \cap \mathcal{M}) $. By construction, $ A_{\chi,\mathrm{p}_\lambda} = A M_{\chi \mathrm{p}_\lambda} $ is compactly supported from the right. According to Proposition~\ref{cl M}, the principal symbol of $ A_{\chi,\mathrm{p}_\lambda} $ is given by
	$$
	\sigma(A_{\chi,\mathrm{p}_\lambda})_{-1}(x,\xi) = \sigma(A)_{-1}(x,\xi)\,\chi(x)\,\mathrm{p}_\lambda = \sigma(A)_{-1}(x,\xi)\,\mathrm{p}_\lambda,
	$$
	since $ \chi \equiv 1 $ on $ \operatorname{supp}_x \sigma(A)_{-1} $. Moreover,
	$$
	A - A_{\chi,\mathrm{p}_\lambda} = A M_{1-\chi} + A M_{\chi(1-\mathrm{p}_\lambda)}.
	$$
	
	Since $ \chi \equiv 1 $ on $ \operatorname{supp}_x(f) $, it follows that $ f(x)(1 - \chi(x)) = 0 $ for all $ x \in \mathbb{R}^d $. Consequently, the principal symbol of $ A M_{1-\chi} =-M_{f}T_\phi M_{1-\chi}$ vanishes:
	$$
	\sigma(A M_{1-\chi})_{-1}(x,\xi) = \sigma(A)_{-1}(x,\xi) \cdot (1 - \chi(x)) = 0.
	$$
	Furthermore, all higher-order homogeneous components of the symbol vanish outside $ \operatorname{supp}_x(f) $, which implies
	$$
	A M_{1-\chi} \in \mathrm{C}\Psi^{-\infty}\big(\mathbb{R}^d; \mathcal{M}\big)\subset B(L_2(\mathbb{R}^d)) \overline{\otimes} \mathcal{M}.
	$$
	
	Lemmas \ref{Cwikel} and \ref{Cwikel-0-2} imply that
	$$
	J^{-4d}M_{f}T_\phi M_{1-\chi} \in \mathcal{L}_{\frac{1}{4},\infty}\big(B(L_2(\mathbb{R}^d)) \overline{\otimes} \mathcal{M}\big).
	$$
	Since
	$$
	A M_{1-\chi}J^{4d} \in \mathrm{C}\Psi^{-\infty}\big(\mathbb{R}^d; \mathcal{M}\big)\subset B(L_2(\mathbb{R}^d)) \overline{\otimes} \mathcal{M},
	$$
	Then,
	\begin{align*}
		|A M_{1-\chi}|
		&= \bigl( A M_{1-\chi} J^{4d} J^{-4d} M_{f} T_\phi M_{1-\chi} \bigr)^{\frac{1}{2}} \\
		&\in \mathcal{L}_{\frac{1}{2},\infty}\bigl( B(L_2(\mathbb{R}^d)) \overline{\otimes} \mathcal{M} \bigr) \\
		&\subset (\mathcal{L}_{d,\infty})_0 \bigl( B(L_2(\mathbb{R}^d)) \overline{\otimes} \mathcal{M} \bigr).
	\end{align*}
	Thus, by Lemma \ref{weak weyl},
	$$
	\lim_{t \to \infty} t^{\frac{1}{d}} \mu(t, A - A_{\chi,\mathrm{p}_\lambda}) = \lim_{t \to \infty} t^{\frac{1}{d}} \mu(t, A M_{\chi(1-\mathrm{p}_\lambda)}).
	$$
	Since
	$$
	A M_{\chi(1-\mathrm{p}_\lambda)} = [T_\phi, M_f] M_{\chi(1-\mathrm{p}_\lambda)} = [T_\phi, M_{f(1-\mathrm{p}_\lambda)}] M_{\chi},
	$$
	an application of Lemma \ref{calderon upper} yields
	$$
	\|A M_{\chi(1-\mathrm{p}_\lambda)}\|_{\mathcal{L}_{d, \infty}} \le C_{\phi, d} \|f(1-\mathrm{p}_\lambda)\|_{\dot{W}_{d}^1(\mathbb{R}^d; L_{d}(\mathcal{M}))}.
	$$
	Finally, using the fact that $ \mathrm{p}_\lambda \nearrow 1_{\mathcal{M}} $ in the strong operator topology and that $ f \in C_c^\infty(\mathbb{R}^d; L_1(\mathcal{M}) \cap \mathcal{M}) $, we conclude that
	$$
	A_{\chi,\mathrm{p}_\lambda} \to A \quad \text{in } \mathcal{L}_{d,\infty}.
	$$
	
	(ii) For each $\lambda$, define
	$$
	A_\lambda := A_{\chi,\mathrm{p}_\lambda} = A M_{\chi \mathrm{p}_\lambda}.
	$$
	By direct application of Theorem \ref{asymp most} to these right-compactly supported operators, each $A_n$ satisfies the Weyl's law
	$$
	\lim_{t \to \infty} t^{\frac{1}{d}} \mu(t, A_\lambda) = C_\lambda ,
	$$
	where, by Theorem \ref{asymp most},
	$$
	C_\lambda  = (2\pi)^{-1} d^{-\frac{1}{d}} \Bigg( \int_{\mathbb{S}^{d-1} } \int_{\mathbb{R}^d} \tau\big(|\sigma(A)_{-1}(x,\xi)|^{d} \mathrm{p}_\lambda\big)\,dx\,d\xi \Bigg)^{\frac{1}{d}}.
	$$
	Since $A_{\chi,\mathrm{p}_\lambda} \to A$ in $\mathcal{L}_{d,\infty}$, Lemma \ref{ideal approximate} gives
	$$
	\lim_{t \to \infty} t^{\frac{1}{d}} \mu(t, A) = \lim_{\lambda \to \infty} C_\lambda .
	$$
	Apply the monotone convergence theorem along the increasing sequence $\mathrm{p}_\lambda \nearrow 1_{\mathcal{M}}$: for the nonnegative integrand $\tau\big(|\sigma(A)_{-1}|^{d} \mathrm{p}_\lambda\big)$, we have
	$$
	\int_{\mathbb{S}^{d-1} } \int_{\mathbb{R}^d} \tau\big(|\sigma(A)_{-1}(x,\xi)|^{d} \mathrm{p}_\lambda\big)\,dx\,d\xi
	\nearrow
	\int_{\mathbb{S}^{d-1} } \int_{\mathbb{R}^d} \tau\big(|\sigma(A)_{-1}(x,\xi)|^{d}\big)\,dx\,d\xi,
	$$
	by normality of $\tau$ and Fubini's theorem. Hence $C_\lambda  \nearrow C$ with
	$$
	C = (2\pi)^{-1} d^{-\frac{1}{d}} \Bigg( \int_{\mathbb{S}^{d-1} } \int_{\mathbb{R}^d} \tau\big(|\sigma(A)_{-1}(x,\xi)|^{d}\big)\,dx\,d\xi \Bigg)^{\frac{1}{d}}.
	$$
	It follows that
	$$
	\lim_{t \to \infty} t^{\frac{1}{d}} \mu(t, A) = C,
	$$
	which is the desired Weyl's law for $A = [T_\phi, M_f]$.
\end{proof}


We can now proceed to complete the proof of Theorem \ref{asymp limit cz commutator}.

\begin{proof}[Proof of Theorem \ref{asymp limit cz commutator}]
	Assume firstly $f \in C_c^\infty(\mathbb{R}^d; L_d(\mathcal{M}) \cap \mathcal{M})$. For the two $\Psi$DOs $T_{\widetilde{\phi}}$ and $M_f$, both of order $0$, we apply Theorem~\ref{asympcomthm} to obtain the asymptotic expansion of the principal symbol of $[T_{\widetilde{\phi}}, M_f]$. The symbol of $M_f T_{\widetilde{\phi}}$ is simply $\widetilde{\phi}(\xi) f(x)$, while the symbol of $T_{\widetilde{\phi}} M_f$ has the asymptotic expansion
	$$
	\sigma(T_{\widetilde{\phi}} M_f)(x,\xi)\sim \sum_{j \ge 0} \sum_{|\gamma|=j} \frac{1}{\gamma!} \partial_{\xi}^\gamma \widetilde{\phi}(\xi) D^\gamma f(x).
	$$
	By the homogeneity of $\phi$ and the construction of $\widetilde{\phi}$, each term $\partial_{\xi}^\gamma \widetilde{\phi}(\xi) D^\gamma f(x)$ coincides with the homogeneous symbol $\partial_{\xi}^\gamma \phi(\xi) D^\gamma f(x)$ of order $-|\gamma|$ when $|\xi| \ge 1$. Therefore, the commutator $[T_{\widetilde{\phi}}, M_f] \in \mathrm{C}\Psi^{-1}\big(\mathbb{R}^d; \mathcal{M}\big)$, with principal symbol given by
	\begin{equation}\label{tilde-comm-prin}
		\sigma([T_{\widetilde{\phi}}, M_f])_{-1}(x,\xi)= \sum_{k=1}^d \partial_{\xi_k} \widetilde{\phi}(\xi) D_k f(x).
	\end{equation}
	Then, by Lemma \ref{right support for compact f}, we obtain
	$$
	\lim_{t \to \infty} t^{\frac{1}{d}} \mu(t, [T_{\widetilde{\phi}}, M_f]) = d^{-\frac{1}{d}} (2 \pi)^{-1} \left[ \int_{\mathbb{S}^{d-1} } \int_{\mathbb{R}^d} \tau\Big(\big|\sum_{k=1}^d \partial_{\xi_k} \phi(\xi) D_k f(x)\big|^{d}\Big)\, dx \, d\xi \right]^{\frac{1}{d}}.
	$$
	
	By construction, $\phi - \widetilde{\phi} \in L_p(\mathbb{R}^d)$ for all $p \ge 1$. Therefore, by Lemma~\ref{Cwikel} and the triangle inequality,
	$$
	[T_{\phi}, M_f] - [T_{\widetilde{\phi}}, M_f] = T_{\phi-\widetilde{\phi}}M_f - M_fT_{\phi-\widetilde{\phi}} \in \mathcal{L}_{d} \subset (\mathcal{L}_{d,\infty})_{0}.
	$$
	Hence, it follows from Lemma \ref{weak weyl} that
	$$
	\lim_{t \to \infty} t^{\frac{1}{d}} \mu(t, [T_{\phi}, M_f]) = d^{-\frac{1}{d}} (2 \pi)^{-1} \left[ \int_{\mathbb{S}^{d-1} } \int_{\mathbb{R}^d} \tau\Big(\big|\sum_{k=1}^d \partial_{\xi_k} \phi(\xi) D_k f(x)\big|^{d}\Big)\, dx \, d\xi \right]^{\frac{1}{d}}.
	$$
	
	So far, we have established \eqref{asy limit cz} for $f \in C_c^\infty(\mathbb{R}^d; L_d(\mathcal{M}) \cap \mathcal{M})$. Since $C_c^\infty(\mathbb{R}^d; L_d(\mathcal{M}) \cap \mathcal{M})$ is norm dense in $ \dot{W}_d^1(\mathbb{R}^d; L_d(\mathcal{M}))$, the result extends to $f \in \dot{W}_d^1(\mathbb{R}^d; L_d(\mathcal{M}))$ by Lemmas \ref{ideal approximate} and \ref{calderon upper}, which completes the proof.
\end{proof}

The $j$-th Riesz transform $R_j = D_{x_j} (-\Delta)^{-\frac{1}{2}}$ is defined on $\mathscr{S}'(\mathbb{R}^d; L_1(\mathcal{M}) + \mathcal{M})$, where $\Delta$ denotes the Laplacian on $\mathbb{R}^d$. If we choose $T_\phi = R_j$, then $[R_j, M_f]$ is an $\mathcal{M}$-valued classical  $\Psi$DO of order $-1$, with principal symbol $\sum_{|\gamma|=1} \partial_\xi^\gamma \frac{\xi_j}{|\xi|} D_x^\gamma f(x)$. Therefore, by Theorem \ref{asymp limit cz commutator}, we obtain the following corollary.

\begin{corollary}
	Let $f\in \dot{W}_d^1(\mathbb{R}^d; L_d(\mathcal{M}))$ and $d\ge 2$. Then
	\begin{equation}\label{asy limit riesz}
		\begin{split}
			&\lim_{t \to \infty} t^{\frac{1}{d}} \mu(t, [R_j, M_f])\\
			=& (2\pi)^{-1}d^{-\frac{1}{d}}  \Bigg(  \int_{\mathbb{S}^{d-1}} \int_{\mathbb{R}^d}\tau \Big( \Big| D_j f(x) - \sum_{k=1}^{d} s_j s_k D_k f(x) \Big|^d \Big)\, dx \, ds \Bigg)^{\frac{1}{d}},
		\end{split}
	\end{equation}
	where the integral over $\mathbb{S}^{d-1}$ is taken with respect to the rotation-invariant measure $ds$ on $\mathbb{S}^{d-1}$, and $s = (s_1, \ldots, s_d)$.
\end{corollary}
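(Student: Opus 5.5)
The plan is to deduce the corollary directly from Theorem \ref{asymp limit cz commutator} with $\phi(s)=s_j$ on $\mathbb{S}^{d-1}$. Equivalently, $\phi(\xi)=\xi_j/|\xi|$ is the degree-$0$ homogeneous extension of $s_j$, which is smooth on $\mathbb{S}^{d-1}$ and not identically zero. Then $T_\phi=R_j=D_{x_j}(-\Delta)^{-\frac12}$, and the only work left is to evaluate the integrand $\big|\sum_{|\gamma|=1}\partial_s^\gamma\phi(s)D_x^\gamma f(x)\big|^d$ in \eqref{asy limit cz}. For $f\in\dot W^1_d(\mathbb{R}^d;L_d(\mathcal{M}))$ and $d\ge2$, all hypotheses of that theorem hold verbatim.

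\textbf{Step 1: the derivatives of $\phi$.} For $\xi\neq0$ and $1\le k\le d$ we have
\[
\partial_{\xi_k}\Big(\frac{\xi_j}{|\xi|}\Big)=\frac{\delta_{jk}}{|\xi|}-\frac{\xi_j\xi_k}{|\xi|^3}.
\]
Restricted to $|\xi|=1$, i.e.\ $\xi=s\in\mathbb{S}^{d-1}$, this equals $\delta_{jk}-s_js_k$. This matches the principal symbol \eqref{tilde-comm-prin} of $[T_{\widetilde\phi},M_f]$ with $\widetilde\phi=\eta\phi$, since $\eta\equiv1$ near the sphere.

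\textbf{Step 2: substitution.} Summing over $k$ gives
\[
\sum_{k=1}^d\partial_{s_k}\phi(s)D_kf(x)=D_jf(x)-\sum_{k=1}^d s_js_kD_kf(x).
\]
The coefficients $\delta_{jk}-s_js_k$ are real scalars, so this is a genuine element of $L_d(\mathcal{M})$ for a.e.\ $x$. Inserting it into \eqref{asy limit cz} yields \eqref{asy limit riesz}.

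\textbf{Main obstacle: the meaning of $\partial_s^\gamma\phi$.} The one point needing care is that $\partial_s^\gamma\phi(s)$ in Theorem \ref{asymp limit cz commutator} must be read as the Euclidean derivative of the degree-$0$ homogeneous extension, evaluated on the sphere. It is not a tangential derivative of some other extension of $s_j$, such as the linear function $\xi_j$, whose gradient would be $e_j$. I will check this against the proof of Theorem \ref{asymp limit cz commutator}: there the principal symbol is $\sum_k\partial_{\xi_k}\widetilde\phi(\xi)D_kf(x)$ with $\widetilde\phi$ homogeneous of degree $0$ for $|\xi|\ge\frac12$, and it is then integrated over $\mathbb{S}^{d-1}$. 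With that reading, the computation in Step 1 is exactly what is needed, and no further analytic work is required.
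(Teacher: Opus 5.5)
Your proposal is correct and matches the paper's argument: the paper likewise takes $\phi(\xi)=\xi_j/|\xi|$, notes that the principal symbol of $[R_j,M_f]$ is $\sum_{|\gamma|=1}\partial_\xi^\gamma\frac{\xi_j}{|\xi|}D_x^\gamma f(x)$, and applies Theorem~\ref{asymp limit cz commutator}. Your explicit computation $\partial_{\xi_k}(\xi_j/|\xi|)\big|_{|\xi|=1}=\delta_{jk}-s_js_k$, and your reading of $\partial_s^\gamma\phi$ as the Euclidean derivative of the degree-$0$ extension, are exactly what that step requires.
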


\begin{rk}
	Simply using the triangular inequality, we see that
	$$\Bigg(  \int_{\mathbb{S}^{d-1}} \int_{\mathbb{R}^d}\tau \Big( \Big| D_j f(x) - \sum_{k=1}^{d} s_j s_k D_k f(x) \Big|^d \Big)\, dx \, ds \Bigg)^{\frac{1}{d}}\leq C_d \|f\|_{\dot{W}_d^1}.$$
	It is also proved in the appendix of \cite{SXZ23} that
	$$\sum_{1\leq j \leq d}\Bigg(  \int_{\mathbb{S}^{d-1}} \int_{\mathbb{R}^d}\tau \Big( \Big| D_j f(x) - \sum_{k=1}^{d} s_j s_k D_k f(x) \Big|^d \Big)\, dx \, ds \Bigg)^{\frac{1}{d}}\approx   \|f\|_{\dot{W}_d^1}$$
	with relevant constants depending only on $d$.
	However, the reverse inequality does not seem to hold for a single $j$.
\end{rk}

\subsection{ Weyl's law for the  commutator $[I^{\alpha} , M_f]$ }

\begin{theorem}
	Let $d \ge 2$ and let $\alpha \in (-\frac{d}{2}, 0) \cup (0,1)$ (alternatively, let $d=1$ and $\alpha \in(0,1)$ ). If $f \in \dot{W}_{\frac{d}{1-\alpha}}^1\Big(\mathbb{R}^d; L_{\frac{d}{1-\alpha}}(\mathcal{M})\Big) $, then
	\begin{equation}\label{asymp-riesz-poten}
		\lim_{t \to \infty} t^{\frac{1-\alpha}{d}} \mu(t, [I^{\alpha}, M_f]) = C_{d,\alpha} \Bigg( \int_{\mathbb{S}^{d-1}}\int_{\mathbb{R}^d}  \tau\Big(\left|s \cdot \nabla f(x)\right|^{\frac{d}{1-\alpha}}\Big)\, dx \, ds \Bigg)^{\frac{1-\alpha}{d}}
	\end{equation}
	with
	$$
	C_{d,\alpha}=|\alpha| d^{\frac{\alpha-1}{d}} (2\pi)^{\alpha-1}.
	$$
\end{theorem}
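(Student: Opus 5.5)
We follow the same scheme as in the proof of Theorem \ref{asymp limit cz commutator}: first smooth data, then a symbolic computation, then a density argument. The last step rests on the a priori bound of Lemma \ref{endpoint upper}.

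\textbf{Step 1 (reduction to smooth $f$).} By density of $C_c^\infty(\mathbb{R}^d;L_{\frac{d}{1-\alpha}}(\mathcal{M})\cap\mathcal{M})$ in $\dot W^1_{\frac{d}{1-\alpha}}$, Lemma \ref{endpoint upper} shows that $f\mapsto[I^\alpha,M_f]$ is continuous into $\mathcal{L}_{\frac{d}{1-\alpha},\infty}$. The right-hand side of \eqref{asymp-riesz-poten} is also continuous in $f$. Lemma \ref{ideal approximate} therefore reduces the theorem to compactly supported smooth $f$.

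\textbf{Step 2 (symbolic part).} Take $\eta$ as in \eqref{cutoff-eta} and split $I^\alpha = T_{\eta|\xi|^\alpha}+T_{(1-\eta)|\xi|^\alpha}$.

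The first term $T_{\eta|\xi|^\alpha}\in \mathrm{C}\Psi^\alpha(\mathbb{R}^d;\mathcal{M})$. By Theorem \ref{asympcomthm} and Proposition \ref{cl M}, $[T_{\eta|\xi|^\alpha},M_f]\in\mathrm{C}\Psi^{\alpha-1}$, with principal symbol
\[
\sum_{k}\partial_{\xi_k}|\xi|^\alpha D_kf(x)=\alpha|\xi|^{\alpha-2}\,\xi\cdot\nabla f(x)/i .
\]
Exactly as in Lemma \ref{right support for compact f}, we pass to the right-supported pieces $[\cdot,M_f]M_{\chi\mathrm{p}_\lambda}$. The piece with $M_{1-\chi}$ is smoothing and lies in $(\mathcal{L}_{\frac{d}{1-\alpha},\infty})_0$. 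For the $(1-\mathrm{p}_\lambda)$ tail we write $[\cdot,M_{f(1-\mathrm{p}_\lambda)}]M_\chi$ and bound it by Lemma \ref{endpoint upper}. Theorem \ref{asymp most} with $m=1-\alpha$ then gives the limit
\[
d^{-\frac{1-\alpha}{d}}(2\pi)^{\alpha-1}\Big(\int_{\mathbb{S}^{d-1}}\int\tau\big(|\alpha\, s\cdot\nabla f|^{\frac{d}{1-\alpha}}\big)\Big)^{\frac{1-\alpha}{d}},
\]
since $|s|=1$ on the sphere. This is exactly $C_{d,\alpha}(\cdots)$.

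\textbf{Step 3 (low-frequency remainder; expected main obstacle).} It remains to show
\[
[T_{(1-\eta)|\xi|^\alpha},M_f]\in(\mathcal{L}_{\frac{d}{1-\alpha},\infty})_0 ,
\]
so that Lemma \ref{weak weyl} applies.

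For $\alpha>0$ this is straightforward. The multiplier $\psi=(1-\eta)|\xi|^\alpha$ is bounded and compactly supported, hence lies in every $L_p$. Lemmas \ref{Cwikel}/\ref{Cwikel-0-2} then put $T_\psi M_f$ and $M_fT_\psi$ (via adjoints) in $\mathcal{L}_p$ for some $p<\infty$. For $p\le \frac{d}{1-\alpha}$ this sits inside the $0$-class; $\mathcal{L}_2$ is enough when $\frac{d}{1-\alpha}>2$, and Lemma \ref{Cwikel-0-2} covers $p<2$.

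For $-\frac d2<\alpha<0$ the symbol $\psi$ is singular at the origin. It still lies in $L_p$ whenever $p|\alpha|<d$, and $p=2$ works since $|\alpha|<\frac d2$. So $T_\psi M_f\in\mathcal{L}_2$ with $2<\frac{d}{1-\alpha}$ when $d\ge3$; otherwise we use Lemma \ref{Cwikel-0-2} with small $p$. If the exponents are borderline, a fallback is to cut $\eta$ at a scale $\varepsilon\to0$ and use the uniform $\mathcal{L}_{\frac{d}{1-\alpha},\infty}$ bound of the commutator with a low-frequency piece, as in the FSZ24 argument, combined with Lemma \ref{ideal approximate}. Handling this singular low-frequency part for negative $\alpha$, and the case $d=1$, is the step I expect to need the most care.
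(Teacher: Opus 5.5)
Your overall route is the paper's, with one cosmetic change: the paper regularizes by the Bessel potential $J^\alpha$ and removes $[J^\alpha-I^\alpha,M_f]$ by Cwikel estimates. You instead cut $|\xi|^\alpha$ off near the origin, so your remainder $\psi=(1-\eta)|\xi|^\alpha$ is compactly supported. That is slightly easier than the paper's $(1+|\xi|^2)^{\alpha/2}-|\xi|^\alpha$, which must also be checked at infinity. Steps 1 and 2 otherwise follow the paper.

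Step 3, however, is wrong as written, because of the exponent bookkeeping. Write $q:=\frac{d}{1-\alpha}$.
\begin{itemize}
\item You can never use an exponent above $q$, since $\mathcal{L}_p\not\subset(\mathcal{L}_{q,\infty})_0$ for $p>q$. This breaks your claim ``$2<\frac{d}{1-\alpha}$ when $d\ge3$'', which is false for $\alpha\le 1-\frac d2$ (e.g.\ $d=3$, $\alpha=-1$ gives $q=\frac32$). In that range, $\mathcal{L}_2$-membership of the remainder says nothing.
\item In the range $q>2$ you cannot drop to $p=2$ either. After Step 1, $f(x)$ only lies in $L_q(\mathcal{M})\cap\mathcal{M}$, so \eqref{Cwikel-p} at $p=2$ would need $f\in L_2(\mathbb{R}^d;L_2(\mathcal{M}))$, which fails when $\tau(1)=\infty$. ``$\mathcal{L}_2$ is enough'' is only valid in the scalar setting, where compactly supported $f$ lies in every $L_p$.
\end{itemize}

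The fix is uniform: take $p=q$. It needs neither your $\varepsilon$-fallback nor any special treatment of $d=1$.
\begin{itemize}
\item The multiplier $\psi$ lies in $L_q(\mathbb{R}^d)$. For $\alpha>0$ it is bounded with compact support; for $\alpha<0$ the local integrability condition $q|\alpha|<d$ reads $|\alpha|<1+|\alpha|$.
\item For $q\ge2$, estimate \eqref{Cwikel-p} gives $\|T_\psi M_f\|_{\mathcal{L}_q}\lesssim\|f\|_{L_q(\mathbb{R}^d;L_q(\mathcal{M}))}\|\psi\|_{L_q}$. The same holds for $M_fT_\psi=(T_{\bar\psi}M_{f^*})^*$.
\item For $q<2$, $\psi$ is compactly supported and locally in $L_2$ (as $2|\alpha|<d$), hence in $\ell_q(L_2)(\mathbb{R}^d)$. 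Lemma \ref{Cwikel-0-2}(i) then applies with $p=q$.
\end{itemize}
Either way the remainder lies in $\mathcal{L}_q\subset(\mathcal{L}_{q,\infty})_0$, which is exactly how the paper treats $J^\alpha-I^\alpha$.

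Step 2 needs this same estimate for the tail. Apply it to $f(1-\mathrm{p}_\lambda)$, whose $L_q(\mathbb{R}^d;L_q(\mathcal{M}))$-norm tends to $0$. This is required because Lemma \ref{endpoint upper} controls commutators with $I^\alpha$, not with $T_{\eta|\xi|^\alpha}$.
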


\begin{proof}
	As in Theorem \ref{asymp limit cz commutator}, to eliminate the singularity, we replace $I^\alpha$ with $J^\alpha$, and assume firstly $f \in C_c^{\infty}(\mathbb{R}^d; L_{\frac{d}{1-\alpha}}(\mathcal{M}) \cap \mathcal{M})$. By Theorem~\ref{asympcomthm}, the symbol of the product $M_f J^\alpha$ is given by
	$$
	\sigma(M_f J^\alpha)(x,\xi) = (1+|\xi|^2)^{\frac{\alpha}{2}} f(x),
	$$
	while the symbol of $J^\alpha M_f$ has the asymptotic expansion
	$$
	\sigma(J^\alpha M_f)(x,\xi)
	\sim (1+|\xi|^2)^{\frac{\alpha}{2}} f(x)
	+ \sum_{j\ge1}\sum_{|\gamma|=j} \frac{1}{\gamma!}\,
	\partial_\xi^\gamma\!\big((1+|\xi|^2)^{\frac{\alpha}{2}}\big)\,
	D^\gamma f(x).
	$$
	Hence,
	$$
	\sigma([J^\alpha,M_f])(x,\xi)
	\sim
	\sum_{|\gamma|\ge1} \frac{1}{\gamma!}\,
	\partial_\xi^\gamma\!\big((1+|\xi|^2)^{\frac{\alpha}{2}}\big)\,
	D^\gamma f(x).
	$$
	
	To obtain the homogeneous expansion, we use the standard binomial expansion
	$$
	(1+|\xi|^2)^{\frac{\alpha}{2}}
	= |\xi|^{\alpha}\Big(1+|\xi|^{-2}\Big)^{\frac{\alpha}{2}}
	\sim
	|\xi|^{\alpha}\sum_{m=0}^\infty
	\binom{\frac{\alpha}{2}}{m}|\xi|^{-2m},
	\qquad |\xi|\to\infty.
	$$
	Differentiating termwise yields
	$$
	\partial_\xi^\gamma\!\big((1+|\xi|^2)^{\frac{\alpha}{2}}\big)
	\sim
	\sum_{m=0}^\infty
	\binom{\frac{\alpha}{2}}{m}\,
	\partial_\xi^\gamma\!\big(|\xi|^{\alpha-2m}\big),
	\qquad |\xi|\to\infty.
	$$
	Substituting this into the previous expression gives the homogeneous asymptotic expansion
	$$
	\sigma([J^\alpha, M_f])(x,\xi)
	\sim
	\sum_{|\gamma|\ge 1} \frac{1}{\gamma!}
	\sum_{m=0}^\infty
	\binom{\frac{\alpha}{2}}{m}\,
	\partial_\xi^\gamma\!\big(|\xi|^{\alpha-2m}\big)\,
	D^\gamma f(x).
	$$
	Each term
	$$
	\partial_\xi^\gamma(|\xi|^{\alpha-2m})\,D^\gamma f(x)
	$$
	is homogeneous of degree
	$$
	\alpha - |\gamma| - 2m
	$$
	in $\xi$. Therefore $[J^\alpha,M_f]$ is a classical $\Psi$DO of order $\alpha-1$.
	In particular, the principal symbol  corresponds to $|\gamma|=1, m=0$:
	$$
	\sigma([J^\alpha,M_f])_{\alpha-1}(x,\xi)
	= \sum_{k=1}^d \partial_{\xi_k}(|\xi|^{\alpha})\,\partial_{x_k}f(x)
	= \alpha\,|\xi|^{\alpha-2}\,\xi\cdot\nabla_x f(x).
	$$
	
	The proof of Lemma \ref{right support for compact f} easily extends to the situation where $\phi: \mathbb{R}^d \rightarrow \mathbb{C}$ is smooth but of order $\alpha$. In particular, for $\phi(\xi )  = (1+|\xi|^2)^{\frac{\alpha}{2}}$ and $f \in C_c^{\infty}(\mathbb{R}^d;L_{\frac{d}{1-\alpha}}(\mathcal{M})\cap\mathcal{M})$,
	by  Lemma \ref{right support for compact f} for $m = 1 - \alpha$, we obtain
	\begin{align*}
		&\lim_{t \to \infty} t^{\frac{1-\alpha}{d}} \mu(t, [J^{\alpha}, M_f])\\
		=& |\alpha| d^{\frac{\alpha-1}{d}} (2\pi)^{\alpha-1} \Bigg( \int_{\mathbb{S}^{d-1}}\int_{\mathbb{R}^d}  \tau\Big(\left|s \cdot \nabla f(x)\right|^{\frac{d}{1-\alpha}}\Big)\, dx \, ds \Bigg)^{\frac{1-\alpha}{d}}.
	\end{align*}

	For $d \ge 2$ and $\alpha \in (-\frac{d}{2}, 0) \cup (0,1)$, or for $d=1$ and $\alpha \in (0,1)$, we have in either case that $\frac{d}{1-\alpha} > 1$.
	Since $f \in C_c^{\infty}(\mathbb{R}^d; L_p(\mathcal{M}) \cap \mathcal{M})$, it follows that $f \in L_p(\mathbb{R}^d; L_p(\mathcal{M}))$ for $2 \le p < \infty$, and $f \in \ell_p(L_2)(\mathbb{R}^d; \mathcal{M})$ for $1 < p < 2$.  Moreover, the function $\xi \mapsto (1+|\xi|^2)^{\frac{\alpha}{2}} - |\xi|^\alpha$ belongs to $L_{\frac{d}{1-\alpha}}(\mathbb{R}^d)$ when $2 \le\frac{d}{1-\alpha} <\infty$, and to $\ell_{\frac{d}{1-\alpha}}(L_2)(\mathbb{R}^d)$ when $1 < \frac{d}{1-\alpha} < 2$.
	Therefore, by Lemmas \ref{Cwikel} and \ref{Cwikel-0-2},
	$$
	\begin{gathered}
		[J^{\alpha}, M_f] - [I^{\alpha}, M_f] = \\
		(J^{\alpha} - I^{\alpha}) M_f - M_f(J^{\alpha} - I^{\alpha}) \in \mathcal{L}_{\frac{d}{1-\alpha}} \subset \Big(\mathcal{L}_{\frac{d}{1-\alpha}, \infty}\Big)_0.
	\end{gathered}
	$$
	Using Lemma \ref{weak weyl}, we obtain \eqref{asymp-riesz-poten}.
	
	Finally, applying Lemmas \ref{ideal approximate} and \ref{endpoint upper}, then \eqref{asymp-riesz-poten} also holds for $f \in \dot{W}_{\frac{d}{1-\alpha}}^1\Big(\mathbb{R}^d; L_{\frac{d}{1-\alpha}}(\mathcal{M})\Big)$.
\end{proof}

\begin{rk}
	If $f$ is scalar-valued, then it is pointed out in \cite{FSZ24} that, by rotation invariance,
	$$\Bigg( \int_{\mathbb{S}^{d-1}}\int_{\mathbb{R}^d}  \tau\Big(\left|s \cdot \nabla f(x)\right|^{\frac{d}{1-\alpha}}\Big)\, dx \, ds \Bigg)^{\frac{1-\alpha}{d}}= C_{d,\alpha} \|\nabla f(x)\|_{\frac{d}{1-\alpha}}$$
	for a certain positive constant depending only on $\alpha$ and $d$.
	In the operator-valued setting, it also follows from the triangular inequality that
	$$\Bigg( \int_{\mathbb{S}^{d-1}}\int_{\mathbb{R}^d}  \tau\Big(\left|s \cdot \nabla f(x)\right|^{\frac{d}{1-\alpha}}\Big)\, dx \, ds \Bigg)^{\frac{1-\alpha}{d}}\leq  C_{d,\alpha} \| f\|_{ \dot{W}_{\frac{d}{1-\alpha}}^1}.$$
	The reverse inequality is also true: for a fixed $f \in \dot{W}_{\frac{d}{1-\alpha}}^1\Big(\mathbb{R}^d; L_{\frac{d}{1-\alpha}}(\mathcal{M})\Big) $, select $1\leq j \leq d $ such that $\|D_j f\|_{\frac{d}{1-\alpha}} \geq\|D_k f\|_{\frac{d}{1-\alpha}}  $ for $k\neq j$, then select a small neighborhood $\Omega \subset \mathbb{S}^{d-1}$ of the $j$-th pole $(0,\cdots, 0, 1, 0,\cdots, 0)$ such that $|s_j |\geq d |s_k|$ for $k\neq j$. When $s\in \Omega$, we have
	$$\tau\Big(\left|s \cdot \nabla f(x)\right|^{\frac{d}{1-\alpha}}\Big) \geq \big(|s_j|\ \|D_j f\|_{\frac{d}{1-\alpha}}  \big)^{\frac{d}{1-\alpha}},$$
	whence
	$$ \| f\|_{ \dot{W}_{\frac{d}{1-\alpha}}^1}\leq  C_{d,\alpha}\Bigg( \int_{\Omega}\int_{\mathbb{R}^d}  \tau\Big(\left|s \cdot \nabla f(x)\right|^{\frac{d}{1-\alpha}}\Big)\, dx \, ds \Bigg)^{\frac{1-\alpha}{d}}.$$
	So we conclude
	$$\Bigg( \int_{\mathbb{S}^{d-1}}\int_{\mathbb{R}^d}  \tau\Big(\left|s \cdot \nabla f(x)\right|^{\frac{d}{1-\alpha}}\Big)\, dx \, ds \Bigg)^{\frac{1-\alpha}{d}}\approx \| f\|_{ \dot{W}_{\frac{d}{1-\alpha}}^1}$$
	with relevant constants depending only on $\alpha$ and $d$.
\end{rk}

\begin{rk}
	The restriction of $\alpha \in (-\frac{d}{2}, 0) \cup (0,1)$ is due to the singularity of $I^\alpha$: It is pointed out by \cite[Lemma~3.3]{FSZ24} that for $\alpha\in (-d, -\frac d 2 ]$, $[I^{\alpha}, M_f]$ does not even map $L_2(\mathbb{R}^d)$ to itself for some $f\in L_1(\mathbb{R}^d)$. For $J^\alpha$, we see from the above proof that
	$$
	\lim_{t \to \infty} t^{\frac{1-\alpha}{d}} \mu(t, [J^{\alpha}, M_f])
	= |\alpha| d^{\frac{\alpha-1}{d}} (2\pi)^{\alpha-1} \Bigg( \int_{\mathbb{S}^{d-1}}\int_{\mathbb{R}^d}  \tau\Big(\left|s \cdot \nabla f(x)\right|^{\frac{d}{1-\alpha}}\Big)\, dx \, ds \Bigg)^{\frac{1-\alpha}{d}}
	$$
	for any $\alpha \in (-\infty, 0) \cup (0,1)$.
\end{rk}

\section{Examples}

This section presents applications of the general theory to concrete noncommutative models. We consider crossed product constructions as well as the abelian case $\mathcal{M}=L_{\infty}(\Omega)$, leading to random and almost periodic operators. In these settings, the operator-valued Weyl's law yield explicit formulas for spectral quantities such as the density of states. These examples demonstrate the scope and applicability of the abstract results.

\subsection{Crossed products}
Throughout this section, let $\mathcal{M}\subset B(H)$ be a von Neumann algebra with normal semifinite trace $\tau$, and let
\[
\alpha:\mathbb{R}^d\to \mathrm{Aut}(\mathcal{M})
\]
denote an action of $\mathbb{R}^d$ on $\mathcal{M}$ by trace-preserving automorphisms. The von Neumann algebra crossed product $\mathcal{M}\rtimes_{\alpha} \mathbb{R}^d$ can be defined as the closure in the weak operator topology of the algebra generated by the operators
\[
\pi_{\alpha}(X)\xi(x) = \alpha(x)(X)\xi(x),\quad \xi\in L_2(\mathbb{R}^d;H),\, X \in \mathcal{M}
\]
and
\[
U(y)\xi(x) = \xi(x-y),\quad \xi \in L_2(\mathbb{R}^d;H).
\]
The dual weight $\widehat{\tau}$ on the crossed product is described in \cite{Takesaki}.

\begin{definition}\label{equivariant_definition}
	Let $T_yu(x) = u(x-y)$ be the operator of translation by $y \in \mathbb{R}^d$ on $u\in L_2(\mathbb{R}^d).$
	Define
	\[
	\widetilde{\alpha}:\mathbb{R}^d\to \mathrm{Aut}(B(L_2(\mathbb{R}^d))\otimes \mathcal{M}).
	\]
	by
	\[
	\widetilde{\alpha}(x)(A\otimes X) = (T_x^*AT_x)\otimes \alpha(x)(X),\quad A \in B(L_2(\mathbb{R}^d)),\; X \in \mathcal{M},\; x \in \mathbb{R}^d.
	\]
\end{definition}

\begin{lemma}\label{crossed_product_as_fixed_point}
	Identify $B(L_2(\mathbb{R}^d))\otimes \mathcal{M}$ with its image in $B(L_2(\mathbb{R}^d;H)).$
	\[
	\mathcal{M}\rtimes_{\alpha} \mathbb{R}^d = (B(L_2(\mathbb{R}^d))\otimes \mathcal{M})^{\widetilde{\alpha}}
	\]
	where the superscript $\widetilde{\alpha}$ means the fixed subalgebra.
\end{lemma}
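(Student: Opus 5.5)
The plan is to prove the two inclusions between $\mathcal{M}\rtimes_{\alpha}\mathbb{R}^d$ and the fixed-point algebra $\mathcal{F}:=(B(L_2(\mathbb{R}^d))\overline{\otimes}\mathcal{M})^{\widetilde{\alpha}}$ separately. Throughout, $\widetilde{\alpha}$ is realised spatially: set $W_x := T_x\otimes V_x$ on $L_2(\mathbb{R}^d;H)$, where $V_x$ is a unitary implementing $\alpha(x)$ (available after passing to the standard form of $\mathcal{M}$, so that $\alpha(x)$ is unitarily implemented on $L_2(\mathcal{M})$). Then $\widetilde{\alpha}(x)=\operatorname{Ad}(W_x^*)$, which shows at once that $\mathcal{F}$ is a von Neumann algebra, being the intersection of $B(L_2(\mathbb{R}^d))\overline{\otimes}\mathcal{M}$ with $\{W_x\}_x'$.

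For the inclusion $\subseteq$, it suffices to check the generators, since $\mathcal{F}$ is weakly closed. First, $\pi_\alpha(X)$ is the decomposable operator $x\mapsto\alpha(x)(X)$, so it lies in $L_\infty(\mathbb{R}^d)\overline{\otimes}\mathcal{M}\subset B(L_2(\mathbb{R}^d))\overline{\otimes}\mathcal{M}$. Conjugating by translation shifts the argument, and the automorphism $\alpha(y)$ compensates via the group law $\alpha(y)\alpha(x-y)=\alpha(x)$. With the sign convention of Definition \ref{equivariant_definition} this gives $\widetilde{\alpha}(y)\pi_\alpha(X)=\pi_\alpha(X)$; if the signs come out reversed, I will replace $\alpha$ by $\alpha(-\cdot)$ in the definition of $\pi_\alpha$, which is harmless. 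Second, $U(y)=T_y\otimes 1$ commutes with every $T_x$ and is fixed by $\alpha(x)\otimes$, so it is invariant.

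For the reverse inclusion $\supseteq$, I will use Takesaki duality / the commutation theorem. Let $\lambda_p$ be the dual unitaries of multiplication by $e^{ip\cdot x}$. Then the commutant of the crossed product is generated by $1\otimes\mathcal{M}'$ (twisted appropriately) together with the unitaries $W_x$. Concretely, one shows
\[
(\mathcal{M}\rtimes_\alpha\mathbb{R}^d)' = \{\,\widetilde{\pi}(\mathcal{M}'),\ W_x : x\in\mathbb{R}^d\,\}'' ,
\]
where $\widetilde{\pi}(Y)$ denotes the operator $\xi\mapsto(x\mapsto\alpha$-twisted action of $Y)$. This is the standard description due to Digernes/Haagerup (see \cite{Takesaki}). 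Since $\mathcal{F}' = (B(L_2)\overline{\otimes}\mathcal{M})'\vee\{W_x\}'' = (1\otimes\mathcal{M}')\vee\{W_x\}''$, it then remains only to match $1\otimes\mathcal{M}'$ with the twisted commutant. This matching follows because $V_x$ normalises $\mathcal{M}'$, so conjugating by the decomposable unitary $x\mapsto V_x$ identifies the two. Taking commutants gives $\mathcal{F}=\mathcal{M}\rtimes_\alpha\mathbb{R}^d$.

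The main obstacle is precisely this commutant computation. If the citation route is too loose, I would instead argue directly. Given $A\in\mathcal{F}$, conjugate by the decomposable unitary $\Theta$, $(\Theta\xi)(x)=V_x\xi(x)$. This turns $\pi_\alpha(\mathcal{M})$ into $1\otimes\mathcal{M}$ and $W_x$ into $T_x\otimes1$, so $\Theta A\Theta^*$ becomes a translation-invariant element of $B(L_2(\mathbb{R}^d))\overline{\otimes}B(H)$ commuting with $\Theta(1\otimes\mathcal{M}')\Theta^*$. Translation-invariant operators are exactly $L(\mathbb{R}^d)\overline{\otimes}B(H)$, i.e.\ convolution operators generated by the $U(y)$. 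Intersecting with the commutant of $\Theta(1\otimes\mathcal{M}')\Theta^*$ forces the coefficients to lie in the image of $\mathcal{M}$ under $\pi_\alpha$, which is the crossed product. The delicate point in this direct argument is justifying the last intersection, which requires a Fourier/averaging argument on $\mathbb{R}^d$ with the Haar measure.
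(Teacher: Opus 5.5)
Your inclusion $\mathcal{M}\rtimes_\alpha\mathbb{R}^d\subseteq\mathcal{F}$ is fine once the signs are fixed, and the fix is not optional. Since $T_y^*M_FT_y=M_{F(\cdot+y)}$, the paper's $\widetilde{\alpha}(y)$ sends $\pi_\alpha(X)$ to multiplication by $x\mapsto\alpha(y)\alpha(x+y)(X)=\alpha(x+2y)(X)$, so $\pi_\alpha(X)$ is not fixed. You must therefore use $\pi_\alpha(X)\xi(x)=\alpha(-x)(X)\xi(x)$ (Takesaki's convention, and the one compatible with Definition \ref{equivariant_symbol_definition}). With $\alpha(x)=\operatorname{Ad}V_x$, you should also take $W_x=T_x\otimes V_x^*$, so that $\operatorname{Ad}(W_x^*)=\widetilde{\alpha}(x)$.

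The genuine gap is in the reverse inclusion, where the commutation theorem you quote is misstated: there is no twisting. Because $\pi_\alpha(X)$ is decomposable with values in $\mathcal{M}$ and $U(y)$ acts trivially on $H$, the untwisted algebra $1\otimes\mathcal{M}'$ already lies in $(\mathcal{M}\rtimes_\alpha\mathbb{R}^d)'$. A twisted copy (multiplication by $x\mapsto V_xYV_x^*$, $Y\in\mathcal{M}'$) commutes with every $U(y)$ only if $Y$ is $\operatorname{Ad}V$-invariant, so it cannot lie in that commutant.

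Your matching step fails too. Conjugation by $\Theta$ does carry $1\otimes\mathcal{M}'$ onto the twisted copy, but it simultaneously carries $W_x$ to $T_x\otimes 1$ and $U(y)$ to $T_y\otimes V_y$. So it identifies $(1\otimes\mathcal{M}')\vee\{W_x\}''$ with the twisted copy joined with $\{T_x\otimes 1\}''$, not with the twisted copy joined with $\{W_x\}''$. You have merged the two unitarily equivalent pictures of the crossed product.

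The repair is to delete the matching step and cite the Digernes--Haagerup commutation theorem in its correct form, $(\mathcal{M}\rtimes_\alpha\mathbb{R}^d)'=(1\otimes\mathcal{M}')\vee\{W_x\}''$, with $V$ the standard implementation. The passage to standard form is legitimate, because $\mathrm{id}\otimes\theta$, for a faithful normal representation $\theta$ of $\mathcal{M}$, transports both sides and intertwines the actions, but you should say so. That formula is literally your expression for $\mathcal{F}'$, and the bicommutant theorem then gives $\mathcal{F}=\mathcal{M}\rtimes_\alpha\mathbb{R}^d$.

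Your direct fallback does not close the gap. Coefficients only make sense for elements of the form $\int T_y\otimes a(y)\,dy$. Showing that every element of $(L(\mathbb{R}^d)\overline{\otimes}B(H))\cap\bigl(\Theta(1\otimes\mathcal{M}')\Theta^*\bigr)'=\Theta\mathcal{F}\Theta^*$ is a weak limit of such integrals with $a(y)=b(y)V_y$, $b(y)\in\mathcal{M}$, is equivalent to the statement being proved. So the step you flag as delicate is the whole difficulty.

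With the corrected citation your argument is valid and differs from the paper's, which uses Takesaki duality instead. There, $B(L_2(\mathbb{R}^d))\overline{\otimes}\mathcal{M}$ is identified with the double crossed product, on which $\widetilde{\alpha}$ acts as the bidual action whose fixed points are the crossed product. Both proofs rest on one deep cited theorem. Yours compares two concrete subalgebras inside a single representation, so it does not need the paper's claim that the abstract duality isomorphism is the identity map. The paper's route, in turn, needs no unitary implementation of $\alpha$. Note that the paper's argument requires the same sign correction of $\pi_\alpha$.
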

\begin{proof}
	In \cite[Corollary X.1.22]{Takesaki} it is proved that the two algebras are isomorphic. Moreover, the specific isomorphism between the double crossed product $(\mathcal{M}\rtimes_{\alpha}\mathbb{R})\rtimes_{\widehat{\alpha}} \mathbb{R}$ and $B(L_2(\mathbb{R}^d)) \otimes \mathcal{M}$ given in \cite[Theorem X.2.3]{Takesaki} is the identity map due to the specific representation of the crossed product and the tensor product used here.
\end{proof}

We connect this to $\Psi$DOs via the following definition.
\begin{definition}\label{equivariant_symbol_definition}
	Let $\sigma\in S^{m}_{\rho,\delta}(\mathbb{R}^d\times \mathbb{R}^d;\mathcal{M}).$ We say that $\sigma$ is equivariant if
	\[
	\alpha(y)\sigma(x,\xi) = \sigma(x-y,\xi),\quad x,y,\xi\in \mathbb{R}^d.
	\]
\end{definition}

\begin{lemma}\label{equivariant_symbols_are_equivariant_operators}
	If $\sigma\in S^{0}_{\rho,\delta}(\mathbb{R}^d\times \mathbb{R}^d;\mathcal{M})$ is $\alpha$-equivariant where $0\leq \delta \leq \rho \leq 1,$ then $\mathrm{Op}(\sigma) \in \mathcal{M}\rtimes_{\alpha} \mathbb{R}^d.$
\end{lemma}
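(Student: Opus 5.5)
The plan is to use Lemma \ref{crossed_product_as_fixed_point}: it suffices to show that $\mathrm{Op}(\sigma)$ lies in $B(L_2(\mathbb{R}^d))\overline{\otimes}\mathcal{M}$ and is fixed by $\widetilde{\alpha}(y)$ for every $y\in\mathbb{R}^d$.

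\textbf{Membership.} By Proposition \ref{l2bdd} and Theorem \ref{thm-tensor algebra}, $\sigma\in S^0_{\rho,\delta}$ with $0\le\delta\le\rho\le 1$, $(\rho,\delta)\ne(1,1)$, gives $\mathrm{Op}(\sigma)\in B(L_2(\mathbb{R}^d))\overline{\otimes}\mathcal{M}$. I treat the $\lambda$-independent symbol as a parameter symbol with $h=0$. The identification of $L_2(\mathcal{N})$ with $L_2(\mathbb{R}^d;H)$ is the standard one, with $\mathcal{M}$ acting on $H=L_2(\mathcal{M})$ by left multiplication.

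\textbf{Invariance.} I first extend $\widetilde{\alpha}(y)$ from elementary tensors to a normal automorphism of the whole tensor product. Assuming $\alpha$ is implemented by a unitary $V_y$ on $H$, this is $\widetilde{\alpha}(y)=\mathrm{Ad}\big((T_y\otimes V_y)^*\big)$. This holds in the standard form, where trace-preserving automorphisms are unitarily implemented on $L_2(\mathcal{M})$ by $V_y(\tau^{1/2}$-vector$)=\alpha(y)(\cdot)$. So the task is to verify
\[
(T_y\otimes V_y)^*\,\mathrm{Op}(\sigma)\,(T_y\otimes V_y)=\mathrm{Op}(\sigma)
\]
on the dense subspace $\mathscr{S}(\mathbb{R}^d;L_2(\mathcal{M}))$, which extends by boundedness.

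This is a direct computation with the symbol formula \eqref{M valued pdo}. For $u$ Schwartz, $\widehat{(T_yV_yu)}(\xi)=e^{-iy\cdot\xi}V_y\hat u(\xi)$. Applying $\mathrm{Op}(\sigma)$ and then $T_y^*=T_{-y}$ shifts $x\mapsto x+y$, which cancels the phase and produces
\[
\int \sigma(x+y,\xi)\,V_y\hat u(\xi)\,e^{ix\cdot\xi}\,\bar d\xi .
\]
Next, $V_y^*\sigma(x+y,\xi)V_y=\alpha(-y)\big(\sigma(x+y,\xi)\big)$, with sign conventions to be fixed. By equivariance (Definition \ref{equivariant_symbol_definition}), $\alpha(-y)\sigma(x+y,\xi)=\sigma(x,\xi)$, so the result equals $\mathrm{Op}(\sigma)u$. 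The integral here is an oscillatory integral. I justify the manipulations by cutting off $\sigma$ as in the discussion after \eqref{pseudo-double-integral} and passing to the limit.

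\textbf{Main obstacle.} I expect the main difficulty to be bookkeeping of conventions rather than analysis. Two things must match:
\begin{itemize}
\item the sign in $\widetilde{\alpha}(x)=T_x^*(\cdot)T_x\otimes\alpha(x)$ against the sign in the equivariance relation;
\item the implementation of $\alpha$ on $H$ within the identification used in Lemma \ref{crossed_product_as_fixed_point}.
\end{itemize}
If the signs come out opposite, I would recheck the convention $U(y)\xi(x)=\xi(x-y)$ and $\pi_\alpha$. As a sanity check, $\pi_\alpha(X)$ (symbol $\alpha(x)(X)$) and $U(y)$ (symbol $e^{-iy\cdot\xi}$) should both be fixed points.
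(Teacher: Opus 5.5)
Your proposal follows the paper's proof: reduce via Lemma \ref{crossed_product_as_fixed_point} to $\widetilde{\alpha}$-invariance, compute $T_y^*\mathrm{Op}(\sigma)T_y=\mathrm{Op}\big(\sigma(\cdot+y,\cdot)\big)$, and conclude from equivariance. Your remarks on membership in the tensor product and on the oscillatory integral just make explicit what the paper leaves implicit. On the sign you flagged: with the paper's conventions the needed identity is $\alpha(y)\big(\sigma(x+y,\xi)\big)=\sigma(x,\xi)$, not $\alpha(-y)$; so if $\alpha(y)=\mathrm{Ad}(V_y)$, the unitary implementing $\widetilde{\alpha}(y)$ is $T_y^*\otimes V_y$ rather than $(T_y\otimes V_y)^*$.
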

\begin{proof}
	By Lemma \ref{crossed_product_as_fixed_point}, it suffices to show that $\mathrm{Op}(\sigma)$ is fixed under $\widetilde{\alpha}.$ Denoting $T_y$ for translation by $y,$ we have
	\[
	T_y^*\mathrm{Op}(\sigma)T_y = \mathrm{Op}(\sigma_y)
	\]
	where $\sigma_y(x,\xi) = \sigma(x+y,\xi),$ Hence
	\[
	\widetilde{\alpha}(y)\mathrm{Op}(\sigma) = \mathrm{Op}(\alpha(y)\sigma_y).
	\]
	So $\mathrm{Op}(\sigma)$ is fixed under $\widetilde{\alpha}$ if and only if $\sigma = \alpha(y)(\sigma_y)$ for all $y,$ which is precisely that $\sigma$ is $\alpha$-equivariant according to Definition \ref{equivariant_symbol_definition}.
\end{proof}

\begin{corollary}
	Let $m>0$ and let $A \in \mathrm{C}\Psi^m(\mathbb{R}^d;\mathcal{M})$ have $\alpha$-equivariant symbol. Then $A$ is affiliated to the crossed product $\mathcal{M}\rtimes_{\alpha} \mathbb{R}^d.$
\end{corollary}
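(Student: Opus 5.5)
The corollary says that an operator $A\in \mathrm{C}\Psi^m(\mathbb{R}^d;\mathcal{M})$ with $m>0$ and $\alpha$-equivariant symbol is affiliated to $\mathcal{M}\rtimes_\alpha\mathbb{R}^d$. The plan is to reduce this to the bounded case, Lemma \ref{equivariant_symbols_are_equivariant_operators}, through resolvents. Affiliation of a closed densely defined operator to a von Neumann algebra $\mathcal{R}$ is equivalent to its commuting with every unitary of $\mathcal{R}'$. For a self-adjoint operator it is equivalent to its resolvents, or any bounded Borel function of it, lying in $\mathcal{R}$. For a general closed operator it is equivalent to both factors of its polar decomposition being affiliated.

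\textbf{Step 1: direct route.} The cleanest argument avoids calculus entirely. By Theorem \ref{thm-tensor algebra} and its proof, $A$ commutes with $I\otimes\mathcal{M}'$ on $\mathscr{S}(\mathbb{R}^d;L_2(\mathcal{M}))$, so the closure $\overline{A}$ is affiliated to $B(L_2(\mathbb{R}^d))\overline{\otimes}\mathcal{M}$. By Lemma \ref{crossed_product_as_fixed_point}, the crossed product is the fixed-point algebra of $\widetilde{\alpha}$. The computation in the proof of Lemma \ref{equivariant_symbols_are_equivariant_operators} is purely algebraic and does not use boundedness. It gives
\[
\widetilde{\alpha}(y)(\mathrm{Op}(\sigma))=\mathrm{Op}(\alpha(y)\sigma_y)=\mathrm{Op}(\sigma)
\]
on the Schwartz core, where $\widetilde{\alpha}(y)$ is implemented by a unitary $W_y=T_y\otimes u_y$. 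This uses that $\alpha$ is spatially implemented on $L_2(\mathcal{M})$ by the trace-preserving unitaries $u_y$, and that $W_y$ preserves the Schwartz core. Hence $W_y\overline{A}W_y^*=\overline{A}$ for all $y$, and the same identity then holds for $|\overline{A}|$ and for the partial isometry in the polar decomposition. Each of these is affiliated to $B(L_2(\mathbb{R}^d))\overline{\otimes}\mathcal{M}$ and is $\widetilde{\alpha}$-invariant. So each bounded function of $|\overline{A}|$, and the partial isometry itself, lies in the fixed-point algebra, that is, in $\mathcal{M}\rtimes_\alpha\mathbb{R}^d$. This gives affiliation.

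\textbf{Step 2: alternative via the calculus.} If one prefers to use the calculus, for instance when $A$ is elliptic and positive, take $\lambda$ in the resolvent set. Remark \ref{inver A-lamda} gives $(A-\lambda)^{-1}\in\Psi^{-m}\subset$ bounded operators. Its symbol is equivariant, because the parametrix recursion \eqref{mexpan}--\eqref{rexpan} is built from $\sigma(A)$, and $x$-derivatives and products commute with $\alpha(y)$ combined with translation. The smoothing correction is equivariant as well, being $T_y^*(\cdot)T_y$-invariant by uniqueness of the inverse. Lemma \ref{equivariant_symbols_are_equivariant_operators} then places the resolvent in the crossed product.

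\textbf{Main obstacle.} The delicate point is justifying that $\widetilde{\alpha}(y)$ acts on unbounded operators by spatial conjugation. This requires that the Schwartz core is invariant under $W_y$, which may require $\alpha$ to be continuous enough to preserve smoothness in $x$. One could instead work with the closure of $W_y(\mathscr{S})$ and use essential self-adjointness of $A^*A$, established in Section \ref{section-resolvent}.
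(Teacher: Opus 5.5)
Your Step 1 is a correct proof, and it takes a different route from the paper. The paper argues in one line: the resolvents of $A$ are $\Psi$DOs by Theorem \ref{inverse resolvent}, and so they lie in the crossed product by Lemma \ref{equivariant_symbols_are_equivariant_operators}. That is essentially your Step 2. It silently requires $A$ to be elliptic with $A$ and $\sigma(A)_m$ positive, which the corollary does not assume. It also uses, without proof, that the resolvent's symbol is again equivariant.

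Your spatial argument goes as follows. Write $\widetilde{\alpha}(y)=\mathrm{Ad}(W_y)$, with $W_y$ built from translation and the unitary $u_y$ implementing the trace-preserving $\alpha(y)$ on $L_2(\mathcal{M})$. Check $W_yAW_y^*=A$ on the Schwartz core, pass to the closure, and conclude via polar decomposition. This needs no ellipticity, positivity or self-adjointness, so it proves the corollary as stated. It also makes Step 2 superfluous: $W_y\overline{A}=\overline{A}W_y$ immediately gives $W_y(\overline{A}-\lambda)^{-1}=(\overline{A}-\lambda)^{-1}W_y$, with no need to track equivariance through the parametrix recursion.

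Some small corrections:
\begin{itemize}
\item To match Definition \ref{equivariant_definition}, you need $W_y=T_y^*\otimes u_y$ rather than $T_y\otimes u_y$.
\item In Step 2 the relevant invariance is under $\mathrm{Ad}(W_y)$, not under $T_y^*(\cdot)T_y$, which alone does not fix the operator unless $\alpha$ is trivial.
\item Your ``main obstacle'' is not an obstacle. For fixed $y$, $W_y$ is a translation in $x$ tensored with a constant bounded operator on $L_2(\mathcal{M})$. So it maps $\mathscr{S}(\mathbb{R}^d;L_2(\mathcal{M}))$ onto itself with no continuity of $y\mapsto\alpha(y)$ required.
\item You use closability of $A$ implicitly. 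It holds because the Hilbert adjoint of $A$ contains the formal adjoint from Theorem \ref{adformula}, defined on the dense Schwartz space.
\end{itemize}
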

\begin{proof}
	It suffices that the resolvents of $A$ belong to $\mathcal{M}\rtimes_{\alpha}\mathbb{R}^d,$ and this is a consequence of Theorem \ref{inverse resolvent} and Lemma \ref{equivariant_symbols_are_equivariant_operators}.
\end{proof}

The dual trace $\widehat{\tau}$ on $\mathcal{M}\rtimes_{\alpha} \mathbb{R}^d$ is not the same as the restriction of the tensor product trace $\Tr = \mathrm{tr}\otimes \tau$ on $B(L_2(\mathbb{R}^d))\otimes \mathcal{M}.$ In the next subsection, we will be able to prove results about the spectral counting function of $\alpha$-invariant elliptic operators, as measured with the dual trace, using an ergodicity assumption.

\subsection{Random and almost periodic operators}\label{random_section}
In this section we specialize to $\mathcal{M} = L_\infty(\Omega),$ where $(\Omega,\Sigma,\mathbb{P})$ is a probability space. Write $\mathbb{E}(\cdot) = \int_{\Omega}\cdot d\mathbb{P}.$ Operators valued in $L_\infty(\Omega)$ can be thought of as random operators, and the tensor product trace $\mathrm{Tr}$ on $\mathcal{N} = L_{\infty}(\Omega)\otimes B(L_2(\mathbb{R}^d))$ is the expected value of the operator trace, i.e.
\[
\mathrm{Tr}(A) = \mathbb{E}(\mathrm{tr}(A)).
\]
We single out the important special case given by $\Omega = \mathbb{R}^d_B,$ the Bohr compactification of $\mathbb{R}^d,$ where $\mathbb{P}$ is the normalized Haar measure.

We will restrict attention to random operators that are ergodic under translation in the following sense, which is a special case of the notion of an ergodic family from \cite[Section V.2]{CarmonaLacroix}.
\begin{definition}\label{gamma_equivariant_definition}
	Let $G\subseteq \mathbb{R}^d$ be a closed subgroup\footnote{We only need to consider $G = \mathbb{R}^d$ and $G = \mathbb{Z}^d$.} A group homomorphism $\gamma:G\to \mathrm{Aut}(\Omega)$ is said to be measure preserving if $\mathbb{P}(\gamma(k)A) = \mathbb{P}(A)$ for all $A \in \Sigma$ and $k\in G.$ We say that $\gamma$ is ergodic if the only measurable subsets of $\Omega$ invariant under $\gamma$ have measure zero or one, i.e.
	\[
	\gamma(k)A=A\text{ for all }k\in G\Longleftrightarrow \mathbb{P}(A) \in \{0,1\}.
	\]
	Given $k\in G,$ let $T_k$ denote the unitary operator of translation by $k$ on $L_2(\mathbb{R}^d).$ That is,
	\[
	T_ku(x) = u(x-k),\quad x \in \mathbb{R}^d,\; u\in L_2(\mathbb{R}^d).
	\]
	Denote the pullback of $\gamma(k)$ by $\gamma(k)^*,$ which is defined on $L_{\infty}(\Omega)$ by
	\[
	\gamma(k)^*f(\omega) = f(\gamma(k)(\omega)),\quad k\in G,\; \omega\in \Omega.
	\]
	An operator $P \in B(L_2(\mathbb{R}^d))\otimes L_{\infty}(\Omega).$ will be called $\gamma$-\emph{equivariant} if for all $k\in G$ we have
	\[
	(1\otimes \gamma(k)^*)P = (T_k\otimes 1)P(T_{-k}\otimes 1).
	\]
	If $P$ is unbounded, the same definition applies provided that the domain of $P$ is invariant under the maps $1\otimes \gamma(k)$ and $T_k\otimes 1$ for all $k\in G.$
\end{definition}
Note that if $G = \mathbb{R}^d,$ then Lemma \ref{crossed_product_as_fixed_point} shows that the $G$-invariant operators are precisely the elements of the crossed product $L_{\infty}(\Omega)\rtimes_{\gamma} \mathbb{R}^d.$

An example with discrete $G$ is as follows.
\begin{example}
	Let $\Omega = \{-1,1\}^{\mathbb{Z}^d}$ be equipped with the product measure. The action
	\[
	\gamma(k)(\{\varepsilon_n\}_{n\in \mathbb{Z}^d}) = \{\varepsilon_{n-k}\}_{n\in \mathbb{Z}^d}
	\]
	is ergodic. Let $V \in C^\infty_c(\mathbb{R}^d).$ The function
	\[
	V(x,\varepsilon) := \sum_{n\in \mathbb{Z}^d} V(x+n)\varepsilon_n,\quad \varepsilon = \{\varepsilon_n\}_{n\in \mathbb{Z}^d} \in \Omega
	\]
	obeys
	\[
	V(x+k,\varepsilon) = V(x,\gamma(k)\varepsilon),\quad k\in \mathbb{Z}^d.
	\]
	Therefore the operator
	\[
	M_{V} \in B(L_2(\mathbb{R}^d))\otimes L_{\infty}(\Omega)
	\]
	of pointwise multiplication by $V$ is $\gamma$-equivariant.
\end{example}

\begin{lemma}
	Let $G\subseteq \mathbb{R}^d$ be a closed subgroup, and let $\gamma:G\to \mathrm{Aut}(\Omega)$ be group action. If $\sigma$ is a symbol function satisfying
	\begin{equation}\label{equivariant_symbol}
		\sigma(x,\xi,\omega) = \sigma(x-k,\xi,\gamma(k)\omega),\quad x,\xi \in \mathbb{R}^d,\; \text{ almost every }\omega\in \Omega
	\end{equation}
	for all $k\in G$ then $\mathrm{Op}(\sigma)$ is $\gamma$-equivariant in the sense of Definition \ref{gamma_equivariant_definition}.
	
	Conversely, if $P$ is a $\gamma$-equivariant $L_{\infty}(\Omega)$-valued $\Psi$DO, then its symbol $\sigma$ obeys \eqref{equivariant_symbol}.
\end{lemma}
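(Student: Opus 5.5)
The forward direction is a direct computation with the symbol, and the converse recovers the symbol from the operator via Definition \ref{1symdef}. Both directions rest on the identity
\[
(T_k\otimes 1)\,\mathrm{Op}(\sigma)\,(T_{-k}\otimes 1) = \mathrm{Op}(\sigma^{(k)}),\qquad \sigma^{(k)}(x,\xi,\omega):=\sigma(x-k,\xi,\omega),
\]
which holds because conjugating by translations shifts the $x$-variable of the symbol and leaves $\xi$ untouched. To prove it, I apply $\mathrm{Op}(\sigma)$ to $T_{-k}u$, use $\widehat{T_{-k}u}(\xi)=e^{ik\cdot\xi}\hat u(\xi)$, and translate by $k$. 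The phase $e^{i(x-k)\cdot\xi}e^{ik\cdot\xi}$ collapses to $e^{ix\cdot\xi}$, which gives the claim. I also need to know how $1\otimes\gamma(k)^*$ acts: it acts only on the $\omega$-variable, so
\[
(1\otimes\gamma(k)^*)\mathrm{Op}(\sigma)=\mathrm{Op}(\sigma_{\gamma,k}),\qquad \sigma_{\gamma,k}(x,\xi,\omega)=\sigma(x,\xi,\gamma(k)\omega).
\]
Here I read the left side as applying the automorphism $\gamma(k)^*$ of $L_\infty(\Omega)$ to the $\mathcal{M}$-valued symbol pointwise in $(x,\xi)$. This is legitimate because $\gamma(k)^*$ is a normal $*$-automorphism, so it commutes with the Bochner/oscillatory integral defining $\mathrm{Op}$.

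For the forward direction, fix $k\in G$. Applying \eqref{equivariant_symbol} with $k$ replaced by $-k$ gives $\sigma(x,\xi,\gamma(k)\omega)=\sigma(x-k,\xi,\omega)$, which uses that $\gamma$ is a homomorphism, so $\gamma(-k)=\gamma(k)^{-1}$. Hence $\sigma_{\gamma,k}=\sigma^{(k)}$ for almost every $\omega$. Both symbols are $L_\infty(\Omega)$-valued, so they agree as elements of $\mathcal{M}$, and the two identities above give the equivariance relation of Definition \ref{gamma_equivariant_definition}. For unbounded $P$, the relation is first checked on $\mathscr{S}(\mathbb{R}^d;L_2(\Omega))$, which is invariant under both $T_k\otimes 1$ and $1\otimes\gamma(k)^*$ (the latter is unitary on $L_2(\Omega)$ since $\gamma$ is measure preserving).

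For the converse, suppose $P=\mathrm{Op}(\sigma)$ is $\gamma$-equivariant. By the two identities above, $\mathrm{Op}(\sigma_{\gamma,k})=\mathrm{Op}(\sigma^{(k)})$. The symbol is uniquely determined by the operator, through $\sigma(P)(x,\xi)=e^{-ix\cdot\xi}P(e^{ix\cdot\xi})$, or equivalently through the kernel relation of Proposition \ref{smooth-op-kernel} after testing on $\mathscr{S}$. Therefore $\sigma_{\gamma,k}=\sigma^{(k)}$ as $L_\infty(\Omega)$-valued functions, which is exactly \eqref{equivariant_symbol} with $k\mapsto -k$; since $G$ is a group, this is equivalent to \eqref{equivariant_symbol} for all $k\in G$.

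The main obstacle is the \emph{almost every} $\omega$ bookkeeping. The symbol takes values in $L_\infty(\Omega)$, so it is defined only up to null sets for each fixed $(x,\xi)$. If $G$ is uncountable, a single null set that works uniformly in $(x,\xi,k)$ is not automatic. I will avoid this by stating and proving every identity as an equality in $L_\infty(\Omega)$ for each fixed $(x,\xi,k)$, which is how \eqref{equivariant_symbol} should be read. The only other point is justifying that $\gamma(k)^*$ passes through the oscillatory integral. I plan to handle it with the compactly supported symbol approximations $\sigma_j$ introduced after \eqref{pseudo-double-integral}, for which the integrals converge absolutely, and then pass to the limit using the normality of $\gamma(k)^*$.
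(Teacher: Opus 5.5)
Your two conjugation identities are correct, and they are exactly what the paper uses for the analogous Lemma \ref{equivariant_symbols_are_equivariant_operators}; the present lemma is stated without proof. The gap is in the sign bookkeeping that links these identities to \eqref{equivariant_symbol}. Replacing $k$ by $-k$ in \eqref{equivariant_symbol} gives $\sigma(x,\xi,\omega)=\sigma(x+k,\xi,\gamma(-k)\omega)$. Substituting $\omega\mapsto\gamma(k)\omega$ then yields
\[
\sigma(x,\xi,\gamma(k)\omega)=\sigma(x+k,\xi,\omega),\qquad\text{i.e.}\qquad \sigma_{\gamma,k}=\sigma^{(-k)},
\]
not $\sigma_{\gamma,k}=\sigma^{(k)}$. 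By your identities, \eqref{equivariant_symbol} is therefore equivalent to $(1\otimes\gamma(k)^*)\mathrm{Op}(\sigma)=(T_{-k}\otimes 1)\mathrm{Op}(\sigma)(T_{k}\otimes 1)$. This is the reverse of the relation in Definition \ref{gamma_equivariant_definition}.

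No relabelling $k\mapsto -k$ can repair this, because it flips the translation and $\gamma$ at the same time. Condition \eqref{equivariant_symbol} says $\sigma$ is invariant under $(x,\omega)\mapsto(x-k,\gamma(k)\omega)$. The definition asks for invariance under $(x,\omega)\mapsto(x+k,\gamma(k)\omega)$. Together these force $\sigma(\cdot,\xi,\omega)$ to be $2k$-periodic for every $k\in G$. Your converse has the same slip: $\sigma_{\gamma,k}=\sigma^{(k)}$ is \eqref{equivariant_symbol} with $\gamma(k)$ replaced by $\gamma(k)^{-1}$, not \eqref{equivariant_symbol} with $k\mapsto -k$.

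Read literally, then, the statement combines two opposite conventions, and your argument hides the mismatch instead of resolving it. The paper's own random-sign example shows this. The potential $V(x,\varepsilon)=\sum_n V(x+n)\varepsilon_n$ satisfies \eqref{equivariant_symbol}. Yet $(1\otimes\gamma(k)^*)M_V$ is multiplication by $V(x+k,\varepsilon)$, while $(T_k\otimes 1)M_V(T_{-k}\otimes 1)$ is multiplication by $V(x-k,\varepsilon)$. On the other hand, Definition \ref{gamma_equivariant_definition} agrees with the crossed-product convention of Definition \ref{equivariant_symbol_definition} for $\alpha(k)=\gamma(k)^*$, namely $\sigma(x,\xi,\gamma(k)\omega)=\sigma(x-k,\xi,\omega)$.

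You need to state explicitly which convention you adopt. Either read \eqref{equivariant_symbol} as $\sigma(x,\xi,\gamma(k)\omega)=\sigma(x-k,\xi,\omega)$, or replace the conjugation in Definition \ref{gamma_equivariant_definition} by $(T_{-k}\otimes 1)P(T_k\otimes 1)$. Once you do, your two identities and the injectivity of $\sigma\mapsto\mathrm{Op}(\sigma)$ give both directions at once.

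A minor point: substituting $\omega\mapsto\gamma(k)\omega$ in an almost-everywhere identity requires $\gamma(k)$ to preserve null sets, as does defining $\gamma(k)^*$ on $L_\infty(\Omega)$ at all. Say that this is part of the assumption $\gamma(k)\in\mathrm{Aut}(\Omega)$.
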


In the event that $\Omega = \mathbb{R}^d_B$ is the Bohr compactification, $G=\mathbb{R}^d$ and $\gamma$ is the translation induced by the embedding of $\mathbb{R}^d$ into $\mathbb{R}^d_B,$ we will call a $\gamma$-equivariant operator \emph{almost periodic}. For further details on almost-periodic operators see \cite{Shubin1979} and \cite{PasturFigotin}.

\begin{example}
	Recall that the space $\mathrm{CAP}(\mathbb{R}^d)$ of continuous almost periodic functions on $\mathbb{R}^d$ as the closure of the linear span of the functions $x\mapsto \exp(i(x,\xi))$ in the uniform norm. We can identify $\mathrm{CAP}(\mathbb{R}^d)$ with the continuous functions on the Bohr compactification of $\mathbb{R}^d.$
	If $V \in \mathrm{CAP}(\mathbb{R}^d),$ define a function $\widetilde{V}$ on $\mathbb{R}^d\times \mathbb{R}^d_B$ by
	\[
	\widetilde{V}(x,g) = V(x+g),\quad x \in \mathbb{R}^d,\, g \in \mathbb{R}^d_B.
	\]
	The operator $M_V$ of pointwise multiplication by $V$ belongs to
	\[
	B(L_2(\mathbb{R}^d))\otimes L_{\infty}(\mathbb{R}^d_B)
	\]
	and is equivariant under the action of $\mathbb{R}^d$ on $\mathbb{R}^d_B$ induced by the inclusion. Since the $\mathbb{R}^d$ action on $\mathbb{R}^d_B$ is ergodic, this is another example of Definition \ref{gamma_equivariant_definition}.
\end{example}

We will use the following ergodic theorem:
\begin{theorem}\cite[Theorem 1.1, page 23]{Petersen1983}, \cite[Theorem 2.3, page 30]{Petersen1983}
	Let $f \in L_{1}(\Omega),$ and let $\gamma:\mathbb{Z}^d\to \mathrm{Aut}(\Omega)$ be ergodic. For almost every $\omega\in \Omega,$ we have
	\[
	\lim_{n\to\infty} \frac{1}{(2n+1)^d}\sum_{|k|_\infty\leq n} f(\gamma(k)\omega) = \mathbb{E}(f).
	\]
	If $f \in L_2(\Omega),$ then we have the same in the norm topology of $L_2(\Omega).$
	
	Similarly, if $\gamma:\mathbb{R}^d\to \mathrm{Aut}(\Omega)$ is ergodic, then
	\[
	\lim_{R\to\infty} \frac{1}{(2R)^d}\int_{[-R,R]^d} f(\gamma(x)\omega)\,dx = \mathbb{E}(f)
	\]
	in the same sense.
\end{theorem}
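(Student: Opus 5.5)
This statement is a classical result quoted from Petersen's monograph (the multiparameter pointwise ergodic theorem together with the mean ergodic theorem), so the plan is to reduce it to those standard tools rather than to anything in the operator-valued calculus above. I will use only that $\gamma$ is a measure-preserving ergodic action, so that the Koopman operators $U_k f = f\circ\gamma(k)$ form a unitary representation on $L_2(\Omega)$ and a family of positive contractions on every $L_p(\Omega)$.

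The $L_2$ statement comes first, via von Neumann's mean ergodic theorem for the commuting unitaries $U_{e_1},\dots,U_{e_d}$. Write $A_n = (2n+1)^{-d}\sum_{|k|_\infty\le n} U_k$. On the subspace of functions invariant under all $U_{e_j}$, $A_n$ is the identity. Ergodicity forces this subspace to be the constants, so the projection onto it is $f\mapsto \mathbb{E}(f)$.

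On its orthogonal complement I will use density. The complement is spanned by the closure of $\sum_j \operatorname{ran}(U_{e_j}-1)$. On a vector $(U_{e_j}-1)g$, the average telescopes in the $j$-th coordinate, which gives $\|A_n (U_{e_j}-1)g\|_2 \le 2\|g\|_2/(2n+1)$. Since $\|A_n\|\le 1$, density then yields $A_n f\to \mathbb{E}(f)$ in $L_2$.

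The continuous case is the same argument with $(2R)^{-d}\int_{[-R,R]^d}U_x\,dx$. It needs strong continuity of $x\mapsto U_x$, which holds for a measurable measure-preserving action. Ergodicity again identifies the invariant vectors with the constants.

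The almost-everywhere statement is the main obstacle. I would follow the Banach principle. First, the convergence holds on a dense class in $L_1$: constants, plus the coboundaries $(U_{e_j}-1)g$ with $g\in L_\infty$, for which the telescoping bound is uniform and hence pointwise. Density of this class in $L_1$ follows from the Hahn--Banach theorem together with ergodicity. Second, one needs a weak-type $(1,1)$ maximal inequality
\[
\mathbb{P}\Big(\sup_n |A_n f| > \lambda\Big) \le \frac{C_d}{\lambda}\|f\|_{L_1(\Omega)}.
\]

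To prove the maximal inequality I would transfer to $\mathbb{Z}^d$ by Calderón's transference principle. For fixed $\omega$, consider the function $k\mapsto f(\gamma(k)\omega)$ on a large box. The Hardy--Littlewood maximal inequality for cubes on $\mathbb{Z}^d$, which follows from a Vitali covering argument, applies to it. Integrating in $\omega$ and using measure preservation transfers the bound to $\Omega$. The $\mathbb{R}^d$ version is identical, using the maximal function on $\mathbb{R}^d$.

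With the maximal inequality and the dense class in hand, the set on which $A_n f$ converges is closed in $L_1$, which gives convergence almost everywhere for every $f\in L_1$. The limit is identified as $\mathbb{E}(f)$ by the $L_2$ case on $L_1\cap L_2$, followed by density.
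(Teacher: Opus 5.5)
Your argument is correct, and it is the standard self-contained proof of the multiparameter (Wiener) ergodic theorem. The $L_2$ statement comes from splitting $L_2(\Omega)$ into the invariant vectors, which are the constants by ergodicity, and the closed span of coboundaries. The almost-everywhere statement comes from the Banach principle, using the dense class of constants and bounded coboundaries together with a weak-type $(1,1)$ maximal inequality transferred from the Hardy--Littlewood maximal inequality for cubes on $\mathbb{Z}^d$ or $\mathbb{R}^d$.

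The paper gives no argument of its own here; it simply quotes the result from Petersen's monograph. So the comparison is between a citation and a full proof. Your route makes clear where the real work lies. A single-transformation maximal ergodic theorem does not by itself give $L_1$ pointwise convergence of cube averages for $d\ge 2$: iterating one-dimensional maximal inequalities, as in the Dunford--Zygmund approach, only controls $f\in L\log^{d-1}L$. A genuinely $d$-dimensional covering argument is therefore needed, and your transference step supplies it in both the discrete and continuous cases.

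Your write-up also makes explicit two hypotheses that the statement leaves implicit. First, $\gamma$ must be measure preserving, which is needed for the Koopman operators to be unitary and for the transference. Second, in the $\mathbb{R}^d$ case the action must be jointly measurable. This is what makes the averages $(2R)^{-d}\int_{[-R,R]^d} f(\gamma(x)\omega)\,dx$ well defined and $x\mapsto U_x$ strongly continuous. It also allows the standard replacement of almost-invariant sets by strictly invariant ones when you invoke ergodicity.

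One small simplification: your dense class already converges pointwise to $\mathbb{E}(f)$, so the Banach principle identifies the limit directly. The detour through the $L_2$ case is unnecessary.
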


For symbols $\sigma \in S^m_{\rho,\delta}(\mathbb{R}^d\times \mathbb{R}^d;L_{\infty}(\Omega))$ we will normally write
\[
\sigma(x,\xi,\omega),\quad (x,\xi)\in \mathbb{R}^d\times \mathbb{R}^d,\; \omega\in \Omega
\]
for $\sigma(x,\xi)(\omega).$

\begin{remark}
	If $\sigma$ is the symbol of an almost-periodic operator, then we have
	\[
	\sigma(x,\xi,\omega) = \sigma(0,\xi,\omega+x),\quad x,\xi \in \mathbb{R}^d,\; \omega\in \mathbb{R}^d_B.
	\]
	and hence we may identify $\sigma$ with a function on $\mathbb{R}^d_B\times \mathbb{R}^d.$
\end{remark}

\begin{theorem}\label{existence_of_DOS}
	Let $\gamma:G \to \mathrm{Aut}(\Omega)$ be an ergodic action, where $G=\mathbb{R}^d$ or $G = \mathbb{Z}^d.$
	Let $m>0$ and let $P$ be non-negative and $\gamma$-equivariant. Assume that for almost-every $\omega\in \Omega$ and for all $\lambda>0$ we have
	\[
	M_{\chi_{[-1,1]^d}}\chi_{[0,\lambda]}(P(\omega)) \in \mathcal{S}_1(L_2(\mathbb{R}^d)).
	\]
	The \emph{density of states} of $P$, defined as the random function
	\[
	N(\lambda,P)(\omega) := \lim_{R\to\infty} (2R)^{-d}\mathrm{tr}(M_{\chi_{[-R,R]^d}}\chi_{[0,\lambda]}(P(\omega)))
	\]
	exists for almost every $\omega\in \Omega,$ and coincides with the constant function
	\[
	N(\lambda,P)(\omega) = \mathbb{E}(\mathrm{tr}(M_{\chi_{[0,1]^d}}\chi_{[0,\lambda]}(P))).
	\]
\end{theorem}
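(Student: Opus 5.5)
The plan is to reduce the density of states to an ergodic average of a stationary random field, and then apply the ergodic theorem quoted just above. For fixed $\lambda>0$ write $E_\lambda := \chi_{[0,\lambda]}(P)$. This is a projection in $B(L_2(\mathbb{R}^d))\overline{\otimes} L_\infty(\Omega)$, and its fibre at $\omega$ is $\chi_{[0,\lambda]}(P(\omega))$. The first step is to check that $E_\lambda$ is again $\gamma$-equivariant. The equivariance relation $(1\otimes\gamma(k)^*)P=(T_k\otimes 1)P(T_{-k}\otimes 1)$ is a unitary conjugation, and $1\otimes\gamma(k)^*$ is a $*$-automorphism, so both sides commute with Borel functional calculus. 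Fibrewise this gives, for almost every $\omega$,
\[
\chi_{[0,\lambda]}(P(\gamma(k)\omega)) = T_k\,\chi_{[0,\lambda]}(P(\omega))\,T_{-k},\quad k\in G.
\]

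Next I treat the case $G=\mathbb{Z}^d$ and define the random variable
\[
f(\omega):=\mathrm{tr}\big(M_{\chi_{[0,1]^d}}E_\lambda(\omega)\big)=\mathrm{tr}\big(M_{\chi_{[0,1]^d}}E_\lambda(\omega)M_{\chi_{[0,1]^d}}\big)\ge 0.
\]
This is finite almost surely by the trace-class hypothesis, since $[0,1]^d\subset[-1,1]^d$. Using the displayed covariance and $T_{-k}M_{\chi_{Q}}T_k=M_{\chi_{Q-k}}$, one gets $\mathrm{tr}(M_{\chi_{k+[0,1]^d}}E_\lambda(\omega)) = f(\gamma(-k)\omega)$, up to the sign convention for the action. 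Tiling the cube $[-n,n]^d$ by the unit cubes $k+[0,1]^d$ with $k\in\{-n,\dots,n-1\}^d$ gives
\[
(2n)^{-d}\,\mathrm{tr}\big(M_{\chi_{[-n,n]^d}}E_\lambda(\omega)\big)=(2n)^{-d}\sum_{k} f(\gamma(-k)\omega).
\]
By the ergodic theorem this converges almost surely to $\mathbb{E}(f)$, at least if $f\in L_1(\Omega)$. If $\mathbb{E}(f)=\infty$, the same conclusion follows by truncating, $f\wedge M\uparrow f$, which yields divergence to $+\infty$. Also $\mathbb{E}(f)$ is exactly $\mathbb{E}(\mathrm{tr}(M_{\chi_{[0,1]^d}}\chi_{[0,\lambda]}(P)))$, which is the claimed constant.

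To pass from integer $n$ to real $R$, I use monotonicity. The quantity $R\mapsto \mathrm{tr}(M_{\chi_{[-R,R]^d}}E_\lambda(\omega))$ is nondecreasing because $E_\lambda\ge0$, so I can sandwich it between the values at $\lfloor R\rfloor$ and $\lceil R\rceil$. Since $(\lceil R\rceil/R)^d\to1$, the limit along real $R$ equals the limit along integers. The case $G=\mathbb{R}^d$ reduces to this one by restricting the action to $\mathbb{Z}^d$. There is one caveat: that restriction may fail to be ergodic. In that case I would instead use the continuous ergodic theorem with $g(\omega)=\mathrm{tr}(M_{\chi_{[0,1]^d}}E_\lambda(\omega))$. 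Writing $[-R,R]^d$ as a continuous average of unit-cube translates, via $\chi_{[-R,R]^d}\approx\int\chi_{x+[0,1]^d}\,dx$ up to boundary layers of volume $O(R^{d-1})$, controlled by the monotonicity argument again, gives the same limit $\mathbb{E}(g)$.

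The almost-sure statement for all $\lambda>0$ simultaneously follows by taking a countable dense set of $\lambda$ together with monotonicity in $\lambda$. Alternatively, I can note that the statement is made for each fixed $\lambda$ and leave it there.

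The main obstacle is measurability and the fibrewise identification: namely, that $\omega\mapsto \mathrm{tr}(M_{\chi_Q}\chi_{[0,\lambda]}(P(\omega)))$ is measurable, and that the functional calculus of the affiliated operator $P$ in $B(L_2(\mathbb{R}^d))\overline{\otimes}L_\infty(\Omega)$ agrees almost everywhere with the fibrewise functional calculus. I would settle this by direct integral theory. In $L_2(\Omega;L_2(\mathbb{R}^d))$, a decomposable self-adjoint operator has spectral projections given by the decomposable field of fibre spectral projections. Measurability of $f$ then follows by writing the trace as $\sum_j\langle E_\lambda(\omega)\chi_Q e_j,\chi_Q e_j\rangle$ for an orthonormal basis $(e_j)$, which is a countable sum of measurable functions.
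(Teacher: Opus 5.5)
Your proposal is correct and follows essentially the paper's route. You use equivariance to rewrite $\mathrm{tr}(M_{\chi_{[-n,n]^d}}\chi_{[0,\lambda]}(P(\omega)))$ as a sum of translates $f(\gamma(\pm k)\omega)$ of the unit-cube quantity and then apply the pointwise ergodic theorem. Your extra steps fill in points the paper's short proof passes over: the monotonicity argument for real $R$, the truncation when $\mathbb{E}(f)=\infty$, the measurability discussion, and the switch to the continuous ergodic theorem when the $\mathbb{Z}^d$-restriction of an $\mathbb{R}^d$-action is not ergodic.

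The only slip is in your optional remark on uniformity in $\lambda$. A dense set plus monotonicity only controls continuity points of $\lambda\mapsto\mathbb{E}(\mathrm{tr}(M_{\chi_{[0,1]^d}}\chi_{[0,\lambda]}(P)))$, so you must also add its countably many jump points to the countable set. For fixed $\lambda$, as in the paper, nothing more is needed.
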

\begin{proof}
	Let
	\[
	N_R(\lambda,P)(\omega) = (2R)^{-d}\mathrm{tr}(M_{\chi_{[-R,R]^d}}\chi_{[0,\lambda]}(P(\omega))).
	\]
	We will show that $\lim_{R\to\infty}N_R(\lambda,P)(\omega)$ exists. Assume that $R$ ranges over integers (the general case is not different). By equivariance we have
	\[
	N_R(\lambda,P)(\omega) = \frac{1}{(2n+1)^d}\sum_{|k|\leq n} \mathrm{tr}(M_{\chi_{[0,1]^d}}\chi_{[0,\lambda]}(P(\omega))).
	\]
	By the pointwise ergodic theorem, we conclude that for almost every $\omega\in \Omega,$ we have
	\[
	\lim_{R\to\infty} N_R(\lambda,P)(\omega) = \Tr(M_{\chi_{[0,1]^d}}\chi_{[0,\lambda]}(P)).
	\]
\end{proof}

\begin{lemma}
	Let $\gamma:\mathbb{R}^d\to \mathrm{Aut}(\Omega)$ be an ergodic action. Denote $\tau$ for the integral on $L_{\infty}(\Omega).$ Let $A$ be an equivariant operator affiliated with $L_{\infty}(\Omega)\otimes B(L_2(\mathbb{R}^d).$ We have
	\[
	N(\lambda,A)(\omega) = \widehat{\tau}(\chi_{(-\infty,\lambda)}(A))
	\]
	where $\widehat{\tau}$ is the dual trace on the crossed product $L_{\infty}(\Omega)\rtimes_{\gamma} \mathbb{R}^d.$
\end{lemma}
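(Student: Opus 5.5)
The plan is to identify the dual trace $\widehat{\tau}$ of an $\widetilde{\gamma}$-invariant positive operator with the localized tensor trace $\mathrm{Tr}(M_{\chi_{[0,1]^d}}\,\cdot\,)$, and then to invoke Theorem \ref{existence_of_DOS}. By Lemma \ref{crossed_product_as_fixed_point}, the spectral projection $E:=\chi_{(-\infty,\lambda)}(A)$ lies in the fixed point algebra $(B(L_2(\mathbb{R}^d))\otimes L_\infty(\Omega))^{\widetilde{\gamma}} = L_\infty(\Omega)\rtimes_\gamma\mathbb{R}^d$. The reason is that $A$ is equivariant, so $\widetilde{\gamma}(y)(A)=A$ for every $y$, and the Borel functional calculus commutes with automorphisms. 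Theorem \ref{existence_of_DOS}, applied with $\chi_{(-\infty,\lambda)}$ in place of $\chi_{[0,\lambda]}$, then gives $N(\lambda,A)(\omega)=\mathrm{Tr}(M_{\chi_{[0,1]^d}}E)=\mathbb{E}(\mathrm{tr}(M_{\chi_{[0,1]^d}}E))$ for almost every $\omega$. The statement therefore reduces to the identity
\[
\widehat{\tau}(X)=\mathrm{Tr}(M_{\chi_{[0,1]^d}}X M_{\chi_{[0,1]^d}}),\qquad 0\le X\in L_\infty(\Omega)\rtimes_\gamma\mathbb{R}^d .
\]

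To prove this identity, I would first define $\psi(X):=\mathrm{Tr}(M_{\chi_Q}XM_{\chi_Q})$ with $Q=[0,1]^d$. Because $X$ is $\widetilde{\gamma}$-invariant and $\mathrm{Tr}=\mathrm{tr}\otimes\tau$ is invariant under $\widetilde{\gamma}$ (the action $\gamma$ preserves $\mathbb{P}$), we have $\psi(X)=\mathrm{Tr}(M_{\chi_{Q+k}}XM_{\chi_{Q+k}})$ for all translates $k$. It follows that $\psi(X)=\lim_R (2R)^{-d}\mathrm{Tr}(M_{\chi_{[-R,R]^d}}XM_{\chi_{[-R,R]^d}})$, and a small averaging argument over continuous translates shows that $\psi(X)=\mathrm{Tr}(M_hXM_h)$ for any $h\in L_2(\mathbb{R}^d)$ with $\|h\|_2=1$. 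Consequently $\psi$ is normal, positive, and faithful on the fixed point algebra. It is also tracial: for $X=Y^*Y$ one computes $\mathrm{Tr}(M_hY^*YM_h)$ and compares it with $\mathrm{Tr}(M_hYY^*M_h)$. To do this, expand $\mathrm{Tr}$ over an orthonormal basis $\{h_j\}$ of $L_2(\mathbb{R}^d)$, use translation invariance to move the $h_j$'s into a single window, and apply the independence of $\psi$ from $h$. This is the standard argument that a translation-invariant trace restricted to an invariant algebra is a trace per unit volume.

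Next I would check the normalization on generators. For $X=\pi_\gamma(f)\,\widehat{g}(\nabla)$, with $f\in L_\infty(\Omega)$ and $g$ of Fourier type, the dual weight gives $\widehat{\tau}(X)=\tau(f)\,g(0)$ up to the Plancherel normalization fixed in \cite{Takesaki}, while $\psi(X)=\mathbb{E}(f)\int_Q\check{g}$-kernel on the diagonal, which equals the same quantity. Both functionals are normal semifinite traces that agree on a weakly dense $*$-subalgebra of Tomita type, so the uniqueness of the dual weight (its characterization via the operator-valued weight $\int_{\mathbb{R}^d}\widetilde{\gamma}(y)\,dy$) yields $\psi=\widehat{\tau}$.

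The main obstacle is the identification $\psi=\widehat{\tau}$, which requires care both with the normalization conventions for $\widehat{\tau}$ and with semifiniteness. If the direct comparison becomes unwieldy, I would instead use the Takesaki duality description $\mathrm{Tr}=\widehat{\widehat{\tau}}$ on the double crossed product. Under that description the restriction of $\mathrm{Tr}$ localized by $M_{\chi_Q}$ is exactly $\widehat{\tau}$ times the Lebesgue measure of $Q$, which equals $1$.
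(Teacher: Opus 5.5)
The paper states this lemma without proof; it is followed directly by the Weyl-law theorem for $N(\lambda,A)$. So there is no argument of the authors to compare with. Your reduction is the natural one and is correct in substance:
\begin{itemize}
\item equivariance puts $E=\chi_{(-\infty,\lambda)}(A)$ in the $\widetilde{\gamma}$-fixed algebra;
\item Theorem~\ref{existence_of_DOS} turns $N(\lambda,A)$ into $\mathrm{Tr}(M_{\chi_Q}EM_{\chi_Q})$;
\item everything then rests on identifying $\widehat{\tau}$ with this trace per unit volume.
\end{itemize}

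Two steps need repair.

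First, the parenthetical in your uniqueness step is misattributed. The dual weight is $\widehat{\tau}=\tau\circ\pi_\gamma^{-1}\circ\int\widehat{\gamma}_p\,dp$, which integrates the \emph{dual} action $\widehat{\gamma}_p$ (conjugation by multiplication with $e^{ip\cdot x}$). By contrast, $\int_{\mathbb{R}^d}\widetilde{\gamma}(y)\,dy$ is the operator-valued weight from $B(L_2(\mathbb{R}^d))\overline{\otimes}L_\infty(\Omega)$ onto the crossed product; it belongs to your second route. What you actually need is that two normal semifinite traces agreeing on a weakly dense $*$-subalgebra of $\mathfrak{m}_{\widehat{\tau}}$ coincide. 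For traces this follows from the central Radon--Nikodym derivative $\psi=\widehat{\tau}(h\,\cdot)$ together with Kaplansky density.

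Agreement on the generators $\pi_\gamma(f)\widehat{g}(\nabla)$ does not give this, because a trace is not multiplicative. The clean check is on $Y=\int\pi_\gamma(F(s))U(s)\,ds$ with $F\in C_c(\mathbb{R}^d;L_2(\Omega)\cap L_\infty(\Omega))$. The operator $YM_h$ has kernel $\gamma(x)^*\bigl(F(x-y)\bigr)h(y)$, up to sign conventions. Proposition~\ref{op-S=intker} and the invariance of $\tau$ then give $\psi(Y^*Y)=\|YM_h\|_{\mathcal{L}_2}^2=\int_{\mathbb{R}^d}\tau(F(s)^*F(s))\,ds$. This is exactly $\widehat{\tau}(Y^*Y)$ on this Tomita algebra. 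Polarization then gives agreement on the span of the products $Y^*Z$, which is the required subalgebra.

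Second, Theorem~\ref{existence_of_DOS} assumes local trace-class membership, which the lemma does not. Its argument only uses the positive, cube-additive quantities $\mathrm{tr}(M_{\chi_B}E(\omega)M_{\chi_B})\in[0,\infty]$. These are finite almost surely when $\widehat{\tau}(E)<\infty$, since $\mathbb{E}\,\mathrm{tr}(M_{\chi_Q}E(\omega)M_{\chi_Q})=\widehat{\tau}(E)$. When $\widehat{\tau}(E)=\infty$, you must show separately that the averages diverge, e.g.\ by truncation in the ergodic theorem.

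Be aware also that the paper's proof of Theorem~\ref{existence_of_DOS} averages over $\mathbb{Z}^d$. The restriction of an ergodic $\mathbb{R}^d$-action to $\mathbb{Z}^d$ need not be ergodic (take translations of $\mathbb{R}^d/\mathbb{Z}^d$). The conclusion survives because the $\mathbb{Z}^d$-limit is $\mathbb{R}^d$-invariant, but since your lemma is exactly this case, that step should be supplied.

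Your fallback is actually the shortest proof, and it removes the traciality and uniqueness discussion entirely. Grant $\mathrm{Tr}=\widehat{\tau}\circ\int_{\mathbb{R}^d}\widetilde{\gamma}(y)\,dy$ (Haagerup's $\widehat{\widehat{\tau}}=\tau\otimes\mathrm{tr}$, transported through Lemma~\ref{crossed_product_as_fixed_point}). Then for $X=Y^*Y$ in the crossed product,
\[
\mathrm{Tr}(M_{\chi_Q}Y^*YM_{\chi_Q})=\mathrm{Tr}(YM_{\chi_Q}Y^*)=\widehat{\tau}\Bigl(Y\Bigl(\int_{\mathbb{R}^d}\widetilde{\gamma}(y)(M_{\chi_Q})\,dy\Bigr)Y^*\Bigr)=\widehat{\tau}(YY^*)=\widehat{\tau}(Y^*Y).
\]
This uses three facts: $Y$ is $\widetilde{\gamma}$-fixed; $\int_{\mathbb{R}^d}\chi_{Q+y}\,dy\equiv|Q|=1$; and $\widehat{\tau}$ is a trace, because $\gamma$ preserves $\mathbb{P}$.
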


\begin{theorem}
	Let $G\subseteq \mathbb{R}^d$ be a closed subgroup, and $\gamma:G\to \mathrm{Aut}(\Omega)$ be an ergodic action.
	Let $m>0$ and let $A \in \mathrm{C}\Psi^m(\mathbb{R}^d;L_{\infty}(\Omega))$ be non-negative, elliptic and $\gamma$-equivariant. If $G = \mathbb{Z}^d,$ then as $\lambda\to\infty$ we have
	\[
	N(\lambda,A) = \lambda^{\frac{d}{m}}\frac{m}{d(2\pi)^d}\int_{|\xi|=1}\int_{[0,1]^d} \mathbb{E}(\sigma(A)_{m}(x,\xi,\cdot)^{-\frac{d}{m}})\,dxd\xi+o(\lambda^{\frac{d}{m}}).
	\]
	If $G = \mathbb{R}^d,$ then
	\[
	N(\lambda,A) = \lambda^{\frac{d}{m}}\frac{m}{d(2\pi)^d}\int_{|\xi|=1}\mathbb{E}(\sigma(A)_{m}(0,\xi,\cdot)^{-\frac{d}{m}})\,d\xi+o(\lambda^{\frac{d}{m}}).
	\]
\end{theorem}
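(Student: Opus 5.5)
The plan is to combine the microlocal Weyl law (Theorem \ref{microlocal_weyl_law}) with the density-of-states identification of Theorem \ref{existence_of_DOS}. The case $G=\mathbb{Z}^d$ comes first; the case $G=\mathbb{R}^d$ is then a reduction.

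\textbf{Step 1 (reduction to a localized trace).} By Theorem \ref{existence_of_DOS}, for almost every $\omega$,
\[
N(\lambda,A)=\mathbb{E}\bigl(\mathrm{tr}(M_{\chi_{[0,1]^d}}\chi_{[0,\lambda]}(A))\bigr)=\Tr\bigl(M_{\chi_{[0,1]^d}}\chi_{[0,\lambda]}(A)\bigr),
\]
where $\Tr=\mathrm{tr}\otimes\mathbb{E}$ is the tensor trace on $B(L_2(\mathbb{R}^d))\overline{\otimes}L_\infty(\Omega)$. The trace-class hypothesis of that theorem has to be checked. I would check it by writing $\chi_{[0,\lambda]}(A)=(1+A)^{-k}\chi_{[0,\lambda]}(A)(1+A)^{k}$. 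By Theorem \ref{structurethm}, $(1+A)^{-k}$ is a $\Psi$DO of order $-mk$. For $mk>d$, the Cwikel estimates (Lemma \ref{Cwikel-0-2}) then give $M_{\chi_{[0,1]^d}}(1+A)^{-k}\in\mathcal{L}_1$. Since $\mathbb{P}$ is a probability measure, the cut-off $\chi_{[0,1]^d}\otimes 1$ is $\Tr$-finite.

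\textbf{Step 2 (apply the microlocal Weyl law with a rough cut-off).} Take $Q=1$ and $\mathcal{M}=L_\infty(\Omega)$. The function $\phi=\chi_{[0,1]^d}\otimes 1$ lies in $C_c$ except that it is discontinuous, so I would sandwich it between continuous functions: $\phi_-\le\chi_{[0,1]^d}\le\phi_+$ with $\phi_\pm\in C_c(\mathbb{R}^d)$ and $\|\phi_+-\phi_-\|_{L_1}<\varepsilon$. Monotonicity in $\phi$ needs $\Tr(M_\phi\chi_{[0,\lambda]}(A))\ge0$ for $\phi\ge0$. This holds because
\[
\Tr(M_\phi E)=\Tr(E M_\phi E)
\]
for the projection $E=\chi_{[0,\lambda]}(A)$, by the trace property once everything is trace class. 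Theorem \ref{microlocal_weyl_law} applied to $\phi_\pm$ gives
\[
\lambda^{-d/m}\Tr(M_{\phi_\pm}\chi_{[0,\lambda]}(A))\to \frac{m}{d(2\pi)^d}\int\!\!\int\phi_\pm(x)\,\mathbb{E}\bigl(\sigma(A)_m(x,\xi)^{-d/m}\bigr)\,dx\,d\xi .
\]
Letting $\varepsilon\to0$ yields the $\mathbb{Z}^d$ formula, with the $x$-integral over $[0,1]^d$. This uses that $\sigma(A)_m^{-d/m}$ is bounded on the sphere, by ellipticity. Theorem \ref{microlocal_weyl_law} is stated for positive semidefinite $A$, so strict positivity is not needed; if it were, I would replace $A$ by $A+1$, which changes $N$ only by a shift $\lambda\mapsto\lambda-1$ and does not affect the leading asymptotics.

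\textbf{Step 3 ($G=\mathbb{R}^d$).} An $\mathbb{R}^d$-equivariant operator is in particular $\mathbb{Z}^d$-equivariant. The restricted $\mathbb{Z}^d$-action need not be ergodic, but the density-of-states limit over $\mathbb{R}^d$-averages still exists by the continuous ergodic theorem, and it equals the same expectation. So the $\mathbb{Z}^d$ formula holds with $\mathbb{E}$ in place of the conditional expectation. Equivariance \eqref{equivariant_symbol} gives $\sigma(A)_m(x,\xi,\omega)=\sigma(A)_m(0,\xi,\gamma(x)\omega)$. Because $\gamma$ preserves $\mathbb{P}$, the expectation of $\sigma(A)_m(x,\xi,\cdot)^{-d/m}$ is independent of $x$. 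The $x$-integral over $[0,1]^d$ therefore collapses, giving the second formula.

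\textbf{Main obstacle.} The hardest part is Steps 1--2: showing that the localized counting function is finite, that it is monotone in $\phi$, and that the discontinuous cut-off can legitimately be replaced by continuous ones. Put differently, the difficulty is that Theorem \ref{microlocal_weyl_law} is only available for continuous compactly supported $\phi$.
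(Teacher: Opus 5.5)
Your argument is a valid deduction from the paper's earlier results and takes the same route as the paper, whose proof is just ``combine Theorem \ref{microlocal_weyl_law} with Theorem \ref{existence_of_DOS}''. Your trace-class check, the sandwich $\phi_-\le\chi_{[0,1]^d}\le\phi_+$ justified by $\Tr(M_\phi E)=\Tr(EM_\phi E)\ge 0$ (needed because Theorem \ref{microlocal_weyl_law} only allows continuous $\phi$), and the collapse of the $x$-integral by measure preservation when $G=\mathbb{R}^d$ fill in details the paper leaves implicit.

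One caveat comes from the cited theorem, not from your argument. The constant $\frac{m}{d(2\pi)^d}$ appears to be too large by a factor $m$, because the residue in Theorem \ref{microlocal_zeta_function} omits the factor $\frac{1}{m}$ that appears in Theorem \ref{zeta pole}. For example, with $A=-\Delta$ and trivial $\Omega$ one computes exactly $N(\lambda,A)=\frac{|\mathbb{S}^{d-1}|}{d(2\pi)^d}\lambda^{d/2}$, which is half of what the stated formula gives.
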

\begin{proof}
	Immediate combination of Theorems \ref{microlocal_weyl_law} and \ref{existence_of_DOS}.
\end{proof}
A similar result for differential operators first obtained by Gusev \cite[Theorem 5.2]{Gusev1977}. Compare \cite[Theorem 4.6]{Shubin1979}, which obtains the same result in the almost periodic case with a sharper remainder term.

As a special case of Theorem \ref{asymp most}, we also have the following:
\begin{theorem}
	Let $G\subseteq \mathbb{R}^d$ be a closed subgroup, and $\gamma:G\to \mathrm{Aut}(\Omega)$ be an ergodic action.
	Let $T \in \mathrm{C}\Psi^{-m}(\mathbb{R}^d;L_{\infty}(\Omega)),$ $m>0$ be compactly supported from the right and $\gamma$-equivariant. If $G = \mathbb{Z}^d,$ then
	\[
	\lim_{t\to\infty} t^{\frac{m}{d}}\mu(t,T)(\omega) = d^{-\frac{m}{d}}(2\pi)^{-m}\left[ \int_{|\xi|=1} \int_{\mathbb{R}^d} \mathbb{E}\Big(|\sigma(T)_{-m}(x, \xi,\cdot)|^{\frac{d}{m}}\Big)\, dx \, d\xi \right]^{\frac{m}{d}}.
	\]
\end{theorem}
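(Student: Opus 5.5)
The plan is to work fiber by fiber, apply the scalar case of Theorem \ref{asymp most} to each fiber, and then use $\gamma$-equivariance and ergodicity to show that the resulting random constant is almost surely deterministic. Here $\mu(t,T)(\omega)$ is read as $\mu(t,T(\omega))$, the singular value function of the fiber operator $T(\omega)$ on $L_2(\mathbb{R}^d)$ with respect to $\mathrm{tr}$.

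\textbf{Step 1: the fibers are scalar classical $\Psi$DOs.} Write $\sigma(T)(x,\xi,\omega)$ for the symbol. Since $\mathcal{M}=L_\infty(\Omega)$ is abelian, the estimates defining $CS^{-m}(\mathbb{R}^d\times\mathbb{R}^d;L_\infty(\Omega))$ are uniform in $\omega$. So for almost every $\omega$, $\sigma(T)(\cdot,\cdot,\omega)$ is a scalar classical symbol of order $-m$, with homogeneous components $\sigma(T)_{-m-j}(\cdot,\cdot,\omega)$. The operator $T(\omega)=\mathrm{Op}(\sigma(T)(\cdot,\cdot,\omega))$ is then the fiber of $T$ in the direct integral decomposition
\[
L_2(\mathbb{R}^d;L_2(\Omega))=\int_\Omega^\oplus L_2(\mathbb{R}^d)\,d\mathbb{P}(\omega).
\]
The identity $T=TM_f$ with $f\in C_c(\mathbb{R}^d;L_{\frac dm}(\Omega)\cap L_\infty(\Omega))$ passes to the fibers as $T(\omega)=T(\omega)M_{f(\cdot,\omega)}$. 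After choosing a representative, $f(\cdot,\omega)$ is bounded and supported in a fixed compact set.

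\textbf{Step 2: the scalar Weyl law on each fiber.} Apply Theorem \ref{asymp most} with $\mathcal{M}=\mathbb{C}$, $\tau=|\cdot|$, to each $T(\omega)$. A bounded compactly supported cutoff can be approximated by continuous ones using Lemma \ref{ideal approximate} and the Cwikel estimates, Lemmas \ref{Cwikel} and \ref{Cwikel-0-2}. This gives, for almost every $\omega$,
\[
\lim_{t\to\infty}t^{\frac md}\mu(t,T(\omega))=d^{-\frac md}(2\pi)^{-m}\Phi(\omega)^{\frac md},\qquad
\Phi(\omega):=\int_{|\xi|=1}\int_{\mathbb{R}^d}|\sigma(T)_{-m}(x,\xi,\omega)|^{\frac dm}\,dx\,d\xi .
\]
The quantity $\Phi$ is measurable and integrable by Fubini. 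By Fubini again, $\mathbb{E}(\Phi)$ equals the bracket in the statement.

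\textbf{Step 3: $\Phi$ is invariant, hence constant.} Equivariance of $T$ gives, via the converse part of the lemma on equivariant symbols, the relation
\[
\sigma(T)(x,\xi,\omega)=\sigma(T)(x-k,\xi,\gamma(k)\omega),\qquad k\in\mathbb{Z}^d .
\]
Passing to homogeneous components, the same relation holds for $\sigma(T)_{-m}$. Substituting $x\mapsto x+k$ in the $dx$-integral gives $\Phi(\gamma(k)\omega)=\Phi(\omega)$ almost surely for every $k\in\mathbb{Z}^d$. Since $\mathbb{Z}^d$ is countable and $\gamma$ is ergodic, every sublevel set $\{\Phi<c\}$ is invariant up to a null set and so has measure $0$ or $1$. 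Hence $\Phi$ is almost surely constant, equal to $\mathbb{E}(\Phi)$. Substituting this into Step 2 gives the stated formula, and the limit is independent of $\omega$ almost surely.

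\textbf{Main obstacle.} I expect the hardest part to be Step 1. One has to verify rigorously that the homogeneous expansion and the right support condition hold for almost every $\omega$ simultaneously, which needs a countable-dense argument in $(x,\xi)$ and the continuity of the symbols. One also has to check that the Hilbert-space fiber of $\mathrm{Op}(\sigma)$ is $\mathrm{Op}(\sigma(\cdot,\cdot,\omega))$. I would establish the fiber identification first on $\mathscr{S}(\mathbb{R}^d)\otimes L_2(\Omega)$ and extend it by boundedness (Proposition \ref{l2bdd}).

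A caveat on Step 3: combining compact right support with equivariance makes the principal symbol vanish off a set invariant under $\mathbb{Z}^d$-translations. This may force both sides to be zero. The argument above covers that case consistently.
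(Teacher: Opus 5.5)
Your argument is correct, but it takes a different route from the paper. The paper gives no argument beyond calling the statement a special case of Theorem~\ref{asymp most}. Take $\mathcal{M}=L_\infty(\Omega)$ with $\tau=\mathbb{E}$. Then $\mu(t,T)$ is the singular value function of $T$ with respect to $\mathrm{Tr}=\mathbb{E}\circ\mathrm{tr}$ on $B(L_2(\mathbb{R}^d))\overline{\otimes}L_\infty(\Omega)$, and the right-hand side is verbatim that of Theorem~\ref{asymp most}. On that reading the limit is a deterministic number and the ``$(\omega)$'' is decorative. Equivariance, ergodicity and $G=\mathbb{Z}^d$ are never used.

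You instead read $\mu(t,T)(\omega)$ as $\mu(t,T(\omega))$, invoke only the scalar Weyl law on each fibre, and use equivariance plus ergodicity to replace the random constant $\Phi(\omega)$ by $\mathbb{E}\Phi$. Your route buys two things.
\begin{itemize}
\item It gives an almost-sure statement about individual realizations, and it is the version in which the ergodicity hypothesis actually does work. Without ergodicity the fibrewise limit would be the random quantity $d^{-m/d}(2\pi)^{-m}\Phi(\omega)^{m/d}$.
\item It also recovers the paper's averaged statement. Indeed $\mathrm{Tr}(e_s^\perp(|T|))=\mathbb{E}\,\mathrm{tr}(e_s^\perp(|T(\omega)|))$, and $s^{d/m}\mathrm{tr}(e_s^\perp(|T(\omega)|))$ is bounded uniformly in $\omega$ by the Cwikel-type estimates, so dominated convergence applies.
\end{itemize}
The price is the measurable-selection and fibre-identification work you flag, which the paper's route avoids entirely. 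One simplification: $f$ is continuous into $L_\infty(\Omega)$, so your countable-dense argument already gives continuous representatives $f(\cdot,\omega)$. The extra approximation of the cutoff in Step~2 is therefore unnecessary.

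Your closing caveat can be made definite, and it makes the theorem trivial.
\begin{itemize}
\item Choose a compact $K\supseteq\operatorname{supp}f$. Then $T(\omega)=T(\omega)M_{\chi_K}$ for a.e.\ $\omega$.
\item Using $M_{\chi_K}T_{-k}=T_{-k}M_{\chi_{K+k}}$, equivariance gives $T(\gamma(k)\omega)=T_kT(\omega)T_{-k}=T(\gamma(k)\omega)M_{\chi_{K+k}}$.
\item The support condition at the point $\gamma(k)\omega$ then gives $T(\gamma(k)\omega)=T(\gamma(k)\omega)M_{\chi_{K+k}}M_{\chi_K}$. This is $0$ as soon as $K\cap(K+k)=\emptyset$.
\item Since $\gamma(k)$ is a measure-preserving bijection, $T=0$.
\end{itemize}
So the hypotheses admit only the zero operator, and both sides vanish under either reading of $\mu(t,T)(\omega)$. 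Neither a Weyl law nor ergodicity is actually needed.
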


\begin{rk}
	The preceding theorem implies that if $T\in \mathrm{C}\Psi^{-m}(\mathbb{R}^d,L_{\infty}(\mathbb{R}^d_B))$ is equivariant with respect to the translation action of $\mathbb{R}^d$ on $\mathbb{R}^d_B$ (i.e., $T$ is an almost periodic $\Psi$DO), and $\tau_{\omega}$ is a Dixmier trace
	on the von Neumann algebra $L_{\infty}(\mathbb{R}^d_B)\rtimes \mathbb{R}^d,$ then
	\[
	\tau_{\omega}(T) = \frac{1}{d(2\pi)^d}\int_{\mathbb{R}^d_B}\int_{|\xi|=1}\sigma(T)_{-m}(x,\xi)\, dx \, d\xi.
	\]
	The same result was stated in \cite[Theorem 9.1]{BCPRSW2005}, but the proof contains an oversight in that it refers to \cite{Shubin1976} for the eigenvalue asymptotics but the results there only apply to positive order elliptic operators.
\end{rk}



\begin{thebibliography}{99}
\bibitem{AS63} M.~Atiyah and I.~Singer.
\newblock The index of elliptic operators on compact manifolds.
\newblock {\it Bull. Amer. Math. Soc.}, 69 (1963) 422-433.

\bibitem{A69} M.~Atiyah.
\newblock Global theory of elliptic operators.
\newblock In: {\it Proceedings of the International Conference on Functional Analysis and Related Topics}, (Tokyo) (1969) pp. 21-30.

\bibitem{Baaj88} S.~Baaj.
\newblock Calcul pseudodiff\'{e}rentiel et produits crois\'{e}s de $C^*$-alg\`{e}bres, I and II.
\newblock {\it C. R. Acad. Sci. Paris, Ser. I}, 307 (1988) 581-586 and 663-666.

\bibitem{BCPRSW2005} M.-T. Benameur, A. L. Carey, J. Phillips, A. Rennie, F. Sukochev and K. Wojciechowski.
\newblock An analytic approach to spectral flow in von Neumann algebras.
\newblock {\it Analysis, geometry and topology of elliptic operators}, (2005) 297--352.

\bibitem{BT06} M.~Benameur and T.~Fack.
\newblock Type II non-commutative geometry. I: Dixmier trace in von Neumann algebras.
\newblock {\it Adv. Math.}, 199:1 (2006) 29-87.

\bibitem{BGS90} M.S.~Birman, G.E.~Karadzhov, and M.Z.~Solomyak.
\newblock Boundedness conditions and spectrum estimates for the operators $b(X)a(D)$ and their analogs.
\newblock In: {\it Estimates and Asymptotics for Discrete Spectra of Integral and Differential Equations}, Leningrad, 1989-90, Adv. Soviet Math. 7, Amer. Math. Soc., Providence, RI, 1991, pp. 85-106.


\bibitem{BS70} M.S.~Birman and M.Z.~Solomjak.
\newblock The principal term of the spectral asymptotics for ``nonsmooth'' elliptic problems.
\newblock {\it Funct. Anal. Appl.}, 4 (1970) 265-275.

\bibitem{BS77i} M.S.~Birman and M.Z.~Solomyak.
\newblock Estimates for the singular numbers of integral operators.
\newblock {\it Usp. Mat. Nauk}, 32 (193) (1977) 17-84, 1 (Russian), {\it Russ. Math. Surv.}, 32 (1977), no. 1, 15-89 (English).

\bibitem{BS77} M.S.~Birman and M.Z.~Solomyak.
\newblock Asymptotic behavior of the spectrum of pseudodifferential operators with anisotropically homogeneous symbols.
\newblock {\it Vestn. Leningr. Univ.}, 13 (3) (1977) 13-21 (Russian), {\it Vestn. Leningr. Univ., Math.}, 10 (1982) 237-247 (English).

\bibitem{BS79} M.S.~Birman and M.Z.~Solomyak.
\newblock Asymptotic behavior of the spectrum of pseudodifferential operators with anisotropically homogeneous symbols. II.
\newblock {\it Vestn. Leningr. Univ. Mat. Mekh. Astron.}, 13 (3) (1979) 5-10 (Russian).

\bibitem{BS79a} M.S.~Birman and M.Z.~Solomjak.
\newblock Asymptotic behavior of the spectrum of variational problems on solutions of elliptic equations.
\newblock {\it Sibirsk. Mat. Zh.}, 20 (1979) no. 1, 3-22, 204.

\bibitem{BS87} M.S.~Birman and M.Z.~Solomyak.
\newblock Spectral Theory of Selfadjoint Operators in Hilbert Space.
\newblock {\it Mathematics and Its Applications (Soviet Series)}, D. Reidel Publishing Co., Dordrecht, 1987.

\bibitem{B98} V.I.~Burenkov.
\newblock Sobolev Spaces on Domains.
\newblock {\it Teubner-Texte Math.}, 137, B. G. Teubner Verlagsgesellschaft, Stuttgart, 1998.

\bibitem{CPRS04} A. L. Carey, J. Phillips, A. Rennie, and F. A. Sukochev.
\newblock The Hochschild class of the Chern character for semifinite spectral triples.
\newblock {\it J. Funct. Anal.}, 213:1 (2004), 111-153.

\bibitem{CPRS06i} A. L. Carey, J. Phillips, A. Rennie, and F. A. Sukochev.
\newblock The local index formula in semifinite von Neumann algebras. I: Spectral flow.
\newblock {\it Adv. Math.}, 202:2 (2006), 451-516.

\bibitem{CPRS06ii} A. L. Carey, J. Phillips, A. Rennie, and F. A. Sukochev.
\newblock The local index formula in semifinite von Neumann algebras. II: The even case.
\newblock {\it Adv. Math.}, 202:2 (2006), 517-554.

\bibitem{CGRS14} A.L.~Carey, V.~Gayral, A.~Rennie, and F.~Sukochev.
\newblock Index theory for locally compact noncommutative geometries.
\newblock {\it Mem. Amer. Math. Soc.}, 231 (2014) no. 1085, vi+130.

\bibitem{CarmonaLacroix} R.~Carmona and J.~Lacroix.
\newblock Spectral Theory of Random Operators
\newblock {\it Probability and Its Applications} (1990)

\bibitem{CS15} M. Caspers and M. de la Salle.
\newblock Schur and Fourier multipliers of an amenable group acting on non-commutative $L^p$-spaces.
\newblock {\em Tran. Amer. Math. Soc.}, 367 (2015), no. 10, 6997-7013.


\bibitem{CGPT23} J. Conde-Alonso, A. Gonz\'alez-P\'erez, J. Parcet and E. Tablate.
\newblock Schur multipliers in Schatten-von Neumann classes.
\newblock {\em Ann. of Math. (2)}, 198 (2023), no. 3, 1229-1260.

\bibitem{AC80} A.~Connes.
\newblock $C^*$-alg\`{e}bres et g\'{e}om\'{e}trie differentielle.
\newblock {\it C. R. Acad. Sci. Paris, S\'{e}r. A}, 290 (1980) 599-604.


\bibitem{AC85} A.~Connes.
\newblock Noncommutative differential geometry.
\newblock {\it Inst. Hautes \'{E}tudes Sci. Publ. Math.}, 62 (1985) 257-360.

\bibitem{AC88} A.~Connes.
\newblock The action functional in non-commutative geometry.
\newblock {\it Commun. Math. Phys.}, 117 (1988) 673-683.

\bibitem{AC94} A.~Connes.
\newblock Noncommutative Geometry.
\newblock Academic Press, San Diego, 1994.

\bibitem{AC95} A.~Connes.
\newblock Noncommutative geometry and reality.
\newblock {\it J. Math. Phys.}, 36 (11) (1995) 6194-6231.

\bibitem{AC21} A.~Connes.
\newblock Noncommutative Geometry, the Spectral Standpoint.
\newblock In: New Spaces in Physics, vol. 2, Cambridge University Press, 2021, pp. 23-84.

\bibitem{CF19} A.~Connes and F.~Fathizadeh.
\newblock The term $a_4$ in the heat kernel expansion of noncommutative tori.
\newblock {\it M\"{u}nster J. Math.}, 12 (2019) 239-410.

\bibitem{CMSZ17} A.~Connes, E.~McDonald, F.~Sukochev and D.~Zanin.
\newblock Conformal trace theorem for Julia sets of quadratic polynomials.
\newblock {\it Ergod. Theory Dyn. Syst.}, 39 (2017) 1-26.


\bibitem{CM14} A.~Connes and H.~Moscovici.
\newblock Modular curvature for noncommutative two-tori.
\newblock {\it J. Am. Math. Soc.}, 27 (3) (2014) 639-684.

\bibitem{CSZ17} A.~Connes, F.~Sukochev and D.~Zanin.
\newblock Trace theorem for quasi-Fuchsian groups.
\newblock {\it Mat. Sb.}, 208, (2017) 59-90.


\bibitem{CST94} A. Connes, D. Sullivan and N. Teleman.
\newblock Quasiconformal mappings, operators on Hilbert space, and local formulae for characteristic classes.
\newblock {\it Topology}, 33 (1994), 663-681.

\bibitem{CT11} A.~Connes and P.~Tretkoff.
\newblock The Gauss-Bonnet theorem for the noncommutative two torus.
\newblock In: Noncommutative Geometry, Arithmetic, and Related Topics, Johns Hopkins Univ. Press, Baltimore, MD, 2011, pp. 141-158.

\bibitem{Co55} M.~Cotlar.
\newblock A combinatorial inequality and its applications to $L_2$-spaces.
\newblock {\it Rev. Mat. Cuyana}, 1 (1955) 41-55 (1956).

\bibitem{Cwikel77} M.~Cwikel.
\newblock Weak type estimates for singular values and the number of bound states of Schr\"odinger operators.
\newblock {\it Ann. of Math.}, 106 (1977) 93-100.



\bibitem{DR87} M.~Dauge and D.~Robert.
\newblock Weyl's formula for a class of pseudodifferential operators with negative order on $L^2(\mathbf{R}^n)$.
\newblock In: Pseudodifferential operators (Oberwolfach, 1986), {\it Lecture Notes in Math.}, 1256, Springer, Berlin, 1987, pp. 91-122.

\bibitem{D66} J.~Dixmier.
\newblock Existence de traces non normales.
\newblock {\it C. R. Acad. Sci. Paris S\'{e}r. A-B}, 262 (1966) A1107-A1108.

\bibitem{DPS23}P.~Dodds, B.de~Pagter, and F.Sukochev.
\newblock Noncommutative Integration and Operator Theory.
\newblock Progress in Mathematics, volume 349, Birkh\"{a}user, Cham, Switzerland, 2023.

\bibitem{FSZ23} R.~Frank, F.~Sukochev, and D.~Zanin.
\newblock Asymptotics of singular values for quantum derivatives.
\newblock {\it Trans. Amer. Math. Soc.}, 376 (3) (2023) 2047-2088.

\bibitem{FSZ24} R.~Frank, F.~Sukochev, and D.~Zanin.
\newblock Endpoint Schatten class properties of commutators.
\newblock {\it Adv. Math.}, 450 (2024) 109738.

\bibitem{GJM22} L.~Gao, M.~Junge, and E.~McDonald.
\newblock Quantum Euclidean spaces with noncommutative derivatives.
\newblock {\it J. Noncommut. Geom.}, 16 (2022) 153-213.

\bibitem{G04} V.~Gayral, J.~Gracia-Bond\'{i}a, B.~Iochum, T.~Sch\"{u}cker, and J.~V\'{a}rilly.
\newblock Moyal planes are spectral triples.
\newblock {\it Commun. Math. Phys.}, 246 (2004) 569-623.

\bibitem{GJP21} A.M.~Gonz\'{a}lez-P\'{e}rez, M.~Junge, and J.~Parcet.
\newblock Singular integral in quantum Euclidean spaces.
\newblock {\it Mem. Amer. Math. Soc.}, 272 (2021) no. 1334, vii+90 pp.

\bibitem{Gusev1977} A.~I.~Gusev.
\newblock Density of states and other spectral invariants of self-adjoint elliptic operators with random coefficients. \newblock {\it Mat. Sb.} 104, 1977, 207--226.

\bibitem{HLP19i} H.~Ha, G.~Lee, and R.~Ponge.
\newblock Pseudodifferential calculus on noncommutative tori, I. Oscillating integrals.
\newblock {\it Int. J. Math.}, 30 (8) (2019) 1950033.

\bibitem{HLP19ii} H.~Ha, G.~Lee, and R.~Ponge.
\newblock Pseudodifferential calculus on noncommutative tori, II. Main properties.
\newblock {\it Int. J. Math.}, 30 (8) (2019) 1950034.

\bibitem{HR00} N.~Higson and J.~Roe.
\newblock Analytic K-homology.
\newblock {\it Oxford Mathematical Monographs.}, (2000) pp. xviii+405.

\bibitem{Hyt16} T.~Hytnen, J.~Neerven, M.~Veraar, and L.~Weis.
\newblock Analysis in Banach spaces, I, Martingales and Littlewood-Paley theory.
\newblock Springer, 2016.

\bibitem{Hyt23} T.~Hytnen, J.~Neerven, M.~Veraar, and L.~Weis.
\newblock Analysis in Banach spaces, III, Harmonic Analysis and Spectral Theory.
\newblock Springer, 2023.

\bibitem{Ike31} S.~Ikehara.
\newblock An extension of Landau's theorem in the analytic theory of numbers.
\newblock {\it J. Math. Phys. Mass. Inst. Technol.}, 10 (1931) 1-12.

\bibitem{L.H65} L.~H\"{o}rmander.
\newblock Pseudo-Diff. Oper., 18 (3) (1965) 501-517.



\bibitem{JW82} S. Janson and T.H. Wolff.
\newblock Schatten classes and commutators of singular integral operators.
\newblock {\it Ark. Mat.}, 20 (1982), 301--310.



\bibitem{KLPS13} N.~Kalton, S.~Lord, D.~Potapov, and F.~Sukochev.
\newblock Traces of compact operators and the noncommutative residue.
\newblock {\it Adv. Math.}, 235 (2013) 1-55.


\bibitem{KN65} J.J.~Kohn and L.~Nirenberg.
\newblock An algebra of pseudo-differential operators.
\newblock {\it Commun. Pure Appl. Math.}, 18 (1965) 269-305.

\bibitem{LP20} G.~Lee and R.~Ponge.
\newblock Functional calculus for elliptic operators on noncommutative tori, I.
\newblock {\it J. PseudoDiffer. Oper. Appl.}, 11 (2020) 935-1004.


\bibitem{LL25} G.~Lee and M.~Lesch.
\newblock Weakly parametric pseudodifferential calculus for twisted $C^*$-dynamical systems.
\newblock {\it J. Noncommut. Geom.}, 19 (2025) no.~3, 847-877


\bibitem{Leo17} G.~Leoni.
\newblock A First Course in Sobolev Spaces, second edition.
\newblock {\it Graduate Studies in Mathematics}, vol. 181, AMS, Providence, RI, 2017.

\bibitem{LM16} M.~Lesch and H.~Moscovici.
\newblock Modular curvature and Morita equivalence.
\newblock {\it Geom. Funct. Anal.}, 26 (3) (2016) 818-873.

\bibitem{LSZ20} G.~Levitina, F.~Sukochev, and D.~Zanin.
\newblock Cwikel estimates revisited.
\newblock {\it Proc. Lond. Math. Soc.}, 120 (3) (2020) no. 2, 265-304.


\bibitem{LevitinaUsachevI} G. Levitina and A.~S. Usachev
\newblock Pietsch correspondence for symmetric functionals on Calkin operator spaces associated with semifinite von Neumann algebras.
\newblock {\it J. Operator Theory} {\bf 89} (2023), no.~2, 305--342.

\bibitem{LevitinaUsachevII} G. Levitina and A.~S. Usachev,
\newblock Dixmier-type traces on symmetric spaces associated with semifinite von Neumann algebras.
\newblock {\it Ann. Funct. Anal.} {\bf 15} (2024), no.~2, Paper No. 45, 14 pp.

\bibitem{LevitinaUsachevIII} G. Levitina and A.~S. Usachev
\newblock Symmetric functionals on simply generated symmetric spaces.
\newblock {\it J. Math. Anal. Appl.} {\bf 546} (2025), no.~1, Paper No. 129184, 30 pp.


\bibitem{LMSZ17} S.~Lord, E.~McDonald, F.~Sukochev, and D.~Zanin.
\newblock Quantum differentiability of essentially bounded functions on Euclidean space.
\newblock {\it J. Funct. Anal.}, 273 (2017) 2353-2387.

\bibitem{LSZ12} S.~Lord, F.~Sukochev, and D.~Zanin.
\newblock Singular Traces: Theory and Applications.
\newblock Walter de Gruyter, vol. 46, 2012.

\bibitem{LSZ19} S.~Lord, F.~Sukochev, and D.~Zanin.
\newblock Advances in Dixmier Traces and Applications.
\newblock In: Advances in Noncommutative Geometry, Springer, 2019, pp. 491-583.

\bibitem{LSZ21} S.~Lord, F.~Sukochev, and D.~Zanin.
\newblock Singular Traces, vol 1. Theory.
\newblock {\it Walter de Gruyter}, vol. 46/1, 2021.

\bibitem{MP22} E.~McDonald and R.~Ponge.
\newblock Cwikel estimates and negative eigenvalues of Schr\"odinger operators on noncommutative tori.
\newblock {\it J. Math. Phys.}, 61 (2022) 043503.

\bibitem{MP23} E.~McDonald and R.~Ponge.
\newblock Dixmier trace formulas and negative eigenvalues of Schr\"odinger operators on curved noncommutative tori.
\newblock {\it Adv. Math.}, 412 (2023) 108815.

\bibitem{MSX19} E.~McDonald, F.~Sukochev, and X.~Xiong.
\newblock Quantum differentiability on quantum tori.
\newblock {\it Commun. Math. Phys.}, 371 (2019) 1231-1260.

\bibitem{MSX20} E.~McDonald, F.~Sukochev, and X.~Xiong.
\newblock Quantum differentiability on noncommutative Euclidean spaces.
\newblock {\it Commun. Math. Phys.}, 379 (2020) 491-542.

\bibitem{MSX23} E.~McDonald, F.~Sukochev, and X.~Xiong.
\newblock Quantum differentiability-the analytical perspective.
\newblock In: {\it Proc. Sympos. Pure Math.}, 105, AMS, Providence, RI (2023) 257-280.

\bibitem{MSZ19} E.~McDonald, F.~Sukochev, and D.~Zanin.
\newblock A $C^{\ast}$-algebraic approach to the principal symbol II.
\newblock {\it Math. Ann.}, 374 (2019) no. 1-2, 273-322.

\bibitem{MSZ22} E.~McDonald, F.~Sukochev, and D.~Zanin.
\newblock A noncommutative Tauberian theorem and Weyl asymptotics in noncommutative geometry.
\newblock {\it Lett. Math. Phys.}, 122 (2022) 77, 45 pp.

\bibitem{NR11} S. Neuwirth and \'{E}. Ricard.
\newblock Transfer of Fourier multipliers into Schur multipliers and sumsets in a discrete group.
\newblock {\it Canad. J. Math.} 63 (2011), 1161-1187.

\bibitem{PSW02} B.~de Pagter, F.~Sukochev, and H.~Witvliet.
\newblock Double operator integrals.
\newblock {\it J. Funct. Anal.}, 192 (2002) 52-111.

\bibitem{PasturFigotin} L.~Pastur and A.~Figotin.
\newblock Spectra of Random and Almost-Periodic Operators
\newblock {\it Grundlehren der mathematischen Wissenschaften} 297 (1992).

\bibitem{Petersen1983} K.~Petersen
\newblock Ergodic Theory.
\newblock {\it Cambridge studies in Advanced Mathematics} 2 (1983).

\bibitem{Pisier95} G. Pisier.
\newblock Similarity problems and completely bounded maps.
\newblock {\it Springer Lecture Notes}. 1618 (1995).

\bibitem{Pisier98} G. Pisier.
\newblock Noncommutative vector-valued $L_p$ spaces and completely $p$-summing maps.
\newblock {\it Ast\'erisque}. 247 (1998), vi+131 pp.

\bibitem{PX03} G.~Pisier and Q.~Xu.
\newblock Noncommutative $L_p$-spaces.
\newblock In: Handbook of the Geometry of Banach Spaces, vol. 2, North-Holland, Amsterdam, 2003, pp. 1459-1517.


\bibitem{P08} R.~Ponge.
\newblock Heisenberg calculus and spectral theory of hypoelliptic operators on Heisenberg manifolds.
\newblock {\it Mem. Amer. Math. Soc.}, 194 (906) viii+134, 2008.

\bibitem{P20i} R.~Ponge.
\newblock Connes's trace theorem for curved noncommutative tori. Application to scalar curvature.
\newblock {\it J. Math. Phys.}, 61 (2020) 042301, 27 pp.

\bibitem{P20ii} R.~Ponge.
\newblock Noncommutative residue and canonical trace on noncommutative tori. Uniqueness results.
\newblock {\it SIGMA Symmetry Integrability Geom. Methods Appl.}, 16 (2020) 061, 31 pp.

\bibitem{P22} R.~Ponge.
\newblock Weyl's laws and Connes' integration formulas for matrix-valued $L \log L$-Orlicz potentials.
\newblock {\it Math. Phys. Anal. Geom.}, 25 (10) (2022), 33 pp.

\bibitem{P23} R.~Ponge.
\newblock Connes integration and Weyls laws.
\newblock {\it J. Noncommut. Geom.}, 17 (2023) no.2, 719-767.

\bibitem{PS09} D.~Potapov and F.~Sukochev.
\newblock Unbounded Fredholm modules and double operator integrals.
\newblock {\it J. Reine Angew. Math.}, 626 (2009) 159-185.

\bibitem{R22} G.~Rozenblum.
\newblock Eigenvalues of singular measures and Connes noncommutative integration.
\newblock {\it J. Spectr. Theory}, 12 (2022) 259-300.

\bibitem{RT10} M.~Ruzhansky and V.~Turunen.
\newblock Pseudo-Differential Operators and Symmetries. Background Analysis and Advanced Topics.
\newblock {\it Pseudo-Differential Operators. Theory and Applications}, vol. 2, Birkh\"{a}user Verlag, Basel, 2010.

\bibitem{Seeley67} R.T.~Seeley.
\newblock Complex powers of an elliptic operator.
\newblock In: {\it Proc. Sympos. Pure Math.}, Vol. X, AMS, Providence, RI, 1967, pp. 288-307.

\bibitem{SSUZ15} E. Semenov, F. Sukochev, A. Usachev and D. Zanin.
\newblock Banach limits and traces on $\mathcal{L}_{1, \infty}$.
\newblock {\it Adv. Math.}, 285 (2015) 568-628.


\bibitem{Shubin1976} M. A.~Shubin.
\newblock Almost periodic pseudodifferential operators, and von Neumann algebras.
\newblock {\it Trudy Moskov. Mat. Ob\v s\v c.} {\bf 35} (1976), 103--164.

\bibitem{Shubin1979} M.A.~Shubin.
\newblock The spectral theory and the index of elliptic operators with almost periodic coefficients
\newblock {\it Russ. Math. Surv.} 34:2 (1979)

\bibitem{Shu01} M.A.~Shubin.
\newblock Pseudodifferential operators and spectral theory, Second Edition.
\newblock Springer-Verlag, Berlin, 2001.

\bibitem{Sim15} B.~Simon.
\newblock Operator theory. A Comprehensive Course in Analysis, Part 4.
\newblock AMS, Providence, RI, 2015.

\bibitem{Si05} B.~Simon.
\newblock Trace Ideals and Their Applications, 2nd edn.
\newblock AMS, Providence, 2005.


\bibitem{St70} E.M.~Stein.
\newblock Singular integrals and differentiability properties of functions.
\newblock Princeton Univ. Press, Princeton, 1970.

\bibitem{St93} E.M.~Stein.
\newblock Harmonic analysis: real-variable methods, orthogonality, and oscillatory integrals.
\newblock In: Monographs in Harmonic Analysis, III, Princeton Mathematical Series, vol. 43, Princeton Univ. Press, 1993.


\bibitem{FZ18} F.~Sukochev and D.~Zanin.
\newblock A $C^*$-algebraic approach to the principal symbol I.
\newblock {\it J. Oper. Theory.}, 80 (2018) 481-522.

\bibitem{FZ21} F.~Sukochev and D.~Zanin.
\newblock Optimal constants in non-commutative H\"{o}lder inequality for quasinorms.
\newblock {\it Proc. Amer. Math. Soc.}, 149 (2021) no. 9, 3813-381

\bibitem{FZ23} F.~Sukochev and D.~Zanin.
\newblock Connes integration formula - a constructive approach.
\newblock {\it Funktsional. Anal. i Prilozhen.}, 57:1 (2023) 52-76.

\bibitem{SXZ23} F.~Sukochev, X.~Xiong, and D.~Zanin.
\newblock Asymptotics of singular values for quantised derivatives on noncommutative tori.
\newblock {\it J. Funct. Anal.}, 285 (2023) 110021.

\bibitem{Takesaki} M.~Takesaki.
\newblock {\it Theory of operator algebras. II},
\newblock Encyclopaedia of Mathematical Sciences Operator Algebras and Non-commutative Geometry, 125 6, Springer, Berlin, 2003.

\bibitem{Tao18} J.~Tao.
\newblock The theory of pseudo-differential operators on the noncommutative $n$-torus.
\newblock {\it Inst. Phys. Conf. Ser.}, 965 (2018) 1-12.

\bibitem{Ta81} M.E.~Taylor.
\newblock Pseudodifferential Operator.
\newblock Princeton Univ. Press, 1981.

\bibitem{Tian25} Y.~Tian.
\newblock Spectral asymptotics for quantized derivatives on quantum Euclidean spaces.
\newblock {\it J. Noncommut. Geom.}, (2025) DOI: 10.4171/JNCG/644.

\bibitem{Tre67} F. Tr\`eves.
\newblock  Topological vector spaces, distributions and kernels.
\newblock  Academic Press, New York-London, 1967.

\bibitem{VG88} J.C.~V\'{a}rilly and J.M.~Gracia-Bond\'{i}a.
\newblock Algebras of distributions suitable for phase-space quantum mechanics. II. Topologies on the Moyal algebra.
\newblock {\it J. Math. Phys.}, 29 (1988) no. 4, 880-887.

\bibitem{W11} H.~Weyl.
\newblock Ueber die asymptotische verteilung der eigenwerte.
\newblock {\it Nachrichten von der Gesellschaft der Wissenschaften zu G\"{o}ttingen, Mathematisch-Physikalische Klasse}, 1911:110-117, 1911.

\bibitem{Wie32} N.~Wiener.
\newblock Tauberian Theorems.
\newblock {\it Ann. Math.}, 33 (1) (1932) 1-100.

\bibitem{XX19} R.~Xia and X.~Xiong.
\newblock Mapping properties of operator-valued pseudo-differential operators.
\newblock {\it J. Funct. Anal.}, 277 (2019) 2918-2980.


\bibitem{Xu07} Q.~Xu.
\newblock Noncommutative $L_p$-spaces and martingale inequalities, unpublished manuscript, 2007.

\bibitem{Zeng23} K.~Zeng.
\newblock Some problems in harmonic analysis on twsited crossed products.
\newblock Ph.D. Thesis, Universit\'{e} Bourgogne Franche-Comt\'{e}, 2023. English. NNT: 2023UBFCD048. tel04414142.

\bibitem{JYE25} J.~Zweck, Y.~Latushkin, and E.~Gallo.
\newblock A regularity condition under which integral operators with operator-valued kernels are trace class.
\newblock {\it Bol. Soc. Mat. Mex.}, 31, 38 (2025).
\end{thebibliography}
\end{document}